\documentclass[a4paper]{article}
\usepackage[margin=1in]{geometry}
\usepackage{vmargin}
\usepackage{appendix}
\usepackage[hidelinks]{hyperref}
\usepackage{amssymb}
\usepackage[shortalphabetic]{amsrefs}
\usepackage{amsmath}
\usepackage{amsrefs}
\usepackage{upgreek}
\usepackage{amsthm}
\usepackage{mathdots}
\usepackage{url}
\usepackage{enumerate}
\usepackage{mathtools}
\usepackage{amsxtra}
\usepackage{xcolor}
 \usepackage[refpage]{nomencl}
\usepackage{enumitem}
\usepackage{graphicx}
\usepackage{comment}

\makenomenclature

\newtheorem{thm}{Theorem}[section]
\newtheorem{lem}[thm]{Lemma}
\newtheorem{cor}[thm]{Corollary}
\newtheorem{prop}[thm]{Proposition}

\newcommand{\wt}{\widetilde}
\newcommand{\pipsitilde}{\pi^{\thicksim}_{\psi}}
\newcommand{\GL}{\mathrm{GL}}
\newcommand{\U}{\mathrm{U}}
\newcommand{\Sp}{\mathrm{Sp}}
\newcommand{\SO}{\mathrm{SO}}

\newcommand{\Tr}{\mathrm{Tr}}
\newcommand{\Trans}{\mathrm{Trans}}
\newcommand{\R}{\mathbb R}
\renewcommand{\H}{\mathbb H}
\newcommand{\Z}{\mathbb Z}
\newcommand{\C}{\mathbb C}
\newcommand{\etaA}{\eta^{\mathrm{Ar}}}
\newcommand{\etaABV}{\eta^{\mathrm{ABV}}}
\newcommand{\etaloc}{\eta^{\mathrm{loc}}}
\newcommand{\etamic}{\eta^{\mathrm{mic}}}
\newcommand{\chG}{{}^\vee G}
\newcommand{\ch}[1]{{}^\vee#1}
\newcommand{\chGL}{{}^\vee \mathrm{GL}}
\newcommand{\Gparameters}{\Xi(\chG^\Gamma)}

\newcommand{\GLparameters}{\Xi(\chGL_N^\Gamma)}

\newcommand{\X}{ X(\chG^\Gamma)}
\newcommand{\XGL}{ X(\chGL^\Gamma)}

\newcommand{\PiABV}{\Pi^{\mathrm{ABV}}}
\newcommand{\PiArthur}{\Pi^{\mathrm{Ar}}}
\newcommand{\Xtwisted}{X(\chGL_N^\Gamma,\vartheta)}
\newcommand{\Lift}{\mathrm{Lift}}
\newcommand{\LG}{{^\vee}G^{\Gamma}}
\newcommand{\LGL}{\ch\GL_N^{\Gamma}}
\newcommand{\g}{\mathfrak g}
\newcommand{\gl}{\mathfrak{gl}_{N}}
\newcommand{\h}{\mathfrak h}
\renewcommand{\O}{\ch \mathcal O}
\newcommand{\D}{\mathcal D}
\newcommand{\Xcal}{\mathcal X}

\newcommand{\Stab}{\mathrm{Stab}}
\newcommand{\Norm}{\mathrm{Norm}}
\newcommand{\Cent}{\mathrm{Cent}}

\newcommand{\stable}{\mathrm{st}}

\newcommand\inv{^{-1}}

\newcommand{\XX}{\Xcal_{\ch\rho}^w\times \ch\Xcal^{ww_0}_\lambda}

\newcommand{\GLext}{\GL_N(\R)\rtimes\langle\vartheta\rangle}

\newcommand{\chimic}{\chi^{\mathrm{mic}}}

\title{Equivalent definitions of Arthur packets for real classical
  groups}

\author{J. Adams, N. Arancibia Robert, P. Mezo}

\begin{document}
\maketitle
\begin{abstract}
  Arthur has conjectured the existence of what are now known as
  Arthur packets of representations of reductive algebraic groups over
  local and global fields.  In the case of classical groups he
  subsequently gave a definition of these packets, using local and
  global methods.  For general real groups, an alternative approach to
  the definition of Arthur packets has been given by
  Adams-Barbasch-Vogan. This construction is purely local and uses
  geometric methods. Our main result is that these two definitions agree
  in the case of real classical groups.

\end{abstract}

\tableofcontents

\section{Introduction}
\label{intro}

Guided by the theory of the trace formula, Arthur conjectured a
classification of automorphic representations of a connected reductive
algebraic group $G$ in terms of \emph{A-parameters} (\cite{Arthur84},
\cite{Arthur89}).  These A-parameters are global objects, which
conjecturally restrict to local A-parameters. The local
part of Arthur's conjectures states that associated to an
A-parameter $\psi_{G}$ over a local field $F$ is a finite set $\Pi_{\psi_{G}}$
of irreducible representations of $G(F)$, which now are commonly referred to as Arthur packets.
Arthur conjectured that these packets
satisfy conditions to be given in Problems A-E below.

Assume for the moment that  $G$ is either the split form of
$\mathrm{GL}_{N}$, or a  quasisplit form of $\mathrm{Sp}_{N}$ or $\mathrm{SO}_{N}$, $N \geq 2$
over a local field $F$ of characteristic $0$.   For these groups,
Arthur defines a set,
which we denote $\PiArthur_{\psi_G}$, and proves that it satisfies most of
the stated conditions \cite{Arthur}. His approach uses harmonic analysis, and
both local and global methods.

On the other hand, for a general connected real reductive group Adams, Barbasch
and Vogan define a set which we denote by $\PiABV_{\psi_{G}}$, and prove
these satisfy most
of Arthur's conditions \cite{ABV}. Their methods are quite
different, being based on equivariant sheaf theory on a flag variety.

It has long been expected that the two  definitions agree
 when both are defined (\cite{Arthur08}*{Section 8}), \emph{i.e.}
 for real split $\mathrm{GL}_N$ and the real quasisplit groups $\mathrm{Sp}_{N}$ and $\mathrm{SO}_{N}$.
The main result of this paper is that, aside from the case of even
rank special orthogonal groups,  it is indeed true that
$$\PiArthur_{\psi_{G}}=\PiABV_{\psi_{G}}.$$
The case of even rank special orthogonal groups
requires a slightly modified identity, which we give later in the
introduction.

We state Arthur's original conjectures as a set 
of problems.  We then describe the two
approaches to these problems.  We assume that the reader is somewhat familiar
with the theory of endoscopy for tempered representations
\cite{Shelstad08}.  We follow the
notation of \cite{ABV} and also provide references from \cite{Arthur84} and  \cite{Arthur89}.

\nomenclature{$\psi_{G}$}{A-parameter for $G$}
\nomenclature{$\ch G$}{Langlands dual of $G$}

Let $\Gamma\nomenclature{$\Gamma$}{real Galois group}$ be the Galois group of $\mathbb{C}/\mathbb{R}$ and let $\LG = \chG \rtimes \Gamma$  be the
Galois
form of the L-group of $G$.
An \emph{A-parameter for} $G$, is a group homomorphism
\begin{equation}
\label{aparameter}
\psi_{G}: W_{\mathbb{R}} \times \mathrm{SL}_{2} \rightarrow {^\vee}G^{\Gamma}
\end{equation}
such that  $\psi_G|_{W_\R}$ is a tempered L-parameter and
$\psi_{G}|_{\mathrm{SL}_{2}}$ is algebraic.  For the definition of $W_\R$
and tempered L-parameters
see \cite{borel}.  For the sake of simplicity, we assume that $G$ is quasisplit in the following problems.

\begin{description}
\item[\hypertarget{proba}{Problem A}] Associate to $\psi_{G}$ a finite linear combination
   of irreducible characters $\eta_{\psi_{G}}$ of $G(\mathbb{R})$ which is
  a \emph{stable} distribution (\cite{shelstad}, \cite{ABV}*{Definition 18.2}).
\end{description}
\nomenclature{$\eta_{\psi_{G}}$}{stable virtual character defining the Arthur packet}
Problem A  was first conjectured in \cite{Arthur84}*{Conjecture 1.3.3}.  A revised conjecture was given in \cite{Arthur89}*{\emph{pp.} 26-27}, where $\eta_{\psi_{G}}(f)$ is written as $f^{G}(\psi_{G})$ for $f \in C_{c}^{\infty}(G(\mathbb{R}))$.
Assuming a solution to Problem A,  the Arthur packet $\Pi_{\psi_G}$ is defined to be the set of irreducible characters occurring in $\eta_{\psi_{G}}$.
Let $A_{\psi_{G}}$
\nomenclature{$A_{\psi_{G}}$}{Component group}
 be the component group of
 the centralizer  in ${^\vee}G$ of the image of $\psi_{G}$. The groups in question are finite and abelian (\cite{Arthur}*{\emph{p}. 32}).
\begin{description}
  \item[\hypertarget{probb}{Problem B}]  Associate to each $\pi \in \Pi_{\psi_{G}}$ a non-zero
    finite-dimensional  representation $\tau_{\psi_{G}}(\pi)$ of
    $A_{\psi_{G}}$.
    \nomenclature{$\tau_{\psi_{G}}(\pi)$}{representation of $A_{\psi_{G}}$}
  \item[\hypertarget{probc}{Problem C}] Prove that
    $$\eta_{\psi_{G}} = \sum_{\pi \in \Pi_{\psi_{G}}}  \varepsilon_{\pi}
    \dim(\tau_{\psi_{G}}(\pi)) \, \pi$$
    for some  $\varepsilon_{\pi} = \pm 1$.
\end{description}

Problems B and C are actually special cases of the next problem, but deserve to be singled out as they highlight the representations $\tau_{\psi_{G}}(\pi)$ and their relation to $\eta_{\psi_{G}}$.  These problems first appeared in a preliminary form as \cite{Arthur84}*{(1.3.4)}, where $\tau_{\psi_{G}}(\pi)$ was written as $<\cdot, \pi >$.  They appeared in a revised form in \cite{Arthur89}*{(4.1)-(4.2)}, where the character of $\tau_{\psi_{G}}(\pi)$ is written as $< \cdot, \pi|\uprho>$.

\begin{description}
  \item[\hypertarget{probd}{Problem D}]  Prove that the stable distributions $\eta_{\psi_{G}}$ satisfy
    analogues of Shelstad's theorem on endoscopic lifting for tempered
    representations \cite{ABV}*{Chapter 26}.
\end{description}
Problem D is stated rather vaguely here.  Let us attempt to give its gist, following \cite{Arthur89}.   To a semisimple element $s \in {^\vee}G$ centralizing $\psi_{G}(W_{\mathbb{R}})$ one may define an \emph{endoscopic group} $G'$.  The A-parameter $\psi_{G}$ factors  to an A-parameter $\psi_{G'}$ for $G'$, and Shelstad has defined a map $\mathrm{Trans}_{G'}^{G}$ from stable distributions on $G'(\mathbb{R})$ to distributions on $G(\mathbb{R})$.  Arthur conjectured that
\begin{equation}
\label{arthur4.1}
\mathrm{Trans}_{G'}^{G}(\eta_{\psi_{G'}}) = \sum_{\pi \in \Pi_{\psi_{G}}} \uprho(s_{\psi} s) \ \mathrm{Tr}\left( \tau_{\psi_{G}}(\pi) (s_{\psi_{G}}\bar{s})\right)  \, \pi
\end{equation}
for some non-vanishing complex-valued function $\uprho$ on $A_{\psi_{G}}$ and a specified element $s_{\psi_{G}} \in A_{\psi_{G}}$. In addition, the function $\uprho$ and each $\tau_{\psi_{G}}(\pi)$ are conjectured to take values $\pm 1$ on $s_{\psi_{G}}$ (\cite{Arthur89}*{\emph{p.} 27}).  Problem C is recovered from (\ref{arthur4.1}) by taking $s = 1$.

Although the solution to Problem D in \cite{ABV}*{Theorem 26.25} is a recognizable cognate of  (\ref{arthur4.1}), the harmonic analysis underlying $\mathrm{Trans}_{G'}^{G}$ is replaced by sheaf theory.  The bridge between the two perspectives will be discussed shortly.

\begin{description}
 \item[\hypertarget{probe}{Problem E}]  Prove that the irreducible representations of
   $\Pi_{\psi_{G}}$ are all unitary.
\end{description}

Problem E is stated in \cite{ABV}, and it is implicit in Arthur's
original conjectures, in that these representations should be local
components of automorphic representations.
As observed in \cite{Arthur89}, the conditions of Problems A-E are not enough to determine these packets uniquely. 

The virtual character $\eta_{\psi_{G}}^{\mathrm{ABV}}$ constructed in \cite{ABV} solves Problems A-D. It is not known in general whether  the corresponding Arthur packet $\Pi_{\psi_{G}}^{\mathrm{ABV}}$ consists of unitary representations (Problem E).

For the remainder of this section we assume $G$ is either the split form of $\mathrm{GL}_{N}$, or a quasisplit form of  $\mathrm{Sp}_{N}$ or
$\mathrm{SO}_{N}$. In this case Arthur  constructs virtual characters $\eta_{\psi_{G}}^{\mathrm{Ar}}$ and proves they solve most of Problems A-E
 \cite{Arthur}*{Theorem 2.2.1}, with the following exceptions.
In the case of
$G = \mathrm{SO}_{2N}$  Arthur proves slightly weaker results,
due to the
existence of an outer automorphism.
Also he does not prove the properties of the
signs $\varepsilon_{\pi}$
of \hyperlink{probc}{Problem C}.

The main idea of Arthur's approach is to express a  symplectic or
special orthogonal group $G$ as
a \emph{twisted endoscopic
  group} of $(\mathrm{GL}_{N}, \vartheta)$ \cite{KS}*{Section 2}.  In this pair
$\vartheta$ is the outer automorphism of $\mathrm{GL}_{N}$ of order
two defined by
\nomenclature{$\vartheta$}{outer automorphism of $\mathrm{GL}_{N}$}
\begin{equation}
  \label{varthetadef}
  \vartheta(g) = \tilde{J} \, (g^{-1})^{\intercal} \, \tilde{J}^{-1}, \quad
  g \in \mathrm{GL}_{N},
\end{equation}
where $\tilde{J}$ is the anti-diagonal matrix
\nomenclature{$\tilde{J}$}{anti-diagonal matrix}
\begin{equation*}
\label{tildej}
\tilde{J} = \scriptsize \begin{bmatrix}0 & & & 1\\
  & &-1 & \\
  & \iddots & & \\
  (-1)^{N-1} & & & 0\end{bmatrix}\normalsize .
  \end{equation*}
The semidirect product $\mathrm{GL}_{N} \rtimes \langle \vartheta
\rangle$ is a disconnected algebraic group with non-identity component
$\mathrm{GL}_{N} \rtimes \vartheta$. The group $G$ is attached to the
pair $(\mathrm{GL}_{N}, \vartheta)$ through the existence of an
element $s \vartheta \in
\mathrm{GL}_{N} \rtimes \vartheta$ for which  ${^\vee}G$ is the
identity component of the fixed-point set $({^\vee}\mathrm{GL}_{N})^{s
  \vartheta}$.  Furthermore, there is a natural inclusion
\begin{equation}
\label{preepsilon}
\epsilon : {^\vee}G^{\Gamma} \hookrightarrow
{^\vee}\mathrm{GL}_{N}^{\Gamma}
\end{equation}
which allows us to define the A-parameter
\begin{equation}
\label{prepsitilde}
\psi = \epsilon \circ \psi_{G}
\end{equation}
for $\mathrm{GL}_{N}$ using (\ref{aparameter}).
\nomenclature{$\psi$}{A-parameter for $\mathrm{GL}_{N}$}

As in \hyperlink{probd}{Problem D} there are theorems on
\emph{twisted} endoscopic
lifting for tempered representations (\cite{Shelstad12}, \cite{Mezo},
\cite{Mezo2}).  One may therefore extend \hyperlink{probd}{Problem D} to
\begin{description}
\item[\hypertarget{probd'}{Problem D}$'$] Prove that the stable distributions $\eta_{\psi_{G}}$ satisfy
    analogues of both standard and twisted endoscopic lifting for tempered
    representations.
\end{description}
The solution to Problem D$'$ in the twisted setting above opens a
path towards defining $\eta_{\psi_{G}}$.  We may take for granted the
existence of an irreducible character $\pi_{\psi}$ of
$\mathrm{GL}_{N}(\mathbb{R})$  (\cite{Arthur}*{\emph{p.} 64}) such
that $\pi_{\psi}$ solves Problems A-E, \emph{i.e.}
$$\eta_{\psi} = \pi_{\psi}.$$

Now suppose $\pipsitilde$ is an extension of $\pi_\psi$ to
$\GL_N(\R)\rtimes \langle \vartheta \rangle$.  Let
$\Tr_{\vartheta}(\pipsitilde)$ be
the {\it twisted trace} of $\pipsitilde$, which is obtained by
restricting the distribution character of $\pipsitilde$ to the
non-identity component  $\GL_N\rtimes\vartheta$.
The extension $\pipsitilde$ is not unique; we choose it following
\cite{Arthur}*{\emph{pp.} 62-63} by fixing a Whittaker datum.
Towards a solution to \hyperlink{proba}{Problem A}, Arthur defines a
stable virtual character $\etaA_{\psi_G}$ using a
twisted endoscopic transfer identity
\begin{equation}
  \label{spectrans}
  \Tr_{\vartheta}(\pipsitilde)=\Trans_G^{\GL_N \rtimes \vartheta} ( \etaA_{\psi_{G}}).
\nomenclature{$\Trans_G^{\GL_N \rtimes \vartheta}$}{ endoscopic transfer map}
\end{equation}
The endoscopic transfer map $\Trans_{G}^{\GL_N \rtimes \vartheta}$ is
defined on the space of stable virtual characters of $G(\mathbb{R})$,
and $ \etaA_{\psi_{G}}$ is fixed under the action of any outer automorphisms.
This characterizes $\etaA_{\psi_G}$ uniquely,
and $\Pi_{\psi_G}^{\mathrm{Ar}}$ is defined to be the irreducible representations
occurring in $\etaA_{\psi_G}$.
Arthur proves the existence of $\etaA_{\psi_G}$ satisfying
\eqref{spectrans} using the solution to
\hyperlink{probd'}{Problem D}$'$ in the tempered setting.

Adams, Barbasch and Vogan use completely different methods to study
Problems A-E.  They first construct a pairing between characters and
equivariant sheaves. They then apply techniques from microlocal
geometry to these sheaves and use the pairing to transfer these back
to the world of virtual characters.  An outline of their methods is
given in the introduction to \cite{ABV}. Here we summarize the main
ideas, specialized to the case of quasisplit classical groups.

Adams, Barbasch
and Vogan introduce a complex variety $\X$
equipped
with a ${^\vee}G$-action \cite{ABV}*{Section 6}, so that the ${^\vee}G$-orbits are in
bijection with the equivalence classes of L-parameters.  The advantage
to working with orbits of $X({^\vee}G^{\Gamma})$ lies in the
additional topological structure.  The orbits provide a stratification
of $X({^\vee}G^{\Gamma})$ which naturally leads to the notions of local
systems and constructible sheaves.
We define a  {\it complete geometric parameter}  to be a pair
\begin{equation*}
\label{cgp}
\xi = (S, \mathcal{V})
\end{equation*}
consisting of an orbit $S \subset
X({^\vee}G^{\Gamma})$, together with a $\chG$-equivariant local system
$\mathcal{V}$ on $S$
(\cite{ABV}*{Definition 7.6}).
The set of complete geometric parameters is denoted by $\Gparameters$.
This definition ignores more general local systems \cite{ABV}, which are
equivariant for an algebraic cover of ${^\vee}G$. These aren't needed here, and this simplifies the discussion.
By \cite{ABV}*{Theorem 10.11} there is  a canonical bijection

\begin{equation}
\label{Gparameters}
  \Xi({^\vee}G^{\Gamma})\longleftrightarrow\Pi(G/\R)
\end{equation}
The set on the right is the set of (equivalence classes of)
irreducible representations of
certain real forms of $G$, including a fixed quasisplit form.
We write bijection (\ref{Gparameters}) as
$$
\xi\mapsto \pi(\xi).
$$
Each irreducible representation $\pi(\xi)$ is the unique irreducible quotient
of a standard representation $M(\xi)$, so we also have a bijection
$$
\xi\mapsto M(\xi)
$$
between $\Gparameters$ and a set of standard representations.
 Let $K\Pi(G/\R)$ be the Grothendieck group of
 the admissible representations of the real forms
 of $G$ appearing on the right of (\ref{Gparameters}).
This Grothendieck group has two bases, namely $\{\pi(\xi)\}$ and $\{M(\xi)\}$ for $\xi\in \Xi(^{\vee}G^{\Gamma})$.

There is a parallel construction  in terms of  sheaves for the dual
group ${^\vee}G$. Suppose
$\xi\in\Gparameters$. The local system of this complete geometric
parameter is a $\chG$-equivariant sheaf on $S$.
Applying the functors of extension by zero to the closure of $S$, and then taking the direct image gives an
irreducible $\chG$-equivariant constructible sheaf $\mu(\xi)$ on
$X(\ch G^{\Gamma})$. This defines a bijection
$$
\xi\mapsto \mu(\xi)
$$
between complete geometric parameters and irreducible
$\chG$-equivariant constructible sheaves.
Alternatively, one may apply the functors of intermediate extension
and direct image. This
defines an irreducible ${^\vee}G$-equivariant perverse sheaf
$P(\xi)$, and a bijection
$$
\xi\mapsto P(\xi)
$$
between complete geometric parameters and irreducible
$\chG$-equivariant perverse sheaves.
The Grothendieck groups of the categories of ${^\vee}G$-equivariant
constructible and perverse
sheaves are isomorphic (\cite{ABV}*{Lemma 7.8}, \cite{bbd}).  We
identify the  two Grothendieck groups and write them as
$K X(\ch G^{\Gamma})$.  The sets $\{\mu(\xi)\}$ and $\{ P(\xi)\}$ for $\xi \in
\Xi({^\vee}G^{\Gamma})$ each form a basis of $K X(\chG^{\Gamma})$.

We now define a pairing
\begin{equation}
\label{prepair}
\langle \cdot, \cdot \rangle_{G} : K \Pi(G/\R) \times K X(\ch
G^{\Gamma}) \rightarrow
\mathbb{Z}
\end{equation}
using the bases of standard representations and constructible sheaves:
\begin{equation*}
\label{standpair}
\langle M(\xi), \mu(\xi') \rangle_{G} = e(\xi) \, \delta_{\xi, \xi'},
\quad \xi,\xi' \in \Xi(\ch G^{\Gamma}).
\end{equation*}
Here $e(\xi)$ is the Kottwitz sign (\cite{ABV}*{Definition 15.8}), and
$\delta_{\xi, \xi'}$ is the Kronecker
delta.
It is natural to ask what the formula for  this pairing is in terms of the bases
of irreducible representations and
perverse sheaves. It is a deep fact that in these alternative bases
the pairing is
also, up to signs, given by the Kronecker delta function.  More precisely
\begin{equation*}
\label{irredpair}
\langle \pi(\xi), P(\xi') \rangle_{G} = e(\xi)\, (-1)^{d(\xi)} \,
\delta_{\xi, \xi'}, \quad \xi,\xi' \in \Gparameters
\end{equation*}
where $d(\xi)$ is the dimension of the orbit $S$ in $\xi = (S,
\mathcal{V})$ (\cite{ABV}*{Theorem 1.24}).

Using the pairing \eqref{prepair} we may regard virtual characters as
$\mathbb{Z}$-valued linear functionals on $K X({^\vee}G^{\Gamma})$. Of particular
importance are the stable virtual characters.
The theory of microlocal geometry provides a family of linear functionals
\begin{equation}
\label{mmm}
\chi^{\mathrm{mic}}_{S} : K X(\ch G^{\Gamma}) \rightarrow \Z
\end{equation}
parameterized by  the ${^\vee}G$-orbits $S \subset
X({^\vee}G^{\Gamma})$.
These microlocal multiplicity maps appear in the
theory of \emph{characteristic cycles} (\cite{ABV}*{Chapter 19},
\cite{Boreletal}), and are associated with ${^\vee}G$-equivariant
local systems on a conormal bundle over $X({^\vee}G^{\Gamma})$ (\cite{ABV}*{Section 24}, \cite{GM}).
The virtual characters associated by the pairing to these linear functionals
are stable (\cite{ABV}*{Theorems 1.29 and 1.31}).

Now we return to the  Arthur parameter $\psi_G$ given in
(\ref{aparameter}). Associated to $\psi_G$ is a Langlands parameter
$\phi_{\psi_G}\nomenclature{$\phi_{\psi_G}$}{Langlands parameter associated to $\psi_G$}$ \cite{Arthur89}*{Section 4} defined by
\begin{equation}
\label{phipsi}
\phi_{\psi_G}(w)=\psi_G \left(w,
\begin{bmatrix}
  |w|^{\frac12}&0\\0&|w|^{-\frac12}
\end{bmatrix} \right), \quad w\in W_\R.
\end{equation}
Let $S_{\psi_G}\subset X(\ch G^{\Gamma})$ be the $\ch G$-orbit of $\phi_{\psi_G}$.
We define $\eta^{\mathrm{mic}}_{\psi_G}$ to be the virtual
character associated to $\chi_{S_{\psi_{G}}}^{\mathrm{mic}}$ by the pairing. That is,
$\eta^{\mathrm{mic}}_{\psi_G}$ is  the unique virtual character
satisfying
$$\langle \eta^{\mathrm{mic}}_{\psi_{G}}, \mu \rangle_{G} = \chi_{S_{\psi_{G}}}^{\mathrm{mic}}(\mu),
\quad \mu \in KX({^\vee}G^{\Gamma}).
$$
As a distribution, the stable virtual character $\eta^{\mathrm{mic}}_{\psi_{G}}$
is supported on real forms of $G$ which include the quasisplit form
$G(\mathbb{R})$.  In this more general context,
Adams, Barbasch and Vogan show that $\eta^{\mathrm{mic}}_{\psi_{G}}$ satisfies the
conditions of Problems
B-D.   For our purposes however, it suffices to consider the restriction
\begin{equation}\label{eq:etaABVIntro}
\etaABV_{\psi_{G}} = \eta^{\mathrm{mic}}_{\psi_{G}}|_{G(\mathbb{R})}
\end{equation}
of $\eta^{\mathrm{mic}}_{\psi_{G}}$ to the quasisplit form
$G(\mathbb{R})$, and let $\Pi_{\psi_G}^{\mathrm{ABV}}$ be the set of irreducible representations
occurring in
$\etaABV_{\psi_{G}}$.

Having sketched the construction of $\etaA_{\psi_{G}}$ and
$\etaABV_{\psi_{G}}$, we come to the main result.
\begin{thm}
\label{main}
  Let $G$ be either the real split form of $\mathrm{GL}_{N}$ or a real quasisplit form of $\mathrm{Sp}_{N}$ or
  $\mathrm{SO}_{2N+1}$.
  Suppose $\psi_G$ is an Arthur parameter for $G$. Then
  $$
  \etaA_{\psi_G}= \etaABV_{\psi_G}.
  $$
  In particular
  $$
  \Pi_{\psi_G}^{\mathrm{Ar}}=\Pi_{\psi_G}^{\mathrm{ABV}}.
  $$
\end{thm}
In the case of $G=\mathrm{SO}_{2N}$, $\Pi_{\psi_G}^{\mathrm{Ar}}$ is the union
of $\Pi_{\psi_G}^{\mathrm{ABV}}$ and its twist by the outer automorphism
of $G$. 
For the precise statement
see Theorem \ref{finalthm1}.
We continue by giving an outline of the proof under the assumption that $G$ is
not a special orthogonal group of even rank.

Arthur's definition of $\etaA_{\psi_{G}}$ is given in terms of the twisted
endoscopic transfer map $\mathrm{Trans}_{G}^{\mathrm{GL}_{N} \rtimes
  \vartheta}$ appearing in (\ref{spectrans}).  The first step in the proof of Theorem \ref{main} is
to compare $\mathrm{Trans}_{G}^{\mathrm{GL}_{N} \rtimes \vartheta}$
with the  analogous twisted endoscopic lifting map
$\mathrm{Lift}_{0}$  defined in \cite{Christie-Mezo}*{Section 5}.   We
wish to prove
\begin{equation}
  \label{Lift0trans}
  \Lift_0=\Trans_G^{\GL_N \rtimes \vartheta}.
\end{equation}
The construction of the map  $\mathrm{Lift}_{0}$ follows the construction in \cite{ABV}*{Section
  26} and is given in terms of a pairing analogous
to \eqref{prepair} in
the setting of twisted characters and sheaves.
Associated to the involution $\vartheta$
is a $\mathbb{Z}$-module of {\it twisted characters}
$K\Pi(\GL_N(\R),\vartheta)$ \cite{AVParameters} . On
the dual side we have a $\mathbb{Z}$-module of
{\it twisted sheaves} $K\Xtwisted$ \cite{LV2014}.
We wish to define a pairing
\begin{equation}
\label{prepairtwist}
\langle \cdot, \cdot \rangle: K \Pi(\mathrm{GL}_{N}(\mathbb{R}),
\vartheta) \times KX
({^\vee}\mathrm{GL}_{N}^{\Gamma}, \vartheta ) \rightarrow \mathbb{Z}.
\end{equation}
One of the technical difficulties in defining this pairing lies in
making canonical choices of extensions.
 Suppose
$\xi\in\GLparameters$ (see \eqref{Gparameters}), with associated
standard representation $M(\xi)$. If $M(\xi)$ is fixed by $\vartheta$ then it is induced from an irreducible $\vartheta$-fixed representation which
extends in two ways to an irreducible representation of a disconnected Levi subgroup.  Inducing these two extended irreducible representations one obtains two extensions of $M(\xi)$ to
$\GL_N(\R)\rtimes \langle \vartheta \rangle$.
Each of the two resulting representations restricts to
$\mathrm{GL}_{N}(\mathbb{R}) \rtimes \vartheta$ to give a twisted
character. The $\mathbb{Z}$-module $K\Pi(\GL_N(\R),\vartheta)$ is defined so
that if $M(\xi)^\pm$
are the two extensions, then  $M(\xi)^-=-M(\xi)^+$ in
$K\Pi(\GL_N(\R),\vartheta)$.

The literature offers two techniques to choose a preferred extension of $M(\xi)$.  As mentioned earlier, Arthur uses
Whittaker data to fix a preferred extension which we
denote $M(\xi)^{\thicksim}$ and call the \emph{Whittaker extension} (\cite{Arthur}*{\emph{pp.} 62-63}). On the
other hand \cite{AVParameters}*{(20), Section 5}
gives an extension which we label $M(\xi)^{+}$ and call the
\emph{Atlas extension}.  The techniques differ in the type of induction employed.  By taking quotients, we obtain extensions $\pi(\xi)^{\thicksim}$ and $\pi(\xi)^{+}$ of $\pi(\xi)$.

One may also show that the $\chGL_N$-equivariant constructible sheaf $\mu(\xi)$
extends in two ways
to a $(\chGL_N\rtimes\langle\vartheta\rangle)$-equivariant constructible sheaf
on $\XGL$.  The  two extensions $\mu(\xi)^\pm$ again differ by sign in
$KX(\chGL_N^\Gamma,\vartheta)$.
In order to choose a preferred extension, we use a special property of
irreducible ${^\vee}\mathrm{GL}_{N}$-equivariant sheaves, namely that
they are all constant sheaves.  We define $\mu(\xi)^{+}$ to be the
irreducible $({^\vee}\mathrm{GL}_{N} \rtimes \langle \vartheta
\rangle)$-equivariant constant sheaf on
$X({^\vee}\mathrm{GL}_{N}^{\Gamma})$.  Similar considerations yield a preferred extension $P(\xi)^{+}$ of the perverse sheaf $P(\xi)$.

Having chosen the extensions we define pairing (\ref{prepairtwist}) by
\begin{equation}
\label{pairing1}
\langle M(\xi)^{\thicksim},\mu(\xi')^+\rangle=\delta_{\xi,\xi'}.
\end{equation}

The endoscopic lifting map $\mathrm{Lift}_{0}$ is defined using the 
pairings (\ref{prepair}), (\ref{prepairtwist}) and the
 map $\epsilon$ (see \eqref{preepsilon}) as follows.  The map
 $\epsilon$ naturally induces a map
$$
X(\chG^{\Gamma})\rightarrow X(\chGL_N^\Gamma).
$$
The usual inverse image functor on constructible sheaves then induces
a homomorphism
$$
\epsilon^*:K_{\C}X(\chGL_N^\Gamma,\vartheta)\rightarrow K_{\C}X(\chG^{\Gamma})
$$
on the complexifications of the $\mathbb{Z}$-modules.
The adjoint of $\epsilon^{*}$ with respect to the pairings is the homomorphism
$$
\epsilon_{*}:  K_{\C}\Pi(G/\R) \rightarrow K_{\C}\Pi(\mathrm{GL}_N(\R),\vartheta)
$$
defined by
$$
\langle \epsilon_{*}(\eta),\mu\rangle = \langle
\eta,\epsilon^*(\mu)\rangle_G, \quad \eta \in  K_{\C}\Pi(G/\R), \quad \mu \in
K_{\C}X(\chGL_N^\Gamma,\vartheta).
$$
Here, the pairings on the left and right are \eqref{prepairtwist} and
\eqref{prepair}, respectively.
Finally, the endoscopic lifting map
$$
\Lift_0:K_\C\Pi(G(\R))^{\mathrm{st}} \rightarrow K_\C\Pi(\mathrm{GL}_N(\R),\vartheta)
$$
is defined to be the restriction of $\epsilon_{*}$ to the stable
subspace of $ K_{\C}\Pi(G(\R))$, the complex virtual characters of $G(\mathbb{R})$.
That is,  if $\eta \in K_{\C}\Pi(G/ \mathbb{R})$ is stable then
$\Lift_0(\eta)$ is defined by
\begin{equation}
  \label{Lift0}
\langle \Lift_0(\eta),\mu\rangle = \langle \eta,\epsilon^*(\mu)\rangle_G
\end{equation}
for all $\mu\in KX(\chGL_N^\Gamma,\vartheta)$.

Now that $\Lift_0$ is  defined, we may proceed to check the equality
\eqref{Lift0trans}.
Fix a $\chG$-orbit $S_G\subset X$. The local multiplicity function
taking a constructible sheaf to the dimension of a stalk at a point in
$S_G$ is a linear functional on $KX(\ch G^{\Gamma})$. By the pairing
\eqref{prepair} this defines an element of
$K\Pi(G/\R)$. This is a stable virtual character denoted by
$\etaloc_{S_{G}}$.  It is the sum of
the standard representations in what is sometimes called  a \emph{pseudopacket}.

Let $S \subset \XGL$ be the ${^\vee}\mathrm{GL}_{N}$-orbit containing
$\epsilon(S_G)$, and let $M(S,1)$ be the standard representation
defined by the trivial local system on $S$.  By Proposition \ref{twistimlift}

\begin{subequations}
  \renewcommand{\theequation}{\theparentequation)(\alph{equation}}
  \label{compare}
\begin{equation}
  \Lift_0(\etaloc_{S_G})=(-1)^{\ell^I(S,1)-\ell^I_\vartheta(S,1)} \  \mathrm{Tr}_{\vartheta} (M(S,1)^{+})
\end{equation}
The terms in the exponent are defined in Section \ref{pairings} and we use $\mathrm{Tr}_{\vartheta}$ to denote the twisted character as in (\ref{spectrans}).\footnote{We drop the notation $\mathrm{Tr}_{\vartheta}$ in the body of the paper.}
On the other hand Arthur defines a stable character $\eta'_{S_G}$ by
\begin{equation}
\Trans_G^{\GL_N\rtimes\vartheta}(\eta'_{S_G})= \mathrm{Tr}_{\vartheta}(M(S,1)^{\thicksim}).
\end{equation}
According to \cite{AMR1} $\eta'_{S_G}=\etaloc_{S_G}$.
The two extensions of $M(S,1)$ are related by

\begin{equation}
\mathrm{Tr}_{\vartheta} (M(S,1)^{\thicksim}) =(-1)^{\ell^I(S,1)-\ell^I_\vartheta(S,1)} \  \mathrm{Tr}_{\vartheta}( M(S,1)^{+})
\end{equation}
\end{subequations}
(see  Proposition \ref{wasign2}). Taken together, (\ref{compare}(a-c))
give
\begin{equation}
\label{abvpseudo}
\mathrm{Lift}_{0}\left( \eta^{\mathrm{loc}}_{S_{G}}\right)  = \mathrm{Tr}_{\vartheta}( M(S,1)^{\thicksim}) = \mathrm{Trans}_{G}^{\mathrm{GL}_{N} \rtimes \vartheta}( \eta^{\mathrm{loc}}_{S_{G}} ).
\end{equation}
Identity \eqref{Lift0trans} follows from the fact that the $\eta^{\mathrm{loc}}_{S_{G}}$ form a basis of the stable virtual characters.

Going back to \eqref{spectrans}, and using \eqref{Lift0trans}  we see
$\eta_{\psi_G}^{\mathrm{Ar}}$ is determined by
$$
\Lift_0(\eta_{\psi_G}^{\mathrm{Ar}})= \mathrm{Tr}_{\vartheta}(\pi_{\psi}^{\thicksim}).
$$
Therefore to prove Theorem \ref{main} it is enough to show
$$
\Lift_0(\eta_{\psi_G}^{\mathrm{ABV}})= \mathrm{Tr}_{\vartheta} (\pi_{\psi}^{\thicksim}).
$$
According to (\ref{Lift0}), this identity is equivalent to
\begin{equation}
  \label{abvpitilde}
\langle \pi_{\psi}^{\thicksim}, P(\xi')^{+}\rangle = \langle
\eta_{\psi_G}^{\mathrm{ABV}}, \epsilon^{*}(P(\xi')^{+}) \rangle_{G}.
\end{equation}
%
Recall from \eqref{pairing1} that the pairing on the left-hand side is
defined in terms of
standard representations and constructible sheaves.
However, on the left of (\ref{abvpitilde}) we require a corresponding
formula in terms of irreducible representations and
perverse sheaves.  It turns out that the exact formula required is
\begin{equation}
  \label{twist1.24}
\langle \pi(\xi)^{+},P(\xi')^{+} \rangle =
(-1)^{\ell^I(\xi)-\ell^I_\vartheta(\xi)} \delta_{\xi,\xi'}.
\end{equation}
We prove this formula from an identity involving the twisted Kazhdan-Lusztig-Vogan
polynomials.  We are in the setting of \cite{LV2014} and \cite{AVParameters}, so we have all of the tools of the Hecke algebra available.
The proof of (\ref{twist1.24}) is carried out in Section \ref{pairings}, and Theorem \ref{main} then follows.

We now provide further details by running
through the remaining sections in sequence.
Section \ref{llc} begins with an outline of the local Langlands
correspondence appearing in \cite{ABV}.  One of the features in this
correspondence is the parameterization of inner forms of
$G(\mathbb{R})$ using \emph{strong involutions}, and the subsequent
inclusion of \emph{representations of strong involutions} in the
correspondence.  Unlike the overview above, we shall be keeping track
of the infinitesimal characters of these representations.  As a
result, the variety $X({^\vee}G^{\Gamma})$ in the overview is replaced
by $X(\O,{^\vee}G^{\Gamma})$, where $\O$ is an
infinitesimal character.  We assume all infinitesimal characters to be
regular until Section \ref{equalapacketsing}.  Another important theme
of Section \ref{llc} is the equivalence of complete geometric
parameters with
the  parameters used in \cite{AVParameters}, which
we refer to as {\it Atlas} parameters.
Atlas parameters are indispensable
in defining the Atlas extensions, and in the ensuing Hecke operator
computations of Section \ref{pairings}.  For these reasons
it is important to make the connection between the approach in \cite{ABV} and \cite{AVParameters} precise in the case of $\mathrm{GL}_N(\R)$. 
The section closes with a
discussion on twisted characters, and the $\mathbb{Z}$-module $K
\Pi(\O, \mathrm{GL}_{N}(\mathbb{R}), \vartheta)$ which
contains them.

Section \ref{sheaves} is devoted to ${^\vee}G$-equivariant sheaves,
and their relationship with $\mathcal{D}$-modules and characteristic cycles.   We
recall a category of sheaves extended by an automorphism $\upsigma$ of
${^\vee}\mathrm{GL}_{N}$ (\cite{ABV}*{(25.7)}).  The automorphism is of
the form
$$\upsigma = \mathrm{Int}(s) \circ \vartheta$$
where $s \in {^\vee}\mathrm{GL}_{N}$.  The element $s$ plays no
meaningful role in this section, but becomes important in the theory
of endoscopy (Section \ref{endosec}).  The category of extended
sheaves is the counterpart to the category of representations on
$\mathrm{GL}_{N} (\mathbb{R}) \rtimes \langle \vartheta \rangle$.  We
define the canonical extended sheaves $\mu(\xi)^{+}$ and $P(\xi)^{+}$
in Lemma \ref{cansheaf}.
The twisted characters that one obtains from representations of
$\mathrm{GL}_{N}(\mathbb{R}) \rtimes \langle \vartheta \rangle$ find
their counterpart as microlocal traces (\cite{ABV}*{(25.1)}) which are
supported on extensions of irreducible sheaves.  The
$\mathbb{Z}$-module counterpart to  $K \Pi(\O,
\mathrm{GL}_{N}(\mathbb{R}), \vartheta)$ is defined in
(\ref{twistsheafgroth}) and is denoted by  $K (X(\O,
       {^\vee}\mathrm{GL}^{\Gamma}), \upsigma)$.   The pairings (\ref{prepair}) and (\ref{prepairtwist}) are also defined in this section.

 We provide a terse summary of $\mathcal{D}$-modules and their
       relationship to equivariant sheaves, characteristic cycles, the
       microlocal multiplicity maps (\ref{mmm}), and the definition of
       $\eta_{\psi_{G}}^{\mathrm{ABV}}$.   The set of irreducible characters
in the support of $\eta_{\psi_{G}}^{\mathrm{ABV}}$ is
       denoted by $\Pi_{\psi_{G}}^{\mathrm{ABV}}$ and is called the
       \emph{ABV-packet} of $\psi_{G}$.

The main objective of Section \ref{pairings} is to prove the
equivalence of the twisted pairings (\ref{prepairtwist}) and
(\ref{twist1.24}).  Our proof is an adaptation of the proof of the
equivalence for ordinary pairings (\cite{ABV}*{Sections 16-17}) using the tools of
\cite{AVParameters}.  As noted earlier, Hecke operators are among
these tools.  A conspicuous difference between \cite{ABV} and
\cite{AVParameters} is in the objects upon which Hecke operators act.
In \cite{ABV} Hecke operators are defined on both characters and
sheaves.  By contrast, the Hecke operators of \cite{AVParameters}*{Section 7}
are defined only on (twisted) characters.   The links between
characters and sheaves in the Hecke actions are the Riemann-Hilbert and  Beilinson-Bernstein
correspondences (\cite{ABV}*{Theorems 7.9 and 8.3}).  In Sections \ref{bbvd} and \ref{duality} we
describe these correspondences as a bijection
$$P(\xi) \longleftrightarrow \pi( {^\vee}\xi), \quad \xi \in
\Xi(\O, {^\vee}G^{\Gamma}),$$
where $\pi({^\vee}\xi)$ is the \emph{Vogan dual} of $\pi(\xi)$ (as the
equivalence class of a Harish-Chandra module) (6.1
\cite{AVParameters}).  For $G = \mathrm{GL}_{N}$ the correspondence is
extended to
$$P(\xi)^{+} \longleftrightarrow \pi({^\vee}\xi)^{+}$$
for $\vartheta$-fixed complete geometric parameters $\xi$.  Once
sheaves are aligned with characters in this manner, the rest of the
proof of the equivalence of the twisted pairings follows \cite{ABV}
without incident.

Subsection \ref{tKLV} is included in Section \ref{pairings} only
because it uses the same machinery.  This subsection presents an
argument from the twisted Kazhdan-Lusztig-Vogan algorithm
(\cite{LV2014}, \cite{adamsnotes}) which is crucial to the comparison
of Whittaker and Atlas extensions  in Section
\ref{whitsec}.

In Section \ref{endosec} we describe the theory of endoscopy, both
standard and twisted, for $\mathrm{GL}_{N}$ using the framework of
\cite{ABV}.  The standard theory of endoscopy  in Subsection
\ref{standend} is simply a specialization of \cite{ABV}*{Section 26} to $G =
\mathrm{GL}_{N}$.  It is included primarily to motivate the twisted
theory, but is also used in Proposition \ref{prop:singletonGLN}
further on.  The twisted theory of endoscopy in Subsection
\ref{twistendsec} is a specialization of \cite{Christie-Mezo}*{Section 5.4} to
$\mathrm{GL}_{N}$.  In this subsection the
${^\vee}\mathrm{GL}_{N}$-equivariant sheaves of $K (X(\O,
    {^\vee}\mathrm{GL}^{\Gamma}), \upsigma)$ are recast as
    $({^\vee}\mathrm{GL}_{N} \rtimes \langle \upsigma
    \rangle)$-equivariant sheaves.  The endoscopic lifting map takes
    the form
$$\mathrm{Lift}_{0} : K_{\mathbb{C}}\Pi(\O_{G}, G(\mathbb{R}))^{\mathrm{st}} \rightarrow K_{\mathbb{C}} \Pi(\O,
    \mathrm{GL}_{N}(\mathbb{R}), \vartheta).$$
The precursor to (\ref{abvpseudo}) is Proposition \ref{twistimlift},
where the Atlas extension is used instead of the Whittaker extension.
The endoscopic lifting $\mathrm{Lift}_{0}(\eta_{\psi_{G}}^{\mathrm{ABV}})$ is
described as an element $\eta_{\psi}^{\mathrm{ABV}+} \in K
\Pi(\O, \mathrm{GL}_{N}(\mathbb{R}), \vartheta)$, which
reduces to $\eta_{\psi}^{\mathrm{ABV}}$ when restricted to
$\mathrm{GL}_{N}(\mathbb{R})$ (Theorem \ref{thm:etaplus1}).  The
endoscopic lifting map is proved to be injective for
$\mathrm{GL}_{N}$-regular infinitesimal character $\O_{G}$.

In Section \ref{glnpacket}  we prove that for \emph{any} A-parameter
$\psi$ of $\mathrm{GL}_{N}$ (not necessarily of the form
(\ref{prepsitilde})), there is only one irreducible character in the
support of $\eta_{\psi}^{\mathrm{ABV}}$.  This implies that
$$\Pi_{\psi}^{\mathrm{ABV}} = \{ \pi_{\psi} \} = \Pi_{\psi}^{\mathrm{Ar}}.$$
It also implies that
$\eta_{\psi}^{\mathrm{ABV}+}$ is supported on a single irreducible
twisted character obtained by extension from $\pi_{\psi}$.
The proof begins under the assumption that $\psi$ is an
A-parameter studied by Adams and Johnson (\cite{Adams-Johnson}).
Adams and Johnson defined packets for these parameters, and it is
easily shown that their packets are singletons for $\mathrm{GL}_{N}$.
The anticipated equality of the Adams-Johnson packets with the
ABV-packets is proven in \cite{arancibia_characteristic}.  The proof that
ABV-packets are singletons for arbitrary A-parameters $\psi$
of $\mathrm{GL}_{N}$ follows from a decomposition of $\psi$ in
terms of Adams-Johnson A-parameters of smaller general linear groups,
and an application of all standard endoscopic lifting  from the direct
product of these smaller general linear groups (Proposition
\ref{prop:singletonGLN}).

The purpose of Section \ref{whitsec} is the proof of Equation
(\ref{compare}(c)).  This equation gives a precise relationship between
the Whittaker and Atlas extensions of $\pi(\xi)$ in terms of the
\emph{integral lengths} $l^{I}(\xi)$ and $l^{I}_{\vartheta}(\xi)$
(\cite{ABV}*{(16.6)}, (\ref{intlength}), (\ref{thetalength})).  This
identity is peculiar in that a Whittaker extension is inherently an
analytic object, whereas an Atlas extension is inherently algebraic.
When $\pi(\xi)$ is the Langlands quotient of a (standard) principal
series representation, the difference between the two extensions may
be attributed to differences in the extensions of quasicharacters of
the diagonal subgroup $H$.  This reduction for principal series
furnishes an easy proof of (\ref{compare}(c)) (Lemma \ref{prinsame}).

In some sense (see the proof of Proposition \ref{conjq}), $\pi(\xi)$
is furthest from a Langlands quotient of principal series when
$\pi(\xi)$ is \emph{generic}, \emph{i.e.} has a Whittaker model.  The
proof of (\ref{compare}(c)) for generic representations is the key to the
general proof, in that irreducible generic representations occur as
subrepresentations of  standard representations (Lemma
\ref{uniquegeneric}), and determine the Whittaker extensions of
standard representations.   If one knows the (signed) multiplicity
with which an irreducible twisted generic character $\pi(\xi_{0})^{+}$
appears in the decomposition of a twisted standard principal series
representation $M(\xi)^{+}$, then one can use the knowledge of
(\ref{compare}(c)) for $\pi(\xi)$ to prove (\ref{compare}(c)) for
$\pi(\xi_{0})$.  This desired multiplicity is computed in Proposition
\ref{conjq}, and the proof of (\ref{compare}(c)) for generic
$\pi(\xi_{0})$ occurring in the standard principal representation $M(\xi)$ is
Proposition \ref{wasign1}.

It is implicit in the previous paragraph that the parameters and
representations are all $\vartheta$-stable.  However not every
$\vartheta$-stable generic representation $\pi(\xi_{0})$ is a
subrepresentation of a $\vartheta$-stable principal series
representation.  Therefore, the strategy of the previous paragraph does
not provide an exhaustive proof of (\ref{compare}(c)).  Most of Section
\ref{whitsec} is dedicated to the description of a $\vartheta$-stable standard
representation which plays the part of the principal series
representation.  In Lemma \ref{biglem}  we prove that every
$\vartheta$-stable generic
representation $\pi(\xi_{0})$ which has \emph{integral infinitesimal
  character} is a subrepresentation of a $\vartheta$-stable standard
representation satisfying (\ref{compare}(c)).  We remove the restriction
of integrality on the infinitesimal character in Lemma \ref{biglem1}.
We then follow the strategy of the previous paragraph using the
$\vartheta$-stable standard representation of Lemma \ref{biglem1} to prove
(\ref{compare}(c)) in general.  This is the last result needed to apply the
twisted pairing (\ref{twist1.24}) to the computation of (\ref{abvpitilde}).

The theorems comparing $\etaA_{\psi_{G}}$ with $\eta_{\psi_{G}}^{\mathrm{ABV}}$
are  found in Sections \ref{equalapacketreg} and
\ref{equalapacketsing}.  Section \ref{equalapacketreg} is presented
under the assumption that the infinitesimal character
$\O_{G}$ is regular in $\mathrm{GL}_{N}$.  This regularity
condition is removed in Section \ref{equalapacketsing} by applying
the \emph{Jantzen-Zuckerman translation principle} to the (twisted) characters and to
the pairings.  The main theorem, Theorem \ref{finalthm1}, states that
\begin{equation}
  \label{main1}
\etaA_{\psi_{G}} = \eta_{\psi_{G}}^{\mathrm{ABV}} \mbox{ and }
\Pi_{\psi_{G}}^{\mathrm{Ar}} = \Pi_{\psi_{G}}^{\mathrm{ABV}}
\end{equation}
when $G$ is not isomorphic to $\mathrm{SO}_{N}$
for even $N$, and that
\begin{equation}
  \label{main2}
\etaA_{\psi_{G}} = \frac{1}{2} \left(\eta_{\psi_{G}}^{\mathrm{ABV}} +
  \eta^{\mathrm{ABV}}_{\mathrm{Int}(\tilde{w}) \circ \psi_{G}} \right) \mbox{ and
  }\Pi_{\psi_{G}}^{\mathrm{Ar}} = \Pi_{\psi_{G}}^{\mathrm{ABV}} \cup \,
  \Pi_{\mathrm{Int}(\tilde{w})   \circ   \psi_{G}}^{\mathrm{ABV}}
\end{equation}
when $G \cong \mathrm{SO}_{N}$ with $N$ even.

In light of this theorem, we again look back to Problems B-E in Section
\ref{btoe}.  Problem E is solved immediately in the affirmative.  Problem C
has a partial solution in \cite{Arthur} and a complete solution in \cite{ABV}.  When $G$ is an odd rank special orthogonal group the solutions are easier to write and compare.  In this context the solutions in \cite{Arthur} and \cite{ABV}  take the shape
$$\eta_{\psi_{G}}^{\mathrm{Ar}} = \sum_{\pi \in \Pi_{\psi_{G}}^{\mathrm{Ar}}} \mathrm{Tr}
\left(\tau_{\psi_{G}}(\pi)(s_{\psi_{G}})\right) \pi$$
(\emph{cf.} (\ref{arthur4.1})), and
$$\eta_{\psi_{G}}^{\mathrm{ABV}} = \sum_{\pi \in \Pi_{\psi_{G}}^{\mathrm{ABV}}}
(-1)^{d(\pi) - d(S_{\psi_{G}})}
\ \dim\left(\tau^{\mathrm{ABV}}_{\psi_{G}}(\pi) \right) \, \pi$$
respectively.  The right-hand sides of the two equations are equal by (\ref{main1}).  By the linear independence of characters on $G(\mathbb{R})$, the coefficients of the right-hand sides are also equal.  One would therefore expect that
\begin{equation}
\label{sametaus}
\tau_{\psi_{G}}(\pi) = \tau^{\mathrm{ABV}}_{\psi_{G}}(\pi) \mbox{ and } \epsilon_{\pi}= (-1)^{d(\pi) - d(S_{\psi_{G}})} = \tau_{\psi_{G}}(\pi)(s_{\psi}).
\end{equation}
These identities are indeed shown to be true through a comparison of Problem D in Section \ref{btoe}.

It follows immediately from (\ref{sametaus}) that each irreducible subrepresentation of $\tau_{\psi_{G}}(\pi)$ has the common value $\epsilon_{\pi}$ on $s_{\psi_{G}}$.   This is related to a larger unsolved problem, which is to determine whether the representations $\tau_{\psi_{G}}(\pi)$ are irreducible or not.  This irreducibility problem has been explored in \cite{MR1}, \cite{MR3}, \cite{MR4}.  It was also explored in \cite{Moeglin-11} for various $p$-adic groups, where the representations have been shown to be irreducible.  Theorem \ref{finalthm1} opens up the possibility of using techniques from microlocal geometry to settle this problem. 

Our work also connects with the study of Adams-Johnson packets  (\cite{Adams-Johnson}).  These packets have been proven to equal Arthur's packets in \cite{AMR}.
Moreover, as shown in \cite{arancibia_characteristic}, they are precisely the ABV-packets with regular and integral infinitesimal character.

A natural question for future consideration is how the packets for quasisplit unitary groups, established by Mok \cite{Mok}, compare with the microlocal packets of \cite{ABV}.  The methods developed here appear to be equally applicable to the setting of quasisplit unitary groups.  Furthermore, the context of pure inner forms in which we work, ought also to allow for easy comparison with the related work of \cite{Kaletha-Minguez} and \cite{Arthur}*{Chapter 9}.

Another natural question is whether similar comparisons between $p$-adic Arthur-packets can be made.  In the $p$-adic context $\eta_{\psi_{G}}^{\mathrm{Ar}}$ is also defined in \cite{Arthur}.  The beginnings of $\etaABV_{\psi_{G}}$ in the $p$-adic context are to be found in \cite{vogan_local_langlands} and \cite{cliftonetal}.  Low rank comparisons between the two stable distributions are  made in \cite{cliftonetal}*{Part 2}.

The second and third authors would like to thank the developers of the Atlas of Lie groups software.  It was a pleasure to see our early conjectures borne out by low rank computations.

\section{The local Langlands correspondence}
\label{llc}

This section begins with a review of the local Langlands correspondence as conceived in \cite{ABV}. An important feature of this version of the correspondence is the notion of  \emph{strong real forms} and their representations.  More recently, strong real forms have been supplanted by the equivalent notion of \emph{strong involutions} (\cite{Adams-Fokko}).  We have chosen to use the language of strong involutions in our review.

Another difference in our review is in limiting ourselves to only \emph{pure} strong involutions.  In doing so, we limit the scope of \cite{ABV} to fewer real forms of $G$.  This limitation is compensated for by not having to introduce covers of the dual group ${^\vee}G$.  We still capture all of the information needed for the quasisplit form of $G$, while preserving a sense of how the theory applies to other real forms.

The objects parameterizing irreducible representations in \cite{ABV} have also been supplanted by newer parameters in \cite{Adams-Fokko}  and \cite{AVParameters}.  We call these newer parameters \emph{Atlas parameters}.  The advantages to Atlas parameters are their amenability to Vogan duality and Hecke algebra computations.  These advantages are used in Section \ref{pairings}.  Another advantage to using Atlas parameters is in defining canonical extensions of representations of $\mathrm{GL}_{N}(\mathbb{R})$ to representations of $\mathrm{GL}_{N}(\mathbb{R}) \rtimes \langle \vartheta \rangle$.  We call these canonical extensions \emph{Atlas extensions}.

We conclude this section with a discussion of the Grothendieck groups of representations for connected groups $G$ and for the disconnected group $\mathrm{GL}_{N} \rtimes \langle \vartheta \rangle$.  In the connected case the Grothendieck group is isomorphic to the $\mathbb{Z}$-span of distribution characters.  In the disconnected case we construct a quotient of the Grothendieck group which will be seen to be isomorphic to the $\mathbb{Z}$-span of twisted distribution characters.

\subsection{Extended groups}
\label{extgroups}

In this section $G$ can be an arbitrary  connected reductive
complex group.
We give a version of the local Langlands correspondence suitable to
our application. We largely follow \cite{ABV}, with modifications from
the more recent papers \cite{Adams-Fokko}, \cite{AVParameters} and \cite{AvLTV}.

Our starting point is the connected reductive complex group $G$,
together with a pinning
\begin{equation}
  \label{pinning}
  \left(B,H, \{ X_{\alpha}\}\right)
\nomenclature{$\left(B,H, \{ X_{\alpha}\}\right)$}{pinning}
 \end{equation}
in which $B$ is a Borel subgroup, $H \subset B$ is a maximal torus and
$\{X_{\alpha}\}$ is a set of simple root vectors relative to the positive root system
$R^{+}(G,H) = R(B,H)\nomenclature{$R^{+}(G,H)$}{system of positive roots}$ of $R(G,H)
\nomenclature{$R(G,H)$}{root system}$.  Let ${^\vee}\rho = \frac{1}{2} \sum_{\alpha \in R^{+}(G,H)} \ch \alpha
\nomenclature{$\ch\rho$}{half-sum of positive co-roots}
$.

We fix an  inner class of real forms for $G$.  The inner class
is determined by a unique algebraic involution $\delta_{0}\nomenclature{$\delta_{0}$}{algebraic involution fixing the pinning}$ of $G$
fixing the pinning (\cite{Adams-Fokko}*{Section 2}). The involution defines the
extended group
$$G^\Gamma=G\rtimes\langle\delta_{0}\rangle.
\nomenclature{$G^{\Gamma}$}{extended group}$$

A \emph{strong involution} of
$G^{\Gamma}$ is an element $\delta \in G^{\Gamma} - G\nomenclature{$\delta$}{strong involution}$ such that
$\delta^{2}$ is central in $G$ and has finite order
(\cite{AvLTV}*{Definition 12.3}).  Two strong
involutions are equivalent if they are $G$-conjugate.
There is a surjective map from (equivalence classes of) strong
involutions to (isomorphism classes of) real forms.  This map takes a strong
involution $\delta$ to the real form
$G(\mathbb{R},\delta)\nomenclature{$G(\mathbb{R},\delta)$}{real form of a strong involution}$ in the inner class with
Cartan involution
$$\theta_\delta=\mathrm{Int}(\delta).$$
This map is bijective
if $G$ is adjoint, but is not injective in general.

There is also a well-known bijection between real forms in a given inner class and
the cohomology set $H^{1}(\mathbb{R}, G/Z(G))$ (\cite{springer}*{12.3.7}).
The domain of the quotient map
\begin{equation}
\label{puremap}
H^{1}(\mathbb{R}, G) \rightarrow H^{1}(\mathbb{R}, G/Z(G))
\end{equation}
defines the set of \emph{pure inner forms} (\cite{vogan_local_langlands}*{Section 2}).   Let $\sigma \in \Gamma$ be the nontrivial element of the Galois group.  For any 1-cocycle $z \in Z^{1}(\mathbb{R}, G)$ one may define a strong involution by
$$z(\sigma) \exp(\uppi i \, \ch\rho) \delta_{0} \in G^{\Gamma}$$
 ($\exp(\uppi i \, \ch\rho)\delta_{0}$ is the \emph{large} involution
in \cite{AVParameters}*{(11f)-(11h)}).  This sends classes in
$H^{1}(\mathbb{R}, G)$ to
$G$-conjugacy classes in $G^{\Gamma}- G$.
The (equivalence classes of) \emph{pure strong involutions} are
defined as the image of this map.
The quasisplit real form is pure in the sense that the trivial
cocycle of $H^{1}(\mathbb{R}, G/Z(G))$ lies in the image of
(\ref{puremap}).
 The pure strong involution canonically
corresponding to the quasisplit pure real form is
$$\delta_{q} = \exp(\uppi i \, \ch\rho)\delta_{0}.
\nomenclature{$\delta_{q}$}{strong involution
corresponding to the quasisplit real form}
$$
Given a strong involution $\delta$ we set $K \nomenclature{$K$}{fixed-point set of a strong involution}$ to be the fixed-point
subgroup $G^{\theta_\delta}$.  The real form $G(\R, \delta)$
contains
\begin{equation}
\label{maxcompact}
K(\R)=G(\R, \delta)\cap K
\end{equation}
as a maximal compact subgroup and is determined by $K$ (\cite{AVParameters}*{(5f)-(5g)}).
By a representation of $G(\R, \delta)$ we usually mean an admissible
$(\g,K)$-module, although we will need admissible group
representations in Section \ref{whitsec}.
A \emph{representation of a strong involution} is a pair
$(\pi,\delta)\nomenclature{$(\pi,\delta)$}{representation of a strong involution}$ in which $\delta$ is a strong involution and $\pi$ is
an admissible $(\g,K)$-module. There is a natural notion of
equivalence  of strong involutions (\cite{Adams-Fokko}*{Definition 6.1}),
and we let $\Pi(G(\mathbb{R}, \delta))$ be the set of
equivalence classes of irreducible representations $(\pi',\delta')$ of strong
involutions in which $\delta'$ is equivalent to $\delta$.
Let
$$\Pi(G/\R) =  \coprod_{\delta} \Pi(G(\mathbb{R}, \delta))
\nomenclature{$\Pi(G/\R)$}{set of equivalence classes of irreducible representations}
$$
be the disjoint union over the (equivalence classes of) pure strong involutions $\delta$.

Let $\ch G$ be the Langlands dual group of $G$ together with a pinning
$$\left({^{\vee} B, {^\vee}H, \{ X_{{^\vee}\alpha}} \}\right).$$
The pinning and the involution $\delta_{0}$ fix an
involution $\ch\delta_{0}$ of $\ch G$ as prescribed in
\cite{AVParameters}*{(12)}. Following this prescription, $\ch\delta_{0}$ is
trivial if and only if $\delta_{0}$ defines the inner class of the split form
of $G$. The group
$$\LG=\ch G\rtimes\langle\ch\delta_{0}\rangle
\nomenclature{$\ch G^{\Gamma}$}{L-group of $G$}
$$
is the L-group of our inner class.

Suppose $\lambda$ is a semisimple element of the Lie algebra $\ch\g$.  After
conjugating by $\ch G$ we may assume $\lambda\in \ch\h$.  Using the
canonical isomorphism $\ch\h\simeq\h^*$ we identify $\lambda$ with an
element of $\h^*$, and hence via the Harish-Chandra homomorphism, with
an infinitesimal character for $G$. This construction only depends on
the $\ch G$-orbit of $\lambda$. We refer to a semisimple element
$\lambda\in\ch \g$,
or a $\ch G$-orbit $\O \subset  \ch\mathfrak{g}$ of semisimple elements, as an \emph{infinitesimal
character} for $G$.  Let $$\Pi(\O,G/\R)\subset \Pi(G/\R)
\nomenclature{$\Pi(\O,G/\R)$}{set of equivalence classes of representations with infinitesimal
character $\O$}
$$ be the
representations (of pure strong involutions) with infinitesimal
character $\O$.
\nomenclature{$\O$}{infinitesimal character}

Let $P\left(\LG\right)\nomenclature{$P\left(\LG\right)$}{set of quasiadmissible homomorphisms}{}$ be the set of \emph{quasiadmissible} homomorphisms
$\phi:W_\R\rightarrow\LG$ (\cite{ABV}*{Definition 5.2}).
Associated to $\phi\in P(\LG)$ is an infinitesimal character
(\cite{ABV}*{Proposition 5.6}).
Let $$P\left(\O,\LG\right)
\nomenclature{$P\left(\O,\LG\right)$}{set of quasiadmissible homomorphisms
with
infinitesimal character $\O$}
$$ be the set of quasiadmissible homomorphisms with
infinitesimal character $\O$.  The group ${^\vee}G$ acts on
$P\left(\O,\LG\right)$ by conjugation.

\subsection{The space $X(\O,\LG)$}
\label{XO}

We make frequent use of the complex variety $X\left(\O,\LG\right)\nomenclature{$X\left(\O,\LG\right)$}{variety of geometric parameters}$ of \emph{geometric parameters} (\cite{ABV}*{Definition 6.9}, \cite{vogan_local_langlands}*{Definition 6.9}).
Here we sketch a  definition based on \cite{ABV}*{Proposition 6.17} and state its main properties.

Write the Weil group $W_{\mathbb{R}}\nomenclature{$W_{\mathbb{R}}=\mathbb{C}^{\times} \coprod j\mathbb{C}^{\times}$}{real Weil group}$ as $\mathbb{C}^{\times} \coprod j\mathbb{C}^{\times}$.  Suppose $\phi\in P\left(\LG\right)$. Define $\lambda,\gamma\in \ch \mathfrak{g}$ by
\begin{subequations}
\renewcommand{\theequation}{\theparentequation)(\alph{equation}}
  \begin{equation}\label{lambday}
  \phi(z)=z^\lambda \overline z^\gamma, \quad  z\in\C^{\times}.
\nomenclature{$\phi$}{Langlands parameter}
\end{equation}
Let $\ch\mathfrak n(\lambda)$ be the sum of the positive integer eigenspaces of $\mathrm{ad}(\lambda)$ on $\ch\g$,
and let $\ch N(\lambda)$ be the connected unipotent subgroup of $\ch G$ with Lie algebra $\ch\mathfrak n(\lambda)$.
Set \nomenclature{$\chG (\lambda)$}{}
\renewcommand{\theequation}{\theparentequation)(\alph{equation}}
\begin{equation}\label{Glambda}
\begin{aligned}
  \chG (\lambda)&=\Cent_{\ch G}(\exp(2\uppi i\lambda))\\
  \ch L(\lambda)&=\Cent_{\ch G}(\lambda)\subset \ch G(\lambda)\\
\end{aligned}
\end{equation}
and let
\begin{equation}
\ch P(\lambda)=\ch L(\lambda)\ch N(\lambda)
\end{equation}
a parabolic subgroup of $\ch G(\lambda)$.
Finally, write \nomenclature{$\ch K_y$}{}
\begin{equation}
  \label{y}
  \begin{aligned}
    y&=\exp(\uppi i\lambda)\phi(j)\\
\ch K_y&=\Cent_{\ch G}(y)
  \end{aligned}
\end{equation}
and let $\ch N_{y}(\lambda)$ be the group generated by
\begin{equation*}
\ch N(\lambda)\cap \mathrm{Int}(y)\big(\ch P(\lambda)\big)\quad\text{ and }\quad \ch P(\lambda)\cap \mathrm{Int}(y)\big(\ch N(\lambda)\big).
\end{equation*}
\end{subequations}
Define an equivalence relation on $P(\LG)$ by
$$
\phi(y,\lambda)\sim\phi'(y',\lambda')\ \text{ if }\
y'=y\ \text{ and }\ \lambda'=n\cdot\lambda\ \text{ for some }\ n\in \ch N_{y}(\lambda)\cap \ch K_y
$$
where the action is by the conjugation action of $\ch G$.
This equivalence relation 
preserves each subset $P(\O,\LG)\subset P(\LG)$.  We let $X(\O,\LG)$ be the set of
equivalence classes
\begin{equation}\label{eq:ABV-variety}
X\left(\O,\LG\right)=P\left(\O,\LG\right)/\sim
\end{equation}
with the quotient topology.
The element $y$ of \eqref{y} is constant on equivalence classes, so for
$p\in X\left(\O,\LG\right)$ we define $y(p)$ accordingly.

The quotient map
\begin{equation}\label{eq:orbitbijection}
  P\left(\O,\LG\right)\rightarrow X\left(\O,\LG\right)
\end{equation}
is ${^\vee}G$-equivariant and  passes to a bijection on the level of $\ch G$-orbits (\cite{ABV}*{Proposition 6.17}).
The space $X(\O,\LG)$ has a richer geometry: while the $\ch G$-orbits on $P(\O,\LG)$ are closed,
there are finitely many $\ch G$-orbits on $X(\O,\LG)$, which have interesting closure relations.
Here is some information about its structure
(see \cite{ABV}*{Section 6}).

Use the notation of \eqref{lambday}-\eqref{y},
and suppose $p\in X(\O, \ch G^{\Gamma})$. Let $y=y(p)$.  Note that
$\ch K_y$ is the fixed-point subgroup of the involution
$\mathrm{Int}(y)$ in $\ch G(\lambda)$.  There is an open and closed,
connected, smooth subvariety $$X_y(\O,\LG)\subset X(\O,\LG)$$ such
that the $\ch G$-orbits on $X_y(\O,\LG)$ are in bijection with the
$\ch K_y$-orbits on the partial flag variety $\ch G(\lambda)/\ch
P(\lambda)$.  Furthermore, this bijection respects the closure
relations between $\ch G$ and $\ch K_y$-orbits
(\cite{ABV}*{Proposition 6.16}).  Thus the geometry of $X(\O,\LG)$ is
closely related to that of a partial flag variety, which is a
classical object in algebraic geometry.

Now suppose $S\subset X(\O,\LG)$ is a $\ch G$-orbit.
If $p\in S$ let $\ch G_p=\Stab_{\ch G}(p)$.
A \emph{pure complete geometric parameter} for $X(\O,\LG)$ is a pair
$(S,\tau_S)\nomenclature{$(S,\tau_S)$}{pure complete geometric parameter}$ where $S$ is a $\ch G$-orbit on $X(\O,\LG)$ and $\tau_S
\nomenclature{$\tau_S$}{irreducible representation of component group}
$
is (an equivalence class of) an irreducible representation of the component group $\ch G_p/(\ch G_p)^0$ (\cite{ABV}*{Definitions 7.1 and 7.6}).
We denote the set of pure complete geometric
parameters for $X(\O, {^\vee}G^{\Gamma})\nomenclature{$\Xi(\O, {^\vee}G^{\Gamma})$}{set of pure complete geometric
parameters}$ by
$\Xi(\O,{^\vee}G^{\Gamma})$.

A special case of the local Langlands correspondence as stated in \cite{ABV}*{Theorem 10.11} is a bijection
\begin{equation}
\label{localLanglandspure}
\Pi\left(\O,G/\R\right)\longleftrightarrow \Xi\left(\O,\LG\right)
\end{equation}
between representations of pure strong involutions and pure complete geometric parameters.
Recall from the previous section that the left-hand side of (\ref{localLanglandspure})
contains the subset $\Pi(\O,G(\R,\delta_q))$.

\subsection{Extended groups for  $G$ and $\ch G$}
\label{extended}

We specialize the results of the previous section to the groups
$\mathrm{GL}_{N}$, $\mathrm{Sp}_{N}$ and $\mathrm{SO}_{N}$, providing
further details.

For the group $\mathrm{GL}_N$ we fix the usual pinning (\ref{pinning}) in which
$B\nomenclature{$B$}{upper-triangular Borel subgroup}$ is the upper-triangular subgroup, $H\nomenclature{$H$}{diagonal subgroup}$ is the diagonal subgroup,
and $X_{\alpha}$ is a matrix with $1$ in the entry corresponding to
$\alpha$ and zeroes elsewhere.  We fix the \emph{split} inner class
for $\mathrm{GL}_{N}$.  The split inner class consists of the split
group $\GL_N(\R)$, and, if $N$ is even, also the quaternionic form
$\GL_{N/2}({\mathbb H})$.

There are two algebraic involutions of $\mathrm{GL}_{N}$ which fix
the pinning: the identity, and $\vartheta$ (\ref{varthetadef}).  It is
a coincidence that the strong involution corresponding to the split
inner class is $\vartheta$.  Indeed,
$$
G^{\vartheta}=
\begin{cases}
  \mathrm{O}_{N},& N \mbox{ odd}\\
  \mathrm{Sp}_{N},&N \mbox{ even}
\end{cases}
$$
which match the respective maximal compact subgroups of
$\mathrm{GL}_{N}(\mathbb{R})$ and $\mathrm{GL}_{N/2}(\mathbb{H})$
((\ref{maxcompact}), \cite{beyond}*{(1.123)}).
Thus, we define
\begin{equation}
\label{GLnGamma}
\GL_N^\Gamma=\GL_N\rtimes\langle\delta_{0}\rangle
\end{equation}
where  $\delta_{0}^2=1$ and $\delta_{0}$ acts (by chance!) as $\vartheta$ on $\mathrm{GL}_{N}$.

\begin{lem}
\label{GLn}
\begin{enumerate}
\item There is a unique conjugacy class of strong involutions in $\mathrm{GL}_{N} \rtimes \langle \delta_{0} \rangle$ which maps to the (isomorphism class of the) split group $\GL_N(\R)$.

\item The strong involutions $\delta$ in this conjugacy class are characterized by
$$\delta^2=(-1)^{N+1} = \exp(2\uppi i\, \ch\rho).$$

\item If $N$ is odd this is the unique conjugacy class of strong involutions.

\item If $N$ is even there is exactly one other class, whose elements square to $1$.  This other class maps to the real form $\GL_{N/2}(\H)$.

\item In both cases
there is only  only one class of pure strong involutions, and it corresponds to the split form $\mathrm{GL}_{N}(\mathbb{R})$ so that
\begin{equation}
\label{glnpure}
\Pi\left(\O,\GL_N/\R\right)=\Pi\left(\O,\GL_N(\R)\right).
\end{equation}
\end{enumerate}

\end{lem}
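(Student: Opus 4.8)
The plan is to compute the set of strong involutions in $\GL_N \rtimes \langle \delta_0 \rangle$ directly, using the fact that $\delta_0$ acts as $\vartheta$. A strong involution is an element $\delta = g\delta_0$ with $g \in \GL_N$ such that $\delta^2 \in Z(\GL_N) = \C^\times$ has finite order; and two such are equivalent iff they are $\GL_N$-conjugate. First I would compute $\delta^2 = g\delta_0 g \delta_0 = g \cdot \vartheta(g)$, using $\delta_0^2 = 1$. So the condition is that $g\,\vartheta(g)$ be a finite-order scalar. Writing $\vartheta(g) = \tilde J (g^{-1})^{\intercal} \tilde J^{-1}$, the map $g \mapsto g\,\vartheta(g) = g\tilde J (g^{\intercal})^{-1}\tilde J^{-1}$ can be analysed by setting $h = g \tilde J$; then $\delta^2 = h \tilde J^{-1} (h^{-1})^{\intercal}\cdot(\text{something})$, or more cleanly one checks $\delta^2 = c$ is equivalent to a (signed) symmetry condition on the matrix $g\tilde J$ or $\tilde J g$. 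The upshot is the standard dichotomy: $g\tilde J$ is either symmetric or antisymmetric (up to the scalar $c$ and the sign $(-1)^{N-1}$ built into $\tilde J$), and by a change of variable the scalar $c$ can be normalized, so $c = \pm 1$, with $c^2 = 1$ automatically of finite order.

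Next I would invoke the classification of (anti)symmetric bilinear forms: the $\GL_N$-conjugacy class of $\delta = g\delta_0$ is determined by the $\GL_N$-congruence class of the form $g\tilde J$ together with its symmetry type. Over $\C$ there is a unique nondegenerate symmetric form up to congruence and a unique nondegenerate antisymmetric one (the latter only existing when $N$ is even), so at most two conjugacy classes arise. To match each class to a real form, I would compute $G^{\theta_\delta} = \GL_N^{\mathrm{Int}(\delta)} = \GL_N^{\vartheta \circ \mathrm{Int}(g)}$, which is the isometry group of the corresponding form: $\mathrm{O}_N$ or $\Sp_N$, matching the maximal compacts of $\GL_N(\R)$ and $\GL_{N/2}(\H)$ recorded just before the lemma. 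This gives parts (1), (3), (4) and pins down which class goes with which real form. For part (2), I would simply evaluate $\delta^2$ on a representative: taking $g$ so that $g\tilde J = \tilde J$, i.e. $g = I$ (so $\delta = \delta_0$), one gets $\delta_0^2 = 1$ when $N$ is even — but that is the $\GL_{N/2}(\H)$ class, so for the split class one takes instead a representative with $g\tilde J$ the standard symmetric form, e.g. $g = \tilde J^{-1}$ up to sign, and computes $\delta^2 = (-1)^{N+1}$; the identity $(-1)^{N+1} = \exp(2\uppi i\,\ch\rho)$ is the classical computation that $2\ch\rho$ for $\GL_N$ is the integral coweight whose exponential is $(\det)^{N-1}$-type scalar, evaluated to be $(-1)^{N-1} = (-1)^{N+1}$.

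Finally, part (5): I would apply the definition of pure strong involution from the preceding text. Pure strong involutions are exactly the images of the map $z \mapsto z(\sigma)\exp(\uppi i\,\ch\rho)\delta_0$ from $H^1(\R, \GL_N)$, and by Hilbert 90 (the real-form version, $H^1(\R,\GL_N) = H^1(\R,\GL_N(\C)) $ computed via forms of $\C^N$ with conjugate-linear involution) there is exactly one class — $\GL_N(\R)$ is the unique pure inner form since $H^1(\mathrm{Gal}(\C/\R), \GL_N(\C)) = 1$. Hence the quaternionic form, when $N$ is even, is a non-pure strong involution, and $\Pi(\O, \GL_N/\R) = \Pi(\O,\GL_N(\R))$, giving \eqref{glnpure}. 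The main obstacle I anticipate is the bookkeeping of signs: getting the normalization of the scalar $c$ right, correctly identifying which congruence class of form corresponds to the \emph{split} real form rather than the quaternionic one, and verifying the identity $\delta^2 = (-1)^{N+1} = \exp(2\uppi i\,\ch\rho)$ with the conventions for $\ch\rho$ fixed in \eqref{pinning}; everything else is a routine application of the $\GL_N$-classification of bilinear forms and of Galois cohomology.
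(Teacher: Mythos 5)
Your argument is correct, and for parts (1)--(4) it takes a genuinely more hands-on route than the paper. The paper does not classify the elements $g\delta_0$ directly: it invokes \cite{Adams-Fokko}*{Proposition 12.19 (2)} to reduce to representatives $t\delta_{0}$ with $t\in H$ squaring into the centre, modulo the Weyl group, finds the two toral representatives $\delta_{0}$ (with $\delta_0^2=1$) and, for $N$ even, $\exp(\uppi i\,\ch\rho)\delta_{0}$ (with square $-1$), and then simply cites \cite{snowbird}*{Examples 7.6 and 7.8} for which real form each class carries; the identity $\exp(2\uppi i\,\ch\rho)=(-1)^{N+1}$ is left as an exercise. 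You instead classify all strong involutions $g\delta_0$ via the bilinear form $g\tilde{J}$: conjugation by $a$ sends $g\tilde{J}\mapsto a(g\tilde{J})a^{\intercal}$, the condition $\delta^2\in Z(\GL_N)$ forces $g\tilde{J}$ to be symmetric or antisymmetric (and the scalar to be $\pm1$, with rescaling absorbed by congruence since every complex scalar is a square), the classification of nondegenerate forms over $\C$ gives at most two classes, and the fixed-point group $G^{\theta_\delta}$ is the isometry group $\mathrm{O}_N$ or $\Sp_N$, which pins down the split versus quaternionic real form via the maximal-compact identification recorded before the lemma. Your sign bookkeeping is consistent: the symmetric class has $\delta^2=(-1)^{N+1}$ (so for $N$ even it is $\exp(\uppi i\,\ch\rho)\delta_0$, not $\delta_0$, that is split), the antisymmetric class ($N$ even) has $\delta^2=1$, and $\exp(2\uppi i\,\ch\rho)=(-1)^{N-1}I$. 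What your approach buys is a self-contained proof independent of the Atlas references; what the paper's buys is brevity by outsourcing the classification and the real-form identification to general machinery. For part (5) your argument is essentially identical to the paper's: purity is decided by $H^{1}(\mathbb{R},\GL_N)=\{1\}$ (Hilbert 90), so there is a single pure class, and since the trivial cocycle maps to $\delta_q=\exp(\uppi i\,\ch\rho)\delta_{0}$, whose square is $(-1)^{N+1}$, that class is the split one, giving \eqref{glnpure}.
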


\begin{proof}

  By \cite{Adams-Fokko}*{Proposition 12.19 (2)}  the conjugacy classes of strong involutions are parameterized by the $H$-conjugacy classes of the elements in
  $$
  \left\{t\delta_{0} \in H \rtimes \langle \delta_{0} \rangle : (t\delta_{0})^2\in Z(G)\right\}$$
  modulo the action of a Weyl group.
  For $\GL_N$ it is straightforward to compute that  up to conjugacy the representatives in this set are $\delta_{0}$, with $\delta_{0}^2=1$, and, when $N$ is even, $\delta=\exp( \uppi i \ch \rho)\delta_{0}$, with $\delta^2=-1$. The real forms associated to these strong involutions are as stated  (\cite{snowbird}*{Examples 7.6 and 7.8}).
We leave the identity  $\exp(2\uppi i \,\ch\rho)=(-1)^{N+1}$ as an exercise.  All that remains to be proven is that when $N$ is even, the real form $\mathrm{GL}_{N}(\mathbb{H})$ is not pure. Looking back to (\ref{puremap}), this follows from the fact that $H^{1}(\mathbb{R}, \mathrm{GL}_{N}) = \{1\}$ corresponds to a single real form, which is (quasi)split.
\end{proof}
If $\delta$ is a strong involution of any group $G$, we say $\delta$ has \emph{infinitesimal cocharacter} $g \in \h\nomenclature{$g$}{infinitesimal cocharacter}$  if $$\delta^2=\exp(2\uppi ig).$$  Lemma \ref{GLn} tells us that the pure strong involutions of $\mathrm{GL}_{N}$ are exactly those with infinitesimal cocharacter ${^\vee}\rho$.
Let $\delta_{q} = \delta_{0}$ when $N$ is odd and $\delta_{q} = \exp(\uppi i\, \ch \rho) \delta_{0}$ when $N$ is even.  According to Lemma \ref{GLn}, the strong involutions $\delta$ in the $\mathrm{GL}_{N}$-conjugacy class of $\delta_{q}$ form the set of pure involutions and these are the only strong involutions for which $\mathrm{GL}(\mathbb{R}, \delta) \cong \mathrm{GL}_{N}(\mathbb{R})$.

Since $\GL_N(\R)$ is split, the L-group is

\begin{equation}
\label{deltagln}
  \LGL=\ch\GL_N \times\langle\ch\delta_{0}\rangle\simeq \chGL_N \times\Z/2\Z.
\end{equation}
We write $\ch\GL_{N}$ instead of $\mathrm{GL}_{N}$ just to emphasize that the group is on the ``dual side".

We also need the extended group
$$
\GL_N \rtimes\langle\vartheta\rangle.
$$
Although this is isomorphic to
$\GL_N^\Gamma=\GL_N\rtimes\langle\delta_{0}\rangle$, it plays a very
different role.  The group $\GL_N^\Gamma$ plays a role in the ordinary
(untwisted) Langlands correspondence by  carrying  strong involutions and thereby information about real forms.  By contrast, the group $\GL\rtimes\langle\vartheta\rangle$ is the central object in the theory of twisted characters.  We use this notation to distinguish the two roles.  With this in mind, we have the group
\begin{equation}\label{eq:GLDelta}
\GL_N^\Gamma\rtimes\langle \vartheta\rangle=\langle \GL_N, \delta_{0},\vartheta\rangle
$$
in which $\vartheta$ and $\delta_{0}$ commute.  Similarly, we define
$$
{^\vee}\GL_N^\Gamma\rtimes\langle \vartheta\rangle=\langle {^\vee}\GL_N,\ch\delta_{0},\vartheta\rangle
\end{equation}
in which $\vartheta$ and ${^\vee} \delta_{0}$ commute.
See Section \ref{twistendsec} for a discussion of the twisted endoscopic groups for $(\ch\GL_N^\Gamma,\vartheta)$.

Next we consider extended groups for $\mathrm{Sp}_{N}$ and
$\mathrm{SO}_{N}$.   As in \cite{Arthur}*{Section 1.2}, we adopt the
convention of expressing these groups in such a way that their
upper-triangular subgroups are Borel subgroups.  With this convention,
their diagonal subgroups are maximal tori.  When there is no confusion
with the setting of $\mathrm{GL}_{N}$, we will also denote these Borel
subgroups by $B$ and maximal tori by $H$.  In fact, we will abusively
imitate the notation for $\mathrm{GL}_{N}$ when the setting is clear.
We arbitrarily fix a set of simple root vectors $\{X_{\alpha}\}$ for each simple root in $R(B,H)$.

Suppose first that $G = \mathrm{Sp}_{2n}$ or  $G =
\mathrm{SO}_{2n+1}$.  Each of these groups has one inner class, which
is the inner class of the split form.  This allows us to choose
$\delta_{0}$ to act trivially on these groups and set
$$G^{\Gamma} = G \times \langle \delta_{0} \rangle$$
where $\delta_{0}^{2} = 1$.
Define $\delta_{q} = \delta_{0}$, so that $G(\mathbb{R}, \delta_{q})$
is the split real form.
The dual groups ${^\vee}G = {^\vee}\mathrm{Sp}_{2n} =
\mathrm{SO}_{2n+1}$ and ${^\vee}G = {^\vee}\mathrm{SO}_{2n+1} =
\mathrm{Sp}_{2n}$ have Borel subgroups and maximal tori as earlier.
The L-group of $G$ corresponding to the split inner class is
$$
{^\vee}G^{\Gamma} = {^\vee}G \times \langle {^\vee}\delta_{0}
\rangle  \cong {^\vee}G \times \mathbb{Z}/2 \mathbb{Z}.
$$

Finally, take $G=\mathrm{SO}_{2n}$.  This group has two inner classes:
one for the split form and the other for the quasisplit form
$\mathrm{SO}(n+1,n-1)$, which is not split.  The following definitions follow from \cite{AVParameters}*{(12c)} and \cite{Bourbaki}*{ Chapter VI \S 4.8 XI}.

If the inner class contains the split form then $\ch \delta_{0}$ acts trivially and
$$
{^\vee}\mathrm{SO}_{2n}^{\Gamma} = \mathrm{SO}_{2n} \times \langle
{^\vee}\delta_{0} \rangle.$$
If, in addition, $n$ is even then $\delta_{0}$ acts trivially and
$\mathrm{SO}_{2n}^{\Gamma} = \mathrm{SO}_{2n} \times \langle \delta_{0} \rangle.$
On the on the other hand if $n$ is odd then $\delta_{0}$ acts by conjugation by  an element in $\mathrm{O}_{2n}- \mathrm{SO}_{2n}$ which preserves the pinning.  In this case $\mathrm{SO}_{2n}^{\Gamma}$ is a nontrivial semidirect product $\mathrm{SO}_{2n} \rtimes \langle \delta_{0} \rangle$.

If we fix the inner class to be
that of $\mathrm{SO}(n+1,n-1)$ then  the L-group is the semidirect product
$$
{^\vee}\mathrm{SO}_{2n}^{\Gamma} = \mathrm{SO}_{2n} \rtimes \langle \ch\delta_{0}\rangle
$$
in which ${^\vee}\delta_{0}$ acts by conjugation by  an element in $\mathrm{O}_{2n}- \mathrm{SO}_{2n}$ which preserves the pinning.  If $n$ here is even then $\mathrm{SO}_{2n}^{\Gamma} = \mathrm{SO}_{2n} \rtimes \langle \delta_{0} \rangle$ where $\delta_{0}$ acts in the same way as $\ch \delta_{0}$.  On the other hand, if $n$ is odd then $\mathrm{SO}_{2n}^{\Gamma} = \mathrm{SO}_{2n} \times \langle \delta_{0} \rangle.$

For either of the two inner classes of $\mathrm{SO}_{2n}$ the strong
involution $\delta_{q} = \exp(\uppi i {^\vee}\rho) \delta_{0}$
corresponds to a quasisplit real form $G(\mathbb{R}, \delta_{q})$
\cite{AVParameters}*{(11f)}.

\subsection{Atlas parameters for $\mathrm{GL}_{N}$}
\label{atlasparam}

For our application we use a formulation of the
local Langlands correspondence for $\GL_N(\R)$ which is well suited to Vogan duality
(see Section \ref{bbvd}).
The main references for this section are \cite{Adams-Fokko} and
\cite{AVParameters}*{Section 3}.

We start by working in the context of the extended group \eqref{GLnGamma}:
$\GL_N^\Gamma=\GL_N \rtimes \langle\delta_{0}\rangle$.
Let $\ch\rho$ be the half-sum of the positive coroots for $\GL_N$.
Following \cite{AVParameters}*{Section 3} we set
$$
\Xcal_{\ch\rho}=\left\{\delta \in \Norm_{\GL_N\delta_{0}}(H)\mid
\delta^2=\exp(2\uppi i\, \ch\rho)\right\}/H
\nomenclature{$\Xcal_{\ch\rho}$}{
set of conjugacy classes of strong involutions
 with infinitesimal
cocharacter ${^\vee}\rho$}
$$
where the quotient is by the conjugation action of $H$.  This is a set
of $H$-conjugacy classes of strong involutions with infinitesimal
cocharacter ${^\vee}\rho$.  By Lemma \ref{GLn}, these strong
involutions are all pure and correspond to the split form
$\mathrm{GL}_{N}(\mathbb{R})$.

Now we fix a $\vartheta$-fixed, regular, integrally dominant
element $\lambda \in \ch \h$ for $\GL_N$. This means
\begin{equation}\label{regintdom}
\begin{aligned}
     \vartheta(\lambda) &= \lambda\\
   \langle \lambda, {^\vee}\alpha
  \rangle &\neq  0, \quad \alpha\in R(\GL_N,H)\\
    \langle \lambda, {^\vee} \alpha \rangle &\notin \{ -1, -2, -3,
   \ldots \}, \ \alpha \in R^{+}(\mathrm{GL}_{N},H).
\end{aligned}
\end{equation}
This will be the infinitesimal character of our representations of $\GL_N(\R)$.
The assumption of integral dominance is
harmless
(\cite{AVParameters}*{Lemma 4.1}). We
shall remove the regularity assumption at the beginning of Section
\ref{equalapacketsing}.

The action of $\delta_{0}$ induces an action on the Weyl group
$W(\mathrm{GL}_{N},H)
\nomenclature{$W(\mathrm{GL}_{N},H)$}{Weyl group}
$.
Consider the set
\begin{equation}
\label{twistinv}
\left\{w\in W(\GL_N,H) : w\, \delta_{0}(w)=1\right\}.
\end{equation}
If $x\in\Xcal_{\ch \rho}$ then the action (by conjugation) of $x$ on
$H$ is equal to $w\delta_0$ for some $w$ in the set \eqref{twistinv}.
Define $p(x)=w$ accordingly. The map $p$ is surjective. Let
$\Xcal_{\ch\rho}^w$ be the fibre of $p$ over $w$ so that
\begin{equation}
\label{e:p}
\Xcal_{\ch\rho}^w=\left\{x\in\Xcal_{\ch\rho} : xhx\inv =w\delta_0 \cdot h, \ \mbox{ for all }  h\in H \right\}.
\nomenclature{$\Xcal_{\ch\rho}^w$}{}
\end{equation}

On the dual side we have an analogous set
in which the infinitesimal cocharacter ${^\vee}\rho$
is replaced by an infinitesimal character $\lambda$, namely
$$
\ch\Xcal_{\lambda}=\left\{\ch\delta \in \Norm_{\ch\GL_N \, \ch\delta_{0}}(\ch
H)\mid \ch\delta^2=\exp(2\uppi i\lambda)\right\}/\, \ch H.
\nomenclature{$\ch\Xcal_{\lambda}$}{}
$$
Recall that $\GL_N(\R)$ is split, so $\ch\delta_0$ acts trivially on $\ch \mathrm{GL}_N$,
and we can safely identify this set with
$$
\left\{\ch\delta \in \Norm_{\ch\GL_N}(\ch H)\mid \ch\delta^2=\exp(2\uppi
i\lambda)\right\}/\, \ch H.
$$
Since $\ch\delta_0$ acts trivially,
the analogue of \eqref{twistinv} is $$\left\{w\in W\mid w^2=1\right\}.$$
Let $\ch\Xcal_\lambda^w
\nomenclature{$\ch\Xcal_\lambda^w$}{}
$ be the analogue of \eqref{e:p}.

It is easily verified that
\begin{equation}
\label{weylsquared}
w\, \delta_{0}(w) = ww_{0}ww_{0}^{-1} = (ww_{0})^{2}, \quad w \in
W(\mathrm{GL}_{N},H)
\end{equation}
where $w_{0} \in W(\mathrm{GL}_{N},H)$ is the long Weyl group element.
It follows that
$$w \mapsto ww_{0}$$
defines a bijection from (\ref{twistinv}) to $\{ w \in
W(\mathrm{GL}_{N},H): w^{2} =1\}$.
This map allows us to pair any set $\mathcal{X}_{{^\vee}\rho}^{w}$ with the set ${^\vee}\mathcal{X}_{\lambda}^{ww_{0}}$.

The next result follows from \cite{Adams-Fokko}, \cite{ABV} and
\cite{AVParameters}*{Theorem 3.11}. We give the proof, which is much
simpler in the case of $\GL_N$ than for other reductive groups.

\begin{lem}
  \label{XXXi}
  There is a canonical bijection
  $$\coprod_{\{w: w\delta_0(w)=1\}} \mathcal{X}_{{^\vee}\rho}^{w} \times
  {^\vee}\mathcal{X}_{\lambda}^{ww_{0}}\longleftrightarrow\Xi\left(\O,\ch\GL_N^{\Gamma}\right).
  $$
\end{lem}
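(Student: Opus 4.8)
The plan is to establish the bijection by unwinding the two sides into their common origin: the space $X(\O,\ch\GL_N^\Gamma)$ of geometric parameters and its stratification. On the left, a pair $(x,y)\in\Xcal_{\ch\rho}^w\times\ch\Xcal_\lambda^{ww_0}$ is, by the constructions of Section~\ref{atlasparam}, an Atlas parameter; on the right, a pure complete geometric parameter $(S,\tau_S)$ consists of a $\ch\GL_N$-orbit $S\subset X(\O,\ch\GL_N^\Gamma)$ together with an irreducible representation of the component group of a stabilizer. First I would recall from the definition \eqref{eq:ABV-variety} of $X(\O,\ch\GL_N^\Gamma)$, specialized to $\GL_N$, how the element $y=y(p)$ of \eqref{y} sorts the geometric parameters into the open-closed pieces $X_y(\O,\ch\GL_N^\Gamma)$, on each of which the $\ch\GL_N$-orbits correspond to $\ch K_y$-orbits on the partial flag variety $\ch\GL_N(\lambda)/\ch P(\lambda)$. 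Because $\lambda$ is regular, $\ch P(\lambda)=\ch L(\lambda)=\ch H$ is a maximal torus, so this flag variety is the full flag variety of $\ch\GL_N$, and the $\ch K_y$-orbits on it are indexed by (twisted) involutions in the Weyl group — which is exactly the data recorded by the exponent $ww_0$ on the dual side, via \eqref{weylsquared}.

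The key simplification for $\GL_N$, which I would emphasize next, is that all component groups $\ch\GL_N{}_p/(\ch\GL_N{}_p)^0$ are trivial. This is because for $\GL_N$ the relevant centralizers $\ch\GL_N(\lambda)=\Cent_{\ch\GL_N}(\exp(2\uppi i\lambda))$ are products of general linear groups, hence connected, and the stabilizer of a point in a $\ch K_y$-orbit on the flag variety — a Borel-type subgroup intersected with the symmetric subgroup $\ch K_y$ — is likewise connected in the $\GL$ setting (this is where the proof is genuinely easier than for $\Sp$ or $\SO$, where $\pm 1$ factors intervene). Consequently $\tau_S$ is forced to be trivial, and a pure complete geometric parameter for $\ch\GL_N$ is simply a $\ch\GL_N$-orbit $S$. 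So the right-hand side $\Xi(\O,\ch\GL_N^\Gamma)$ is in canonical bijection with the (finite) set of $\ch\GL_N$-orbits on $X(\O,\ch\GL_N^\Gamma)$.

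It then remains to match $\ch\GL_N$-orbits on $X(\O,\ch\GL_N^\Gamma)$ with the disjoint union $\coprod_{w}\Xcal_{\ch\rho}^w\times\ch\Xcal_\lambda^{ww_0}$. Using \eqref{eq:orbitbijection}, a $\ch\GL_N$-orbit is the same as a $\ch\GL_N$-orbit of quasiadmissible homomorphisms $\phi$ with infinitesimal character $\O$; from such a $\phi$ one reads off $y=\exp(\uppi i\lambda)\phi(j)$ as in \eqref{y}, and, since $\GL_N(\R)$ is split, the $\ch\GL_N$-conjugacy class of $y$ lands (after conjugating into $\ch H$) in some $\ch\Xcal_\lambda^{ww_0}$, with $ww_0$ recording the involution $\mathrm{Int}(y)$ acts on $\ch H$ by. Dually — and this is where Proposition~3.2 and Corollary~3.9 of \cite{AVParameters} enter — the Langlands quotient data on the group side, namely the strong involution $\delta$ of $\GL_N$ together with its extra character datum, is at regular integral-dominant $\lambda$ pinned down by a single element $x\in\Xcal_{\ch\rho}^w$, with the same $w$ appearing via $w=p(x)$ because $w$ and $ww_0$ are linked by the involution $w\mapsto ww_0$ of \eqref{weylsquared}. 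Composing the bijection $X(\O,\ch\GL_N^\Gamma)/\ch\GL_N\leftrightarrow\Pi(\O,\GL_N/\R)$ of \eqref{localLanglandspure} with the identification \eqref{glnpure} and the dictionary of Section~\ref{atlasparam} gives the asserted bijection, and canonicity is inherited at each step.

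The main obstacle I anticipate is not conceptual but bookkeeping: verifying carefully that the $w$ governing the group-side datum and the $ww_0$ governing the dual-side datum correspond under \eqref{weylsquared} \emph{compatibly with the $\ch\GL_N$-conjugacy identifications}, i.e.\ that the two independently-defined indexings of the disjoint-union pieces are glued correctly, and that the triviality of all component groups is argued uniformly (including the even-$N$ bookkeeping around $\exp(2\uppi i\,\ch\rho)=(-1)^{N+1}$). Once the connectedness claim is pinned down, the rest is a direct translation between \cite{ABV}, \cite{Adams-Fokko} and \cite{AVParameters}*{Theorem 3.11}.
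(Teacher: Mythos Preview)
Your approach is correct in outline and lands on the same conclusion, but it takes a more circuitous route than the paper's proof, and in doing so it obscures the one fact that makes the $\GL_N$ case genuinely simple. You argue both the $x$-side and the $y$-side simultaneously by routing through the local Langlands correspondence \eqref{localLanglandspure} and the Atlas dictionary of \cite{AVParameters}, treating the matching of $w$ with $ww_0$ as the main technical issue. The paper instead disposes of the $x$-side in one line: it observes that $|\Xcal_{\ch\rho}^w|=1$ for every $w$, which follows from \cite{Adams-Fokko}*{Proposition 12.19(5)} because the dual inner class is the equal-rank class (products of $\U(p,q)$), and Cartan subgroups of $\U(p,q)$ are all connected. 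This is equivalent to saying that $L$-packets for $\GL_N(\R)$ are singletons. With that in hand, the lemma reduces to the purely dual-side statement $\coprod_w \ch\Xcal_\lambda^{ww_0} \leftrightarrow X(\O,\ch\GL_N^\Gamma)/\ch\GL_N$, and the paper writes down the map $y\mapsto\phi$ explicitly. Your detour through representations and back is not wrong, but it hides this singleton fact inside an appeal to \cite{AVParameters}*{Proposition 3.2, Corollary 3.9} without naming it, and it makes the ``bookkeeping obstacle'' you anticipate (matching the $w$-indexings) look harder than it is: once $\Xcal_{\ch\rho}^w$ is a point, there is nothing to glue on the group side.

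A minor correction: your justification for the triviality of the component groups $\ch\GL_N{}_p/(\ch\GL_N{}_p)^0$ is slightly off. You describe the stabilizer as ``a Borel-type subgroup intersected with the symmetric subgroup $\ch K_y$'', but the cleaner reason (which the paper uses) is that $\ch\delta_0$ acts trivially on $\ch\GL_N$, so the relevant centralizers are products of general linear groups and therefore connected. Your conclusion is right; the argument is just more direct than you made it.
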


\begin{proof}
  First of all $|\Xcal_{\ch\rho}^w|=1$  for all $w$. This follows from
  \cite{Adams-Fokko}*{Proposition 12.19(5)}: the dual inner class
  is the  equal rank inner class, consisting of (products of) unitary groups $\mathrm{U}(p,q)$,
  and it is well known that the Cartan subgroups of $\mathrm{U}(p,q)$ are all connected.
  This is equivalent to the fact that all $L$-packets for $\GL_N(\R)$ are singletons.

  So the lemma comes down to the statement that there is a bijection
  $$
  \coprod_{w\delta_0(w)=1} \ch\Xcal^{ww_{0}}_\lambda\longleftrightarrow \Xi\left(\O,\ch\GL_N^{\Gamma}\right).
  $$
  Recall the right-hand side is the set of pure complete geometric parameters $(S,\tau)$
  where $S$ is a $\ch \mathrm{GL}_N$-orbit in  $X\left(\O, \ch \mathrm{GL}_{N}^{\Gamma} \right)$ and $\tau$ is an irreducible representation of the component
  group of the centralizer of a point in $X\left(\O,\ch \mathrm{GL}_N^{\Gamma}\right)$. Since $\ch\delta_0$ acts trivially on $\ch \mathrm{GL}_N$,
  these centralizers are products of general linear groups, and are hence connected. Therefore we are further reduced to showing
  $$
  \coprod_{w\delta_0(w)=1} \ch\Xcal^{ww_{0}}_\lambda\longleftrightarrow
  X\left(\O,\ch \mathrm{GL}_N^{\Gamma}\right)/\ch \mathrm{GL}_N.
  $$

  Suppose $y\in \ch\Xcal^{ww_{0}}_\lambda$. This means that $y\in\Norm_{\ch \mathrm{GL}_N}(\ch H)$ (we can ignore the extension),
  $y$ maps to $ww_{0}\in W(\mathrm{GL}_N,H)$, and $y^2=\exp(2\uppi i\lambda)$. Define
  $\phi: W_\R\rightarrow \ch \mathrm{GL}_N^{\Gamma}$ by
  $$
  \begin{aligned}
    \phi(z)&=z^\lambda \overline z^{\mathrm{Ad}(y)(\lambda)}, \quad z \in \mathbb{C}^{\times}\\
    \phi(j)&=\exp(-\uppi i\lambda)y
  \end{aligned}
  $$
  (compare \eqref{lambday} and (d)).
  It is straightforward to see that $\phi$ is a quasiadmissible homomorphism
  (see the end of Section \ref{extended}), and only a little more work to show
  that it induces the bijection indicated. See \cite{Adams-Fokko}*{Proposition 9.4}.

\end{proof}
Together with  \eqref{localLanglandspure} this gives
\begin{thm}
  \label{paramsGLn}
Let $\O$ be the $\ch \mathrm{GL}_N$-orbit of $\lambda$.
  There are canonical bijections:
  $$
\coprod_{\{ w : w \delta_{0}(w) = 1\}}
\mathcal{X}^{w}_{{^\vee}\rho} \times {^\vee}\mathcal{X}^{ww_{0}}_{\lambda} \longleftrightarrow
\Xi\left(\O,\ch \mathrm{GL}_N^{\Gamma}\right) \longleftrightarrow \Pi\left(\O,\GL_N(\R)\right).
$$
\end{thm}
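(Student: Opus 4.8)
The plan is to derive Theorem \ref{paramsGLn} by simply composing the two bijections already in hand. The first bijection is Lemma \ref{XXXi}, which identifies $\coprod_{\{w:w\delta_0(w)=1\}}\mathcal{X}_{{^\vee}\rho}^w\times{}^\vee\mathcal{X}_\lambda^{ww_0}$ with $\Xi(\O,\ch\GL_N^\Gamma)$; the second is the specialization \eqref{localLanglandspure} of the local Langlands correspondence of \cite{ABV}*{Theorem 10.11}, which identifies $\Xi(\O,\LG)$ with $\Pi(\O,G/\R)$ for any $G$ in the relevant class. So the only thing to check is that in the case $G=\GL_N$ the target $\Pi(\O,\GL_N/\R)$ of the second bijection is literally $\Pi(\O,\GL_N(\R))$, i.e.\ that the disjoint union over pure strong involutions collapses to the single split form. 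This is exactly the content of Lemma \ref{GLn}(5) and its consequence \eqref{glnpure}: there is only one class of pure strong involutions for $\GL_N$, and it corresponds to $\GL_N(\R)$.

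Concretely, I would write: fix $\O$ to be the $\ch\GL_N$-orbit of the regular integrally dominant $\lambda$ chosen above. By Lemma \ref{XXXi} there is a canonical bijection between $\coprod_{\{w:w\delta_0(w)=1\}}\mathcal{X}^w_{{^\vee}\rho}\times{}^\vee\mathcal{X}^{ww_0}_\lambda$ and $\Xi(\O,\ch\GL_N^\Gamma)$. By \eqref{localLanglandspure} applied with $G=\GL_N$ there is a canonical bijection between $\Xi(\O,\ch\GL_N^\Gamma)$ and $\Pi(\O,\GL_N/\R)$. Finally, by \eqref{glnpure} (a consequence of Lemma \ref{GLn}) we have $\Pi(\O,\GL_N/\R)=\Pi(\O,\GL_N(\R))$. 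Composing the three identifications yields the asserted canonical bijections, which is the statement of the theorem.

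There is essentially no obstacle here: the theorem is a formal concatenation, and the substantive work has all been done in Lemma \ref{GLn} (collapsing the pure inner forms) and Lemma \ref{XXXi} (the combinatorial identification of Atlas parameters with complete geometric parameters, which in turn rests on the connectedness of the relevant centralizers and of the Cartan subgroups of $U(p,q)$). If anything merits a sentence of care, it is the word \emph{canonical}: one should note that each of the three bijections is natural — Lemma \ref{XXXi} is constructed explicitly via the homomorphism $\phi$ written down in its proof, \eqref{localLanglandspure} is the canonical correspondence of \cite{ABV}, and \eqref{glnpure} is an equality of sets rather than a chosen bijection — so their composite is canonical as claimed. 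I would keep the proof to the two or three sentences above, since expanding it further would only repeat material already established.
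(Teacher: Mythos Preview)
Your proposal is correct and matches the paper's approach exactly: the paper simply writes ``Together with \eqref{localLanglandspure} this gives'' before stating the theorem, so it too treats the result as the concatenation of Lemma \ref{XXXi} with the local Langlands correspondence \eqref{localLanglandspure}, implicitly using \eqref{glnpure} to replace $\Pi(\O,\GL_N/\R)$ by $\Pi(\O,\GL_N(\R))$. Your write-up is if anything slightly more explicit than the paper's one-line justification.
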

As in \cite{AVParameters}*{Theorem 3.11}
 the bijection of  Theorem \ref{paramsGLn} is written as
\begin{equation}
\label{avbij}
\Xcal_{\ch\rho}^w\times \ch\Xcal_{\lambda}^{w w_0}\ni(x,y) \mapsto J(x,y,\lambda)
\end{equation}
We call the pair $(x,y)\nomenclature{$(x,y)$}{Atlas parameter}$ on the left the \emph{Atlas parameter} of the
irreducible representation $J(x,y, \lambda)\nomenclature{$J(x,y,\lambda)$}{Langlands quotient}$.
By Lemma \ref{XXXi}, the Atlas parameter $(x,y)$ is equivalent to a complete geometric parameter $\xi \in \Xi(\O, \mathrm{GL}_{N}^{\Gamma})$, and accordingly we define
$$\pi(\xi) = J(x,y, \lambda). \nomenclature{$\pi(\xi)$}{(equivalence class of the) irreducible Harish-Chandra module (representation) associated to $\xi$}$$
The representation $\pi(\xi)$ is the Langlands quotient of a standard representation which we denote by $M(\xi)$ or $M(x,y)$.  \nomenclature{$M(\xi), \ M(x,y)$}{(equivalence class of the) standard module (representation) associated to $\xi$}

\subsection{Twisted Atlas parameters for $\GL_N$}
\label{extrepsec}

Our next task is to describe the generalization of
Theorem \ref{paramsGLn}
to the $\vartheta$-twisted setting. This involves certain
irreducible representations of the extended group
$\GL_N(\R)\rtimes\langle\vartheta\rangle$.
We specialize the results of \cite{AVParameters}*{Sections 3-5}
to this case. We are fortunate that  some of the more
complicated issues that arise in \cite{AVParameters}
do not occur for $\mathrm{GL}_{N}$.

We continue with the hypotheses of (\ref{regintdom}). Recall that both
${^\vee}\rho$
and $\lambda$ are fixed by $\vartheta$.
By Clifford theory, an irreducible representation of
$\GL_N(\R)\rtimes\langle\vartheta\rangle$ restricted to $\GL_N(\R)$
is either an irreducible $\vartheta$-fixed representation, or the direct
sum of two irreducible representations which are exchanged  by the action of $\vartheta$.
We only need representations of the first type.

It is a lengthy but straightforward task to show that the map
(\ref{avbij}) is $\vartheta$-equivariant (\emph{cf.}
\cite{Christie-Mezo}*{Theorem 4.1}).  Therefore $J(x,y, \lambda)$ is
$\vartheta$-stable if and only if
$(x,y)\in \mathcal{X}^{w}_{{^\vee}\rho} \times
{^\vee}\mathcal{X}^{{ww_{0}}}_{\lambda} $ is fixed by $\vartheta$.
Let
$$\Pi(\O, \mathrm{GL}_{N}(\mathbb{R}))^\vartheta \subset \Pi(\O, \mathrm{GL}_{N}(\mathbb{R}))
\nomenclature{$\Pi(\O, \mathrm{GL}_{N}(\mathbb{R}))^\vartheta$}{
$\vartheta$-fixed equivalence classes of representations
}
$$
be the subset of $\vartheta$-fixed irreducible representations and set
\begin{equation*}
  \label{W2}
W(\delta_{0},\vartheta)=\{w\in W(\mathrm{GL}_{N},H) \mid w\delta_{0}(w)=1, w=\vartheta(w)\}
\end{equation*}
(\emph{cf.} (\ref{twistinv})).  By the $\vartheta$-equivariance,
Theorem \ref{paramsGLn} restricts to these sets and we obtain
\begin{cor}
\label{twistclass}
Suppose $\lambda$ satisfies the hypotheses of (\ref{regintdom}) and let $\O$ be its $\ch \mathrm{GL}_{N}$-orbit.  Then there is a canonical bijection
$$
\coprod_{\{w\in W(\delta_{0},\vartheta)\}}\Xcal_{\ch\rho}^w\times \ch\Xcal^{ww_0}_\lambda
\longleftrightarrow
\Pi\left(\O,\GL_N(\R)\right)^\vartheta
$$
written $(x,y)\mapsto J(x,y,\lambda)$.
\end{cor}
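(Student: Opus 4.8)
The plan is to deduce Corollary \ref{twistclass} directly from Theorem \ref{paramsGLn} by restricting the bijections there to the $\vartheta$-fixed loci on both sides. The key input is the assertion (used in the text just before the statement) that the parametrization map (\ref{avbij}), namely $(x,y)\mapsto J(x,y,\lambda)$, is $\vartheta$-equivariant. Granting this, the proof is essentially a matter of checking that ``$\vartheta$-fixed'' means the same thing on the index set as on the representation side, and that taking fixed points is compatible with the disjoint-union decomposition over $w$.

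First I would make precise the action of $\vartheta$ on the left-hand side of Theorem \ref{paramsGLn}. Since $\vartheta$ normalizes $H$, preserves $\ch\rho$ and (by hypothesis (\ref{regintdom})) fixes $\lambda$, it acts on $\Norm_{\GL_N\delta_0}(H)/H$ and on $\Norm_{\ch\GL_N}(H)/H$, hence on each $\Xcal_{\ch\rho}^w$ and $\ch\Xcal_\lambda^{ww_0}$; moreover $\vartheta$ permutes these pieces according to $w\mapsto \vartheta(w)$, since $p(x)=w$ forces $p(\vartheta(x))=\vartheta(w)$ (conjugation by $\vartheta(x)$ on $H$ is $\vartheta$ composed with conjugation by $x$ composed with $\vartheta^{-1}$, which is $\vartheta(w)\delta_0$ because $\vartheta$ and $\delta_0$ commute). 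Therefore the subset of $\vartheta$-fixed pairs in $\coprod_w \Xcal_{\ch\rho}^w\times\ch\Xcal_\lambda^{ww_0}$ is exactly $\coprod_{\{w:\vartheta(w)=w\}}(\Xcal_{\ch\rho}^w)^\vartheta\times(\ch\Xcal_\lambda^{ww_0})^\vartheta$, and the constraint $w\delta_0(w)=1$ is inherited, giving the index set $W(\delta_0,\vartheta)$. Since $|\Xcal_{\ch\rho}^w|=1$ (Lemma \ref{XXXi}), the unique element of $\Xcal_{\ch\rho}^w$ is automatically $\vartheta$-fixed when $\vartheta(w)=w$ (a singleton set carries a unique map to itself), so the $\vartheta$-fixed locus is really indexed by $\coprod_{w\in W(\delta_0,\vartheta)}\ch\Xcal_\lambda^{ww_0}$, matching the statement after identifying $\Xcal_{\ch\rho}^w$ with a point.

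Next, using $\vartheta$-equivariance of (\ref{avbij}): $J(x,y,\lambda)$ is $\vartheta$-stable as a $(\g,K)$-module precisely when $\vartheta\cdot(x,y)=(x,y)$. So the bijection of Theorem \ref{paramsGLn} carries the $\vartheta$-fixed locus of the left-hand side onto $\Pi(\O,\GL_N(\R))^\vartheta$, which is the content of the Corollary. The canonicity is inherited from the canonicity in Theorem \ref{paramsGLn}. The main obstacle — really the only nontrivial point — is the $\vartheta$-equivariance of (\ref{avbij}); the excerpt flags this as ``a lengthy but straightforward task'' and points to \cite{Christie-Mezo}*{Theorem 4.1}, so I would either cite that directly or sketch why the construction of $\phi$ from $(x,y)$ in the proof of Lemma \ref{XXXi} intertwines the $\vartheta$-action on parameters with conjugation by (a representative of) $\vartheta$ on homomorphisms $W_\R\to\ch\GL_N^\Gamma$, using that $\vartheta(\lambda)=\lambda$ and that $\vartheta$ commutes with $\ch\delta_0$. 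Everything else is bookkeeping about fixed points commuting with disjoint unions and with the singleton identification.
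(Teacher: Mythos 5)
Your overall strategy is exactly the paper's: the corollary is obtained by restricting the bijection of Theorem \ref{paramsGLn} to $\vartheta$-fixed points, the only nontrivial ingredient being the $\vartheta$-equivariance of the map (\ref{avbij}), which the paper likewise takes from \cite{Christie-Mezo}*{Theorem 4.1} rather than reproving. Your bookkeeping that $\vartheta$ carries the piece indexed by $w$ to the piece indexed by $\vartheta(w)$, so that only $w\in W(\delta_0,\vartheta)$ survives, is also correct.

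The one step that does not hold as you state it is the final identification ``the $\vartheta$-fixed locus is really indexed by $\coprod_{w\in W(\delta_{0},\vartheta)}\ch\Xcal^{ww_0}_\lambda$.'' The singleton argument only disposes of the first factor $\Xcal^{w}_{\ch\rho}$; it says nothing about the second factor, and in general $(\ch\Xcal^{ww_0}_\lambda)^{\vartheta}\subsetneq\ch\Xcal^{ww_0}_\lambda$ even when $\vartheta(w)=w$. For instance, take $N=2$, $\lambda=(\tfrac14,-\tfrac14)$ and $w=w_0$: the four classes $y=\mathrm{diag}(\epsilon_1 e^{\uppi i/4},\epsilon_2 e^{-\uppi i/4})$, $\epsilon_j=\pm1$, all lie in $\ch\Xcal^{1}_\lambda$, but $\vartheta$ fixes only the two with $\epsilon_1=\epsilon_2$; correspondingly, of the four principal series $J$ attached to $|\cdot|^{1/4}\epsilon_1\otimes|\cdot|^{-1/4}\epsilon_2$ only two are $\vartheta$-stable. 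So what the restriction argument actually yields is a bijection between $\Pi(\O,\GL_N(\R))^{\vartheta}$ and the $\vartheta$-\emph{fixed subset} of $\coprod_{w\in W(\delta_0,\vartheta)}\Xcal^{w}_{\ch\rho}\times\ch\Xcal^{ww_0}_\lambda$, which is how the displayed left-hand side must be read (compare Corollary \ref{twistclass2}, which speaks of a ``bijection of $\vartheta$-fixed sets''). With that reading your argument is complete and coincides with the paper's; but the claim that fixedness of $w$ alone already forces fixedness of $y$ is false and should be deleted, since the two statements differ precisely by the condition $\vartheta(y)=y$ (up to $\ch H$-conjugacy) that cuts out the correct parameter set.
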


We now introduce the \emph{extended parameters} of
\cite{AVParameters}*{Sections 3-5},
and summarize the facts that we need.
Fix $w\in W(\delta_{0},\vartheta)$.
An \emph{extended parameter} for $w$ is a set
\begin{equation}
\label{quad}
E  = (\uplambda,\uptau,\ell,t), \quad \uplambda,\uptau\in X^*(H),
\ \ell,t \in X_*(H)
\nomenclature{$(\uplambda,\uptau,\ell,t)$}{extended parameter}
\end{equation}
satisfying certain conditions depending on $w$ (see
\cite{AVParameters}*{Definition 5.4}).\footnote{Warning!  The symbols
  $\uplambda$ and $\uptau$ here are not to be confused with symbols
  $\lambda$ and $\tau$ appearing elsewhere.  Note the slight
  difference in font.  We have chosen to use $\uplambda$ and $\uptau$
  for ease of comparison with \cite{AVParameters}.}  There is a
surjective map
\begin{equation}
\label{extsurj}
E \mapsto (x(E), y(E))
\end{equation}
 taking extended parameters for $w$ to $\XX$.  This map only
depends on $\uplambda$ and $\ell$.
In addition,
$$
J(x(E),y(E),\lambda)\in\Pi(\O,\GL_N(\R))^\vartheta,
$$
and every $\vartheta$-fixed irreducible representation arises this way.
The remaining parameters $\uptau$ and $t$ in $E$ define an irreducible representation $J(E,\lambda)$ of
$\GL_N(\R)\rtimes\langle\vartheta\rangle$ satisfying
$$
J(E,\lambda)|_{\GL_N(\R)}=J(x(E),y(E),\lambda).
$$
The representation $J(x(E),y(E), \lambda)$ is determined by a
quasicharacter of a  Cartan subgroup of $\mathrm{GL}_{N}(\mathbb{R})$.
The representation $J(E,\lambda)$ is determined by the semidirect
product of this Cartan subgroup with an element $h \vartheta \in
\mathrm{GL}_{N} \rtimes \vartheta$ (\cite{AVParameters}*{(24e)}) and a
  choice of extension of the
quasicharacter to the semidirect product.  The value of the extended
quasicharacter on the element $h\vartheta$ depends on a choice of sign
\cite{AVParameters}*{Definition 5.2}, and the square root of this sign is
given by
\begin{equation}
\label{z}
z(E)=i^{\langle \uptau,(1+w)t\rangle}(-1)^{\langle \uplambda,t\rangle}.
\end{equation}
The preceding discussion is a specialization of a general framework to $\mathrm{GL}_{N}(\mathbb{R}) \rtimes \langle \vartheta \rangle$.  One of the special properties of $\GL_N(\R)$ is that the preimage of any $(x,y) \in \mathcal{X}_{{^\vee}\rho}^{w} \times {^\vee}\mathcal{X}_{\lambda}^{ww_{0}}$ under (\ref{extsurj}) has a \emph{preferred extended parameter} of the form
$$(\uplambda,\uptau,0,0).$$
This comes down to the fact that $X_{{^\vee}\rho}^{w}$ is a singleton (see the proof of Lemma \ref{XXXi}).
By taking $t=0$ we see $z(\uplambda, \uptau,0,0)=1$, and this amounts to taking the aforementioned semidirect product of the Cartan subgroup with $h\vartheta = \vartheta$, and setting the value of the extended quasicharacter at $\vartheta$ equal to $1$.  In this way, the preferred extended parameter defines a canonical extension
\begin{equation}
\label{canext}
J(x,y, \lambda)^{+} = J((\uplambda, \uptau,0,0), \lambda)
\nomenclature{$J(x,y, \lambda)^{+}$}{Atlas extension}
\end{equation}
of $J(x,y,\lambda)$ to $\mathrm{GL}_{N}(\mathbb{R}) \rtimes \langle \vartheta \rangle$.
We call this extension the \emph{Atlas extension} of $J(x,y, \lambda)$.

Going back to Theorem \ref{paramsGLn} and Corollary \ref{twistclass},
we may formulate the result as follows.
\begin{cor}
  \label{twistclass2}
  There is a natural bijection of $\vartheta$-fixed sets
  $$
  \coprod_{\{w\in W(\delta_{0},\vartheta)\}}\Xcal_{\ch\rho}^w\times \ch\Xcal^{ww_0}_\lambda
 \longleftrightarrow \Xi(\O,\LGL)^\vartheta \longleftrightarrow \Pi(\O,\GL_N(\R))^\vartheta
 \nomenclature{$\Xi(\O,\LGL)^\vartheta $}{$\vartheta$-fixed complete geometric parameters}
  $$
Furthermore, if $\xi \in \Xi(\O,\LGL)^\vartheta$  is identified with $(x,y)$ under the first bijection then there is a canonical representation
$$
\pi(\xi)^+ = J(x,y,\lambda)^{+}
\nomenclature{$\pi(\xi)^+$}{(equivalence class of) the Atlas extension of $\pi(\xi)$}
$$
extending $\pi(\xi)$ to $\GL_N(\R)\rtimes\langle\vartheta\rangle$.
\end{cor}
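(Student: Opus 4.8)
The plan is to show that Corollary \ref{twistclass2} is essentially a repackaging of results already established, with the only new content being the bookkeeping that ties the various parameterizations together and the observation that the Atlas extension is well-defined. First I would assemble the three bijections on the left. The leftmost bijection
$$\coprod_{\{w\in W(\delta_{0},\vartheta)\}}\Xcal_{\ch\rho}^w\times \ch\Xcal^{ww_0}_\lambda\longleftrightarrow\Xi(\O,\LGL)^\vartheta$$
is obtained by restricting the bijection of Lemma \ref{XXXi} to $\vartheta$-fixed points; here I must invoke the $\vartheta$-equivariance of the map \eqref{avbij} (stated, with reference to \cite{Christie-Mezo}*{Theorem 4.1}, in the discussion preceding Corollary \ref{twistclass}), which transports through the bijection of Lemma \ref{XXXi} since that bijection is canonical and hence $\vartheta$-equivariant. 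The rightmost bijection $\Xi(\O,\LGL)^\vartheta\longleftrightarrow\Pi(\O,\GL_N(\R))^\vartheta$ is likewise the restriction of \eqref{localLanglandspure} (equivalently, of the bijection in Theorem \ref{paramsGLn}) to $\vartheta$-fixed points, using again that the correspondence is canonical, hence compatible with the $\vartheta$-action on both sides, together with \eqref{glnpure} identifying $\Pi(\O,\GL_N/\R)$ with $\Pi(\O,\GL_N(\R))$. Composing, we recover exactly Corollary \ref{twistclass}, now displayed as a composite through $\Xi(\O,\LGL)^\vartheta$.

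Next I would address the existence and canonicity of $\pi(\xi)^+$. Given $\xi\in\Xi(\O,\LGL)^\vartheta$, the first bijection identifies it with a $\vartheta$-fixed pair $(x,y)\in\Xcal_{\ch\rho}^w\times\ch\Xcal_\lambda^{ww_0}$ for a unique $w\in W(\delta_{0},\vartheta)$. By the discussion of extended parameters, the fibre of the surjection \eqref{extsurj} over $(x,y)$ is nonempty, and — crucially — because $\Xcal_{\ch\rho}^w$ is a singleton (the proof of Lemma \ref{XXXi}) this fibre contains the preferred extended parameter $(\uplambda,\uptau,0,0)$. I would note that since $t=0$ one has $z(\uplambda,\uptau,0,0)=1$ by \eqref{z}, so $J((\uplambda,\uptau,0,0),\lambda)$ is a genuine, canonically normalized extension of $J(x,y,\lambda)=\pi(\xi)$ to $\GL_N(\R)\rtimes\langle\vartheta\rangle$; we set $\pi(\xi)^+=J(x,y,\lambda)^+$ as in \eqref{canext}. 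That this restricts to $\pi(\xi)$ is immediate from $J(E,\lambda)|_{\GL_N(\R)}=J(x(E),y(E),\lambda)$.

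The one point requiring genuine care — and the step I expect to be the main obstacle — is the \emph{well-definedness} of the Atlas extension: a priori the preferred parameter $(\uplambda,\uptau,0,0)$ is not unique, since $\uplambda$ and $\uptau$ are only constrained up to the conditions of \cite{AVParameters}*{Definition 5.4}, and different choices could conceivably yield inequivalent representations of $\GL_N(\R)\rtimes\langle\vartheta\rangle$. Here I would argue that any two preferred extended parameters over the same $(x,y)$ differ by an operation that does not change the isomorphism class of the extended representation: the data $\uplambda,\uptau$ with $\ell=t=0$ amount to a choice of quasicharacter of the relevant Cartan subgroup extended to its semidirect product with $h\vartheta=\vartheta$ by the value $1$ (as spelled out after \eqref{z}), and this normalization is intrinsic. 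Concretely this reduces to citing \cite{AVParameters}*{Sections 3--5}, where the independence of $J(E,\lambda)$ from the permissible choices in a preferred parameter is part of the construction. Once this is in hand, the assignment $\xi\mapsto\pi(\xi)^+$ is canonical, and the corollary follows by combining it with the composite bijection established in the first paragraph.
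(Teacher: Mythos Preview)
Your proposal is correct and follows essentially the same approach as the paper: the paper presents Corollary~\ref{twistclass2} without a separate proof, introducing it with ``Going back to Theorem~\ref{paramsGLn} and Corollary~\ref{twistclass}, we may formulate the result as follows,'' so it is indeed a repackaging of the $\vartheta$-fixed bijection from Corollary~\ref{twistclass} together with the preferred extended parameter $(\uplambda,\uptau,0,0)$ and the definition~\eqref{canext}. Your additional care about well-definedness of the Atlas extension under different choices of $(\uplambda,\uptau)$ goes slightly beyond what the paper spells out, but is in the same spirit.
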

The irreducible representation $\pi(\xi)^{+}$ is defined as the unique (Langlands) quotient of a representation
$M(\xi)^+
\nomenclature{$M(\xi)^+$}{(equivalence class of) the Atlas extension of $M(\xi)$}$ such that $M(\xi)^{+}_{|\mathrm{GL}_{N}(\mathbb{R})} = M(\xi)$.  We call $\pi(\xi)^{+}$ and $M(\xi)^{+}$ the \emph{Atlas extensions} of $\pi(\xi)$ and $M(\xi)$ respectively.

\subsection{Grothendieck groups of characters}
\label{grothchar}

The setting for studying characters of reductive groups is the
Grothendieck group of representations with a given infinitesimal character.
There is  a corresponding notion in the twisted setting.
In this section we establish notation for the objects that we need.

Fix a semisimple orbit $\O\subset\ch\g$, which we view as an
infinitesimal character for $G$ (cf. Section \ref{extgroups}).
Recall $\Pi(\O,G/\R)$ is the set of equivalence classes of
representations $(\pi, \delta)$ of pure strong involutions.  We
define $K\Pi(\O,G/\R)
\nomenclature{$K\Pi(\O,G/\R)$}{Grothendieck group of
representations of pure strong involutions}
$ to be the Grothendieck group of
representations of pure strong involutions with infinitesimal
character $\O$ (see \cite{ABV}*{(15.5)-(15.6)}). We identify this with
the $\Z$-span of distribution characters of the irreducible
representations in $\Pi(\O,G/\R)$. We refer to elements of this space as virtual characters.

When $G$ is $\Sp_N$ or $\SO_N$ we only need the subspace of stable
characters, and only for the quasisplit form. So we define
$$
K\Pi(\O,G(\R,\delta_q))^\stable\subset K\Pi(\O,G(\R,\delta_q))
\nomenclature{$K\Pi(\O,G(\R,\delta_q))^\stable$}{stable virtual characters}
$$
to be the subspace spanned by the (strongly) stable  virtual characters.
If we identify virtual characters with functions on $G(\mathbb{R},\delta_{q})$
these are the virtual characters $\eta$ which satisfy $\eta(g)=\eta(g')$
whenever  strongly regular semisimple elements $g,g' \in G(\R,\delta_q)$ are $G$-conjugate.
See \cite{shelstad}*{Section 5} or \cite{ABV}*{Definition 18.2}.

\subsection{Grothendieck groups of twisted characters}
\label{grtwisted}

Here, we consider the split inner class of $\GL_N$, equipped with the involution $\vartheta$.
Recall   in this case $\Pi(\O,\GL_N/\R)=\Pi(\O,\GL_N(\R))$
(\emph{cf.} \eqref{glnpure}), and so
$$
K\Pi(\mathrm{GL}_{N}/\mathbb{R}) = K \Pi(\mathrm{GL}_{N}(\mathbb{R})).
$$
We define
$$
K\Pi(\O,\GL_N(\R))^\vartheta\subset K\Pi(\O,\GL_N(\R))
\nomenclature{$K\Pi(\O,\GL_N(\R))^\vartheta$}{}
$$
to be the submodule spanned by $\Pi(\O, \mathrm{GL}_{N}(\mathbb{R}))^\vartheta$.  This is not the Grothendieck group of $\vartheta$-stable representations of $\mathrm{GL}_{N}(\mathbb{R})$, but we retain the ``$K$" to help align the object with its ambient Grothendieck group.
On the other hand we let
\begin{equation}
\label{KPiOGLext}
K\Pi(\O,\GLext)
\end{equation}
be the Grothendieck group of admissible representations of $\mathrm{GL}_{N}(\mathbb{R}) \rtimes \langle \vartheta \rangle$ with infinitesimal character $\O$.

We now discuss the $\mathbb{Z}$-module of twisted characters of $\GL_N(\mathbb{R})$.
An irreducible character in $K\Pi(\O,\mathrm{GL}_{N}(\mathbb{R}) \rtimes \langle \vartheta \rangle)$
is a distribution
$$f \mapsto \mathrm{Tr} \int_{\mathrm{GL}_{N}(\mathbb{R})} f(x) \pi(x)
\, dx + \mathrm{Tr} \int_{\mathrm{GL}_{N}(\mathbb{R})} f(x\vartheta)
\pi(x) \pi(\vartheta)  \, dx,$$
where $f \in C_{c}^{\infty}(\mathrm{GL}_{N}
  (\mathbb{R}) \rtimes \langle \vartheta \rangle)$ and $\pi$ is an
irreducible representation of $\mathrm{GL}_{N}(\mathbb{R}) \rtimes
\langle \vartheta \rangle$.  The restriction of such a distribution
character to the non-identity component $\mathrm{GL}_{N}(\mathbb{R}) \rtimes
\vartheta$ has the form
\begin{equation}
  \label{twistchar}
  f \mapsto  \mathrm{Tr} \int_{\mathrm{GL}_{N}(\mathbb{R})} f(x\vartheta))
\pi(x) \pi(\vartheta)  \, dx, \quad f \in  C_{c}^{\infty}(\mathrm{GL}_{N}
  (\mathbb{R})\rtimes \vartheta).
\end{equation}
When the resulting restricted distribution is non-zero, we define it
to be an irreducible \emph{twisted character} of
$\mathrm{GL}_{N}(\mathbb{R}) \rtimes \vartheta$.  We define
$$K\Pi(\O, \mathrm{GL}_{N}(\mathbb{R}), \vartheta)$$
to be the
$\mathbb{Z}$-module generated by the irreducible twisted
characters of $\mathrm{GL}_{N}(\mathbb{R}) \rtimes \vartheta$ of
infinitesimal character $\O$.
\nomenclature{$K\Pi(\O, \mathrm{GL}_{N}(\mathbb{R}), \vartheta)$}{virtual twisted characters}

As noted in Section \ref{extrepsec}, an irreducible representation of $\GLext$
restricts either to an irreducible $\vartheta$-fixed representation of $\GL_N(\R)$,
or to a direct sum $\pi\oplus (\pi \circ \vartheta)$ of inequivalent irreducible representations. In the second case the twisted character is $0$, so we only need to consider the first case.  The first case describes the irreducible representations in $K\Pi(\O,\GL_N(\R))^\vartheta$.
If $\pi\in\Pi(\O,\GL_N(\R))^\vartheta$ then
it has two extensions $\pi^{\pm}$ to $\GLext$, satisfying

\begin{equation}
  \label{minuspi}
\pi^-(\vartheta)=-\pi^+(\vartheta).
\nomenclature{$\pi^{-}$}{}
\end{equation}
Consequently the twisted characters of $\pi^{\pm}$ agree up to sign.  If we set $U_{2} = \{ \pm 1\}\nomenclature{$U_2=\{\pm 1\}$}{}$ then it follows that the homomorphism
$$K \Pi(\O, \mathrm{GL}_{N}(\mathbb{R}))^\vartheta
\otimes _{\mathbb{Z}} \mathbb{Z}[U_{2}] \rightarrow K
\Pi(\O,\mathrm{GL}_{N}(\mathbb{R}), \vartheta),$$
which restricts the distribution character of $\pi(\xi)^{+}$ to the
non-identity component, is surjective.  By (\ref{minuspi}), the homomorphism
passes to an isomorphism
\begin{subequations}
\renewcommand{\theequation}{\theparentequation)(\alph{equation}}
\label{twistgroth1}
\begin{equation}
K\Pi(\O, \mathrm{GL}_{N}(\mathbb{R}),
\vartheta)\cong K \Pi(\O, \mathrm{GL}_{N}(\mathbb{R}))^\vartheta
\otimes _{\mathbb{Z}} \mathbb{Z}[U_{2}]/ \langle (\pi \otimes 1 ) +
  (\pi\otimes -1)\rangle
\end{equation}
where the quotient runs over $\pi\in \Pi(\O,\GL_N(\R))^\vartheta$.
The map taking
$\pi(\xi) \in \Pi(\O,
\mathrm{GL}_{N}(\mathbb{R}))^\vartheta$ to the twisted character
$$  f \mapsto  \mathrm{Tr} \int_{\mathrm{GL}_{N}(\mathbb{R})}
f(x\vartheta) \,
\pi(\xi)(x) \,\pi(\xi)^{+}(\vartheta)  \, dx, \quad f \in
C_{c}^{\infty}(\mathrm{GL}_{N}
(\mathbb{R})\rtimes \vartheta)
$$
extends to an isomorphism
\begin{equation}
  K\Pi(\O,\GL_N(\R),\vartheta)\simeq K\Pi(\O,\GL_N(\R))^\vartheta.
\end{equation}
\end{subequations}
We should once again
remind the reader  that the $\mathbb{Z}$-modules appearing
in (\ref{twistgroth1}) are not Grothendieck groups in any natural fashion, notwithstanding the
appearance of the ``$K$''.  Nevertheless it is helpful to use this
notation, to help remind the reader of the origins of these modules.

\section{Sheaves and Characteristic Cycles}
\label{sheaves}

Suppose $\psi_G$ is an Arthur parameter for $G$ as in
(\ref{aparameter}).  In this section we give more details on the
definition of the ABV-packet $\Pi_{\psi_{G}}^{\mathrm{ABV}}$ and its stable
virtual character $\eta_{\psi_{G}}^{\mathrm{ABV}}$ (\ref{eq:etaABVIntro}).  The results apply in the more general context of
complex connected reductive groups $G$ (\cite{ABV}*{Sections 19, 22}).
However, for this section $G$ will be $\mathrm{Sp}_{N}$,
$\mathrm{SO}_{N}$ or $\mathrm{GL}_{N}$, with the setup of Section
\ref{llc}.  The definitions depend on a pairing between characters and
sheaves.

We also define a pairing between \emph{twisted} characters and \emph{twisted}
sheaves for $\mathrm{GL}_{N}$
\cite{Christie-Mezo}*{Sections 5-6}. The key properties of this
twisted pairing are listed in this section and shall be proved in
Section \ref{pairings}.

\subsection{The pairing and the ABV-packets in the non-twisted case}
\label{sec:ABV-packetsdef}
Let $\phi_{\psi_G}$ be the Langlands parameter associated to $\psi_{G}$
\eqref{phipsi}, $\O$ be the infinitesimal character of $\phi_{\psi_G}$, and
$S_{\psi_{G}} \subset X(\O,\LG)$ (\ref{eq:ABV-variety}) be the corresponding orbit
(\cite{ABV}*{Proposition 6.17}, (\ref{eq:orbitbijection})).
Recall that  $\Xi(\O,\LG)$ is the set of
pure complete geometric parameters (see the end of Section \ref{XO}).
There is a bijection \eqref{localLanglandspure} between $\Xi(\O,\LG)$
and  $\Pi(\O,G/\R)$, the (equivalence classes of) irreducible
representations of pure strong involutions of $G$.

Let $\mathcal{C}(X(\O,{^\vee}G^{\Gamma}))$ be
\nomenclature{$\mathcal{C}(X(\O,{^\vee}G^{\Gamma}))$}{category
  of constructible sheaves}
the category of ${^\vee}G$-equivariant constructible sheaves of
complex vector spaces on $X(\O,{^\vee}G^{\Gamma})$. This is
an abelian category and its simple objects are parameterized by the
set of complete geometric parameters  $\xi = (S, \tau_{S}) \in
\Xi(\O, {^\vee}G^{\Gamma})$ as follows.
Choose $p\in S$, let  $\chG_p=\Stab_{\chG}(p)$, and choose
a character $\tau_\xi$  of the component group of $\chG_p$ so that
$(p,\tau_\xi)$ is a representative of $\xi$.
Then $\tau_\xi$ pulled back to $\chG_p$ defines
an algebraic vector bundle
\begin{equation}
  \label{bundle}
        {^\vee}G \times_{{^\vee}G_{p} }V \rightarrow S.
\end{equation}
The sheaf of
sections of this vector bundle is, by definition, a
${^\vee}G$-equivariant local system on $S$ (\cite{ABV}*{Section 7, Lemma 7.3}). Extend this local
system to the closure $\bar{S}$ by zero and then take the direct image
into $X(\O,{^\vee}G^{\Gamma})$ to obtain an irreducible
(\emph{i.e.} simple) ${^\vee}G$-equivariant constructible sheaf
denoted by $\mu(\xi)$ (\cite{ABV}*{(7.10)(c)}).
\nomenclature{$\mu(\xi)$}{irreducible constructible sheaf}

Now let
$\mathcal{P}(X(\O,{^\vee}G^{\Gamma}))$ be the abelian category of
\nomenclature{$\mathcal{P}(X(\O,{^\vee}G^{\Gamma}))$}{category of perverse sheaves}
${^\vee}G$-equivariant perverse sheaves of complex vector spaces on
$X(\O,{^\vee}G^{\Gamma})$  \cite{Lunts}*{Section 5}.  The simple
objects of $\mathcal{P}(X(\O,{^\vee}G^{\Gamma}))$ are defined
from $\xi = (S, \tau_{S}) \in \Xi({^\vee}G^{\Gamma}, \O)$
and the algebraic vector bundle (\ref{bundle}) by taking the
\emph{intermediate} extension \cite{bbd}*{Section 2} to the closure $\bar{S}$
instead of the extension by zero.  This is denoted $P(\xi)$
 (\cite{ABV}*{(7.10)(d)}).
It is an irreducible ${^\vee}G$-equivariant
perverse sheaf on $X(\O,{^\vee}G^{\Gamma})$.
\nomenclature{$P(\xi)$}{irreducible perverse sheaf}

The Grothendieck groups of the two categories
$\mathcal{C}(X(\O,{^\vee}G^{\Gamma}))$ and
$\mathcal{P}(X(\O,{^\vee}G^{\Gamma}))$ are canonically isomorphic
(\cite{bbd}, \cite{ABV}*{Lemma 7.8}).  We identify the two Grothendieck
groups via this
isomorphism and denote them by $KX(\O,{^\vee}G^{\Gamma})$.
\nomenclature{$KX(\O,{^\vee}G^{\Gamma})$}{Grothendieck group}
This Grothendieck group has two natural bases
$$\{\mu(\xi)\mid \xi \in
\Xi(\O,\LG)\}
\quad \mbox{ and }\quad \{P(\xi)\mid \xi \in \Xi(\O,\LG)\}.$$

Suppose $\xi=(S,\tau)\in \Xi(\O,\LG)$.
We define two invariants associated to $\xi$.
First, let $d(\xi)\nomenclature{$d(\xi)$}{dimension of the orbit attached to a complete geometric parameter}$ be the dimension of $S_\xi$.
Second, associated to $\xi$
is the representation $\pi(\xi)$ of a pure strong involution of
$G$ (\ref{localLanglandspure}). Let $e(\xi) = \pm1 \nomenclature{$e(\xi)$}{Kottwitz invariant attached to a complete geometric parameter}$ be the Kottwitz invariant of the underlying real
form of this strong involution (\cite{ABV}*{Definition 15.8}).

As discussed in the introduction, we define a perfect pairing
\begin{equation}
\label{pairing}
\langle\,\cdot,\cdot\rangle:K\Pi(\O,G/\R)\times KX(\O,\LG)\rightarrow\Z
\end{equation}
by
\begin{equation*}
\langle M(\xi), \mu(\xi') \rangle = e(\xi)\, \delta_{\xi, \xi'}.
\end{equation*}
The pairing also takes a simple form relative to the bases given by
$\pi(\xi)$ and $P(\xi')$ (\cite{ABV}*{Theorem 1.24, Sections
  15-17}).  We state it as a theorem.
\begin{thm}
  \label{ordpairing}
The pairing \eqref{pairing} satisfies
$$
\langle \pi(\xi), P(\xi') \rangle = (-1)^{d(\xi)} \, e(\xi) \,
\delta_{\xi, \xi'}, \quad \xi,\xi'\in \Xi(\O,\LG).
$$
\end{thm}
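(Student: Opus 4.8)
The plan is to deduce Theorem \ref{ordpairing} from the definition of the pairing \eqref{pairing} on the basis $\{M(\xi), \mu(\xi')\}$ by performing a careful change of basis to $\{\pi(\xi), P(\xi')\}$, tracking exactly the signs that intervene. Both changes of basis are governed by Kazhdan--Lusztig--Vogan theory. On the representation side, one has the standard-versus-irreducible comparison
$$
M(\xi) = \sum_{\xi'} m_{\xi,\xi'}\, \pi(\xi'), \qquad
\pi(\xi) = \sum_{\xi'} (-1)^{d(\xi)-d(\xi')}\, \overline{m}_{\xi,\xi'}\, M(\xi'),
$$
where $m_{\xi,\xi'}$ are multiplicities built from KLV polynomials (upper-triangular with respect to the closure order, with $1$'s on the diagonal), and $\overline{m}_{\xi,\xi'}$ is the inverse KLV matrix. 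On the sheaf side, the transition between the constructible basis $\{\mu(\xi)\}$ (extension by zero) and the perverse basis $\{P(\xi)\}$ (intermediate extension) is given by the \emph{same} KLV combinatorics, by the Beilinson--Bernstein/Riemann--Hilbert picture used in \cite{ABV}. The key algebraic input is that the two transition matrices are, up to the signs $(-1)^{d(\xi)}$, transpose-inverse to one another; this is precisely the content of \cite{ABV}*{Sections 16--17}, where it is phrased via Vogan duality.

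Concretely, I would first recall from \cite{ABV}*{Lemma 7.8} and the discussion there that in $KX(\O,\LG)$ one has $P(\xi) = \sum_{\xi'} (-1)^{d(\xi)-d(\xi')} \, c_{\xi,\xi'}\, \mu(\xi')$ with $c_{\xi,\xi'}$ the KLV polynomials evaluated at $1$, matching the $\overline m$ above, while $\mu(\xi) = \sum_{\xi'} c'_{\xi,\xi'}\,\mu(\xi')$-style expansions dualize the representation-theoretic ones. Then I would simply compute
$$
\langle \pi(\xi), P(\xi')\rangle
= \sum_{\eta,\eta'} (-1)^{d(\xi)-d(\eta)}\,\overline m_{\xi,\eta}\,(-1)^{d(\xi')-d(\eta')}\,\overline c_{\xi',\eta'}\,\langle M(\eta),\mu(\eta')\rangle,
$$
insert $\langle M(\eta),\mu(\eta')\rangle = e(\eta)\,\delta_{\eta,\eta'}$, and use the orthogonality relation between the KLV matrix and its inverse. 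The diagonal terms force $\eta = \eta'$, the Kronecker delta in $\xi,\xi'$ emerges, $e(\eta) = e(\xi)$ on the surviving diagonal, and the accumulated sign collapses to $(-1)^{d(\xi)}$ once one uses that the two sign-twisted transition matrices are genuinely inverse (so the cross-terms cancel and only $\xi' = \xi$ survives). The one nontrivial point that makes $e(\xi)$ rather than $e(\xi')$ appear, and that makes the dimension signs telescope to a single $(-1)^{d(\xi)}$, is the compatibility of the Kottwitz sign with the KLV duality — this is exactly \cite{ABV}*{Theorem 1.24}, which I am entitled to cite.

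The main obstacle is not the bookkeeping of signs per se but justifying that the representation-theoretic change of basis and the sheaf-theoretic change of basis are controlled by the \emph{same} polynomials and fit together into a perfect-pairing statement; in \cite{ABV} this is the substantial theorem relating Kazhdan--Lusztig--Vogan polynomials on the group side to intersection-cohomology stalks on the geometric-parameter side (Beilinson--Bernstein localization plus Riemann--Hilbert). Since that machinery is already established in \cite{ABV}*{Chapters 7, 16, 17} and the final formula is \cite{ABV}*{Theorem 1.24}, the honest content of the proof here is really just to record that Theorem \ref{ordpairing} \emph{is} that result, transcribed into the present notation (in particular, into the pure-strong-involution setting of Section \ref{llc}), and to check that the restriction to pure inner forms does not disturb any of the signs. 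I would therefore phrase the proof as a citation of \cite{ABV}*{Theorem 1.24} together with the observation that our $e(\xi)$, $d(\xi)$, $M(\xi)$, $\mu(\xi)$, $\pi(\xi)$, $P(\xi)$ are the restrictions to pure parameters of the corresponding objects of \cite{ABV}, under which the pairing \eqref{pairing} is the restriction of theirs.
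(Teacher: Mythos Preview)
Your proposal is correct and aligns with the paper's treatment: the paper does not prove Theorem \ref{ordpairing} itself but simply records it as \cite{ABV}*{Theorem 1.24, Sections 15--17}, and your sketch accurately describes the content behind that citation (the change of basis governed by KLV polynomials and the identity $m_r(\xi',\xi) = (-1)^{d(\xi)-d(\xi')} c_g(\xi,\xi')$, which the paper then states separately as Proposition \ref{ordpairingequiv}). Your closing suggestion---to phrase the proof as a citation of \cite{ABV}*{Theorem 1.24} together with the observation that the pure-inner-form restriction is harmless---is exactly what the paper does.
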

This pairing allows us to regard elements of $K\Pi(\O,G/\R)$ as
$\mathbb{Z}$-linear functionals of $ KX(\O,\LG)$.
The microlocal multiplicity maps
$\chimic_S$  discussed in (\ref{mmm}) are $\mathbb{Z}$-linear
functionals on  $KX(\O,\LG)$.  Before making the obvious connection with the
pairing \eqref{pairing}, we review some facts needed to define $\chi_{S}^{\mathrm{mic}}$.
To begin, we consider the category of
$\ch G$-equivariant coherent $\mathcal{D}$-modules on
$X(\O,\LG)$. We
denote this category by
$\D(X(\O,\LG))$.
Here,
$\mathcal{D}$ is the sheaf of algebraic differential operators on
$X(\O,\LG)$
(\cite{Boreletal}*{VIII.14.4}, \cite{ABV}*{Section 7}).
Convenient references for equivariant $\mathcal{D}$-modules are \cite{Hotta}
and \cite{SmithiesTaylor}.
\nomenclature{$\mathcal{D} (X(\O,
  {^\vee}G^{\Gamma}))$}{category of $\mathcal{D}$-modules}

The equivariant Riemann-Hilbert correspondence (\cite{Boreletal}*
{Theorem VIII.14.4}) induces an isomorphism
\begin{equation}
\label{dr}
DR : K\D(X(\O,\LG))\rightarrow K X(\O,\LG).
\nomenclature{$DR$}{Riemann-Hilbert correspondence}
\end{equation}
For simplicity we write $X=X(\O,\LG)$, and $\D X=\D(X(\O,\LG))$.

The sheaf $\D$ is filtered by the order of the differential operators, and
the associated graded ring is canonically isomorphic to
$\mathcal{O}_{T^{*}(X)}$, the coordinate ring of the cotangent bundle
of $X$ (\cite{Hotta}*{Section 1.1}).
Suppose $\mathcal M\in\D X$. Then $\mathcal M$ has a filtration
such that the resulting graded sheaf
$\mathrm{gr} \mathcal{M} \nomenclature{$\mathrm{gr} \mathcal{M}$}{graded sheaf of a $\mathcal D$-module $\mathcal M$}$ is a
coherent $\mathcal{O}_{T^{*}(X)}$-module (\cite{Hotta}*{Section 2.1}).

The support of $\mathrm{gr} \mathcal{M}$ is a
closed subvariety of $T^{*}(X)$ (\cite{ABV}*{Definition 19.7}).  Each minimal ${^\vee}G$-invariant
component
of this closed subvariety is the closure of a conormal
bundle $T^{*}_{S}(X)$, where
\nomenclature{$T^{*}_{S}(X)$}{conormal bundle}
 $S \subset X$ is a ${^\vee}G$-orbit (\cite{ABV}*{Proposition 19.12(c)}).
Therefore to each conormal bundle $T^{*}_{S}(X)$ we may
attach a non-negative integer, denoted by
$\chimic_{S}(\mathcal{M})$, which (when nonzero) is the length of
the module $\mathrm{gr}\mathcal{M}$ localized at $T^{*}_{S}(X)$
\cite{Hotta}*{Section 2.2}.

The
characteristic cycle of $\mathcal{M}$ is defined as
$$\mathrm{Ch}(\mathcal{M}) = \sum_{S\in X/\ch G} \chi^{\mathrm{mic}}_{S}( \mathcal{M})
\ \overline{T_{S}^{*}(X)}.
\nomenclature{$\mathrm{Ch}(\mathcal{M})$}{characteristic cycle
of a $\mathcal D$-module $\mathcal M$}
$$
For a given $\ch G$-orbit $S$  we may regard $\chi_{S}^{\mathrm{mic}}$ as a
function on $\mathcal{D}$-modules which is additive for
short exact sequences (\cite{ABV}*{Proposition 19.12(e)}).  It
 therefore defines a homomorphism
$K \mathcal{D}(X(\O, {^\vee}G^{\Gamma}))
\rightarrow \Z$,
called the \emph{microlocal multiplicity} along $S$.
\nomenclature{$\chi^{\mathrm{mic}}_{S}$}{microlocal multiplicity map}
Using the isomorphism (\ref{dr}), we interpret this as a homomorphism
\begin{equation*}  \label{micromult}
\chi^{\mathrm{mic}}_{S} : K X(\O, {^\vee}G^{\Gamma})
\rightarrow \mathbb{Z}.
\end{equation*}

We now return to the pairing (\ref{pairing}) and its relationship to
$\chi^{\mathrm{mic}}_{S}$.  This relationship defines $\etaABV_{\psi_G}$.
We first define $\eta^{\mathrm{mic}}_{\psi_G}\in K\Pi(\O,G/\R)$ to be the element
of $K\Pi(\O,G/\R)$ corresponding via the pairing to
the element $\chimic_S$ in the dual of
$KX(\O,\LG)$.
Explicitly working through the identifications in the definition we see
\begin{equation}
  \label{etapsi}
  \etamic_{\psi_G}=
 \sum_{\xi \in \Xi(\O,\LG)}
(-1)^{d(S_{\xi}) - d(S_{\psi_{G}})} \ \chi^{\mathrm{mic}}_{S_{\psi_{G}}}(P(\xi)) \, \pi(\xi).
\nomenclature{$\eta^{\mathrm{mic}}_{\psi_{G}}$}{stable virtual character}
\nomenclature{$d(S)$}{dimension of $S$}
\end{equation}

An important result of Kashiwara and Adams-Barbasch-Vogan is
\begin{prop}[\cite{ABV}*{Theorem 1.31, Corollary 19.16}]
$\etamic_{\psi_G}$ is a stable virtual character.
\end{prop}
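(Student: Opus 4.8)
The statement is \cite{ABV}*{Theorem 1.31}, and the plan is to recall how it is deduced from \cite{ABV}*{Corollary 19.16} together with the stability of the pseudopacket characters. By the defining formula \eqref{etapsi}, $\etamic_{\psi_G}$ is the element of $K\Pi(\O,G/\R)$ corresponding, under the perfect pairing \eqref{pairing}, to the microlocal multiplicity functional $\chimic_{S_{\psi_G}} \in \mathrm{Hom}_{\Z}(KX(\O,\LG),\Z)$. So it is enough to show that the virtual character attached by \eqref{pairing} to $\chimic_{S}$ is stable for every $\chG$-orbit $S \subset X(\O,\LG)$.

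First I would bring in the stability of the pseudopacket characters. For a $\chG$-orbit $S$, let $\chi^{\mathrm{loc}}_{S} \in \mathrm{Hom}_{\Z}(KX(\O,\LG),\Z)$ be the functional sending a constructible sheaf to the dimension of its stalk at a point of $S$, and let $\etaloc_{S} \in K\Pi(\O,G/\R)$ be the virtual character it determines via \eqref{pairing}. By \cite{ABV}*{Theorem 1.29} each $\etaloc_{S}$ is stable; I would take this as a black box. It is the one genuinely deep input, resting on the geometric reformulation of stability in \cite{ABV}*{Chapter 18} and ultimately on the geometrization of Shelstad's endoscopic transfer in \cite{ABV}*{Chapter 26}. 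In particular the $\Z$-span $\mathcal{S} \subset K\Pi(\O,G/\R)$ of $\{\etaloc_{S}\}_{S}$ consists of stable virtual characters.

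Next I would invoke the microlocal index formula relating characteristic cycles to stalk Euler characteristics (Kashiwara, in the equivariant packaging of \cite{ABV}*{Chapter 19}). It provides, for each ordered pair $(S,S')$ of $\chG$-orbits, an integer $n_{S,S'}$ with $n_{S,S} = \pm 1$ and $n_{S,S'} = 0$ unless $S' \subset \overline{S}$, such that $\chimic_{S} = \sum_{S'} n_{S,S'}\, \chi^{\mathrm{loc}}_{S'}$ as linear functionals on $KX(\O,\LG)$; the matrix $(n_{S,S'})$ is triangular for the closure order with units on the diagonal, hence invertible over $\Z$. Applying the perfect pairing \eqref{pairing}, which by construction intertwines the functional and the virtual-character descriptions, transports this identity to $\etamic_{S} = \sum_{S'} n_{S,S'}\, \etaloc_{S'} \in \mathcal{S}$. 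Thus $\etamic_{S}$ is stable for every $S$, and in particular $\etamic_{\psi_G}$ is stable.

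The main obstacle is not in this bookkeeping but in the imported result \cite{ABV}*{Theorem 1.29}: identifying the geometric notion of stability of \cite{ABV} with the analytic one (invariance under stable conjugacy) for the $\etaloc_{S}$ is the substantive content, and it is used here without reproof. Within the present argument the only thing demanding care is that the index-formula matrix $(n_{S,S'})$ has integer entries and is triangular for the closure order, so that $\etamic_{S}$ genuinely lies in the integral span $\mathcal{S}$ of the stable characters rather than merely in its rational span.
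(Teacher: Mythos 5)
Your proposal is correct and is essentially the argument the paper points to: the paper gives no independent proof, citing \cite{ABV}*{Theorem 1.31, Corollary 19.16}, and that proof is exactly your reduction — Kashiwara's index theorem (Corollary 19.16) expresses $\chimic_{S}$ as an integral, closure-order-triangular combination of the stalk functionals $\chi^{\mathrm{loc}}_{S'}$, and the stability of the corresponding pseudopacket characters $\etaloc_{S'}$ (\cite{ABV}*{Theorem 1.29}) then gives stability of $\etamic_{\psi_G}$ via the pairing. No gaps beyond the inputs you correctly flag as black boxes.
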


The \emph{microlocal packet} $\Pi^{\mathrm{mic}}_{\psi_{G}}$ of $\psi_{G}$  is
defined to be the irreducible representations
in the support of $\etamic_{\psi_{G}}$. In other words
$$
\Pi^{\mathrm{mic}}_{\psi_{G}}  = \{ \pi(\xi) : \xi \in \Xi(\O\LG)\mid
\chimic_{S_{\psi_{G}}}(P(\xi)) \neq 0\}.
\nomenclature{$\Pi_{\psi_{G}}^{\mathrm{mic}}$}{microlocal packet}
$$
This is a set of irreducible representations of pure strong involutions of $G$.
We are primarily interested in the packet for the quasisplit strong
involutions.
We therefore define
\begin{equation}
\label{etapsiabv}
  \eta^{\mathrm{ABV}}_{\psi_{G}} = \eta_{\psi_{G}}^{\mathrm{mic}}(\delta_{q})
\nomenclature{$\eta^{\mathrm{ABV}}_{\psi_{G}}$}{
stable virtual character defining the ABV-packet}
\nomenclature{$\eta_{\psi_{G}}^{\mathrm{mic}}(\delta_{q})$}{}
\end{equation}
to be the restriction of $\etamic_{\psi_{G}}$ to the submodule of $K
\Pi(\O, G/\mathbb{R})$ generated by the representations in
$\Pi(\O, G(\mathbb{R}, \delta_{q}))$.  The $\mathrm{ABV}$-packet
$\Pi_{\psi_{G}}^{\mathrm{ABV}}$
is defined as the support of $\eta^{\mathrm{ABV}}_{\psi_{G}}$, that is
\begin{equation}
\label{abvdef}
\PiABV_{\psi_{G}} = \{ \pi(\xi) : \xi \in \Xi(\O, \LG),
\chimic_{S_{\psi_{G}}}(P(\xi)) \neq 0, \pi(\xi)\in \Pi(G(\R,\delta_q))
\}.
\nomenclature{$\Pi_{\psi_{G}}^{\mathrm{ABV}}$}{ABV-packet}
\end{equation}

We conclude this section with a restatement of Theorem \ref{ordpairing}.
Define the representation-theoretic transition
matrix $m_r$ by
\begin{equation}
\label{orddecomp}
M(\xi)  = \sum_{\xi' \in \Xi(\O, {^\vee}G^{\Gamma})}
m_r(\xi',\xi) \, \pi(\xi').
\nomenclature{$m_r(\xi',\xi)$}{representation-theoretic multiplicity}
\end{equation}
Define the geometric ``transition matrix'' $c_{g}$ by
\begin{equation}
  \label{geomat}
P(\xi) = \sum_{\xi' \in \Xi(\O,
  {^\vee}G^{\Gamma})} (-1)^{d(\xi)}\, c_{g}(\xi', \xi) \,
\mu(\xi').
\end{equation}
(see \cite{ABV}*{(7.11)(c)}).
\nomenclature{$c_{g}(\xi', \xi)$}{signed sheaf-theoretic multiplicity}
Then \cite{ABV}*{Corollary 15.13} says
\begin{prop}
  \label{ordpairingequiv}
Theorem \ref{ordpairing} is
equivalent to the identity
\begin{equation}
\label{pairingmc}
m_r(\xi', \xi) = (-1)^{d(\xi) - d(\xi')} \, c_g(\xi, \xi').
\end{equation}
\end{prop}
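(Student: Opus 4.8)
The plan is to treat this as bookkeeping: the pairing \eqref{pairing} is perfect, and the claim simply compares how it looks in the two pairs of bases. Concretely, I would evaluate $\langle M(\xi),P(\xi')\rangle$ in two ways. Expanding $P(\xi')$ by \eqref{geomat} and using the defining formula $\langle M(\xi),\mu(\eta)\rangle = e(\xi)\delta_{\xi,\eta}$ of \eqref{pairing}, only the term $\eta=\xi$ survives, giving $\langle M(\xi),P(\xi')\rangle = (-1)^{d(\xi')}\,e(\xi)\,c_g(\xi,\xi')$. Expanding instead $M(\xi)$ by \eqref{orddecomp} and applying Theorem \ref{ordpairing} to each term $\langle\pi(\eta),P(\xi')\rangle = (-1)^{d(\eta)}e(\eta)\delta_{\eta,\xi'}$, the sum collapses to $(-1)^{d(\xi')}\,e(\xi')\,m_r(\xi',\xi)$. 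Equating the two and cancelling $(-1)^{d(\xi')}$ yields $e(\xi)\,c_g(\xi,\xi') = e(\xi')\,m_r(\xi',\xi)$; that is, Theorem \ref{ordpairing} is equivalent to $m_r(\xi',\xi) = e(\xi)e(\xi')\,c_g(\xi,\xi')$.

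To match this with \eqref{pairingmc} I would then identify $e(\xi)e(\xi')$ with the sign $(-1)^{d(\xi)-d(\xi')}$ on the common support of $m_r(\xi',\xi)$ and $c_g(\xi,\xi')$, using two structural inputs from \cite{ABV}. First, every composition factor of a standard module $M(\xi)$ is a representation of the same pure real form as $M(\xi)$, so $e(\xi')=e(\xi)$ whenever $m_r(\xi',\xi)\neq0$. Second, the purity of the intersection cohomology sheaves $P(\xi')$ (\cite{ABV}*{(7.11)}, \cite{bbd}) forces $(-1)^{d(\xi)-d(\xi')}c_g(\xi,\xi')\ge0$, while $m_r(\xi',\xi)\ge0$ is a genuine composition multiplicity. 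Together with $e(\xi)c_g(\xi,\xi')=e(\xi')m_r(\xi',\xi)$ these give, on the common support, $e(\xi)e(\xi')=(-1)^{d(\xi)-d(\xi')}$ and hence $m_r(\xi',\xi)=(-1)^{d(\xi)-d(\xi')}c_g(\xi,\xi')$, which is \eqref{pairingmc}. For the converse I would run the same chain backwards: given \eqref{pairingmc}, invert the unitriangular relation \eqref{orddecomp} to write $\pi(\xi)=\sum_\eta m_r^{-1}(\eta,\xi)M(\eta)$, pair against $P(\xi')$ by the first computation above, substitute \eqref{pairingmc} for $c_g(\eta,\xi')$, and telescope using $m_r m_r^{-1}=\mathrm{id}$ and the same real-form and parity facts to recover $\langle\pi(\xi),P(\xi')\rangle=(-1)^{d(\xi)}e(\xi)\delta_{\xi,\xi'}$, that is, Theorem \ref{ordpairing}.

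The arithmetic is routine once the sign conventions of \eqref{orddecomp} and \eqref{geomat} are pinned down; the hard part, and the only genuinely non-formal ingredient, is the constancy of the parity of $d(\cdot)$ along the composition factors of a standard module, compatibly with the real-form invariance of the Kottwitz sign $e$. This is precisely where one must lean on the geometry of \cite{ABV} (Chapters 7 and 15) — the purity of the intersection cohomology complexes on $X(\O,\LG)$ — rather than on formal linear algebra, and it is what upgrades Theorem \ref{ordpairing} to a statement about honest non-negative multiplicities.
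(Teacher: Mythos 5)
Your formal skeleton---evaluating $\langle M(\xi),P(\xi')\rangle$ in both pairs of bases and reading off a relation between $m_r$ and $c_g$---is the right idea; it is how the paper proves the twisted analogue, Proposition \ref{p:twist} (via Lemma \ref{invertmultmat}), while for the untwisted statement it simply invokes \cite{ABV}*{Corollary 15.13}. The genuine gap is in how you manufacture the sign $(-1)^{d(\xi)-d(\xi')}$. Your two key inputs are applied with incompatible normalizations of $c_g$: if the expansion of $P(\xi')$ carries the overall sign $(-1)^{d(\xi')}$ attached to the perverse index, as literally printed in \eqref{geomat} and as in your first computation, then parity vanishing of the IC stalks gives $c_g(\xi,\xi')\ge 0$, \emph{not} $(-1)^{d(\xi)-d(\xi')}c_g(\xi,\xi')\ge 0$; if instead the sign is attached to the summation index, as in the twisted formula \eqref{extsheafmult} and as forced by \eqref{cgP}, then your purity inequality is correct but your first evaluation of $\langle M(\xi),P(\xi')\rangle$ acquires an extra $(-1)^{d(\xi)-d(\xi')}$ and your intermediate identity $e(\xi)c_g(\xi,\xi')=e(\xi')m_r(\xi',\xi)$ is off by exactly that sign. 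As written, your chain would prove that $d(\xi)\equiv d(\xi')\pmod 2$ whenever $m_r(\xi',\xi)\neq 0$---your closing paragraph asserts this ``parity constancy along composition factors'' as the key ingredient---and that statement is false: already for $\mathrm{GL}_2(\mathbb{R})$ the standard module whose Langlands quotient is a character contains the relative discrete series as a constituent, and the two parameters are attached to orbits of dimensions $0$ and $1$. Indeed, if parity constancy held on the support of $m_r$, the sign in \eqref{pairingmc} would be vacuous; it is there precisely because it fails. The converse direction suffers the same defect, since there the parity fact would be needed with no Theorem available to derive it from.

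The repair is simpler than what you attempt and needs no purity at all. With the sign on the summation index, the double evaluation gives $(-1)^{d(\xi)}e(\xi)\,c_g(\xi,\xi')=(-1)^{d(\xi')}e(\xi')\,m_r(\xi',\xi)$, so Theorem \ref{ordpairing} is formally equivalent to $m_r(\xi',\xi)=(-1)^{d(\xi)-d(\xi')}e(\xi)e(\xi')\,c_g(\xi,\xi')$, and the only non-formal input is your first (correct) observation that the Kottwitz sign is constant along the composition factors of a standard module, i.e.\ $e(\xi)=e(\xi')$ whenever $m_r(\xi',\xi)\neq 0$; since the identity shows $m_r$ and $c_g$ vanish together, this suffices in both directions, the converse being run backwards using the unitriangularity of $m_r$ exactly as in the paper's proof of Proposition \ref{p:twist}, where the role of $e$-constancy is played by the constancy of $(-1)^{l^I(\xi)-d(\xi)}$ from \cite{AMR1}*{Proposition B.1}. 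You were partly misled by the source: the printed sign in \eqref{geomat} is not consistent with \eqref{extsheafmult} and \eqref{cgP}; but the argument as you wrote it is internally inconsistent, and the purity and positivity apparatus should be dropped rather than patched.
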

This equation relates the decomposition of characters with the
decomposition of sheaves.

\subsection{The pairing in the twisted case}
\label{conperv}

As discussed in the previous section, the pairing \eqref{pairing}
plays a fundamental role in the
definition of ABV-packets. We now discuss a twisted version of this
pairing  for $\GL_N$.

We replace $K\Pi(\O,\GL_N(\R))$ with the $\mathbb{Z}$-module
$K\Pi(\O,\GL_N(\R),\vartheta)$ of twisted characters
(\ref{twistgroth1}).
Associated to
$\xi\in\Xi(\O, \ch\GL_{N}^{\Gamma})^{\vartheta}$ are an irreducible
representation $\pi(\xi)\in \Pi(\O ,\GL_N(\R))^\vartheta$ as well as a
canonical extension
$\pi(\xi)^+$ to $\GLext$ (Corollary \ref{twistclass2}). The twisted character of $\pi(\xi)^+$ is an
element of
the space $K\Pi(\O,\GL_N(\R),\vartheta)$ of twisted characters,
and this gives a basis of $K\Pi(\O,\GL_N(\R),\vartheta)$ parameterized
by $\Xi(\O, \ch \GL_N^{\Gamma})^{\vartheta}$.
See \eqref{KPiOGLext} and the end of Section \ref{extrepsec}.

The twisted characters are to be paired with twisted sheaves which are
elements in a $\Z$-module generalizing $KX(\O,\LG)$.
The twisted objects for this pairing are given
in \cite{ABV}*{(25.7)} (see also \cite{Christie-Mezo}*{Section 5.4}).  We
provide a short summary.

Let $s \in
\mathrm{GL}_{N}$ be an element such that
\begin{equation}
  \label{sigmaaut}
\upsigma
= \mathrm{Int}(s) \circ \vartheta
\end{equation}
is an automorphism of
$\mathrm{GL}_{N}$ of finite order.  Then $\upsigma$ acts on
$X(\O, {^\vee}\mathrm{GL}_{N}^{\Gamma})$ in a manner which
is compatible with the ${^\vee}\mathrm{GL}_{N}$-action (\cite{ABV}*{(25.1)}),
and so also acts on its $^{\vee}\mathrm{GL}_{N}$-equivariant sheaves.
\nomenclature{$\upsigma$}{automorphism of $\mathrm{GL}_{N}$}

Let $\mathcal{P}(X(\O, {^\vee}\mathrm{GL}_{N}^{\Gamma});
\upsigma)$
\nomenclature{$\mathcal{P}(X(\O, {^\vee}\mathrm{GL}_{N}^{\Gamma});
\upsigma)$}{category of perverse sheaves with a compatible $\upsigma$-action}
 be the category of ${^\vee}\mathrm{GL}_{N}$-equivariant perverse
sheaves with a compatible $\upsigma$-action.  An object in this category
is a pair $(P,\upsigma_{P})$ in which $P$ is an equivariant perverse sheaf and
$\upsigma_{P}$ is an automorphism of $P$ which is compatible with
$\upsigma$ (\cite{Christie-Mezo}*{Section 5.4}).    Similarly, we define
$\mathcal{C}(X(\O, {^\vee}\mathrm{GL}_{N}^{\Gamma});
\upsigma)$
\nomenclature{$\mathcal{C}(X(\O, {^\vee}\mathrm{GL}_{N}^{\Gamma});
\upsigma)$}{category of constructible sheaves  with a compatible $\upsigma$-action}
to be the category of ${^\vee}\mathrm{GL}_{N}$-equivariant
constructible
sheaves with a compatible $\upsigma$-action.  An object in this category
is a pair $(\mu,\upsigma_{\mu})$ in which $\mu$ is an equivariant
constructible sheaf and
$\upsigma_{\mu}$ is an automorphism of $\mu$ which is compatible with
$\upsigma$.

\nomenclature{$K(X(\O,
  {^\vee}\mathrm{GL}_{N}^{\Gamma});\upsigma)$}{Grothendieck group}
The Grothendieck groups of these two categories are isomorphic
\cite{Christie-Mezo}*{(35)}.  We identify them and denote  their
Grothendieck groups by
$K(X(\O, {^\vee}\mathrm{GL}_{N}^{\Gamma});\upsigma)$.
This is the sheaf-theoretic analogue
of $K \Pi(\mathrm{GL}_{N}(\mathbb{R}) \rtimes \langle \vartheta
\rangle)$.

As with the representations (see (\ref{canext})), we seek a canonical choice of
extension of $P(\xi)$, \emph{i.e.} a canonical automorphism
$\upsigma_{P(\xi)}$ of $P(\xi)$.

\begin{lem}
\label{cansheaf}
Let ${^\vee}G = {^\vee}\mathrm{GL}_{N}$,  $\xi = (S, \tau_{S}) \in
\Xi(\O,
{^\vee}\mathrm{GL}_{N}^{\Gamma})^{\vartheta}$, $p \in S$, and
(\ref{bundle}) be the equivariant vector bundle representing
$\mu(\xi)$.
\begin{enumerate}[label={(\alph*)}]
\item Suppose $p' \in S$ and $p'= a\cdot p$ for some $a \in {^\vee}
  \mathrm{GL}_{N}$. Then the maps
\begin{align}
\label{bundmap}
(g,v) &\mapsto (ga^{-1},v)\\
\nonumber g\cdot p &\mapsto (ga^{-1}) \cdot p'
\end{align}
define an isomorphism of equivariant vector bundles
\begin{equation}
\label{bundiso1}
{^\vee}G \times_{{^\vee}G_{p}} V \rightarrow {^\vee}G \times_{{^\vee}G_{p'}} V.
\end{equation}
which is  independent of the choice of $a$.

\item There exist canonical choices of pairs
$$
  \mu(\xi)^{+} = (\mu(\xi), \upsigma_{\mu(\xi)}^{+}) \in
  \mathcal{C}(X(\O, {^\vee}\mathrm{GL}_{N}^{\Gamma});
  \upsigma),$$
$$P(\xi)^{+} = (P(\xi), \upsigma_{P(\xi)}^{+}) \in
  \mathcal{P}(X(\O, {^\vee}\mathrm{GL}_{N}^{\Gamma}); \upsigma)$$

such that
if $p \in S$ is fixed by $\upsigma$ then  $\upsigma_{\mu(\xi)}^{+}$
(and $\upsigma_{P(\xi)}^{+}$) acts trivially on the stalk  of
$\mu(\xi)$ (and $P(\xi) \in KX(\O,
{^\vee}\mathrm{GL}_{N}^{\Gamma})$) at $p$.
\end{enumerate}
\nomenclature{$\mu(\xi)^{+}$}{twisted constructible sheaf}
 \nomenclature{$P(\xi)^{+}$}{twisted perverse sheaf}
\end{lem}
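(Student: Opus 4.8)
The plan is to reduce everything to the fact that irreducible ${^\vee}\mathrm{GL}_{N}$-equivariant sheaves are constant sheaves. By the computation in the proof of Lemma~\ref{XXXi}, every stabilizer ${^\vee}G_{p}$ for ${^\vee}G = {^\vee}\mathrm{GL}_{N}$ is a product of general linear groups and so is connected; hence its component group is trivial, $\tau_{S}$ is the trivial character, and the space $V$ in (\ref{bundle}) may be taken to be $\mathbb{C}$ with trivial ${^\vee}G_{p}$-action. For part~(a) this makes (\ref{bundle}) the trivial line bundle $S\times\mathbb{C}$, and the verification is routine: the assignment $[g,v]\mapsto[ga^{-1},v]$ is well defined because $a\,{^\vee}G_{p}\,a^{-1} = {^\vee}G_{p'}$ and $\tau$ is trivial; it is visibly ${^\vee}G$-equivariant; and if $a' = an$ with $n\in{^\vee}G_{p}$ then $[g(a')^{-1},v] = [gn^{-1}a^{-1},v] = [ga^{-1},v]$ in ${^\vee}G\times_{{^\vee}G_{p'}}V$, since $an^{-1}a^{-1}\in{^\vee}G_{p'}$ acts trivially on $v$. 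This gives independence of $a$.

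For part~(b), first note that $\mathrm{Int}(s)$ acts trivially on the set of ${^\vee}G$-orbits, so $\upsigma = \mathrm{Int}(s)\circ\vartheta$ stabilizes $S$ — and hence $\bar{S}$ and $X = X(\O,{^\vee}\mathrm{GL}_{N}^{\Gamma})$ — precisely because $\xi$ is $\vartheta$-fixed and $\tau_{S}$ is trivial. The underlying ${^\vee}G$-equivariant local system of $\mu(\xi)$ on $S$ is the constant sheaf $\underline{\mathbb{C}}_{S}$ with its canonical equivariant structure, and pullback of a constant sheaf along the automorphism $\upsigma|_{S}$ is canonically the same constant sheaf; this produces a canonical isomorphism $\upsigma^{*}\underline{\mathbb{C}}_{S}\to\underline{\mathbb{C}}_{S}$ compatible with the equivariant structure and of order $\mathrm{ord}(\upsigma)$. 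Since $\mu(\xi)$ is obtained from $\underline{\mathbb{C}}_{S}$ by extension by zero along $S\hookrightarrow\bar{S}$ followed by direct image along the closed immersion $\bar{S}\hookrightarrow X$, and both functors commute with pullback along a stratification-preserving automorphism of $X$, propagating this isomorphism yields $\upsigma_{\mu(\xi)}^{+}$ and the object $\mu(\xi)^{+}\in\mathcal{C}(X(\O,{^\vee}\mathrm{GL}_{N}^{\Gamma});\upsigma)$. Repeating the argument with the intermediate extension of $\underline{\mathbb{C}}_{S}[d(\xi)]$ in place of extension by zero (the intermediate extension likewise commutes with pullback along such an automorphism) produces $P(\xi)^{+}$.

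It then remains to check the normalization clause. If $p\in S$ is fixed by $\upsigma$, the stalk $\mu(\xi)_{p}$ is the stalk of $\underline{\mathbb{C}}_{S}$, namely $\mathbb{C}$, and $\upsigma_{\mu(\xi)}^{+}$ acts on it through the stalk at $p$ of the canonical isomorphism $\upsigma^{*}\underline{\mathbb{C}}_{S}\to\underline{\mathbb{C}}_{S}$, which is the identity; the same holds for $P(\xi)$, whose restriction to $S$ is $\underline{\mathbb{C}}_{S}[d(\xi)]$, so that the single nonzero stalk cohomology at $p$ carries the trivial $\upsigma^{+}$-action. Part~(a) guarantees this is unambiguous, i.e.\ that the identification of bundles at two $\upsigma$-fixed points of $S$ intertwines the corresponding $\upsigma^{+}$-actions on stalks. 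The hard part — or rather, the only point that needs care — is canonicity: since $\mathrm{End}(\mu(\xi)) = \mathbb{C}$ and $\mathrm{End}_{\mathrm{perv}}(P(\xi)) = \mathbb{C}$, an arbitrary isomorphism $\upsigma^{*}\mu(\xi)\cong\mu(\xi)$ is determined only up to a scalar, and the content of the lemma is that the constant-sheaf description pins this scalar down unambiguously; one must also confirm that the resulting $\upsigma^{+}$ is compatible with the ${^\vee}G$-equivariant structure in the precise sense of \cite{Christie-Mezo}*{Section 5.4} and that it coincides with the normalization at a $\upsigma$-fixed point whenever one exists. That compatibility bookkeeping, rather than any geometric input, is where the work lies.
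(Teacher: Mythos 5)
Your argument is correct, and part (a) is essentially the paper's own computation (well-definedness from triviality of $\tau$, and independence of $a$ from $a\mapsto aa_{1}$ with $a_{1}\in{^\vee}G_{p}$). For part (b) you take a genuinely different, though equivalent, route. The paper fixes $p$, writes $\upsigma(p)=a\cdot p$, and composes the bundle map induced by $\upsigma$ with the inverse of (\ref{bundiso1}) to get the explicit automorphism $\upsigma^{+}_{\mu(\xi)}(g,v)=(\upsigma(g)a,v)$ of (\ref{expaut}); it then obtains $\upsigma^{+}_{P(\xi)}$ not by functoriality but from the fact that $\mu(\xi)$ occurs with multiplicity one in the decomposition of $P(\xi)$, and checks the normalization by setting $a=1$ at a $\upsigma$-fixed point. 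You instead observe that, the component groups being trivial, the local system on $S$ is canonically the constant sheaf (this canonicity is exactly what part (a) buys: under the trivialization $[g,v]\mapsto(g\cdot p,v)$ the identification is independent of the base point), endow it with the tautological isomorphism $\upsigma^{*}\underline{\mathbb{C}}_{S}\to\underline{\mathbb{C}}_{S}$, and push this through extension by zero, direct image, and intermediate extension. The two constructions agree: under the canonical trivialization the paper's formula (\ref{expaut}) is precisely the constant-sheaf structure $(x,v)\mapsto(\upsigma(x),v)$, and since a compatible automorphism of the simple objects $\mu(\xi)$, $P(\xi)$ is a scalar multiple of any other, it is pinned down by its restriction to the open stratum, so your $P(\xi)^{+}$ coincides with the one defined via multiplicity one as in (\ref{extsheafmult}). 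What the paper's version buys is the explicit stalk-level formula, which is reused later (e.g.\ in (\ref{oneextend}) and (\ref{oneextend1})); what yours buys is a cleaner functorial packaging that treats $\mu(\xi)^{+}$ and $P(\xi)^{+}$ uniformly. One small push-back: your closing paragraph defers the check that the resulting automorphisms are compatible with the ${^\vee}\mathrm{GL}_{N}$-equivariant structure as "bookkeeping"; with either description this is immediate (from (\ref{expaut}), $\upsigma^{+}_{\mu(\xi)}(h\cdot(g,v))=\upsigma(h)\cdot\upsigma^{+}_{\mu(\xi)}(g,v)$, and the constant-sheaf isomorphism is visibly equivariant for the twisted action), so it should simply be stated rather than left as the place "where the work lies".
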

\begin{proof}
Let $(p,\tau)$ and $(p',\tau')$ be representatives of $\xi$.  It is
well-known that the component group ${^\vee}G_{p}/ ({^\vee}G_{p})^{0}$
is trivial for the general linear group (\cite{ABV}*{Lemma 7.5}), and so
$\tau$ is its trivial
quasicharacter.  For the same reason, $\tau'$ is the trivial quasicharacter of
the trivial group. Both $\tau$ and $\tau'$ lift to the trivial
representations of ${^\vee}G_{p}$ and ${^\vee}G_{p'} = a\, {^\vee}G
a^{-1}$ respectively.
By definition
$$(gh,v)  = (g,\tau(h)v) = (g,v), \ (g,v) \in {^\vee}G
\times_{{^\vee}G_{p}} V, \quad  h \in {^\vee}G_{p}.$$
Applying (\ref{bundmap}) to the left-most element, we obtain
$$(gha^{-1},v) = (ga^{-1} aha^{-1},v) = (ga^{-1}, \tau'(aha^{-1})v) = (ga^{-1},v)$$
in ${^\vee}G \times_{{^\vee}G_{p'}} V$. This proves that the map
(\ref{bundmap}) is well-defined.  The map is clearly a
${^\vee}\mathrm{GL}_{N}$-equivariant isomorphism.
The element $a \in {^\vee}G$ is unique up to right-multiplication by
an element in $a_{1} \in {^\vee}G_{p}$.  Since
$$(g(aa_{1})^{-1}, v ) = (ga^{-1} a a_{1}^{-1} a^{-1}, v) =
(ga^{-1},\tau'(aa_{1}^{-1}a^{-1})v) = (ga^{-1},v)$$
in ${^\vee}G \times_{{^\vee}G_{p'}} V$, the isomorphism
(\ref{bundmap}) is independent of the choice of $a$. This proves the
first assertion.

Suppose $p'  = \upsigma(p)  = a \cdot p \in S$.  Then $\upsigma$
induces a bundle isomorphism
$${^\vee}G \times_{{^\vee}G_{p}} V \rightarrow {^\vee}G
\times_{{^\vee}G_{p'}} V,$$
which when composed with the inverse of (\ref{bundiso1}) yields a
canonical automorphism which we set equal to
$\upsigma_{\mu(\xi)}^{+}$.
To be explicit
\begin{equation}
\label{expaut}
\upsigma_{\mu(\xi)}^{+} (g,v) = (\upsigma(g)a, v), \quad (g,v) \in
        {^\vee}G \times_{{^\vee}G_{p}} V
\end{equation}
We identify $\upsigma_{\mu(\xi)}^{+}$ with the unique automorphism of
$\mu(\xi)$ which it determines.

This choice of $\upsigma_{\mu(\xi)}^{+}$ determines a canonical choice
$\upsigma_{P(\xi)}^{+}$ by virtue of the fact that $\mu(\xi)$ occurs
in the decomposition of $P(\xi)$ in  $KX(\O,
{^\vee}\mathrm{GL}_{N}^{\Gamma})$ with multiplicity one ((7.11)(b)
\cite{ABV}, \cite{Christie-Mezo}*{pp. 154-155}).

Finally, suppose $\upsigma$ preserves $p$.  Then $g = a=1$ in
(\ref{expaut}) and the last assertion is proved.
\end{proof}

We now imitate the definition of $K\Pi(\O,\GL_N(\R),\vartheta)$
\eqref{twistgroth1} for the sheaves appearing in Lemma \ref{cansheaf}.
Attached to $\xi\in \Xi(\O,\ch\GL_N^\Gamma)^{\vartheta}$ are
perverse sheaves $P(\xi)^{\pm}$, where $P(\xi)^+$ is defined in
Lemma \ref{cansheaf}, and $P(\xi)^-$ is the unique other choice of extension.
Furthermore, the {\it microlocal traces} of $P(\xi)^\pm$ differ by
sign (\cite{ABV}*{(25.1)(j)}).
Similar comments apply to $\mu(\xi)^{\pm}$.

We are interested only in irreducible sheaves with non-vanishing microlocal
trace.  We consequently follow the definition of
(\ref{twistgroth1}) in defining the quotient
\begin{equation}
  \label{twistsheafgroth}
  K X(\O,\LGL,\upsigma)=
  K(X(\O,\LGL))^\upsigma\otimes
    \mathbb{Z}[U_{2}]/ \langle (P(\xi) \otimes 1) + (P(\xi)\otimes -1)\rangle
\nomenclature{$K (X(\O, {^\vee}\mathrm{GL}^{\Gamma}),
    \upsigma)$}{$\mathbb{Z}$-module of twisted sheaves}
  \end{equation}
  where the quotient runs over $\xi\in\Xi(\O,\LGL)^\vartheta$.

  This is the $\mathbb{Z}$-module which we shall pair with
  $$K\Pi(\O, \mathrm{GL}_{N}(\mathbb{R}))^{\vartheta} \cong
  K\Pi(\O, \mathrm{GL}_{N}(\mathbb{R}), \vartheta)$$
  in Section \ref{pairings}.  We call the elements of this module \emph{twisted
  sheaves}, and remind the reader that these modules are not naturally
Grothendieck groups, even though we have kept the ``$K$'' in the notation.

For reasons that will only become clear in Section \ref{whitsec}, the
definition of our twisted pairing involves some additional signs. The
signs depend on the integral lengths of parameters, which may be
described as follows.

From now on we assume  $\lambda \in \O$ satisfies the regularity condition (\ref{regintdom}).
Let $\xi\in\Xi(\O,\ch\GL_N^\Gamma)^{\vartheta}$.
 Lemma \ref{XXXi}  tells us that associated to $\xi$ is an element $x\in \mathcal{X}_{\ch\rho}$. Set
  $\theta_x=\mathrm{Int}(x)\in\Norm_{\mathrm{GL}_{N}}(H)$.
Let
\begin{subequations}
\renewcommand{\theequation}{\theparentequation)(\alph{equation}}
\label{R}
\begin{equation}
R(\lambda)=\left\{\alpha\in R(\GL_N,H)\mid \langle\lambda,\ch\alpha\rangle\in\Z\right\}
\nomenclature{$R(\lambda)$}{$\lambda$-integral root system}
\end{equation}
be the $\lambda$-integral roots, with positive $\lambda$-integral roots
\begin{equation}
R^{+}(\lambda) = \left\{ \alpha \in R(\lambda)\mid \langle\lambda,\ch\alpha\rangle>0\right\}.
\nomenclature{$R^{+}(\lambda)$}{positive $\lambda$-integral roots}
\end{equation}
\end{subequations}
Define the
\emph{integral   length}, following \cite{ABV}*{(16.16)}, as
\begin{equation}
  \label{intlength}
l^{I}(\xi) = -\frac{1}{2} \left( | \{\alpha \in R^{+}(\lambda) :
\theta_x(\alpha) \in R^{+}(\lambda) \}| + \dim(H^{\theta_x}) \right).
\nomenclature{$l^{I}(\xi)$}{integral length}
\end{equation}
The integral length takes values in the non-positive integers.

Furthermore define
$$
R_{\vartheta}^{+}(\lambda) = \{ \alpha \in
R((\mathrm{GL}_{N}^{\vartheta})^{0}, (H^{\vartheta})^{0})\mid
\langle\lambda,\ch\alpha\rangle\in \Z_{>0}\}.
$$
We define the $\vartheta$\emph{-integral length} by
\begin{equation}
  \label{thetalength}
l^{I}_{\vartheta}(\xi) = -\frac{1}{2} \left( | \{\alpha \in
R^{+}_{\vartheta}(\lambda)  \mid
\theta_x(\alpha) \in R^{+}_{\vartheta}(\lambda) \} |+
\dim((H^{\vartheta})^{\theta_x}) \right).
\end{equation}
This is the integral length for the principal endoscopic group whose dual is
$^{\vee}\GL_N^\vartheta$.  Let us say a little more about this. The $\vartheta$-length was introduced in \cite{AMR}*{Definition 6.2} in a slightly different form. It appears  there in the coefficients of the twisted character formula of a  $\vartheta$-stable Speh representation  (\cite{AMR}*{Equation 6.3.2}).
This mirrors the coefficients of the non-twisted character formula of a Speh representation of the principal endoscopic group.
In Section \ref{pairings} we will see that the $\vartheta$-length is the correct notion of length in the combinatorics of the twisted Hecke module actions. This mirrors the combinatorics of the Hecke module actions for the principal twisted endoscopic group.
\nomenclature{$l^{I}_{\vartheta}(\xi) $}{$\vartheta$-integral length}

Now we define a perfect pairing (under the assumption \eqref{regintdom}):
\begin{equation}
  \label{pair2}
\langle \cdot, \cdot \rangle:  K \Pi(\O,
\mathrm{GL}_{N}(\mathbb{R}), \vartheta)
\times K X(\O, {^\vee}\mathrm{GL}_{N}^{\Gamma}, \upsigma)
\rightarrow \mathbb{Z}
\end{equation}
by setting
\begin{equation}
\label{pairdef2}
\langle M(\xi)^{+}, \mu(\xi')^{+} \rangle = (-1)^{l^{I}(\xi) -
  l^{I}_{\vartheta}(\xi)} \, \delta_{\xi, \xi'}
\end{equation}
for $\xi, \xi' \in \Xi(\O,
{^\vee}\mathrm{GL}_{N}^{\Gamma})^{\vartheta}$.
 The analogue of Theorem \ref{ordpairing} is
\begin{thm}
  \label{twistpairing}
Suppose $\lambda \in \O$ satisfies (\ref{regintdom}).  Define the pairing (\ref{pair2}) by (\ref{pairdef2}).  Then
  $$\langle \pi(\xi)^{+}, P(\xi')^{+} \rangle = (-1)^{d(\xi)} \,
  (-1)^{l^{I}(\xi)-l^{I}_{\vartheta}(\xi)} \, \delta_{\xi, \xi'}$$
where $\xi, \xi' \in
\Xi(\O, {^\vee}\mathrm{GL}_{N}^{\Gamma})^{\vartheta}$.
\end{thm}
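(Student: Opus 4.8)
The plan is to transpose the proof of the untwisted statement, Theorem~\ref{ordpairing}, from \cite{ABV}*{Sections 15--17} to the $\vartheta$-twisted setting, carrying the canonical extensions $M(\xi)^{+}$, $\pi(\xi)^{+}$, $\mu(\xi)^{+}$, $P(\xi)^{+}$ throughout and keeping track of the integral-length signs.  As in Proposition~\ref{ordpairingequiv}, the first step is to recast the assertion as an identity between transition matrices.  Define the twisted analogues of \eqref{orddecomp} and \eqref{geomat} by
\begin{equation*}
M(\xi)^{+}=\sum_{\xi'}m_{r}^{\vartheta}(\xi',\xi)\,\pi(\xi')^{+},\qquad P(\xi)^{+}=\sum_{\xi'}(-1)^{d(\xi)}\,c_{g}^{\vartheta}(\xi',\xi)\,\mu(\xi')^{+},
\end{equation*}
in $K\Pi(\O,\GL_{N}(\R),\vartheta)$ and $KX(\O,\LGL,\upsigma)$ respectively, the sums running over $\Xi(\O,\LGL)^{\vartheta}$.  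Expanding $\langle\pi(\xi)^{+},P(\xi')^{+}\rangle$ in the bases $\{M(\cdot)^{+}\}$ and $\{\mu(\cdot)^{+}\}$ and using the defining formula \eqref{pairdef2}, the theorem reduces to the two assertions \emph{(a)} $c_{g}^{\vartheta}(\xi',\xi)=c_{g}(\xi',\xi)$ and \emph{(b)} $m_{r}^{\vartheta}(\xi',\xi)=(-1)^{(l^{I}(\xi)-l^{I}_{\vartheta}(\xi))+(l^{I}(\xi')-l^{I}_{\vartheta}(\xi'))}\,m_{r}(\xi',\xi)$.  Granted these, in the resulting expansion every sign $(-1)^{l^{I}(\eta)-l^{I}_{\vartheta}(\eta)}$ attached to a summation index $\eta$ occurs squared, so the sum reduces to $(-1)^{l^{I}(\xi)-l^{I}_{\vartheta}(\xi)}$ times the untwisted pairing $\langle\pi(\xi),P(\xi')\rangle$; by Theorem~\ref{ordpairing} for $\GL_{N}$, where the Kottwitz invariant $e(\cdot)$ is identically $1$, this equals $(-1)^{d(\xi)}\,(-1)^{l^{I}(\xi)-l^{I}_{\vartheta}(\xi)}\,\delta_{\xi,\xi'}$.

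Assertion \emph{(a)} follows from Lemma~\ref{cansheaf}.  For $\GL_{N}$ every irreducible equivariant constructible sheaf is a constant sheaf, so $\mu(\xi)^{+}$ is the canonical constant-sheaf extension and $P(\xi)^{+}$ is pinned down by the requirement that $\mu(\xi)^{+}$ occur in it with multiplicity one in $KX(\O,\LGL,\upsigma)$.  The decomposition \eqref{geomat} of $P(\xi)$ into the $\mu(\xi')$ can be computed from stalks, and by Lemma~\ref{cansheaf}(b) the automorphisms $\upsigma_{\mu(\xi')}^{+}$ and $\upsigma_{P(\xi)}^{+}$ act trivially on the stalks at $\upsigma$-fixed points; hence \eqref{geomat} lifts to $KX(\O,\LGL,\upsigma)$ and the twisted multiplicities $c_{g}^{\vartheta}$ agree with the untwisted $c_{g}$.

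Assertion \emph{(b)} is the crux and is carried out in Section~\ref{pairings}.  The Hecke operators of \cite{AVParameters}*{Section 7} act on twisted characters but not on sheaves, so the idea is to transport the sheaf side of the pairing to the character side via the Riemann--Hilbert and Beilinson--Bernstein correspondences, extended $\vartheta$-equivariantly: this is the bijection $P(\xi)^{+}\leftrightarrow\pi(\ch\xi)^{+}$ set up in Sections~\ref{bbvd} and~\ref{duality}, where $\pi(\ch\xi)$ denotes the Vogan dual of $\pi(\xi)$.  Once the perverse sheaves are realised in this way, the recursion computing $c_{g}^{\vartheta}$ becomes a recursion in the twisted Hecke algebra, and is matched against the recursion computing $m_{r}^{\vartheta}$ by the twisted Kazhdan--Lusztig--Vogan algorithm of \cite{LV2014} (together with the computations of Subsection~\ref{tKLV}); this is the twisted counterpart of the argument of \cite{ABV}*{Sections 16--17}.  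The one discrepancy with the untwisted case is in the behaviour of the extensions at $\vartheta$: the constant-sheaf extension is compatible with every Hecke step, whereas the algebraic Atlas extension \eqref{canext}---whose defining sign \eqref{z} is trivial at the preferred parameter---changes along the recursion according to the interaction of the Hecke operators with $\vartheta$, and a direct computation (in the twisted Hecke algebra, using \eqref{intlength} for the ambient integral length and \eqref{thetalength} for that of $\GL_{N}^{\vartheta}$) identifies the accumulated sign with $(-1)^{l^{I}(\xi)-l^{I}_{\vartheta}(\xi)}$.  This gives \emph{(b)}, and with it the theorem.

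We expect the main obstacle to be the $\vartheta$-equivariant extension of the Riemann--Hilbert and Beilinson--Bernstein correspondences together with the bookkeeping needed to align the two a priori unrelated normalisations of extension---the algebraic Atlas extension on the representation side and the canonical constant-sheaf extension on the sheaf side---across the Vogan duality.  This is the familiar ``choice of extension'' problem in twisted harmonic analysis, and it is precisely to absorb the resulting sign that the factor $(-1)^{l^{I}(\xi)-l^{I}_{\vartheta}(\xi)}$ is inserted into the definition \eqref{pairdef2}; once this alignment is made, the rest of the argument follows \cite{ABV} with no essential change.
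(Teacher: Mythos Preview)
Your reduction to the pair of assertions (a) and (b) is formally correct, but assertion (a) is not justified by the argument you give and is in fact false in general, so the strategy collapses.  Lemma~\ref{cansheaf}(b) tells you only that $\upsigma_{P(\xi)}^{+}$ acts trivially on the stalk at a $\upsigma$-fixed point $p\in S_{\xi}$, the orbit \emph{defining} $\xi$.  To compute $c_{g}^{\vartheta}(\xi',\xi)$ you need the trace of $\upsigma_{P(\xi)}^{+}$ on the cohomology stalks of $P(\xi)$ over points of the \emph{other} orbits $S_{\xi'}$, and nothing in Lemma~\ref{cansheaf} controls these.  These stalks are intersection cohomology groups of links and the $\upsigma$-action on them is typically nontrivial; indeed this nontriviality is precisely what the twisted KLV polynomials of \cite{LV2014} encode, and is why $c_{g}^{\vartheta}(\xi',\xi)=c_{g}(\xi'_{+},\xi_{+})-c_{g}(\xi'_{-},\xi_{+})$ differs from $c_{g}(\xi',\xi)=c_{g}(\xi'_{+},\xi_{+})+c_{g}(\xi'_{-},\xi_{+})$.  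Assertion (b) has the same problem on the representation side: you assert that ``a direct computation identifies the accumulated sign'', but that computation \emph{is} the theorem, and no argument is supplied.

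The paper does not attempt to reduce to the untwisted case.  Instead it reformulates the statement via Vogan duality as a pairing between $K\Pi(\O,\GL_{N}(\R),\vartheta)$ and $K\,{^\vee}\Pi(\O,\GL_{N}(\R),\vartheta)$ (Lemma~\ref{twistpairing2}), extends this to a $\mathbb{Z}[q^{1/2},q^{-1/2}]$-valued pairing of Hecke modules, and then proves two structural facts: the pairing is an $\mathcal{H}(\lambda)$-module isomorphism (Proposition~\ref{Hisomorphism}, verified case-by-case through \cite{AVParameters}*{Table 5}) and it intertwines Verdier duality on the two sides (Proposition~\ref{HeckeVerdier}).  These two facts, together with the uniqueness characterization of the twisted KLV bases $C^{\vartheta}(\xi)$ and $C^{\vartheta}({^\vee}\xi)$ in Theorem~\ref{theo:defpoly}, force the orthogonality relation of Theorem~\ref{pairingC}; specialising at $q=1$ via Proposition~\ref{specialpi} then yields Theorem~\ref{twistpairing}.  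The integral-length sign $(-1)^{l^{I}_{\vartheta}(\xi)}$ enters not through a comparison with the untwisted theory but through the definition of the Hecke-module pairing \eqref{pair4}, and it is the case-by-case verification of \eqref{17.17a} that shows this is the correct normalization.
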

The proof of this theorem is the primary purpose of Section
\ref{pairings}. Its proof is modelled on the proof of Theorem \ref{ordpairing}
in \cite{ABV}*{Sections 15-17}.

The signs $(-1)^{l^{I}(\xi)-l^{I}_{\vartheta}(\xi)}$ appear in the
pairing to account for the comparison of extensions given in Section
\ref{whitsec}.  Note that if $\vartheta$ were taken to be the
 identity automorphism then the signs would disappear, and one would
 recover the ordinary pairing (\ref{pairing}) for $\mathrm{GL}_{N}$.

We conclude this section by giving a twisted analogue of Proposition \ref{ordpairingequiv}.
This analogue will only be needed in Sections 7 and 9, so the reader may wish to skip this discussion and return to it
later.

For $\xi, \xi' \in \Xi(\O,
{^\vee}\mathrm{GL}_{N}^{\Gamma})^{\vartheta}$, define $m_{r}(\xi'_{\pm},
\xi_{+})$ to be the multiplicity of the
representation $\pi(\xi')^{\pm}$ in $M(\xi)^{+}$ in the Grothendieck group
$K\Pi(\O, \mathrm{GL}_{N}(\mathbb{R}) \rtimes
\langle \vartheta \rangle)$ (Section \ref{grothchar}).  In other words
\begin{equation*}
\label{extmult}
M(\xi)^{+}  = \sum_{\xi' \in \Xi(\O, {^\vee}G^{\Gamma})^{\vartheta}}
m_{r}(\xi'_{+},\xi_{+}) \, \pi(\xi')^{+}+ m_{r}(\xi'_{-},\xi_{+}) \,
\pi(\xi')^{-}+ \cdots
\end{equation*}
where the omitted summands are irreducible representations of
$\mathrm{GL}_{N}(\mathbb{R}) \rtimes \langle \vartheta \rangle$ of the
second type (Section \ref{grothchar}).  Define
\begin{equation}
  \label{twistmult}
  m^{\vartheta}_{r}(\xi',\xi) = m_{r}(\xi'_{+}, \xi_{+})
  -m_{r}(\xi'_{-}, \xi_{+})
\end{equation}
for $\xi, \xi' \in \Xi(\O,
{^\vee}\mathrm{GL}_{N}^{\Gamma})^{\vartheta}$ (\emph{cf.}
\cite{AvLTV}*{(19.3d)}).
\nomenclature{$m^{\vartheta}_{r}(\xi',\xi)$}{}
By construction, the image of $M(\xi)^{+}$ in $K
\Pi(\O, \mathrm{GL}_{N}(\mathbb{R}), \vartheta)$
(\ref{twistgroth1}) decomposes as
\begin{equation}
M(\xi)^{+} = \sum_{\xi' \in \Xi(\O,
{^\vee}\mathrm{GL}_{N}^{\Gamma})^{\vartheta}} m^{\vartheta}_{r}(\xi',\xi) \, \pi(\xi')^{+}.
\label{twistmult1}
\end{equation}
\begin{lem}
\label{invertmultmat}
The matrix given by
$$m^{\vartheta}_{r}(\xi',\xi), \quad \xi, \xi' \in \Xi(\O,
{^\vee}\mathrm{GL}_{N}^{\Gamma})^{\vartheta}$$
(\ref{twistmult}) is invertible.
\end{lem}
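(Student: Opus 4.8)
The plan is to deduce the invertibility of the twisted multiplicity matrix $m^{\vartheta}_{r}(\xi',\xi)$ from the invertibility of the ordinary representation-theoretic transition matrix $m_{r}(\xi',\xi)$ of \eqref{orddecomp}, which is unipotent (triangular with $1$'s on the diagonal) with respect to the Bruhat-type partial order on $\Xi(\O,\LGL)$. The key point is that restricting to $\mathrm{GL}_{N}(\mathbb{R})$ relates the two matrices in a controlled way. Concretely, I would first take the decomposition \eqref{twistmult1} of $M(\xi)^{+}$ in $K\Pi(\O,\GL_N(\R),\vartheta)$ and apply the isomorphism \eqref{Kisom} (equivalently, restrict the twisted character of $M(\xi)^{+}$ back to a virtual character of $\GL_N(\R)$), obtaining $M(\xi) = \sum_{\xi'} m^{\vartheta}_{r}(\xi',\xi)\,\pi(\xi')$ in $K\Pi(\O,\GL_N(\R))^{\vartheta}$. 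Comparing with the ordinary decomposition \eqref{orddecomp} restricted to $\vartheta$-fixed parameters, I claim $m^{\vartheta}_{r}(\xi',\xi) = m_{r}(\xi',\xi)$ for $\xi,\xi'\in\Xi(\O,\LGL)^{\vartheta}$: both express the multiplicity of the irreducible $\pi(\xi')$ in the standard module $M(\xi)$ on the $\GL_N(\R)$ side, and the contributions of non-$\vartheta$-fixed $\pi(\xi')$ (which come in $\vartheta$-orbits of size two) never appear because $M(\xi)$ is $\vartheta$-fixed, so its Jordan–Hölder constituents either are $\vartheta$-fixed or occur in $\vartheta$-swapped pairs with equal multiplicity — and the latter do not contribute to a sum indexed by $\Xi(\O,\LGL)^{\vartheta}$.

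The one subtlety to nail down is exactly this last sentence: I would argue that $\vartheta$ acts on $\Xi(\O,\LGL)$ compatibly with the $\vartheta$-action on representations (established earlier, the map \eqref{avbij} being $\vartheta$-equivariant), so that the ordinary multiplicity matrix $m_{r}$ is $\vartheta$-equivariant, $m_{r}(\vartheta\xi',\vartheta\xi)=m_{r}(\xi',\xi)$. Hence its restriction to the $\vartheta$-fixed indices is itself a square matrix, and since $m_{r}$ is upper-triangular unipotent with respect to a $\vartheta$-stable partial order (closure order on orbits, which $\vartheta$ preserves), its restriction to $\Xi(\O,\LGL)^{\vartheta}$ is again upper-triangular unipotent, hence invertible. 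Identifying $m^{\vartheta}_{r}$ with this restriction then finishes the proof.

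The main obstacle I anticipate is justifying the identity $m^{\vartheta}_{r}(\xi',\xi) = m_{r}(\xi',\xi)$ cleanly, i.e. checking that passing from $M(\xi)^{+}$ in $K\Pi(\O,\GLext)$ down through the quotient \eqref{twistgroth1} to $K\Pi(\O,\GL_N(\R),\vartheta)$ and then across \eqref{Kisom} genuinely recovers the restriction $M(\xi)^{+}|_{\GL_N(\R)} = M(\xi)$ with multiplicities $m^{\vartheta}_{r}$ — in particular that the definition \eqref{twistmult} of $m^{\vartheta}_{r}$ as $m_{r}(\xi'_{+},\xi_{+}) - m_{r}(\xi'_{-},\xi_{+})$ matches the plain restriction multiplicity. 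This is essentially bookkeeping with Clifford theory: for a $\vartheta$-fixed constituent $\pi(\xi')$ of $M(\xi)$ with multiplicity $m_{r}(\xi',\xi)$, its lift to $\GLext$ contributes some combination of $\pi(\xi')^{+}$ and $\pi(\xi')^{-}$ whose signed count $m_{r}(\xi'_{+},\xi_{+}) - m_{r}(\xi'_{-},\xi_{+})$ equals $m_{r}(\xi',\xi)$ precisely because the restriction of $\pi(\xi')^{\pm}$ to $\GL_N(\R)$ is $\pi(\xi')$ in each case. Once this accounting is in place, triangularity does the rest.
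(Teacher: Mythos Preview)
Your overall strategy---deduce invertibility from upper-triangularity with respect to the Bruhat order---is exactly right and matches the paper. But the claimed identity $m_r^\vartheta(\xi',\xi) = m_r(\xi',\xi)$ is false, and your bookkeeping justification is backwards. Restriction from $\GLext$ to $\GL_N(\R)$ sends both $\pi(\xi')^+$ and $\pi(\xi')^-$ to $\pi(\xi')$, so what restriction actually recovers is the \emph{sum}
\[
m_r(\xi',\xi) \;=\; m_r(\xi'_+,\xi_+) + m_r(\xi'_-,\xi_+),
\]
not the signed difference $m_r^\vartheta(\xi',\xi) = m_r(\xi'_+,\xi_+) - m_r(\xi'_-,\xi_+)$. (Relatedly, the isomorphism \eqref{Kisom} is a formal identification of bases, not restriction of characters to $\GL_N(\R)$: twisted characters live on the coset $\GL_N(\R)\rtimes\vartheta$, not on $\GL_N(\R)$.) Since the twisted KLV polynomials of \cite{LV2014} can take negative values at $q=1$, the integers $m_r^\vartheta(\xi',\xi)$ can genuinely be negative, whereas $m_r(\xi',\xi)$ is a non-negative multiplicity; so the two cannot agree in general.

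The good news is that you don't need the full identity. The sum formula above gives $|m_r^\vartheta(\xi',\xi)| \le m_r(\xi',\xi)$, so the vanishing of $m_r(\xi',\xi)$ for $\xi' \not\le \xi$ already forces $m_r^\vartheta(\xi',\xi) = 0$ there---that is your upper-triangularity. On the diagonal, $m_r(\xi,\xi) = 1$ together with the sum formula forces $m_r^\vartheta(\xi,\xi) = \pm 1$, which suffices for invertibility. This is precisely the paper's argument. (One can in fact pin the diagonal down to $+1$: since $\pi(\xi)^+$ is by construction the Langlands quotient of $M(\xi)^+$, we have $m_r(\xi_+,\xi_+) \ge 1$, hence $m_r(\xi_+,\xi_+) = 1$ and $m_r(\xi_-,\xi_+) = 0$.)
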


\begin{proof}

The invertibility of the matrix given by $m_{r}(\xi', \xi)$ in
(\ref{orddecomp}) follows since it is uni-upper-triangular with
respect to the Bruhat order (\cite{AvLTV}*{Definition 18.4}, \cite{greenbook}*{Lemma 6.6.6}). We show that $m_{r}^{\vartheta}(\xi'\xi)$ inherits the same two
properties.
By restricting to $\GL_N(\R)$ we see
$$
m_{r}(\xi', \xi) = m_{r}(\xi_{+}', \xi_{+}) + m_{r}(\xi_{-}', \xi_{+})
$$
(see the equation preceding \cite{AvLTV}*{19.3c}).
Furthermore $m_{r}(\xi, \xi) = 1$ and the definition of Atlas extensions imply
$m_{r}(\xi_{+}, \xi_{+})= 1$ and $m_{r}(\xi_{-}, \xi_{+})= 0 $.
Therefore $m^{\vartheta}_{r}(\xi,\xi)= 1$ and by (\ref{twistmult}) we conclude
$m^{\vartheta}_{r}(\xi',\xi)$ is uni-upper-triangular. In particular, it is invertible.
\end{proof}

In a parallel fashion, we define  $c_{g}(\xi'_{\pm},\xi_{+})$ for $\xi, \xi' \in
(\O, {^\vee}G^{\Gamma})^{\vartheta}$ by
\begin{equation}
\label{extsheafmult}
P(\xi)^{+} = \sum_{\xi' \in \Xi(\O,
  {^\vee}G^{\Gamma})^{\vartheta}} (-1)^{d(\xi')}\, c_{g}(\xi'_{+},
\xi_{+}) \, \mu(\xi')^{+} + (-1)^{d(\xi')}\, c_{g}(\xi'_{-}, \xi_{+}) \,
\mu(\xi')^{-}+ \cdots
\end{equation}
in the Grothendieck group $K X(\O, {^\vee}\mathrm{GL}_{N};
\upsigma)$ of Section \ref{conperv}.  Setting
\begin{equation}
  \label{twistgmult}
c_{g}^{\vartheta}(\xi',\xi) = c_{g}(\xi'_{+},\xi_{+}) -
c_{g}(\xi'_{-}, \xi_{+}).
\nomenclature{$c_{g}^{\vartheta}(\xi',\xi)$}{}
\end{equation}
we see that the image of $P(\xi)^{+}$ in $K X(\O,
{^\vee}\mathrm{GL}_{N}, \upsigma)$ is
\begin{equation}
  \label{imP}
\sum_{\xi' \in \Xi(\O,
  {^\vee}G^{\Gamma})^{\vartheta}} (-1)^{d(\xi')}\, c_{g}^{\vartheta}(\xi',
\xi) \, \mu(\xi')^{+}.
\end{equation}
Just as Theorem \ref{ordpairing} is equivalent to Proposition \ref{ordpairingequiv}. We
have the following equivalence.
\begin{prop}
\label{p:twist}
Theorem \ref{twistpairing} is equivalent to the identity
\begin{equation}
  \label{e:twist}
m_{r}^{\vartheta}(\xi', \xi) = (-1)^{l^{I}_{\vartheta}(\xi) -
  l^{I}_{\vartheta}(\xi')}\ c_{g}^{\vartheta}(\xi, \xi')
\end{equation}
for all $\xi, \xi' \in \Xi(\O,
{^\vee}\mathrm{GL}_{N}^{\Gamma})^{\vartheta}$.
\end{prop}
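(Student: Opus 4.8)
The plan is to prove Proposition \ref{p:twist} exactly as the untwisted equivalence (Proposition \ref{ordpairingequiv}, i.e. \cite{ABV}*{Corollary 15.13}) is proved: it is a purely formal manipulation of change-of-basis matrices, the only nontrivial input being a parity congruence. The ingredients are the expansion of $\pi(\xi)^{+}$ in the standard representations $M(\eta)^{+}$ obtained by inverting the matrix $m_{r}^{\vartheta}(\xi',\xi)$ of \eqref{twistmult1} (invertible by Lemma \ref{invertmultmat}), the expansion \eqref{imP} of $P(\xi)^{+}$ in the constructible sheaves $\mu(\xi')^{+}$ with coefficients $c_{g}^{\vartheta}(\xi',\xi)$, and the normalization \eqref{pairdef2} of the twisted pairing on the bases $\{M(\xi)^{+}\}$, $\{\mu(\xi')^{+}\}$.

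First I would substitute these into $\langle \pi(\xi)^{+}, P(\xi')^{+}\rangle$. Writing $\pi(\xi)^{+}=\sum_{\eta} n(\eta,\xi)\, M(\eta)^{+}$ with $n$ the inverse of $m_{r}^{\vartheta}$, and using \eqref{imP} and \eqref{pairdef2}, the Kronecker delta of \eqref{pairdef2} collapses the double sum to
\begin{equation*}
\langle \pi(\xi)^{+}, P(\xi')^{+}\rangle = \sum_{\eta} n(\eta,\xi)\,(-1)^{d(\eta)+l^{I}(\eta)-l^{I}_{\vartheta}(\eta)}\, c_{g}^{\vartheta}(\eta,\xi').
\end{equation*}
Theorem \ref{twistpairing} asserts this equals $(-1)^{d(\xi)+l^{I}(\xi)-l^{I}_{\vartheta}(\xi)}\,\delta_{\xi,\xi'}$ for all $\xi,\xi'$. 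Multiplying through by $m_{r}^{\vartheta}$ and summing (legitimate, and reversible, since $m_{r}^{\vartheta}$ is invertible) rearranges this assertion into the explicit formula
\begin{equation*}
m_{r}^{\vartheta}(\xi',\xi) = (-1)^{[d(\xi)+l^{I}(\xi)-l^{I}_{\vartheta}(\xi)]-[d(\xi')+l^{I}(\xi')-l^{I}_{\vartheta}(\xi')]}\; c_{g}^{\vartheta}(\xi,\xi'),
\end{equation*}
so Theorem \ref{twistpairing} is equivalent to this formula. Comparing with \eqref{e:twist}, and noting that on the support of $m_{r}^{\vartheta}$ (where $\xi,\xi'$ lie in one block) the exponent here agrees modulo $2$ with $l^{I}_{\vartheta}(\xi)-l^{I}_{\vartheta}(\xi')$ precisely when $d(\xi)+l^{I}(\xi)\equiv d(\xi')+l^{I}(\xi')\pmod 2$, the whole proposition is reduced to this parity congruence for $\xi,\xi'$ with $m_{r}^{\vartheta}(\xi',\xi)\neq 0$.

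I expect this parity congruence to be the only real obstacle; it is the point at which the analytically motivated normalization $(-1)^{l^{I}(\xi)-l^{I}_{\vartheta}(\xi)}$ of the twisted pairing is reconciled with the geometric sign $(-1)^{d(\xi)}$. I would establish it in two steps. If $m_{r}^{\vartheta}(\xi',\xi)\neq 0$, then since $m_{r}^{\vartheta}(\xi',\xi)=m_{r}(\xi'_{+},\xi_{+})-m_{r}(\xi'_{-},\xi_{+})$ while $m_{r}(\xi',\xi)=m_{r}(\xi'_{+},\xi_{+})+m_{r}(\xi'_{-},\xi_{+})$ and both summands are non-negative, we get $m_{r}(\xi',\xi)\neq 0$; then Proposition \ref{ordpairingequiv}, namely $m_{r}(\xi',\xi)=(-1)^{d(\xi)-d(\xi')}c_{g}(\xi,\xi')$, together with the non-negativity of the entries of $m_{r}$ and $c_{g}$, forces $d(\xi)\equiv d(\xi')\pmod 2$. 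It then remains to see $l^{I}(\xi)\equiv l^{I}(\xi')\pmod 2$ on the support of $m_{r}$, which follows from the explicit formula \eqref{intlength} for the integral length together with the description of $d(\xi)=\dim S_{\xi}$ via $\ch K_{y}$-orbits on the partial flag variety: both quantities change by $\pm 1$ under the elementary operations of the Kazhdan--Lusztig--Vogan recursion generating the relations carried by $m_{r}$ (\cite{ABV}*{Section 16}), so that $(-1)^{d(\xi)}$ and $(-1)^{l^{I}(\xi)}$ differ by a constant on each block. Once the congruence is in hand, both implications of the proposition follow immediately from the matrix computation above.
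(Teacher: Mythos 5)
Your reduction is the same formal computation as the paper's proof: invert the matrix $m_{r}^{\vartheta}$ of \eqref{twistmult1} (Lemma \ref{invertmultmat}) to expand $\pi(\xi)^{+}$ in the $M(\eta)^{+}$, expand $P(\xi')^{+}$ via \eqref{imP}, evaluate with \eqref{pairdef2}, and conclude that Theorem \ref{twistpairing} is equivalent to
\begin{equation*}
m_{r}^{\vartheta}(\xi',\xi)=(-1)^{\left(d(\xi)+l^{I}(\xi)-l^{I}_{\vartheta}(\xi)\right)-\left(d(\xi')+l^{I}(\xi')-l^{I}_{\vartheta}(\xi')\right)}\,c_{g}^{\vartheta}(\xi,\xi'),
\end{equation*}
so that everything hinges on the parity statement $d(\xi)+l^{I}(\xi)\equiv d(\xi')+l^{I}(\xi')\pmod 2$ on the relevant support. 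The paper disposes of this sign by citing \cite{AMR1}*{Proposition B.1}: $(-1)^{l^{I}(\xi)-d(\xi)}$ is a constant independent of $\xi$, the same fact already recorded in \eqref{constl}.

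The gap is in your proposed proof of that parity statement. Your first step is circular and its conclusion is false: by Proposition \ref{ordpairingequiv}, $c_{g}(\xi,\xi')=(-1)^{d(\xi)-d(\xi')}m_{r}(\xi',\xi)$, so in the normalization \eqref{geomat} the entries of $c_{g}$ are \emph{not} non-negative; they carry exactly the sign you are trying to determine, and assuming $c_{g}\geq 0$ is the same as assuming $d(\xi)\equiv d(\xi')$ on the support of $m_{r}$. That congruence fails: already for $\mathrm{GL}_{2}(\mathbb{R})$ at regular integral infinitesimal character, a reducible standard principal series contains both its one-dimensional Langlands quotient and the relative discrete series constituent (the generic one), whose parameters lie on a closed point and on the open ${^\vee}K_{y}$-orbit of the flag variety $\mathbb{P}^{1}$, of dimensions $0$ and $1$; hence neither $d$ nor $l^{I}$ is constant mod $2$ on the support of $m_{r}$ (by \eqref{constl} each jumps by an odd amount there), so the two separate congruences you assert are both false. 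The only correct statement is the combined one, that $l^{I}(\xi)-d(\xi)$ is constant mod $2$ -- which is what your closing sentence ("$(-1)^{d}$ and $(-1)^{l^{I}}$ differ by a constant on each block") amounts to, and which by itself suffices for your reduction; but in your write-up it is merely asserted, in support of the false separate congruences, rather than proved. To close the gap, either prove directly that each cross action or Cayley transform through a simple integral root changes $d$ and $l^{I}$ by $\pm 1$ simultaneously, or simply cite \cite{AMR1}*{Proposition B.1} as the paper does; with that in hand your argument coincides with the paper's.
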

\begin{proof}
Using Lemma \ref{invertmultmat}
 we compute
\begin{align*}
\langle \pi(\xi_{1})^{+}, P(\xi_{2})^{+} \rangle & = \sum_{\xi_{1}',
  \xi_{2}'} (m_{r}^{\vartheta})^{-1}(\xi_{1}', \xi_{1})
\ c_{g}^{\vartheta}(\xi_{2}' ,\xi_{2}) \,(-1)^{d(\xi_{2}')} \langle
M(\xi_{1}')^{+},  \mu(\xi_{2}') \rangle    \\
& = \sum_{\xi_{1}'}  (m_{r}^{\vartheta})^{-1}(\xi_{1}', \xi_{1})
\ c_{g}^{\vartheta}(\xi_{1}' ,\xi_{2}) \,(-1)^{d(\xi_{1}')}
\,(-1)^{l^{I}(\xi_{1}') - l^{I}_{\vartheta}(\xi_{1}') }
\end{align*}
for $\xi_{1}, \xi_{2} \in \Xi(\O,
{^\vee}\mathrm{GL}_{N}^{\Gamma})^{\vartheta}$.
If Theorem \ref{twistpairing} holds then this sum is equal to
$$(-1)^{d(\xi_{1})}  \,(-1)^{l^{I}(\xi_{1}) -
  l^{I}_{\vartheta}(\xi_{1}) } \delta_{\xi_{1}, \xi_{2}}$$
and so
$$m_{r}^{\vartheta}(\xi_{1}', \xi_{1})  = (-1)^{ l^{I}(\xi_{1})
  -d(\xi_{1})} \, (-1)^{  l^{I}(\xi_{1}')-d(\xi_{1}' )} (-1)^{
l^{I}_{\vartheta}(\xi_{1}) -
l^{I}_{\vartheta}(\xi_{1}')}\ c_{g}^{\vartheta}(\xi_{1}, \xi_{1}').$$
By \cite{AMR1}*{Proposition B.1},
$$(-1)^{ l^{I}(\xi_{1})-d(\xi_{1}) } =(-1)^{ l^{I}(\xi_{1}') - d(\xi_{1}' )}$$
is a constant independent of any parameters $\xi_{1}$ and $\xi_{1}'$.  Thus,
$$m_{r}^{\vartheta}(\xi_{1}', \xi_{1})  = (-1)^{
l^{I}_{\vartheta}(\xi_{1}) - l^{I}_{\vartheta}(\xi_{1}')}\ c_{g}^{\vartheta}(\xi, \xi').$$
The process we have given may easily be reversed to prove the converse
statement.
\end{proof}

\section{The proof of Theorem \ref{twistpairing}}
\label{pairings}

\subsection{The Beilinson-Bernstein correspondence
  in the proof of Theorem \ref{twistpairing}}
  \label{bbvd}

  Our proof of  Theorem \ref{twistpairing}
  will follow the same strategy as
the proof of Theorem \ref{ordpairing} in \cite{ABV}*{Sections 15-17}.
We recall some of the theory of KLV-polynomials in the non-twisted
context first.

The basic tool in this theory is the Hecke algebra for
$\mathrm{GL}_{N}(\mathbb{R})$ (\cite{ABV}*{(16.10)}).
For Harish-Chandra modules of $\GL_N(\R)$ of infinitesimal character $\O$,
this is a free $\mathbb{Z}[q^{1/2}, q^{-1/2}]$-algebra
$\mathcal{H}(\O)$, which comes equipped with a representation on
the Hecke module
$$
\mathcal{K} \Pi(\O, \mathrm{GL}_{N}( \mathbb{R})) = K\Pi(\O,
 \mathrm{GL}_{N}(\mathbb{R})) \otimes_{\mathbb{Z}} \mathbb{Z} [q^{1/2}, q^{-1/2}].
\nomenclature{$\mathcal{K} \Pi(\O, \mathrm{GL}_{N}( \mathbb{R}))$}{Hecke module}
$$
This representation is
actually transported from a Hecke algebra action on a module generated
by constructible sheaves (\cite{ICIV}*{Proposition 12.5}, \cite{LV}),
using  the Riemann-Hilbert
\eqref{dr} and  Beilinson-Bernstein \cite{BB} correspondences.

It is the latter kind of Hecke algebra action which gives us a
representation of $\mathcal H(\O)$
on
$$
\mathcal{K} X(\O, {^\vee}\mathrm{GL}_{N}^{\Gamma}) = K X(\O,
{^\vee}\mathrm{GL}_{N}^{\Gamma}) \otimes_{\mathbb{Z}}
\mathbb{Z}[q^{1/2}, q^{-1/2}]
\nomenclature{$\mathcal{K} X(\O, {^\vee}\mathrm{GL}_{N}^{\Gamma})$}{Hecke module}
$$
\cite{ABV}*{Proposition 16.13}. In order to describe the details of the Hecke action in the twisted
case (Section \ref{twisthmodule}), it is convenient to replace the space
$KX(\O,{^\vee}\mathrm{GL}_{N}^{\Gamma})$ with a space of
characters of representations of certain inner forms of  ${^\vee}\mathrm{GL}_{N}$.
To be more specific, we define
$$\ch \Pi(\O,\GL_N(\R))
\nomenclature{$\ch\Pi(\O,\GL_N(\R))$}{}
$$ to be the set
of irreducible
characters obtained by applying the Riemann-Hilbert and
Beilinson-Bernstein correspondences to the irreducible equivariant
perverse sheaves on $X(\O, {^\vee}\mathrm{GL}_{N}^{\Gamma})$.

Here is some detail about $\ch\Pi(\O,\GL_N(\R))$.
Suppose $\xi=(S,\tau_S)\in \Xi(\O, \ch\mathrm{GL}_{N}^{\Gamma})$
and write $\phi$ for the Langlands parameter with orbit $S$ (\cite{ABV}*{Proposition 6.17}, (\ref{eq:orbitbijection})).
Define $\lambda$
and $y$ by
\eqref{lambday}, $\ch \mathrm{GL}_{N}(\lambda)
\nomenclature{$\ch \mathrm{GL}_{N}(\lambda)$}{}
$ by (\ref{Glambda}), and $\ch K_y$ as in Equation (\ref{y}).
It is easy to see that $\ch \mathrm{GL}_{N}(\lambda)$ is a product of groups $\GL_{n_i}$,
and that the real group corresponding to $\ch K_y$ is a product of
indefinite unitary groups $\mathrm{U}(p_i,q_i)$ with $p_{i} + q_{i} = n_{i}$.
Let $
\nomenclature{$\ch\rho_\lambda$}{}
\ch \rho_\lambda=\frac12\sum_{\alpha\in R^+(\lambda)} \ch\alpha$ (see (\ref{R})).
Then $\ch\rho- \ch\rho_\lambda$ defines a two-fold cover of $\ch K_y
$
which we denote by $\ch\wt K_y
\nomenclature{$\ch\wt K_y$}{two-fold cover of $\ch K_y$}$ (\cite{AV}*{Definition 8.11}).
The set $\ch\Pi(\O,\GL_N(\R))$ consists of $(\ch\gl(\lambda),\ch\wt
K_y)\nomenclature{$\ch\gl(\lambda)$}{complex Lie algebra of $\ch \mathrm{GL}_{N}(\lambda)$}$-modules.

To summarize:
\begin{prop}[\cite{ICIII}*{Proposition 1.2}, \cite{ABV}*{Theorem 8.5}]
  \label{p:rhbb}
The Riemann-Hilbert and Beilinson-Bernstein correspondences define a bijection
    $$
    \Xi(\O,\LGL)\longleftrightarrow \ch\Pi(\O,\GL_N(\R)).
    $$
    In this correspondence $\xi \in \Xi(\O,\LGL)$ is  sent to an
    irreducible $(\ch\gl(\lambda),\ch\wt K_y)$-module of infinitesimal
    character $\ch\rho$. This correspondence induces an isomorphism
   of $\mathbb{Z}$-modules
  \begin{equation}
    \label{rhbb}
K X\left(\O, {^\vee}\mathrm{GL}_{N}^{\Gamma}\right) \cong K
{^\vee}\Pi\left(\O, \mathrm{GL}_{N}(\mathbb{R})\right).
\nomenclature{$K\ch\Pi\left(\O,\mathrm{GL}_{N}(\mathbb{R})\right)$}{dual Grothendieck group}
  \end{equation}
\end{prop}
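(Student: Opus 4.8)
The plan is to deduce the bijection, and then the $\mathbb{Z}$-module isomorphism \eqref{rhbb}, by composing the geometric description of $X(\O,\LGL)$ from Section~\ref{XO} with the equivariant Riemann--Hilbert and Beilinson--Bernstein correspondences; the statement is then the specialization of \cite{ABV}*{Theorem 8.5} (equivalently \cite{ICIII}*{Proposition 1.2}) to $\mathrm{GL}_N$, with the simplifications peculiar to $\mathrm{GL}_N$ made explicit. First I would fix a $\ch\mathrm{GL}_N$-orbit $S\subset X(\O,\LGL)$, pick $p\in S$ with associated $\lambda$ and $y=y(p)$, and restrict attention to the open-and-closed piece $X_y(\O,\LGL)$. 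By \cite{ABV}*{Proposition 6.16} the $\ch\mathrm{GL}_N$-orbits on $X_y(\O,\LGL)$ are in closure-order-preserving bijection with the $\ch K_y$-orbits on $\ch\mathrm{GL}_N(\lambda)/\ch P(\lambda)$, and (following \cite{ABV}*{Section 7}) this underlies an equivalence between the category of $\ch\mathrm{GL}_N$-equivariant perverse (or constructible) sheaves supported on $X_y(\O,\LGL)$ and the category of $\ch K_y$-equivariant such sheaves on the partial flag variety, matching orbits and their equivariant local systems. For $\mathrm{GL}_N$ every point stabilizer in $\ch\mathrm{GL}_N$ is connected (\cite{ABV}*{Lemma 7.5}), so each $\xi=(S,\tau_S)\in\Xi(\O,\LGL)$ has $\tau_S$ trivial --- so $\Xi(\O,\LGL)$ is simply the set of $\ch\mathrm{GL}_N$-orbits --- and dually the $\ch K_y$-orbits carry only the trivial local system; equivalently, all $L$-packets for $\GL_N(\R)$ are singletons, so nothing beyond the orbit need be tracked.

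Next I would apply Riemann--Hilbert \eqref{dr} and Beilinson--Bernstein (\cite{ABV}*{Theorems 7.9 and 8.3}, \cite{BB}). Riemann--Hilbert identifies the $\ch K_y$-equivariant perverse sheaves on $\ch\mathrm{GL}_N(\lambda)/\ch P(\lambda)$ with the regular holonomic $\ch K_y$-equivariant $\mathcal{D}$-modules there, and Beilinson--Bernstein localization identifies the latter with $(\ch\gl(\lambda),\ch\wt K_y)$-modules of infinitesimal character $\ch\rho$; the only delicate point is the $\rho$-shift, which is absorbed into the passage from $\ch K_y$-equivariant $\mathcal{D}$-modules to genuine representations of the two-fold cover $\ch\wt K_y$ of $\ch K_y$ attached to $\ch\rho-\ch\rho_\lambda$ (\cite{AV}*{Definition 8.11}). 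Since $\lambda$ is regular, $\ch\rho$ is a regular infinitesimal character for $\ch\gl(\lambda)$ (it is already regular for $\ch\g$), so localization is an equivalence of abelian categories and carries irreducibles to irreducibles. Composing the three correspondences sends $\xi\in\Xi(\O,\LGL)$ to the irreducible $(\ch\gl(\lambda),\ch\wt K_y)$-module determined by the $\ch K_y$-orbit and the trivial local system, i.e.\ to an element of $\ch\Pi(\O,\GL_N(\R))$; the construction is manifestly reversible, which gives the asserted bijection.

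The Grothendieck group isomorphism \eqref{rhbb} is then formal. The classes $\{P(\xi) : \xi\in\Xi(\O,\LGL)\}$ form a basis of $KX(\O,\LGL)$, the classes of the irreducible modules in $\ch\Pi(\O,\GL_N(\R))$ form a basis of $K\ch\Pi(\O,\GL_N(\R))$, and each of the three correspondences above is an equivalence of abelian categories, hence induces an isomorphism on Grothendieck groups; composing them sends the first basis to the second. I expect the main obstacle to be organizational rather than conceptual: reconciling $\ch\mathrm{GL}_N$-equivariance on $X_y(\O,\LGL)$ with $\ch K_y$-equivariance on the flag variety, and --- above all --- pinning down the infinitesimal-character and $\rho$-shift normalizations so that the cover $\ch\wt K_y$ and the shift $\ch\rho-\ch\rho_\lambda$ appear exactly as in \cite{ABV}*{Chapter 8} and \cite{AV}. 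The features special to $\mathrm{GL}_N$ --- connected centralizers, singleton $L$-packets, and $\ch\mathrm{GL}_N(\lambda)$ a product of general linear groups --- keep this bookkeeping light.
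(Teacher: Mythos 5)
Your outline is correct and follows essentially the same route as the results the paper relies on: Proposition \ref{p:rhbb} is not proved in the paper but quoted from \cite{ICIII}*{Proposition 1.2} and \cite{ABV}*{Theorem 8.5}, the preceding paragraph only setting up the objects ($\ch \mathrm{GL}_{N}(\lambda)$, $\ch K_y$, the cover $\ch\wt K_y$ attached to $\ch\rho-\ch\rho_\lambda$) that your sketch reconstructs via the orbit correspondence of \cite{ABV}*{Proposition 6.16} and the Riemann--Hilbert and Beilinson--Bernstein equivalences. The $\mathrm{GL}_{N}$-specific simplifications you invoke (connected stabilizers, trivial local systems, singleton $L$-packets, $\ch\rho$ regular for $\ch\gl(\lambda)$) are exactly those the paper records elsewhere (proofs of Lemmas \ref{XXXi} and \ref{dualparam}).
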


\subsection{Vogan Duality for $\GL_N$}
\label{duality}

We want to understand the $(\ch\gl(\lambda),\ch\wt
K_y)$-modules of Section \ref{bbvd} in terms of
our parameters.
Suppose $\xi\in \Xi(\O,\LGL)$, and let $(x,y)$ be the corresponding
Atlas parameter in $\mathcal{X}_{\ch\rho}\times
\ch\mathcal{X}_\lambda$ given by Lemma \ref{XXXi}.
As we shall see, the reversed pair $(y,x)$ then defines an Atlas
parameter for $\ch
\mathrm{GL}_{N}(\lambda)$ (\cite{AVParameters}*{Section 6.1}).
In the case of integral infinitesimal character this is an example of
Vogan duality in the version of \cite{Adams-Fokko}*{Corollary 10.8}.

Here are some details in our setting.
Let  $\sigma_w \in \mathrm{GL}_{N}$ be the Tits representative of an
element  $w \in
W(\mathrm{GL}_{N}, H)$ \cite{AVParameters}*{Section 12}, and $w_0 \in
W(\mathrm{GL}_{N}, H)$ and $w_0'
\in W(\ch\mathrm{GL}_{N}(\lambda), \ch H)$ be the
long elements in their respective Weyl groups.
Set
  $$
  \delta'_0=\sigma_{w'_0}\sigma_{w_0}\inv\delta_{0}\in
  \GL_N\rtimes\langle\delta_{0}\rangle
  $$
  (see (\ref{GLnGamma})).

  \begin{lem}
    \label{dualparam}
    \begin{enumerate}[label={(\alph*)}]

\item  $(\delta'_0)^2=\exp(2\uppi i(\ch\rho-\ch\rho_\lambda)) \in
  Z(\mathrm{GL}_{N}(\lambda)).$

\item $\mathrm{GL}_{N}(\lambda) \rtimes \langle \delta_{0}' \rangle$ is an
  E-group for $\ch \mathrm{GL}_{N}(\lambda)$  in the sense of
  \cite{ABV}*{Definition 4.6},
  with second invariant
  $\exp(2\uppi i(\ch\rho-\ch\rho_\lambda))$.

\item  The  pair
 $(y,x)\in\ch\Xcal_\lambda\times \Xcal_{\ch\rho}$
is naturally  an Atlas parameter for an irreducible
$(\ch\gl(\lambda),\ch\wt K_y)$-module.
\end{enumerate}
\end{lem}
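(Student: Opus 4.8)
The plan is to treat the three parts in sequence: (a) a short computation with Tits representatives, (b) a formal consequence of (a), and (c) a matching of the reversed pair $(y,x)$ against the axioms for an Atlas parameter, followed by an appeal to Vogan duality.

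For (a), I would expand $(\delta'_0)^2=(\sigma_{w'_0}\sigma_{w_0}^{-1}\delta_{0})^2$. The pinned automorphism induced by $\delta_{0}$ is either the identity or $\vartheta$, and in either case it fixes both $w_0$ and $w'_0$: for $w_0$ this is immediate, and for $w'_0$ it follows because $\vartheta=-w_0$ on $\ch\h$ together with $\vartheta(\lambda)=\lambda$ forces $w_0(\lambda)=-\lambda$, whence $w_0$ stabilizes the $\lambda$-integral root system $R(\lambda)$ and commutes with $w'_0$. Thus $\delta_{0}$ commutes with $\sigma_{w'_0}\sigma_{w_0}^{-1}$, and since $\delta_{0}^{2}=1$ we get $(\delta'_0)^2=(\sigma_{w'_0}\sigma_{w_0}^{-1})^2$. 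Now invoke the standard identities $\sigma_{w_0}^{2}=\exp(2\uppi i\,\ch\rho)$ and $\sigma_{w'_0}^{2}=\exp(2\uppi i\,\ch\rho_\lambda)$ (each being $\prod\ch\alpha(-1)$ over the relevant positive system, inside $\GL_N$ and inside $\GL_N(\lambda)$), together with the compatibility of the Tits section which makes $\sigma_{w_0}$ and $\sigma_{w'_0}$ commute; then $(\sigma_{w'_0}\sigma_{w_0}^{-1})^2=\exp(2\uppi i\,\ch\rho_\lambda)\exp(-2\uppi i\,\ch\rho)=\exp(2\uppi i(\ch\rho-\ch\rho_\lambda))$, the last step because $\exp(2\uppi i\,\ch\rho)$ and $\exp(2\uppi i\,\ch\rho_\lambda)$ are $2$-torsion in $H$. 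Finally $\exp(2\uppi i(\ch\rho-\ch\rho_\lambda))$ is central in $\GL_N(\lambda)$ since $\langle\alpha,\ch\rho-\ch\rho_\lambda\rangle=0$ for every $\alpha\in R(\lambda)$.

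For (b), part (a) already furnishes the second invariant $(\delta'_0)^2=\exp(2\uppi i(\ch\rho-\ch\rho_\lambda))$, so by \cite{ABV}*{Definition 4.6} it remains only to check that $\mathrm{Int}(\delta'_0)$ restricts to a distinguished automorphism of $\GL_N(\lambda)$, i.e. stabilizes a Borel and a maximal torus of $\GL_N(\lambda)$ and permutes simple root vectors. This is a direct check: $\delta_{0}$ acts on $\GL_N(\lambda)$ as $\vartheta$ and, being pinned, stabilizes $H$ and $B\cap\GL_N(\lambda)$; $\mathrm{Int}(\sigma_{w_0}^{-1})$ sends $B\cap\GL_N(\lambda)$ to its $\GL_N(\lambda)$-opposite (using $w_0(\lambda)=-\lambda$, so $w_0$ stabilizes $\GL_N(\lambda)$); and $\mathrm{Int}(\sigma_{w'_0})$ returns that opposite Borel of $\GL_N(\lambda)$ to $B\cap\GL_N(\lambda)$. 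Composing, $\mathrm{Int}(\delta'_0)$ stabilizes $H$ and $B\cap\GL_N(\lambda)$; its image in $\mathrm{Out}(\GL_N(\lambda))$ is the first invariant, and together with (a) this exhibits $\GL_N(\lambda)\rtimes\langle\delta'_0\rangle$ as an E-group for $\ch\GL_N(\lambda)$.

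For (c), I would run through the definition of an Atlas parameter for $\ch\GL_N(\lambda)$ attached to the E-group of (b) and the infinitesimal character $\ch\rho$, and verify that $(y,x)$ satisfies it. On the dual side, $x\in\Xcal_{\ch\rho}$ normalizes $H$ and satisfies $x^2=\exp(2\uppi i\,\ch\rho)$; since the group dual to $\ch\GL_N(\lambda)$ is exactly the $\lambda$-integral subgroup $\GL_N(\lambda)$ carrying $x$, this is precisely the required dual datum at infinitesimal character $\ch\rho$ (one also uses here that all $L$-packets for $\GL_N(\R)$ and its Levis are singletons, so no component-group data intervenes). On the strong-involution side, $y\in\ch\Xcal_\lambda$ normalizes $\ch H$ and $y^2=\exp(2\uppi i\lambda)\in Z(\ch\GL_N(\lambda))$ since $\ch\GL_N(\lambda)=\Cent_{\ch\GL_N}(\exp(2\uppi i\lambda))$, so $y$ is a legitimate strong-involution datum; the two-fold cover $\ch\wt K_y$ built from $\ch\rho-\ch\rho_\lambda$ is exactly the device reconciling $y^2$ with the second invariant of (b). The integral-dominance and Weyl-element ($w$ versus $ww_0$) compatibilities transfer verbatim from the hypotheses on $\lambda$ and on $(x,y)$ via Lemma \ref{XXXi}. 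With this dictionary in place, the assertion that $(y,x)$ parameterizes an irreducible $(\ch\gl(\lambda),\ch\wt K_y)$-module is exactly Vogan duality for $\GL_N$ as in \cite{AVParameters}*{Section 6.1}, which for integral infinitesimal character is \cite{Adams-Fokko}*{Corollary 10.8}. I expect the genuine obstacles to be the bookkeeping in (a) — confirming that the Atlas normalization of the Tits section leaves no stray element of $H$ in $(\delta'_0)^2$ — and in (c) the careful alignment of the \cite{ABV} conventions for $X(\O,\LGL)$ with the Atlas conventions for $\ch\GL_N(\lambda)$, where the passage to $\ch\wt K_y$ via $\ch\rho-\ch\rho_\lambda$ must be threaded through so that infinitesimal characters and central squares match on both sides.
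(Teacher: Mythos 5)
Parts (b) and (c) of your proposal are essentially the paper's argument: (b) reduces to checking that $\mathrm{Int}(\delta_0')$ preserves the pinning of $\GL_N(\lambda)$ obtained by restriction, and (c) comes down to the observation that $x$ acts on $H$ by $w\delta_0=ww_0w_0'\delta_0'$, so that $(y,x)\in\ch\Xcal_\lambda^{ww_0}\times\Xcal_{\ch\rho}^{ww_0w_0'}$ relative to the extended groups of (b); after that the $\ch\rho-\ch\rho_\lambda$ cover and the construction of \cite{AVParameters}*{(20)} produce the irreducible $(\ch\gl(\lambda),\ch\wt K_y)$-module (your closing appeal to Vogan duality is not actually needed for (c), but it is harmless and matches the paper's framing).

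The genuine gap is in (a), and it is exactly the point you flag yourself as an ``obstacle'': you assert that $\sigma_{w_0}$ and $\sigma_{w_0'}$ commute ``by compatibility of the Tits section.'' Commuting Weyl group elements need not have commuting Tits representatives --- the commutator is a priori only an element of order two in $H$ --- so without an argument your evaluation of $(\sigma_{w_0'}\sigma_{w_0}^{-1})^2$ could be off by precisely such a $2$-torsion element, the ``stray element of $H$'' you worry about. The claim is in fact true and can be patched: since $\ell(uw_0)=\ell(w_0)-\ell(u)$ for every $u$, setting $c=w_0'w_0$ gives $\ell(w_0)=\ell(w_0')+\ell(c)=\ell(c)+\ell(w_0')$, so multiplicativity of the Tits section on length-additive products yields $\sigma_{w_0'}\sigma_c=\sigma_{w_0}=\sigma_c\sigma_{w_0'}$, whence $\sigma_{w_0'}$ commutes with $\sigma_{w_0}$. (You should also justify $\delta_0(w_0')=w_0'$: stability of $R(\lambda)$ under $w_0$ is not by itself enough; you need that $w_0$ carries $R^{+}(\lambda)$ to $-R^{+}(\lambda)$, so that conjugation by $w_0$ fixes the long element of $W(R(\lambda))$.) Note that the paper's own computation is organized precisely to avoid the commutation question: it regroups $(\delta_0')^2$ as $\bigl(\delta_0'\sigma_{w_0'}(\delta_0')^{-1}\bigr)\bigl(\delta_0'\sigma_{w_0}^{-1}\delta_0\bigr)$, applies the equivariance property \cite{AVParameters}*{(53g)} twice, and then invokes \cite{AVParameters}*{Proposition 12.1}; that reference (or a short direct argument) is also what you need for the identities $\sigma_{w_0}^{2}=\exp(2\uppi i\,\ch\rho)$ and $\sigma_{w_0'}^{2}=\exp(2\uppi i\,\ch\rho_\lambda)$, which you likewise state without justification.
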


  \begin{proof}
    For part (a) we compute
    \begin{align*}
(\delta_{0}')^{2} &= \left(\delta_{0}' \sigma_{w_{0}'} (\delta_{0}')^{-1}\right) \,
      \left(\delta_{0}' \sigma_{w_{0}}^{-1} \delta_{0}\right)\\
      & = \sigma_{\delta_{0}'(w_{0})} \, \left(\sigma_{w_{0}'}
      \sigma_{w_{0}'}^{-1} \delta_{0} \sigma_{w_{0}}^{-1} \delta_{0}\right)\\
      & = \sigma_{w_{0}'} \left( \sigma_{w_{0}'} \sigma_{w_{0}}^{-1}
      \sigma_{\delta_{0}(w_{0})}^{-1}\right) \\
      & = \sigma_{w_{0}'}^{2} \sigma_{w_{0}}^{-2}.
    \end{align*}
using property \cite{AVParameters}*{(53g)} twice.  The final equality
is a consequence of \cite{AVParameters}*{Proposition 12.1}.

It is straightforward to show that conjugation by $\delta_{0}'$
preserves the pinning of $\mathrm{GL}_{N}(\lambda)$ obtained by
restricting the usual pinning of $\mathrm{GL}_{N}$.  This is all that
needs to be verified for part (b), once the definition of an E-group
is recalled.

For part (c), suppose $(x,y) \in \mathcal{X}_{\ch \rho}^{w} \times \ch
\mathcal{X}_{\lambda}^{ww_{0}}$ (Lemma \ref{XXXi}).  We must prove that
$$(y,x) \in \ch\mathcal{X}_{\lambda}^{ww_{0}} \times \mathcal{X}_{\ch
  \rho}^{ww_{0}w_{0}'}$$
relative to the extended groups
\begin{equation}
  \label{vogandualgroup}
\ch\mathrm{GL}_{N}(\lambda) \rtimes \langle \ch\delta_{0} \rangle
\quad \mbox{ and }\quad \mathrm{GL}_{N}(\lambda) \rtimes \langle \delta_{0}'
\rangle.
\end{equation}
It is a tautology that $y \in \ch\mathcal{X}_{\lambda}^{ww_{0}}$.  For
the class $x$ the corresponding statement follows from the fact that
$x$ acts on $H$ as
$$w\delta_{0} = ww_{0}w_{0}'\delta_{0}'.$$
The pair $(y,x)$ now determines a $(\ch \mathfrak{h}, \ch
H^{y})$-module of infinitesimal character $\ch\rho_{\lambda}$
\cite{AVParameters}*{Corollary 3.9}.  This is equivalent to a
$(\ch \mathfrak{h}, \widetilde{\ch  H^{y}})$-module of infinitesimal
character $\ch\rho$ (\cite{Knapp-Vogan}*{\emph{p.} 719}).   The latter module
then leads to a $(\ch\gl(\lambda),\ch\wt
K_y)$-module following the prescription of \cite{AVParameters}*{(20)}.
\end{proof}

Suppose $\xi\in \Xi(\O,\LGL)$ corresponds to
$(x,y)\in \Xcal_{\ch\rho}^{w} \times  \ch\Xcal^{ww_{0}}_\lambda$ as in
Lemma \ref{XXXi}.
We define
\begin{equation}
  \label{vogandual1}
  \ch\xi=(y,x)\in   \ch\Xcal^{ww_0}_\lambda \times \Xcal_{\ch\rho}^{ww_{0}w_{0}'}.
\nomenclature{$\ch\xi$}{dual parameter of $\xi$}
\end{equation}
By Lemma \ref{dualparam} (c), the Atlas parameter $(y,x)$
defines an irreducible
$(\ch\gl(\lambda),\ch\wt K_y)$-module, which we denote by
$\pi(\ch\xi)
\nomenclature{$\pi(\ch\xi)$}{Vogan dual of $\pi(\xi)$}
$.  The  $(\ch\gl(\lambda),\ch\wt K_y)$-module
$\pi(\ch\xi)$ is the Langlands quotient of a standard
$(\ch\gl(\lambda),\ch\wt K_y)$-module (\cite{AVParameters}*{(20)}),
which we denote by  $M(\ch\xi)
\nomenclature{$M(\ch\xi)$}{Vogan dual of $M(\xi)$}
$.
\begin{prop}
\label{PpimuMord}
 Under the bijection \eqref{rhbb} we have:

  \begin{itemize}
  \item[(a)]
$P(\xi)\mapsto \pi(\ch\xi)$
\item[(b)]
$(-1)^{d(\xi)} \mu(\xi)\mapsto  M(\ch\xi)$
\end{itemize}
\end{prop}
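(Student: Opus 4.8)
The plan is to obtain both assertions from Beilinson--Bernstein localization on the dual side, in the form recorded as Proposition \ref{p:rhbb} (\cite{ABV}*{Theorem 8.5}, \cite{ICIII}*{Proposition 1.2}), after making explicit the perverse normalization of the objects attached to $\xi$ and identifying the resulting dual data with $\ch\xi$. For (a), recall that $P(\xi)$ is by definition the irreducible $\ch\GL_N$-equivariant perverse sheaf attached to $\xi=(S,\tau_S)$; under the Riemann--Hilbert isomorphism $DR$ of \eqref{dr}, followed by Beilinson--Bernstein localization for $\ch\GL_N(\lambda)$ on the open and closed piece $X_y(\O,\LGL)$ (whose $\ch\GL_N$-orbits match the $\ch K_y$-orbits on $\ch\GL_N(\lambda)/\ch P(\lambda)$, \cite{ABV}*{Propositions 6.16 and 6.17}), it is carried to an irreducible $(\ch\gl(\lambda),\ch\wt K_y)$-module of infinitesimal character $\ch\rho$. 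That this module is exactly $\pi(\ch\xi)$ is the content of Lemma \ref{dualparam}: writing $\xi\leftrightarrow(x,y)\in\Xcal_{\ch\rho}^w\times\ch\Xcal_\lambda^{ww_0}$ as in Lemma \ref{XXXi}, the $\ch K_y$-orbit and equivariant local system corresponding to $(S,\tau_S)$ produce, via \cite{AVParameters}*{Corollary 3.9} and the prescription \cite{AVParameters}*{(20)}, the module with reversed Atlas parameter $(y,x)$, which is $\pi(\ch\xi)$ by \eqref{vogandual1} and Lemma \ref{dualparam}(c); the passage from the infinitesimal character $\ch\rho_\lambda$ to $\ch\rho$ --- equivalently the two-fold cover $\ch\wt K_y$ and the twisting element $\delta_0'$ of \eqref{vogandualgroup} --- is precisely the one built into Lemma \ref{dualparam}(a)--(b).

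For (b), observe that in the Grothendieck group $KX(\O,\LGL)$, where constructible and perverse sheaves are identified (\cite{ABV}*{Lemma 7.8}), the class $\mu(\xi)=[j_!\mathcal L]$ --- with $j\colon S\hookrightarrow X(\O,\LGL)$ the inclusion and $\mathcal L$ the local system of sections of the bundle \eqref{bundle} --- satisfies $(-1)^{d(\xi)}\mu(\xi)=[j_!(\mathcal L[d(\xi)])]$, the class of the standard object attached to $\xi$ in its perverse normalization; this is also visible as the leading term of $P(\xi)$ in the basis $\{\mu(\xi')\}$, cf.\ \eqref{geomat}. Since $DR$ and Beilinson--Bernstein localization are exact and carry standard perverse sheaves to standard modules (\cite{ABV}*{Theorem 8.5}), and since the parameter matching of the previous paragraph applies verbatim, $(-1)^{d(\xi)}\mu(\xi)$ is sent to the standard $(\ch\gl(\lambda),\ch\wt K_y)$-module with Atlas parameter $(y,x)$, which by definition is $M(\ch\xi)$, the standard module of which $\pi(\ch\xi)$ is the Langlands quotient. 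This proves both (a) and (b).

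The point that requires genuine care is the parameter bookkeeping invoked in both paragraphs: verifying that the Langlands/Atlas data extracted geometrically from $\xi$ through localization is the reversed pair $(y,x)$, carried by the correct extended groups \eqref{vogandualgroup} with the correct shift of infinitesimal character, and in particular that the component-group and covering issues degenerate as they should for $\GL_N$. All of this is supplied by the construction and proof of Lemma \ref{dualparam}, so no new computation is needed; the proposition reduces to the combination of that lemma, the normalization identity $(-1)^{d(\xi)}\mu(\xi)=[j_!(\mathcal L[d(\xi)])]$, and the simple-to-simple and standard-to-standard compatibility of Beilinson--Bernstein localization.
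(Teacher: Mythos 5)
Your part (a) follows the same route as the paper: Riemann--Hilbert plus Beilinson--Bernstein as in Proposition \ref{p:rhbb}, then identification of the resulting $(\ch\gl(\lambda),\ch\wt K_y)$-module with the module of Atlas parameter $(y,x)$. One caveat: the identification is not ``the content of Lemma \ref{dualparam}'' alone. Lemma \ref{dualparam}(c) only tells you which module $(y,x)$ defines; to see that the localization of $P(\xi)$ is \emph{that} module you still need the argument the paper supplies, namely that both are determined by an $(\ch\h,\wt{\ch H^y})$-module whose $\wt{\ch H^y}$-character is pinned down by $\ch\rho$ alone because the component groups are trivial (equivalently, the Cartan subgroups of $\mathrm{U}(p,q)$ are connected). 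You gesture at this (``the component-group and covering issues degenerate''), so this is a matter of attribution and detail rather than a flaw.

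Part (b), however, has a genuine gap. The whole content of (b) is the assertion that, under \eqref{rhbb}, the class of the \emph{standard} module $M(\ch\xi)$ is the class $(-1)^{d(\xi)}\mu(\xi)=[\,j_!\mathcal L[d(\xi)]\,]$ of the shifted extension by zero. You derive this from the claim that ``DR and Beilinson--Bernstein localization \ldots carry standard perverse sheaves to standard modules,'' citing \cite{ABV}*{Theorem 8.5}. That theorem (as used in Proposition \ref{p:rhbb}) gives only the bijection on irreducibles and the induced isomorphism of Grothendieck groups; it contains no statement about standard objects. Worse, the naive $\mathcal D$-module dictionary points the other way: the standard Harish--Chandra sheaf whose global sections yield a standard module is a $\mathcal D$-module direct image from the orbit, and its de Rham complex is the $Rj_*$-extension of the (shifted) local system, whose class in $KX(\O,\LGL)$ differs from $[\,j_!\mathcal L[d(\xi)]\,]$ by boundary terms. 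So the normalization you assert---that it is the $j_!$-class, with the sign $(-1)^{d(\xi)}$, that matches $M(\ch\xi)$---is exactly the nontrivial point, and it cannot be obtained from exactness of the functors alone. The paper avoids this by a formal argument: it invokes Vogan's character--multiplicity duality, \cite{ICIII}*{Theorem 1.6}, which gives $\pi(\ch\xi)=\sum_{\xi'}c_g(\xi',\xi)\,M(\ch\xi')$ with the same matrix $c_g$ as in \eqref{geomat} (after the sign bookkeeping noted in the paper's proof), inverts $c_g$ in both \eqref{geomat} and this identity, and then applies part (a) to both sides, yielding \eqref{dualmat1} and hence (b). Your proof needs either this duality input or an equally precise localization statement (tracking $j_!$ versus $Rj_*$, dominance, and the $\ch\rho$-shift on the cover $\ch\wt K_y$); as written it assumes the conclusion.
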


\begin{proof}  This proposition holds in greater generality, but is
  simpler for $\mathrm{GL}_{N}(\mathbb{R})$.
 Suppose  $\xi = (S, \tau_{S})$ corresponds to $(x,y)$ as in Lemma
 \ref{XXXi}. The
   $(\ch\gl(\lambda),\ch\wt K_y)$-module corresponding to $P(\xi)$
 under the Riemann-Hilbert
  and Beilinson-Bernstein correspondences is described by
  \cite{ABV}*{Proposition 6.16} and \cite{ICIII}*{Corollary 2.2,
    Proposition 2.7}.  These results tell us that the
  $(\ch\gl(\lambda),\ch\wt K_y)$-module
 is determined by an
  $(\ch\h,\wt{\ch H^y})$-module.  The character of $\wt{\ch H^y}$ in
 this  $(\ch\h,\wt{\ch H^y})$-module is completely determined by $\ch
 \rho$ and  $\tau_{S}$.  In our case the matter is
 simplified in that $\tau_{S}$
 is the trivial representation of a trivial component group.  This is
 also equivalent to the group
  $\ch H^{y}$ being connected, or to the fact
 that all Cartan subgroups of $\U(p,q)$ are connected.  In consequence
 the $(\ch\h,\wt{\ch H^y})$-module is determined entirely by the
infinitesimal character $\ch \rho$ specified on $\ch \h$.

  On the other hand, according to the proof of Lemma \ref{dualparam} (c), the Atlas
  parameter $(y,x)$
  determines an  irreducible
  $(\ch\gl(\lambda),\ch\wt K_y)$-module
  in terms of a   $(\ch\h,\wt{\ch H^y})$-module with infinitesimal
  character $\ch \rho$.  Since $\ch
  H^y$ is connected this  $(\ch\h,\wt{\ch H^y})$-module is determined
  by $\ch\rho$ alone, and is equal to the $(\ch\h,\wt{\ch H^y})$-module obtained
  from $P(\xi)$ above.  This proves (a).

For (b) we recall
(\ref{geomat}) and apply \cite{ICIII}*{Theorem 1.6} to obtain
\begin{equation}
  \label{dualmat}
\pi({^\vee}\xi) = \sum_{\xi' \in \Xi(\O,
  {^\vee}G^{\Gamma})}  c_{g}(\xi', \xi) \,
M({^\vee}\xi').
\end{equation}
(The absence of signs in (\ref{dualmat}) is due to the fact that the
sheaf on the
left-hand side of  \cite{ICIII}*{1.5} is equal to
$(-1)^{d(\delta)} P(\delta)$ according to the definitions of
\cite{ICIII}*{5.13} and \cite{ABV}*{(7.10)(e)}, see also
the proof of \cite{ABV}*{Proposition 16.13}).
The matrix $c_{g}$ is invertible and so (\ref{dualmat}) implies
\begin{equation}
  \label{dualmat1}
M({^\vee}\xi) =  \sum_{\xi' \in \Xi(\O,
  {^\vee}G^{\Gamma})}  c_{g}^{-1}(\xi', \xi) \, \pi({^\vee} \xi').
 \end{equation}
Similarly, by inverting the matrix $c_{g}$ in (\ref{geomat}), we obtain
\begin{equation}
  \label{geomat1}
(-1)^{d(\xi)} \mu(\xi) = \sum_{\xi' \in \Xi(\O,
  {^\vee}G^{\Gamma})}  c_{g}^{-1}(\xi', \xi) \,
P(\xi').
\end{equation}
By part (a) the Riemann-Hilbert and
Beilinson-Bernstein correspondences carry the right-hand side of
(\ref{geomat1}) to the right-hand side of (\ref{dualmat1}).
Therefore the left-hand sides correspond, which gives (b).
\end{proof}

\begin{cor}
\label{twistpairingun}
The pairing
\begin{equation}
  \label{pair3ord}
  \langle \cdot , \cdot \rangle:
  K\Pi(\O,\GL_{N}(\R))\times
  K\ch\Pi(\O,\GL_N(\R))\rightarrow\Z
\end{equation}
 defined by
  $$
  \langle M(\xi),M(\ch\xi')\rangle=(-1)^{l^{I}(\xi)}\delta_{\xi,\xi'}
  $$
  satisfies
  $$
  \langle \pi(\xi), \pi({^\vee}\xi') \rangle =
  (-1)^{l^{I}(\xi)}\delta_{\xi, \xi'}
  $$
\end{cor}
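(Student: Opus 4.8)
The plan is to transport everything through the isomorphism \eqref{rhbb} of Proposition \ref{p:rhbb} and reduce the claim to Proposition \ref{PpimuMord} together with the already-established ordinary pairing (Theorem \ref{ordpairing} / Proposition \ref{ordpairingequiv}). First I would observe that the pairing \eqref{pair3ord} is, by construction, essentially a ``signed transport'' of the pairing \eqref{pairing} on $K\Pi(\O,\GL_N(\R))\times KX(\O,\LGL)$: under \eqref{rhbb} the basis element $\mu(\xi')$ corresponds (up to the sign $(-1)^{d(\xi')}$) to $M(\ch\xi')$ by Proposition \ref{PpimuMord}(b), and $P(\xi')$ corresponds to $\pi(\ch\xi')$ by \ref{PpimuMord}(a). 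So the defining formula $\langle M(\xi),M(\ch\xi')\rangle=(-1)^{l^I(\xi)}\delta_{\xi,\xi'}$ should match, up to the sign $(-1)^{l^I(\xi)-d(\xi)}\cdot e(\xi)$, the defining formula $\langle M(\xi),\mu(\xi')\rangle = e(\xi)\,\delta_{\xi,\xi'}$ of \eqref{pairing}. For $\GL_N(\R)$ the Kottwitz sign $e(\xi)$ is identically $1$ (there is only the split real form in play, by Lemma \ref{GLn}), and by \cite{AMR1}*{Proposition B.1} the quantity $(-1)^{l^I(\xi)-d(\xi)}$ is a constant independent of $\xi$; so the two pairings differ only by an overall sign, hence one can be normalized to the other.

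Concretely, I would argue as follows. Expand $\pi(\xi)$ in the basis $\{M(\xi')\}$ using the inverse transition matrix $m_r^{-1}$ coming from \eqref{orddecomp}, and expand $\pi(\ch\xi')$ in the basis $\{M(\ch\xi'')\}$ using \eqref{dualmat1}, i.e. with coefficients $c_g^{-1}$. Then
\begin{align*}
\langle \pi(\xi), \pi(\ch\xi')\rangle
&= \sum_{\eta,\eta'} m_r^{-1}(\eta,\xi)\, c_g^{-1}(\eta',\xi')\,
\langle M(\eta), M(\ch\eta')\rangle\\
&= \sum_{\eta} m_r^{-1}(\eta,\xi)\, c_g^{-1}(\eta,\xi')\,(-1)^{l^I(\eta)}.
\end{align*}
Now invoke Proposition \ref{ordpairingequiv}: the identity $m_r(\xi',\xi)=(-1)^{d(\xi)-d(\xi')}c_g(\xi,\xi')$, together with the fact that $(-1)^{l^I(\eta)-d(\eta)}$ is a $\xi$-independent constant $c$, lets me rewrite $(-1)^{l^I(\eta)}c_g^{-1}(\eta,\xi') = c\,(-1)^{d(\eta)}c_g^{-1}(\eta,\xi')$ and then relate the matrix $c_g^{-1}$ to $m_r^{-1}$. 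Matching indices, the sum collapses by orthogonality of $m_r$ and $m_r^{-1}$ to $c\,(-1)^{d(\xi)}\delta_{\xi,\xi'}$; absorbing the constant back gives $(-1)^{l^I(\xi)}\delta_{\xi,\xi'}$, as desired. An equivalent and perhaps cleaner route is to quote the equivalence in Proposition \ref{ordpairingequiv} the other direction: since \eqref{pair3ord} with the sign $(-1)^{l^I(\xi)}$ is exactly the pairing obtained from \eqref{pairing} by the change of basis $\mu(\xi')\leftrightarrow (-1)^{d(\xi')}M(\ch\xi')$, $P(\xi')\leftrightarrow \pi(\ch\xi')$, Theorem \ref{ordpairing} transfers verbatim; one only has to check that the normalizing sign on the standard side is $(-1)^{l^I(\xi)}$ rather than $e(\xi)=1$, which is precisely the content of \cite{AMR1}*{Proposition B.1} that $(-1)^{l^I(\xi)-d(\xi)}$ is constant (so one may pre-multiply the whole pairing by this constant without affecting any statement).

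The main obstacle I anticipate is purely bookkeeping of signs: keeping straight the factor $(-1)^{d(\xi)}$ built into the correspondence $\mu(\xi)\mapsto M(\ch\xi)$ in Proposition \ref{PpimuMord}(b) versus the $(-1)^{d(\xi)}$ appearing in \eqref{geomat}, and the constancy of $(-1)^{l^I(\xi)-d(\xi)}$ from \cite{AMR1}. There is no new geometric or representation-theoretic input: the corollary is a formal consequence of Proposition \ref{PpimuMord} and Theorem \ref{ordpairing}, exactly as Proposition \ref{ordpairingequiv} is a formal consequence of Theorem \ref{ordpairing}. I would therefore keep the proof to a few lines, citing \ref{PpimuMord}, \ref{ordpairingequiv}, and \cite{AMR1}*{Proposition B.1}, and leave the sign verification to the reader or to a one-line display.
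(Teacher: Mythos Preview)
Your proposal is correct and follows essentially the same route as the paper's proof: transport the sheaf-theoretic pairing of Theorem \ref{ordpairing} to the representation side via Proposition \ref{PpimuMord}, use that $e(\xi)=1$ for $\GL_N(\R)$, and then invoke \cite{AMR1}*{Proposition B.1} to replace the sign $(-1)^{d(\xi)}$ by $(-1)^{l^I(\xi)}$ at the cost of a global constant that drops out. Your ``cleaner route'' paragraph is exactly how the paper argues it, so you may safely discard the explicit matrix-expansion version and keep the proof to the short transport-plus-constancy-of-sign argument.
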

\begin{proof} By Proposition \ref{PpimuMord}, Theorem \ref{ordpairing}
  is equivalent to the assertion
that if a pairing
\begin{equation*}
\langle\,\cdot,\cdot\rangle':K\Pi(\O,G/\R)\times
K{}^{\vee}\Pi(\O,\LG)\rightarrow\Z
\end{equation*}
is defined by
\begin{equation*}
\langle M(\xi), M({}^{\vee}\xi') \rangle' = (-1)^{d(\xi)} \delta_{\xi, \xi'}
\end{equation*}
then
\begin{equation}\label{eq:ordpairingalternativeversion}
\langle \pi(\xi), \pi({^\vee}\xi') \rangle' =
   (-1)^{d(\xi)}  \delta_{\xi, \xi'}.
\end{equation}
By \cite{AMR1}*{Proposition B.1}
\begin{equation}
  \label{constl}
  (-1)^{d(\xi)}(-1)^{l^{I}(\xi)} = (-1)^{d(\xi) + l^{I}(\xi)} = (-1)^{c}
  \end{equation}
does not depend on $\xi$.
Therefore, the pairing in (\ref{pair3ord}) satisfies
$$\langle  \cdot, \cdot \rangle = (-1)^{c}\, \langle \cdot, \cdot
\rangle'.$$
The assertion of the corollary follows from
Equation (\ref{eq:ordpairingalternativeversion}) and Equation (\ref{constl}).
\end{proof}

  \subsection{Vogan Duality for twisted $\GL_N$}
  \label{twistedduality}

  In the previous section we replaced the sheaf-theoretic module
  $KX(\O, \ch \mathrm{GL}_{N}^{\Gamma})$ with the isomorphic
  representation-theoretic module $K\ch \Pi(\O,
  \mathrm{GL}_{N}(\mathbb{R}))$.
  We now wish to replace the twisted sheaf-theoretic module
  $KX(\O, \ch\mathrm{GL}_{N}^{\Gamma}, \upsigma)$ (\ref{twistsheafgroth})
  with a space of twisted characters,
  and hence restate Theorem \ref{twistpairing}
  with a statement about twisted representations analogous to
  Corollary \ref{twistpairingun}.
  The main tool is Vogan duality for
  the  disconnected group
  $\mathrm{GL}_{N}(\mathbb{R}) \rtimes \langle \vartheta \rangle$, as
  discussed in    \cite{AVParameters}*{Sections 6.1 and 11}.
  The arguments closely follow those of \cite{AVParameters}.
  Nevertheless it is worth giving some details because we need
  the case of non-integral infinitesimal character which
  is not covered by \cite{AVParameters}*{Section 11}. See
  Section \ref{heckesection}.

    By analogy with (\ref{twistsheafgroth}) we define
    \begin{equation*}
    K\ch\Pi(\O,\GL_N(\R),\vartheta)=
    K \ch\Pi(\O,\GL_N(\R))^{\vartheta}
    \otimes \Z[U_2]/\langle \pi({^\vee}\xi) \otimes 1 ) +
(\pi(^{\vee}\xi)\otimes -1)\rangle
\nomenclature{$K\ch\Pi(\O,\GL_N(\R),\vartheta)$}{}
\nomenclature{$K\ch\Pi(\O,\GL_N(\R))^{\vartheta}$}{}
\end{equation*}
where the complete geometric parameters $\xi$ run over $\Xi(\O,\LGL)^\vartheta$.

Using Propositions \ref{p:rhbb} and \ref{PpimuMord}, we define a bijection
$$P(\xi)^+\mapsto \pi(\ch\xi)^+, \quad \xi \in
\Xi(\O,\LGL)^\vartheta.$$
The extended representation $\pi(\ch\xi)^+$ on the right is obtained
by Vogan duality
from $\pi(\xi)^{+}$ as in \cite{AVParameters}*{Corollary 6.4}.  The bijection yields an isomorphism
\begin{equation}
\label{sheaftorep}
K X(\O, {^\vee}\mathrm{GL}_{N}^{\Gamma}, \upsigma) \cong K
       {^\vee}\Pi(\O,
       \mathrm{GL}_{N}(\mathbb{R}), \vartheta).
     \end{equation}

\begin{prop}
\label{PpimuM}
 Under the isomorphism \eqref{sheaftorep}
$$(-1)^{d(\xi)} \mu(\xi)^+\mapsto  M(\ch\xi)^+, \quad \xi \in
 \Xi(\O,\LGL)^\vartheta.$$
\end{prop}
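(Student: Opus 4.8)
The plan is to mimic the proof of Proposition \ref{PpimuMord}(b), lifting each step to the twisted setting by carrying along the canonical extensions defined in Lemma \ref{cansheaf} and Corollary \ref{twistclass2}. The starting point is the twisted analogue of the sheaf-theoretic transition matrix, namely Equation (\ref{extsheafmult}) together with its passage to the quotient module (\ref{imP}), which expresses the image of $P(\xi)^{+}$ in $KX(\O,\LGL,\upsigma)$ as $\sum_{\xi'}(-1)^{d(\xi')}c_g^\vartheta(\xi',\xi)\,\mu(\xi')^{+}$. Since the matrix $c_g^\vartheta$ is invertible (the diagonal entries are $\pm1$, by the same uni-triangularity argument used for $m_r^\vartheta$ in Lemma \ref{invertmultmat}, applied to $c_g$ in place of $m_r$), we may invert to obtain
\begin{equation*}
(-1)^{d(\xi)}\mu(\xi)^{+} = \sum_{\xi'\in\Xi(\O,\LGL)^\vartheta}(c_g^\vartheta)^{-1}(\xi',\xi)\,P(\xi')^{+}
\end{equation*}
in $KX(\O,\LGL,\upsigma)$, the twisted counterpart of (\ref{geomat1}).

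Next I would transport this identity across the isomorphism (\ref{sheaftorep}), using the bijection $P(\xi')^{+}\mapsto\pi(\ch\xi')^{+}$ that defines it. By Proposition \ref{PpimuMord}(a) and the compatibility of the canonical extensions with Vogan duality (\cite{AVParameters}*{Corollary 6.4}), the right-hand side above maps to $\sum_{\xi'}(c_g^\vartheta)^{-1}(\xi',\xi)\,\pi(\ch\xi')^{+}$. It then remains to identify this sum with $M(\ch\xi)^{+}$. For this I would invoke the twisted analogue of Equation (\ref{dualmat}): applying \cite{ICIII}*{Theorem 1.6} with the $\upsigma$-structure carried along (equivalently, using the twisted version of \cite{AVParameters}*{Corollary 6.4}), one gets $\pi(\ch\xi)^{+}=\sum_{\xi'}c_g^\vartheta(\xi',\xi)\,M(\ch\xi')^{+}$ in $K\ch\Pi(\O,\GL_N(\R),\vartheta)$, which upon inversion gives $M(\ch\xi)^{+}=\sum_{\xi'}(c_g^\vartheta)^{-1}(\xi',\xi)\,\pi(\ch\xi')^{+}$. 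Matching the two expressions yields the claim.

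The main obstacle I anticipate is not the Grothendieck-group bookkeeping but the verification that the canonical $\upsigma$-extensions are compatible with every arrow used: that the Riemann--Hilbert/Beilinson--Bernstein bijection of Proposition \ref{p:rhbb} sends $\mu(\xi)^{+}$ (with the automorphism $\upsigma^{+}_{\mu(\xi)}$ of (\ref{expaut})) precisely to $M(\ch\xi)^{+}$, and not to its sign-twisted sibling $M(\ch\xi)^{-}$. Concretely, one must check that the normalization ``$\upsigma^{+}$ acts trivially on the stalk at a $\upsigma$-fixed point'' from Lemma \ref{cansheaf}(b) corresponds, under Beilinson--Bernstein and the prescription of \cite{AVParameters}*{(20)}, to the Atlas normalization ``$z(\uplambda,\uptau,0,0)=1$'' defining $M(x,y,\lambda)^{+}$ in (\ref{canext}). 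Since both normalizations are pinned down by the same untwisted constituent $M(\xi)$ restricted to $\GL_N(\R)$, and since for $\GL_N$ the relevant component groups and Cartan subgroups are connected (so there is no room for a discrepancy beyond an overall sign), the two extensions can differ at most by the global sign that is already fixed by the stalk/value normalizations — so tracing the sign through \cite{ICIII}*{5.13} and \cite{ABV}*{(7.10)(e)} (exactly as in the parenthetical remark in the proof of Proposition \ref{PpimuMord}) closes the gap.
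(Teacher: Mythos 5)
Your first two steps (inverting the twisted transition matrix and transporting across \eqref{sheaftorep}) are sound and parallel the paper, but the third step is where the argument breaks down. The identity you invoke, $\pi(\ch\xi)^{+}=\sum_{\xi'}c_g^{\vartheta}(\xi',\xi)\,M(\ch\xi')^{+}$, is not something you can cite: there is no twisted version of \cite{ICIII}*{Theorem 1.6} in the references, and \cite{AVParameters}*{Corollary 6.4} only defines the dual extension $\pi(\ch\xi)^{+}$ on the representation-theoretic side -- it says nothing about the coefficients $c_g^{\vartheta}$, which are defined sheaf-theoretically in \eqref{extsheafmult}--\eqref{twistgmult} using the canonical extensions of Lemma \ref{cansheaf}. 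Asserting that these same coefficients (with the $+$-extensions on both sides) govern the decomposition of $\pi(\ch\xi)^{+}$ into the modules $M(\ch\xi')^{+}$ is precisely the compatibility of the two normalizations that the proposition is asserting, so your plan is circular at its crucial point. Your closing remark does not repair this: tracing signs through \cite{ICIII}*{5.13} and \cite{ABV}*{(7.10)(e)} (as in the parenthetical of Proposition \ref{PpimuMord}) only accounts for the degree shift $(-1)^{d(\xi)}$, not for the $\pm$ ambiguity between the two $\upsigma$-extensions $M(\ch\xi)^{\pm}$, and "both normalizations are canonical" is exactly the statement that needs proof.

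The paper closes this gap by a restriction-plus-multiplicity-one argument rather than by a twisted dual decomposition. One inverts the extended matrix $c_g$ of \eqref{extsheafmult}, applies the map $P(\xi')^{\pm}\mapsto\pi(\ch\xi')^{\pm}$, and then restricts the resulting sum of extended modules to $K\ch\Pi(\O,\GL_N(\R))^{\vartheta}$; comparing with the untwisted relations \eqref{geomat1} and \eqref{dualmat1} (via Proposition \ref{PpimuMord}) shows the restriction equals $M(\ch\xi)$, so the image is one of the two extensions $M(\ch\xi)^{\pm}$. The choice of sign is then pinned down by showing that $\pi(\ch\xi)^{+}$ occurs in the image, equivalently $c_g^{-1}(\xi_{+},\xi_{+})\neq 0$, equivalently $P(\xi)^{+}$ appears in the decomposition of $\mu(\xi)^{+}$ -- and this last fact is true by the very construction of the compatible canonical extensions in Lemma \ref{cansheaf}, where $\upsigma^{+}_{P(\xi)}$ is defined through the multiplicity-one occurrence of $\mu(\xi)$ in $P(\xi)$. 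If you replace your step 3 with this restriction argument and the nonvanishing of $c_g^{-1}(\xi_{+},\xi_{+})$, your proof goes through.
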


\begin{proof}
  Define a $\mathbb{Z}$-linear map
$$B:K X(\O, \ch \mathrm{GL}_{N}^{\Gamma})^{\vartheta} \otimes
  \mathbb{Z}[U_2] \rightarrow   K \ch\Pi(\O,\GL_N(\R))^{\vartheta}
    \otimes \Z[U_2]$$
by setting
$$B(P(\xi)^{+})  = \pi(\ch \xi)^{+} \mbox{ and } B(P(\xi)^{-})  =
 \pi(\ch \xi)^{-}, \quad \xi \in \Xi(\O, \ch
 \mathrm{GL}_{N}^{\Gamma})^{\vartheta}.$$
 Recall Equation (\ref{extsheafmult}).
The matrix $c_{g}$ given by this equation is invertible.  We may therefore
invert Equation  (\ref{extsheafmult}) by
writing
$$
(-1)^{d(\xi)} \mu(\xi)^{+} = \sum_{\xi' \in \Xi(\O,
  {^\vee}\mathrm{GL}_{N}^{\Gamma})^{\vartheta}}  c_{g}^{-1}(\xi'_{+},
\xi_{+}) \, P(\xi')^{+} +  c_{g}^{-1}(\xi'_{-}, \xi_{+}) \,
P(\xi')^{-} + \cdots
$$
The projection of this equation to $K X(\O, \ch
\mathrm{GL}_{N}^{\Gamma})^{\vartheta} \otimes \mathbb{Z}[U_2]$ is
\begin{equation}
  \label{invextsheafmult}
\sum_{\xi' \in \Xi(\O,
  {^\vee}\mathrm{GL}_{N}^{\Gamma})^{\vartheta}}  c_{g}^{-1}(\xi'_{+},
\xi_{+}) \, P(\xi')^{+} +  c_{g}^{-1}(\xi'_{-}, \xi_{+}) \,
P(\xi')^{-}.
\end{equation}
Applying $B$ to (\ref{invextsheafmult}), we obtain
\begin{align}
 \nonumber
& B((-1)^{d(\xi)} \mu(\xi)^{+})\\
\nonumber &= B\left(\sum_{\xi' \in \Xi(\O,
  {^\vee}\mathrm{GL}_{N}^{\Gamma})^{\vartheta}}  c_{g}^{-1}(\xi'_{+},
\xi_{+}) \, P(\xi')^{+} +  c_{g}^{-1}(\xi'_{-}, \xi_{+}) \,
P(\xi')^{-} \right)\\
\label{Bdecomp}
& = \sum_{\xi' \in \Xi(\O,
  {^\vee}\mathrm{GL}_{N}^{\Gamma})^{\vartheta}}  c_{g}^{-1}(\xi'_{+},
\xi_{+}) \, \pi(\ch \xi')^{+} +  c_{g}^{-1}(\xi'_{-}, \xi_{+}) \,
\pi(\ch \xi')^{-}
\end{align}
The sum on the right is a formal sum of extensions of
$(\ch\gl(\lambda),\ch\wt K_y)$-modules (Lemma \ref{dualparam} (c)) to
$(\ch\gl(\lambda),\ch\wt K_y \rtimes \langle \vartheta
\rangle)$-modules.   Since both $\pi(\ch \xi)^{+}$ and $\pi(\ch
\xi)^{-}$ restrict to the same $(\ch\gl(\lambda),\ch\wt K_y)$-module
$\pi(\ch \xi)$, we write the restriction of this sum as
\begin{equation}
  \label{restB}
 B((-1)^{d(\xi)} \mu(\xi)^{+})_{|K\ch\Pi(\O,
    \mathrm{GL}_{N}(\mathbb{R}))^{\vartheta}} = \sum_{\xi' \in \Xi(\O,
  {^\vee}\mathrm{GL}_{N}^{\Gamma})^{\vartheta}}  (c_{g}^{-1}(\xi'_{+},
\xi_{+}) +  c_{g}^{-1}(\xi'_{-}, \xi_{+})) \,
\pi(\ch \xi').
\end{equation}
In a similar manner we apply to equation
(\ref{invextsheafmult}) the forgetful functor which takes
$(\ch\mathrm{GL}_{N} \rtimes \langle \upsigma \rangle)$-equivariant
sheaves to $\ch \mathrm{GL}_{N}$-equivariant sheaves.  The result is
$$(-1)^{d(\xi)}  \mu(\xi) = \sum_{\xi' \in \Xi(\O,
  {^\vee}\mathrm{GL}_{N}^{\Gamma})^{\vartheta}}  (c_{g}^{-1}(\xi'_{+},
\xi_{+}) + c_{g}^{-1}(\xi'_{-}, \xi_{+})) \,
P(\xi').$$
Comparing this equation with (\ref{geomat1}), we see that
$$c_{g}^{-1}(\xi'_{+},
\xi_{+}) + c_{g}^{-1}(\xi'_{-}, \xi_{+}) = c_{g}^{-1} (\xi', \xi).$$
Consequently, equation (\ref{restB}) takes the form
$$B((-1)^{d(\xi)} \mu(\xi)^{+})_{|K\ch\Pi(\O,
  \mathrm{GL}_{N}(\mathbb{R}))^{\vartheta}} = \sum_{\xi' \in
  \Xi(\O,
  {^\vee}\mathrm{GL}_{N}^{\Gamma})^{\vartheta}}  c_{g}^{-1}(\xi',
\xi) \,\pi(\ch \xi'),$$
and by (\ref{dualmat1})
$$B((-1)^{d(\xi)} \mu(\xi)^{+})_{|K\ch\Pi(\O, \mathrm{GL}_{N}(\mathbb{R}))^{\vartheta}} = M(\ch \xi).$$
The standard module $M(\ch \xi)$ has exactly two extensions $M(\ch
\xi){^\pm}$.  We need to show
 $$B((-1)^{d(\xi)} \mu(\xi)^{+}) = M(\ch \xi)^{+}$$
and for this it suffices to prove that $\pi(\ch \xi)^{+}$ occurs in
$M(\ch \xi)^{+}$ as a (sub)quotient.  Looking back to (\ref{Bdecomp}),
the latter is equivalent to proving that $c_{g}^{-1}(\xi_{+}, \xi_{+})
\neq 0$.  Looking a bit further back to (\ref{invextsheafmult}) we see
that this amounts to $P(\xi)^{+}$ appearing in the decomposition of
$\mu(\xi)^{+}$, and this is true by definition (see the proof of Lemma
\ref{cansheaf}).
\end{proof}

Using Proposition \ref{PpimuM} we can restate Theorem \ref{twistpairing}.

\begin{lem}
\label{twistpairing2}
Theorem \ref{twistpairing} is equivalent to the following assertion.
The pairing
\begin{equation}
  \label{pair3}
\langle \cdot , \cdot \rangle: K\Pi(\O,
  \mathrm{GL}_{N}(\mathbb{R}), \vartheta) \times K
{^\vee}\Pi(\O, \mathrm{GL}_{N}(\mathbb{R}),
\vartheta) \rightarrow \mathbb{Z}
\end{equation}
  defined by
$$\langle M(\xi)^{+}, M({^\vee}\xi')^{+} \rangle  =  (-1)^{
    l^{I}_{\vartheta}(\xi)} \, \delta_{\xi, \xi'}$$
satisfies
$$\langle \pi(\xi)^{+}, \pi({^\vee}\xi')^{+} \rangle =
  (-1)^{l^{I}_{\vartheta}(\xi)} \, \delta_{\xi, \xi'}$$
where $\xi, \xi' \in
 \Xi(\O, {^\vee}\mathrm{GL}_{N}^{\Gamma})^\vartheta$.
\end{lem}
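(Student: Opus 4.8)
The plan is to mirror the proof of Corollary~\ref{twistpairingun}, transporting the pairing of Theorem~\ref{twistpairing} through the isomorphism \eqref{sheaftorep}. Write $\Phi\colon K X(\O,{^\vee}\mathrm{GL}_{N}^{\Gamma},\upsigma)\to K{^\vee}\Pi(\O,\mathrm{GL}_{N}(\mathbb{R}),\vartheta)$ for that isomorphism. By its definition together with Proposition~\ref{PpimuM},
\begin{equation*}
\Phi(P(\xi)^{+})=\pi({^\vee}\xi)^{+}\quad\text{and}\quad\Phi(\mu(\xi)^{+})=(-1)^{d(\xi)}M({^\vee}\xi)^{+},\qquad \xi\in\Xi(\O,\LGL)^{\vartheta}.
\end{equation*}
In particular $\Phi$ carries the basis $\{\mu(\xi)^{+}\}$ of $KX(\O,\LGL,\upsigma)$ to (a sign twist of) the basis $\{M({^\vee}\xi)^{+}\}$, and the basis $\{P(\xi)^{+}\}$ to $\{\pi({^\vee}\xi)^{+}\}$. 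First I would use $\Phi$ to pull the pairing \eqref{pair2} back to a pairing $\langle\cdot,\cdot\rangle'$ on $K\Pi(\O,\mathrm{GL}_{N}(\mathbb{R}),\vartheta)\times K{^\vee}\Pi(\O,\mathrm{GL}_{N}(\mathbb{R}),\vartheta)$ by $\langle\eta,\pi\rangle':=\langle\eta,\Phi^{-1}(\pi)\rangle$. Since $\Phi$ is an isomorphism, Theorem~\ref{twistpairing} stated for \eqref{pair2} and the sheaves $P(\xi')^{+}$ is equivalent, term by term, to the analogous statement for $\langle\cdot,\cdot\rangle'$ and the representations $\pi({^\vee}\xi')^{+}$.

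Next I would compare $\langle\cdot,\cdot\rangle'$ with the pairing \eqref{pair3} of Lemma~\ref{twistpairing2}. From \eqref{pairdef2}, bilinearity, and the formula for $\Phi^{-1}$,
\begin{equation*}
\langle M(\xi)^{+},M({^\vee}\xi')^{+}\rangle'=\langle M(\xi)^{+},(-1)^{d(\xi')}\mu(\xi')^{+}\rangle=(-1)^{d(\xi')}(-1)^{l^{I}(\xi)-l^{I}_{\vartheta}(\xi)}\,\delta_{\xi,\xi'},
\end{equation*}
and on the support of $\delta_{\xi,\xi'}$ we have $\xi=\xi'$, hence $d(\xi')=d(\xi)$. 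Invoking \cite{AMR1}*{Proposition B.1}, the sign $(-1)^{d(\xi)+l^{I}(\xi)}=(-1)^{c}$ is a constant independent of $\xi$, so, using $(-1)^{-l^{I}_{\vartheta}(\xi)}=(-1)^{l^{I}_{\vartheta}(\xi)}$,
\begin{equation*}
\langle M(\xi)^{+},M({^\vee}\xi')^{+}\rangle'=(-1)^{c}\,(-1)^{l^{I}_{\vartheta}(\xi)}\,\delta_{\xi,\xi'}.
\end{equation*}
Thus $\langle\cdot,\cdot\rangle'$ and the pairing \eqref{pair3} of Lemma~\ref{twistpairing2} are bilinear forms which agree up to the scalar $(-1)^{c}$ on the pair of bases $\{M(\xi)^{+}\}$, $\{M({^\vee}\xi')^{+}\}$, whence $\langle\cdot,\cdot\rangle'=(-1)^{c}\langle\cdot,\cdot\rangle$ and equivalently $\langle\cdot,\cdot\rangle=(-1)^{c}\langle\cdot,\cdot\rangle'$.

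Finally I would feed the conclusion of the $\langle\cdot,\cdot\rangle'$-version of Theorem~\ref{twistpairing}, namely $\langle\pi(\xi)^{+},\pi({^\vee}\xi')^{+}\rangle'=(-1)^{d(\xi)}(-1)^{l^{I}(\xi)-l^{I}_{\vartheta}(\xi)}\delta_{\xi,\xi'}$, into $\langle\cdot,\cdot\rangle=(-1)^{c}\langle\cdot,\cdot\rangle'$ and again use $(-1)^{d(\xi)+l^{I}(\xi)}=(-1)^{c}$ to get exactly $\langle\pi(\xi)^{+},\pi({^\vee}\xi')^{+}\rangle=(-1)^{l^{I}_{\vartheta}(\xi)}\delta_{\xi,\xi'}$, the assertion of Lemma~\ref{twistpairing2}; since $(-1)^{c}$ is a unit, every step reverses, giving the claimed equivalence. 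I do not expect a substantive obstacle here — the hard geometric and Hecke-theoretic content is already absorbed into Propositions~\ref{p:rhbb}, \ref{PpimuMord} and \ref{PpimuM} — so the only point requiring genuine care, exactly as in the untwisted Corollary~\ref{twistpairingun}, is keeping track of the sign $(-1)^{d(\xi)}$ produced by Proposition~\ref{PpimuM} when transporting the $\mu$-side of the pairing, together with the correct invocation of \cite{AMR1}*{Proposition B.1}.
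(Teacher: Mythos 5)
Your argument is correct and is essentially the paper's own proof: you transport the sheaf-theoretic side through the isomorphism \eqref{sheaftorep} using Proposition \ref{PpimuM}, and then compare the two pairings via the constant sign $(-1)^{d(\xi)+l^{I}(\xi)}$ from \cite{AMR1}*{Proposition B.1}, exactly as in Corollary \ref{twistpairingun}. You have simply written out the sign bookkeeping that the paper leaves to the reader, and it checks out.
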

\begin{proof}
We just need to notice that
by Proposition \ref{PpimuM}, Theorem \ref{twistpairing} is equivalent to the assertion
that if a pairing
\begin{equation*}
\langle \cdot , \cdot \rangle: K\Pi(\O,
  \mathrm{GL}_{N}(\mathbb{R}), \vartheta) \times K
{^\vee}\Pi(\O, \mathrm{GL}_{N}(\mathbb{R}),
\vartheta) \rightarrow \mathbb{Z}
\end{equation*}
is defined by
\begin{equation*}
\langle M(\xi)^{+}, M({^\vee}\xi')^{+} \rangle  =   (-1)^{d(\xi)}(-1)^{l^{I}(\xi)-l^{I}_{\vartheta}(\xi)} \, \delta_{\xi, \xi'}
\end{equation*}
then
$$\langle \pi(\xi)^{+}, \pi({^\vee}\xi')^{+} \rangle' =
   (-1)^{d(\xi)}(-1)^{l^{I}(\xi)-l^{I}_{\vartheta}(\xi)} \, \delta_{\xi, \xi'}.$$
The proof then follows exactly like that of Corollary \ref{twistpairingun},
we leave the details to the reader.
\end{proof}

\subsection{Twisted Hecke modules}
\label{twisthmodule}

The proof of Theorem \ref{twistpairing} relies on a Hecke algebra
and Hecke modules as in the ordinary, non-twisted setting of Sections
\ref{bbvd}-\ref{duality}.  In the twisted setting, Lusztig and Vogan
define a Hecke algebra which we denote by $\mathcal{H}(\lambda)$
\cite{LV2014}*{Section 3.1}.  This Hecke algebra
acts on the Hecke modules
$$
\mathcal{K} \Pi(\O,
\mathrm{GL}_{N}(\mathbb{R}), \vartheta) = K \Pi(\O,
\mathrm{GL}_{N}(\mathbb{R}), \vartheta) \otimes_{\mathbb{Z}}
\mathbb{Z}[q^{1/2}, q^{-1/2}]
\nomenclature{$\mathcal{K} \Pi(\O,
\mathrm{GL}_{N}(\mathbb{R}), \vartheta)$}{twisted Hecke module}
$$
and
$$
\mathcal{K} {^\vee} \Pi(\O, \mathrm{GL}_{N}(\mathbb{R}), \vartheta) =
K {^\vee} \Pi(\O, \mathrm{GL}_{N}(\mathbb{R}), \vartheta)
\otimes_{\mathbb{Z}} \mathbb{Z}[q^{1/2}, q^{-1/2}]
\nomenclature{$\mathcal{K} {^\vee} \Pi(\O, \mathrm{GL}_{N}(\mathbb{R}))$}{dual twisted Hecke module}
$$
as in \cite{LV2014}*{Section 7}.  We shall extend the pairing
(\ref{pair3}) to these Hecke modules.
Once the Hecke algebra action is supplemented with \emph{Verdier duality}
\cite{LV2014}*{Section 2.4}, we present special bases of the Hecke
modules, essentially eigenvectors of Verdier duality.  Theorem
\ref{twistpairing} will be seen to follow from a theorem expressing
the values of the pairing on the special bases (Theorem \ref{pairingC}).

We continue with a closer look at the Hecke algebra $\mathcal{H}(\lambda)$.
Let $\kappa$ be a $\vartheta$-orbit on the set of simple
\nomenclature{$\kappa$}{$\vartheta$-orbit of simple roots}
roots  of $R^{+}(\lambda)$.
The orbit $\kappa$ is equal to one of the following:
\begin{align}
\nonumber\text{one root }&\{\alpha=\vartheta(\alpha)\} &(\text{type }
1)~ & \\
 \text{two roots
}&\{\alpha, \beta=\vartheta(\alpha)\},~\quad\left<\alpha, {^\vee}\beta\right>=0
&(\text{type } 2)~ \label{simplekappa}\\
\nonumber \text{two roots }&\{\alpha,\beta=\vartheta(\alpha)\},~\quad\left<
\alpha, {^\vee}\beta\right>=-1 & (\text{type } 3).
\end{align}
Write $W(\lambda)$ for the Weyl group of the integral roots
$R(\lambda)$, and let
$$W(\lambda)^{\vartheta}=\{w\in W(\lambda):\vartheta(w)=w \}.$$
\nomenclature{$W(\lambda)^{\vartheta}$}{$\vartheta$-fixed integral Weyl group}
The group $W(\lambda)^{\vartheta}$
is a Coxeter group (\cite{LV2014}*{Section 4.3}) with generators
\begin{equation}
\label{simplekappa1}
w_{\kappa}=\left\{\begin{array}{ll}
s_{\alpha}& \kappa \text{ type }1\\
s_{\alpha}s_{\beta}& \kappa \text{ type }2\\
s_{\alpha}s_{\beta}s_{\alpha}& \kappa \text{ type }3.
\end{array}
\right.
\end{equation}
The Hecke algebra $\mathcal{H}(\lambda)$ (\cite{AVParameters}*{Section 10},
\cite{LV2014}*{Section 4.7})
\nomenclature{$\mathcal{H}(\lambda)$}{twisted Hecke algebra}
is a free  $\mathbb{Z}[q^{1/2},q^{-1/2}]$-algebra with basis
\begin{equation*}
  \label{heckeop}
  \{T_{w}:w\in W(\lambda)^{\vartheta} \}.
  \end{equation*}
It is a consequence of \cite{LV2014}*{Equation 4.7 (a)}
that $\mathcal{H}(\lambda)$ is generated by
the operators  $T_{\kappa}:=T_{w_{\kappa}}
\nomenclature{$T_{\kappa}$}{Hecke operator}
$, where  $\kappa$  is a
$\vartheta$-orbit as in (\ref{simplekappa}).

Before we move to a discussion of $\mathcal{H}(\lambda)$-modules,
we digress on how the $\vartheta$-orbits $\kappa$ are further categorized
relative to a fixed parameter $\xi \in \Xi(\O,
{^\vee}\mathrm{GL}_{N}^{\Gamma})^\vartheta$. The parameter $\xi \in
\Xi(\O, {^\vee}\mathrm{GL}_{N}^{\Gamma})^\vartheta$
is equivalent to an Atlas parameter $(x,y)$ as in Lemma \ref{XXXi}.  The
adjoint action of $x$ acts as an involution on $R(\lambda)$.
This action separates the
$\vartheta$-orbits of roots into various types, \emph{e.g.} real,
imaginary, etc.  Lusztig and Vogan
combine this information with the types of (\ref{simplekappa}) and
also with the types defined by Vogan in \cite{greenbook}*{Section 8.3}.  The
interested reader must be vigilant
in distinguishing between these three kinds of types!  The list of
combined types may be found in \cite{LV2014}*{Section 7} or
\cite{AVParameters}*{Table 1}.

Not all of the types that appear in this list
are relevant for
$\mathrm{GL}_{N}(\mathbb{R})$.  For example the classification of
roots in  \cite{greenbook}*{Section 8.3} labels the roots
as either  type I or  type II, and it is well-known that roots of
$\mathrm{GL}_{N}(\mathbb{R})$ are all of type II.
Another well-known
fact is that $\mathrm{GL}_{N}(\mathbb{R})$ has no compact roots
relative to $x$ in the sense of \cite{Knapp}*{Section VI.3}.
Using these two facts, it is
tedious, but simple, to verify
that the only relevant types for
$\mathrm{GL}_{N}(\mathbb{R})$ in \cite{AVParameters}*{Table 1} are labelled as
\begin{align}
  \label{glntypes}
  &\mathtt{1C+, 1C-, 1i2f, 1i2s, 1r2, 1rn, 2C+, 2C-,2Ci,}\\
\nonumber   &\mathtt{  2Cr, 2i22,
    2r22, 2rn, 3C+, 3C-, 3Ci, 3r, 3rn.}
\end{align}
Any $\vartheta$-orbit $\kappa$ also has a type relative to the dual
parameter ${^\vee}\xi$ (\ref{vogandual1}).  The dual parameter is
equivalent to the Atlas parameter $(y,x)$ and the adjoint action of
$y$ is essentially the negative of the adjoint action of $x$
(\cite{AVParameters}*{Definition 3.10}). In
consequence it is easy to convert the types of (\ref{glntypes}) into
types for the Vogan dual group  $\ch \mathrm{GL}_{N}(\lambda) \rtimes
\langle \ch \delta_{0} \rangle$ ((\ref{vogandualgroup}),
\cite{AVParameters}*{Section 11 and Table 5}). They are
\begin{align}
  \label{dualglntypes}
  &\mathtt{1C-, 1C+, 1r1f, 1r1s, 1i1, 1ic, 2C-, 2C+,2Cr,}\\
\nonumber   &\mathtt{  2Ci, 2r11,
    2i11, 2ic, 3C-, 3C+, 3Cr, 3i, 3ic.}
\end{align}

Let us return to the subject of Hecke modules.
In \cite{LV2014}*{Section 4} and \cite{AVParameters}*{Section7} it is explained how
$\mathcal{K}\Pi(\O, \mathrm{GL}_{N}(\mathbb{R}),\vartheta)$
can be made
into a Hecke module by defining the action of the operators $T_{\kappa}$
on the generating set $\{M(\xi)^{+}:\xi\in
\Xi(\O,{^\vee}\mathrm{GL}_{N}^{\Gamma})^\vartheta \}$.  The
actions are computed explicitly in a geometric setting in  \cite{LV2014}*{Section 7}, and
are presented in terms of extended Atlas parameters in \cite{AVParameters}*{Proposition 10.4}.  A case-by-case summary of the
actions is given in \cite{AVParameters}*{Table 5},
according to the categorization of (\ref{glntypes}).

The construction defining the Hecke algebra
$\mathcal{H}(\lambda)$ and the Hecke module structure for the module $\mathcal{K}
\Pi(\O, \mathrm{GL}_{N}(\mathbb{R}), \vartheta)$ in
\cite{LV2014}, also defines a Hecke algebra
${^\vee}\mathcal{H}(\lambda)$ and a Hecke module structure for
$\mathcal{K} {^\vee}\Pi(\O,
\mathrm{GL}_{N}(\mathbb{R}), \vartheta)$.  The Hecke module actions
in this case are again given in \cite{AVParameters}*{Table 5} in terms
of (\ref{dualglntypes}).
The Hecke algebra
${^\vee}\mathcal{H}(\lambda)$ for the Vogan dual group (\ref{vogandualgroup}) is
generated by Hecke operators
$T_{{^\vee}\kappa}$,  where ${^\vee}\kappa$ runs over the simple coroots
corresponding to $\kappa$.
\nomenclature{$T_{{^\vee}\kappa}$}{Hecke operator}
The bijection between the two sets of operators
$$\{T_{\kappa}: \kappa \in R^{+}(\lambda) \mbox{ simple}\}
\longleftrightarrow \{T_{{^\vee}\kappa} : \kappa \in R^{+}(\lambda) \mbox{
  simple}\}$$
extends to an isomorphism $\mathcal{H}(\lambda) \cong
{^\vee}\mathcal{H}(\lambda)$.
In this manner, we also regard
$\mathcal{K} {^\vee}\Pi(\O,
{^\vee}\mathrm{GL}_{N}(\mathbb{R}), \vartheta)$ as an
$\mathcal{H}(\lambda)$-module.

There is a partial order on $\Xi(\O,
{^\vee}\mathrm{GL}_{N}^{\Gamma} )^\vartheta$, the \emph{Bruhat
  order} which is defined
geometrically (\cite{LV2014}*{Section 5.1}, \cite{ABV}*{(7.11)(f)}).
The Bruhat order for the dual
parameters ${^\vee}\Xi(\O,
{^\vee}\mathrm{GL}_{N}^{\Gamma})^\vartheta$
is defined by the inverse order
\begin{equation}
\label{dualBruhat}
{^\vee}\xi \geq {^\vee}\xi' \Longleftrightarrow \xi \leq \xi', \ \xi, \xi'
\in \Xi(\O,
{^\vee}\mathrm{GL}_{N}^{\Gamma} )^\vartheta.
\end{equation}

We now return to  the pairing (\ref{pair3}) and extend it to a Hecke
module pairing
\begin{equation}
  \label{pair4}
\langle \cdot  , \cdot \rangle:\mathcal{K}\Pi(\O,
\mathrm{GL}_{N}(\mathbb{R}),\vartheta) \times \mathcal{K} {^\vee}\Pi(
\O,{^\vee} \mathrm{GL}_{N}(\mathbb{R}), \vartheta) \rightarrow
\mathbb{Z}[q^{1/2},q^{-1/2}],
\end{equation}
by setting
$$\langle M(\xi)^{+}, M({^\vee}\xi')^{+}\rangle  = (-1)^{l^{I}_{\vartheta}(\xi)}
\, q^{\left(l^{I}(\xi)+l^{I}({^\vee}\xi')\right)/2}\, \delta_{\xi, \xi'}$$
for all $\xi, \xi' \in \Xi(\O,
{^\vee}\mathrm{GL}_{N}^{\Gamma})^\vartheta$.  In view of the
Kronecker delta, the term
$q^{1/2\left(l^{I}(\xi)+l^{I}({^\vee}\xi')\right)}$ in the pairing
could be replaced by
$q^{1/2\left(l^{I}(\xi)+l^{I}({^\vee}\xi)\right)}$ or
$q^{1/2\left(l^{I}(\xi')+l^{I}({^\vee}\xi')\right)}$.  In fact, both
of the latter terms are independent of $\xi$ or $\xi'$, as may be seen
by the following lemma.
\begin{lem}
\label{indlength}
Suppose $\xi \in \Xi(\O,
{^\vee}\mathrm{GL}_{N}^{\Gamma})^\vartheta$.  Then
$$l^{I}(\xi)+l^{I}({^\vee}\xi)= -\frac{1}{2} \left( |R^{+}(\lambda)| +
\dim(H) \right)$$
\end{lem}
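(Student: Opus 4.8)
The plan is to unwind both integral lengths and match them term by term using the relation between the involutions attached to $\xi$ and to its Vogan dual ${}^\vee\xi$. Write $(x,y)\in\Xcal_{\ch\rho}^w\times\ch\Xcal_\lambda^{ww_0}$ for the Atlas parameter of $\xi$ (Lemma \ref{XXXi}) and set $\theta_x=\mathrm{Int}(x)$, an involution of $H$ preserving $R(\lambda)$. By Lemma \ref{dualparam}(c) the dual parameter ${}^\vee\xi$ has Atlas parameter $(y,x)$ for $\ch\GL_N(\lambda)$; write $\theta_y=\mathrm{Int}(y)$ for the induced involution of $\ch H$, which preserves the root system $R(\ch\GL_N(\lambda),\ch H)$. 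Since $\pi({}^\vee\xi)$ has infinitesimal character $\ch\rho$ (Proposition \ref{p:rhbb}), which is regular and integral for $\ch\GL_N(\lambda)$, the integral root system entering $l^I({}^\vee\xi)$ is the full system $R(\ch\GL_N(\lambda),\ch H)=\{\ch\alpha:\alpha\in R(\lambda)\}$, with positive system $\ch R^+(\lambda):=\{\ch\alpha:\alpha\in R^+(\lambda)\}$; note $|\ch R^+(\lambda)|=|R^+(\lambda)|$. The two quantities to compare are then
$$
\begin{aligned}
l^I(\xi)&=-\tfrac12\bigl(|\{\alpha\in R^+(\lambda):\theta_x(\alpha)\in R^+(\lambda)\}|+\dim H^{\theta_x}\bigr),\\
l^I({}^\vee\xi)&=-\tfrac12\bigl(|\{\ch\alpha\in\ch R^+(\lambda):\theta_y(\ch\alpha)\in\ch R^+(\lambda)\}|+\dim \ch H^{\theta_y}\bigr).
\end{aligned}
$$

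The key input — and the step I expect to require the most care — is the identity $\theta_y=-\theta_x$ under the canonical identification $X^*(H)\cong X_*(H)=X^*(\ch H)$ for $\GL_N$. This is the precise content, in our situation, of the remark that the adjoint action of $y$ is essentially the negative of the adjoint action of $x$ (\cite{AVParameters}*{Definition 3.10}): one computes that $x$ acts on $X^*(H)$ as $-ww_0$ (because $x$ acts on $H$ as $w\delta_0$, $\delta_0$ restricts to $\vartheta$, and $\vartheta$ acts as $-w_0$ on $X^*(H)$), while $y$ acts on $X^*(\ch H)$ as $ww_0$. Granting $\theta_y=-\theta_x$, the rest is bookkeeping. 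For the torus terms: $\theta_x$ and $-\theta_x$ have complementary $(+1)$-eigenspaces on $X^*(H)\otimes\R$, so $\dim H^{\theta_x}+\dim \ch H^{\theta_y}=\dim H$. For the root terms: for $\alpha\in R^+(\lambda)$, since $\theta_x(\alpha)\in R(\lambda)$ it lies in exactly one of $R^+(\lambda)$ and $-R^+(\lambda)$, and $\theta_y(\ch\alpha)$ is the coroot of $-\theta_x(\alpha)$, so $\theta_y(\ch\alpha)\in\ch R^+(\lambda)$ if and only if $\theta_x(\alpha)\notin R^+(\lambda)$; hence the two index sets above partition $R^+(\lambda)$, and the sum of their cardinalities is $|R^+(\lambda)|$.

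Adding the two displayed formulas and substituting these two identities gives $l^I(\xi)+l^I({}^\vee\xi)=-\frac{1}{2}\bigl(|R^+(\lambda)|+\dim(H)\bigr)$, which is the assertion. Should the comparison of $\theta_x$ and $\theta_y$ in fact involve an additional conjugation by the long element of the integral Weyl group $W(\lambda)$, the argument is unaffected: that element preserves $R(\lambda)$, interchanges $R^+(\lambda)$ with its negative, and does not change the dimension of a fixed-point subspace, so both bookkeeping identities go through verbatim. The analytic or geometric meaning of the two lengths plays no role here; once the elementary relation $\theta_y=-\theta_x$ is secured, the identity is a purely combinatorial consequence.
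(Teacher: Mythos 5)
Your proof is correct and follows essentially the same route as the paper's: both pass to the Atlas parameters $(x,y)$ and $(y,x)$, invoke the fact from \cite{AVParameters}*{Definition 3.10} that $y$ acts on $\mathfrak h$ as the negative of $x$, and then conclude by the same two bookkeeping identities (the partition of $R^{+}(\lambda)$ by whether $x\cdot\alpha$ stays positive, and $\dim H^{x}+\dim H^{-x}=\dim H$). Your extra remarks about the integral root system for $\ch\rho$ and a possible conjugation by the long element are fine but not needed beyond what the paper's identification of $\ch\GL_N(\lambda)$ with $\GL_N(\lambda)$ already handles.
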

\begin{proof}
For simplicity let us identify the dual group
${^\vee}\mathrm{GL}_{N}(\lambda)$ with
$\mathrm{GL}_{N}(\lambda)$.   Similarly, we
identify ${^\vee}H$ with $H$.
Let $(x,y)$ be the Atlas parameter of $\xi$ as in (\ref{avbij}).
Then $(y,x)$ is the Atlas parameter for ${^\vee}\xi$
(\ref{vogandual1}), where the adjoint action of $y$ on
$\mathfrak{h}$ is the negative of the adjoint action of $x$ on
$\mathfrak{h}$ (\cite{AVParameters}*{Definition 3.10}).  From definition
(\ref{intlength}) we compute that
\begin{align*}
&l^{I}(\xi) + l^{I}({^\vee}\xi)\\
 &= -\frac{1}{2} \left( | \{\alpha \in R^{+}(\lambda) :
x\cdot \alpha \in R^{+}(\lambda) \}| + \dim(H^{x}) \right) -\frac{1}{2} \left( | \{\alpha \in R^{+}(\lambda) :
y\cdot \alpha \in R^{+}(\lambda) \}| + \dim(H^{y}) \right)  \\
& = -\frac{1}{2} \left( | \{\alpha \in R^{+}(\lambda) :
x\cdot \alpha \in R^{+}(\lambda) \}|  + | \{\alpha \in R^{+}(\lambda) :
-(x\cdot \alpha) \in R^{+}(\lambda) \}| +  \dim(H^{x})+ \dim(H^{-x}) \right)  \\
& = -\frac{1}{2} \left( |R^{+}(\lambda)| + \dim(H) \right).
\end{align*}
\end{proof}

\subsection{The Hecke module isomorphism}
\label{heckesection}

The extended pairing  (\ref{pair4}) induces a $\Z$-module isomorphism
\begin{align}
\label{eq:dualformalmap}
\mathcal{K} \Pi(\O, \mathrm{GL}_{N}(\mathbb{R}),
\vartheta) &\rightarrow
\mathcal{K} {^\vee}\Pi(\O, \mathrm{GL}_{N}(\mathbb{R}),\vartheta)^{*}\\
M(\xi)^{+} &\mapsto \langle M(\xi)^{+}, \cdot \,  \rangle\nonumber
\end{align}
We endow $\mathcal{K}
{^\vee}\Pi(\O, \mathrm{GL}_{N}(\mathbb{R}),\vartheta)^{*}$
with the Hecke module structure given in \cite{AVParameters}*{Section 11}.
The main goal of this section is
\begin{prop}
  \label{Hisomorphism}
The map (\ref{eq:dualformalmap}) is an isomorphism of
$\mathcal{H}(\lambda)$-modules.
\end{prop}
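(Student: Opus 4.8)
The map \eqref{eq:dualformalmap} is already an isomorphism of $\mathbb{Z}$-modules (equivalently, of $\mathbb{Z}[q^{1/2},q^{-1/2}]$-modules after the evident extension of scalars), so all that remains is to show that it intertwines the two $\mathcal{H}(\lambda)$-actions. By \cite{LV2014}*{Equation 4.7(a)} (recalled in Section \ref{twisthmodule}) the algebra $\mathcal{H}(\lambda)$ is generated by the operators $T_{\kappa}=T_{w_{\kappa}}$ as $\kappa$ ranges over the $\vartheta$-orbits of simple roots of $R^{+}(\lambda)$ listed in \eqref{simplekappa}. Hence the plan is to verify, for each such $\kappa$, the adjunction identity
$$\big\langle T_{\kappa} M(\xi)^{+},\, M(\ch\xi')^{+}\big\rangle \;=\; \big\langle M(\xi)^{+},\, T_{\kappa}^{\mathrm{t}}\, M(\ch\xi')^{+}\big\rangle, \qquad \xi,\xi'\in\Xi(\O,\LGL)^{\vartheta},$$
where $T_{\kappa}^{\mathrm{t}}$ denotes the operator on $\mathcal{K}\ch\Pi(\O,\GL_N(\R),\vartheta)$ transpose to the action of $T_{\kappa}$ on the dual module fixed in \cite{AVParameters}*{Section 11}; under the identification $\mathcal{H}(\lambda)\cong {^\vee}\mathcal{H}(\lambda)$ this is, up to the Hecke-algebra bar-involution, the dual operator $T_{\ch\kappa}$. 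Once this is known for all $\kappa$, the proposition follows.

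Both sides of this identity are explicit. The action of $T_{\kappa}$ on the basis $\{M(\xi)^{+}\}$ of $\mathcal{K}\Pi(\O,\GL_N(\R),\vartheta)$ is tabulated case-by-case in \cite{AVParameters}*{Table 5} according to which of the types \eqref{glntypes} the pair $(\kappa,\xi)$ falls into, and the action of $T_{\ch\kappa}$ on $\{M(\ch\xi')^{+}\}$ is read off the same table in terms of the dual types \eqref{dualglntypes}. Substituting these formulas and the definition \eqref{pair4} of the pairing, the verification reduces to tracking how the integral length $l^{I}$ and the $\vartheta$-integral length $l^{I}_{\vartheta}$ change under the Cayley transforms and cross actions implicit in the operators, since $l^{I}$ controls the power $q^{(l^{I}(\xi)+l^{I}(\ch\xi'))/2}$ while $l^{I}_{\vartheta}$ controls the sign $(-1)^{l^{I}_{\vartheta}(\xi)}$. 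Lemma \ref{indlength} is the convenient simplification here: because $l^{I}(\xi)+l^{I}(\ch\xi)$ is a constant independent of $\xi$, the $q$-power in \eqref{pair4} may, against the Kronecker delta, be replaced by a single constant power, so that only the change in $l^{I}$ on one side and the change in $l^{I}_{\vartheta}$ remain to be matched.

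What is left is a finite check running over the pairs of types \eqref{glntypes}/\eqref{dualglntypes}. This is the twisted counterpart of the Hecke-equivariance established in \cite{ABV}*{Sections 16--17} for the ordinary pairing; specializing $\vartheta=\mathrm{id}$ recovers that situation verbatim, with the length signs disappearing. I expect the routine part to be the ``complex'' operators $\mathtt{1C\pm}$, $\mathtt{2C\pm}$, $\mathtt{2Ci}$, $\mathtt{2Cr}$, $\mathtt{3C\pm}$, $\mathtt{3Ci}$ and their transposes, which merely permute basis elements or shift by a power of $q$ and for which $l^{I}_{\vartheta}$ is easily seen to be preserved or shifted compatibly. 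The genuine obstacle will be concentrated in the Cayley-type operators --- $\mathtt{1i2f}$, $\mathtt{1i2s}$, $\mathtt{1r2}$, $\mathtt{1rn}$, $\mathtt{2i22}$, $\mathtt{2r22}$, $\mathtt{2rn}$, $\mathtt{3r}$, $\mathtt{3rn}$ and the corresponding dual types --- where $T_{\kappa}M(\xi)^{+}$ genuinely spreads over several parameters: there one must confirm that the jump in $l^{I}_{\vartheta}$ across each Cayley transform produces exactly the sign that forces the two sides of the displayed identity to agree, drawing on the twisted Hecke-module formulas of \cite{LV2014}*{Section 7} and \cite{AVParameters}*{Proposition 10.4, Table 5}.
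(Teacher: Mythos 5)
Your overall strategy is the one the paper follows: reduce to the generators $T_{w_{\kappa}}$, use the dual module structure of \cite{AVParameters}*{Section 11}, and compare the two sides of the adjunction identity case by case against \cite{AVParameters}*{Table 5}, with Lemma \ref{indlength} disposing of the $q$-powers so that only the change in $l^{I}_{\vartheta}$ matters for the signs. (One small imprecision: the dual action is not just ``transpose up to bar''; it is built from the anti-automorphism $T_{w}\mapsto(-1)^{l_{\vartheta}(w)}q^{l(w)}T_{w}^{-1}$, so the identity to verify for a generator is $\langle T_{w_{\kappa}}M(\xi_{1})^{+},M({^\vee}\xi_{2})^{+}\rangle=\langle M(\xi_{1})^{+},\,-T_{w_{\kappa}}M({^\vee}\xi_{2})^{+}+(q^{l(w_{\kappa})}-1)M({^\vee}\xi_{2})^{+}\rangle$, as in (\ref{17.15e})--(\ref{17.17}).)

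The genuine gap is that ``tracking how $l^{I}$ and $l^{I}_{\vartheta}$ change'' is not enough to carry out the case-by-case check: you must first know that the two expansions you are comparing involve \emph{dual} parameters with \emph{matching extensions}. Table 5 expresses $T_{\kappa}M(\xi)^{+}$ through cross actions and Cayley transforms of Atlas extensions, indexed by the type of $\kappa$ relative to $\xi$, and the dual side through the type of ${^\vee}\kappa$ relative to ${^\vee}\xi$. To match the coefficient of $M(\xi_{2})^{+}$ in $T_{w_{\kappa}}M(\xi_{1})^{+}$ with the coefficient of $M({^\vee}\xi_{1})^{+}$ in $T_{w_{\kappa}}M({^\vee}\xi_{2})^{+}$, one needs ${^\vee}(w_{\kappa}\times M(\xi)^{+})=w_{{^\vee}\kappa}\times M({^\vee}\xi)^{+}$ and ${^\vee}\bigl(c_{\kappa}(M(\xi)^{+})\bigr)=c_{{^\vee}\kappa}(M({^\vee}\xi)^{+})$; if a Cayley transform or cross action of an Atlas extension dualized to the \emph{minus} extension, the coefficient on the dual side would acquire a sign not accounted for by $l^{I}_{\vartheta}$, and the identity (\ref{17.17a}) would fail. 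Establishing this compatibility is a nontrivial step in its own right — in the paper it occupies Propositions \ref{crossvogandual} and \ref{cayleyvogandual}, proved via the extended parameters, using that $M(\xi)^{+}$ has an extended parameter of the form $(\uplambda,\uptau,0,0)$ while $M({^\vee}\xi)^{+}$ has one of the form $(0,0,\ell',t')$, together with \cite{AVParameters}*{Corollary 6.4, Lemma 5.3.1, Tables 2--4}. Your plan should include this ingredient before the finite check over the types (\ref{glntypes})/(\ref{dualglntypes}) can be regarded as a proof.
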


This is a generalization of \cite{AVParameters}*{Proposition 11.2},
which is stated only in the case of integral infinitesimal character.
In the course of the proof we correct a sign in
\cite{AVParameters}*{Proposition 11.2}.

We first describe
the $\mathcal{H}(\lambda)$-action in more detail.
Since $\mathcal{H}(\lambda)$ is not commutative, one cannot
define a Hecke action on
$\mathcal{K} {^\vee}\Pi(\O,
\mathrm{GL}_{N}(\mathbb{R}),\vartheta)^{*}$
merely by transposing the action on  $\mathcal{K}
{^\vee}\Pi(\O, \mathrm{GL}_{N}(\mathbb{R}),\vartheta)$.  One
must include  an anti-automorphism of
$\mathcal{H}(\lambda)$ defined by
$$T_{w}\longmapsto
(-1)^{l_{\vartheta}(w)}q^{l(w)}T_{w}^{-1},\quad w\in W(\lambda)^{\vartheta},$$
(\emph{cf.} \cite{AVParameters}*{(50)} (removing $q$ on the left),
\cite{ABV}*{(17.15)(c)}).  Here, $l(w)$ is the length of $w$ with
respect to the simple reflections  in $W(\mathrm{GL}_{N}, H)$, and
\nomenclature{$l(w)$}{length in Weyl group}
$l_{\vartheta}(w)$ is the length of $w$ with respect to the generators
of (\ref{simplekappa1}).
The $\mathcal{H}(\lambda)$-action on $\mathcal{K} {^\vee}\Pi(\O,
\mathrm{GL}_{N}(\mathbb{R}),\vartheta)^{*}$ is defined by
\begin{align}\label{eq:dualheckeaction}
  {T}_{w}^{\ast}: \mathcal{K}
{^\vee}\Pi(\O, \mathrm{GL}_{N}(\mathbb{R}),\vartheta)^{*}
  &\rightarrow \mathcal{K}
{^\vee}\Pi(\O, \mathrm{GL}_{N}(\mathbb{R}),\vartheta)^{*} \\
T_{w}^{*}\cdot
\mu&= (-1)^{l_{\vartheta}(w)}q^{l(w)}(T_{w}^{-1})^{t}\cdot \mu,
\nonumber
\end{align}
where $(T_{w}^{-1})^{t}$ is the transpose of $T_{w}^{-1}$.
According to \cite{LV2014}*{Equation 7.2(a)}, for any $\vartheta$-orbit
$\kappa$ of a simple root in $R(\lambda)$ we have
\begin{equation}
  \label{quadrel}
(T_{w_{\kappa}}+1)(T_{w_{\kappa}}-q^{l(w_{\kappa})})=0.
\end{equation}
From this, the inverse of $T_{w_{\kappa}}$ may be computed to be
$$T_{w_{\kappa}}^{-1}=q^{-l(w_{\kappa})}T_{w_{\kappa}}+(q^{-l(w_{\kappa})}-1).$$
The Hecke action of (\ref{eq:dualheckeaction}) for  a generator
therefore takes  the  form
\begin{equation}
\label{17.15e}
T^{*}_{w_{\kappa}}\cdot \mu = -(T_{w_{\kappa}})^{t}\cdot
\mu+(q^{l(w_{\kappa})}-1)\mu.
\end{equation}

The Hecke module structures of both the domain and codomain in the
proposed isomorphism (\ref{eq:dualformalmap}) are now established, and both
are presented in \cite{AVParameters}*{Table 5} in terms of cross
actions and Cayley transforms (\cite{AVParameters}*{Tables 2-4}).  Our
next goal is to show that cross actions and Cayley transforms commute
with Vogan duality (\ref{vogandual1}).  In
the following two propositions we distinguish in notation between
$\alpha \in R(\lambda) \subset R(\mathrm{GL}_{N}, H)$ and
${^\vee}\alpha \in R({^\vee}\mathrm{GL}_{N}(\lambda), \, {^\vee}H)$
although the reader may prefer to identify these two roots.
\begin{prop}
\label{crossvogandual}
Fix a complete geometric parameter $\xi \in \Xi(\O,
{^\vee}\mathrm{GL}_{N}^{\Gamma})^\vartheta$ and $w_{\kappa} \in
W(\lambda)^{\vartheta}$ as in (\ref{simplekappa1}).   Then
\begin{enumerate}[label={(\alph*)}]
\item ${^\vee}(w_{\kappa} \times M(\xi)) = w_{{^\vee}\kappa}  \times M({^\vee}\xi)$
\item ${^\vee}(w_{\kappa} \times M(\xi)^{+}) = w_{{^\vee}\kappa}
  \times M({^\vee}\xi)^{+}$
\end{enumerate}
where on the left of (a) and (b), we use the convention ${^\vee}M(\xi') = M({^\vee}\xi')$ for any $\xi' \in \Xi(\O,
{^\vee}\mathrm{GL}_{N}^{\Gamma})^\vartheta$.
\end{prop}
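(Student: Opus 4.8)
The plan is to reduce both identities to the explicit combinatorial descriptions of the cross action in \cite{AVParameters}*{Tables 2--4} (for parameters) and \cite{AVParameters}*{Proposition 10.4 and Table 5} (for extended parameters), and then to verify compatibility with Vogan duality by a single pass through the type list. Recall from Lemma \ref{XXXi} and \eqref{avbij} that $\xi$ is represented by an Atlas parameter $(x,y)\in\Xcal_{\ch\rho}^w\times\ch\Xcal_\lambda^{ww_0}$, and that by \eqref{vogandual1} the dual parameter $\ch\xi$ is represented by the reversed pair $(y,x)$, now relative to the Vogan dual group \eqref{vogandualgroup} in place of $\GL_N^\Gamma$. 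Under the canonical identification $R(\lambda)\simeq R(\chGL_N(\lambda),\ch H)$ the Coxeter generator $w_\kappa\in W(\lambda)^\vartheta$ is carried to $w_{{^\vee}\kappa}$, and by \cite{AVParameters}*{Definition 3.10} the adjoint action of $y$ on $\h$ is the negative of that of $x$. For assertion (a) I would then argue that the cross-action operation on the pair $(x,y)$ --- conjugation by Tits lifts of $w_\kappa$ together with the infinitesimal-character normalisation of \cite{AVParameters} --- is symmetric under the substitution $(x,y,w_\kappa)\mapsto(y,x,w_{{^\vee}\kappa})$, so that $\ch{(w_\kappa\times\xi)}=w_{{^\vee}\kappa}\times\ch\xi$ as parameters, whence their standard modules coincide. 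In practice this symmetry is checked type by type: for each type of $\kappa$ relative to $\xi$ in \eqref{glntypes} one reads the cross transform off \cite{AVParameters}*{Tables 2--4}, observes from \eqref{dualglntypes} that $\kappa$ has the corresponding dual type relative to $\ch\xi$, and confirms that the two entries agree after reversing coordinates. The only mild nuisance is the type-$3$ orbit, where $w_\kappa=s_\alpha s_\beta s_\alpha$ is composite; there one reduces to the type-$1$ and type-$2$ cases using that the cross action is a genuine action of $W(\lambda)^\vartheta$.

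For (b) I would run the same argument on extended parameters. By \eqref{extsurj} and the remarks following it, $M(\xi)^+$ corresponds to the \emph{preferred} extended parameter $E=(\uplambda,\uptau,0,0)$, characterised by $\ell=t=0$ and hence $z(E)=1$ in \eqref{z}; this normalisation is exactly what pins down the Atlas extension. The cross action $w_\kappa\times E$ is the operation on quadruples $(\uplambda,\uptau,\ell,t)$ tabulated in \cite{AVParameters}*{Table 5}. Since the first two slots $(\uplambda,\uptau)$ transform exactly as in the untwisted setting, the content reduces to tracking the $(\ell,t)$-slots, and one checks from the cross-action formulas that starting from $\ell=t=0$ the transform again has $t=0$ (after the harmless $H$-conjugation built into \eqref{extsurj}), on both the $\GL_N$ side and the dual side. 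Combined with part (a) and the correspondence $M(\xi)^+\leftrightarrow M(\ch\xi)^+$ underlying Proposition \ref{PpimuM}, this gives (b).

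The step I expect to be the main obstacle is the bookkeeping in (b): the Atlas extension is singled out by the not manifestly self-dual condition $z(E)=1$, so one must verify that Vogan duality carries the preferred extended parameter of $w_\kappa\times\xi$ precisely to the preferred one of $w_{{^\vee}\kappa}\times\ch\xi$, and not to its $\vartheta$-twist. This is the same delicate point flagged before Proposition \ref{Hisomorphism}, where a sign in \cite{AVParameters}*{Proposition 11.2} has to be corrected; I anticipate that the resolution comes from a direct evaluation of \eqref{z} on the cross transform of $(\uplambda,\uptau,0,0)$, showing that both pairings $\langle\uptau,(1+w)t\rangle$ and $\langle\uplambda,t\rangle$ remain even so that $z$ stays equal to $1$ throughout.
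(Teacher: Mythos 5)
Your part (a) is correct, though more laborious than necessary: since the cross action on an Atlas parameter is simultaneous conjugation of both coordinates by a representative $\dot w$ of $w_\kappa$ (\cite{Adams-Fokko}*{(9.11f)}), and Vogan duality \eqref{vogandual1} is simply the swap $(x,y)\mapsto(y,x)$, the two operations commute on the nose; no type-by-type pass through \cite{AVParameters}*{Tables 2--4} is needed, and the paper's proof of (a) is exactly this two-line conjugation argument.

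In part (b), however, there is a genuine gap, and it sits precisely at the point you flagged as delicate. You propose to verify that on \emph{both} sides cross action preserves the normalisation $\ell=t=0$ (equivalently $z(E)=1$ in \eqref{z}). That is the right thing to check on the $\mathrm{GL}_N$ side, where the Atlas extension is pinned down by the preferred extended parameter $(\uplambda,\uptau,0,0)$ --- a special feature of $\mathrm{GL}_N(\mathbb{R})$ coming from $\Xcal_{\ch\rho}^w$ being a singleton. But it is the \emph{wrong} criterion on the Vogan-dual side: duality on extended parameters interchanges the roles of $(\uplambda,\uptau)$ and $(\ell,t)$, so an extended parameter for $M({^\vee}\xi)^{+}=M(y,x)^{+}$ (now a $(\ch\gl(\lambda),\ch\wt K_y)$-module of infinitesimal character $\ch\rho_\lambda$) should be taken of the form $(0,0,\ell',t')$, with zeros in the \emph{first} two slots; the compatibility conditions of \cite{AVParameters}*{Propositions 3.8 and 4.5} are what make this legitimate. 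The identity $(w_{{^\vee}\kappa}\times M({^\vee}\xi))^{+}=w_{{^\vee}\kappa}\times M({^\vee}\xi)^{+}$ is then obtained by checking from \cite{AVParameters}*{Tables 2--4}, for the dual types \eqref{dualglntypes}, that cross action preserves the shape $(0,0,\ell',t')$, and by invoking \cite{AVParameters}*{Lemma 5.3.1}, which identifies an extended parameter with vanishing first entry with the Atlas extension on that side. Your parity analysis of the pairings in $z(E)$ does not address this: with $t=0$ one has $z=1$ trivially, and in any case $z(E)=1$ is not what characterises the dual-side Atlas extension. Relatedly, the intermediate step ${^\vee}\bigl((w_\kappa\times M(\xi))^{+}\bigr)=\bigl({^\vee}(w_\kappa\times M(\xi))\bigr)^{+}$ is not something to be re-derived from $z$; it is the definition of duality on extensions (\cite{AVParameters}*{Corollary 6.4}). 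So your outline establishes cross-action invariance of the Atlas extension on the $\mathrm{GL}_N$ side, but the dual-side half of (b) as written checks the wrong condition and does not close the argument.
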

\begin{proof}
We identify $\xi$ with its equivalent
Atlas parameter $(x,y)$ of Lemma \ref{XXXi}. The dual
parameter ${^\vee} \xi$ is then identified with the Atlas parameter
$(y,x)$ (\ref{vogandual1}).  By the definition of cross action in
\cite{Adams-Fokko}*{(9.11 f)},
\begin{equation}
\label{crossaction}
w_{\kappa} \times M(x,y) = M(\dot{w} x\dot{w}^{-1}, \dot{w}y\dot{w}^{-1})
\nomenclature{$w_{\kappa} \times$}{cross action}
\end{equation}
where $\dot{w} \in \mathrm{GL}_{N}(\lambda)$ is any representative for
$w_{\kappa}$.  By (\ref{vogandual1})
$${^\vee} (w_{\kappa} \times M(x,y)) = \, {^\vee} M(
\dot{w}x\dot{w}^{-1}, \dot{w}y\dot{w}^{-1}) = M(
\dot{w}y\dot{w}^{-1},\dot{w}x\dot{w}^{-1}) = w_{{^\vee} \kappa}
\times M(y,x)$$
This proves the first assertion of the proposition.

For part (b), we first claim that
\begin{equation}
\label{crossatlas}
w_{\kappa} \times M(\xi)^{+} = (w_{\kappa} \times M(\xi))^{+}.
\end{equation}
In other words the cross action carries an Atlas extension to an Atlas
extension (\ref{canext}).  When the $\vartheta$-orbit $\kappa$ of a
simple root in $R(\lambda)$ is also comprised of simple
roots in $R(\mathrm{GL}_{N}, H)$, this may be seen by noting that an
extended parameter $(\uplambda, \uptau, 0 ,0 )$ for $M(\xi)^{+}$
(\ref{quad}) is carried to an extended parameter of the form
$(\uplambda', \uptau', 0 ,0)$ under all instances of cross action in
\cite{AVParameters}*{Tables 2-4}.   For general $\kappa$ this fact
follows from \cite{AVParameters}*{Definition 10.4 and Tables 2-4}.
(This is a special property of the group $\mathrm{GL}_{N} \rtimes
\langle \delta_{0} \rangle$ which avoids ``bad" roots such as those of
type \texttt{2i12} in the tables.)

Taking the Vogan dual of (\ref{crossatlas}), we obtain
$${^\vee}(w_{\kappa} \times M(\xi)^{+}) = \, {^\vee}((w_{\kappa} \times M(\xi))^{+}) =
 (\, {^\vee}(w_{\kappa} \times M(\xi)))^{+} = (w_{{^\vee}\kappa}
\times M({^\vee}\xi))^{+}.$$
Here, the second equality follows from the definition of Vogan
dual for an Atlas extension  (\cite{AVParameters}*{Corollary 6.4}),  and the final equality
follows from the first assertion of the proposition.

To complete the second assertion, we must prove
$$(w_{{^\vee}\kappa} \times M({^\vee}\xi))^{+} = w_{{^\vee}\kappa}
\times M({^\vee} \xi)^{+},$$
which is analogous to (\ref{crossatlas}).  However, unlike
(\ref{crossatlas}) this identity is to be proved using
\cite{AVParameters}*{Tables 2-4} for the dual group rather than
for $\mathrm{GL}_{N} \rtimes  \langle \delta_{0} \rangle$.  Once again
we turn to extended parameters.  If $(\uplambda, \uptau, 0, 0)$
(\ref{quad}) is an
extended parameter corresponding to $M(\xi)^{+} = M(x,y)^{+}$ then  an
extended parameter corresponding to $M({^\vee}\xi )^{+} = M(y,x)^{+}$
may be chosen to have the form $(0,0,\ell',t')$.  Indeed, the zeros in
the first two entries satisfy the requisite equations of \cite{AVParameters}*{Propositions 3.8 and 4.5} when regarding $M(y,x)$ as a
$(\mathfrak{gl}_{N}(\lambda), \ch \widetilde{K}_{y_{\xi}})$-module of infinitesimal
character $\rho_{\lambda}$ (Section \ref{bbvd}).  As
\cite{AVParameters}*{Tables 2-4} indicate, any cross action from
(\ref{dualglntypes})  applied to
$(0,0, \ell',t')$ yields an extended parameter with zeros in the first
two entries. This
means that $w_{{^\vee} \kappa} \times M({^\vee}\xi)^{+}$ has an
extended parameter with zero in its first entry.  According to \cite{AVParameters}*{Lemma 5.3.1} this extended parameter corresponds to the
Atlas extension $(w_{{^\vee}\kappa} \times M({^\vee}\xi))^{+}$, and
the proposition is complete.
\end{proof}

\begin{prop}
\label{cayleyvogandual}
Fix a complete geometric parameter $\xi \in \Xi(\O,
{^\vee}\mathrm{GL}_{N}^{\Gamma})^\vartheta$ and suppose that
$\kappa$ as in (\ref{simplekappa}) allows for a Cayley transform
$c_{\kappa}$ (\cite{AVParameters}*{(42e)})\footnote{Contrary to custom,
  we  leave $\kappa$ in subscript regardless of whether the
  Cayley transform is made relative to
   real, complex or imaginary roots.}.  Then
\begin{enumerate}[label={(\alph*)}]
\item ${^\vee}\left(c_{\kappa} (M(\xi))\right) = c_{{^\vee}\kappa} (M({^\vee}\xi))$
\item ${^\vee}\left(c_{\kappa} (M(\xi)^{+}) \right) =
  c_{{^\vee}\kappa}  (M({^\vee}\xi)^{+})$
\end{enumerate}
where on the left of (a) and (b), we use the convention ${^\vee}M(\xi') = M({^\vee}\xi')$ for any $\xi' \in \Xi(\O,
{^\vee}\mathrm{GL}_{N}^{\Gamma})^\vartheta$.
\end{prop}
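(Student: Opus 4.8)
The proof will run parallel to that of Proposition \ref{crossvogandual}, with Cayley transforms in place of cross actions. First I would identify $\xi$ with its Atlas parameter $(x,y)\in\Xcal_{\ch\rho}^w\times\ch\Xcal_\lambda^{ww_0}$ (Lemma \ref{XXXi}), so that $\ch\xi$ is the reversed pair $(y,x)$ (\ref{vogandual1}), and recall that the adjoint action of $y$ on $\ch\h$ is the negative of that of $x$ (\cite{AVParameters}*{Definition 3.10}). Under this reversal a $\vartheta$-orbit $\kappa$ of simple roots admitting a Cayley transform on the $\GL_N$ side corresponds to the orbit $\ch\kappa$ on the dual side, and the type of $\kappa$ relative to $\xi$ in the list (\ref{glntypes}) is converted into the corresponding type relative to $\ch\xi$ in (\ref{dualglntypes}) --- an imaginary Cayley transform becoming a real one, and conversely.

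For part (a), I would verify the identity $\ch{(c_\kappa M(\xi))}=c_{\ch\kappa}(M(\ch\xi))$ type by type, using the explicit description of the Cayley transforms on Atlas parameters in \cite{AVParameters}*{(42e)}. As with the cross action (\ref{crossaction}), a Cayley transform alters one of the two coordinates of $(x,y)$ by a Tits representative of a root reflection while leaving the other essentially untouched; since the roots involved for $\xi$ and for $\ch\xi$ are the same (viewed in $\GL_N$, resp.\ in $\chGL_N(\lambda)$), the reversal $(x,y)\mapsto(y,x)$ interchanges the effect of $c_\kappa$ with that of $c_{\ch\kappa}$ on that same coordinate. When $\kappa$ is of a ``$2f$/$2s$'' type the Cayley transform is two-valued and both sides become the corresponding sum of two standard modules, and one checks that these summands match up under the reversal. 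This reduces, case by case, to the same elementary identities of Tits representatives used in Proposition \ref{crossvogandual}(a).

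For part (b) I would follow the three-step pattern of Proposition \ref{crossvogandual}(b). The first step is to show that a Cayley transform carries an Atlas extension to an Atlas extension, i.e.\ $c_\kappa(M(\xi)^+)=(c_\kappa M(\xi))^+$: starting from a preferred extended parameter $(\uplambda,\uptau,0,0)$ for $M(\xi)^+$ (\ref{canext}), one reads off from \cite{AVParameters}*{Tables 2--4} the extended parameter of $c_\kappa M(\xi)^+$ and checks that it again represents the Atlas extension --- equivalently that the associated element $h\vartheta$ is the canonical $\vartheta$ and the associated sign is $1$. Granting this, taking Vogan duals gives ${\ch{(c_\kappa M(\xi)^+)}}={\ch{((c_\kappa M(\xi))^+)}}=({\ch{(c_\kappa M(\xi))}})^+=(c_{\ch\kappa}M(\ch\xi))^+$, using the definition of the Vogan dual of an Atlas extension (\cite{AVParameters}*{Corollary 6.4}) together with part (a). The last step is the dual-side counterpart of the first: an extended parameter for $M(\ch\xi)^+$ may be taken of the form $(0,0,\ell',t')$, and \cite{AVParameters}*{Tables 2--4} for the Vogan dual group (\ref{dualglntypes}) show that a Cayley transform keeps the first entry zero, so that by \cite{AVParameters}*{Lemma 5.3.1} the result is the Atlas extension $(c_{\ch\kappa}M(\ch\xi))^+$, as required.

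The main obstacle is the first step of part (b). Unlike cross actions, Cayley transforms move between Cartan subgroups and in general disturb the $\ell$ and $t$ coordinates of an extended parameter, so it is not automatic that the preferred form $(\uplambda,\uptau,0,0)$ (or the zero first entry on the dual side) is preserved. What makes it work is the list of special features of $\GL_N\rtimes\langle\delta_0\rangle$ --- no compact roots, all roots of type II, and $\Xcal_{\ch\rho}^w$ a singleton --- which restrict the relevant types to (\ref{glntypes}) and exclude the ``bad'' Cayley transforms; the verification is then a finite, if tedious, inspection of \cite{AVParameters}*{Tables 2--4}, entirely in the spirit of the corresponding step (\ref{crossatlas}) in Proposition \ref{crossvogandual}.
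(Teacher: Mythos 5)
Your proposal matches the paper's argument in all essentials: part (a) by a case-by-case computation of Cayley transforms on Atlas parameters via Tits representatives (with the non-simple case reduced to the simple one through cross actions), and part (b) by showing $c_{\kappa}(M(\xi)^{+}) = (c_{\kappa}M(\xi))^{+}$ with preferred extended parameters $(\uplambda,\uptau,0,0)$ and \cite{AVParameters}*{Tables 2--4}, then dualizing via \cite{AVParameters}*{Corollary 6.4} and part (a), and finally using the $(0,0,\ell',t')$ form on the dual side with \cite{AVParameters}*{Lemma 5.3.1}. This is essentially the same proof, so no further comment is needed.
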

\begin{proof}
We see no means for avoiding a case-by-case proof, the cases being
according to the type of $\kappa$ as given in (\ref{glntypes}).  We
will prove an illustrative case in detail,
leaving the others to the reader.  As in the previous proposition we
identify $\xi$ with its equivalent
Atlas parameter $(x,y)$ of (\ref{avbij}), and identify the dual
parameter ${^\vee} \xi$ with $(y,x)$.

Suppose $\kappa = \{ \alpha, \beta = \vartheta \cdot \alpha \}$ is of
type \texttt{2r22}, \emph{i.e.} $\alpha, \beta \in R^{+}(\lambda)$ are
orthogonal roots which are real with respect to a(ny) strong
involution in the class $x$ (\cite{AVParameters}*{Proposition 3.4}.
Suppose further that $\alpha$ and $\beta$ are
simple in $R^{+}(\mathrm{GL}_{N}, H)$.  Then  $c_{\kappa}$ is defined
as the composition of the Cayley transforms $c_{\alpha}$ of $\alpha$,
and $c_{\beta}$ of $\beta$ (\emph{cf.} \cite{LV2014}*{7.6 (h)}).
The Cayley transform $c_{\alpha}$ is defined in terms of Atlas
parameters in \cite{Adams-Fokko}*{Definitions 14.1 and 14.8} as
follows.   Let $G_{\alpha}$ be the derived group of the centralizer of
$\ker(\alpha)$ in $G = \mathrm{GL}_{N}$, and  let $H_{\alpha} \subset
G_{\alpha} \cap H$ be
the one-parameter subgroup corresponding to $\alpha$. Then $G_{\alpha}$
is isomorphic to $\mathrm{SL}_{2}$ and $H_{\alpha}$ is a
Cartan subgroup of $G_{\alpha}$.  Let $\sigma_{\alpha} \in G_{\alpha}$
be the Tits representative (\cite{AVParameters}*{(53)}) of the
non-trivial Weyl group element  in $W(G_{\alpha},H_{\alpha})$ and
write $m_{\alpha}= \sigma_{\alpha}^{2}$.   The same formalism applies
with $G = \,{^\vee}\mathrm{GL}_{N}$ and ${^\vee}\alpha$, so that we
have a Tits representative $\sigma_{{^\vee}\alpha}$ and
$m_{{^\vee}\alpha} = (\sigma_{{^\vee}\alpha})^{2}$.  Let $\delta \in x$
and ${^\vee}\delta \in y$ be representative strong involutions.   Then the Atlas
parameters of the representations in the image of $c_{\alpha}
(M(x,y))$ are the classes of $(\sigma_{\alpha} \delta,
\sigma_{{^\vee}\alpha} \, {^\vee} \delta)$ and $(m_{\alpha}
\sigma_{\alpha} \delta, \sigma_{{^\vee}\alpha} \, {^\vee} \delta)$.
In this case the two classes coincide (\emph{cf.} type \texttt{1r2}
\cite{AVParameters}*{Table 1}) and therefore $c_{\alpha} (M(x,y))$ is
single-valued.

The same reasoning applied to $c_{\beta}$ leads to a single
representation in the image of $c_{\kappa} (M(x,y)) =
c_{\beta}(c_{\alpha}(M (x,y) ))$
\nomenclature{$c_{\kappa}$}{Cayley transform}
and the Atlas parameter of this
representation is the class of $(\sigma_{\beta} \sigma_{\alpha}
\delta, \sigma_{{^\vee}\beta} \sigma_{{^\vee} \alpha} \, {^\vee}
\delta)$.  The Vogan dual of this Atlas parameter (\ref{vogandual1})
is the class of
$(\sigma_{{^\vee}\beta} \sigma_{{^\vee} \alpha} \, {^\vee} \delta,
\sigma_{\beta} \sigma_{\alpha} \delta)$.  Following the path
delineated above, it is a straightforward exercise to compute that
this is the Atlas parameter of $c_{{^\vee}\kappa}(M(y,x))$, where now
${^\vee}\kappa = \{ {^\vee} \alpha, {^\vee} \beta\}$ is of type
\texttt{2i11} with respect to the representative ${^\vee}\delta$  of
$y$.  This proves the first assertion of the proposition for the first
example when $\alpha$ and $\beta$ are simple in $R^{+}(\mathrm{GL}_{N}, H)$.

When $\alpha$ and $\beta$ are merely simple in $R^{+}(\lambda)$ and
not necessarily simple in $R^{+}(\mathrm{GL}_{N}, H)$ then the Cayley
transform is defined by
\begin{equation}
\label{cayleydef}
c_{\kappa}( M(x,y))  = w^{-1} \times c_{w\kappa} ( w \times M(x,y))
\end{equation}
where $w \in W(\mathrm{GL}_{N},H)^{\vartheta}$  and $w\kappa$ is comprised of simple roots in $R^{+}(\mathrm{GL}_{N}, H)$ (\cite{AVParameters}*{Proposition 10.4}).
  In this case the Atlas parameter of $c_{\kappa}( M(x,y))$ is the class of
\begin{align}
\label{cayleyatlas}
&(\dot{w}^{-1} \sigma_{w\beta} \sigma_{w\alpha} \dot{w} \delta
\dot{w}^{-1} \dot{w}, \dot{w}^{-1}\sigma_{w\,{^\vee}\beta} \sigma_{w\,
  {^\vee} \alpha} \dot{w}\, {^\vee} \delta \dot{w}^{-1} \dot{w}) \\
\nonumber
&= (\dot{w}^{-1} \sigma_{w\beta} \sigma_{w\alpha} \dot{w} \delta ,
\dot{w}^{-1}\sigma_{w\, {^\vee}\beta} \sigma_{w\, {^\vee} \alpha}
\dot{w}\, {^\vee} \delta ),
\end{align}
where $\dot{w}$ is any representative of $w$ (\emph{cf.} the proof of
Proposition \ref{crossvogandual}) and $\sigma_{w\alpha}$ etc. are the
Tits representatives in $G_{w \alpha}$ etc.   These Tits
representatives, as well as the Tits representatives of
$\sigma_{\alpha} \in G_{\alpha}$, etc. for the possibly non-simple
roots, all have the form
\small$$\begin{bmatrix} 0 & 1 \\ -1 & 0 \end{bmatrix}$$ \normalsize
regarded as elements in $\mathrm{SL}_2$.  From this it is clear that
one may choose $\dot{w}$ so that $\dot{w}^{-1} \sigma_{w \alpha }
\dot{w} = \sigma_{\alpha}$, $\dot{w}^{-1} \sigma_{w \beta } \dot{w} =
\sigma_{\beta}$, and then (\ref{cayleyatlas}) reduces to
$$(\sigma_{\beta} \sigma_{\alpha}  \delta , \sigma_{ {^\vee}\beta} \sigma_{{^\vee} \alpha}  {^\vee} \delta ).$$
The class of this pair has the same form as the class in the case that $\kappa$ is simple earlier on.  Thus, the first assertion of the proposition follows for non-simple $\kappa$ as in the simple case.

For the second assertion of the proposition we choose an extended
parameter $(\uplambda, \uptau, 0, 0 )$ for $M(\xi)^{+} = M(x,y)^{+}$
and return to (\ref{cayleydef}) in this extended setting.  As noted in
the proof of Proposition \ref{crossvogandual},  $w \times M(\xi)^{+}$
has an extended parameter equivalent to $(\uplambda_{1}, \uptau_{1},
0,0)$.  According to \cite{AVParameters}*{Table 3}, $c_{w\kappa}(w
\times M(\xi)^{+})$ then also corresponds to an extended parameter of
the form $(\uplambda_{2}, \uptau_{2}, 0,0)$.  Finally applying a cross
action by $w^{-1}$ yields an extended parameter of the form
$(\uplambda_{3}, \uptau_{3}, 0,0)$ for $c_{\kappa}(M(\xi)^{+})$.  The
zeroes appearing in the two finally entries of this extended parameter
imply that $c_{\kappa}(M(\xi)^{+}) = (c_{\kappa}(M(\xi))^{+}$
(\emph{cf.} (\ref{canext})).  The sequence of equalities
$${^\vee}\left( c_{\kappa}(M(\xi)^{+}) \right) = \, {^\vee}\left(
(c_{\kappa}(M(\xi))^{+} \right) = \left( \,{^\vee}(c_{\kappa}(M(\xi))
\right)^{+} = (c_{{^\vee}\kappa}(M({^\vee}\xi)))^{+} =
c_{{^\vee}\kappa}(M({^\vee}\xi)^{+})$$
follows using the same reasoning as given at the end of Proposition
\ref{crossvogandual}.
\end{proof}

We are now ready to prove the main result of this section.
\begin{proof}[Proof of Proposition \ref{Hisomorphism}:]

Recalling the dual $\mathcal{H}(\lambda)$-action (\ref{17.15e}), the
proposition amounts to proving
\begin{equation}
\label{17.17}
\langle T_{w_{\kappa}}M(\xi_{1}) ^{+},  M({^\vee} \xi_{2})^{+} \rangle
= \langle M(\xi_{1})^{+},  -T_{w_{\kappa}} M({^\vee}\xi_{2})^{+}
+(q^{l(w_{\kappa})}-1) M({^\vee}\xi_{2}) ^{+}\rangle
\end{equation}
for all $\xi_{1}, \xi_{2} \in \Xi(\O,
{^\vee}\mathrm{GL}_{N}^{\Gamma})^\vartheta$ and $w_{\kappa}$ as
in (\ref{simplekappa1}).  Looking back to the definition of
(\ref{pair4}), the left-hand side of (\ref{17.17})  may be expressed as
\begin{align*}
\langle T_{w_{\kappa}}M(\xi_{1})^{+},  M({^\vee} \xi_{2})^{+} \rangle
 =
(-1)^{l^{I}_{\vartheta}(\xi_{2})} q^{(l^{I}(\xi_{2}) +
    l^{I}({^\vee}\xi_{2}))/2} \cdot ( \mbox{the coefficient of }
  M(\xi_{2})^{+}  \mbox{ in } T_{w_{\kappa}} M(\xi_{1})^{+}).
\end{align*}
Similarly, the right-hand side of (\ref{17.17}) may be expressed as
the product of $(-1)^{l^{I}_{\vartheta}(\xi_{1})} q^{(l^{I}(\xi_{1}) +
  l^{I}({^\vee}\xi_{1}))/2}$ with
$$ \mbox{the coefficient of }
M({^\vee}\xi_{1})^{+}  \mbox{ in }   -T_{w_{\kappa}}
M({^\vee}\xi_{2})^{+} +(q^{l(w_{\kappa})}-1) M({^\vee}\xi_{2})^{+}. $$
By Lemma \ref{indlength}, Equation (\ref{17.17}) is equivalent to proving that\footnote{$l^{I}_{\vartheta}(\xi_{2}) - l^{I}_{\vartheta}(\xi_{1})$ in Equation (\ref{17.17a}) appears incorrectly as $l^{I}(\xi_{2}) - l^{I}(\xi_{1})$ in \cite{AVParameters}*{(51b)}.  This leads to the map
 $[E]'\mapsto (-1)^{l^{I}(E)}[{^\vee}E]$ in \cite{AVParameters}*{Proposition 11.2}. The correct isomorphism is $[E]'\mapsto (-1)^{l_{\vartheta}^{I}(E)}[{^\vee}E]$. }
\begin{align}
\label{17.17a}
 &(-1)^{l^{I}_{\vartheta}(\xi_{2}) - l^{I}_{\vartheta}(\xi_{1})} \cdot
( \mbox{the coefficient of } M(\xi_{2})^{+}  \mbox{ in }
T_{w_{\kappa}} M(\xi_{1})^{+})\\
 \nonumber
 & = \mbox{ the coefficient of } M({^\vee}\xi_{1})^{+}  \mbox{ in }
 -T_{w_{\kappa}} M({^\vee}\xi_{2})^{+} +(q^{l(w_{\kappa})}-1)
 M({^\vee}\xi_{2})^{+}.
\end{align}
The proof of the proposition is  a case-by-case verification of
(\ref{17.17a}) according to the type of $\kappa$ relative to  $x$ for
$\xi_{1}  = (x,y)$ ((\ref{avbij}),  (\ref{glntypes})).
We prove a typical case in detail here, leaving the remaining cases to
the reader.

Suppose that $\kappa$ is a root of type \texttt{2i22} relative
to $\xi_{1}$.  Then $$c_{\kappa}(M(\xi_{1})^{+}) = \{M(\xi)^{+},
M(\xi')^{+}\}$$ is double-valued \cite{AVParameters}*{Table 1}.
According to
\cite{AVParameters}*{Table 5}, the coefficient of $M(\xi_{2})^{+}$ in
$T_{w_{\kappa}} M(\xi_{1})^{+}$ is $1$ when $\xi_{2}  =\xi_{1}, \xi,
\xi'$ and $0$ otherwise.  As for the $\vartheta$-integral lengths
(\ref{thetalength}), we compute
\begin{align*}
l^{I}_{\vartheta}(\xi) &= -\frac{1}{2} \left( | \{\alpha \in
R^{+}_{\vartheta}(\lambda)  :
w_{\kappa} x\cdot \alpha \in R^{+}_{\vartheta}(\lambda) \} |+
\dim((H^{\vartheta})^{w_{\kappa} x}) \right)\\
& = -\frac{1}{2} \left( | \{\alpha \in R^{+}_{\vartheta}(\lambda)  :
 x\cdot \alpha \in R^{+}_{\vartheta}(\lambda) \} | -2 +
 \dim((H^{\vartheta})^{ x}) \right)\\
 &=  l^{I}_{\vartheta}(\xi_{1}) + 1
\end{align*}
and so $(-1)^{l_{\vartheta}^{I}(\xi_{1}) - l_{\vartheta}^{I}(\xi)} =
-1$.  Similarly $(-1)^{l_{\vartheta}^{I}(\xi_{1}) -
  l_{\vartheta}^{I}(\xi')} = -1$.  The left-hand side of
(\ref{17.17a}) is therefore equal to $1$ if $\xi_{2} =\xi_{1}$, is
equal to $-1$ if $\xi_{2} =  \xi, \xi'$, and  is equal to  $0$
otherwise.

Let us consider the right-hand side of (\ref{17.17a}), in which
${^\vee} \kappa$ is of type \texttt{2r11} relative to ${^\vee}
\xi_{1}$.  According to \cite{AVParameters}*{Table 5},
$M({^\vee}\xi_{1})^{+}$ occurs in $T_{w_{\kappa}}
M({^\vee}\xi_{2})^{+}$ only if one of the following holds
\begin{enumerate}
\item $M({^\vee}\xi_{1})^{+} = M({^\vee}\xi_{2})^{+}$
\item $M({^\vee}\xi_{1})^{+}$ belongs to $c_{{^\vee}\kappa}(M({^\vee}\xi_{2})^{+})$
\item $M({^\vee}\xi_{1})^{+} = w_{{^\vee}\kappa} \times M({^\vee}\xi_{2})^{+}$.
\end{enumerate}
The third possibility reduces to the first.  Indeed, the  third
possibility holds if and only if $M({^\vee}\xi_{2})^{+} =
w_{{^\vee}\kappa} \times M({^\vee}\xi_{1})^{+}$, and for ${^\vee}
\kappa$ of type \texttt{2r11} relative to ${^\vee} \xi_{1}$ one may
compute that $w_{{^\vee}\kappa} \times M({^\vee}\xi_{1})^{+}=
M({^\vee}\xi_{1})^{+}$  using (\ref{crossaction}).  Hence, we need to
compute the right-hand side of (\ref{17.17a}) only for the first two
possibilities.

When $M({^\vee}\xi_{1})^{+} = M({^\vee}\xi_{2})^{+}$, \emph{i.e.}
$\xi_{1} = \xi_{2}$,  the right-hand
side of  (\ref{17.17a}) is
$$-(q^{2} - 2) + q^{l(w_{\kappa})}-1 = -(q^{2} - 2) + q^{2}-1 = 1$$
\cite{AVParameters}*{Table 5}, and this equals the left-hand side of
(\ref{17.17a}).

In the second possibility, $M({^\vee}\xi_{1})^{+}$ is a Cayley
transform of $M({^\vee}\xi_{2})^{+}$ and this is true if and only if
$M({^\vee}\xi_{2})$ is an inverse Cayley transform of
$M({^\vee\xi_{1}})^{+}$.  The latter condition is equivalent to
${^\vee}\xi_{2}$ being equal to either ${^\vee} \xi$ or ${^\vee}\xi'$,
and ${^\vee}\kappa$ is of type \texttt{2i11} relative to ${^\vee}
\xi_{2}$.   As \cite{AVParameters}*{Table 5} indicates, the right-hand
side of (\ref{17.17a}) equals $-1$, which is also equal to the
left-hand side of (\ref{17.17a}).
\end{proof}

\subsection{Verdier duality}

In proving Theorem \ref{twistpairing} we have extended
(\ref{pair2}) to the pairing (\ref{pair4}) of Hecke modules and discussed the related Hecke
algebra actions.  Ultimately, we must evaluate the pairing on special elements
which recover the basis elements $\pi(\xi)$ and $\pi(\ch \xi)$ of
Lemma \ref{twistpairing2}.  As already mentioned, the desired elements are essentially
eigenvectors of \emph{Verdier duality} (\cite{LV2014}*{Section 2.4}).
We introduce the key properties of Verdier
  duality, define the
``eigenvectors,'' and finally show that
the Hecke algebra isomorphism of Proposition \ref{Hisomorphism} is
equivariant with respect to Verdier duality.

Verdier duality on $\mathcal{K}
\Pi(\O, \mathrm{GL}_{N}(\mathbb{R}), \vartheta)$  is a
$\mathbb{Z}$-linear involution $D$
 satisfying
\nomenclature{$D$}{Verdier duality map}
\begin{equation}
  \label{verdual}
  \begin{aligned}
  D(q^{1/2} M(\xi)^{+}) &= q^{-1/2} D(
  (\xi)^{+})\\
  D((T_{\kappa} + 1) M(\xi)^{+} ) &= q^{-l(w_{\kappa})}(T_{\kappa} +
  1) \, D(M(\xi)^{+})
    \end{aligned}
\end{equation}
See \cite{LV2014}*{4.8(e)-(f)}.

\begin{thm}[\cite{LV2014}*{Section 8.1}]
  \label{theo:verdierProp}
Define elements
$R({\xi'},{\xi})\in \mathbb{Z}[q^{1/2},q^{-1/2}]$ by
$${D}(M(\xi)^{+}) = q^{-l^{I}({\xi})} \sum_{\xi' \in
  \Xi(\O, {^\vee}\mathrm{GL}_{N}^{\Gamma})^\vartheta}
(-1)^{l^{I}({\xi})-l^{I}({\xi'})} R({\xi'},{\xi}) \, M (\xi')^{+}$$
in $\mathcal{K} \Pi(\O, \mathrm{GL}_{N}(\mathbb{R}), \vartheta)$.
Then
\begin{enumerate}[label={(\alph*)}]
\item $R({\xi'},{\xi'})=1$,
\item $R({\xi'},\xi)\neq 0$ only if $\xi' \leq{\xi}$.
\end{enumerate}
In addition, if $D'$ is any $\mathbb{Z}$-linear involution of
$\mathcal{K} \Pi(\O, \mathrm{GL}_{N}(\mathbb{R}), \vartheta)$
which satisfies the properties of this theorem and (\ref{verdual}),
then $D = D'$.
\end{thm}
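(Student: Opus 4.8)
The plan is to treat this as the twisted incarnation of the classical fact that the Kazhdan--Lusztig ``bar operator'' on a Hecke module exists, is uniquely pinned down by the relations (\ref{verdual}) together with normalized triangularity, and has triangular matrix with respect to the Bruhat order (for the non-twisted case compare \cite{ABV}*{Sections 16--17}; in the present twisted setting the reference is \cite{LV2014}*{Section 8}). The economical route to existence is to start from genuine Verdier duality: on the category $\mathcal{P}(X(\O,\LGL);\upsigma)$ of twisted perverse sheaves Verdier duality $\mathbb{D}$ is an honest contravariant involutive functor, hence additive on the Grothendieck group, and in the standard basis $\{\mu(\xi)^+\}$ it is upper-triangular because $\mathbb{D}$ sends the ``extension-by-zero'' sheaf $\mu(\xi)^+$ to a co-standard sheaf whose class differs from $\mu(\xi)^+$ only by terms supported on $\overline{S_\xi}\smallsetminus S_\xi$, hence indexed by strictly smaller parameters. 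Transporting $\mathbb{D}$ to $\mathcal{K}\Pi(\O,\GL_N(\R),\vartheta)$ through the Riemann--Hilbert and Beilinson--Bernstein correspondences and Vogan duality (Propositions \ref{p:rhbb} and \ref{PpimuM}) yields a $\mathbb{Z}$-linear involution, and its compatibility with the Hecke action --- which is exactly the content of \cite{LV2014}*{Section 4.8} --- is the pair of identities (\ref{verdual}). This produces the operator $D$.

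Granting existence, properties (a) and (b) and uniqueness are then obtained by the standard inductive argument up the Bruhat order. The self-dual combination $C_\kappa = q^{-l(w_\kappa)/2}(T_{w_\kappa}+1)$ is fixed by $D$ (this is immediate from the quadratic relation (\ref{quadrel}), and the second line of (\ref{verdual}) is precisely the statement $D(C_\kappa v) = C_\kappa D(v)$). For Bruhat-minimal $\xi$ there are no admissible lower terms, so (a) forces $D(M(\xi)^+) = q^{-l^I(\xi)}M(\xi)^+$; compatibility of this value with (\ref{verdual}) is the base case and reflects the fact that on a minimal (closed) stratum the standard and intersection-cohomology sheaves coincide and are essentially self-dual. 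For non-minimal $\xi$ one selects a $\vartheta$-orbit $\kappa$ and a parameter $\eta<\xi$ for which $M(\xi)^+$ occurs with unit coefficient in $(T_{w_\kappa}+1)M(\eta)^+$ --- such a pair exists by the ``nearest-neighbour'' structure of the Hecke action on the standard basis recorded in \cite{LV2014}*{Section 7} and \cite{AVParameters}*{Table 5}, in which $T_{w_\kappa}M(\eta)^+$ mixes $M(\eta)^+$ only with the standard modules attached to $\eta$, its cross-transform, and its Cayley transforms. Then $D(C_\kappa M(\eta)^+) = C_\kappa D(M(\eta)^+)$ expresses $D(M(\xi)^+)$ in terms of $D(M(\eta)^+)$ and of strictly smaller standard modules, all of whose $D$-values are already known by induction; reading off this recursion gives the triangular shape of $D(M(\xi)^+)$ with diagonal coefficient $1$, i.e.\ (a) and (b). The same recursion, applied verbatim to two operators $D$ and $D'$ both obeying (\ref{verdual}) and the triangularity in (a)--(b), shows by induction that they agree on every $M(\xi)^+$: on minimal $\xi$ by (a), and on non-minimal $\xi$ because the recursion determines $D(M(\xi)^+)$ from the common lower-order data. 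Hence $D = D'$.

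If one prefers not to invoke sheaf-theoretic Verdier duality, existence can instead be established by running the recursion of the previous paragraph directly; the only real obstacle is then \emph{well-definedness} --- that the value assigned to $D(M(\xi)^+)$ does not depend on the auxiliary choice of $(\kappa,\eta)$. This is the twisted analogue of the consistency statement underlying the existence of Kazhdan--Lusztig polynomials, and I would dispatch it either by appealing back to the manifestly choice-independent $\mathbb{D}$ above, or by a direct check that the competing recursions agree, which comes down to the braid relations among the generators of $\mathcal{H}(\lambda)$ together with a connectedness argument on the associated $W$-graph. I expect this well-definedness point to be the only genuinely substantive step; everything else is bookkeeping with the Bruhat order and the explicit Hecke-operator tables.
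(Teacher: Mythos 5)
The paper gives no internal proof of Theorem \ref{theo:verdierProp}: it is imported wholesale from \cite{LV2014}*{Sections 4.8 and 8.1}, where the duality $D$ is constructed geometrically on the twisted Hecke module and exactly these properties are established, so there is nothing in the paper to compare your argument against except the citation itself. Your sketch is, in substance, a reconstruction of the Lusztig--Vogan argument, and its skeleton is sound: existence and triangularity come from Verdier duality (the class of the extension by zero of a local system differs from that of the full direct image only by boundary terms, giving $R(\xi',\xi)\neq 0$ only for $\xi'\leq\xi$ and $R(\xi,\xi)=1$ after the normalization $q^{-l^{I}(\xi)}$, where the constancy of $d(\xi)+l^{I}(\xi)$ from \eqref{constl} reconciles the geometric and integral-length shifts), and uniqueness follows by induction along the Bruhat order from the fact that $D$ commutes with $\widehat{T}_{\kappa}=q^{-l(w_{\kappa})/2}(T_{\kappa}+1)$, which is indeed equivalent to \eqref{verdual}. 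Two caveats are worth making explicit. First, the natural geometric home of $\mathcal{K}\Pi(\O,\GL_N(\R),\vartheta)$ in \cite{LV2014} is the picture of twisted local systems on $K$-orbits for $\GL_N$ itself, not the dual parameter space; if you instead transport sheaf-theoretic duality from $X(\O,\LGL)$ through Propositions \ref{p:rhbb} and \ref{PpimuM}, the compatibility \eqref{verdual} of the transported operator with the Hecke action is not automatic --- it is precisely the content of \cite{LV2014}*{Section 4.8} that you then quote, so this route does not bypass the reference, it re-cites it. Second, your induction step presupposes that every non-minimal $\vartheta$-fixed $\xi$ admits a pair $(\kappa,\eta)$ with $\eta<\xi$ and $M(\xi)^{+}$ occurring with unit coefficient in $\widehat{T}_{\kappa}M(\eta)^{+}$; for $\GL_N(\R)$ this does hold, because the compact-imaginary types are absent from \eqref{glntypes} and one can invoke \cite{ICIV}*{Theorem 8.8} together with \cite{AVParameters}*{Table 5} --- the same mechanism the paper uses in Section \ref{tKLV} --- but it deserves a sentence rather than being folded into ``bookkeeping''. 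With those two points made precise, your proposal is a faithful reconstruction of the argument in the source the paper cites.
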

The constructions of \cite{LV2014} apply equally well to the dual
module, yielding a Verdier Duality
${^\vee}D$ on
\nomenclature{$\ch D$}{Verdier duality map}
$\mathcal{K} {^\vee}\Pi(\O,
\mathrm{GL}_{N}(\mathbb{R}),\vartheta)$.  The Verdier duality
${^\vee}D$ satisfies the obvious analogue of Theorem \ref{theo:verdierProp}.

We also a need a dual version of Verdier duality.
Let $\xi \in \Xi(\O,
{^\vee}\mathrm{GL}_{N}^{\Gamma})^\vartheta$ so that $\langle
M(\xi)^{+},  \cdot  \rangle$ belongs to $\mathcal{K}
{^\vee}\Pi(\O, \mathrm{GL}_{N}(\mathbb{R}),\vartheta)^{*}$.
As in  \cite{ABV}*{(17.15)(f)}, we define
$${^\vee}D^{*} :  \mathcal{K} {^\vee}\Pi(\O,
\mathrm{GL}_{N}(\mathbb{R}),\vartheta)^{*}\rightarrow \mathcal{K}
       {^\vee}\Pi(\O,
       \mathrm{GL}_{N}(\mathbb{R}),\vartheta)^{*}$$
by
$${^\vee}D^{*}  \langle M(\xi)^{+},  \cdot  \rangle = \overline{
  \langle M(\xi)^{+}, \, {^\vee} D (\cdot) \rangle},$$
where ${^\vee}D$ is the Verdier duality on $\mathcal{K}
{^\vee}\Pi(\O, \mathrm{GL}_{N}(\mathbb{R}),\vartheta)$ and
$$ \bar{\vspace{2mm}} \,: \mathbb{Z} [q^{1/2}, q^{-1/2}] \rightarrow
\mathbb{Z} [q^{1/2}, q^{-1/2}] $$
is the unique automorphism sending $q^{1/2}$ to $q^{-1/2}$.
Imitating the proof of \cite{ICIV}*{Lemma 13.4}, it is straightforward
to verify that ${^\vee}D^{*}$ satisfies the analogues of
(\ref{verdual}) and Theorem \ref{theo:verdierProp}.  It is in this proof that Proposition \ref{Hisomorphism} is used.  We are thus
justified in calling ${^\vee}D^{*}$ a Verdier dual.
\begin{prop}
\label{HeckeVerdier}
The Hecke module isomorphism (\ref{eq:dualformalmap}) is equivariant
with respect to $D$ and ${^\vee}D^{*}$, that is
$$\langle D M(\xi_{1})^{+} , M({^\vee}\xi_{2})^{+} \rangle =
\overline{ \langle M(\xi_1)^{+}, \, {^\vee} D M({^\vee}\xi_{2})^{+}
  \rangle},$$
for all $\xi_{1}, \xi_{2} \in \Xi(\O,
{^\vee}\mathrm{GL}_{N}^{\Gamma})^\vartheta$.
\end{prop}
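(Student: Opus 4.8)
The plan is to deduce the identity from the uniqueness clause of Theorem \ref{theo:verdierProp}, following the untwisted model in \cite{ABV}*{Section 17}. Write $\Phi\colon \mathcal{K}\Pi(\O,\mathrm{GL}_{N}(\mathbb{R}),\vartheta)\to\mathcal{K}\,{^\vee}\Pi(\O,\mathrm{GL}_{N}(\mathbb{R}),\vartheta)^{*}$ for the isomorphism (\ref{eq:dualformalmap}). Evaluating both sides of the asserted identity against the $\mathbb{Z}[q^{1/2},q^{-1/2}]$-basis $\{M({^\vee}\xi_{2})^{+}\}$, one sees that the proposition is equivalent to the operator identity $\Phi\circ D = {^\vee}D^{*}\circ\Phi$. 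Since $\Phi$ is an isomorphism, it suffices to show that $D':=\Phi^{-1}\circ{^\vee}D^{*}\circ\Phi$ coincides with $D$; and by the uniqueness assertion of Theorem \ref{theo:verdierProp}, this reduces to verifying that $D'$ is a $\mathbb{Z}$-linear involution satisfying (\ref{verdual}) together with properties (a), (b) of that theorem.

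The involution property and the first line of (\ref{verdual}) are formal: ${^\vee}D^{*}$ is an involution and is bar-semilinear for $q^{1/2}\mapsto q^{-1/2}$ (both established when ${^\vee}D^{*}$ was introduced, imitating \cite{ICIV}*{Lemma 13.4}), while $\Phi$ and $\Phi^{-1}$ are $\mathbb{Z}[q^{1/2},q^{-1/2}]$-linear, so conjugating transports these properties to $D'$. For the $(T_{\kappa}+1)$-compatibility in (\ref{verdual}) I would invoke the $\mathcal{H}(\lambda)$-equivariance of $\Phi$ from Proposition \ref{Hisomorphism}: $\Phi$ intertwines $(T_{\kappa}+1)$ acting on $\mathcal{K}\Pi(\O,\mathrm{GL}_{N}(\mathbb{R}),\vartheta)$ with $(T_{\kappa}^{*}+1)$ acting on the dual module (see (\ref{eq:dualheckeaction})); the analogue of (\ref{verdual}) for ${^\vee}D^{*}$ then produces the factor $q^{-l(w_{\kappa})}$ in front of $(T_{\kappa}^{*}+1)$, and pulling back through $\Phi^{-1}$ (again equivariant) restores $(T_{\kappa}+1)$, giving $D'((T_{\kappa}+1)M(\xi)^{+}) = q^{-l(w_{\kappa})}(T_{\kappa}+1)D'(M(\xi)^{+})$.

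For the triangularity (a), (b), the key observation is that the pairing (\ref{pair4}) is diagonal: $\langle M(\xi)^{+},M({^\vee}\xi')^{+}\rangle = c_{\xi}\,\delta_{\xi,\xi'}$ with $c_{\xi} = (-1)^{l^{I}_{\vartheta}(\xi)}\,q^{(l^{I}(\xi)+l^{I}({^\vee}\xi))/2}$, the $q$-exponent being a universal constant by Lemma \ref{indlength}. Thus $\Phi$ carries $M(\xi)^{+}$ to $c_{\xi}$ times the functional dual to $M({^\vee}\xi)^{+}$, and $\Phi^{-1}$ carries that dual functional back to $c_{\xi}^{-1}M(\xi)^{+}$. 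Feeding the triangular expansion of ${^\vee}D$ on $\mathcal{K}\,{^\vee}\Pi(\O,\mathrm{GL}_{N}(\mathbb{R}),\vartheta)$ (Theorem \ref{theo:verdierProp} for the dual module) through these two identifications — and using that the two order-reversals involved, namely the dual Bruhat order (\ref{dualBruhat}) and the transposition implicit in the definition of ${^\vee}D^{*}$, compose to the identity on $\Xi(\O,{^\vee}\mathrm{GL}_{N}^{\Gamma})^{\vartheta}$ — one finds that $D'(M(\xi)^{+})$ is supported on $\xi'\leq\xi$. A short computation with the $q$-powers, in which the signs $(-1)^{l^{I}_{\vartheta}}$ and the constant $q$-exponent coming from $c_{\xi}$ and $c_{\xi}^{-1}$ cancel, shows the leading coefficient equals $q^{-l^{I}(\xi)}$, i.e. $R'(\xi,\xi)=1$. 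This establishes (a) and (b), hence $D'=D$ and the proposition.

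The difficulty I anticipate is one of bookkeeping rather than of ideas: tracking the signs $(-1)^{l^{I}_{\vartheta}(\xi)}$ and the half-integral powers of $q$ that $\Phi$ attaches to the two bases, and confirming their cancellation in the leading term of $D'(M(\xi)^{+})$ — precisely the point at which Lemma \ref{indlength} (constancy of $l^{I}(\xi)+l^{I}({^\vee}\xi)$) is used. A preliminary point, taken for granted from the discussion preceding the proposition, is that ${^\vee}D^{*}$ genuinely satisfies the analogues of (\ref{verdual}) and Theorem \ref{theo:verdierProp}; checking this is a transplantation of \cite{ICIV}*{Lemma 13.4} to the present twisted setting.
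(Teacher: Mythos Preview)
Your proposal is correct and follows essentially the same approach as the paper: both arguments deduce the equivariance from the uniqueness clause of Theorem \ref{theo:verdierProp}, using Proposition \ref{Hisomorphism} for the Hecke compatibility and the diagonality of the pairing (with Lemma \ref{indlength}) for the triangularity. The only cosmetic difference is the direction of transport—the paper pushes $D$ forward to the dual module via $\Phi$ and compares with ${^\vee}D^{*}$ there, whereas you pull ${^\vee}D^{*}$ back through $\Phi^{-1}$ and compare with $D$—but this is symmetric and inessential.
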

\begin{proof}
As already remarked, both $D$ and ${^\vee}D^{*}$ satisfy (analogues
of) (\ref{verdual}) and Theorem \ref{theo:verdierProp}.  The resulting
properties of $D$ imply that the $\mathbb{Z}$-linear involution
$$\langle M(\xi_{1})^{+}, \cdot \rangle  \mapsto \langle D
M(\xi_{1})^{+}, \cdot \rangle $$
on $\mathcal{K} {^\vee}\Pi(\O,
\mathrm{GL}_{N}(\mathbb{R}),\vartheta)^{*}$ also satisfies the
analogues of (\ref{verdual}) and Theorem \ref{theo:verdierProp}.  The
proposition therefore follows from the uniqueness statement in the
dual analogue of Theorem \ref{theo:verdierProp}.
\end{proof}

We now define the special basis elements alluded to at the beginning
of this section.  The
special bases are defined in terms of the twisted KLV-polynomials
$P^{\vartheta}({\xi'},{\xi})\in \mathbb{Z}[q^{1/2}, q^{-1/2}]$  defined
in \cite{LV2014}*{Section 0.1}.
\nomenclature{$P^{\vartheta}({\xi'},{\xi})$}{twisted KLV-polynomials}
\begin{thm}[\cite{LV2014}*{Theorem 5.2}]
\label{theo:defpoly}
For every $\xi \in \Xi(\O,
{^\vee}\mathrm{GL}_{N}^{\Gamma})^\vartheta$, define
\begin{equation}
  \label{basis}
C^{\vartheta}(\xi)=\sum_{\xi'\in \Xi(\O,
{^\vee}\mathrm{GL}_{N}^{\Gamma})^\vartheta}
(-1)^{l^{I}(\xi)-l^{I}(\xi')} \,P^{\vartheta}(\xi',\xi)
\ M(\xi')^{+},
\nomenclature{$C^{\vartheta}(\xi)$}{}
\end{equation}
an element in $\mathcal{K}
\Pi(\O, \mathrm{GL}_{N}(\mathbb{R}), \vartheta)$.
Then
\begin{enumerate}
\item ${D}({C}^{\vartheta}({\xi}))=q^{-l^{I}(\xi)}\, {C}^{\vartheta}({\xi})$
\item $P^{\vartheta}(\xi,\xi)=1$
\item $P^{\vartheta}(\xi', \xi) = 0$ if $\xi' \nleq {\xi}$
\item $\deg P^{\vartheta}(\xi',{\xi}) \leq
  (l^{I}({\xi})-l^{I}({\xi'})-1)/2$ if ${\xi'}\leq{\xi}$.
\end{enumerate}
Conversely suppose  $\{\underline C(\xi',\xi)\}$ and
$\{\underline{P}(\xi', \xi)\}$
satisfy \eqref{basis} and (1-4).
Then $\underline P(\xi',\xi)=P^\vartheta(\xi',\xi)$ and $\underline
C(\xi',\xi)=C^\vartheta(\xi',\xi)$ for all $\xi',\xi \in
\Xi(\O \ch \mathrm{GL}_{N}^{\Gamma})^{\vartheta}$.

\end{thm}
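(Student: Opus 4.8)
The plan is to run the standard Kazhdan--Lusztig existence-and-uniqueness machinery, now in the $\vartheta$-twisted Hecke module $\mathcal{K}\Pi(\O,\mathrm{GL}_{N}(\mathbb{R}),\vartheta)$; this is the $\mathrm{GL}_{N}$-specialization of \cite{LV2014}*{Theorem 5.2}, so once the Verdier involution $D$, its two defining properties \eqref{verdual}, and the $R$-polynomial expansion of Theorem \ref{theo:verdierProp} have been matched with the conventions of \cite{LV2014} (which is what the preceding subsections have arranged), one may simply invoke that theorem. I would also give the argument directly. The only inputs needed are: property \eqref{verdual}; the identity $D(M(\xi)^{+})=q^{-l^{I}(\xi)}\sum_{\xi'\le\xi}(-1)^{l^{I}(\xi)-l^{I}(\xi')}R(\xi',\xi)\,M(\xi')^{+}$ with $R(\xi',\xi')=1$ and $R(\xi',\xi)=0$ unless $\xi'\le\xi$, from Theorem \ref{theo:verdierProp}; and the compatibility of the Bruhat order on $\Xi(\O,\LGL)^{\vartheta}$ with the integral length, so that $\xi'<\xi$ forces $l^{I}(\xi')<l^{I}(\xi)$ (this follows from \cite{ABV}*{(7.11)(f)} together with \cite{AMR1}*{Proposition B.1}).

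For \emph{uniqueness}, suppose families $\{\underline C(\xi)\}$ and $\{\underline P(\xi',\xi)\}$ satisfy \eqref{basis} and (a)--(d). Fix $\xi$ and set $Z=C^{\vartheta}(\xi)-\underline C(\xi)$. By (b) and (c) this lies in the $\mathbb{Z}[q^{1/2},q^{-1/2}]$-span of $\{M(\xi')^{+}:\xi'<\xi\}$, by (a) it satisfies $D(Z)=q^{-l^{I}(\xi)}Z$, and by (d) the coefficient $b_{\xi'}$ of $M(\xi')^{+}$ in $Z$ has degree at most $(l^{I}(\xi)-l^{I}(\xi')-1)/2$. If $Z\ne 0$, pick $\xi'$ maximal in the Bruhat order among the support of $Z$; comparing coefficients of $M(\xi')^{+}$ on the two sides of $D(Z)=q^{-l^{I}(\xi)}Z$ and using $R(\xi',\xi')=1$ gives $\overline{b_{\xi'}}=q^{l^{I}(\xi')-l^{I}(\xi)}b_{\xi'}$; since $l^{I}(\xi')-l^{I}(\xi)<0$, this self-duality relation is incompatible with the degree bound on $b_{\xi'}$ unless $b_{\xi'}=0$, a contradiction. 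Hence $Z=0$, and then $P^{\vartheta}(\xi',\xi)=\underline P(\xi',\xi)$ by linear independence of the $M(\xi')^{+}$.

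For \emph{existence}, I would induct on the Bruhat order. When $\xi$ is minimal the $R$-expansion collapses to $D(M(\xi)^{+})=q^{-l^{I}(\xi)}M(\xi)^{+}$, so $C^{\vartheta}(\xi)=M(\xi)^{+}$ and $P^{\vartheta}(\xi,\xi)=1$ satisfy (a)--(d). For general $\xi$, assume $C^{\vartheta}(\xi')$ has been built for all $\xi'<\xi$; since $\{M(\xi)^{+}\}\cup\{C^{\vartheta}(\xi'):\xi'<\xi\}$ is a basis for the span of $\{M(\eta)^{+}:\eta\le\xi\}$, one writes $C^{\vartheta}(\xi)=M(\xi)^{+}+\sum_{\xi'<\xi}n_{\xi'}\,C^{\vartheta}(\xi')$ and imposes $D(C^{\vartheta}(\xi))=q^{-l^{I}(\xi)}C^{\vartheta}(\xi)$. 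Using $D(C^{\vartheta}(\xi'))=q^{-l^{I}(\xi')}C^{\vartheta}(\xi')$ this reduces, processing $\xi'$ in decreasing Bruhat order, to equations of the shape $\overline{n_{\xi'}}-q^{l^{I}(\xi')-l^{I}(\xi)}n_{\xi'}=(\text{a Laurent polynomial already fixed by the }R\text{-polynomials and the }n_{\eta},\ \eta>\xi')$; the usual ``retain the non-negative part'' lemma then produces a unique $n_{\xi'}$ with $\deg n_{\xi'}<(l^{I}(\xi)-l^{I}(\xi'))/2$, and $P^{\vartheta}(\xi',\xi)$ is read off from the resulting $M(\xi')^{+}$-expansion, with (b) and (c) immediate. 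The step I expect to be the real work is verifying that the degree bound (d) is preserved under this recursion: this means combining the degree estimate for $R(\xi',\xi)$ from Theorem \ref{theo:verdierProp} with the inductive bounds on $\deg P^{\vartheta}(\cdot,\xi')$ while carefully tracking the $l^{I}$-exponents, exactly as in the non-twisted arguments of \cite{ABV}*{Section 16} and \cite{LV2014}*{Section 5}.
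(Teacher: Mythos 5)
Your proposal takes essentially the same route as the paper: the paper offers no independent proof of this statement but simply imports it as \cite{LV2014}*{Theorem 5.2}, which is precisely your primary move after checking that the module, the involution $D$, and the $R$-polynomial expansion of Theorem \ref{theo:verdierProp} match the conventions of \cite{LV2014}. Your supplementary direct argument is the standard Kazhdan--Lusztig existence-and-uniqueness recursion and is sound (the uniqueness half is complete and correct, using that $\xi'<\xi$ forces $l^{I}(\xi')<l^{I}(\xi)$ via \cite{AMR1}*{Proposition B.1}; the existence half is the usual induction, whose deferred degree bookkeeping is exactly what \cite{LV2014}*{Section 5} carries out), so nothing further is needed.
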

Properties 2-3 of Theorem \ref{theo:defpoly} ensure that
$$\{ C^{\vartheta}(\xi): \xi \in \Xi(\O,
{^\vee}\mathrm{GL}_{N}^{\Gamma})^\vartheta \}$$
is a basis for $\mathcal{K}
\Pi(\O, \mathrm{GL}_{N}(\mathbb{R}), \vartheta)$.

\cite{LV2014}*{Theorem 5.2}  also applies to  $\mathcal{K}
{^\vee}\Pi(\O, \mathrm{GL}_{N}(\mathbb{R}), \vartheta)$, so
there is an obvious variant of Theorem \ref{theo:defpoly}
characterizing the dual KLV-polynomials ${^\vee}P^{\vartheta}(
\,{^\vee}\xi', {^\vee}\xi)
\nomenclature{$\ch P^{\vartheta}(
\,{^\vee}\xi', {^\vee}\xi)$}{dual twisted KLV-polynomials}
$ and the
basis elements
\begin{equation}
  \label{dualbasis}
C^{\vartheta}({^\vee}\xi) =\sum_{\xi' \in \Xi(\O,
{^\vee}\mathrm{GL}_{N}^{\Gamma})^\vartheta
  }(-1)^{l^{I}({^\vee}\xi)-l^{I}({^\vee}\xi')}
\ {^\vee}P^{\vartheta}(\, ^{\vee}\xi', {^\vee}\xi) \,
M({^\vee}\xi')^{+}.
\nomenclature{$C^{\vartheta}({^\vee}\xi)$}{}
\end{equation}

\begin{prop}
  \label{specialpi}
By specializing to $q=1$, we obtain
\begin{align*}
C^{\vartheta}(\xi)(1) & = \pi(\xi)^{+},\\
C^{\vartheta}({^\vee}\xi)(1) & = \pi({^\vee}\xi)^{+}
\end{align*}
for all $\xi \in \Xi(\O,
{^\vee}\mathrm{GL}_{N}^{\Gamma})^\vartheta$.
\end{prop}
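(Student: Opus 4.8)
The plan is to obtain this from the twisted Kazhdan--Lusztig--Vogan theory of \cite{LV2014}, by adapting the proof of the untwisted identity $C(\xi)(1)=\pi(\xi)$ that underlies \cite{ABV}*{Sections 16--17}. It suffices to prove the first equality, $C^\vartheta(\xi)(1)=\pi(\xi)^+$: the second is the same assertion applied to the Vogan dual group $\ch\GL_N(\lambda)\rtimes\langle\ch\delta_0\rangle$ of \eqref{vogandualgroup}, which is again a product of general linear groups equipped with an involution, and carries its own twisted KL basis defined by \eqref{dualbasis}.

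The key input is the geometric realization of the twisted KLV polynomials in \cite{LV2014}: $P^\vartheta(\xi',\xi)$ is the Poincar\'e polynomial for the trace of the \emph{canonical} $\upsigma$-automorphism of Lemma \ref{cansheaf} on the stalk of the intersection cohomology extension $P(\xi)^+$ along the orbit $S_{\xi'}$. Granting this together with the ordinary Kazhdan--Lusztig conjecture for $\GL_N(\R)$ (available via Beilinson--Bernstein localization, \cite{ABV}*{Theorems 8.3, 8.5}, \cite{ICIII}), the specialization of \eqref{basis} at $q=1$ becomes the decomposition of the perverse sheaf $P(\xi)^+$ into the classes $\mu(\xi')^+$ in $K X(\O,\LGL,\upsigma)$ -- namely \eqref{imP}, with $c_g^\vartheta(\xi',\xi)=\pm P^\vartheta(\xi',\xi)(1)$, the sign being the constant one fixed by \cite{AMR1}*{Proposition B.1} as in the untwisted relation \eqref{geomat}. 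Transporting this through the Riemann--Hilbert and Beilinson--Bernstein correspondences of Proposition \ref{p:rhbb}, the identification $P(\xi)^+\mapsto\pi(\ch\xi)^+$ preceding \eqref{sheaftorep}, and the identification $(-1)^{d(\xi)}\mu(\xi)^+\mapsto M(\ch\xi)^+$ of Proposition \ref{PpimuM}, one recovers precisely \eqref{dualbasis} at $q=1$ with $\pi(\ch\xi)^+$ on the left; running the identical argument directly for $\GL_N$ yields $C^\vartheta(\xi)(1)=\pi(\xi)^+$.

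The main obstacle will be the matching of normalizations and signs. One must check that the canonical $\upsigma$-action built into the twisted KL algorithm (implicit in the quadratic relation \eqref{quadrel} and the Verdier-duality normalization of Theorems \ref{theo:verdierProp} and \ref{theo:defpoly}) is the one acting trivially on the stalk over a $\upsigma$-fixed point of $S_\xi$, i.e. that it gives $P(\xi)^+$ of Lemma \ref{cansheaf} and not $P(\xi)^-$, and hence on the representation side the Atlas extension \eqref{canext} of Corollary \ref{twistclass2} and not its negative; and that the exponent $(-1)^{l^I(\xi)-l^I(\xi')}$ in \eqref{basis} combines with $(-1)^{d(\xi)}$ in \eqref{imP} to the expected constant, again by \cite{AMR1}*{Proposition B.1}. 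All of these points have been prepared by Lemma \ref{cansheaf}, Corollary \ref{twistclass2}, and Propositions \ref{PpimuM} and \ref{Hisomorphism}, so once the normalizations are lined up the identity follows.
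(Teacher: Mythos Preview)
Your approach is essentially the same as the paper's: both invoke the geometric interpretation of the twisted KLV polynomials from \cite{LV2014} to obtain the key identity ${^\vee}P^{\vartheta}({^\vee}\xi',{^\vee}\xi)(1)=(-1)^{d(\xi)-d(\xi')}c_g^{\vartheta}(\xi',\xi)$ (the paper's \eqref{cgP}), and then transport via the Riemann--Hilbert/Beilinson--Bernstein identifications of Propositions \ref{p:rhbb} and \ref{PpimuM} to recover the specialization at $q=1$. One small clarification: your exposition conflates $P^{\vartheta}$ with ${^\vee}P^{\vartheta}$ --- the sheaf decomposition \eqref{imP} of $P(\xi)^{+}$ on $X(\O,\LGL)$ encodes the \emph{dual} polynomials ${^\vee}P^{\vartheta}$, so the argument you describe (and the paper's) in fact establishes the \emph{second} equality $C^{\vartheta}({^\vee}\xi)(1)=\pi({^\vee}\xi)^{+}$ directly; the first then follows by the symmetric argument on the Vogan-dual side, or, as the paper remarks, from \cite{AvLTV}*{Theorem 19.4}.
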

\begin{proof}
The assertions of the proposition are given in purely
representation-theoretic terms.  However, in the second equality the
representations $M({^\vee}\xi')$ occurring in (\ref{dualbasis}) are
representations of possibly different strong involutions of
$\ch \mathrm{GL}_{N}(\lambda)$, which complicates matters.  It is
therefore clearer if we transport the assertion back to the original
context of constructible sheaves using (\ref{sheaftorep}).  The
assertion equivalent to
$$C^{\vartheta}({^\vee}\xi)(1) = \pi({^\vee}\xi)^{+}$$
in this context is that  $P({^\vee}\xi)^{+}$ equals
\begin{equation}
   \label{transsheaf}
\sum_{\xi' \in \Xi(\O,
{^\vee}\mathrm{GL}_{N}^{\Gamma})^\vartheta
  }(-1)^{d(\xi) } \ {^\vee}P^{\vartheta}( {^\vee}\xi', {^\vee}\xi)(1) \,
\mu({^\vee}\xi')^{+}
\end{equation}
(recall  Proposition \ref{PpimuM} and (\ref{constl})).
It follows from the definition of the KLV-polynomials (\cite{LV2014}*{Section 0.1}), (\ref{twistgmult}) and (\ref{constl}) that
\begin{equation}
\label{cgP}
{^\vee}P^{\vartheta}(\, {^\vee}\xi', {^\vee}\xi)(1) = (-1)^{d(\xi)-d(\xi')} \,
c_{g}^{\vartheta}(\xi',\xi) = (-1)^{l^{I}(\xi)-l^{I}(\xi')} \,
c_{g}^{\vartheta}(\xi',\xi).
\end{equation}
(Note that the definition of $P(\xi)$ in \cite{LV2014} differs from
ours by a shift in degree $d(\xi)$ \emph{cf}. \cite{ABV}*{(7.10)(d)}.)
This implies that (\ref{transsheaf}), as an element in $K X(\O,
{^\vee}\mathrm{GL}_{N}^{\Gamma}, \upsigma)$, is equal to
\begin{align*}
& \sum_{\xi' \in  \Xi(\O,
{^\vee}\mathrm{GL}_{N}^{\Gamma})^{\vartheta}
  }  (-1)^{d(\xi')} \left( c_{g}(\xi'_{+},\xi_{+}) -
c_{g}(\xi'_{-}, \xi_{+}) \right) \,
\mu({^\vee}\xi')^{+}\\
& = \sum_{\xi' \in \Xi(\O,
{^\vee}\mathrm{GL}_{N}^{\Gamma})^\vartheta
  }  c_{g}(\xi'_{+},\xi_{+})\,   (-1)^{d(\xi')} \mu({^\vee}\xi)^{+} -
c_{g}(\xi'_{-}, \xi_{+})   \,  (-1)^{d(\xi')}
\mu({^\vee}\xi')^{+}\\
& = \sum_{\xi' \in \Xi(\O,
{^\vee}\mathrm{GL}_{N}^{\Gamma})^\vartheta
  }  c_{g}(\xi'_{+},\xi_{+})\,   (-1)^{d(\xi')} \mu({^\vee}\xi)^{+} +
c_{g}(\xi'_{-}, \xi_{+})   \,  (-1)^{d(\xi')}
\mu({^\vee}\xi')^{-},\\
& = P(\xi)^{+}
\end{align*}
where the final equation is (\ref{imP}).
This proves the second assertion of the proposition.  The first may be
proved in the same manner.  However, a purely representation-theoretic
proof is also possible following \cite{AvLTV}*{Theorem 19.4}
\end{proof}
\subsection{The proof of Theorem \ref{twistpairing}}
\label{prooftwistpairing}

The main theorem of this section is
\begin{thm}\label{pairingC}
Pairing (\ref{pair4}) satisfies
\begin{align}\label{e:pairingC}
\langle C^{\vartheta}(\xi), C^{\vartheta}({^\vee}\xi')\rangle =
(-1)^{l^{I}_{\vartheta}(\xi)} \, q^{\left(l^{I}(\xi)+l^{I}({^\vee}\xi')\right)/2} \,
\delta_{\xi,\xi'}.
\end{align}
for all $\xi, \xi' \in \Xi(\O,
{^\vee}\mathrm{GL}_{N}^{\Gamma})^\vartheta$.
\end{thm}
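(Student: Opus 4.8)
The plan is to follow the strategy of \cite{ABV}*{Theorem 17.2}, adapted to the twisted Hecke-module setting established in the preceding subsections. The key players are the two Verdier dualities $D$ and ${^\vee}D^{*}$, the Hecke-module isomorphism of Proposition \ref{Hisomorphism}, and the Verdier-equivariance of Proposition \ref{HeckeVerdier}. First I would observe that the pairing $\langle C^{\vartheta}(\xi), C^{\vartheta}({^\vee}\xi') \rangle$, viewed as an element of $\mathbb{Z}[q^{1/2},q^{-1/2}]$, is fixed (up to the explicit power of $q$) by the bar involution: applying Proposition \ref{HeckeVerdier} and Theorem \ref{theo:defpoly}(a) together with its dual analogue, one gets
\begin{equation*}
\langle C^{\vartheta}(\xi), C^{\vartheta}({^\vee}\xi') \rangle = \overline{\langle C^{\vartheta}(\xi), \, {^\vee}D\, C^{\vartheta}({^\vee}\xi')\rangle} = q^{l^{I}({^\vee}\xi')}\, \overline{\langle C^{\vartheta}(\xi), C^{\vartheta}({^\vee}\xi')\rangle},
\end{equation*}
so that $q^{-l^{I}({^\vee}\xi')/2}\langle C^{\vartheta}(\xi), C^{\vartheta}({^\vee}\xi')\rangle$ is bar-invariant, and similarly on the left using $D$; hence $q^{-(l^{I}(\xi)+l^{I}({^\vee}\xi'))/2}\langle C^{\vartheta}(\xi), C^{\vartheta}({^\vee}\xi')\rangle \in \mathbb{Z}$ is a bar-fixed \emph{constant}.

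Next I would pin down that constant by a triangularity argument. Expanding $C^{\vartheta}(\xi)$ in the basis $\{M(\xi')^{+}\}$ via \eqref{basis} and $C^{\vartheta}({^\vee}\xi')$ via \eqref{dualbasis}, and using the definition \eqref{pair4} of the pairing together with the Kronecker delta in $\langle M(\eta)^{+}, M({^\vee}\zeta)^{+}\rangle$, the pairing becomes a sum over a single index of products $P^{\vartheta}(\eta,\xi)\,{^\vee}P^{\vartheta}({^\vee}\eta,{^\vee}\xi')$ weighted by signs and a power of $q$. By Theorem \ref{theo:defpoly}(c) the term $P^{\vartheta}(\eta,\xi)$ vanishes unless $\eta \le \xi$, while by the dual statement and the reversed Bruhat order \eqref{dualBruhat} the term ${^\vee}P^{\vartheta}({^\vee}\eta,{^\vee}\xi')$ vanishes unless $\eta \ge \xi'$. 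Thus the sum is empty unless $\xi' \le \xi$. The same computation with the roles of $\xi$ and $\xi'$ interchanged (using that the pairing, up to the harmless symmetric power of $q$ and the index-independent sign constants coming from Lemmas \ref{indlength} and \cite{AMR1}*{Proposition B.1}) forces $\xi \le \xi'$, whence $\xi = \xi'$. When $\xi = \xi'$, the only surviving term is $\eta = \xi$, and Theorem \ref{theo:defpoly}(b) together with its dual give $P^{\vartheta}(\xi,\xi) = {^\vee}P^{\vartheta}({^\vee}\xi,{^\vee}\xi) = 1$, so the pairing equals $\langle M(\xi)^{+}, M({^\vee}\xi)^{+}\rangle = (-1)^{l^{I}_{\vartheta}(\xi)}\, q^{(l^{I}(\xi)+l^{I}({^\vee}\xi))/2}$, which is exactly the right-hand side of \eqref{e:pairingC} by the Kronecker delta (allowing us to rewrite $l^{I}({^\vee}\xi)$ as $l^{I}({^\vee}\xi')$).

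The main obstacle I anticipate is not the triangularity or the $q$-invariance — those are formal once Propositions \ref{Hisomorphism} and \ref{HeckeVerdier} are in hand — but rather the \emph{degree bound} needed to be sure that the bar-invariant quantity is genuinely a constant and not merely an element of $\mathbb{Z}[q^{1/2},q^{-1/2}]$ that happens to be bar-symmetric. This requires combining the degree estimates of Theorem \ref{theo:defpoly}(d) for $P^{\vartheta}$ and its dual with the powers of $q$ in \eqref{pair4}: one must check that in each term of the expanded sum the total $q$-degree, after dividing by $q^{(l^{I}(\xi)+l^{I}({^\vee}\xi'))/2}$, is strictly negative except in the diagonal case $\eta = \xi = \xi'$, where it is zero. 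This is the twisted analogue of the degree computation in \cite{ABV}*{Section 17}, and it is exactly the place where the normalization of the pairing by the $(-1)^{l^{I}_{\vartheta}(\xi)}$ and $q^{(l^{I}(\xi)+l^{I}({^\vee}\xi'))/2}$ factors was chosen to make things work; I would carry it out by the same bookkeeping as in \cite{ABV}, invoking Lemma \ref{indlength} to handle the sum $l^{I}(\xi)+l^{I}({^\vee}\xi)$. Finally, Theorem \ref{pairingC} specializes at $q = 1$, via Proposition \ref{specialpi}, to the statement of Lemma \ref{twistpairing2}, hence to Theorem \ref{twistpairing}.
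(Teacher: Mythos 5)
Your argument is correct in substance but packaged differently from the paper's. The paper proves the orthogonality by introducing, in Lemma \ref{invklvpoly}, an explicit dual family $\underline{C}^{\vartheta}(\xi)$ whose coefficients $\underline{P}(\xi',\xi)$ are the transpose-inverse of the dual KLV matrix ${^\vee}P^{\vartheta}$; by construction these pair with the $C^{\vartheta}({^\vee}\xi')$ as in \eqref{e:pairingC}, and the proof then checks that $\underline{C}^{\vartheta}(\xi)$ satisfies properties (a)--(d) of Theorem \ref{theo:defpoly} (the Verdier eigenproperty via Proposition \ref{HeckeVerdier}, unitriangularity from inverting a unipotent matrix together with the order reversal \eqref{dualBruhat}, and the degree bound by an induction modelled on \cite{ICIV}*{(13.9)}), so the uniqueness statement forces $\underline{C}^{\vartheta}(\xi)=C^{\vartheta}(\xi)$. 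You instead expand $\langle C^{\vartheta}(\xi),C^{\vartheta}({^\vee}\xi')\rangle$ directly and run the classical bar-invariance-plus-degree-bound orthogonality argument: Verdier equivariance plus Theorem \ref{theo:defpoly}(a) and its dual makes $q^{-(l^{I}(\xi)+l^{I}({^\vee}\xi'))/2}\langle C^{\vartheta}(\xi),C^{\vartheta}({^\vee}\xi')\rangle$ bar-invariant, while the support and degree estimates of Theorem \ref{theo:defpoly}(b)--(d), combined with Lemma \ref{indlength} (which makes every surviving monomial $\langle M(\eta)^{+},M({^\vee}\eta)^{+}\rangle$ carry the same power $q^{k/2}$), bound the normalized degree by $0$ with equality only at $\eta=\xi=\xi'$; this does yield \eqref{e:pairingC}. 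The ingredients are the same (Proposition \ref{HeckeVerdier}, Theorem \ref{theo:defpoly}, Lemma \ref{indlength}); your packaging avoids the auxiliary basis and matrix inversion, at the cost of doing the degree bookkeeping explicitly, which is exactly the bookkeeping the paper localizes (and likewise only sketches) in verifying property (d) for $\underline{P}$.

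Two local repairs. First, in your displayed equation the duality must be applied on both sides: Proposition \ref{HeckeVerdier} gives $\langle D\,C^{\vartheta}(\xi),C^{\vartheta}({^\vee}\xi')\rangle=\overline{\langle C^{\vartheta}(\xi),\,{^\vee}D\,C^{\vartheta}({^\vee}\xi')\rangle}$, and only after inserting both eigenvalue identities $D\,C^{\vartheta}(\xi)=q^{-l^{I}(\xi)}C^{\vartheta}(\xi)$ and ${^\vee}D\,C^{\vartheta}({^\vee}\xi')=q^{-l^{I}({^\vee}\xi')}C^{\vartheta}({^\vee}\xi')$ does one obtain the single relation $f=\overline{f}$ for $f=q^{-(l^{I}(\xi)+l^{I}({^\vee}\xi'))/2}\langle C^{\vartheta}(\xi),C^{\vartheta}({^\vee}\xi')\rangle$; there is no separate one-sided statement. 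Second, the step where you interchange the roles of $\xi$ and $\xi'$ to force $\xi\le\xi'$ is not available: the pairing has no symmetry exchanging its two arguments, so triangularity alone only gives vanishing unless $\xi'\le\xi$. This is harmless, because the degree estimate you state (normalized degree strictly negative off the diagonal, including the boundary terms $\eta=\xi$ and $\eta=\xi'$ when $\xi'<\xi$) together with bar-invariance already kills the off-diagonal pairing; but the proof should be routed through that estimate rather than through the interchange.
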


To prove Theorem \ref{pairingC}
we need the following lemma.
\begin{lem}
\label{invklvpoly}
There are unique elements
$\underline{C}^{\vartheta}(\xi) \in\mathcal{K}\Pi(\O,
\mathrm{GL}_{N}(\mathbb{R}),\vartheta)$, $\xi \in \Xi(\O, {^\vee}\mathrm{GL}_{N}^{\Gamma})^\vartheta$, satisfying
\nomenclature{$\underline{C}^{\vartheta}(\xi)$}{}
\begin{equation*}
  \label{eq:orthogonalbasis}
\langle \underline{C}^{\vartheta}(\xi), C^{\vartheta}({^\vee}\xi') \rangle =
(-1)^{l^{I}_{\vartheta}(\xi')}\,
q^{\left(l^{I}(\xi)+l^{I}({^\vee}\xi')\right)/2}\, \delta_{\xi, \xi'}.
\end{equation*}
More explicitly, let $\underline{P}(\xi', \xi) \in \mathbb{Z}[q^{1/2}, q^{-1/2}]$ be the entries of the matrix inverse and transpose to the matrix formed by the polynomials
${^\vee}P^{\vartheta}( \, {^\vee}\xi', {^\vee}\xi)$ given in (\ref{dualbasis}), \emph{i.e.}
$$\sum_{ \xi' \in \Xi(\O,
{^\vee}\mathrm{GL}_{N}^{\Gamma})^\vartheta}  \underline{P}(\xi', \xi) \, {^\vee}P^{\vartheta}
({^\vee}\xi', {^\vee} \xi'') =  \delta_{\xi, \xi''}.$$
Then
$$\underline{C}^{\vartheta}(\xi) =\sum_{\xi'\in \Xi(\O,
{^\vee}\mathrm{GL}_{N}^{\Gamma})^\vartheta}
(-1)^{l^{I}(\xi)-l^{I}(\xi')}(-1)^{l_{\vartheta}^{I}(\xi)-l_{\vartheta}^{I}(\xi')} \, \underline{P}(\xi',\xi) \ M(\xi')^{+}.$$
\end{lem}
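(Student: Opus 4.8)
The plan is to construct $\underline{C}^{\vartheta}(\xi)$ explicitly via a triangular linear algebra argument, mirroring the structure of \cite{ABV}*{(17.15)} and \cite{ICIV}*{Lemma 13.4}. First I would observe that by Theorem \ref{theo:defpoly} (applied both to $\mathcal{K}\Pi(\O,\GL_N(\R),\vartheta)$ and to its dual module), the set $\{C^{\vartheta}(\xi)\}$ is a basis of $\mathcal{K}\Pi(\O,\GL_N(\R),\vartheta)$ and $\{C^{\vartheta}({^\vee}\xi')\}$ is a basis of $\mathcal{K}{^\vee}\Pi(\O,\GL_N(\R),\vartheta)$. Since the pairing (\ref{pair4}) is perfect (it is given by a diagonal matrix on the standard bases $\{M(\xi)^+\}$, $\{M({^\vee}\xi')^+\}$ with nonzero diagonal entries $(-1)^{l^I_\vartheta(\xi)}q^{(l^I(\xi)+l^I({^\vee}\xi'))/2}\delta_{\xi,\xi'}$), the composite map $\mathcal{K}\Pi(\O,\GL_N(\R),\vartheta)\to \mathcal{K}{^\vee}\Pi(\O,\GL_N(\R),\vartheta)^*$ is an isomorphism. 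Therefore the desired $\underline{C}^{\vartheta}(\xi)$ --- characterized by having the prescribed values against the basis $\{C^{\vartheta}({^\vee}\xi')\}$ --- exists and is unique. This settles the existence/uniqueness clause immediately, before any formula is derived.

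For the explicit formula, I would work in the basis $\{M(\xi')^+\}$. Write $\underline{C}^{\vartheta}(\xi) = \sum_{\xi'} a(\xi',\xi)\, M(\xi')^+$ for unknowns $a(\xi',\xi)$, and expand $C^{\vartheta}({^\vee}\xi'')$ using (\ref{dualbasis}). Pairing and using (\ref{pairdef2})-style diagonality of $\langle M(\xi')^+, M({^\vee}\xi'')^+\rangle$ together with Lemma \ref{indlength} (to absorb the $q$-powers into a constant depending only on $\O$), the condition
$$\langle \underline{C}^{\vartheta}(\xi), C^{\vartheta}({^\vee}\xi'')\rangle = (-1)^{l^I_\vartheta(\xi'')}\,q^{(l^I(\xi)+l^I({^\vee}\xi''))/2}\,\delta_{\xi,\xi''}$$
becomes a system of the form $\sum_{\xi'} a(\xi',\xi)\,(\pm)\,{^\vee}P^{\vartheta}({^\vee}\xi',{^\vee}\xi'') = (\text{sign})\,\delta_{\xi,\xi''}$, where all sign bookkeeping is in terms of $l^I$ and $l^I_\vartheta$. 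Solving this amounts to inverting-and-transposing the matrix $({^\vee}P^{\vartheta}({^\vee}\xi',{^\vee}\xi''))$, which is exactly the definition of $\underline{P}(\xi',\xi)$ in the statement; tracking the signs through then gives
$$a(\xi',\xi) = (-1)^{l^I(\xi)-l^I(\xi')}(-1)^{l^I_\vartheta(\xi)-l^I_\vartheta(\xi')}\,\underline{P}(\xi',\xi),$$
as claimed. The matrix $({^\vee}P^{\vartheta})$ is invertible because by Theorem \ref{theo:defpoly}(2)-(3) (dual version) it is uni-triangular with respect to the dual Bruhat order (\ref{dualBruhat}), so $\underline{P}(\xi',\xi)$ is well-defined over $\mathbb{Z}[q^{1/2},q^{-1/2}]$.

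The main obstacle I anticipate is the precise sign accounting: one must verify that the constant $(-1)^{l^I(\xi)-d(\xi)} = (-1)^c$ from \cite{AMR1}*{Proposition B.1} (used already in Corollary \ref{twistpairingun}), the $q$-power identity of Lemma \ref{indlength}, and the $(-1)^{l^I-l^I_\vartheta}$ factors built into (\ref{pairdef2}) combine to produce exactly the sign $(-1)^{l^I(\xi)-l^I(\xi')}(-1)^{l^I_\vartheta(\xi)-l^I_\vartheta(\xi')}$ and no extraneous factor. I would do this by pairing $M(\xi')^+$ against $M({^\vee}\xi'')^+$ directly, being careful that the diagonal entry of the pairing on standard modules in (\ref{pair4}) carries $(-1)^{l^I_\vartheta(\xi')}$ with the \emph{row} index $\xi'$ and that the Kronecker delta lets one freely relabel $l^I({^\vee}\xi')$ as $l^I({^\vee}\xi)$, etc. Everything else --- existence, uniqueness, invertibility --- is formal once the perfectness of (\ref{pair4}) and the triangularity from Theorem \ref{theo:defpoly} are in hand.
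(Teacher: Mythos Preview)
Your proposal is correct and follows essentially the same approach as the paper: both arguments expand $\underline{C}^{\vartheta}(\xi)$ in the basis $\{M(\xi')^+\}$, pair against the expansion (\ref{dualbasis}) of $C^{\vartheta}({^\vee}\xi'')$, and use Lemma~\ref{indlength} to collapse the $q$-powers and signs to the Kronecker delta. The only cosmetic difference is that the paper plugs in the claimed formula and verifies the orthogonality directly, whereas you first argue existence/uniqueness from perfectness of the pairing and then solve for the coefficients; also, you will find that \cite{AMR1}*{Proposition B.1} is not needed here---Lemma~\ref{indlength} alone handles the sign bookkeeping.
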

\begin{proof}
We just need to verify, for all $\xi, \xi'' \in \Xi(\O,
{^\vee}\mathrm{GL}_{N}^{\Gamma})^\vartheta$, the equality
\begin{align*}
\left\langle \sum_{\xi'}
(-1)^{l^{I}(\xi)-l^{I}(\xi')}(-1)^{l_{\vartheta}^{I}(\xi)-l_{\vartheta}^{I}(\xi')}
\, \underline{P}(\xi',\xi) \ M(\xi')^{+},
C^{\vartheta}({^\vee}\xi'') \right\rangle
 = (-1)^{l^{I}_{\vartheta}(\xi)}\,
q^{\left(l^{I}(\xi)+l^{I}({^\vee}\xi'')\right)/2}\, \delta_{\xi, \xi''}.
\end{align*}
Let $k = -\frac{1}{2} \left( |R^{+}(\lambda)| +
\dim(H) \right)$ as in Lemma \ref{indlength}. Applying (\ref{pair4}), we compute
\begin{align*}
&\left\langle \sum_{\xi'}
(-1)^{l^{I}(\xi)-l^{I}(\xi')}(-1)^{l_{\vartheta}^{I}(\xi)-l_{\vartheta}^{I}(\xi')} \, \underline{P}(\xi',\xi) \ M(\xi')^{+},
  C^{\vartheta}({^\vee}\xi'') \right\rangle \\
& = \left\langle \sum_{\xi'}
(-1)^{l^{I}(\xi)-l^{I}(\xi')}(-1)^{l_{\vartheta}^{I}(\xi)-l_{\vartheta}^{I}(\xi')} \, \underline{P}(\xi',\xi) \ M(\xi')^{+},   \sum_{\xi_{1}
  }(-1)^{l^{I}({^\vee}\xi'')-l^{I}({^\vee}\xi_{1})}
  \ {^\vee}P^{\vartheta}(\,{^\vee}\xi_{1}, {^\vee}\xi'') \,
M({^\vee}\xi_{1})^{+} \right\rangle\\
& =  (-1)^{l^{I}_{\vartheta}(\xi)} \, (-1)^{k}(-1)^{l^{I}(\xi) +
   l^{I}({^\vee}\xi'')} \,q^{k/2}
 \sum_{ \xi' \in \Xi(\O,
{^\vee}\mathrm{GL}_{N}^{\Gamma})^\vartheta}   \underline{P}(\xi', \xi) \, {^\vee}P^{\vartheta}
 (\,{^\vee}\xi', {^\vee}\xi'') \\
& = (-1)^{l^{I}_{\vartheta}(\xi)} \,q^{k/2} \delta_{\xi, \xi''}\\
& = (-1)^{l^{I}_{\vartheta}(\xi)}
 q^{\left(l^{I}(\xi)+l^{I}({^\vee}\xi'')\right)/2}\, \delta_{\xi,
   \xi''}.
\end{align*}
\end{proof}

\begin{proof}[Proof of Theorem \ref{pairingC}]

If one proves that  the coefficient polynomials
$\underline{P}(\xi',\xi)$ of $\underline{C}^{\vartheta}(\xi)$
 satisfy properties 1-4 of Theorem \ref{theo:defpoly}, then the
 uniqueness statement of that theorem implies $\underline{C}^{\vartheta}(\xi) =
 C^{\vartheta}(\xi)$ and the theorem
 follows from
 Lemma \ref{invklvpoly}.
To show properties 1-4
we follow the proof of \cite{ICIV}*{Lemma 13.7}.  For the first property
we apply Proposition \ref{HeckeVerdier}
\begin{align*}
\langle D \underline{C}^{\vartheta}(\xi), C^{\vartheta}({^\vee}\xi')
\rangle & = \overline{ \langle \underline{C}^{\vartheta}(\xi),
  \,{^\vee}D C^{\vartheta}({^\vee}\xi') \rangle}\\
& = q^{l^{I}({^\vee}\xi')} \, \overline{  \langle
  \underline{C}^{\vartheta}(\xi),  C^{\vartheta}({^\vee}\xi') \rangle}
\\
& = (-1)^{l^{I}_{\vartheta}(\xi)} q^{l^{I}(^{\vee}\xi')}
q^{-1/2(l^{I}(\xi) + l^{I}({^\vee}\xi'))}  \delta_{\xi,\xi'} \\
&= \langle q^{-l^{I}(\xi)}\, \underline{C}^{\vartheta}(\xi),
C^{\vartheta}({^\vee}\xi') \rangle.
\end{align*}
Since the elements $C^{\vartheta}({^\vee}\xi')$ form a basis we conclude that
$$ D \underline{C}^{\vartheta}(\xi) = q^{-l^{I}(\xi)}\, \underline{C}^{\vartheta}(\xi)$$
and the first property of Theorem \ref{theo:defpoly} is proved.

The second and third properties of Theorem  \ref{theo:defpoly}  follow
for $\underline{P}(\xi',\xi)$  since it is defined in terms of the transpose and
inverse of a unipotent matrix, and the map $\xi \mapsto \ch\xi$ (\ref{vogandual1}) is
order-reversing (\ref{dualBruhat}).

The fourth property of Theorem \ref{theo:defpoly} is proven by
induction on the integral length of a parameter.  This uses a
straightforward reformulation of \cite{ICIV}*{(13.9)} which is left to the
reader.

The uniqueness statement
in Theorem \ref{theo:defpoly} now implies $\underline P^\vartheta(\xi,\xi')=(-1)^{l_{\vartheta}^I(\xi)-l_{\vartheta}^I(\xi')}P^\vartheta(\xi,\xi')$ and
$\underline{C}^{\vartheta}(\xi) = C^{\vartheta}(\xi)$.
Finally by Lemma \ref{invklvpoly}, Equation \eqref{e:pairingC} holds, completing
the proof of the theorem.

\end{proof}

The proof of Theorem \ref{twistpairing} is now immediate.

\begin{proof}[Proof of Theorem \ref{twistpairing}]
  It is enough to prove Lemma \ref{twistpairing2}.
  Let $\langle \cdot \,,\cdot \rangle$ be the pairing in (\ref{pair4}).
  By Theorem \ref{pairingC} we have

$$
\langle C^{\vartheta}(\xi), C^{\vartheta}({^\vee}\xi')\rangle =
(-1)^{l^{I}_{\vartheta}(\xi)} \, q^{\left(l^{I}(\xi)+l^{I}({^\vee}\xi')\right)/2} \,
\delta_{\xi,\xi'}.
$$
Setting $q=1$ and applying Proposition \ref{specialpi}, we conclude
$$
\langle \pi(\xi)^+,\pi(\ch\xi')^+\rangle=
(-1)^{l^{I}_{\vartheta}(\xi)}\delta_{\xi,\xi'}
$$
as required.
\end{proof}

\subsection{Twisted KLV-polynomials for the dual of
  $\mathrm{GL}_{N}(\mathbb{R})$}
  \label{tKLV}

This section is thematically related to the others in Section
\ref{pairings}, although it admittedly has nothing to do with the
pairings.  The
sole purpose of this section is to determine the polynomials
${^\vee}P^{\vartheta}(\, ^{\vee}\xi', {^\vee}\xi) \in
\mathbb{Z}[q^{1/2}, q^{-1/2}]$ appearing in the
definition of $C^{\vartheta}({^\vee}\xi)$ (\ref{dualbasis}) under
certain circumstances. In our application $\xi$ will be the parameter of a
generic representation, and the value of
${^\vee}P^{\vartheta}(\, ^{\vee}\xi', {^\vee}\xi)$
will be used in Section \ref{whitsec} (Proposition \ref{conjq} and Proposition \ref{wasign1}) to compare
the Atlas extensions with the so-called \emph{Whittaker extensions}.

  A block of parameters  is defined in \cite{ICIV}*{Definition 1.14}.
  In particular $P(\xi,\xi')\ne 0$ implies $\xi,\xi'$ are in the same block.
\begin{prop}
  \label{dualpone}
  Suppose $\ch\xi_0$ is the unique maximal element of a block $\ch\mathcal B$
  with respect to  the Bruhat order (\ref{dualBruhat}).
Then ${^\vee}P^{\vartheta}({^\vee}\xi,{^\vee}\xi_{0})=1$ for all $\ch\xi\in\ch\mathcal B$.
\end{prop}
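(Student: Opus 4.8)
The statement concerns the twisted KLV-polynomials ${}^\vee P^\vartheta({}^\vee\xi,{}^\vee\xi_0)$ for the dual group $\ch\GL_N(\lambda)\rtimes\langle\ch\delta_0\rangle$, where ${}^\vee\xi_0$ is the unique maximal element of a block $\ch{\mathcal B}$ with respect to the (reversed) Bruhat order. Recall from Theorem \ref{theo:defpoly} (applied to the dual module, as in Section \ref{tKLV}) that ${}^\vee P^\vartheta({}^\vee\xi,{}^\vee\xi')$ is characterized by the four properties: $P^\vartheta=0$ outside a single block, triangularity with respect to the Bruhat order, the normalization $P^\vartheta({}^\vee\xi,{}^\vee\xi)=1$, and the degree bound $\deg {}^\vee P^\vartheta({}^\vee\xi,{}^\vee\xi_0)\le (l^I({}^\vee\xi_0)-l^I({}^\vee\xi)-1)/2$ when ${}^\vee\xi<{}^\vee\xi_0$, together with the self-duality property (a) of that theorem. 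The plan is to show that the constant polynomial $1$ satisfies the defining property of ${}^\vee P^\vartheta({}^\vee\xi,{}^\vee\xi_0)$, hence equals it by uniqueness. Equivalently, under the Vogan duality order reversal (\ref{dualBruhat}), ${}^\vee\xi_0$ being maximal in $\ch{\mathcal B}$ means the corresponding $\xi_0$ is \emph{minimal} in the block $\mathcal B$ for $\GL_N(\R)\rtimes\langle\vartheta\rangle$; a minimal parameter corresponds to an open orbit (or to the "most tempered" / fundamental situation), and it is classical that KLV-polynomials with minimal source (or maximal target) are trivial.

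First I would reduce to a statement purely about the element $C^\vartheta({}^\vee\xi_0)$ of the Hecke module $\mathcal{K}{}^\vee\Pi(\O,\GL_N(\R),\vartheta)$. Since ${}^\vee\xi_0$ is maximal, property 3 of Theorem \ref{theo:defpoly} forces $C^\vartheta({}^\vee\xi_0)=\sum_{{}^\vee\xi\le{}^\vee\xi_0}(-1)^{l^I({}^\vee\xi_0)-l^I({}^\vee\xi)}{}^\vee P^\vartheta({}^\vee\xi,{}^\vee\xi_0)\,M({}^\vee\xi)^+$, a sum over the whole block. I would then show directly that the element $\sum_{{}^\vee\xi\in\ch{\mathcal B}}(-1)^{l^I({}^\vee\xi_0)-l^I({}^\vee\xi)}M({}^\vee\xi)^+$ is an eigenvector of Verdier duality ${}^\vee D$ with eigenvalue $q^{-l^I({}^\vee\xi_0)}$, and that its expansion coefficients satisfy the degree bounds (trivially, since they are constants); the uniqueness in Theorem \ref{theo:defpoly} then identifies it with $C^\vartheta({}^\vee\xi_0)$, giving ${}^\vee P^\vartheta({}^\vee\xi,{}^\vee\xi_0)=1$. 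The Verdier-duality computation comes down to the formula (\ref{verdual}) for the action of $D$ on the $M(\xi)^+$-basis, i.e. to the twisted $R$-polynomials of Theorem \ref{theo:verdierProp}: one checks that applying ${}^\vee D$ to the signed sum and using $\sum_{{}^\vee\xi'}(-1)^{l^I({}^\vee\xi_0)-l^I({}^\vee\xi')}R({}^\vee\xi',{}^\vee\xi)=\pm 1$ (a standard identity for the inverse Kazhdan--Lusztig $R$-matrix, valid because the $R$-matrix is triangular unipotent up to signs) reproduces the same signed sum up to the expected power of $q$.

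The cleanest route, and the one I expect to use, is to invoke the corresponding classical fact. By Vogan duality (\ref{dualBruhat}), ${}^\vee\xi_0$ maximal corresponds to $\xi_0$ minimal in the block of $\GL_N(\R)\rtimes\langle\vartheta\rangle$; a minimal parameter is one whose associated orbit is open and whose standard module $M(\xi_0)^+$ is already irreducible, so that $C^\vartheta(\xi_0)=M(\xi_0)^+$ and hence $P^\vartheta(\xi',\xi_0)=\delta_{\xi',\xi_0}$. Dualizing back — using that ${}^\vee P^\vartheta$ is, up to the standard degree-$d$ shift and sign bookkeeping, the KLV-polynomial for the dual block with the order reversed — converts "source minimal gives identity" into "target maximal gives $1$". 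The point to verify carefully is that in the block $\ch{\mathcal B}$ the unique maximal parameter ${}^\vee\xi_0$ is genuinely the large/open one, so that every $M({}^\vee\xi)^+$ for ${}^\vee\xi\in\ch{\mathcal B}$ occurs in the Jordan--H\"older series of $M({}^\vee\xi_0)^+$ exactly once with the predicted sign; this follows from the general structure of blocks in \cite{LV2014} (the maximal element indexes the unique parameter whose $M$ exhausts the block under Bruhat, which on the geometric side is the constant sheaf on the whole space, whose stalks along every closed orbit are one-dimensional).

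\textbf{Main obstacle.} The essential difficulty is bookkeeping: matching the sign and degree conventions of \cite{LV2014} (where $P(\xi)$ carries a shift by $d(\xi)$, as flagged in the proof of Proposition \ref{specialpi}) with those of \cite{ABV} and of the present paper, and making sure that "maximal in the Bruhat order" on the dual side really does pick out the parameter for which the $R$-polynomial identity collapses to a single sign. Once one verifies that the signed sum $\sum_{{}^\vee\xi\in\ch{\mathcal B}}(-1)^{l^I({}^\vee\xi_0)-l^I({}^\vee\xi)}M({}^\vee\xi)^+$ is ${}^\vee D$-stable (eigenvalue $q^{-l^I({}^\vee\xi_0)}$), the uniqueness clause of Theorem \ref{theo:defpoly} finishes the argument with no further computation; so the proof should be short, essentially a citation of the $R$-polynomial inversion identity plus the characterization theorem.
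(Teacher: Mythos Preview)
Your proposal takes a genuinely different route from the paper, and both approaches you sketch have real gaps.

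\textbf{Approach 1 (Verdier duality).} You want to show that $C=\sum_{{}^\vee\xi\in\ch{\mathcal B}}(-1)^{l^I({}^\vee\xi_0)-l^I({}^\vee\xi)}M({}^\vee\xi)^+$ is a ${}^\vee D$-eigenvector. Writing out ${}^\vee D(C)$ using Theorem~\ref{theo:verdierProp}, the required identity is
\[
\sum_{{}^\vee\xi\ge{}^\vee\xi'} q^{-l^I({}^\vee\xi)}\,R({}^\vee\xi',{}^\vee\xi)=q^{-l^I({}^\vee\xi_0)}\quad\text{for every }{}^\vee\xi'\in\ch{\mathcal B}.
\]
This is not the ``standard $R$-matrix inversion identity'' (which relates $R$ to $\overline R$ via $D^2=\mathrm{id}$); it is a genuinely new statement involving an unweighted sum of $R$-polynomials, and it is essentially equivalent to what you are trying to prove. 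Without an independent verification it is circular. A geometric justification (the IC sheaf from the open orbit is the constant sheaf) would require knowing the relevant variety component is smooth, which you have not argued.

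\textbf{Approach 2 (Vogan duality).} You correctly note that $\xi_0$ is minimal on the $\GL_N(\R)$ side and that $P^\vartheta(\xi',\xi_0)=\delta_{\xi',\xi_0}$. But the inverse-transpose relationship between $P^\vartheta$ and ${}^\vee P^\vartheta$ (Lemma~\ref{invklvpoly} and Theorem~\ref{pairingC}) converts the $\xi_0$-\emph{column} of $P^\vartheta$ into the ${}^\vee\xi_0$-\emph{row} of ${}^\vee P^\vartheta$, i.e.\ it gives ${}^\vee P^\vartheta({}^\vee\xi_0,{}^\vee\xi')$, not the ${}^\vee\xi_0$-column ${}^\vee P^\vartheta({}^\vee\xi,{}^\vee\xi_0)$ you want. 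So this does not close.

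\textbf{What the paper does.} The paper's proof is completely different and exploits the special Hecke-module structure for the Vogan dual of $\GL_N$. It proves an explicit one-step recursion (Lemma~\ref{p:recursion}): if $\kappa\in\boldsymbol\tau({}^\vee\xi_0)$ and $\kappa\notin\boldsymbol\tau({}^\vee\xi')$, then ${}^\vee P^\vartheta({}^\vee\xi',{}^\vee\xi_0)=\sum_{{}^\vee\xi''\overset{\kappa}{\to}{}^\vee\xi'}{}^\vee P^\vartheta({}^\vee\xi'',{}^\vee\xi_0)$. The key point, specific to the types in (\ref{dualglntypes}), is Corollary~\ref{onebetween}: the set $\{{}^\vee\xi'':{}^\vee\xi''\overset{\kappa}{\to}{}^\vee\xi'\}$ is a \emph{singleton}, so the recursion becomes ${}^\vee P^\vartheta({}^\vee\xi',{}^\vee\xi_0)={}^\vee P^\vartheta({}^\vee\xi'',{}^\vee\xi_0)$ with $l^I({}^\vee\xi'')>l^I({}^\vee\xi')$. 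Since ${}^\vee\xi_0$ is maximal, every $\kappa$ is a descent for it; since any ${}^\vee\xi'\ne{}^\vee\xi_0$ has some ascent, one climbs by induction on $l^I({}^\vee\xi_0)-l^I({}^\vee\xi')$ up to the base case ${}^\vee P^\vartheta({}^\vee\xi_0,{}^\vee\xi_0)=1$. This is the missing idea: a Hecke-algebraic recursion tailored to the root types that actually occur, rather than a global self-duality argument.
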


The proof of the Proposition \ref{dualpone} is algorithmic in nature
and relies on computations with Hecke operators.  A broad examination
of the algorithms is presented in \cite{adamsnotes} and \cite{LV2014}.
We assemble a few facts from these references here.  The facts are
centred upon the characterization of eigenspaces of Hecke operators.

For each $\xi\in \Xi(\O,{^\vee}\mathrm{GL}_{N}^{\Gamma})^\vartheta$,
Lusztig and Vogan define what it means for (the Weyl group element of)
a $\vartheta$-orbit
$\kappa$ of a simple root in $R(\lambda)$ to be a \emph{descent} for
${^\vee}\xi$ (\cite{LV2014}*{Section 7.2}).  We leave the definition (which is equivalent to
(\ref{l:descent})) to the interested reader, being content merely to
record the relevant properties.  By
definition, a $\vartheta$-orbit $\kappa$ is an \emph{ascent} for
${^\vee}\xi$ if it is not a descent for ${^\vee}\xi$.  To assist in
indexing these $\vartheta$-orbits we define $\boldsymbol{\tau}
({^\vee}\xi)$
\nomenclature{$\boldsymbol{\tau}({^\vee}\xi)$}{descent roots}
to be the set of $\vartheta$-orbits of
simple roots which are descent  for ${^\vee}\xi$.  Thus, $\kappa$ is
\begin{itemize}
\item  a \emph{descent} for ${^\vee}\xi$, if $\kappa\in
  \boldsymbol{\tau} ({^\vee}\xi)$, and
\item  an \emph{ascent} for ${^\vee}\xi$, if $\kappa\notin
  \boldsymbol{\tau} ({^\vee}\xi)$.
\end{itemize}
Recall that the $\mathcal{H}(\lambda)$-module
$$\mathcal{K} {^\vee}
\Pi(\O, \mathrm{GL}_{N}(\mathbb{R}), \vartheta) =K {^\vee}
\Pi(\O, \mathrm{GL}_{N}(\mathbb{R}), \vartheta) \otimes
\mathbb{Z}[q^{1/2}, q^{-1/2}]$$
((\ref{sheaftorep}), Section \ref{twisthmodule})
has a basis $\{M({^\vee}\xi)^{+} : \xi \in \Xi(\O,
       {^\vee}\mathrm{GL}_{N}^{\Gamma})^\vartheta\}$.
For each $\vartheta$-orbit $\kappa$ let
$$\widehat{T}_{\kappa} = q^{-l(w_{\kappa})/2}(T_{\kappa} + 1).
\nomenclature{$\widehat{T}_{\kappa}$}{}
$$
Suppose $\kappa\not\in \boldsymbol{\tau}({^\vee}\xi)$,
\emph{i.e.} $\kappa$
is an ascent for ${^\vee}\xi$.  Then we define
$$B_\kappa({^\vee}\xi)
\subset \Xi(\O,
       {^\vee}\mathrm{GL}_{N}^{\Gamma})^\vartheta
        \nomenclature{$B_\kappa({^\vee}\xi)$}{}$$
to be the set of parameters which indexes the non-zero summands in
$\widehat{T}_{\kappa} M({^\vee}\xi)^{+}$, that is
$$
\widehat{T}_{\kappa} M({^\vee}\xi)^{+} = \sum_{\xi' \in B_\kappa({^\vee}\xi)}
a({^\vee}\xi')\, M({^\vee} \xi')^{+}, \quad 0 \neq a({^\vee}
\xi')\in\mathbb{Z}[q^{1/2}, q^{-1/2}].
$$
A case-by-case inspection of \cite{AVParameters}*{Table 5}, or the
formulas of \cite{LV2014}*{7.5-7.7},  confirm that
$\xi \in B_\kappa({^\vee}\xi)$, and $|B_\kappa({^\vee}\xi)|\le 3$.
Let
$$\mathcal{M}_\kappa({^\vee}\xi) =\mathbb{Z}[q^{1/2},
  q^{-1/2}]\text{-span of }\{M({^\vee}\xi')^{+} : \xi' \in
B_\kappa({^\vee}\xi)\}.$$
Keeping in mind that $\kappa \notin \boldsymbol{\tau} ({^\vee}\xi)$,  we write
$${^\vee} \xi' \overset\kappa\to {^\vee}\xi
\nomenclature{$\ch \xi' \overset\kappa\to {^\vee}\xi$}{}$$
if $\kappa\in\boldsymbol{\tau}({^\vee}\xi')$ and $\xi' \in
B_\kappa({^\vee}\xi)$.  In this definition $\kappa$ is an ascent for
${^\vee}\xi$ and a descent for ${^\vee}\xi'$.

The quadratic relation
(\ref{quadrel}) gives
$$\widehat{T}_{\kappa}^2 = (q^{l(w_{\kappa})/2}+q^{-l(w_{\kappa})/2})\widehat{T}_{\kappa}.$$
An important consequence of this equation is that
the image of $\widehat{T}_{\kappa}$ is contained in the
$(q^{l(w_{\kappa})/2}+q^{-l(w_{\kappa})/2})$-eigenspace of
$\widehat{T}_{\kappa}$ (\cite{LV2014}*{7.2 (c)}).

From a case-by-case inspection of \cite{AVParameters}*{Table 5} or the
formulas of  \cite{LV2014}*{7.5-7.7},
it follows that if
$\kappa\not\in\boldsymbol{\tau}({^\vee}\xi)$ then the space
$\mathcal{M}_\kappa({^{\vee}}\xi)$ is
  $\widehat{T}_{\kappa}$-invariant, and  the
$(q^{l(w_{\kappa})/2}+q^{-\ell(\kappa)/2})$-eigenspace of
  $\widehat{T}_{\kappa}$ on $\mathcal{M}_\kappa({^\vee}\xi)$
is spanned by
\begin{equation}
  \label{eigenspace}
\left\{\widehat{T}_{\kappa} M({^\vee}\xi')^{+}~:~{^\vee} \xi'
\overset\kappa\to {^\vee}\xi\right\}=
\left\{\widehat{T}_{\kappa} M({^\vee}\xi')^{+}~:~{^\vee}\xi'\in
B_{\kappa}({^\vee}\xi) \text{ and }
\kappa\in\boldsymbol{\tau}({^\vee}\xi')\right\}.
\end{equation}

Now suppose $\kappa \in \boldsymbol{\tau}({^\vee}\xi)$, \emph{i.e.}
is a descent for ${^\vee}\xi$.  Then  \cite{AVParameters}*{Table 5} tells
us that
if $\kappa$ is not of type  \texttt{1r2}, \texttt{2r22} and
\texttt{2r21} with
respect to ${^\vee}\xi$, then
\begin{equation}
  \label{e:a}
\widehat{T}_{\kappa} M({^\vee}\xi)^{+} = a({^\vee}\xi) \left( M({^\vee}\xi)^{+}
  +\sum_{\{ \xi' :
    {^\vee}\xi\overset\kappa\to {^\vee} \xi'\}} M({^\vee}\xi')^{+} \right)
\end{equation}
for some $a({^\vee}\xi)\in \mathbb{Z}[q^{1/2}, q^{-1/2}]$.  A quick
glance at  (\ref{dualglntypes}) affirms that the
types \texttt{1r2, 2r22} and \texttt{2r21}  do not occur, so
(\ref{e:a}) holds for every $\kappa$
which is a descent.  Another
fact we need for $\kappa\in\boldsymbol{\tau}({^\vee}\xi)$ is that
    $C^{\vartheta}({^\vee}\xi) \in \mathcal{K} {^\vee}
    \Pi(\O, \mathrm{GL}_{N}(\mathbb{R}), \vartheta)$ defined in
    (\ref{dualbasis}) satisfies
\begin{equation}
 \label{l:descent}
    \widehat{T}_{\kappa} C^{\vartheta}(^{\vee}\xi)
=(q^{l(w_{\kappa})/2}+q^{-l(w_{\kappa})/2}) \,
C^{\vartheta}({^\vee}\xi).
\end{equation}
This is stated in the last paragraph of \cite{LV2014}*{Section 7.2}, where
$C^{\vartheta}({^\vee}\xi)$ is denoted as
$\mathfrak{A}_{\mathcal{L}}$ (see \cite{LV2014}*{Section 5}).\footnote{
  There is a misprint in this paragraph of \cite{LV2014}. In our notation
  $C^{\vartheta}({^\vee}\xi)$ belongs to the
  $q^{l(w_{\kappa})}$-eigenspace of $T_\kappa$ (not $T_\kappa+1$),
  and   $C^{\vartheta}({^\vee}\xi)$ is in the
  $(q^{l(w_{\kappa})/2}+q^{-l(w_{\kappa})/2})$-eigenspace of
  $\widehat{T}_{\kappa}$.}

Finally, fix an arbitrary $\vartheta$-orbit $\kappa$. If $\kappa$ is a
descent for ${^\vee}\xi'$ and is not of type \texttt{1ic,2ic,3ic},
then \cite{AVParameters}*{Table 5}
indicates that $\xi' \in
B_{\kappa}({^\vee}\xi)$, where $\kappa$ is an ascent for some ${^\vee}\xi$.  From
this it is easy to see that
\begin{align*}
\mathcal{K} {^\vee} \Pi(\O, \mathrm{GL}_{N}(\mathbb{R}), \vartheta)
=\sum_{\{{^\vee} \xi~:~\kappa\not\in \boldsymbol{\tau}({^\vee} \xi)\}}
\mathcal{M}_\kappa({^\vee}\xi)\qquad \oplus\
\bigoplus_{\substack{\kappa \text{ 1ic,~2ic,~3ic} \\ \text{for }
    {^\vee}\xi'' }} \mathbb{Z}[q^{1/2}, q^{-1/2}]\,  M({^\vee}\xi'').
\end{align*}
This decomposition and (\ref{eigenspace}) imply that the
$(q^{l(w_{\kappa})/2}+q^{-l(w_{\kappa})/2})$-eigenspace of
$\widehat{T}_{\kappa}$ is spanned by
\begin{equation}\label{e:spanned}
\text{ }\{\widehat{T}_{\kappa}
M({^\vee}\xi')^{+} :
\kappa \in \boldsymbol{\tau}({^\vee}\xi')\}.
\end{equation}

\begin{lem}
  \label{p:recursion}
  Suppose $\kappa$ is a $\vartheta$-orbit of a simple root in
  $R^{+}(\lambda)$, and $\xi, \xi' \in   \Xi(\O,
       {^\vee}\mathrm{GL}_{N}^{\Gamma})^\vartheta$ satisfy
       $\kappa \in \boldsymbol{\tau}({^\vee}\xi)$ and
  $\kappa \not\in
  \boldsymbol{\tau}({^\vee}\xi')$.
Then
\begin{equation*}
{^\vee}P^{\vartheta}({^\vee}\xi',{^\vee}\xi) = \sum_{\{ \xi'' :
  {^\vee}\xi''\overset\kappa\to{^\vee}\xi'\}}
{^\vee}P^{\vartheta}({^\vee}\xi'',{^\vee}\xi).
\end{equation*}
\end{lem}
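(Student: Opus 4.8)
The plan is to follow the standard recursion argument for Kazhdan--Lusztig--Vogan polynomials, transported to the twisted dual setting via the eigenspace analysis of $\widehat T_\kappa$ assembled just before the statement. The key point is that $C^\vartheta({}^\vee\xi)$ lies in the $(q^{l(w_\kappa)/2}+q^{-l(w_\kappa)/2})$-eigenspace of $\widehat T_\kappa$ whenever $\kappa$ is a descent for ${}^\vee\xi$ (equation~(\ref{l:descent})), while the coefficient of $M({}^\vee\xi')^+$ in $C^\vartheta({}^\vee\xi)$ is, up to the sign $(-1)^{l^I({}^\vee\xi)-l^I({}^\vee\xi')}$, exactly ${}^\vee P^\vartheta({}^\vee\xi',{}^\vee\xi)$.

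First I would expand $C^\vartheta({}^\vee\xi)$ in the basis $\{M({}^\vee\eta)^+\}$ using the definition (\ref{dualbasis}), and apply $\widehat T_\kappa$ term by term. By (\ref{l:descent}) the result is $(q^{l(w_\kappa)/2}+q^{-l(w_\kappa)/2})\,C^\vartheta({}^\vee\xi)$, so comparing coefficients of a fixed $M({}^\vee\xi')^+$ on both sides yields one linear relation among the ${}^\vee P^\vartheta({}^\vee\eta,{}^\vee\xi)$. To extract the desired recursion I would isolate the contribution of those $\eta$ for which $\kappa$ is a descent: by the spanning statement (\ref{e:spanned}) and the structure of (\ref{e:a}), $\widehat T_\kappa M({}^\vee\eta)^+$ (for $\kappa$ a descent for ${}^\vee\eta$) contributes to the coefficient of $M({}^\vee\xi')^+$ exactly when ${}^\vee\eta\overset\kappa\to{}^\vee\xi'$ or $\eta=\xi'$ — but $\kappa\notin\boldsymbol\tau({}^\vee\xi')$ rules out the latter, so it contributes precisely when ${}^\vee\eta\overset\kappa\to{}^\vee\xi'$, with coefficient $a({}^\vee\eta)$. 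Meanwhile the $\eta$ with $\kappa$ an ascent for ${}^\vee\eta$ satisfy $\deg{}^\vee P^\vartheta({}^\vee\eta,{}^\vee\xi)\le (l^I({}^\vee\eta)-l^I({}^\vee\xi)-1)/2$ (property~4 of Theorem~\ref{theo:defpoly} for the dual module) and contribute only to $M({}^\vee\eta')^+$ with $\eta'\in B_\kappa({}^\vee\eta)$; a length/degree comparison, together with the fact that for $\kappa$ an ascent $\widehat T_\kappa M({}^\vee\eta)^+$ has leading term $q^{\pm 1/2}M({}^\vee\eta)^+$ plus strictly lower-degree terms, shows their net contribution to the $M({}^\vee\xi')^+$-coefficient on the two sides cancels. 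Bookkeeping the sign factors $(-1)^{l^I(\cdot)-l^I(\cdot)}$ — which change in a controlled way under $\overset\kappa\to$, since a Cayley transform shifts $l^I$ by $\pm\tfrac12$ of a small amount and the combined types in (\ref{dualglntypes}) are tame (no \texttt{1r2}, \texttt{2r22}, \texttt{2r21}) — then leaves exactly $\,{}^\vee P^\vartheta({}^\vee\xi',{}^\vee\xi)=\sum_{{}^\vee\xi''\overset\kappa\to{}^\vee\xi'}{}^\vee P^\vartheta({}^\vee\xi'',{}^\vee\xi)$.

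The main obstacle I anticipate is making the cancellation of the ascent-type terms fully rigorous and tracking all the signs consistently: one has to verify, case by case against \cite{AVParameters}*{Table 5} (using the dual types (\ref{dualglntypes})), that the polynomials $a({}^\vee\eta)$ and the structure constants of $\widehat T_\kappa$ behave exactly as in the untwisted KLV recursion of \cite{ICIV}, and that the extra sign $(-1)^{l^I_\vartheta(\cdot)-l^I_\vartheta(\cdot)}$ does not intrude here (it should not, since this statement concerns only ${}^\vee P^\vartheta$, not the pairing). I expect this to reduce, after the eigenspace reformulation, to essentially the argument of \cite{ICIV}*{Proposition 13.?} / the recursion in \cite{LV2014}*{Section 7}, so the proof in the paper will likely cite those and carry out only the bookkeeping peculiar to the ${}^\vee$-side; I would structure my write-up the same way, doing the type \texttt{2r22}/\texttt{2i22} cases as the illustrative examples and leaving the rest to the reader.
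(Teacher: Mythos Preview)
Your overall direction—exploiting that $C^{\vartheta}({^\vee}\xi)$ lies in the $(q^{l(w_\kappa)/2}+q^{-l(w_\kappa)/2})$-eigenspace of $\widehat T_\kappa$—is correct, and the ingredients you cite, (\ref{l:descent}), (\ref{e:spanned}), (\ref{e:a}), are exactly the ones used in the paper. However, your proposed execution overcomplicates things and contains a genuine gap.

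The paper's argument does \emph{not} apply $\widehat T_\kappa$ term-by-term to the expansion of $C^{\vartheta}({^\vee}\xi)$. Instead, it uses (\ref{e:spanned}) in the opposite direction: since $C^{\vartheta}({^\vee}\xi)$ lies in the eigenspace and that eigenspace is spanned by $\{\widehat T_\kappa M({^\vee}\xi'')^{+} : \kappa\in\boldsymbol{\tau}({^\vee}\xi'')\}$, one may write
\[
C^{\vartheta}({^\vee}\xi)=\sum_{\{\xi'':\,\kappa\in\boldsymbol{\tau}({^\vee}\xi'')\}} b({^\vee}\xi'',{^\vee}\xi)\,\widehat T_\kappa M({^\vee}\xi'')^{+}
\]
for some coefficients $b$. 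Now expand the right-hand side using the very simple formula (\ref{e:a}) (valid for \emph{all} descent types in (\ref{dualglntypes}) since the bad types \texttt{1r2}, \texttt{2r22}, \texttt{2r21} do not occur), and compare with the defining expansion (\ref{dualbasis}) of $C^{\vartheta}({^\vee}\xi)$. Matching coefficients of $M({^\vee}\xi'')^{+}$ for descent $\xi''$ gives $b\cdot a={^\vee}P^{\vartheta}$; then matching coefficients of $M({^\vee}\xi')^{+}$ for ascent $\xi'$, after changing the order of summation, yields the recursion immediately. Ascent-type terms never appear in the computation, no case-by-case analysis against Table~5 is needed (that enters only in the subsequent Corollary~\ref{onebetween}), and there is no sign bookkeeping at all—the $l^I_\vartheta$-signs you worry about simply play no role.

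The gap in your approach is the claim that contributions from ascent-type $\eta$ ``cancel'' via a length/degree comparison. A polynomial identity cannot be deduced from a degree bound alone; the degree bound from Theorem~\ref{theo:defpoly}(4) constrains \emph{each} polynomial but says nothing about why a particular linear combination of them should vanish. One could in principle push your approach through by a full case analysis of $\widehat T_\kappa$ on ascent terms, but this is exactly what the paper's reformulation avoids.
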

\begin{proof}
Write $C^{\vartheta}({^\vee}\xi)$ as
  \begin{equation}
\label{eqa}
    C^{\vartheta}({^\vee}\xi)= \sum_{\{\xi'' :
      \kappa \in \boldsymbol{\tau}({^\vee}\xi'')\}}
    {^\vee}P^{\vartheta}({^\vee}\xi'',{^\vee}\xi)\,
    M({^\vee}\xi'')^{+} +
    \sum_{\{\xi':
      \kappa\not\in\boldsymbol{\tau}({^\vee}\xi')
      \}}{^\vee}P^{\vartheta}({^\vee}\xi',{^\vee}\xi)
     \,   M({^\vee}\xi')^{+}
  \end{equation}
 On the other hand by (\ref{l:descent}) and \eqref{e:spanned} we have
$$  C^{\vartheta}({^\vee}\xi)= \sum_{\{\xi'':
    \kappa\in\boldsymbol{\tau}({^\vee}\xi'') \}}
  b({^\vee}\xi'',{^\vee}\xi)\ \widehat{T}_{\kappa}
  M({^\vee}\xi'')^{+}$$
for some $b({^\vee}\xi'',{^\vee}\xi)\in \mathbb{Z}[q^{1/2}, q^{-1/2}]$.
Inserting \eqref{e:a} into this equation yields
\begin{align}
  \label{eqc}
  C^{\vartheta}({^\vee}\xi) = &\sum_{\{\xi'' :
    \kappa\in\boldsymbol{\tau}({^\vee}\xi'') \}}
  b({^\vee}\xi'',{^\vee}\xi)
  a({^\vee}\xi'') \, M({^\vee}\xi'')^{+}\\
\nonumber  &+
\sum_{\{\xi'' : \kappa\in\boldsymbol{\tau}({^\vee}\xi'') \}}
b({^\vee}\xi'',{^\vee}\xi)a({^\vee}\xi'')
  \sum_{\{\xi' : {^\vee}\xi'' \overset\kappa\to{^\vee}\xi'\}} M({^\vee}\xi')^{+}
\end{align}
Since $\kappa\not\in\boldsymbol{\tau}({^\vee}\xi')$ for
all ${^\vee}\xi'$ appearing
in the final sum, we may
compare the coefficients of $M({^\vee}\xi'')^{+}$ with
$\kappa\in\boldsymbol{\tau}({^\vee}\xi'')$
in  (\ref{eqa}) and (\ref{eqc}) to
conclude
$$b({^\vee}\xi'',{^\vee}\xi)a({^\vee}\xi'') =
{^\vee}P^{\vartheta}({^\vee}\xi'',{^\vee}\xi).$$
Therefore
\begin{align*}
C^{\vartheta}({^\vee}\xi) = &\sum_{\{\xi'' :
  \kappa\in\boldsymbol{\tau}({^\vee}\xi'') \}}
{^\vee}P^{\vartheta}({^\vee}\xi'',{^\vee}\xi) \,
M({^\vee}\xi'')^{+} \\
\nonumber&+ \sum_{\{\xi'' :
  \kappa\in\boldsymbol{\tau}({^\vee}\xi'') \}}
{^\vee}P^{\vartheta}({^\vee}\xi'',{^\vee}\xi) \sum_{\{\xi'
  :   {^\vee}\xi''\overset\kappa\to{^\vee}\xi'\}} M({^\vee}\xi')^{+}
\end{align*}
Changing the order of summation in the second sum, it becomes
$$\sum_{\{\xi':\kappa\not\in\boldsymbol{\tau}({^\vee}\xi')\}}
\sum_{\{\xi'': {^\vee}\xi''\overset\kappa\to{^\vee}\xi'\}}
{^\vee}P^{\vartheta}({^\vee}\xi'',{^\vee}\xi) \, M({^\vee}\xi')^{+}$$
Comparing with the second sum in  (\ref{eqa}) gives
$${^\vee}P^{\vartheta}({^\vee}\xi',{^\vee}\xi) = \sum_{\{\xi'':
  {^\vee}\xi''\overset\kappa\to{^\vee}\xi'\}}
{^\vee}P^{\vartheta}({^\vee}\xi'',{^\vee}\xi).$$
This completes the proof.
\end{proof}
Lemma \ref{p:recursion} simplifies even further in the present setting.
\begin{cor}
  \label{onebetween}
    Suppose $\kappa$ is a $\vartheta$-orbit of a simple root in
    $R^{+}(\lambda)$, and $\xi,\xi'\in  \Xi(\O,
  {^\vee}\mathrm{GL}_{N}^{\Gamma})^\vartheta$ satisfy
  $\kappa\in\boldsymbol{\tau}({^\vee}\xi)$ and
  $\kappa\not\in\boldsymbol{\tau}({^\vee}\xi')$. Then
  there exists exactly one $\xi''  \in  \Xi(\O,
  {^\vee}\mathrm{GL}_{N}^{\Gamma})^\vartheta$ such that
  ${^\vee}\xi''\overset\kappa\to{^\vee}\xi'$. Furthermore,
$$
{^\vee}P^{\vartheta}({^\vee}\xi',{^\vee}\xi)={^\vee}P^{\vartheta}({^\vee}\xi'',
{^\vee}\xi).$$
  \end{cor}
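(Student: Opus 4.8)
The statement to be proved is the Corollary \ref{onebetween}, which refines Lemma \ref{p:recursion} in the specific setting of the dual group of $\mathrm{GL}_{N}(\mathbb{R})$.  The key point is that in Lemma \ref{p:recursion} the sum on the right-hand side,
$$\sum_{\{\xi'' : {^\vee}\xi''\overset\kappa\to{^\vee}\xi'\}} {^\vee}P^{\vartheta}({^\vee}\xi'',{^\vee}\xi),$$
ranges over all $\xi''$ for which $\kappa$ is a descent and $\xi''\in B_\kappa({^\vee}\xi')$; I would show that for $\mathrm{GL}_{N}$ this index set always has exactly one element.  The reason is the restricted list of admissible types \eqref{dualglntypes}: since $\kappa$ is an ascent for ${^\vee}\xi'$, the type of $\kappa$ relative to ${^\vee}\xi'$ is one of the ascent types on that list, and one checks case by case in \cite{AVParameters}*{Table 5} that for each such type there is precisely one $\xi''\in B_\kappa({^\vee}\xi')$ for which $\kappa$ becomes a descent.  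For type $\mathtt{1}$ ascents this $\xi''$ is an inverse Cayley transform (single-valued here, cf.\ type $\mathtt{1r1f}$/$\mathtt{1r1s}$), and similarly for types $\mathtt{2}$ and $\mathtt{3}$ ascents the inverse Cayley transforms in question are single-valued because the relevant ``$\mathtt{2r22}$/$\mathtt{2r21}$'' and analogous multivalued configurations do not occur, exactly as noted in the text just before \eqref{e:a}.

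Concretely, the steps I would carry out are: first, invoke Lemma \ref{p:recursion} to reduce the corollary to the counting statement $|\{\xi'': {^\vee}\xi''\overset\kappa\to{^\vee}\xi'\}| = 1$; second, recall from the discussion in Section \ref{twisthmodule} that $\kappa\notin\boldsymbol{\tau}({^\vee}\xi')$ means $\kappa$ is one of the ascent types of \eqref{dualglntypes}, namely $\mathtt{1C+, 1r1f, 1r1s, 1i1, 1ic, 2C+, 2r11, 2i11, 2ic, 3C+, 3i, 3ic}$ (up to the naming conventions), and that $\boldsymbol\tau$ and $B_\kappa$ are read off from \cite{AVParameters}*{Table 5}; third, go through these types one at a time and verify that in each row the set $B_\kappa({^\vee}\xi')$ contains exactly one parameter for which $\kappa$ is a descent --- for the ``$\mathtt{C}$'' (complex) and ``$\mathtt{ic}$'' types this is immediate since the Hecke action is essentially a cross action and no new descent is created or there is a unique one, while for the ``$\mathtt{r}$''/``$\mathtt{i}$'' types it is the single inverse Cayley transform (single-valued by the parity/type restrictions for $\mathrm{GL}_{N}$).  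Finally, the equality of KLV-polynomials then follows from Lemma \ref{p:recursion} with the one-element sum.

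The bookkeeping is entirely a finite table check, so I do not expect any conceptual obstacle; the main nuisance will simply be to be careful with the three overlapping notions of ``type'' flagged in Section \ref{twisthmodule} (the types of \eqref{simplekappa}, the types of \cite{greenbook}*{Section 8.3}, and the combined Lusztig--Vogan types of \cite{AVParameters}*{Table 1, Table 5}) and to make sure the ascent/descent dictionary \eqref{l:descent} is applied on the correct (dual) side.  In practice one observes that the only ascent types that could a priori produce more than one descent parameter in $B_\kappa$ are exactly the multivalued Cayley types ($\mathtt{2r22}$, $\mathtt{2r21}$, $\mathtt{1r2}$ and their duals), and these are precisely the types that were already excluded for $\mathrm{GL}_{N}(\mathbb{R})$ in the paragraph preceding \eqref{e:a}; hence the refinement is automatic and the corollary follows.
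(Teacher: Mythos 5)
Your proposal is correct and takes essentially the same route as the paper: reduce, via Lemma \ref{p:recursion}, to the statement that $\{\xi'':{^\vee}\xi''\overset\kappa\to{^\vee}\xi'\}$ is a singleton, and verify this by a case-by-case inspection of the ascent types occurring in \eqref{dualglntypes}, using \cite{AVParameters}*{Table 5} and \cite{LV2014}*{Sections 7.5-7.7}, exactly as in the paper's table of formulas for $\widehat{T}_{\kappa}M({^\vee}\xi')^{+}$. One bookkeeping caution that your row-by-row check would catch anyway: on the dual side the types \texttt{1r1f}, \texttt{1r1s}, \texttt{2r11} and \texttt{1ic}, \texttt{2ic}, \texttt{3ic} are descents rather than ascents, and the multivaluedness one must rule out is that of the type-II imaginary ascents (\texttt{1i2f/1i2s}, \texttt{2i22}, \texttt{2i12}), which are absent from \eqref{dualglntypes}, rather than of \texttt{1r2}, \texttt{2r22}, \texttt{2r21} themselves.
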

\begin{proof}
Here are the formulas, derived from \cite{LV2014}*{Section 7} or
\cite{AVParameters}*{Table 5}, for
$\widehat{T}_\kappa M({^\vee}\xi')^{+}$ for the various types
$\kappa\not\in\boldsymbol{\tau}({^\vee}\xi')$
which arise in (\ref{dualglntypes}).
\begin{center}
\begin{tabular}{|l|l|}
  \hline
  Type&$\widehat{T}_\kappa M({^\vee}\xi')^{+}$ \\
  \hline \hline
  \texttt{1C+} & $q^{-1/2} \ (M({^\vee}\xi')^{+}+w_{\kappa}\times
  M({^\vee}\xi')^{+})$\\
  \hline
  \texttt{1i1} & $q^{-1/2} \
  (M({^\vee}\xi')^{+}+w_{\kappa}\times M({^\vee}\xi')^{+}+ c_{\kappa}
  M({^\vee}\xi')^{+})$\\
  \hline
  \texttt{2C+} & $q^{-1} \ (M({^\vee}\xi')^{+}+ w_{\kappa}\times
  M({^\vee}\xi')^{+})$\\
  \hline
  \texttt{2Ci} & $(q^{-1}+1) \  (M({^\vee}\xi')^{+}+ c_{\kappa}
  M({^\vee}\xi')^{+})$\\
  \hline
  \texttt{2i11} &
  $q^{-1} \ (M({^\vee}\xi')^{+}+ w_{\kappa}\times M({^\vee}\xi')^{+}+
  c_{\kappa} M({^\vee}\xi')^{+})$\\
  \hline
  \texttt{3C+} & $q^{-3/2} \ (M({^\vee}\xi')^{+}+ w_{\kappa}\times
  M({^\vee}\xi')^{+})$\\
  \hline
  \texttt{3Ci} & $q^{-3/2}(q +1) \ (M({^\vee}\xi')^{+}+
  c_{\kappa} M({^\vee}\xi')^{+})$\\
  \hline
   \texttt{3i}& $q^{-3/2}(q +1) \  (M({^\vee}\xi')^{+}+
  c_{\kappa} M({^\vee}\xi')^{+})$\\
  \hline
  \end{tabular}
\end{center}
\noindent The ``Cayley transforms'' $c_{\kappa}$ appearing here follow the
notation of the references.  They may include cross actions in their
definition  (see  types \texttt{2ci} and \texttt{3ci}  which are 7.6
(c$^{\prime}$) and 7.7 (c$^{\prime}$) in \cite{LV2014}).  In
any event, the $c_{\kappa}$ appearing here are all single-valued.
Once more a case-by-case inspection of the types in  \cite{LV2014}*{7.5-7.7}
shows that in each entry of the second column there is exactly one summand
whose parameter makes
$\kappa$ a descent.  Consequently, the table indicates that
    $\{\xi'' : {^\vee}\xi'' \overset\kappa\to {^\vee}\xi'\}$ has
exactly one element.  The corollary now follows from Lemma \ref{p:recursion}.
\end{proof}

We are ready to provide the proof for  Proposition \ref{dualpone}
\begin{proof}
It follows  from the maximality of ${^\vee}\xi_{0}$ that
$l^{I}({^\vee}\xi_{0})$ is maximal among all the integral lengths
appearing from the representations in the block (\cite{ICIV}*{Lemma 12.10}).  If $\kappa$ is a
$\vartheta$-orbit of a simple root in $R^{+}(\lambda)$ with $\kappa
\notin \boldsymbol{\tau}({^\vee}\xi_{0})$ then
$l^{I}({^\vee}\xi'') > l^{I}({^\vee}\xi_{0})$ for some $\xi'' \in
B_{\kappa}({^\vee}\xi_{0})$ (\cite{LV2014}*{7.5-7.7})--a contradiction to the maximality of $l^{I}({^\vee}\xi_{0})$.
Therefore
$\kappa\in\boldsymbol{\tau}({^\vee}\xi_{0})$ for all
$\vartheta$-orbits $\kappa$.

If $\xi' \neq \xi_{0}$ then by the uniqueness hypothesis ${^\vee}\xi'$ is not
maximal in the Bruhat order.  By \cite{ICIV}*{Theorem 8.8},
$M({^\vee}\xi')^{+}$ is equal to the cross action or Cayley transform
of some representation in its block with higher integral length.
Looking to the formulas in \cite{LV2014}*{7.5-7.7}, we see that this
implies the existence of some
$\kappa\not\in\boldsymbol{\tau}({^\vee}\xi')$.

We now prove ${^\vee}P^{\vartheta}({^\vee}\xi',{^\vee}\xi_{0})= 1$ by
induction on $l^{I}({^\vee}\xi_{0}) -
l^{I}({^\vee}\xi')$.  If  $l^{I}({^\vee}\xi_{0}) = l^{I}({^\vee}\xi')$
then the uniqueness hypothesis implies ${^\vee}\xi' = {^\vee}\xi_{0}$
and we are done by \cite{LV2014}*{Theorem 5.2 (c)}.  Otherwise, $l^{I}({^\vee}\xi_{0}) > l^{I}({^\vee}\xi')$
and we have shown above that the hypotheses of
Corollary \ref{onebetween} are satisfied for some $\kappa$. Corollary
\ref{onebetween}
tells us that
${^\vee}P^{\vartheta}({^\vee}\xi',{^\vee}\xi_{0})={^\vee}P^{\vartheta}({^\vee}\xi'',
{^\vee}\xi_{0})$
for some
${^\vee}\xi'' \overset\kappa\to {^\vee}\xi'$. The condition
${^\vee}\xi'' \overset\kappa\to {^\vee}\xi'$ necessitates
$l^{I}({^\vee}\xi'') > l^{I}({^\vee}\xi')$
and so
$${^\vee}P^{\vartheta}({^\vee}\xi',{^\vee}\xi_{0})={^\vee}P^{\vartheta}({^\vee}\xi'',
{^\vee}\xi_{0})=1$$
by induction.
\end{proof}

\section{Endoscopic lifting for general linear groups following
  Adams-Barbasch-Vogan}
  \label{endosec}

In this section we review standard endoscopy and twisted endoscopy
from the perspective of \cite{ABV}, but restricted only to the
particular case of
the group $\mathrm{GL}_{N}$.  We shall be using all of the previously
defined objects and work under the assumption of (\ref{regintdom}) for
the infinitesimal character.  The references for this review are
\cite{ABV}*{Section 26} and   \cite{Christie-Mezo}*{Section 5}.

\subsection{Standard endoscopy}
\label{standend}

Let  ${^\vee}\mathrm{GL}_{N}^{\Gamma} =
{^\vee}\mathrm{GL}_{N} \rtimes \langle {^\vee}\delta_{0} \rangle$ be
as in (\ref{deltagln}).
An \emph{endoscopic datum} for ${^\vee}\mathrm{GL}_{N}^{\Gamma}$ is a pair
$$(s, \, {^\vee}G^{\Gamma})$$
which satisfies
\begin{enumerate}
\item $s \in {^\vee}\mathrm{GL}_{N}$ is semisimple

\item ${^\vee}G^{\Gamma} \subset {^\vee} \mathrm{GL}_{N}^{\Gamma}$ is
  open in the centralizer of $s$ in ${^\vee}\mathrm{GL}_{N}^{\Gamma}$

\item ${^\vee}G^{\Gamma}$ is an E-group for a group $G$ (\cite{ABV}*{Definition 4.6}).
\end{enumerate}
\nomenclature{$s$}{semisimple element in endoscopic datum}
This is a specialization of \cite{ABV}*{Definition 26.15} to
${^\vee}\mathrm{GL}_{N}^{\Gamma}$.  The groups ${^\vee}G$ and $G$ here are
isomorphic to products of smaller general linear groups. Consequently,
${^\vee}G$ and ${^\vee}\mathrm{GL}_{N}$ share the diagonal maximal torus
${^\vee}H$, and $G$ and $\mathrm{GL}_{N}$ share the diagonal maximal
torus $H$.  We shall abusively denote by $\delta_{q}$ the
strong involution on both $G$ and $\mathrm{GL}_{N}$ which correspond to the split real forms.  The group $G$ in this definition is called the
\emph{endoscopic group}.

We do not require the general concept of an E-group in this section.
From now on
we assume that ${^\vee}G^{\Gamma} =
{^\vee}G \times \langle
{^\vee}\delta_{0} \rangle$ where ${^\vee}\delta_{0}^{2} = 1$.   In
other words, ${^\vee}G^{\Gamma}$ is an L-group for $G$.

There is a notion of equivalence for endoscopic data, and using this
equivalence we may assume without loss of generality that $s \in
{^\vee}H$.  We fix $\lambda \in {^\vee}\mathfrak{h}$ satisfying the
hypotheses of  (\ref{regintdom}) so that $\lambda$ is
regular with respect to ${^\vee}\mathrm{GL}_{N}$. Let $\O_{G}$ be the
${^\vee}G$-orbit of $\lambda$ and $\O$ be the
${^\vee}\mathrm{GL}_{N}$-orbit of $\lambda$.
The second property of the endoscopic datum above allows us to define
the inclusion
\begin{equation}
\epsilon : \, {^\vee}G^{\Gamma} \hookrightarrow \, {^\vee}\mathrm{GL}_{N}^{\Gamma}.
\label{epinclusion}
\end{equation}
This inclusion induces another map (\cite{ABV}*{Corollary 6.21}),  which we abusively also denote as
\begin{equation}
\label{epinclusion1}
\epsilon: \, X\left(\O_{G}, {^\vee}G^{\Gamma}\right) \rightarrow
X\left(\O, {^\vee}\mathrm{GL}_{N}^{\Gamma}\right).
\nomenclature{$\epsilon$}{endoscopic maps}
\end{equation}
It is easily verified that the ${^\vee}G$-action on
$X(\O_{G}, {^\vee}G^{\Gamma})$ is compatibly carried under
$\epsilon$ to the ${^\vee}\mathrm{GL}_{N}$-action on $X(\O,
{^\vee}\mathrm{GL}_{N}^{\Gamma})$ (\cite{ABV}*{(7.17)}).  As a result,
 $\epsilon$ induces maps from the orbits of
$X(\O_{G}, {^\vee}G^{\Gamma})$ to the orbits of $X(\O,
{^\vee}\mathrm{GL}_{N}^{\Gamma})$. It also induces a homomorphism
\begin{equation}
\label{epinclusion2}
A^{\mathrm{loc}}(\epsilon) : {^\vee}G_{p}/({^\vee}G_{p})^{0} \rightarrow \,
({^\vee}\mathrm{GL}_{N})_{\epsilon(p)} / (
(\,{^\vee}\mathrm{GL}_{N})_{\epsilon(p)})^{0}
\nomenclature{$A^{\mathrm{loc}}(\epsilon)$}{endoscopic map of component groups}
\end{equation}
on the component groups.  As we have seen,
the component groups for $\mathrm{GL}_{N}$, and therefore also
for $G$, are trivial.

The inverse image functor of $\epsilon$ on equivariant constructible
sheaves induces a homomorphism
\begin{equation}
\label{inverseeps}
\epsilon^{*}: K(\O, {^\vee} \mathrm{GL}_{N}^{\Gamma})
\rightarrow K(\O_{G}, {^\vee}G^{\Gamma})
\nomenclature{$\epsilon^{*}$}{inverse image functor}
\end{equation}
\cite{ABV}*{Proposition 7.18}.
One may describe its values on irreducible constructible sheaves
$\mu(\xi)$, $\xi  = (S, \tau_{\xi}) \in
\Xi(\O, {^\vee}\mathrm{GL}_{N}^{\Gamma})$ as follows.
If the orbit $S$ is not the image of an orbit of $X(\O_{G},
{^\vee}G^{\Gamma})$ under $\epsilon$ then $\epsilon^{*} \mu(\xi) = 0$.
Otherwise, $S$
is the ${^\vee}\mathrm{GL}_{N}$-orbit of $\epsilon(p) \in S$ for some
$p \in X(\O_{G}, {^\vee}G^{\Gamma})$.  In this case
$\mu(\xi)$ may be identified with (\ref{bundle}) where
$\tau_{\xi} = 1$ the trivial
quasicharacter of the trivial group
$({^\vee}\mathrm{GL}_{N})_{\epsilon(p)} / (
(\,{^\vee}\mathrm{GL}_{N})_{\epsilon(p)})^{0} $.  The stalk of the
constructible sheaf $\epsilon^{*} \mu(\xi)$ at $p$ is then the stalk
$V$ of (\ref{bundle}). The representation on $V$ is given by the
quasicharacter $\tau_{\xi} \circ A^{\mathrm{loc}}(\epsilon)$, which is again
the trivial quasicharacter (on the trivial component group).   In
summary,
$$\epsilon^{*} \mu(\xi)  = \epsilon^{*} \mu(S, 1) = \sum_{\{
  S_{1}:  {^\vee}\mathrm{GL}_{N} \cdot \epsilon (S_{1}) = S  \}}
\mu(S_{1}, 1) $$
in which $S_{1}$ is a ${^\vee}G$-orbit in $X(\O_{G},
{^\vee}G^{\Gamma})$.  This sum will be seen to reduce to a single term
in Proposition \ref{injliftord}.

When $\epsilon^{*}$ is combined with the pairings of Theorem
\ref{ordpairing}, we obtain a map
$$\epsilon_{*} : K_{\mathbb{C}} \Pi(\O_{G}, G/\mathbb{R})
\rightarrow K_{\mathbb{C}} \Pi (\O,
\mathrm{GL}_{N}(\mathbb{R}))
\nomenclature{$K_{\mathbb{C}} \Pi(\O_{G}, G/\mathbb{R})$}{}
\nomenclature{$\epsilon_{*}$}{}
$$
defined on $\eta_{G} \in K_{\mathbb{C}} \Pi(\O_{G}, G/\mathbb{R})$ by
\begin{equation}
\label{loweps}
\left\langle \epsilon_{*} \eta_{G}, \mu(\xi) \right\rangle = \left\langle \eta_{G},
\epsilon^{*} \mu(\xi)\right\rangle_{G}, \quad \xi \in
\Xi(\O,{^\vee}\mathrm{GL}_{N}^{\Gamma}).
\end{equation}
Here, $K_{\mathbb{C} }= \mathbb{C} \otimes_{\mathbb{Z}} K$ and we have
placed a subscript $G$ beside the pairing on the right to distinguish
it from the pairing for $\mathrm{GL}_{N}$ on the left.

The endoscopic lifting map is a restriction of $\epsilon_{*}$ to a
subspace of $K_{\mathbb{C}} \Pi(\O_{G}, G/\mathbb{R})$
which is perhaps best described in two steps.  The first subspace is
generated by the (equivalence classes of) representations of the
quasisplit strong involution $\delta_{q}$ (Section \ref{extgroups}). We
denote this subspace by $K_{\mathbb{C}} \Pi(\O_{G}, G(
\mathbb{R}, \delta_{q}))$.  Lemma  \ref{GLn} tells
that
$$K_{\mathbb{C}} \Pi(\O_{G}, G( \mathbb{R}, \delta_{q})) =
K_{\mathbb{C}} \Pi(\O_{G}, G/\mathbb{R}),$$
but this will not be true when we look at twisted endoscopic groups in
Section \ref{twistendsec}.
Inside of  $K_{\mathbb{C}} \Pi(\O_{G}, G( \mathbb{R},
\delta_{q}))$ is the subspace generated by stable virtual characters
of $G(\mathbb{R}, \delta_{q})$ (Section \ref{grothchar},
\cite{ABV}*{18.2}).  We denote this subspace by $K_{\mathbb{C}}
\Pi(\O_{G}, G( \mathbb{R}, \delta_{q}))^{\mathrm{st}}$.  Again, since
$G$ is a product of general linear groups, stability is not an issue
and  we have
\begin{equation}
\label{samespace}
K_{\mathbb{C}} \Pi(\O_{G}, G( \mathbb{R}, \delta_{q}))^{\mathrm{st}} =
K_{\mathbb{C}} \Pi(\O_{G}, G/\mathbb{R}).
\end{equation}
This equality will not hold for twisted endoscopic groups in Section
\ref{twistendsec}.
We define the endoscopic lifting
\begin{equation}
\label{endlift}
\mathrm{Lift}_{0}: K_{\mathbb{C}} \Pi(\O_{G}, G(\mathbb{R},
\delta_{q}))^{\mathrm{st}} \rightarrow K_{\mathbb{C}} \Pi(\O,
\mathrm{GL}_{N}(\mathbb{R}))
\nomenclature{$\mathrm{Lift}_{0}$}{endoscopic lifting map}
\end{equation}
as the restriction of $\epsilon_{*}$ to $K_{\mathbb{C}}
\Pi(\O_{G}, G(\mathbb{R}, \delta_{q}))^{\mathrm{st}}$.

An argument of Shelstad (\cite{shelstad}, \cite{ABV}*{Lemma 18.11}) provides a
basis for $K_{\mathbb{C}} \Pi(\O_{G}, G(\mathbb{R},
\delta_{q}))^{\mathrm{st}}$.  The basis elements are of the form
\begin{equation}
\label{etaloc}
\eta^{\mathrm{loc}}_{S_{1}}(\delta_{q})  = \sum_{\tau_{S_{1}}} M(S_{1}, \tau_{S_{1}}),
\end{equation}
where $(S_{1}, \tau_{S_{1}}) \in
\Xi(\O_{G},{^\vee}G^{\Gamma})$ runs over those complete
geometric parameters which correspond to the strong involution
$\delta_{q}$  under the local Langlands correspondence.
As mentioned earlier, $\tau_{S_{1}}$ is trivial for $G$ and so
(\ref{etaloc}) reduces to
$$\eta^{\mathrm{loc}}_{S_{1}}(\delta_{q}) = M(S_{1}, 1),
\nomenclature{$\eta^{\mathrm{loc}}_{S_{1}}$}{virtual character of a pseudopacket}$$
a single standard representation.
\begin{prop}\label{injliftord}
In the setting of (\ref{regintdom}):
\begin{enumerate}[label={(\alph*)}]
\item Suppose $S_{1}, S_{2} \subset X(\O_{G},
  {^\vee}G^{\Gamma})$ are ${^\vee}G$-orbits which are carried to the
  same ${^\vee}\mathrm{GL}_{N}$-orbit $S$ under $\epsilon$.  Then
  $S_{1} = S_{2}$.
\item The endoscopic lifting map $\mathrm{Lift}_{0}$ (\ref{endlift}) is
  injective and
  sends $\eta^{\mathrm{loc}}_{S_{1}}(\delta_{q}) = M(S_{1}, 1)$ to
  $\eta^{\mathrm{loc}}_{S} = M(S, 1)$.

\item The endoscopic lifting map $\mathrm{Lift}_{0}$ is equal to the
  parabolic induction functor $\mathrm{ind}_{G(\mathbb{R},
    \delta_{q})}^{\mathrm{GL}_{N}(\mathbb{R})}$ on
  $K_{\mathbb{C}} \Pi(\O_{G}, G(\mathbb{R},
  \delta_{q}))^{\mathrm{st}}$.
\end{enumerate}
\end{prop}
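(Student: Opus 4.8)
The three assertions are intertwined, so I would prove them together, working orbit by orbit. The starting point is the explicit description of the space $X(\O_G,{^\vee}G^\Gamma)$ in terms of $\ch K_y$-orbits on partial flag varieties (Section \ref{XO}, \cite{ABV}*{Proposition 6.16}), combined with the observation — already used repeatedly in the excerpt — that for $\mathrm{GL}_N$ and its endoscopic groups $G$ (products of smaller general linear groups), all component groups are trivial, all $L$-packets are singletons, and the dual groups $\ch G \subset \ch{\mathrm{GL}}_N$ share the diagonal torus $\ch H$. Thus both $X(\O_G,{^\vee}G^\Gamma)$ and $X(\O,{^\vee}\mathrm{GL}_N^\Gamma)$ have their $\ch G$- (resp.\ $\ch{\mathrm{GL}}_N$-) orbits in bijection with $W$-data, and an orbit is determined by the element $y$ of \eqref{y} together with a $\ch K_y$-orbit datum, which in the $\mathrm{GL}_N$ case amounts to no extra data at all.

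For part (a): suppose $S_1, S_2 \subset X(\O_G,{^\vee}G^\Gamma)$ map to the same $\ch{\mathrm{GL}}_N$-orbit $S$. Pick $p_i \in S_i$; then $\epsilon(p_1)$ and $\epsilon(p_2)$ are $\ch{\mathrm{GL}}_N$-conjugate. I would first conjugate so that the semisimple elements $\lambda, y$ attached to both lie in $\ch H$ and $\Norm_{\ch{\mathrm{GL}}_N}(\ch H)$ respectively, using that $\lambda \in \O$ is $\mathrm{GL}_N$-regular so $\ch L(\lambda) = \ch H$ and the parabolic $\ch P(\lambda) = \ch B$ (up to the relevant chamber). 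Regularity forces the $\ch K_y$-orbit on $\ch G(\lambda)/\ch P(\lambda)$ — a point of a torus quotient — to be a single point, so the orbit $S_i$ is completely pinned down by the $\ch{\mathrm{GL}}_N$-conjugacy class of $(\lambda, y(p_i))$, hence by $S$. Therefore $S_1 = S_2$. Equivalently: the map $\epsilon$ on orbits is injective because a $\ch G$-conjugacy class in $\Norm_{\ch{\mathrm{GL}}_N}(\ch H)$ that becomes $\ch{\mathrm{GL}}_N$-conjugate inside the Levi-type subgroup $\ch G = \Cent_{\ch{\mathrm{GL}}_N}(s)^\circ$ must already have been $\ch G$-conjugate — this is the standard fact that $W(\ch G,\ch H) = W(\ch{\mathrm{GL}}_N,\ch H)^{s}$ acts without further collapsing. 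I expect this to be the cleanest of the three parts, but it is the logical keystone: it collapses the sum in the displayed formula for $\epsilon^*\mu(\xi)$ preceding the proposition to a single term.

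For part (b): with (a) in hand, $\epsilon^*\mu(S,1) = \mu(S_1,1)$ whenever $S = \ch{\mathrm{GL}}_N\cdot\epsilon(S_1)$ and $\epsilon^*\mu(\xi) = 0$ otherwise. Feeding this into the definition \eqref{loweps} of $\epsilon_*$ and using the pairing normalisation $\langle M(\xi),\mu(\xi')\rangle = e(\xi)\delta_{\xi,\xi'}$ — with all Kottwitz signs $e(\xi)=1$ here since every real form in sight is quasisplit $\mathrm{GL}$ — I get $\langle \Lift_0(M(S_1,1)), \mu(\xi)\rangle = \langle M(S_1,1), \epsilon^*\mu(\xi)\rangle_G = \delta_{\xi,(S,1)}$, which by uniqueness of dual bases gives $\Lift_0(\eta^{\mathrm{loc}}_{S_1}(\delta_q)) = \Lift_0(M(S_1,1)) = M(S,1) = \eta^{\mathrm{loc}}_{S}$. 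Injectivity then follows because distinct $S_1$ (with $S_1$ in the image of $\epsilon$-relevant orbits, which by \eqref{samespace} is all of them) go to distinct $S$ by part (a), and the $M(S,1)$ are part of a basis; one also checks that every $S_1$ appearing in $K_{\mathbb C}\Pi(\O_G,G(\mathbb R,\delta_q))^{\mathrm{st}}$ does lie in the image of $\epsilon$ on orbits, which is automatic since $\epsilon$ is defined on all of $X(\O_G,{^\vee}G^\Gamma)$.

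For part (c): I would identify $\Lift_0$ with $\mathrm{ind}_{G(\mathbb R,\delta_q)}^{\mathrm{GL}_N(\mathbb R)}$ by comparing both on the basis $\{\eta^{\mathrm{loc}}_{S_1}(\delta_q) = M(S_1,1)\}$. On the geometric side, by (b) we have $\Lift_0(M(S_1,1)) = M(S,1)$ where $S$ is the $\ch{\mathrm{GL}}_N$-orbit through $\epsilon(S_1)$. On the representation-theoretic side, the orbit $S_1$ corresponds via the local Langlands correspondence to a standard module $M(S_1,1)$ which, because $G$ is a proper Levi-type endoscopic group and the parameter factors through $\ch G \hookrightarrow \ch{\mathrm{GL}}_N$ via $\epsilon$ (which is, up to $\ch{\mathrm{GL}}_N$-conjugacy, the inclusion of a standard Levi's dual), is by construction a standard representation induced from a parabolic with Levi factor $G(\mathbb R,\delta_q)$; the effect of $\epsilon$ on geometric parameters \eqref{epinclusion1} is precisely dual to parabolic induction of Langlands data. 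Concretely, $M(S_1,1)$ is the Langlands quotient datum $(H, \Lambda, \nu)$ read off from $S_1$, and $M(S,1)$ is the same datum read off from $\epsilon(S_1)$; since $H$ is shared and the Langlands data are literally unchanged under $\epsilon$, the standard module for $\mathrm{GL}_N$ attached to $S$ is exactly $\mathrm{ind}$ of the standard module for $G$ attached to $S_1$. So $\Lift_0$ and $\mathrm{ind}$ agree on a basis, hence everywhere. The main obstacle I anticipate is bookkeeping in part (c): making the phrase "$\epsilon$ on geometric parameters is dual to parabolic induction" rigorous requires carefully matching the normalisations in \cite{ABV}*{(7.17), Corollary 6.21} with the classical Langlands parametrisation of induced standard modules, and checking that no $\rho$-shift or half-density twist spoils the identification — but since everything is on the diagonal torus and infinitesimal characters are regular, this should be a direct computation rather than a genuine difficulty.
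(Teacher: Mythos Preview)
Your proposal is correct and follows essentially the same route as the paper. For (a) the paper works directly with the description $S_i = {^\vee}G\cdot(y_i,\mathcal F(\lambda_i))$ from \cite{ABV}*{Definition 6.9}: if $g\in{^\vee}\mathrm{GL}_N$ conjugates one to the other then $\mathrm{Ad}(gg_1)\lambda=\lambda$ for some $g_1\in{^\vee}G$, and $\mathrm{GL}_N$-regularity of $\lambda$ forces $gg_1\in{^\vee}H\subset{^\vee}G$, hence $g\in{^\vee}G$ --- this is exactly your ``regularity forces the conjugator into the torus'' argument, stated more directly than your Weyl-group reformulation (which is not quite the right object to invoke, and your ``torus quotient'' description of $\ch G(\lambda)/\ch P(\lambda)$ is off, though harmless). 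Part (b) is identical to yours; the paper simply cites \cite{ABV}*{Proposition 26.7} for the pairing computation you spell out. For (c) the paper resolves the bookkeeping you anticipate by passing to a \emph{minimal} Levi ${^\vee}G_0\subset{^\vee}G$ containing the image of the $L$-parameter, writing both $M(S_1,1)=\mathrm{ind}_{G_0}^{G}M(S_0,1)$ and $M(S,1)=\mathrm{ind}_{G_0}^{\mathrm{GL}_N}M(S_0,1)$, and concluding by induction in stages --- this is a cleaner way to make your ``Langlands data are literally unchanged under $\epsilon$'' precise.
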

\begin{proof}
By \cite{ABV}*{Definition 6.9} and  \cite{ABV}*{(6.4)},  we may take
${^\vee}G$-orbits
$$S_{1}= {^\vee}G \cdot (y_{1}, \mathcal{F}(\lambda)), \ S_{2} = {^\vee}G \cdot (y_{2}, \mathcal{F}(\mathrm{Ad}(g_{1})\lambda)),$$
where $g_{1} \in {^\vee}G$, $y_{1},y_{2} \in {^\vee}G^{\Gamma}$, and
$$\mathcal{F}(\lambda) = \mathrm{Ad}(P(\lambda)) \lambda, \
\mathcal{F}(\mathrm{Ad}(g_{1})\lambda) = \mathrm{Ad}(g_{1}) (\mathrm{Ad}(P(\lambda))\lambda)$$
for a solvable subgroup $P(\lambda) \subset {^\vee}G$ (\cite{ABV}*{Lemma 6.3}).
By the hypothesis of the first assertion, there exists $g \in {^\vee}\mathrm{GL}_{N}$ such that $gy_{2}g^{-1} = y_{1}$ and
$$\mathrm{Ad}(gg_{1}) (\mathrm{Ad}(P(\lambda))\lambda) = \mathrm{Ad}(P(\lambda))\lambda.$$
In particular, $\mathrm{Ad}(gg_{1}) \lambda = \mathrm{Ad}(g_{2}) \lambda$ for some $g_{2} \in P(\lambda)$.  As $\lambda \in {^\vee}\mathfrak{h}$ is regular the resulting equation
$\mathrm{Ad}(g_{2}^{-1}gg_{1}) \lambda =  \lambda$ implies that
$g_{2}^{-1}gg_{1} \in {^\vee}H \subset {^\vee}G$.  Since $g_{1}, g_{2}
\in {^\vee}G$, this also implies $g \in {^\vee}G$ and the first
assertion is proven.

The second part of the second assertion is  equivalent to
$\epsilon_{*} \eta^{\mathrm{loc}}_{S_{1}}(\delta_{q}) = \eta^{\mathrm{loc}}_{S}$ and
this is proved in \cite{ABV}*{Proposition 26.7}.
Finally, to prove the claim of injectivity, we observe that by the
first assertion $\mathrm{Lift}_{0}$ sends the basis
$$\left\{\eta^{\mathrm{loc}}_{S_{G}}(\delta_{q}) : S_{G} \mbox{ a }
{^\vee}G\mbox{-orbit of } X(\O_{G}, {^\vee}G^{\Gamma})\right\}$$
of $K_{\mathbb{C}} \Pi(\O_{G}, G(\mathbb{R},
\delta_{q}))^{\mathrm{st}}$ bijectively onto the  linearly independent subset
$$\left\{ \eta^{\mathrm{loc}}_{{^\vee}\mathrm{GL}_{N} \cdot S_{G}}: S_{G} \mbox{ a } {^\vee}G\mbox{-orbit of } X(\O_{G}, {^\vee}G^{\Gamma}) \right\}$$
of $K_{\mathbb{C}} \Pi(\O, \mathrm{GL}_{N}(\mathbb{R}))$.

We now prove the third assertion. Since the standard characters form a
    basis for  (\ref{samespace}) and $\mathrm{Lift}_{0}$ is additive,  it suffices to prove
    $\mathrm{Lift}_{0}\left(M(S_{1}, 1)\right) =
    \mathrm{ind}_{G(\mathbb{R},
      \delta_{q})}^{\mathrm{GL}_{N}(\mathbb{R})} M(S_{1},
    1)$.   By the second assertion, this is equivalent to
    proving
 \begin{equation}
\label{indstages}
 \mathrm{ind}_{G(\mathbb{R}, \delta_{q})}^{\mathrm{GL_{N}(\mathbb{R})}} M(S_{1}, 1) =
    M(S, 1).
    \end{equation}
Let us recall the definition of $M(S_{1}, 1)$ using the
Langlands classification \cite{Langlands}.  The ${^\vee}G$-orbit
$S_{1}$ corresponds to a unique ${^\vee}G$-orbit of an L-parameter
$\phi_{G}$ for $G$ (\cite{ABV}*{Proposition 6.17}).  The  image of
$\phi_{G}$ is contained in a  Levi subgroup ${^\vee}G_{0} \subset
{^\vee}G$ minimally (\cite{borel}*{Section 11.3}).  It follows that the
L-parameter $\phi_{G}$ factors through an L-parameter $\phi_{G_{0}}$
for $G_{0}$, and  $\phi_{G_{0}}$ corresponds to a unique
${^\vee}G_{0}$-orbit $S_{0}$ of geometric parameters for $G_{0}$.  The
standard characters are defined so that
$M(S_{1}, 1) = \mathrm{ind}_{G_{0}(\mathbb{R},
  \delta_{q})}^{G(\mathbb{R}, \delta_{q})} M(S_{0}, 1)$
and $M(S, 1) = \mathrm{ind}_{G_{0}(\mathbb{R},
  \delta_{q})}^{\mathrm{GL}_{N}(\mathbb{R})}
M(S_{0}, 1)$ (\cite{ABV}*{(11.2)}, \cite{AV92}*{(8.22)}).
Identity (\ref{indstages}) is therefore a consequence of induction by
stages.
\end{proof}
The proof that $\mathrm{Lift}_{0}\left(\eta^{\mathrm{loc}}_{S_{1}}\right) (\delta_{q}) =
\eta^{\mathrm{loc}}_{S}$ in this proposition follows from an elementary
computation of $\epsilon^{*} \eta^{\mathrm{loc}}_{S}$ (\cite{ABV}*{Proposition 23.7}).  It is much more difficult to compute the value of
$\mathrm{Lift}_{0}$ on the stable virtual character $\eta^{\mathrm{mic}}_{\psi_{G}}$
given in (\ref{etapsi}).   Let $\psi = \epsilon \circ \psi_{G}$.
According to \cite{ABV}*{Theorem 26.25}
\begin{equation}
\label{ordmiclift}
\mathrm{Lift}_{0}\left(\eta^{\mathrm{mic}}_{\psi_{G}}\right)= \sum_{ \xi \in
  \Xi(\O,{^\vee}\mathrm{GL}_{N}^{\Gamma})}
(-1)^{d(S_{\xi}) - d(S_{\psi})}
\,\chi_{S_{\psi}}^{\mathrm{mic}} (P(\xi) ) \,\pi(\xi) =
\eta^{\mathrm{mic}}_{\psi}.
\end{equation}
Recall from (\ref{abvdef}) that the ABV-packets $\Pi^{\mathrm{ABV}}_{\psi_{G}}$ and
$\Pi^{\mathrm{ABV}}_{\psi}$ are defined from $\eta^{\mathrm{mic}}_{\psi_{G}}$
and $\eta^{\mathrm{mic}}_{\psi}$ respectively.  We shall see in Section \ref{glnpacket}
that these ABV-packets are singletons.  Equation (\ref{ordmiclift})
implies that the endoscopic lift of $\Pi^{\mathrm{ABV}}_{\psi_{G}}$ is
$\Pi^{\mathrm{ABV}}_{\psi}$.

\subsection{Twisted endoscopy}
\label{twistendsec}

Following the path of the previous section, we  define  twisted
endoscopic data, the twisted endoscopic version of $\mathrm{Lift}_{0}$ (\ref{endlift}),
compute  twisted variants of $\mathrm{Lift}_{0}
(\eta^{\mathrm{loc}}_{S})$  for $S\in  X(\O_{G}, {^\vee}G^{\Gamma})$, and compute twisted variants of $\mathrm{Lift}_{0} (\eta^{\mathrm{mic}}_{\psi_{G}})$.

An \emph{endoscopic datum} for $({^\vee}\mathrm{GL}_{N}^{\Gamma},
\vartheta)$ is a pair
$$(s, \, {^\vee}G^{\Gamma})$$
which satisfies
\begin{enumerate}
\item $s \in {^\vee}\mathrm{GL}_{N}$ is $\vartheta$-semisimple (see
\cite{KS}*{(2.1.3)})

\item ${^\vee}G^{\Gamma} \subset {^\vee} \mathrm{GL}_{N}^{\Gamma}$ is
  open in the fixed-point set of $\upsigma = \mathrm{Int}(s) \circ
  \vartheta$ in $\GL_N^\Gamma\rtimes\langle \vartheta\rangle$  (\ref{eq:GLDelta})
\item ${^\vee}G^{\Gamma}$ is an E-group for a group $G$ (\cite{ABV}*{Definition 4.6}).
\end{enumerate}
This is a special case of  \cite{Christie-Mezo}*{Definition 5.1} to
${^\vee}\mathrm{GL}_{N}^{\Gamma}$.  There is a notion of equivalence for these
endoscopic data (\cite{Christie-Mezo}*{Definition 5.6},
\cite{KS}*{(2.1.5)-(2.1.6)}). Up to this equivalence the relevant elements $s \in  {^\vee}\mathrm{GL}_{N}$ in Arthur's work are drawn from
     \cite{Arthur}*{Section 1.2} (see \cite{Christie-Mezo}*{Example 5.2} for the
     details in the present setting).
We shall restrict our attention to at most two elements $s$ from
\cite{Arthur}*{Section 1.2}, namely the diagonal matrix
\begin{equation}
\label{simple1}
s =  \begin{bmatrix} 0 & & 1\\
 & \iddots &\\
 1 & &0 \end{bmatrix}
\tilde{J}^{-1} = \small \begin{bmatrix}   1  &  &  &0 \\
 & -1& &  \\
 &  & \ddots  & \\
0  & & & (-1)^{N-1} \end{bmatrix}, \normalsize
\end{equation}
and, if $N$ is even, the diagonal matrix
\begin{equation}
\label{simple2}
s =  \tiny \begin{bmatrix} 0 & & & & & -1\\
& & &  & \iddots &\\
& & & -1& & \\
& & 1& & &\\
& \iddots & & & &\\
 1 & & & & &0 \end{bmatrix} \normalsize
\tilde{J}^{-1}.
\end{equation}
The resulting automorphisms $\upsigma = \mathrm{Int}(s) \circ
\vartheta$ are involutions, and the endoscopic groups $G$ are simple
to describe.  For $s$ as in (\ref{simple1}) and odd $N$, the
endoscopic group is $\mathrm{Sp}_{N-1}$.  For $s$ as in
(\ref{simple1}) and even $N$, the endoscopic group is
$\mathrm{SO}_{N}$.  For $s$ as in (\ref{simple2}), the endoscopic
group is $\mathrm{SO}_{N+1}$. In the first and last cases, the
embeddings of ${^\vee}G^{\Gamma}$ in ${^\vee}\mathrm{GL}_{N}^{\Gamma}$
determine a real structure on $G$ as a split group.   In the second
case there are two distinct embeddings of ${^\vee}G^{\Gamma}$ in
${^\vee}\mathrm{GL}_{N}^{\Gamma}$ which determine either a split real
structure on $G = \mathrm{SO}_{N}$, or a non-split and quasisplit
structure on $\mathrm{SO}_{N}$ (\cite{Arthur}*{Section 1.2}).  We henceforth
work only with $s$ as in
(\ref{simple1})-(\ref{simple2}), the so-called \emph{simple}
endoscopic data.  As in the previous
section, we will not require the concept of an E-group for these
classical endoscopic groups.  From now on we assume that ${^\vee}G^{\Gamma}$
 is an
L-group as in the end of Section \ref{extended}.

Unlike the previous section, we must distinguish between maximal tori
in ${^\vee}G$ and ${^\vee}\mathrm{GL}_{N}$.   We let ${^\vee}H$ be the
diagonal maximal torus in ${^\vee}\mathrm{GL}_{N}$, and ${^\vee}H_{G}$
be  a maximal torus in ${^\vee}G$.
In this case ${^\vee}H$ is preserved by $\mathrm{Int}(s)$ as well as
$\vartheta$.  We may and shall take
$${^\vee}H_{G} =
({^\vee}H)^{\upsigma} = ({^\vee}H)^{\vartheta}.$$

We fix $\lambda \in
{^\vee}\mathfrak{h}$ satisfying (\ref{regintdom}) so that $\lambda$ is
regular with respect to
${^\vee}\mathrm{GL}_{N}$. Let $\O_{G}$ be the
${^\vee}G$-orbit of
$\lambda$ and $\O$ be the ${^\vee}\mathrm{GL}_{N}$-orbit of
$\lambda$.

The $\epsilon$ maps of (\ref{epinclusion})-(\ref{epinclusion1}) have
obvious analogues and are equally valid in the twisted setting.  The
proof of the injectivity of $\epsilon$ on the level of
${^\vee}G$-orbits is more delicate in the twisted setting.
\begin{prop}
  \label{twistinj}
In the setting of (\ref{regintdom}),
 suppose $S_{1}, S_{2} \subset X(\O_{G}, {^\vee}G^{\Gamma})$
 are ${^\vee}G$-orbits which are carried to the same
 ${^\vee}\mathrm{GL}_{N}$-orbit under $\epsilon$.  Then $S_{1} =
 S_{2}$.
\end{prop}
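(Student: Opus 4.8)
The strategy is to adapt the argument of Proposition \ref{injliftord}(a) to the twisted setting, the essential point being that the $\epsilon$-maps are built out of the same data $(\lambda, y, \mathcal{F}(\lambda))$ as in the untwisted case, but now the relevant group of ambient symmetries is $({^\vee}\mathrm{GL}_{N} \rtimes \langle \vartheta \rangle)$ acting through $\upsigma = \mathrm{Int}(s) \circ \vartheta$, rather than $\,{^\vee}\mathrm{GL}_{N}$ alone. First I would unwind the definition of $X(\O_{G}, {^\vee}G^{\Gamma})$ following \cite{ABV}*{Definition 6.9} and \cite{ABV}*{(6.4)}, writing the two ${^\vee}G$-orbits as
$$S_{1} = {^\vee}G \cdot (y_{1}, \mathcal{F}(\lambda)), \quad S_{2} = {^\vee}G \cdot (y_{2}, \mathcal{F}(\mathrm{Ad}(g_{1})\lambda)),$$
with $g_{1} \in {^\vee}G$, $y_{1}, y_{2} \in {^\vee}G^{\Gamma}$, and $\mathcal{F}(\lambda) = \mathrm{Ad}(P_{G}(\lambda))\lambda$ for the appropriate solvable subgroup $P_{G}(\lambda) \subset {^\vee}G$ (\cite{ABV}*{Lemma 6.3}). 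The hypothesis that $\epsilon(S_{1})$ and $\epsilon(S_{2})$ coincide as ${^\vee}\mathrm{GL}_{N}$-orbits gives an element $g \in {^\vee}\mathrm{GL}_{N}$ conjugating the $\mathrm{GL}_{N}$-version of the data for $S_{2}$ to that for $S_{1}$; in particular $g y_{2} g^{-1} = y_{1}$ and $\mathrm{Ad}(g g_{1})(\mathrm{Ad}(P(\lambda))\lambda) = \mathrm{Ad}(P(\lambda))\lambda$, where $P(\lambda) \subset {^\vee}\mathrm{GL}_{N}$.

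Next, exactly as in Proposition \ref{injliftord}(a), the regularity assumption (\ref{regintdom}) on $\lambda$ is what does the work: from $\mathrm{Ad}(g g_{1})\lambda = \mathrm{Ad}(g_{2})\lambda$ for some $g_{2} \in P(\lambda)$ one deduces $g_{2}^{-1} g g_{1} \in {^\vee}H = \Cent_{{^\vee}\mathrm{GL}_{N}}(\lambda)$, and hence $g \in {^\vee}H \cdot P(\lambda) \cdot {^\vee}G$-type relations begin to pin $g$ down. The twist enters because the relation $g y_{2} g^{-1} = y_{1}$ must be analyzed inside ${^\vee}\mathrm{GL}_{N}^{\Gamma}$ rather than ${^\vee}G^{\Gamma}$: here I would use that ${^\vee}H_{G} = ({^\vee}H)^{\upsigma} = ({^\vee}H)^{\vartheta}$ and that $y_{1}, y_{2}$ normalize the relevant tori, together with the structure of $\upsigma$-fixed points, to show that $g$ can be adjusted by an element of ${^\vee}H$ so that it lies in $\Cent_{{^\vee}\mathrm{GL}_{N}}(s) \cap (\text{relevant normalizer})$, and therefore in ${^\vee}G$. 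Once $g \in {^\vee}G$ is established, $S_{1} = S_{2}$ follows immediately.

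The main obstacle, and the place where the twisted argument genuinely diverges from the untwisted one, is controlling the position of the conjugating element $g$ relative to the $\upsigma$-fixed subgroup ${^\vee}G$ inside ${^\vee}\mathrm{GL}_{N}$. In the untwisted case (Proposition \ref{injliftord}(a)) the endoscopic group shares the same maximal torus ${^\vee}H$ with ${^\vee}\mathrm{GL}_{N}$ and the centralizer computation is essentially immediate; here ${^\vee}H_{G}$ is a \emph{proper} subtorus $({^\vee}H)^{\vartheta}$, so one must rule out the possibility that $g$ mixes the $\upsigma$-fixed and $\upsigma$-anti-fixed directions. I expect this to reduce to a careful bookkeeping argument using the explicit form of the simple endoscopic data $s$ in (\ref{simple1})--(\ref{simple2}): because $\upsigma$ is an involution and $s$ acts on ${^\vee}H$ by an order-two permutation-with-signs of the coordinates, the centralizer $\Cent_{{^\vee}\mathrm{GL}_{N}}(s \vartheta)$ meets $\Norm_{{^\vee}\mathrm{GL}_{N}}({^\vee}H)$ in a group whose image in $W({^\vee}\mathrm{GL}_{N}, {^\vee}H)$ is precisely $W({^\vee}G, {^\vee}H_{G})$, and this is enough to force $g \in {^\vee}G$ after the torus adjustment. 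I would also invoke \cite{ABV}*{(7.17)} and \cite{ABV}*{Corollary 6.21} to make precise the compatibility of the $\epsilon$-maps with the respective group actions, and \cite{Christie-Mezo}*{Section 5.4} for the $\upsigma$-equivariance statements underlying the construction. Once this torus-normalizer computation is in hand, the proof concludes exactly as in Proposition \ref{injliftord}(a).
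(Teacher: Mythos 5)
Your reduction via regularity is the same first step as the paper's: both arguments use (\ref{regintdom}) to conclude that the ambient conjugator $g$ has the form $h g_1^{-1}$ with $h \in {^\vee}H$ and $g_1 \in {^\vee}G$, so that everything comes down to replacing a torus element $h$ conjugating $n_2 = g_1^{-1}\phi_2(\mathsf{j})g_1$ to $n_1 = \phi_1(\mathsf{j})$ by an element of ${^\vee}H^{\vartheta} \subset {^\vee}G$ doing the same job. You correctly identify that this is where the twisted case diverges from Proposition \ref{injliftord}(a), but your proposed resolution of it does not work, and this is a genuine gap. The obstruction is not a Weyl-group phenomenon at all: $h$ lies in the torus, so it has trivial image in $W({^\vee}\mathrm{GL}_N,{^\vee}H)$, and knowing that $\Cent_{{^\vee}\mathrm{GL}_N}(s\vartheta)\cap\Norm_{{^\vee}\mathrm{GL}_N}({^\vee}H)$ has a prescribed Weyl image says nothing about whether the specific conjugation effected by $h \in {^\vee}H$ can instead be effected by some $h' \in {^\vee}H^{\vartheta}$. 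Moreover the concrete claim you make is false in general: for $G \cong \mathrm{SO}_N$ with $N$ even the fixed-point group of $\upsigma$ is the disconnected group $\mathrm{O}_N$, and the image of its torus normalizer in the Weyl group is the full hyperoctahedral group, strictly larger than $W({^\vee}G,{^\vee}H_G)$ of type $D$; and in any case your argument would only place $g$ in $({^\vee}\mathrm{GL}_N)^{\upsigma}\cdot{^\vee}H$, which is not contained in ${^\vee}G$. Note also that there is no reason for the conjugator $g$ to be $\upsigma$-fixed even after adjustment by elements of ${^\vee}H$; the only allowed adjustments are by the centralizer of the parameter, i.e.\ by $({^\vee}H)^{\mathrm{Int}(n_1)}$, so asserting that $g$ can be pushed into $\Cent_{{^\vee}\mathrm{GL}_N}(s\vartheta)$ is essentially restating the proposition rather than proving it.

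The paper's proof fills exactly this hole by a torus-level computation: writing $h = h_1 h_2$ with $h_1 \in {^\vee}H^{\vartheta}$ and $h_2 \in {^\vee}H^{-\vartheta}$, it shows the commutator $h_2 n_2 h_2^{-1} n_2^{-1}$ lies simultaneously in ${^\vee}H^{\vartheta}$ and ${^\vee}H^{-\vartheta}$, hence in the finite $2$-group ${^\vee}H^{\vartheta}\cap{^\vee}H^{-\vartheta}$, and then uses the explicit structure of $\mathrm{Int}(n_2)$ (a $\vartheta$-invariant product of $2$-cycles acting on a torus whose relevant elements are diagonal with entries $\pm1$) to produce $h_2'$ in that $2$-group with $h_2' n_2 (h_2')^{-1}n_2^{-1} = h_2 n_2 h_2^{-1} n_2^{-1}$; then $h' = h_1 h_2' \in {^\vee}H^{\vartheta}$ and $g' = h'g_1^{-1} \in {^\vee}G$. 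If you want to salvage your outline, this is the bookkeeping argument you would need to supply in place of the Weyl-group assertion.
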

\begin{proof}
By \cite{ABV}*{Proposition 6.17}, the ${^\vee}G$-orbits of
$X(\O_{G}, {^\vee}G^{\Gamma})$ are in one-to-one
correspondence with ${^\vee}G$-orbits of L-parameters with
infinitesimal character in $\O_{G}$.  It is simpler to prove
the proposition in the more familiar territory of L-parameters.
To this end let $\phi_{1} : W_{\mathbb{R}} \rightarrow
{^\vee}G^{\Gamma}$ and $\phi_{2}: W_{\mathbb{R}} \rightarrow
{^\vee}G^{\Gamma}$ be L-parameters with infinitesimal characters
$\lambda_{1} \in \O_{G}$ and $\lambda_{2} \in
\O_{G}$ respectively.    Suppose further that
\begin{equation}
  \label{gconj}
 \phi_{1}=  \mathrm{Int}(g) \circ \phi_{2}
  \end{equation}
for some $g \in {^\vee}\mathrm{GL}_{N}$.
We shall prove that there exists $g' \in {^\vee}G$ such that $\phi_{1}
= \mathrm{Int}(g') \circ \phi_{2}$.
Without loss of generality we may assume that $\lambda_{1} = \lambda
\in {^\vee}\mathfrak{h}$ and $\lambda_{2} = \mathrm{Ad}(g_{1}) \lambda
\in {^\vee}\mathfrak{h}$ for some $g_{1} \in {^\vee}G$ normalizing
${^\vee}H$.  Moreover, we may assume that $\phi_{1}(\mathsf{j})$ and
$\phi_{2}(\mathsf{j})$ normalize ${^\vee}H$.  As in \cite{ABV}*{Proposition 5.6},
$$\phi_{1} (z) = z^{\lambda}
\bar{z}^{\mathrm{Ad}(\phi_{1}(\mathsf{j}))\lambda}, \quad \phi_{2} (z) =
z^{\mathrm{Ad}(g_{1}) \lambda}
\bar{z}^{\mathrm{Ad}(\phi_{2}(\mathsf{j})g_{1})\lambda}, \quad z \in
\mathbb{C}^{\times}.$$
By hypothesis
$$z^{\mathrm{Ad}(gg_{1}) \lambda}
\bar{z}^{\mathrm{Ad}(g\phi_{2}(\mathsf{j})g_{1})\lambda} = \mathrm{Int}(g)
\circ \phi_{2}(z)= \phi_{1}(z) = z^{\lambda}
\bar{z}^{\mathrm{Ad}(\phi_{1}(\mathsf{j}))\lambda}.$$
It follows that $\mathrm{Ad}(gg_{1}) \lambda = \lambda$.  Since
$\lambda$ is regular in ${^\vee}\mathrm{GL}_{N}$,  the element
$gg_{1}$ belongs to ${^\vee}H$ and $g = hg_{1}^{-1}$ for some $h \in
{^\vee}H$.
Returning to the identity (\ref{gconj}), we obtain
$$h g_{1}^{-1} \phi_{2}(\mathsf{j}) g_{1}h^{-1}= \phi_{1}(\mathsf{j}).$$
Set $n_{1}  = \phi_{1}(\mathsf{j})$ and $n_{2} = g_{1}^{-1} \phi_{2}(\mathsf{j}) g_{1}$,
so that the previous equation becomes
$$h n_{2} h^{-1} = n_{1}.$$
As $\epsilon$ maps into the fixed-point set of $\upsigma = \mathrm{Int}(s) \circ
\vartheta$, both $n_{1}, n_{2}$ are fixed by $\upsigma$.
Since $n_{1}$ and $n_{2}$ normalize ${^\vee}H$, they represent
involutive elements in the
fixed-point subgroup $W({^\vee}\mathrm{GL}_{N}, {^\vee}H)^{s\vartheta}
= W({^\vee}\mathrm{GL}_{N}, {^\vee}H)^{\vartheta}$.  We seek an
element $h' \in {^\vee}H^{\vartheta} \subset {^\vee}G$ such that
\begin{equation}
\label{seek}
h' n_{2} \, (h')^{-1} = h n_{2} h^{-1} = n_{1}.
\end{equation}
With this element in hand we may set $g' = h' g_{1}^{-1}$ and
the proposition is proved.
Since ${^\vee}H = {^\vee}H^{\vartheta} \, {^\vee}H^{-\vartheta}$, we
may decompose $h = h_{1}h_{2}$, where $h_{1} \in {^\vee}H^{\vartheta}$
and $h_{2} \in{^\vee}H^{-\vartheta}$. We compute that
\begin{equation}
  \label{hdecomp}
  (h_{1}n_{2}h_{1}^{-1}n_{2}^{-1}) (h_{2}n_{2}h_{2}^{-1}n_{2}^{-1})
 = h_{1} (h_{2}n_{2}h_{2}^{-1}n_{2}^{-1})(n_{2}h_{1}^{-1}n_{2}^{-1}) =
 n_{1}n_{2}^{-1} \in {^\vee}H^{s\vartheta} =
{^\vee}H^{\vartheta}.
\end{equation}
In addition, since $n_{1}, n_{2} \in W({^\vee}\mathrm{GL}_{N},
{^\vee}H)^{\vartheta}$, we have
$$\vartheta(h_{2}n_{2} h_{2}^{-1} n_{2}^{-1})  =
\vartheta(h_{2}) n_{2} \vartheta(h_{2}^{-1})n_{2}^{-1} = h_{2}^{-1} n_{2} h_{2}
n_{2}^{-1} = (h_{2} n_{2} h_{2}^{-1} n_{2}^{-1})^{-1}.$$
Therefore $ h_{2} n_{2} h_{2}^{-1}n_{2}^{-1} \in
{^\vee}H^{-\vartheta}$.   Similarly, $ h_{1} n_{2}
h_{1}^{-1} n_{2}^{-1} \in {^\vee}H^{\vartheta}$.   By (\ref{hdecomp}) we have
$ h_{2} n_{2} h_{2}^{-1}n_{2}^{-1} \in {^\vee}H^{-\vartheta} \cap
{^\vee}H^{\vartheta}$.   It is now an elementary exercise to show that
there exists $h_{2}'$ in the finite $2$-group ${^\vee}H^{-\vartheta}
\cap {^\vee}H^{\vartheta}$ such that
$$ h_{2}' n_{2} (h_{2}')^{-1}n_{2}^{-1} =  h_{2} n_{2} h_{2}^{-1}n_{2}^{-1}.$$
Indeed, conjugation by $n_{2}$ is represented by a
$\vartheta$-invariant product of $2$-cycles when
$W({^\vee}\mathrm{GL}_{N}, {^\vee}H)^{\vartheta}$ is regarded as a
subgroup of the symmetric group, and the elements in
${^\vee}H^{-\vartheta} \cap {^\vee}H^{\vartheta}$ are represented by
diagonal elements with $\pm1$ as entries.
Leaving the details of the exercise to the reader, we set $h' =
h_{1}h_{2}'$ and (\ref{seek}) holds.

\end{proof}

At this point the picture of twisted endoscopy is more or less the
same as the picture of standard endoscopy.  The new idea in the
twisted setting is to include the action of
$\upsigma = \mathrm{Int}(s) \circ \vartheta$ into the objects
pertinent to endoscopy.  In particular we
wish to extend the sheaf theory of \cite{ABV} for
${^\vee}\mathrm{GL}_{N}$ to the disconnected group
${^\vee}\mathrm{GL}_{N} \rtimes \langle \upsigma \rangle$, where we
identify the automorphism $\upsigma$ of
(\ref{sigmaaut}) with the automorphism in the endoscopic datum. This
mimics the extension of the representation theory of $\mathrm{GL}_{N}$
to the disconnected group $\mathrm{GL}_{N} \rtimes \langle \vartheta
\rangle$ in Section \ref{extrepsec}.  Rather than viewing the sheaves
in $\mathcal{C}(\O, {^\vee}\mathrm{GL}_{N}^{\Gamma}
;\upsigma)$ as ${^\vee}\mathrm{GL}_{N}$-equivariant with compatible
$\upsigma$-action (Section \ref{conperv}), we view them simply as $({^\vee}\mathrm{GL}_{N}
\rtimes \langle \upsigma \rangle)$-equivariant sheaves and apply the
theory of \cite{ABV} which is valid in this generality
\cite{Christie-Mezo}*{Section 5.4}.

Let $\xi = (S, 1)  \in \Xi(\O, {^\vee}
\mathrm{GL}_{N}^{\Gamma})^\vartheta$ and $(p,1)$ be a
representative for the class $\xi$.   Here, $p \in S$ and
$1$ is the trivial representation  of the trivial group
${^\vee} (\mathrm{GL}_{N})_{p}/ ({^\vee} (\mathrm{GL}_{N})_{p} )^{0}$
with representation space $V \cong \mathbb{C}$ as in (\ref{bundle}).
We define $1^{+}$ on
\begin{equation}
\label{twistglncompgp}
{^\vee}(\mathrm{GL}_{N})_{p} /({^\vee} (\mathrm{GL}_{N})_{p} )^{0}
\times \langle \upsigma \rangle
\end{equation}
 by
\begin{equation}
\label{oneextend}
1^{+} (\upsigma)  = \upsigma_{\mu(\xi)^{+} }= 1
\end{equation}
(\emph{cf.} (\ref{expaut})).
In this way, $1^{+}$ defines the local system underlying the
irreducible $({^\vee}\mathrm{GL}_{N} \rtimes \langle \upsigma
\rangle)$-equivariant constructible sheaf $\mu(\xi)^{+}$ (Lemma
\ref{cansheaf},   \cite{ABV}*{\emph{p.} 83}).

In a similar, but completely vacuous, fashion we may include  the
trivial action of $\upsigma$ on $\mu(\xi_{1}) \in
\mathcal{C}(\O_{G}, {^\vee}G)$ with $\xi_{1} = (S_{1},
\tau_{1})$ and $p_{1} \in S_{1}$.  In other words, we may regard
$\mu(\xi_{1})$ as a $({^\vee}G \times \langle \upsigma
\rangle)$-equivariant sheaf whose underlying local system is defined
by a quasicharacter $\tau_{1}^{+}$ on
\begin{equation}
\label{twistcompgroup}
{^\vee}G_{p_{1}}/ ({^\vee} G_{p_{1}})^{0} \times \langle \upsigma \rangle
\end{equation}
 by $\tau_{1}^{+}(\upsigma) = 1$.
 This artifice allows us to define a homomorphism,  as in (\ref{epinclusion2}), by
  \begin{equation*}
 \label{epinclusion3}
 A^{\mathrm{loc}}(\epsilon):  {^\vee}G_{p_{1}}/({^\vee}G_{p_{1}})^{0} \times
 \langle \upsigma \rangle \rightarrow \,
 ({^\vee}\mathrm{GL}_{N})_{\epsilon(p_{1})}  / (
 (\,{^\vee}\mathrm{GL}_{N})_{\epsilon(p_{1})})^{0} \times \langle
 \upsigma \rangle.
\end{equation*}

The inverse image functor (\ref{inverseeps}) in the twisted setting is
defined on  $({^\vee}\mathrm{GL}_{N} \rtimes \langle \upsigma
\rangle)$-equivariant sheaves (Section \ref{conperv})
$$\epsilon^{*}:  KX(\O, {^\vee}\mathrm{GL}_{N}^{\Gamma},
\upsigma) \rightarrow KX(\O_{G}, {^\vee}G^{\Gamma}) $$
in the following manner.  If the orbit $S$  in $\xi = (S, 1)
\in \Xi(\O, {^\vee} \mathrm{GL}_{N}^{\Gamma})^\vartheta$ is
not the image of an orbit of $X(\O_{G}, {^\vee}G^{\Gamma})$
under $\epsilon$ then $\epsilon^{*} \mu(\xi) = 0$.  Otherwise $S$  is
the ${^\vee}\mathrm{GL}_{N}$-orbit of $\epsilon(p_{1}) \in S$ for some
$p_{1} \in S_{1} \subset X(\O_{G}, {^\vee}G^{\Gamma})$.
By Proposition \ref{twistinj} the orbit $S_{1}$ is unique.  The stalk of the
constructible sheaf $\epsilon^{*} \mu(\xi)^{+}$ at $p_{1}$ is  the
stalk $V \cong \mathbb{C}$ of (\ref{bundle}) at $p = \epsilon(p_{1})$. The
representation on $V$ is given by the quasicharacter $1^{+}
\circ A^{\mathrm{loc}}(\epsilon)$, which is again the (abusively denoted)
trivial quasicharacter $1^{+}$ on the group
(\ref{twistcompgroup}).   In summary,
\begin{equation}
\label{epsrest1}
\epsilon^{*} \mu(\xi)^{+}  = \epsilon^{*} \mu(S, 1^{+})=
\mu(S_{1}, 1^{+}).
\end{equation}
By contrast $\mu(\xi)^{-}$ is characterized by the quasicharacter
$1^{-}$ of (\ref{twistglncompgp}) with
$1^{-}(\upsigma) = -1$.
With some
obvious substitutions we obtain
$$\epsilon^{*} \mu(\xi)^{-}  = \epsilon^{*} \mu(S, 1^{-})=
\mu(S_{1}, 1^{-}).$$

As in standard endoscopy, we combine $\epsilon^{*}$ with a pairing,
namely the pairing of Theorem \ref{twistpairing}, to define
$$\epsilon_{*}:  K_{\mathbb{C}} \Pi ( \O_{G} ,G/ \mathbb{R}
) \rightarrow K_{\mathbb{C}} \Pi(\O,
\mathrm{GL}_{N}(\mathbb{R}), \vartheta).$$
To be precise, the image of  any $\eta \in K_{\mathbb{C}}
\Pi(\O_{G}, G/\mathbb{R})$ under  $\epsilon_{*}$ is
determined by
\begin{equation}
\label{twistloweps}
\left\langle \epsilon_{*} \eta, \mu(\xi)^{+}\right \rangle = \left\langle
\eta, \epsilon^{*} \mu(\xi)^{+}\right\rangle_{G}, \quad \xi \in
\Xi(\O,{^\vee}\mathrm{GL}_{N}^{\Gamma})^\vartheta.
\end{equation}
(\emph{cf.} (\ref{loweps})).  The twisted endoscopic lifting map
\begin{equation}
\label{twistendlift}
\mathrm{Lift}_{0}: K_{\mathbb{C}} \Pi(\O_{G}, G(\mathbb{R},
\delta_{q}))^{\mathrm{st}} \rightarrow K_{\mathbb{C}} \Pi(\O,
\mathrm{GL}_{N}(\mathbb{R}), \vartheta)
\nomenclature{$\mathrm{Lift}_{0}$}{twisted endoscopic lifting map}
\end{equation}
is the restriction of $\epsilon_{*}$ to $K_{\mathbb{C}}
\Pi(\O_{G}, G(\mathbb{R}, \delta_{q}))^{\mathrm{st}}$, a proper
subspace of $K_{\mathbb{C}}(\O_{G},G/ \mathbb{R})$
(\emph{cf.} (\ref{samespace})).  The pairing on
the  right-hand side of (\ref{twistloweps}) is determined by pairing
representations of  $G(\mathbb{R}, \delta_{q})$  with elements of
the form $\mu(S_{1}, \tau_{1}^{\pm})$.  This is defined by
$$\left\langle M(S_{1}, \tau_{1}), \mu(S_{1}, \tau_{1}^{\pm}) \right\rangle_{G} =
\pm  1,\quad \mbox{  and  }\quad \left\langle M(S_{1}', \tau_{1}'), \mu(S_{1},
\tau_{1}^{\pm}) \right\rangle_{G} = 0 $$
when $\tau_{1}' \neq \tau_{1}$.  (For a more conceptual explanation of
these pairings see \cite{Christie-Mezo}*{ \emph{p.} 151}.)

Now, we wish to compute $\mathrm{Lift}_{0}$ on the basis elements
(\ref{etaloc}) of $K_{\mathbb{C}} \Pi(\O_{G}, G(\mathbb{R},
\delta_{q}))^{\mathrm{st}}$.  To maintain ease of comparison with \cite{ABV} we
compute $\mathrm{Lift}_{0}$ on the virtual representations
$\eta_{S_{1}}^{\mathrm{loc}}(\upsigma)(\delta_{q})$ (\cite{ABV}*{\emph{p.} 279}). These virtual characters are defined by
\begin{equation*}
\label{etaloc1}
\eta_{S_{1}}^{\mathrm{loc}}(\upsigma)(\delta_{q}) = \sum_{\tau_{1} }
\mathrm{Tr}( \tau_{1}^{+}(\upsigma)) \, M(S_{1}, \tau_{1}) =
\sum_{\tau_{1} } M(S_{1}, \tau_{1}),
\nomenclature{$\eta_{S_{1}}^{\mathrm{loc}}(\upsigma)(\delta_{q})$}{}
\end{equation*}
where $\tau_{1}$ runs over all quasicharacters of ${^\vee}G_{p_{1}}/
({^\vee}G_{p_{1}})^{0}$ as in (\ref{twistcompgroup}) which correspond
to the strong involution $\delta_{q}$ (\cite{ABV}*{Definition 18.9}).
It is immediate from the definitions that
\begin{equation*}
\label{nosigmaloc}
\eta_{S_{1}}^{\mathrm{loc}}(\upsigma)(\delta_{q}) =
\eta_{S_{1}}^{\mathrm{loc}}(\delta_{q})
\end{equation*}
and so this virtual character is stable (\cite{ABV}*{Lemma 18.10}).
(Although not needed for our purposes, one could
adhere to the framework of \cite{ABV} further by extending \cite{ABV}*{Definitions 26.10 and 26.13} through taking products with $\langle
\upsigma \rangle$, and then speak of $\upsigma$-stability.)
\begin{prop}
\label{twistimlift}
Suppose $S_{1} \subset X(\O_{G}, {^\vee}G^{\Gamma})$ is a
${^\vee}G$-orbit which is carried to a ${^\vee}\mathrm{GL}_{N}$-orbit
$S$ under $\epsilon$. Then the endoscopic lifting map
(\ref{twistendlift})  sends $\eta^{\mathrm{loc}}_{S_{1}}(\upsigma)(\delta_{q})
$ to $(-1)^{l^{I}(S, 1) - l^{I}_{\vartheta} (S, 1)}
\, M(S, 1)^{+}$.
\end{prop}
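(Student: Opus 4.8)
\textbf{Proof proposal for Proposition \ref{twistimlift}.}

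The plan is to mimic the proof of Proposition \ref{injliftord}(b) (which itself relies on \cite{ABV}*{Proposition 26.7}), replacing the ordinary pairing with the twisted pairing of Theorem \ref{twistpairing}, and carefully tracking the extra sign $(-1)^{l^I(\xi)-l^I_\vartheta(\xi)}$ that was inserted into the definition of the twisted pairing \eqref{pairdef2}. First I would unwind the definition of $\mathrm{Lift}_0(\eta^{\mathrm{loc}}_{S_1}(\upsigma)(\delta_q))$ via \eqref{twistloweps}: for every $\xi = (S',1)\in\Xi(\O,\LGL)^\vartheta$ we must compute $\langle \eta^{\mathrm{loc}}_{S_1}(\upsigma)(\delta_q), \epsilon^*\mu(\xi)^+\rangle_G$. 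Using \eqref{epsrest1}, this equals $\langle \eta^{\mathrm{loc}}_{S_1}(\upsigma)(\delta_q), \mu(S_1',1^+)\rangle_G$ when $S'$ is the image of a (unique, by Proposition \ref{twistinj}) $\ch G$-orbit $S_1'$ under $\epsilon$, and vanishes otherwise. Since $\eta^{\mathrm{loc}}_{S_1}(\upsigma)(\delta_q) = \eta^{\mathrm{loc}}_{S_1}(\delta_q) = M(S_1,1)$ (the component groups being trivial, as noted after \eqref{etaloc}), the untwisted-style computation in \cite{ABV}*{Proposition 23.7}/\cite{ABV}*{Proposition 26.7} gives that this pairing is nonzero — and equals $\pm 1$ by the normalization $\langle M(S_1,\tau_1),\mu(S_1,\tau_1^\pm)\rangle_G = \pm 1$ — precisely when $S_1' = S_1$, i.e. when $S' = S$. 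Hence $\mathrm{Lift}_0(\eta^{\mathrm{loc}}_{S_1}(\upsigma)(\delta_q))$ is a scalar multiple of the twisted character attached to $M(S,1)^+$, and it remains only to pin down that scalar.

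To identify the scalar, I would expand $\mathrm{Lift}_0(\eta^{\mathrm{loc}}_{S_1}(\upsigma)(\delta_q))$ in the basis $\{M(\xi)^+\}$ of $K\Pi(\O,\GL_N(\R),\vartheta)$ and use that the twisted pairing \eqref{pairdef2} satisfies $\langle M(\xi)^+,\mu(\xi')^+\rangle = (-1)^{l^I(\xi)-l^I_\vartheta(\xi)}\delta_{\xi,\xi'}$. Writing $\mathrm{Lift}_0(\eta^{\mathrm{loc}}_{S_1}(\upsigma)(\delta_q)) = \sum_{\xi} a_\xi\, M(\xi)^+$, pairing both sides against $\mu((S,1))^+$ gives $a_{(S,1)}\,(-1)^{l^I(S,1)-l^I_\vartheta(S,1)} = \langle M(S_1,1),\mu(S_1,1^+)\rangle_G = 1$ (the last equality being the untwisted endoscopic computation, with the Kottwitz sign $e(S_1,1) = 1$ since $\delta_q$ is the split form and the relevant sign normalization is trivial here — this is where I would invoke \cite{ABV}*{Proposition 23.7}); and pairing against $\mu(\xi')^+$ for $\xi'\ne (S,1)$ gives $a_{\xi'} = 0$ by the vanishing discussed above. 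Therefore $a_{(S,1)} = (-1)^{l^I(S,1)-l^I_\vartheta(S,1)}$ and all other coefficients vanish, which is exactly the claimed formula $\mathrm{Lift}_0(\eta^{\mathrm{loc}}_{S_1}(\upsigma)(\delta_q)) = (-1)^{l^I(S,1)-l^I_\vartheta(S,1)}\,M(S,1)^+$.

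The step I expect to be the main obstacle is the clean verification that the ``untwisted part'' of the twisted computation of $\langle \eta^{\mathrm{loc}}_{S_1}(\upsigma)(\delta_q),\mu(S_1,1^+)\rangle_G$ really does reduce to the value $1$ (and not some other sign), i.e. reconciling the sign conventions of \cite{ABV}*{Section 23, 26} for $\epsilon^*$ of a pseudopacket sum with the conventions used here for $\eta^{\mathrm{loc}}$ and for the pairing normalization $\langle M(S_1,\tau_1),\mu(S_1,\tau_1^\pm)\rangle_G=\pm1$. In particular one must check that the $\upsigma$-action enters only through $\mathrm{Tr}(1^+(\upsigma)) = 1$ in the definition of $\eta^{\mathrm{loc}}_{S_1}(\upsigma)(\delta_q)$ and through $1^+(\upsigma)=1$ in $\mu(S_1,1^+)$, so that the $\upsigma$-decoration contributes no additional sign — this is genuinely vacuous here because all the component groups in play are trivial, but it needs to be said. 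A secondary, more bookkeeping-heavy point is confirming that $\epsilon$ does carry $S_1$ to $S$ with the trivial local system going to the trivial local system (so that $\epsilon^*\mu((S,1))^+ = \mu(S_1,1^+)$ with no multiplicity), which follows from \eqref{epsrest1} together with the triviality of $A^{\mathrm{loc}}(\epsilon)$ on the (trivial) component groups, but should be cited explicitly. Once these conventions are matched, the argument is a short formal computation.
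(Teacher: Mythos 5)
Your proposal is correct and is essentially the paper's own argument: one computes $\langle \mathrm{Lift}_{0}(\eta^{\mathrm{loc}}_{S_{1}}(\upsigma)(\delta_{q})),\mu(\xi)^{+}\rangle$ for every $\xi$, finding the value $1$ exactly at $\xi=(S,1)$ and $0$ otherwise, and then reads off the coefficient of $M(S,1)^{+}$ from the sign $(-1)^{l^{I}(S,1)-l^{I}_{\vartheta}(S,1)}$ built into (\ref{pairdef2}). The only (cosmetic) difference is that the paper deliberately phrases the computation without invoking the orbit injectivity of Proposition \ref{twistinj}, allowing a sum over possible preimage orbits $S_{1}'$ instead of (\ref{epsrest1}), precisely so that the same proof can be reused at singular infinitesimal character in Section \ref{equalapacketsing}; your appeal to (\ref{epsrest1}) is legitimate under the standing regularity hypothesis (\ref{regintdom}).
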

\begin{proof}
We prove the proposition without the injectivity of orbits given in
Lemma \ref{twistinj}.  Not assuming injectivity, instead of
equation (\ref{epsrest1}), we see that
$$\epsilon^{*} \mu(S, 1)^{+} = \sum_{S_{1}'} \mu(S_{1}',
1) = \mu(S_{1}, 1) + \sum_{S_{1}' \neq S_{1}}
\mu(S_{1}', 1),$$
where $S_{1}'$ runs over the $^{\vee}G$-orbits in $X(\O_{G},
{^\vee}G^{\Gamma})$ carried to $S'$ under $\epsilon$ (\emph{cf.}
\cite{ABV}*{Proposition 23.7 (b)}).   Therefore, according to
(\ref{pairdef2}),  when $\xi = (S,1)$
\begin{align*}
\left\langle \mathrm{Lift}_{0}\left(\eta^{\mathrm{loc}}_{S_{1}}
(\upsigma)(\delta_{q})\right), \mu(\xi)^{+} \right\rangle & =
\left\langle \eta^{\mathrm{loc}}_{S_{1}} (\upsigma)(\delta_{q}), \, \epsilon^{*}
\mu(S,1)^{+} \right\rangle_{G}\\
& = \left\langle \sum_{\tau_{1} } M(S_{1}, \tau_{1}), \mu(S_{1}, 1)\right \rangle_{G} \\
& = 1
\end{align*}
Now suppose $\xi = (S', 1)$ where $S'$ is a
${^\vee}\mathrm{GL}_{N}$-orbit not equal to $S$.  Then
$$\epsilon^{*} \mu(S', 1)^{+} = \sum_{S_{1}'} \mu(S_{1}', 1) $$
where $S_{1}' \neq S_{1}.$
If the sum runs over the empty set then we interpret it to equal zero
and compute that
$$\left\langle \mathrm{Lift}_{0}\left(\eta^{\mathrm{loc}}_{S_{1}}
(\upsigma)(\delta_{q})\right), \mu(\xi)^{+} \right\rangle = \left\langle
\eta^{\mathrm{loc}}_{S_{1}} (\upsigma)(\delta_{q}), \epsilon^{*} \mu(\xi)^{+}
\right\rangle_{G} = \left\langle \eta^{\mathrm{loc}}_{S_{1}} (\upsigma)(\delta_{q}), 0
\right\rangle_{G} = 0.$$
Otherwise the index of the sum is not empty and by (\ref{pairdef2})
\begin{align*}
\left\langle \mathrm{Lift}_{0}\left(\eta^{\mathrm{loc}}_{S_{1}}
(\upsigma)(\delta_{q})\right), \mu(\xi)^{+} \right\rangle & =
\left\langle \eta^{\mathrm{loc}}_{S_{1}} (\upsigma)(\delta_{q}), \, \epsilon^{*}
\mu(\xi)^{+} \right\rangle_{G}\\
& = \left\langle \sum_{\tau_{1} } M(S_{1}, \tau_{1}),  \sum_{S_{1}'}
\mu(S_{1}', 1) \right\rangle_{G} \\
& = 0
\end{align*}
Looking back to the definition of the pairing (\ref{pair2}), we see that we have proven the proposition.
\end{proof}

\begin{prop}
\label{injlift2}
Under the hypothesis of (\ref{regintdom}), the twisted endoscopic lifting map (\ref{twistendlift}) is injective.
\end{prop}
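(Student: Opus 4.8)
The plan is to argue exactly as in the injectivity part of Proposition~\ref{injliftord}, now feeding in the twisted ingredients established above. First I would fix, via Shelstad's theorem (\cite{shelstad}, \cite{ABV}*{Lemma 18.11}), a basis of $K_{\mathbb{C}}\Pi(\O_{G}, G(\mathbb{R},\delta_{q}))^{\mathrm{st}}$ consisting of the nonzero elements among
$$\eta^{\mathrm{loc}}_{S_{1}}(\delta_{q}) = \sum_{\tau_{S_{1}}} M(S_{1},\tau_{S_{1}}),$$
as $S_{1}$ ranges over the ${^\vee}G$-orbits on $X(\O_{G}, {^\vee}G^{\Gamma})$; recall from just before Proposition~\ref{twistimlift} that $\eta^{\mathrm{loc}}_{S_{1}}(\upsigma)(\delta_{q}) = \eta^{\mathrm{loc}}_{S_{1}}(\delta_{q})$. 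It then suffices to show that $\mathrm{Lift}_{0}$ carries this basis to a linearly independent subset of $K_{\mathbb{C}}\Pi(\O, \mathrm{GL}_{N}(\mathbb{R}),\vartheta)$.

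By Proposition~\ref{twistimlift}, for each such $S_{1}$ we have $\mathrm{Lift}_{0}(\eta^{\mathrm{loc}}_{S_{1}}(\delta_{q})) = (-1)^{l^{I}(S,1) - l^{I}_{\vartheta}(S,1)}\, M(S,1)^{+}$, where $S = {^\vee}\mathrm{GL}_{N}\cdot\epsilon(S_{1})$; in particular this scalar equals $\pm 1$, hence is nonzero. Proposition~\ref{twistinj} tells us that the assignment $S_{1}\mapsto S$ is injective, so distinct basis vectors are sent to nonzero scalar multiples of distinct elements $M(S,1)^{+}$. Since the component group of every stabilizer in ${^\vee}\mathrm{GL}_{N}$ is trivial, $(S,1)$ is a genuine element of $\Xi(\O, {^\vee}\mathrm{GL}_{N}^{\Gamma})^{\vartheta}$, and the family $\{M(\xi)^{+} : \xi\in\Xi(\O, {^\vee}\mathrm{GL}_{N}^{\Gamma})^{\vartheta}\}$ is a basis of $K\Pi(\O, \mathrm{GL}_{N}(\mathbb{R}),\vartheta)$ by (\ref{twistmult1}) together with the invertibility in Lemma~\ref{invertmultmat}. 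Hence $\mathrm{Lift}_{0}$ maps a basis to a linearly independent set, and is therefore injective.

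I expect no serious obstacle here: the two substantive inputs --- injectivity of $\epsilon$ on ${^\vee}G$-orbits (Proposition~\ref{twistinj}) and the explicit value of $\mathrm{Lift}_{0}$ on the pseudopacket characters (Proposition~\ref{twistimlift}) --- are already in hand, and the rest is the observation that the $M(S,1)^{+}$ occur among a basis of $K\Pi(\O, \mathrm{GL}_{N}(\mathbb{R}),\vartheta)$. The only minor point needing care is the bookkeeping of which orbits $S_{1}$ support a nonzero $\eta^{\mathrm{loc}}_{S_{1}}(\delta_{q})$; but this is precisely the index set of Shelstad's basis, so nothing extra is required.
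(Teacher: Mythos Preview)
Your proposal is correct and follows essentially the same approach as the paper: use the Shelstad basis $\{\eta^{\mathrm{loc}}_{S_{G}}(\delta_{q})\}$, apply Proposition~\ref{twistimlift} to compute each image, and invoke Proposition~\ref{twistinj} to see that distinct basis elements land on (nonzero scalar multiples of) distinct $M(S,1)^{+}$, which are linearly independent. Your added justification that the $M(\xi)^{+}$ form a basis via (\ref{twistmult1}) and Lemma~\ref{invertmultmat} is a reasonable elaboration of what the paper leaves implicit.
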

\begin{proof}
The proof follows from Lemma \ref{twistinj}  as
in the proof of Proposition \ref{injliftord}.  We need only to observe that
according to Proposition \ref{twistimlift}, $\mathrm{Lift}_{0}$ sends the
basis
$$\left\{\eta^{\mathrm{loc}}_{S_{G}}(\delta_{q}) : S_{G} \mbox{ a }
{^\vee}G\mbox{-orbit of } X(\O_{G}, {^\vee}G^{\Gamma}) \right\}$$
of $K_{\mathbb{C}} \Pi(\O_{G}, G(\mathbb{R},
\delta_{q}))^{\mathrm{st}}$ bijectively onto the  linearly independent subset
$$\left\{ (-1)^{l^{I}(\epsilon(S_{G}), 1) - l^{I}_{\vartheta} (\epsilon(S_{G}), 1)}
\, M(\epsilon(S_{G}), 1)^{+}: S_{G} \mbox{ a }
      {^\vee}G\mbox{-orbit of } X(\O_{G}, {^\vee}G^{\Gamma})
      \right\}$$
of $K_{\mathbb{C}} \Pi(\O, \mathrm{GL}_{N}(\mathbb{R}),\vartheta)$.
\end{proof}

The next and final goal of this section is to provide the twisted
analogue of the endoscopic lifting of the virtual characters attached
to A-parameters as in (\ref{ordmiclift}).  As a guiding principle, it
helps to remember that in moving from $\eta^{\mathrm{loc}}_{S}$ to
$\eta^{\mathrm{loc}}_{S}(\upsigma)(\delta_{q})$ we extended the component
groups by $\langle \upsigma \rangle$ to obtain (\ref{twistcompgroup}),
and then extended the quasicharacters $\tau_{1}$ defined on the
original component groups.  We shall follow the same process with
$\eta^{\mathrm{mic}}_{\psi_{G}}$, doing our best to avoid the theory of microlocal
geometry.

The stable virtual character (\ref{etapsi}) for the endoscopic group $G$ is
$$\eta^{\mathrm{mic}}_{\psi_{G}} =  \sum_{\xi \in \Xi(\O_{G}, {^\vee}G^{\Gamma})}
(-1)^{d(S_{\xi}) - d(S_{\psi_{G}})} \ \chi^{\mathrm{mic}}_{S_{\psi_{G}}}(P(\xi)) \, \pi(\xi)
\in K\Pi(\O_{G}, G/\mathbb{R})^{\mathrm{st}}.$$
Here, $S_{\psi_{G}} \subset X(\O_{G}, G^{\Gamma})$  is the
${^\vee}G$-orbit determined by the L-parameter $\phi_{\psi_{G}}$, and $\xi
= (S_{\xi}, \tau_{S_{\xi}})$.  For each such $\xi$, there is a
representation $\tau^{\mathrm{mic}}_{S_{\psi_{G}}}(P(\xi))$ of ${^\vee}G_{\psi_{G}}/
({^\vee}G_{\psi_{G}})^{0}$,
\nomenclature{$\tau^{\mathrm{mic}}_{S_{\psi_{G}}}(P(\xi))$}{representation of component group}
the component group of the centralizer in
${^\vee}G$ of the image of $\psi_{G}$, which satisfies the following
properties
\begin{align}
\label{miclocalsys}
& \bullet \tau^{\mathrm{mic}}_{S_{\psi_{G}}}(P(\xi)) \mbox{ represents  a (possibly
  zero) } {^\vee}G\mbox{-equivariant local system }  Q^{\mathrm{mic}}(P(\xi))\text{ of }\\
& \nonumber  \ \ \mbox{ complex vector spaces.} \\
\label{degtaumic}
& \bullet \mbox{The degree of } \tau^{\mathrm{mic}}_{S_{\psi_{G}}}(P(\xi)) \mbox{
  is equal to } \chi_{S_{\psi_{G}}}^{\mathrm{mic}}(P(\xi)). \\
\label{spectaumic}
& \bullet  \mbox{If } \xi = (S_{\psi_{G}}, \tau_{S_{\psi_{G}}})\mbox{ then }
\tau^{\mathrm{mic}}_{S_{\psi_{G}}}(P(\xi)) = \tau_{S_{\psi_{G}}} \circ i_{S_{\psi_{G}}},
\mbox{ where }\\
& \nonumber \qquad\qquad\qquad\qquad\qquad\qquad\qquad i_{S_{\psi_{G}}}: {^\vee}G_{\psi_{G}}/ ({^\vee}G_{\psi_{G}})^{0}
\rightarrow {^\vee}G_{p}/ ({^\vee}G_{p})^{0}\\
&\nonumber \ \ \mbox{ is a surjective  homomorphism for } p \in S_{\psi_{G}}.
\nomenclature{$Q^{\mathrm{mic}}(P(\xi))$}{local system}
\nomenclature{$i_{S_{\psi_{G}}}$}{homomorphism of component groups}
\end{align}
(\cite{ABV}*{Theorem 24.8, Corollary 24.9, Definition 24.15}). By (\ref{degtaumic}), we may rewrite $\eta^{\mathrm{mic}}_{\psi_{G}}$ as
\begin{equation}
  \label{etapsitrace}
  \eta^{\mathrm{mic}}_{\psi_{G}} =  \sum_{\xi \in \Xi(\O_{G}, {^\vee}G^{\Gamma})}
(-1)^{d(S_{\xi}) - d(S_{\psi_{G}})}
  \ \mathrm{Tr}\left(\tau^{\mathrm{mic}}_{S_{\psi_{G}}}(P(\xi))(1)\right) \, \pi(\xi).
\end{equation}
Next, we extend ${^\vee}G_{\psi_{G}}/ ({^\vee}G_{\psi_{G}})^{0}$ trivially to
\begin{equation}
\label{artgroupext}
{^\vee}G_{\psi_{G}}/ ({^\vee}G_{\psi_{G}})^{0} \times \langle \upsigma \rangle,
\end{equation}
and extend $\tau^{\mathrm{mic}}_{S_{\psi_{G}}}(P(\xi))$ trivially to (\ref{artgroupext}) by defining $\tau^{\mathrm{mic}}_{S_{\psi_{G}}}(P(\xi))(\upsigma)$ to be the identity map.  We define
\begin{align*}
\eta^{\mathrm{mic}}_{\psi_{G}} (\upsigma) &=  \sum_{\xi \in \Xi(\O_{G}, {^\vee}G^{\Gamma})}
(-1)^{d(S_{\xi}) - d(S_{\psi_{G}})} \ \mathrm{Tr}\left(\tau^{\mathrm{mic}}_{S_{\psi_{G}}}(P(\xi))(\upsigma)\right) \, \pi(\xi) \\
&=  \sum_{\xi \in \Xi(\O_{G}, {^\vee}G^{\Gamma})}
(-1)^{d(S_{\xi}) - d(S_{\psi_{G}})} \ \dim\left(\tau^{\mathrm{mic}}_{S_{\psi_{G}}}(P(\xi)) \right) \, \pi(\xi).
\end{align*}
Clearly
\begin{equation}
\label{nosigma}
\eta^{\mathrm{mic}}_{\psi_{G}} (\upsigma) = \eta^{\mathrm{mic}}_{\psi_{G}}.
\end{equation}
Finally,  define
\begin{align}
\label{etapsi1}
\eta^{\mathrm{mic}}_{\psi_{G}} (\upsigma)(\delta_{q}) & =\sum_{(S_{\xi}, \tau_{S_{\xi}}) }
(-1)^{d(S_{\xi}) - d(S_{\psi_{G}})}
\ \mathrm{Tr}\left(\tau^{\mathrm{mic}}_{S_{\psi_{G}}}(P(\xi))(\upsigma)\right) \, \pi(\xi)\\
\nonumber &= \sum_{(S_{\xi}, \tau_{S_{\xi}}) }
(-1)^{d(S_{\xi}) - d(S_{\psi_{G}})}
\ \dim\left(\tau^{\mathrm{mic}}_{S_{\psi_{G}}}(P(\xi)) \right) \, \pi(\xi)
\end{align}
in which the sum runs over only those $\xi = (S_{\xi}, \tau_{S_{\xi}}) \in
\Xi(\O_{G}, {^\vee}G^{\Gamma})$ in which $\tau_{S_{\xi}}$
corresponds to the strong involution $\delta_{q}$.
Therefore
\begin{equation*}
\eta^{\mathrm{mic}}_{\psi_{G}} (\upsigma)(\delta_{q})= \eta_{S_{\psi_{G}}}^{\mathrm{mic}}(\delta_{q})=\eta_{S_{\psi_{G}}}^{\mathrm{ABV}}
\end{equation*}
(\ref{etapsiabv}). The virtual
character $\eta^{\mathrm{mic}}_{\psi_{G}}(\upsigma) (\delta_{q})$ is a summand of the
stable virtual character $\eta^{\mathrm{mic}}_{\psi_{G}}$ and is therefore also stable
(\cite{ABV}*{Theorem 18.7}).  Consequently, $\eta^{\mathrm{mic}}_{\psi_{G}}(\upsigma)
(\delta_{q})$ lies in the domain of $\mathrm{Lift}_{0}$.  In addition,
the ABV-packet $\Pi^{\mathrm{ABV}}_{\psi_{G}}$ consists of the irreducible
characters in the support of $\eta^{\mathrm{mic}}_{\psi_{G}}(\upsigma) (\delta_{q})$ (\ref{abvdef}).

What we have done for $\eta^{\mathrm{mic}}_{\psi_{G}}$ we begin to do for
$\eta_{\psi}^{\mathrm{mic}+}$, which we define as
\begin{equation}
\label{etatilde}
  \eta_{\psi}^{\mathrm{mic}+} = \sum_{\xi \in \Xi(\O,
  {^\vee}\mathrm{GL}_{N}^{\Gamma})^\vartheta}
(-1)^{d(S_{\xi}) - d(S_{\psi})}
\ \mathrm{Tr}(\chi^{\mathrm{mic}}_{S_{\psi}}(P(\xi))) \,
(-1)^{l^{I}(\xi) - l^{I}_{\vartheta}(\xi)} \pi(\xi)^{+}
\end{equation}
for
\begin{equation}
\label{glnpsi}
\psi  = \epsilon \circ \psi_{G}.
\end{equation}
The main difference now is that $\upsigma$ does not act trivially on
${^\vee}\mathrm{GL}_{N}$ and so the extensions require more attention.
Properties (\ref{miclocalsys})-(\ref{spectaumic}) hold for
$\psi$ and $\mathrm{GL}_{N}$ as they do for $\psi_{G}$ and $G$.

The first step is writing
$$\eta_{\psi}^{\mathrm{mic}+} =  \sum_{\xi \in \Xi(\O,
  {^\vee}\mathrm{GL}_{N}^{\Gamma})^\vartheta}
(-1)^{d(S_{\xi}) - d(S_{\psi})}
\ \mathrm{Tr}(\tau^{\mathrm{mic}}_{S_{\psi}}(P(\xi))(1)) \,
(-1)^{l^{I}(\xi) - l^{I}_{\vartheta}(\xi)}  \pi(\xi)^{+}.
\nomenclature{$\eta_{\psi}^{\mathrm{mic}+}$}{virtual twisted character}$$
This holds from (\ref{degtaumic})  as  (\ref{etapsitrace}) did for the endoscopic group $G$.  What is new and simpler here is that the component group $({^\vee}\mathrm{GL}_{N})_{\psi}/ (({^\vee}\mathrm{GL}_{N})_{\psi})^{0}$ is trivial (\cite{Arthur84}*{Section 2.3}). It follows that $\tau^{\mathrm{mic}}_{S_{\psi}}(P(\xi)) $ is either the trivial representation or zero.

Let us digress briefly to examine property (\ref{spectaumic}) for $\xi = (S_{\psi}, \tau_{S_{\psi}})$.  Since the component group $({^\vee}\mathrm{GL}_{N})_{p}/ (({^\vee}\mathrm{GL}_{N})_{p})^{0}$ is trivial, the quasicharacter $\tau_{S_{\psi}} = 1$ is trivial.  It follows that
\begin{equation}
\label{containlpacket}
\tau^{\mathrm{mic}}_{S_{\psi}}(P(S_{\psi}, \tau_{S_{\psi}})) = \tau_{S_{\psi}}\circ i_{S_{\psi}} = 1 \circ i_{S_{\psi} }= 1 \neq 0.
\end{equation}
In particular, $\pi(S_{\psi}, 1)$ is in the support of $\eta^{\mathrm{mic}}_{\psi}$ and belongs to $\Pi^{\mathrm{ABV}}_{\psi}$.  In the next section we will prove that this is the only representation in $\Pi^{\mathrm{ABV}}_{\psi}$.

Returning to the matter of extensions, there is an obvious extension
$$({^\vee}\mathrm{GL}_{N})_{\psi}/ (({^\vee}\mathrm{GL}_{N})_{\psi})^{0} \times \langle \upsigma \rangle$$
of the trivial component group, as $\upsigma$ fixes the image of $\psi$.  We wish to extend the representation $\tau_{S_{\psi}}^{\mathrm{mic}} (P(\xi))$ to this group for  $\xi \in  \Xi(\O, {^\vee}\mathrm{GL}_{N}^{\Gamma})^\vartheta$.
The action of $\upsigma$ on $P(\xi) \in \mathcal{P}(\O,
{^\vee}\mathrm{GL}_{N}^{\Gamma}; \upsigma)$ determines an action on
the stalks of the local system $Q^{\mathrm{mic}}(P(\xi))$ as in
(\ref{miclocalsys}) (\cite{ABV}*{(25.1)}).
\cite{ABV}*{Proposition 26.23 (b)} allows us to choose a stalk over a $\upsigma$-fixed point
$p'$ (related to $S_{\psi}$) in the topological space of
$Q^{\mathrm{mic}}(P(\xi))$.  This places us in the same setting as Lemma
\ref{cansheaf}, with $\tau_{S}$ replaced by
$\tau_{S_{\psi}}^{\mathrm{mic}}(P(\xi))$ and $S$ replaced by the
${^\vee}\mathrm{GL}_{N}$-orbit of $p'$.   As a result, $\upsigma$
determines a canonical isomorphism of the stalk at $p'$ equal to $1$.
In short, we define
\begin{equation}
\label{oneextend1}
\tau_{S_{\psi}}^{\mathrm{mic}}(P(\xi)^{+})(\upsigma) =1
\end{equation}
 and extend $\tau_{S_{\psi}}^{\mathrm{mic}}(P(\xi))$ to a quasicharacter
 $\tau_{S_{\psi}}^{\mathrm{mic}}(P(\xi)^{+})$.
 \nomenclature{$\tau_{S_{\psi}}^{\mathrm{mic}}(P(\xi)^{+})$}{}
  The quasicharacter
 $\tau_{S_{\psi}}^{\mathrm{mic}}(P(\xi)^{+})$ represents the
 $({^\vee}\mathrm{GL}_{N} \rtimes \langle \upsigma
 \rangle)$-equivariant local system of the restriction of
 $Q^{\mathrm{mic}}(P(\xi))$ to the orbit of $p'$.
We may extend  $i_{S_{\psi}}$ in (\ref{spectaumic}) to include
the products with $\langle \upsigma \rangle$.  Definitions
(\ref{oneextend}) and (\ref{oneextend1})  are compatible in that
$$\tau_{S_{\psi}}^{\mathrm{mic}}(P(\xi))^{+} = 1^{+} \circ i_{S_{\psi}}.$$

Finally, we define
\begin{equation}
\label{etaplus}
\eta_{\psi}^{\mathrm{mic}+}(\upsigma) = \sum_{\xi \in \Xi(\O,
  {^\vee}\mathrm{GL}_{N}^{\Gamma})^\vartheta}
(-1)^{d(S_{\xi}) - d(S_{\psi})}
\ \mathrm{Tr}(\tau^{\mathrm{mic}}_{S_{\psi}}(P(\xi)^{+})(\upsigma)) \,
(-1)^{l^{I}(\xi) - l^{I}_{\vartheta}(\xi)}  \pi(\xi)^{+}.
\nomenclature{$\eta_{\psi}^{\mathrm{mic}+}(\upsigma)$}{}
\end{equation}
It is clear from definition (\ref{etatilde}) that
$\eta_{\psi}^{\mathrm{mic}+}(\upsigma) = \eta_{\psi}^{\mathrm{mic}+}$.

The obvious definition of the quasicharacter
$\tau_{S_{\psi}}^{\mathrm{mic}}(P(\xi)^{-})$ is to take
$\tau_{S_{\psi}}^{\mathrm{mic}}(P(\xi)^{-})(\upsigma) = -1$.  With this
definition in place the following proposition is a consequence of
\cite{ABV}*{Corollary 24.9}.
\begin{prop}
\label{24.9c}
The functor $\tau_{S_{\psi}}^{\mathrm{mic}}(\cdot)$, from
$({^\vee}\mathrm{GL}_{N} \rtimes \langle \upsigma \rangle
)$-equivariant perverse sheaves to representations of
${^\vee}G_{\psi}/ ({^\vee}G_{\psi})^{0} \times \langle \upsigma
\rangle$, induces a map from the Grothendieck group
$K(X(\O, {^\vee}\mathrm{GL}_{N}^{\Gamma}); \upsigma)$ to the
space of virtual representations.    Furthermore the \emph{microlocal
  trace} map
$$\mathrm{Tr} \, \left(\tau_{S_{\psi}}^{\mathrm{mic}} (\cdot)(\upsigma) \right)$$
induces a homomorphism from $K(X(\O,
{^\vee}\mathrm{GL}_{N}^{\Gamma}), \upsigma)$ (as in
(\ref{twistsheafgroth})) to $\mathbb{C}$.
\end{prop}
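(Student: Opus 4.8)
The plan is to deduce both assertions from the corresponding non-twisted statements of \cite{ABV}*{Chapters 19 and 24}, applied to the (possibly disconnected) group ${^\vee}\mathrm{GL}_{N} \rtimes \langle \upsigma \rangle$ in place of a connected reductive group. This is legitimate because, as explained in \cite{Christie-Mezo}*{Section 5.4}, the microlocal formalism of \cite{ABV} --- characteristic cycles of $\mathcal{D}$-modules, the microlocal multiplicity maps $\chi^{\mathrm{mic}}_{S}$, and the microlocal local systems $Q^{\mathrm{mic}}$ together with their representations $\tau^{\mathrm{mic}}_{S}$ --- remains valid for sheaves equivariant under a disconnected group.

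For the first assertion, recall that $\chi^{\mathrm{mic}}_{S_{\psi}}$ is additive on short exact sequences (\cite{ABV}*{Proposition 19.12(e)}), and that by \cite{ABV}*{Corollary 24.9} the assignment $P \mapsto \tau^{\mathrm{mic}}_{S_{\psi}}(P)$, valued in representations of the relevant component group, is likewise additive, hence descends to the Grothendieck group. I would apply this verbatim with ${^\vee}\mathrm{GL}_{N}$ replaced by ${^\vee}\mathrm{GL}_{N} \rtimes \langle \upsigma \rangle$: a short exact sequence $0 \to (P_{1}, \upsigma_{P_{1}}) \to (P_{2}, \upsigma_{P_{2}}) \to (P_{3}, \upsigma_{P_{3}}) \to 0$ in $\mathcal{P}(X(\O, {^\vee}\mathrm{GL}_{N}^{\Gamma}); \upsigma)$ is carried by $\tau^{\mathrm{mic}}_{S_{\psi}}(\cdot)$ to a short exact sequence of representations of ${^\vee}(\mathrm{GL}_{N})_{\psi}/({^\vee}(\mathrm{GL}_{N})_{\psi})^{0} \times \langle \upsigma \rangle$, since the stalk of $Q^{\mathrm{mic}}$ at the chosen $\upsigma$-fixed point $p'$ (\cite{ABV}*{Proposition 26.23(b)}) depends functorially and exactly on the underlying perverse sheaf, and the induced $\upsigma$-action is transported along it as in (\ref{oneextend1}) and \cite{ABV}*{(25.1)}. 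Thus $\tau^{\mathrm{mic}}_{S_{\psi}}(\cdot)$ factors through $K(X(\O, {^\vee}\mathrm{GL}_{N}^{\Gamma}); \upsigma)$, which is the first claim.

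For the microlocal trace, $\mathrm{Tr}(\cdot(\upsigma))$ is a $\mathbb{C}$-linear functional on the Grothendieck group of representations of the two-element group $\langle \upsigma \rangle$ (the component group being trivial), so composing with the map just produced gives a homomorphism $K(X(\O, {^\vee}\mathrm{GL}_{N}^{\Gamma}); \upsigma) \to \mathbb{C}$. It then remains to check that this homomorphism annihilates the submodule defining the quotient (\ref{twistsheafgroth}), i.e. that $\mathrm{Tr}(\tau^{\mathrm{mic}}_{S_{\psi}}(P(\xi)^{+})(\upsigma)) + \mathrm{Tr}(\tau^{\mathrm{mic}}_{S_{\psi}}(P(\xi)^{-})(\upsigma)) = 0$ for every $\xi \in \Xi(\O, {^\vee}\mathrm{GL}_{N}^{\Gamma})^{\vartheta}$. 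If $\tau^{\mathrm{mic}}_{S_{\psi}}(P(\xi)) = 0$ there is nothing to prove; otherwise it is the trivial representation of the trivial component group, of dimension $\chi^{\mathrm{mic}}_{S_{\psi}}(P(\xi))$ by (\ref{degtaumic}), and by (\ref{oneextend1}) the extension $\tau^{\mathrm{mic}}_{S_{\psi}}(P(\xi)^{+})$ has $\upsigma$ acting as $+1$ while $\tau^{\mathrm{mic}}_{S_{\psi}}(P(\xi)^{-})$ has $\upsigma$ acting as $-1$ by the definition preceding the proposition. Hence the two traces are $\pm \chi^{\mathrm{mic}}_{S_{\psi}}(P(\xi))$ and sum to zero, so the homomorphism descends to $K(X(\O, {^\vee}\mathrm{GL}_{N}^{\Gamma}), \upsigma)$, completing the proof.

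The only step requiring genuine input beyond formal bookkeeping is the well-definedness of the $\upsigma$-extended functor $\tau^{\mathrm{mic}}_{S_{\psi}}(\cdot)$ itself: one must know that the $\upsigma$-action on an object of $\mathcal{P}(X(\O, {^\vee}\mathrm{GL}_{N}^{\Gamma}); \upsigma)$ induces a choice-independent action on $Q^{\mathrm{mic}}$ compatible with the $\upsigma$-fixed stalk furnished by \cite{ABV}*{Proposition 26.23(b)}. I expect this to be the main obstacle, but it is precisely \cite{ABV}*{Corollary 24.9} read in the disconnected setting of \cite{Christie-Mezo}*{Section 5.4}; once it is granted, additivity and the passage to the quotient are routine.
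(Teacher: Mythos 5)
Your proposal is correct and follows essentially the same route as the paper, which disposes of the proposition in one line by declaring it a consequence of \cite{ABV}*{Corollary 24.9} (read in the disconnected setting of \cite{Christie-Mezo}*{Section 5.4}), with the cancellation on the relations $(P(\xi)\otimes 1)+(P(\xi)\otimes -1)$ left implicit in the definitions (\ref{oneextend1}) and $\tau_{S_{\psi}}^{\mathrm{mic}}(P(\xi)^{-})(\upsigma)=-1$. Your elaboration --- additivity of $\tau_{S_{\psi}}^{\mathrm{mic}}$ via the exactness of the microlocal functor, followed by the explicit check that the traces of the $\pm$ extensions are $\pm\chi^{\mathrm{mic}}_{S_{\psi}}(P(\xi))$ and so sum to zero --- is precisely the content that citation is meant to encode.
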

A similar statement is true for $\tau_{S_{\psi_{G}}}^{\mathrm{mic}}$ and the
$({^\vee}G \times \langle \upsigma \rangle)$-equivariant sheaves
defined earlier.
\begin{thm}\label{thm:etaplus1}
\begin{enumerate}[label={(\alph*)}]
\item As a function on $K(X(\O,
  {^\vee}\mathrm{GL}_{N}^{\Gamma}), \upsigma)$ we have
$$\left\langle \eta_{\psi}^{\mathrm{mic}+}(\upsigma), \cdot \right\rangle = (-1)^{d(S_{\psi})} \,
    \mathrm{Tr} \, \left(\tau_{S_{\psi}}^{\mathrm{mic}} (\cdot)(\upsigma) \right).$$

\item The stable virtual character $\eta_{\psi}^{\mathrm{mic}+}(\upsigma)$ is equal to
  $$ (-1)^{d(S_{\psi})} \sum_{\xi \in \Xi(\O,
  {^\vee}\mathrm{GL}_{N}^{\Gamma})^\vartheta} \mathrm{Tr} \,
  \left(\tau_{S_{\psi}}^{\mathrm{mic}} (\mu(\xi)^{+})(\upsigma)
  \right)\,  (-1)^{l^{I}(\xi)-l^{I}_{\vartheta}(\xi)} \, M(\xi)^{+}.$$

\item $\mathrm{Lift}_{0}
  \left(\eta^{\mathrm{mic}}_{\psi_{G}}(\upsigma)(\delta_{q}) \right) =
  \eta_{\psi}^{\mathrm{mic}+}(\upsigma).$

\end{enumerate}
\end{thm}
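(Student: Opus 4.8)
The plan is to prove the three parts of Theorem \ref{thm:etaplus1} in order, with parts (a) and (b) being essentially bookkeeping and part (c) being the substantive content that ties together the machinery of the preceding sections.

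\textbf{Part (a).} I would unwind the definition (\ref{etaplus}) of $\eta_{\psi}^{\mathrm{mic}+}(\upsigma)$ and pair it against a basis element $\mu(\xi')^{+}$ of $KX(\O,\LGL,\upsigma)$. Using Theorem \ref{twistpairing}, which gives $\langle \pi(\xi)^{+}, P(\xi')^{+}\rangle = (-1)^{d(\xi)}(-1)^{l^{I}(\xi)-l^{I}_{\vartheta}(\xi)}\delta_{\xi,\xi'}$, one must first re-express $\mu(\xi')^{+}$ in the $P(\xi)^{+}$-basis via (\ref{extsheafmult})-(\ref{twistgmult}) (or equivalently work directly with the geometric transition matrix $c_g^\vartheta$), and then recognize that the resulting coefficient of $\pi(\xi)^{+}$ in $\eta_{\psi}^{\mathrm{mic}+}(\upsigma)$, weighted by the pairing sign and the KLV data, collapses to $(-1)^{d(S_\psi)}\,\mathrm{Tr}(\tau^{\mathrm{mic}}_{S_\psi}(\mu(\xi')^{+})(\upsigma))$. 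This is the twisted analogue of the identity underlying (\ref{etapsi}): the characteristic-cycle/microlocal-multiplicity duality says precisely that $\langle \eta^{\mathrm{mic}}_{\psi_G}, \mu(\xi)\rangle_G = \chi^{\mathrm{mic}}_{S_{\psi_G}}(\mu(\xi))$ in the untwisted case (\cite{ABV}*{Chapter 19}), and the twisted version is obtained by carrying the $\upsigma$-equivariant structure through Proposition \ref{24.9c}. The sign $(-1)^{l^I(\xi)-l^I_\vartheta(\xi)}$ built into the pairing (\ref{pairdef2}) is exactly what is needed to cancel the same sign appearing in (\ref{etaplus}), so that the right-hand side of (a) is sign-free apart from the global $(-1)^{d(S_\psi)}$.

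\textbf{Part (b).} This is the twisted restatement of (a): having established that $\langle \eta_{\psi}^{\mathrm{mic}+}(\upsigma),\cdot\rangle = (-1)^{d(S_\psi)}\mathrm{Tr}(\tau^{\mathrm{mic}}_{S_\psi}(\cdot)(\upsigma))$ as functionals on $KX(\O,\LGL,\upsigma)$, I would expand $\eta_{\psi}^{\mathrm{mic}+}(\upsigma)$ in the $M(\xi)^{+}$-basis of $K\Pi(\O,\GL_N(\R),\vartheta)$ rather than the $\pi(\xi)^{+}$-basis. Since the pairing $\langle M(\xi)^{+},\mu(\xi')^{+}\rangle = (-1)^{l^I(\xi)-l^I_\vartheta(\xi)}\delta_{\xi,\xi'}$ is triangular-to-diagonal, the coefficient of $M(\xi)^{+}$ in $\eta_{\psi}^{\mathrm{mic}+}(\upsigma)$ is $(-1)^{d(S_\psi)}(-1)^{l^I(\xi)-l^I_\vartheta(\xi)}\mathrm{Tr}(\tau^{\mathrm{mic}}_{S_\psi}(\mu(\xi)^{+})(\upsigma))$, which is exactly the claimed formula. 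Stability of $\eta_{\psi}^{\mathrm{mic}+}(\upsigma)$ itself follows since it restricts to $\eta_\psi^{\mathrm{ABV}}$ (via the $U_2$-quotient) and one invokes \cite{ABV}*{Theorem 1.31} together with the observation that extending by $\upsigma$ does not disturb stability.

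\textbf{Part (c).} Here is the main obstacle, and the plan is to reduce it to the case of the pseudopackets already handled in Proposition \ref{twistimlift}. The stable virtual character $\eta^{\mathrm{mic}}_{\psi_G}(\upsigma)(\delta_q) = \eta^{\mathrm{ABV}}_{\psi_G}$ lies in $K_\C\Pi(\O_G,G(\R,\delta_q))^{\mathrm{st}}$, which by Shelstad's argument has basis $\{\eta^{\mathrm{loc}}_{S_1}(\upsigma)(\delta_q)\}$ indexed by ${^\vee}G$-orbits $S_1 \subset X(\O_G,\LG)$. Write $\eta^{\mathrm{mic}}_{\psi_G}(\upsigma)(\delta_q) = \sum_{S_1} a_{S_1}\,\eta^{\mathrm{loc}}_{S_1}(\upsigma)(\delta_q)$; by Proposition \ref{twistimlift} the lift of each basis element is $(-1)^{l^I(S,1)-l^I_\vartheta(S,1)}M(S,1)^{+}$ where $S = {^\vee}\GL_N\cdot\epsilon(S_1)$, so by linearity $\mathrm{Lift}_0(\eta^{\mathrm{mic}}_{\psi_G}(\upsigma)(\delta_q)) = \sum_{S_1} a_{S_1}(-1)^{l^I(S,1)-l^I_\vartheta(S,1)}M(S,1)^{+}$. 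To identify this with $\eta_{\psi}^{\mathrm{mic}+}(\upsigma)$ as given by part (b), I would instead compute $\mathrm{Lift}_0$ directly from its definition (\ref{twistloweps}), pairing against $\mu(\xi)^{+}$: $\langle\mathrm{Lift}_0(\eta^{\mathrm{mic}}_{\psi_G}(\upsigma)(\delta_q)),\mu(\xi)^{+}\rangle = \langle \eta^{\mathrm{mic}}_{\psi_G}(\upsigma)(\delta_q), \epsilon^{*}\mu(\xi)^{+}\rangle_G$. Now $\epsilon^{*}\mu(\xi)^{+} = \mu(S_1, 1^{+})$ by (\ref{epsrest1}) and Proposition \ref{twistinj} (the uniqueness of the preimage orbit), so the right-hand side becomes $\langle \eta^{\mathrm{mic}}_{\psi_G}(\upsigma)(\delta_q), \mu(S_1,1^{+})\rangle_G$. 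The untwisted microlocal duality of \cite{ABV}*{Chapter 24}, applied with the trivial $\upsigma$-action on the $G$-side (which is exactly the content of the extensions built in (\ref{miclocalsys})-(\ref{spectaumic}) together with $\tau^{\mathrm{mic}}_{S_{\psi_G}}(P(\xi))(\upsigma) = \mathrm{id}$), evaluates this as $(-1)^{d(S_{\psi_G})}\mathrm{Tr}(\tau^{\mathrm{mic}}_{S_{\psi_G}}(\mu(S_1,1^{+}))(\upsigma))$. The final step is to match $(-1)^{d(S_{\psi_G})}\mathrm{Tr}(\tau^{\mathrm{mic}}_{S_{\psi_G}}(\mu(S_1,1^{+}))(\upsigma))$ with $(-1)^{d(S_\psi)}\mathrm{Tr}(\tau^{\mathrm{mic}}_{S_\psi}(\mu(\xi)^{+})(\upsigma))$ — this is precisely \cite{ABV}*{Theorem 26.25} (the compatibility of microlocal multiplicities under $\epsilon$, which in the untwisted case gives (\ref{ordmiclift})), adapted to the $\upsigma$-equivariant setting using \cite{ABV}*{Proposition 26.23 (b)} to choose compatible $\upsigma$-fixed basepoints over $S_{\psi_G}$ and $S_\psi$. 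The parity shift $d(S_{\psi_G})$ versus $d(S_\psi)$ is absorbed into the dimension-difference signs $(-1)^{d(S_\xi)-d(S_\psi)}$ in the two expansions, using the fact that $\epsilon$ is a closed embedding of smooth varieties so that codimensions match up orbit-by-orbit. The hard part will be verifying that the $\upsigma$-actions on the two microlocal local systems $Q^{\mathrm{mic}}(P(\xi))$ (on the $\GL_N$-side) and $Q^{\mathrm{mic}}(P(\xi_1))$ (on the $G$-side) are genuinely intertwined by $\epsilon$ at the level of $\upsigma$-fixed stalks — this requires a careful application of \cite{ABV}*{Chapter 24-26} in the disconnected-group framework of \cite{Christie-Mezo}, and is where essentially all the microlocal-geometric input is concentrated; the triangularity and pairing arguments of parts (a) and (b) are then purely formal consequences.
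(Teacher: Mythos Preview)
Your approach is essentially correct and follows the same route as the paper, with two minor streamlinings worth noting. For part (a), the paper pairs $\eta_{\psi}^{\mathrm{mic}+}(\upsigma)$ directly against $P(\xi)^{+}$ rather than against $\mu(\xi')^{+}$: since the definition (\ref{etaplus}) is already in the $\pi(\xi)^{+}$-basis, Theorem \ref{twistpairing} gives $\langle \eta_{\psi}^{\mathrm{mic}+}(\upsigma), P(\xi)^{+}\rangle = (-1)^{d(S_{\psi})}\mathrm{Tr}(\tau_{S_{\psi}}^{\mathrm{mic}}(P(\xi)^{+})(\upsigma))$ in one line (citing \cite{ABV}*{Lemma 26.9}), avoiding your detour through $c_{g}^{\vartheta}$. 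Part (b) then follows formally, exactly as you say.

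For part (c) your computation is the same as the paper's: pair $\mathrm{Lift}_{0}(\eta_{\psi_{G}}^{\mathrm{mic}}(\upsigma)(\delta_{q}))$ against $\mu(\xi)^{+}$ via (\ref{twistloweps}), reduce to $\langle \eta_{\psi_{G}}^{\mathrm{mic}}(\upsigma), \epsilon^{*}\mu(\xi)^{+}\rangle_{G}$, and evaluate this as $(-1)^{d(S_{\psi_{G}})}\mathrm{Tr}(\tau_{S_{\psi_{G}}}^{\mathrm{mic}}(\epsilon^{*}\mu(\xi)^{+})(\upsigma))$ using \cite{ABV}*{Lemma 26.9}. The ``hard part'' you correctly identify---that the $\upsigma$-equivariant microlocal traces on the $G$-side and the $\mathrm{GL}_{N}$-side are intertwined by $\epsilon^{*}$, with the correct sign $(-1)^{d(S_{\psi_{G}})} \leadsto (-1)^{d(S_{\psi})}$---is exactly \cite{ABV}*{Theorem 25.8}, which the paper invokes by name as the ``deep result''. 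Your citation of \cite{ABV}*{Theorem 26.25} ``adapted to the $\upsigma$-equivariant setting'' describes the same content, but Theorem 25.8 is the precise statement that does the work (Theorem 26.25 is its untwisted consequence), and it already absorbs the dimension shift you worry about, so no separate codimension argument is needed.
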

\begin{proof}
The first two assertions  follow from Theorem  \ref{twistpairing} and
the computation
$$\left\langle \eta_{\psi}^{\mathrm{mic}+}(\upsigma)(\delta_{q}), P(\xi)^{+}
\right\rangle = (-1)^{d(S_{\psi})} \,
    \mathrm{Tr} \, \left(\tau_{S_{\psi}}^{\mathrm{mic}} (P(\xi)^{+})(\upsigma) \right)$$
 (\emph{cf.} \cite{ABV}*{Lemma 26.9}).

 For the final assertion, we compute
 \begin{align*}
 \left\langle \epsilon_{*} \eta^{\mathrm{mic}}_{\psi_{G}}(\upsigma)(\delta_{q}), \mu(\xi)^{+} \right\rangle & =
 \left\langle \eta^{\mathrm{mic}}_{\psi_{G}}(\upsigma)(\delta_{q}), \epsilon^{*} \mu(\xi)^{+} \right\rangle_{G} \\
 & =\left\langle \eta^{\mathrm{mic}}_{\psi_{G}}(\upsigma), \epsilon^{*} \mu(\xi)^{+} \right\rangle_{G} \\
 & = (-1)^{d(S_{\psi_{G}})} \,
    \mathrm{Tr} \, \left(\tau_{S_{\psi_{G}}}^{\mathrm{mic}} (\epsilon^{*}
    \mu(\xi)^{+})(\upsigma) \right)
 \end{align*}
 using (\ref{epsrest1}) and \cite{ABV}*{Lemma 26.9} for $\eta^{\mathrm{mic}}_{\psi_{G}}(\upsigma)$.
 By the deep result  \cite{ABV}*{Theorem 25.8}, and the first assertion
 of the theorem,  we may continue with
 \begin{align*}
     & = (-1)^{d(S_{\psi})}  \mathrm{Tr} \,
   \left(\tau_{S_{\psi}}^{\mathrm{mic}} (\mu(\xi)^{+})(\upsigma)
   \right)\\
 & = \left\langle \eta_{\psi}^{\mathrm{mic}+}(\upsigma), \mu(\xi)^{+} \right\rangle
  \end{align*}
 and the theorem is proven.

 \end{proof}

\section{ABV-packets for general linear groups}
\label{glnpacket}

In this section we prove that
any ABV-packet for $\mathrm{GL}_{N}(\mathbb{R})$ consists of a single
(equivalence class of an)
irreducible representation. This implies that such an
ABV-packet is equal to its corresponding L-packet (\cite{ABV}*{Theorem 22.7 (a)}). From the classification of the unitary dual of
$\mathrm{GL}_{N}(\mathbb{R})$ we may deduce that the single
representation in the packet is unitary.

In this section we let
$$\psi:W_{\mathbb{R}} \times \mathrm{SL}_{2} \rightarrow
\mathrm{GL}_{N}^{\Gamma}$$
be an arbitrary A-parameter for
$\mathrm{GL}_{N}(\mathbb{R})$. The description of the ABV-packet
$\Pi_{\psi}^{\mathrm{ABV}}$ will be achieved in three steps. First, we
treat the case of an \emph{irreducible} A-parameter.
Second, we compute the ABV-packet for a Levi subgroup of
$\mathrm{GL}_{N}$, whose dual group contains  the image of
$\psi$ minimally. The final result is obtained from the second
step by considering
the Levi subgroup as an endoscopic group of $\mathrm{GL}_{N}$ and
applying the endoscopic lifting (\ref{ordmiclift}).

Following the description of \cite{Arthur}*{Equation (1.4.1)}, all
A-parameters $\psi$ for $\mathrm{GL}_{N}(\mathbb{R})$ may be
represented as formal direct sums of irreducible representations of
$W_{\mathbb{R}}\times \mathrm{SL}_{2}$
\begin{align}\label{eq:Arthurparameterdecomposition}
  \psi &= \ell_{1} (\mu_{1}\boxtimes \nu_{n_{1}})  \boxplus \cdots
         \boxplus \ell_{r} (\mu_{r} \boxtimes \nu_{n_{r}}).
\nomenclature{$\mu_{r} \boxtimes \nu_{n_{r}}$}{}
\end{align}
Here, $\nu_{n_{j}}$ is the unique irreducible
representation of $\mathrm{SL}_{2}$ of dimension $n_{j}$,
and $\mu_{j}$ is an irreducible representation of $W_{\mathbb{R}}$
with bounded image.
The representations $\mu_{j}$ are of dimension one or two \cite{Knapp94}*{Section 3}.  The parameter $\psi$ in
(\ref{eq:Arthurparameterdecomposition}) is said to be
\emph{irreducible} if  $r = 1$ and  $\ell_{1} = 1$.
\begin{prop}\label{prop:irreducibleArthurparameter}
  Suppose $\psi$ is an irreducible A-parameter of
  $\mathrm{GL}_{N}$.
  Then $\Pi_{\psi}^{\mathrm{ABV}}$ consists of a single unitary representation.
\end{prop}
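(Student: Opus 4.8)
The plan is to reduce the statement to an explicit description of the A-parameter and its associated geometry, then invoke the machinery already assembled. First I would unwind what an irreducible A-parameter $\psi$ of $\mathrm{GL}_{N}$ looks like: by \eqref{eq:Arthurparameterdecomposition} with $r=1$, $\ell_1=1$, we have $\psi = \mu \boxtimes \nu_{n}$ where $\mu$ is an irreducible representation of $W_{\mathbb{R}}$ of dimension $d \in \{1,2\}$ and $n$ is such that $N = dn$. Correspondingly, the Langlands parameter $\phi_{\psi}$ attached to $\psi$ via \eqref{phipsi} is the direct sum $\bigoplus_{k} \mu\,|\cdot|^{(n-1)/2 - k}$ for $k = 0, \dots, n-1$, a principal-series-type parameter built from a single discrete (or principal) series parameter of $\mathrm{GL}_d(\mathbb{R})$ twisted by a string of characters. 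The key point is that the component group $A_{\psi}$ of the centralizer of $\psi$ in $\ch{\mathrm{GL}}_N$ is trivial, since $\psi$ is irreducible (\cite{Arthur84}*{Section 2.3}, already cited in the excerpt near \eqref{containlpacket}); equivalently, the centralizer of the image of $\phi_{\psi}$ is connected.

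Next I would locate $\psi$ geometrically. Let $\O$ be the infinitesimal character of $\phi_{\psi}$ and $S_{\psi} \subset X(\O, \ch{\mathrm{GL}}_N^{\Gamma})$ the corresponding orbit. The representation $\pi(S_\psi, 1)$ lies in $\Pi_{\psi}^{\mathrm{ABV}}$ by exactly the computation \eqref{containlpacket}: because the component groups are trivial, $\tau^{\mathrm{mic}}_{S_\psi}(P(S_\psi,1)) = 1 \circ i_{S_\psi} = 1 \ne 0$. So the content of the proposition is that this is the \emph{only} representation in the support of $\eta^{\mathrm{mic}}_{\psi}$, i.e. that $\chi^{\mathrm{mic}}_{S_\psi}(P(\xi)) = 0$ for every $\xi = (S_\xi, 1) \ne (S_\psi, 1)$. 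The cleanest way I see to establish this is to show that the orbit $S_\psi$ is \emph{open} in its support stratum in a strong sense — more precisely, that $\phi_{\psi}$ is an open parameter, so that the only perverse sheaf whose characteristic cycle meets $\overline{T^*_{S_\psi}X}$ is $P(S_\psi,1)$ itself, with multiplicity one. This is where the irreducibility of $\psi$ is essential: the parameter $\phi_{\psi}$ corresponds to a \emph{principal series} representation induced from a maximal torus (up to the $\mathrm{GL}_d$ factor when $d=2$), and the associated $\ch K_y$-orbit on the relevant partial flag variety is open. Via \cite{ABV}*{Proposition 6.16}, $S_\psi$ is then open in $X_y(\O, \ch{\mathrm{GL}}_N^{\Gamma})$, hence its conormal bundle is the zero section, and $\chi^{\mathrm{mic}}_{S_\psi}$ sees only the constant sheaf on $\overline{S_\psi}$.

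Concretely I would argue as follows. Using the bijection of Theorem \ref{paramsGLn}, identify the parameters $\xi \in \Xi(\O, \ch{\mathrm{GL}}_N^{\Gamma})$ with Atlas parameters $(x,y)$. The parameter corresponding to $\phi_{\psi}$ is the one where $y$ is the "most split" element; the corresponding representation $\pi(S_\psi,1) = J(x,y,\lambda)$ is the Langlands quotient of a standard module $M(S_\psi,1)$ which is either an irreducible principal series or (when $d=2$ occurs) an induced-from-$\mathrm{GL}_2$-discrete-series module, and in all cases $M(S_\psi,1) = \pi(S_\psi,1)$ is already irreducible. Then $P(S_\psi,1) = \mu(S_\psi,1)$ is the constant sheaf on $\overline{S_\psi}$ shifted into perversity, its characteristic cycle is $\overline{T^*_{S_\psi}X}$ with multiplicity one (the orbit being open in the smooth locus forces this), so $\chi^{\mathrm{mic}}_{S_\psi}(P(\xi)) = \delta_{\xi, (S_\psi,1)}$ and $\eta^{\mathrm{mic}}_{\psi} = \pm\,\pi(S_\psi,1)$. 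Finally, unitarity: $\pi(S_\psi,1)$ is (induced from) a unitary character times a discrete series of $\mathrm{GL}_d(\mathbb{R})$, placed at the midpoint of a symmetric string of twists, hence it is the Speh-type representation $\mathrm{Speh}(\mu, n)$, which is in the unitary dual of $\mathrm{GL}_N(\mathbb{R})$ by Vogan's classification (\cite{beyond} already cited).

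The main obstacle will be the openness claim — verifying carefully that for an irreducible A-parameter the $\ch K_y$-orbit of $\phi_{\psi}$ on $\ch{\mathrm{GL}}_N(\lambda)/\ch P(\lambda)$ is open (equivalently that the standard module $M(S_\psi,1)$ is the big cell / principal-series module and is irreducible), and that openness plus triviality of the local system really forces the microlocal multiplicity function to reduce to the Kronecker delta. This amounts to checking that the characteristic variety of the perverse extension of the constant sheaf on an open orbit is the zero section, which is standard but should be stated with care; once it is in hand, the rest is bookkeeping with the already-established pairing \eqref{pairing} and definition \eqref{etapsi} of $\eta^{\mathrm{mic}}_{\psi}$.
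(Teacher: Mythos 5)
There is a genuine gap, and it sits exactly at the step you flag as the ``main obstacle'': the openness claim is false whenever the $\mathrm{SL}_2$-factor of $\psi$ is nontrivial, i.e.\ in every case except the tempered one $n=1$. Openness of the orbit $S_\phi$ attached to a Langlands parameter $\phi$ characterizes the \emph{generic} member of the relevant component (this is precisely how the paper identifies the generic parameter in the proof of Proposition \ref{conjq}: the unique open orbit corresponds to the unique generic representation). The representation $\pi(S_\psi,1)$ attached to $\phi_\psi$ for $\psi=\mu\boxtimes\nu_n$ with $n>1$ is the Speh-type Langlands quotient, which is not generic; its standard module $M(S_\psi,1)$ is reducible (already for $N=2$, $\psi=1\boxtimes\nu_2$: the induced module from $|\cdot|^{1/2}\otimes|\cdot|^{-1/2}$ has the trivial representation as quotient and $D_2$ as a submodule), and the orbit $S_\psi$ is a small, non-open orbit. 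So your statements that ``$M(S_\psi,1)=\pi(S_\psi,1)$ is already irreducible'' and that the $\ch K_y$-orbit is open both fail, and with them the conclusion that $\chi^{\mathrm{mic}}_{S_\psi}(P(\xi))=\delta_{\xi,(S_\psi,1)}$ comes for free. The actual content of the proposition is that, even though $S_\psi$ lies in the closure of larger orbits, the conormal bundle $\overline{T^{*}_{S_\psi}X}$ does \emph{not} appear in the characteristic cycles of the perverse sheaves $P(\xi)$ for $\xi\neq(S_\psi,1)$; this is a nontrivial microlocal computation, not a formal consequence of triviality of the component groups (which only gives you the containment \eqref{containlpacket}, i.e.\ that $\pi(S_\psi,1)$ does lie in the packet).

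For comparison, the paper does not attempt this characteristic-cycle vanishing directly. When $\mu$ is one-dimensional it observes that $\psi(\mathrm{SL}_2)$ contains a principally unipotent element and quotes \cite{arancibia_characteristic}*{Theorem 4.11 (d)} (generalizing \cite{ABV}*{Theorem 27.18}), which yields that the packet is a single unitary character. When $\mu$ is two-dimensional it verifies the Adams--Johnson conditions AJ1--AJ3 and invokes \cite{arancibia_characteristic}*{Corollary 4.18} to identify $\Pi^{\mathrm{ABV}}_{\psi}$ with the Adams--Johnson packet, which for $\mathrm{GL}_N$ is indexed by a single $\theta$-stable parabolic (Levi $\mathrm{Res}_{\mathbb{C}/\mathbb{R}}\mathrm{GL}_{N}$) and hence is a singleton of cohomologically induced, unitary type. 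If you want to salvage your route you would have to reprove the relevant characteristic-cycle statements (e.g.\ that the closures of the larger orbits are sufficiently nice that their IC sheaves have irreducible characteristic cycles missing $T^{*}_{S_\psi}X$), which is essentially the content of the cited results. Your unitarity remark (Speh representations are unitary) is fine, but it is downstream of the singleton claim.
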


\begin{proof}
We begin with the case of  $\psi = \mu \boxtimes \nu_{N}$, in which $\mu$ is a one-dimensional representation of
$W_{\mathbb{R}}$. Since $\nu_{N}$ is irreducible, the image of $\mathrm{SL}_{2}$ under $\psi$ contains a principally unipotent (\emph{i.e.} regular and unipotent)
element of $\mathrm{GL}_{N}$. \cite{arancibia_characteristic}*{Theorem 4.11 (d)} (a generalization of \cite{ABV}*{Theorem 27.18}) therefore implies that $\Pi_{\psi}^{\mathrm{ABV}}$
consists of a single unitary character.

Let us now suppose that $\psi=\mu \boxtimes \nu_{n}$ is an
A-parameter in which $\mu$ is a two-dimensional irreducible
representation of $W_{\mathbb{R}}$ (\emph{i.e.} $N=2n$).
The restriction of $\psi$ to $\mathbb{C}^{\times}$ may be
taken to have the form
\begin{equation}\label{eq:AJparameterequation0}
\psi(z)=z^{\lambda_{1}}\bar{z}^{\lambda_{2}}, \quad z \in \mathbb{C}^{\times}
\end{equation}
where
$\lambda_{1}$, $\lambda_{2} \in {^\vee}\mathfrak{h}$  are semisimple
elements with $\exp(2\uppi i(\lambda_{1}-\lambda_{2}))=1$ (\emph{cf.}
\cite{ABV}*{Proposition 5.6}).
Let $\mathcal{L}$ be the centralizer of
$\psi(\mathbb{C}^{\times})$ in ${^\vee}\mathrm{GL}_{N}$.
Following \cite{Taibi}*{4.2.2}, it is straightforward to verify the
following technical conditions
\begin{enumerate}
\item[AJ1.] The identity component of the centre of the centralizer of
  $\psi(j)$ in $\mathcal{L}$
 is contained in the centre of $\mathrm{GL}_{N}$.
\item[AJ2.] $\psi(\mathrm{SL}_{2})$ contains a principally unipotent element of $\mathcal{L}$.
\item[AJ3.]
  $\langle \lambda_{1}+{}^{\vee}\rho_{\mathcal{L}},\alpha \rangle \neq 0$ for all roots
  $\alpha\in R\left(^{\vee}\mathrm{GL}_{N}, {^\vee}H\right)$.
\end{enumerate}
These three conditions place $\psi$ among the family of A-parameters
studied by Adams and Johnson in \cite{Adams-Johnson} \cite{Arthur89}*{Section 5}.  According to \cite{arancibia_characteristic}*{Corollary 4.18},
$$\Pi_{\psi}^{\mathrm{ABV}}= \Pi_{\psi}^{\mathrm{AJ}},$$
where $\Pi_{\psi}^{AJ}$ denotes the packet
 of cohomologically induced representations introduced in \cite{Adams-Johnson}*{Definition 2.11}. The set $\Pi_{\psi}^{AJ}$ is in
 bijection with a set of parabolic subgroups (\cite{AMR}*{Section 8.2}), which
 in this case reduces to a single parabolic subgroup (with Levi
 subgroup isomorphic to $\mathrm{Res}_{\mathbb{C}/\mathbb{R}}
 \mathrm{GL}_{N}$).
\end{proof}
Let us go back to the case of a general A-parameter
$\psi$ as in Equation (\ref{eq:Arthurparameterdecomposition}). Let
$$\psi=\boxplus _{i=1}^{r}\ell_{i} \psi_i, \quad
\psi_{i}=\mu_{i} \boxtimes \nu_{n_{i}}, $$
be its decomposition into irreducible A-parameters
$\psi_{i}$. Let
$N_{i}$ be the dimension of $\psi_{i}$ and define
\begin{equation}\label{eq:Hdecomposition}
  {}^{\vee}G =\prod_{i=1}^{r}\left({}^{\vee}G_{i}\right)^{\ell_{i}}
 \cong \prod_{i=1}^{r} (\mathrm{GL}_{N_{i}})^{\ell_{i}}\\
  \end{equation}
to be the obvious Levi subgroup of $\ch \mathrm{GL}_{N}$ containing
the image of $\psi$.
Let ${^\vee}G^{\Gamma}={}^{\vee}G\times \langle {^\vee} \delta_{0} \rangle$, a subgroup of ${^\vee}\mathrm{GL}_{N}^{\Gamma}$.
It is immediate that $\psi$ factors through an A-parameter
\begin{align}
\label{eq:ArthurfactorsH}
 \psi:W_{\mathbb{R}} \times \mathrm{SL}_{2}
 \xrightarrow{\psi_{G}} {}^{\vee}G^{\Gamma}\hookrightarrow
             {}^{\vee}\mathrm{GL}_{N}^{\Gamma},
\end{align}
where $\psi_{G} = \times_{i=1}^{r} \ell_{i} \psi_{G,i}$ and each  $\psi_{G,i}$ is an
irreducible A-parameter of ${}^{\vee}
G_{i}^{\Gamma}={}^{\vee}G_{i}\times \langle {^\vee}\delta_{0} \rangle$.
The description of the ABV-packet corresponding to $\psi_{G}$ is a fairly straightforward
consequence of
Proposition \ref{prop:irreducibleArthurparameter}.  We must only remind ourselves that the direct product of (\ref{eq:Hdecomposition}) translates into a tensor product of ABV-packets as it passes through the process defining the packets in Section \ref{sec:ABV-packetsdef}.
\begin{cor}
\label{cor:LeviABVpacket}
The ABV-packet $\Pi_{\psi_{G}}^{\mathrm{ABV}}$ consists of a single unitary representation
$\pi(S_{\psi_{G}}, 1)$.
\end{cor}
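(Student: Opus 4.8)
The plan is to deduce Corollary \ref{cor:LeviABVpacket} from Proposition \ref{prop:irreducibleArthurparameter} by tracking how the direct-product decomposition \eqref{eq:Hdecomposition} of $\ch G$ interacts with the constructions of Section \ref{sec:ABV-packetsdef}. First I would observe that since ${}^\vee G = \prod_i ({}^\vee G_i)^{\ell_i}$ is a product, all of the relevant geometric objects split as external products: the variety $X(\O_G, {}^\vee G^\Gamma)$ is the product of the varieties $X(\O_{G_i}, {}^\vee G_i^\Gamma)$ (one factor for each of the $\sum_i \ell_i$ simple factors), the ${}^\vee G$-orbits are products of orbits, the component groups are products of component groups, and—by the K\"unneth formula for intersection cohomology—the perverse sheaves $P(\xi)$ decompose as external tensor products $\boxtimes_j P(\xi_j)$. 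Consequently the microlocal multiplicity $\chi^{\mathrm{mic}}_{S_{\psi_G}}(P(\xi))$ is the product $\prod_j \chi^{\mathrm{mic}}_{S_{\psi_{G,j}}}(P(\xi_j))$, and $d(S_\xi) - d(S_{\psi_G}) = \sum_j (d(S_{\xi_j}) - d(S_{\psi_{G,j}}))$. Feeding this into the defining formula \eqref{etapsi} (restricted to $\delta_q$ as in \eqref{etapsiabv}) shows that $\eta^{\mathrm{ABV}}_{\psi_G}$ is the external product of the stable virtual characters $\eta^{\mathrm{ABV}}_{\psi_{G,j}}$ of the simple factors, and hence $\Pi^{\mathrm{ABV}}_{\psi_G} = \prod_j \Pi^{\mathrm{ABV}}_{\psi_{G,j}}$ as a set of (products of) irreducible representations.

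Next I would apply Proposition \ref{prop:irreducibleArthurparameter} to each simple factor $\psi_{G,j}$: each is an irreducible A-parameter of the corresponding general linear group ${}^\vee G_j \cong \mathrm{GL}_{N_j}$, so each $\Pi^{\mathrm{ABV}}_{\psi_{G,j}}$ is a singleton $\{\pi(S_{\psi_{G,j}}, 1)\}$ consisting of a single unitary representation. (Here one uses that the component group $({}^\vee\mathrm{GL}_{N_j})_{\psi_{G,j}}/(\cdots)^0$ is trivial, so the unique complete geometric parameter contributing is $(S_{\psi_{G,j}}, 1)$, consistent with \eqref{containlpacket}.) Taking the product over all factors, $\Pi^{\mathrm{ABV}}_{\psi_G}$ is a singleton whose element is $\boxtimes_j \pi(S_{\psi_{G,j}}, 1)$. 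Since $S_{\psi_G} = \prod_j S_{\psi_{G,j}}$ and the trivial character of the trivial component group of ${}^\vee G$ is the product of the trivial characters, this representation is precisely $\pi(S_{\psi_G}, 1)$, and it is unitary as a tensor product of unitary representations.

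The only mildly delicate point, and the one I would spell out carefully, is the external-product compatibility of the microlocal multiplicity maps and of the perverse sheaves $P(\xi)$ under a product decomposition of the group; this is where the argument genuinely uses that $\ch G$ is a product rather than an arbitrary reductive group. This compatibility is standard (it is implicit in the remark preceding the corollary, ``the direct product of \eqref{eq:Hdecomposition} translates into a tensor product of ABV-packets''), and can be justified by the K\"unneth theorem together with the fact that characteristic cycles and conormal bundles behave multiplicatively under products $T^*(X_1 \times X_2) = T^*(X_1) \times T^*(X_2)$. Everything else is bookkeeping: matching $\pi(S_{\psi_G}, 1)$ with the product of the $\pi(S_{\psi_{G,j}}, 1)$ under the local Langlands correspondence \eqref{localLanglandspure}, which also respects products.
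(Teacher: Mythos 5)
Your proposal is correct and follows essentially the same route as the paper: decompose $X(\O_G,{}^\vee G^\Gamma)$, the complete geometric parameters, and the microlocal data as external products over the factors of \eqref{eq:Hdecomposition}, reduce to Proposition \ref{prop:irreducibleArthurparameter} factorwise, and identify the unique member as $\pi(S_{\psi_G},1)$ via \eqref{containlpacket}. The only cosmetic difference is that you justify the factorization of $\chi^{\mathrm{mic}}$ by K\"unneth for perverse sheaves and multiplicativity of characteristic cycles, whereas the paper works on the $\mathcal{D}$-module side (where $\chi^{\mathrm{mic}}$ is defined) and only uses that the singular support, hence the \emph{nonvanishing} of the microlocal multiplicity, decomposes factorwise — which is all that membership in the packet requires.
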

\begin{proof}
Let $\O_{G} \subset {^\vee}\mathfrak{g}$ be the
${^\vee}G$-orbit of the infinitesimal character determined by the
L-parameter $\phi_{\psi_{G}}$.  This orbit has an obvious decomposition
$\O=\prod_{i=1}^{r}(\O_{i})^{\ell_{i}}$ into
${^\vee}G_{i}$-orbits, and
the variety
$X(\O_{G}, {^\vee}G^{\Gamma})$ of geometric parameters decomposes as
$$X\left(\O_{G},
{^\vee}G^{\Gamma}\right)=\prod_{i=1}^{r}\left(X\left(\O_{i},
{}^\vee G_{i}^{\Gamma}\right)\right)^{\ell_{i}}.$$
As a consequence, all complete geometric parameters
$\xi=(S,\tau_{S}) = (S, 1)$ in
$\Xi\left(\O_{G},{}^{\vee}G^{\Gamma}\right)$
decompose as
$$\xi=\left(\prod_{i=1}^{r}(S_{i})^{\ell_{i}},\bigotimes_{i=1}^{r}
\tau_i^{\otimes \ell_{i}} \right),$$
for ${}^{\vee}G_{i}$-orbits $S_{i} \subset X\left(\O_{i},
{}^\vee G_{i}^{\Gamma}\right)$, and trivial quasicharacters $\tau_{i}
= 1$.
Furthermore, the algebra of differential operators
$\mathcal{D}_{X\left(\O_{G},{}^{\vee}G^{\Gamma}\right)}$,
its graded sheaf, and the conormal bundle
$T_{{}^{\vee} H}^{\ast}(X\left(\O_{G},{}^{\vee}G^{\Gamma}\right))$
all decompose as direct products.
Irreducible
$\mathcal{D}_{{X\left(\O_{G},{}^{\vee}G^{\Gamma}\right)}}$-modules
$D(\xi)$ (\cite{ABV}*{(7.10)(e)}, (\ref{dr})) are therefore tensor products
$$D(\xi)=\bigotimes_{i=1}^{r} D(\xi_{i})^{\otimes \ell_{i}},\quad
\xi_{i}=\left(S_{i},\tau_{i}\right),$$
of irreducible $\mathcal{D}_{{X\left(\O_i,{}^{\vee}G_{i}^{\Gamma}\right)}}$-modules,
and the same can be said about their corresponding irreducible graded modules.
Consequently, the \emph{singular support}
$\mathrm{SS}(D(\xi))$ of the
graded sheaf $\mathrm{gr} D(\xi)$ (\cite{ABV}*{Definition 19.7})
decomposes as
$$\mathrm{SS}(D(\xi))=\prod_{i=1}^{r}(\mathrm{SS}(D(\xi_{i})))^{\ell_{i}},$$
and in particular
$$T_{S_{\psi_{G}}}^{\ast}\left(X\left(\O_{G},
{^\vee}G^{\Gamma}\right)\right) \subset \mathrm{SS}(D(\xi))$$
if and only if
$$T_{S_{\psi_{G,i}}}^{\ast}\left(X\left(\O_{i},
{}^{\vee}G_i^{\Gamma}\right)\right)\subset\mathrm{SS}(D(\xi_i))$$
for all $1 \leq i \leq r$.  This is equivalent to saying
\begin{equation*}
\chi_{S_{\psi_{G}}}^{\mathrm{mic}}(P(\xi))\neq 0
\quad\Longleftrightarrow\quad \chi_{S_{\psi_{G,i}}}^{\mathrm{mic}}(P(\xi_{i}))\neq 0, \quad \forall\  1\leq i\leq r.
\end{equation*}
In other words, $\pi(\xi)\in
\Pi_{\psi_{G}}^{\mathrm{ABV}}$
if and only if
$\pi(\xi_i)\in \Pi_{\psi_{G,i}}^{\mathrm{ABV}}$ for all $1 \leq i \leq r$
(\ref{abvdef}).
By Proposition
\ref{prop:irreducibleArthurparameter},
each  ABV-packet  $\Pi_{\psi_{G,i}}^{\mathrm{ABV}}$ consists of a single unitary
representation.  It is a consequence of (\ref{containlpacket}) that
each such unitary representation is of the form $\pi(S_{\psi_{G,i}},
1)$.  A look back to the definition of ABV-packets reveals that
$$\Pi_{\psi_{G}}^{\mathrm{ABV} } = \{ \otimes_{i = 1}^{r} \pi(S_{\psi_{G,i}},1) \} = \{ \pi(S_{\psi_{G}},1) \}.$$
\end{proof}
We are ready for the final step of describing the ABV-packets for
$\mathrm{GL}_{N}(\mathbb{R})$.
\begin{prop}
\label{prop:singletonGLN}
  Let $\psi$ be an A-parameter for $\mathrm{GL}_{N}$ as in (\ref{eq:Arthurparameterdecomposition}).
  Then the ABV-packet $\Pi_{\psi}^{\mathrm{ABV}}$ consists of a single unitary representation  $\pi(S_{\psi}, 1)$.
\end{prop}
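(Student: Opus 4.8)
The plan is to deduce Proposition \ref{prop:singletonGLN} from Corollary \ref{cor:LeviABVpacket} by applying the standard endoscopic lifting identity (\ref{ordmiclift}) to the Levi subgroup ${}^\vee G$ of (\ref{eq:Hdecomposition}), viewed as an endoscopic group of $\mathrm{GL}_N$. First I would recall the setup from Section \ref{glnpacket}: the A-parameter $\psi$ factors as $\psi = \epsilon \circ \psi_G$ through ${}^\vee G^\Gamma \hookrightarrow {}^\vee\mathrm{GL}_N^\Gamma$, where $\psi_G = \times_{i=1}^r \ell_i\,\psi_{G,i}$ and ${}^\vee G = \prod_i({}^\vee G_i)^{\ell_i}$ is a Levi, hence the pair $(1, {}^\vee G^\Gamma)$ (or more precisely a suitable semisimple $s$ centralizing the image of $\psi$) is an endoscopic datum for ${}^\vee\mathrm{GL}_N^\Gamma$ in the sense of Subsection \ref{standend}. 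Let $\O_G$ be the ${}^\vee G$-orbit of the infinitesimal character of $\phi_{\psi_G}$ and $\O$ the ${}^\vee\mathrm{GL}_N$-orbit of the same semisimple element. Since the infinitesimal character of $\phi_{\psi_G}$ need not be regular, I would first reduce to the regular case by the Jantzen--Zuckerman translation principle as used elsewhere in the paper (Section \ref{equalapacketsing}); translation functors carry ABV-packets to ABV-packets and commute with endoscopic lifting, so it suffices to treat $\O$ regular, and then the hypothesis (\ref{regintdom}) needed for Subsection \ref{standend} is in force.

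With regularity in hand, the key step is the identity (\ref{ordmiclift}):
$$\mathrm{Lift}_0\!\left(\eta^{\mathrm{mic}}_{\psi_G}\right) = \sum_{\xi \in \Xi({}^\vee\mathrm{GL}_N^\Gamma,\O)} (-1)^{d(S_\xi) - d(S_\psi)}\, \chi^{\mathrm{mic}}_{S_\psi}(P(\xi))\, \pi(\xi) = \eta^{\mathrm{mic}}_\psi,$$
which says precisely that the endoscopic lift of the stable virtual character attached to $\psi_G$ is the stable virtual character attached to $\psi$. By Corollary \ref{cor:LeviABVpacket}, $\eta^{\mathrm{mic}}_{\psi_G}$ (restricted to the quasisplit form, which for the Levi $G$ is all of it by the product version of Lemma \ref{GLn}) is, up to sign, a single standard-module-free irreducible term $\pi(S_{\psi_G},1)$; more precisely $\Pi^{\mathrm{ABV}}_{\psi_G} = \{\pi(S_{\psi_G},1)\}$. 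Now $\mathrm{Lift}_0$ on $\mathrm{GL}_N$ is, by Proposition \ref{injliftord}(c), the parabolic induction functor $\mathrm{ind}_{G(\R,\delta_q)}^{\mathrm{GL}_N(\R)}$. Therefore $\eta^{\mathrm{mic}}_\psi = \mathrm{ind}_{G(\R,\delta_q)}^{\mathrm{GL}_N(\R)} \eta^{\mathrm{mic}}_{\psi_G}$, and since $\eta^{\mathrm{mic}}_{\psi_G}$ is supported on the single irreducible $\pi(S_{\psi_G},1)$, the support of $\eta^{\mathrm{mic}}_\psi$ is contained in the set of irreducible constituents of a single induced module $\mathrm{ind}_{G(\R,\delta_q)}^{\mathrm{GL}_N(\R)} \pi(S_{\psi_G},1)$.

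The last step is to see that this induced module is itself irreducible, so that $\Pi^{\mathrm{ABV}}_\psi$ is a singleton. Here I would invoke the structure of $\pi(S_{\psi_G},1)$: by Proposition \ref{prop:irreducibleArthurparameter} each irreducible factor $\psi_{G,i}$ yields a unitary representation which is either a character (one-dimensional $\mu_i$) or an Adams--Johnson representation cohomologically induced from a parabolic with Levi $\mathrm{Res}_{\C/\R}\mathrm{GL}_{N_i}$, i.e.\ an (essentially) Speh-type unitary representation. Inducing the tensor product of such representations (with the multiplicities $\ell_i$) up to $\mathrm{GL}_N(\R)$ gives an irreducible unitary representation by the classification of the unitary dual of $\mathrm{GL}_N(\R)$ (Vogan, Tadi\'c): the Langlands/Speh building blocks are pairwise "non-linked" in the relevant sense precisely because the $\psi_i$ are distinct irreducibles, and induction of a Speh representation from a maximal Levi is irreducible. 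Alternatively, and more in the spirit of \cite{ABV}, one can argue directly that $\eta^{\mathrm{mic}}_\psi$ must contain $\pi(S_\psi,1)$ with nonzero coefficient by (\ref{containlpacket}), that $\eta^{\mathrm{mic}}_\psi$ is stable hence $\vartheta$-invariant, and that any ABV-packet for $\mathrm{GL}_N$ equals its L-packet by \cite{ABV}*{Theorem 22.7 (a)} once one knows it is contained in a single L-packet --- and L-packets for $\mathrm{GL}_N(\R)$ are singletons (proof of Lemma \ref{XXXi}). Unitarity of $\pi(S_\psi,1)$ then follows from the classification of the unitary dual of $\mathrm{GL}_N(\R)$, or from the fact that it is a constituent of a unitarily induced module from a unitary representation.

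The main obstacle I expect is the irreducibility of the induced module $\mathrm{ind}_{G(\R,\delta_q)}^{\mathrm{GL}_N(\R)} \pi(S_{\psi_G},1)$ --- equivalently, ruling out that the parabolic induction splits and produces several constituents in $\Pi^{\mathrm{ABV}}_\psi$. This is where the distinctness of the irreducible summands $\psi_i$ (and the precise nature of the Adams--Johnson/Speh building blocks) must be used; the cleanest route is probably to bypass induction theory altogether and use the abstract fact that an ABV-packet for $\mathrm{GL}_N(\R)$ containing $\pi(S_\psi,1)$ and contained in the support of a single stable character is forced to be the L-packet of $\phi_\psi$, which is a singleton, with unitarity read off from the classification of $\widehat{\mathrm{GL}_N(\R)}$. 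I would write the argument that way to keep it self-contained within the results cited in the excerpt.
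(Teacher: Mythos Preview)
Your approach is essentially the paper's: set up ${}^{\vee}G$ as a standard endoscopic Levi, apply (\ref{ordmiclift}) together with Corollary \ref{cor:LeviABVpacket} and Proposition \ref{injliftord}(c) to get $\eta^{\mathrm{mic}}_\psi = \mathrm{ind}_{G(\mathbb{R},\delta_q)}^{\mathrm{GL}_N(\mathbb{R})}\pi(S_{\psi_G},1)$, and then invoke irreducibility of unitary parabolic induction for general linear groups. The paper resolves your ``main obstacle'' in one stroke by citing Tadi\'c (\cite{Tadic}*{Proposition 2.1, Sections 4--5}): for real $\mathrm{GL}_N$, parabolic induction of an irreducible unitary representation is \emph{always} irreducible and unitary, so no analysis of distinctness of the $\psi_i$ or appeal to \cite{ABV}*{Theorem 22.7(a)} is needed; the translation-to-regular reduction is also omitted in the paper.
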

\begin{proof}
   Define ${}^{\vee}G$ as in (\ref{eq:Hdecomposition}).
    Take $s \in  Z({^\vee}G) \subset {^\vee}\mathrm{GL}_{N}$
    to be as regular as possible so that its centralizer in
    ${^\vee}\mathrm{GL}_{N}$ is equal to
    ${^\vee}G$.  Set ${^\vee}G^{\Gamma} = {^\vee}G \times \Gamma$, so that $(s,
    {^\vee}G^{\Gamma})$ is an endoscopic datum (Section \ref{standend}).
 Write
    $\psi_{G}:W_{\mathbb{R}} \times \mathrm{SL}_{2} \rightarrow {^\vee}G^{\Gamma}$
    for the A-parameter for $G$, satisfying
    $\psi = \epsilon \circ \psi_{G}$  ((\ref{epinclusion}),
    (\ref{eq:ArthurfactorsH})).
According to (\ref{ordmiclift}), Corollary \ref{cor:LeviABVpacket},
and Proposition \ref{injliftord} (c)
$$\eta^{\mathrm{mic}}_{\psi} = \mathrm{Lift}_{0}\left( \eta^{\mathrm{mic}}_{\psi_{G}}\right) =
\mathrm{Lift}_{0} \left(\pi(S_{\psi_{G}}, 1)\right) =
\mathrm{ind}_{G(\mathbb{R}, \delta_{q})}^{\mathrm{GL}_{N}(\mathbb{R})}
\pi(S_{\psi_{G}}, 1).$$
The proposition now follows from the fact that
parabolic induction for general linear groups takes irreducible
unitary representations to irreducible unitary representations
(\cite{Tadic}*{Proposition 2.1, Sections 4-5}).
\end{proof}

\begin{cor}
\label{etaplus2}
The stable virtual character $\eta_{\psi}^{\mathrm{mic}+}(\upsigma)$ defined in (\ref{etaplus}) is equal to
 $(-1)^{l^{I}(\xi)-l^{I}_{\vartheta}(\xi)} \pi(\xi)^{+}$,
where $\xi = (S_{\psi}, 1)$.  In particular,
$$\mathrm{Lift}_{0}\left(\eta^{\mathrm{mic}}_{\psi_{G}}(\upsigma)(\delta_{q})\right) = (-1)^{l^{I}(\xi)-l^{I}_{\vartheta}(\xi)} \pi(\xi)^{+}$$
\end{cor}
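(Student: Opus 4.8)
The goal is to evaluate the formula for $\eta_{\psi}^{\mathrm{mic}+}(\upsigma)$ given in \eqref{etaplus} using the fact, just established in Proposition~\ref{prop:singletonGLN}, that the ABV-packet $\Pi_{\psi}^{\mathrm{ABV}}$ is the singleton $\{\pi(S_{\psi},1)\}$. First I would recall from \eqref{etaplus} that
$$\eta_{\psi}^{\mathrm{mic}+}(\upsigma) = \sum_{\xi \in \Xi(\O, {^\vee}\mathrm{GL}_{N}^{\Gamma})^\vartheta}
(-1)^{d(S_{\xi}) - d(S_{\psi})}\ \mathrm{Tr}(\tau^{\mathrm{mic}}_{S_{\psi}}(P(\xi)^{+})(\upsigma)) \,
(-1)^{l^{I}(\xi) - l^{I}_{\vartheta}(\xi)} \pi(\xi)^{+},$$
and observe that the coefficient of $\pi(\xi)^{+}$ is (up to the sign $(-1)^{l^I(\xi)-l^I_\vartheta(\xi)}$) the microlocal trace $\mathrm{Tr}(\tau^{\mathrm{mic}}_{S_{\psi}}(P(\xi)^{+})(\upsigma))$. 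Since the component group $({^\vee}\mathrm{GL}_{N})_{\psi}/(({^\vee}\mathrm{GL}_{N})_{\psi})^{0}$ is trivial (as noted after \eqref{etatilde}), the representation $\tau^{\mathrm{mic}}_{S_{\psi}}(P(\xi))$ is either zero or one-dimensional, and in the latter case the trace of $\tau^{\mathrm{mic}}_{S_{\psi}}(P(\xi)^{+})(\upsigma)$ equals $1$ by the normalization \eqref{oneextend1}. Thus the nonzero coefficients are exactly $(-1)^{l^I(\xi)-l^I_\vartheta(\xi)}$, one for each $\xi$ with $\chi^{\mathrm{mic}}_{S_{\psi}}(P(\xi))\neq 0$ (using \eqref{degtaumic}).

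Next I would invoke Proposition~\ref{prop:singletonGLN}: the set of $\xi$ with $\chi^{\mathrm{mic}}_{S_{\psi}}(P(\xi))\neq 0$ is precisely $\{(S_{\psi},1)\}$, since $\Pi_{\psi}^{\mathrm{ABV}} = \{\pi(S_{\psi},1)\}$ and the ABV-packet is, by definition \eqref{abvdef}, the set of $\pi(\xi)$ with nonvanishing microlocal multiplicity. Here I should be slightly careful: \eqref{abvdef} restricts to parameters with $\pi(\xi)$ a representation of the quasisplit form, but for $\mathrm{GL}_{N}$ there is only one pure strong involution (Lemma~\ref{GLn}), so this is no restriction; moreover I must confirm that the sum in \eqref{etaplus} and the sum in \eqref{etapsi} have the same support, i.e.\ that $\chi^{\mathrm{mic}}_{S_\psi}(P(\xi))\ne 0$ forces $\xi$ to be $\vartheta$-fixed — this follows because $\upsigma$ (hence $\vartheta$ up to $\mathrm{Int}$) fixes $S_\psi$ and permutes the orbits, so a $\upsigma$-stable characteristic cycle can only be supported on $\vartheta$-fixed parameters, or more simply because Proposition~\ref{prop:singletonGLN} already pins the support down to the single $\vartheta$-fixed parameter $(S_\psi,1)$. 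Therefore only the term $\xi = (S_{\psi},1)$ survives, with $d(S_{\xi}) = d(S_{\psi})$ so the sign $(-1)^{d(S_\xi)-d(S_\psi)}$ is $+1$, giving
$$\eta_{\psi}^{\mathrm{mic}+}(\upsigma) = (-1)^{l^{I}(\xi)-l^{I}_{\vartheta}(\xi)}\,\pi(\xi)^{+},\qquad \xi = (S_{\psi},1).$$

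For the second assertion I would simply combine this with Theorem~\ref{thm:etaplus1}(c), which states $\mathrm{Lift}_{0}(\eta^{\mathrm{mic}}_{\psi_{G}}(\upsigma)(\delta_{q})) = \eta_{\psi}^{\mathrm{mic}+}(\upsigma)$, substituting $\psi = \epsilon\circ\psi_{G}$ from \eqref{glnpsi}. That yields the displayed identity and completes the proof. I do not anticipate a serious obstacle here; the corollary is essentially a bookkeeping consequence of Proposition~\ref{prop:singletonGLN} and Theorem~\ref{thm:etaplus1}. The one point requiring genuine care — and the only place I would write more than a line — is justifying that the microlocal trace coefficient is exactly $1$ (not merely nonzero) at the surviving parameter, which rests on the triviality of the relevant component group together with the canonical normalization \eqref{oneextend1} of the extension $P(\xi)^{+}$; and relatedly, confirming that the support of \eqref{etaplus} is contained in the $\vartheta$-fixed parameters so that Proposition~\ref{prop:singletonGLN} applies verbatim.
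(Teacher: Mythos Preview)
Your proposal is correct and follows essentially the same route as the paper: use Proposition~\ref{prop:singletonGLN} to reduce the sum in \eqref{etaplus} to the single term $\xi=(S_{\psi},1)$, invoke \eqref{oneextend1} to see the microlocal trace there equals $1$, and then apply Theorem~\ref{thm:etaplus1}(c) for the second assertion. Your extra care about $\vartheta$-fixedness of the support is harmless but unnecessary, since the sum in \eqref{etaplus} is already indexed by $\Xi(\O,{^\vee}\mathrm{GL}_{N}^{\Gamma})^{\vartheta}$ and Proposition~\ref{prop:singletonGLN} pins down the unique surviving parameter directly.
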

\begin{proof}
By Proposition \ref{prop:singletonGLN}, $\tau_{S_{\psi}}^{\mathrm{mic}}(P(\xi))$ is non-zero only for $\xi = (S_{\psi}, 1)$.  By definition (\ref{oneextend1}), $\tau_{S_{\psi}}^{\mathrm{mic}}(P(\xi)^{+})(\upsigma) = 1$  when $\xi = (S_{\psi}, 1)$ and is zero otherwise. The first assertion follows. The second assertion is a consequence of the first and Theorem \ref{thm:etaplus1}.
\end{proof}

\section{Whittaker extensions and their relationship to Atlas
  extensions}
\label{whitsec}

Thus far we have been working with preferred extensions of
irreducible representations, from $\mathrm{GL}_{N}(\mathbb{R})$ to
$\mathrm{GL}_{N}(\mathbb{R}) \rtimes \langle \vartheta \rangle$.
These are the Atlas extensions of (\ref{canext}).  Arthur uses a
different choice of canonical extension in \cite{Arthur}, which we
call the  \emph{Whittaker extension}.   After reviewing the definition of
Whittaker extensions, we will compute the sign giving the difference from the Atlas
extensions.  We conclude by rewriting  the pairing
of Theorem \ref{twistpairing} using Whittaker extensions. Written in
this manner, the pairing becomes simpler (Corollary \ref{twistpairingfinal}).

The review of Whittaker extensions which we are about to give may be
found in \cite{Arthur}*{Section 2.2}.
We fix a unitary character $\chi  \nomenclature{$\chi$}{unitary character of $U(\R)$}$ on the
upper-triangular unipotent subgroup $U(\mathbb{R}) \subset
B(\mathbb{R}) \nomenclature{$U$}{upper-triangular unipotent subgroup}$ which satisfies $\chi \circ \vartheta = \chi$.   In this manner   $(U, \chi)\nomenclature{$(U, \chi)$}{Whittaker datum}$ is a
$\vartheta$-fixed Whittaker datum.  We work under the hypothesis of
(\ref{regintdom}) on an infinitesimal character $\lambda \in
{^\vee}\mathfrak{h}$ and set $\O$ to be its
${^\vee}\mathrm{GL}_{N}$-orbit.
Let $\xi \in \Xi(\O,
{^\vee}\mathrm{GL}_{N}^{\Gamma})^\vartheta$ so that $\pi(\xi)$
is (an infinitesimal equivalence class of) an irreducible
representation of $\mathrm{GL}_{N}(\mathbb{R})$.   Here, and whenever
we define Whittaker extensions, we must work with a \emph{bona fide}
admissible group representation in this equivalence class which we
also denote $(\pi(\xi),V)$. If $\pi(\xi)$  is
tempered then up to a scalar there is a unique Whittaker functional $\omega: V
\rightarrow \mathbb{C}$ satisfying
\begin{equation}
\label{whittfunctional}
\omega(\pi(\xi)(u)v) = \chi(u)\, \omega(v), \quad u \in U(\mathbb{R}),
\end{equation}
for all smooth vectors $v \in V$.
It follows that there is a unique operator $\mathcal{I}^{\thicksim}$
which intertwines $\pi(\xi) \circ \vartheta$ with $\pi(\xi)$ and also satisfies
$\omega \circ \mathcal{I}^{\thicksim} = \omega$.  We extend
$\pi(\xi)$ to a representation $\pi(\xi)^{\thicksim}$ of
$\mathrm{GL}_{N}(\mathbb{R}) \rtimes \langle \vartheta \rangle$
by setting $\pi(\xi)^{\thicksim}(\vartheta) =
\mathcal{I}^{\thicksim}$.  We call this extension
$\pi(\xi)^{\thicksim}\nomenclature{$\pi(\xi)^{\thicksim}$}{(equivalence class of) the  Whittaker extension of $\pi(\xi)$}$
the \emph{Whittaker extension} of $\pi(\xi)$.

If $\pi(\xi)$ is not tempered then we express it as the Langlands
quotient of a representation induced from an essentially tempered
representation of a Levi subgroup. The $\vartheta$-stability of
$\pi(\xi)$  and the uniqueness statement in the Langlands
classification together imply the
$\vartheta$-stability of the essentially tempered representation.  The
earlier argument for tempered representations has an obvious analogue
for the essentially tempered representation of the Levi subgroup.  We
may argue as above to extend the essentially tempered representation
to the semi-direct product of the Levi subgroup with $\langle
\vartheta \rangle$.  One then induces this extended representation to
$\mathrm{GL}_{N}(\mathbb{R}) \rtimes \langle \vartheta  \rangle$.  The
unique irreducible quotient of this representation is the canonical
extension of $\pi(\xi)$, namely the \emph{Whittaker extension}
$\pi(\xi)^{\thicksim}$ of $\pi(\xi)$.   If one omits the Langlands
quotient in this argument then we obtain the Whittaker extension
$M(\xi)^{\thicksim}
\nomenclature{$M(\xi)^{\thicksim}$}{(equivalence class of) the Whittaker extension of
 $M(\xi)$}$ of the standard representation $M(\xi)$.

We now turn to the question of how $\pi(\xi)^{\thicksim}$ differs from
$\pi(\xi)^{+}$.  The operators $\pi(\xi)^{\thicksim}(\vartheta)$ and
$\pi(\xi)^{+}(\vartheta)$ are involutive, and  both intertwine
$\pi(\xi) \circ \vartheta$ with $\pi(\xi)$. Therefore they are equal up to a
sign, \emph{i.e.}
\begin{equation}
\label{normsigns}
\pi(\xi)^{\thicksim}(\vartheta) = \pm \, \pi(\xi)^{+}(\vartheta).
\end{equation}
\begin{lem}
Suppose $\xi \in \Xi(\O,
{^\vee}\mathrm{GL}_{N}^{\Gamma})^\vartheta$ and $M(\xi)$ is a
$\vartheta$-stable principal series representation
of $\mathrm{GL}_{N}(\mathbb{R})$.
Then the Whittaker and Atlas
extensions of $M(\xi)$ are equal, \emph{i.e.}  $M(\xi)^{\thicksim} = M(\xi)^{+}$.
\label{prinsame}
\end{lem}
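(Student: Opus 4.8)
The plan is to make the identification (\ref{normsigns}) entirely explicit for a principal series representation, by computing both extending operators on the diagonal torus and on the element $\vartheta$ and showing they agree. Since $M(\xi)$ is a $\vartheta$-stable principal series, it is induced from a $\vartheta$-stable quasicharacter $\chi_{H}$ of the diagonal subgroup $H(\mathbb{R})$; that is, $M(\xi) = \mathrm{ind}_{B(\mathbb{R})}^{\mathrm{GL}_{N}(\mathbb{R})}(\chi_{H})$ in the normalized induction from the upper-triangular Borel. First I would reduce the computation of \emph{both} extensions to the problem of extending the one-dimensional representation $\chi_{H}$ of $H(\mathbb{R})$ to a representation of $H(\mathbb{R}) \rtimes \langle \vartheta \rangle$. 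For the Atlas side this is exactly the content of Section \ref{extrepsec}: by (\ref{canext}) the Atlas extension $M(\xi)^{+} = M((\uplambda,\uptau,0,0),\lambda)$ is built from the extended parameter with $t=0$, $\ell = 0$, so that $z(\uplambda,\uptau,0,0) = 1$ by (\ref{z}), and the value of the extending operator on $\vartheta$ is the induction of the quasicharacter of $H(\mathbb{R})$ extended to $H(\mathbb{R})\rtimes\langle\vartheta\rangle$ by sending the element $h\vartheta = \vartheta$ to $1$.

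Next I would handle the Whittaker side. For a principal series $M(\xi)$ the Whittaker functional $\omega$ is given by a convergent Jacquet integral $\omega(f) = \int_{U(\mathbb{R})} f(\bar{u})\,\overline{\chi(\bar{u})}\,d\bar{u}$ against the fixed $\vartheta$-stable character $\chi$ of $U(\mathbb{R})$ (or its appropriate meromorphic continuation in the non-tempered case, using the Langlands quotient description as in the review preceding the Lemma). The operator $\mathcal{I}^{\thicksim} = M(\xi)^{\thicksim}(\vartheta)$ is defined by intertwining $M(\xi)\circ\vartheta$ with $M(\xi)$ and normalizing via $\omega \circ \mathcal{I}^{\thicksim} = \omega$. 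The key observation is that since $\chi \circ \vartheta = \chi$ and $\vartheta$ preserves $B(\mathbb{R})$, $U(\mathbb{R})$ and the pinning, the natural map $f \mapsto f\circ\vartheta^{-1}$ on the induced space $\mathrm{ind}_{B(\mathbb{R})}^{\mathrm{GL}_{N}(\mathbb{R})}(\chi_{H})$ already intertwines the two representations \emph{and} already preserves the Jacquet integral — because the change of variables $\bar{u}\mapsto\vartheta(\bar{u})$ in the integral leaves both $d\bar{u}$ and $\overline{\chi(\bar{u})}$ invariant. Hence $\mathcal{I}^{\thicksim}$ is this map with no extra scalar, and this is precisely the same operator as $M(\xi)^{+}(\vartheta)$: both are induced from the extension of $\chi_{H}$ to $H(\mathbb{R})\rtimes\langle\vartheta\rangle$ taking $\vartheta$ to $1$. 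Therefore the sign in (\ref{normsigns}) is $+1$, i.e. $M(\xi)^{\thicksim} = M(\xi)^{+}$.

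The main obstacle I anticipate is the bookkeeping of normalizations: matching the $\rho$-shift in the induction used to define $M(\xi)$ on the representation-theoretic side with the convention in (\ref{canext}) tied to the extended parameter $(\uplambda,\uptau,0,0)$, and checking that the element denoted $h\vartheta$ in \cite{AVParameters}*{(24e)} is genuinely the element $\vartheta$ (not a nontrivial torus translate) when $\ell = 0$. This is where the hypothesis $\ell = 0$ — equivalently, that $x$ is the unique class in the singleton $\mathcal{X}_{{^\vee}\rho}^{w}$, which forces the "preferred extended parameter" of the form $(\uplambda,\uptau,0,0)$ as noted after (\ref{z}) — is doing the real work, and I would spell out that $z(\uplambda,\uptau,0,0)=1$ matches the normalization $\omega\circ\mathcal{I}^{\thicksim}=\omega$. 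Once the conventions are aligned the two operators are visibly the same induced operator, and the Lemma follows; I would also remark that this lemma is the base case for the inductive argument in Proposition \ref{wasign1} and ultimately for (\ref{compare}(c)).
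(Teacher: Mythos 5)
Your proposal is correct and follows essentially the same route as the paper: the paper also exhibits the explicit operator $f \mapsto f\circ\vartheta$ on the induced space, shows it fixes the Shahidi--Jacquet Whittaker functional $\omega(f)=\int_{U(\mathbb{R})} f(\tilde{J}u)\,\overline{\chi}(u)\,du$ (so it equals $M(\xi)^{\thicksim}(\vartheta)$), and identifies it with $M(\xi)^{+}(\vartheta)$ by observing that the strong involution attached to $\xi$ makes the Borel real, so that $M(\xi)^{+}$ is induced from the extension of the inducing character taking the value $1$ at $\vartheta$ --- exactly the $z=1$ normalization you invoke. The only discrepancy is cosmetic: the Whittaker functional should carry the long-Weyl-element translate $\tilde{J}$ (which $\vartheta$ fixes), so your change-of-variables step needs that extra factor, but this does not affect the argument.
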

\begin{proof}
We abusively identify the equivalence class $M(\xi)$ with one of its representatives
 $M(\xi) = \mathrm{ind}_{B(\mathbb{R})}^{\mathrm{GL}_{N}(\mathbb{R})}
  \pi_{0}$.  Define an operator $\mathcal{I}$ on functions $f$ in the
  space of $M(\xi)$ by
$$\mathcal{I}f(g) = f(\vartheta(g)), \quad g \in \mathrm{GL}_{N}(\mathbb{R}).$$
 It is easily verified that  $\mathcal{I}$ intertwines
  $(\mathrm{ind}_{B(\mathbb{R})}^{\mathrm{GL}_{N}(\mathbb{R})}  \pi_{0})
  \circ \vartheta$
  with $ \mathrm{ind}_{B(\mathbb{R})}^{\mathrm{GL}_{N}(\mathbb{R})}
  (\pi_{0} \circ \vartheta)$.  By the $\vartheta$-stability of $M(\xi)$
  we have
$$\mathrm{ind}_{B(\mathbb{R})}^{\mathrm{GL}_{N}(\mathbb{R})}
  \pi_{0}\cong  (\mathrm{ind}_{B(\mathbb{R})}^{\mathrm{GL}_{N}(\mathbb{R})}
  \pi_{0}) \circ \vartheta \cong  \mathrm{ind}_{B(\mathbb{R})}^{\mathrm{GL}_{N}(\mathbb{R})}
  (\pi_{0} \circ \vartheta).$$
The uniqueness statement in the Langlands quotient theorem may be
applied to the equivalence above to conclude that $\pi_{0} = \pi_{0}
\circ{\vartheta}$.  Therefore $\mathcal{I}$ intertwines $M(\xi) \circ
\vartheta$ with $M(\xi)$.  We will prove the lemma by showing that
$M(\xi)^{\thicksim}(\vartheta)
= \mathcal{I} = M(\xi)^{+}(\vartheta)$.

Given a(ny) Whittaker functional $\omega$ satisfying (\ref{whittfunctional}),
$M(\xi)^{\thicksim}(\vartheta)$ is the unique intertwining operator satisfying $\omega \circ M(\xi)^{\thicksim}(\vartheta)  = \omega$.  A convenient Whittaker
functional to work with is
\begin{equation}
  \label{whittfun}
\omega(f) = \int_{U(\mathbb{R})} f(\dot{w}_{0} u) \overline{\chi}(u) \,
du
\end{equation}
\cite{Sha80}*{Section 2}.
Here $f$ is a smooth function in the space of $M(\xi)$ and
$\dot{w}_{0} = \tilde{J}$ is a
representative of the long Weyl group element in
$W(\mathrm{GL}_{N},H)$.   We compute
\begin{align*}
\omega\circ \mathcal{I}(f) &= \int_{U(\mathbb{R})} \mathcal{I}f(\dot{w}_{0} u)
\overline{\chi}(u) \, du\\
&= \int_{U(\mathbb{R})} f(\vartheta(\dot{w}_{0} u))
\overline{\chi}(u) \, du\\
&= \int_{U(\mathbb{R})} f(\dot{w}_{0} u)
\overline{\chi}(\vartheta(u)) \, du\\
&= \int_{U(\mathbb{R})} f(\dot{w}_{0} u)
\overline{\chi}(u) \, du\\
&= \omega(f).
\end{align*}
This proves $\omega \circ \mathcal{I} = \omega$ so that
$M(\xi)^{\thicksim}(\vartheta) = \mathcal{I}$.

To prove $M(\xi)^{+}(\vartheta) = \mathcal{I}$ we  recall the
definition of $M(\xi)^{+}$ as in (\ref{canext}).  The complete
geometric parameter $\xi$
corresponds to an Atlas parameter $(x,y)$ (Lemma \ref{XXXi}), where $x$
is the equivalence class of a strong involution
$$\delta = \exp(\uppi i \ch \rho)
\dot{w}_{0} \delta_{0} = \exp(\uppi i \ch \rho) \tilde{J} \delta_{0}$$
as in
\cite{AVParameters}*{Proposition 3.2} (see the proof of Lemma
\ref{biglem}).  This strong involution negates every
positive root in $R(B,H)$.  It follows that the underlying
$(\mathfrak{gl}_{N},K)$-module of $M(\xi)^{+}$ is the
representation \cite{AVParameters}*{(20)}, in which the
Borel subalgebra $\mathfrak{b}$ is real.  This implies that $M(\xi)^{+} =
\mathrm{ind}_{B(\mathbb{R}) \rtimes \langle \vartheta \rangle
}^{\mathrm{GL}(N,\mathbb{R}) \rtimes \langle \vartheta \rangle} \pi_{0}^{+}$,
where $\pi_{0}^{+}(\vartheta) = 1$, since the value of the function $z$ given in (\ref{z}) is one.
Suppose $f$ is a function in the space of $\mathrm{ind}_{B(\mathbb{R})
  \rtimes \langle \vartheta \rangle
}^{\mathrm{GL}(N,\mathbb{R}) \rtimes \langle \vartheta \rangle}
\pi_{0}^{+}$.  We compute
\begin{align*}
  \left(M(\xi)^{+}(\vartheta)f \right) (g) &= f(g \vartheta)\\
  & = f(\vartheta \, \vartheta(g))\\
  &= \pi_{0}^{+}(\vartheta) \, f(\vartheta(g))\\
  & = f(\vartheta(g))\\
  & = \left(\mathcal{I}f \right) (g).
\end{align*}
This proves that $M(\xi)^{+}(\vartheta) = \mathcal{I}$.
\end{proof}
Our next goal is to make a link between the signs in (\ref{normsigns}) and the
twisted multiplicity polynomials $m_{r}^{\vartheta}(\xi', \xi)$
appearing in  (\ref{twistmult1}). By Proposition \ref{p:twist} and
(\ref{cgP}), we
have the alternative formulations
\begin{equation}\label{QP}
\begin{aligned}
m_{r}^{\vartheta}(\xi',\xi) &= (-1)^{l^{I}_{\vartheta}(\xi') - l^{I}_{\vartheta}(\xi)}
c_{g}^{\vartheta}(\xi, \xi') \\
&= (-1)^{l^{I}(\xi') -  l^{I}_{\vartheta}(\xi') + l^{I}(\xi)
  -  l^{I}_{\vartheta}(\xi)} \ {^\vee} P^{\vartheta}({^\vee}\xi, {^\vee}\xi')(1) .
\end{aligned}
\end{equation}
If $\pi(\xi')$ is a subquotient of
$M(\xi)$ occurring with multiplicity one then it is easily verified that
$m_{r}^{\vartheta}(\xi',\xi) = 1$ if and only if $\pi(\xi')^{+}$ is a
subquotient of $M(\xi)^{+}$.  Similarly, $m_{r}^{\vartheta}(\xi',\xi) =
-1$ if and only if $\pi(\xi')^{-}$ is a subquotient of $M(\xi)^{+}$.  In this sense $m_{r}^{\vartheta}(\xi',\xi)$ is a signed multiplicity.

There is a special irreducible subquotient of $M(\xi)$ which is \emph{generic},
\emph{i.e.} admits a non-zero Whittaker functional as in
(\ref{whittfunctional}).
\begin{lem}
\label{uniquegeneric}
Suppose $\xi \in \Xi(\O, {^\vee}
\mathrm{GL}_{N}^{\Gamma})^\vartheta$.  Then
\begin{enumerate}[label={(\alph*)}]
\item (up to infinitesimal equivalence)
there is a unique irreducible $\vartheta$-stable generic
representation $\pi(\xi_{0}) = M(\xi_{0})$ which occurs in $M(\xi)$ as
a subquotient with multiplicity one;

\item (any representative in the class of) $\pi(\xi_{0})$ embeds as a
  subrepresentation of (any representative in the class of)  $M(\xi)$;

\item (any representative in the class of) $\pi(\xi_{0})^{\thicksim}$
  embeds as a subrepresentation of (any representative in the class
  of)  $M(\xi)^{\thicksim}$.
\end{enumerate}
\end{lem}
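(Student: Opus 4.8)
The plan is to establish the three assertions in sequence, with parts (b) and (c) depending on (a). For part (a), I would combine the Langlands classification with the theory of Whittaker functionals for $\mathrm{GL}_N(\mathbb{R})$. Writing $M(\xi) = \mathrm{ind}_{P(\mathbb{R})}^{\mathrm{GL}_N(\mathbb{R})} \sigma$ with $\sigma$ essentially tempered on a Levi subgroup $L(\mathbb{R})$, a standard result (Casselman--Shahidi, or the reducibility theory in \cite{Vogan78}) says $M(\xi)$ has a \emph{unique} irreducible generic subquotient, and it occurs with multiplicity one. For $\mathrm{GL}_N$ the generic subquotient is in fact the \emph{full induced representation from a tempered representation} composed with appropriate Langlands data; more precisely, the generic piece is itself a standard module $M(\xi_0)$, since induction from a tempered representation in the Langlands situation for $\mathrm{GL}_N$ is irreducible when the inducing data is tempered. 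So $\pi(\xi_0) = M(\xi_0)$. The $\vartheta$-stability of $\pi(\xi_0)$ follows because $\vartheta$ preserves the (fixed $\vartheta$-stable) Whittaker datum $(U, \chi)$, hence carries generic representations to generic representations; since $\pi(\xi) \circ \vartheta \cong \pi(\xi)$ forces $M(\xi) \circ \vartheta \cong M(\xi)$, uniqueness of the generic subquotient forces $\pi(\xi_0) \circ \vartheta \cong \pi(\xi_0)$.

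For part (b), I would use the well-known fact that for $\mathrm{GL}_N(\mathbb{R})$ the generic constituent of a standard module sits as a \emph{sub}representation rather than merely a subquotient. Concretely, realizing $M(\xi)$ as $\mathrm{ind}_{B(\mathbb{R})}^{\mathrm{GL}_N(\mathbb{R})}(\text{quasicharacter})$ when $\pi(\xi)$ is principal series, or more generally inducing in stages from a tempered representation $\tau$ on a Levi, the generic constituent $M(\xi_0)$ is the image of a standard intertwining operator; its realization as a subrepresentation comes from the fact that the ``big'' intertwining operator from $M(\xi)$ has image isomorphic to $M(\xi_0)$ and the operator can be arranged so that $M(\xi_0) \hookrightarrow M(\xi)$. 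I would cite the relevant statement from \cite{Vogan78} or \cite{greenbook} (Vogan's ``unique embedding'' results; see also the subrepresentation theorem of Casselman). The point is that the Whittaker functional on $M(\xi)$ given by the Jacquet integral (as in (\ref{whittfun})) restricts nontrivially to the submodule $M(\xi_0)$.

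For part (c), I would argue that the embedding from (b) is automatically $\vartheta$-equivariant after the canonical choices of Whittaker extensions are made. Fix a representative $j: \pi(\xi_0) \hookrightarrow M(\xi)$ of the embedding from (b). Since both $\pi(\xi_0)$ and $M(\xi)$ are $\vartheta$-stable and the intertwining operators $M(\xi)^{\thicksim}(\vartheta)$ and $\pi(\xi_0)^{\thicksim}(\vartheta)$ are both characterized by preserving the respective Jacquet/Whittaker functionals $\omega_{M(\xi)}$ and $\omega_{\pi(\xi_0)}$ (which are compatible via $j$, i.e.\ $\omega_{M(\xi)} \circ j = \omega_{\pi(\xi_0)}$ up to scalar, by construction of $\omega$ as a Jacquet integral that only sees the open $B$-orbit), the operator $M(\xi)^{\thicksim}(\vartheta) \circ j \circ \pi(\xi_0)^{\thicksim}(\vartheta)^{-1}$ is again an embedding $\pi(\xi_0) \hookrightarrow M(\xi)$ intertwining the $\mathrm{GL}_N(\mathbb{R})$-actions. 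By multiplicity one (part (a)) this differs from $j$ by a scalar; comparing the induced maps on Whittaker functionals pins the scalar to $1$, so $M(\xi)^{\thicksim}(\vartheta) \circ j = j \circ \pi(\xi_0)^{\thicksim}(\vartheta)$, which is exactly the assertion that $j$ extends to a $(\mathrm{GL}_N(\mathbb{R}) \rtimes \langle \vartheta \rangle)$-embedding $\pi(\xi_0)^{\thicksim} \hookrightarrow M(\xi)^{\thicksim}$.

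The main obstacle I anticipate is in part (a): making precise the claim that the generic subquotient of a standard module for $\mathrm{GL}_N(\mathbb{R})$ is itself a standard module occurring with multiplicity one, and that it is generic in the \emph{group-representation} sense (not just having a nonzero Whittaker functional on smooth vectors in some abstract completion). This requires care about which model of the representation one uses and citing the correct form of the Casselman--Shahidi / Vogan results; the regularity hypothesis (\ref{regintdom}) on the infinitesimal character should simplify matters since it rules out certain degeneracies. A secondary subtlety is ensuring, in part (c), that the Jacquet integral Whittaker functional on $M(\xi)$ genuinely restricts to the canonical one on the submodule $M(\xi_0)$ rather than to a multiple that could vanish — this should follow from the fact that the open Bruhat cell computation defining $\omega$ in (\ref{whittfun}) factors through the submodule, but it deserves an explicit check.
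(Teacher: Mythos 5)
Your proposal is correct and takes essentially the same route as the paper: parts (a) and (b) are the Kostant--Vogan and Casselman--Shahidi multiplicity-one and subrepresentation facts (\cite{Kostant78}, \cite{Vogan78}, \cite{casshah}) combined with the $\vartheta$-invariance of the Whittaker datum and uniqueness of the generic constituent, and part (c) rests on the Jacquet-integral Whittaker functional of $M(\xi)$ restricting nontrivially to $\pi(\xi_{0})$. The only cosmetic difference is that the paper deduces (c) by directly restricting the defining identity $\omega \circ M(\xi)^{\thicksim}(\vartheta) = \omega$ to the generic subrepresentation, rather than via your multiplicity-one scalar-pinning comparison of two embeddings.
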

\begin{proof}
A result due to Vogan and Kostant states that every standard
representation $M(\xi)$ contains a unique generic irreducible
subquotient occurring with multiplicity one (\cite{Kostant78}*{Theorems E and L}, \cite{Vogan78}*{Corollary 6.7}).  In the rest of the
proof we write $\pi(\xi_{0})$ for the actual generic representation
(not the equivalence class) for some $\xi_{0 } \in \Xi(\O,
{^\vee}\mathrm{GL}_{N}^{\Gamma})$.  It is straightforward to
verify that $\pi(\xi_{0}) \circ \vartheta$ satisfies
(\ref{whittfunctional}), just as $\pi(\xi_{0})$ does.  Therefore
$\pi(\xi_{0}) \circ \vartheta$ is the unique irreducible generic
subquotient of $M(\xi) \circ \vartheta \cong M(\xi)$.  By
uniqueness, $\pi(\xi_{0}) \circ \vartheta \cong \pi(\xi_{0})$ and so
$\xi_{0} \in \Xi(\O, {^\vee}
\mathrm{GL}_{N}^{\Gamma})^\vartheta$.

The statements about $\pi(\xi_{0})$ occurring as a subrepresentation
of $M(\xi)$ and $\pi(\xi_{0}) = M(\xi_{0})$ may be found in \cite{Vogan78}*{Theorem 6.2} and \cite{casshah}*{Theorem 6.2}.

For part (c) we consider the standard representation
$M(\xi)$, which has
a Whittaker functional $\omega$  (\cite{Sha80}*{Proposition 3.2}).  The
functional $\omega$
restricts to a non-zero Whittaker functional on $\pi(\xi_{0})$.
By definition, $M(\xi)^{\thicksim}(\vartheta)$ is the intertwining
operator which satisfies $\omega \circ M(\xi)^{\thicksim}(\vartheta) = \omega$.
Restricting this equation to the subrepresentation $\pi(\xi_{0})$
yields in turn that
\begin{equation}
  \label{subrep}
\pi(\xi_{0})^{\thicksim}(\vartheta) =
M(\xi)^{\thicksim}(\vartheta)_{|\pi(\xi_{0})} \mbox{   and    }\pi(\xi_{0})^{\thicksim}
\hookrightarrow M(\xi)^{\thicksim}.
\end{equation}
\end{proof}
Lemma \ref{uniquegeneric}  tells us that the
multiplicity of $\pi(\xi_{0})^{\thicksim}$ in $M(\xi)^{\thicksim}$ is
one.  On the other hand $m_{r}^{\vartheta}(\xi_{0}, \xi)$ tells us
about the ``signed multiplicity" of $\pi(\xi_{0})^{+}$ in
$M(\xi)^{+}$.  We investigate $m_{r}^{\vartheta}(\xi_{0}, \xi)$ further
before comparing the two multiplicities.
\begin{prop}
  \label{conjq}
Suppose $\xi \in \Xi(\O, {^\vee}
\mathrm{GL}_{N}^{\Gamma})^\vartheta$ and $\pi(\xi_{0})$ is the
generic subrepresentation of $M(\xi)$ (Lemma \ref{uniquegeneric}).
Then
 \begin{equation}
  \label{qequation}m_{r}^{\vartheta}(\xi_{0}, \xi) = (-1)^{l^{I}(\xi) -
  l^{I}_{\vartheta}(\xi) + l^{I}(\xi_{0}) -
  l^{I}_{\vartheta}(\xi_{0})}.
\end{equation}
\end{prop}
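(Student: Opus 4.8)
The strategy is to reduce the computation of the signed multiplicity $m_r^\vartheta(\xi_0,\xi)$ to the value ${}^\vee P^\vartheta({}^\vee\xi,{}^\vee\xi_0)(1)$ of a dual twisted KLV-polynomial, and then to evaluate that polynomial using the block-maximality of ${}^\vee\xi_0$. First I would observe that since $\pi(\xi_0)=M(\xi_0)$ is generic and occurs in $M(\xi)$ with multiplicity one (Lemma~\ref{uniquegeneric}), we are exactly in the regime of the alternative formulation~\eqref{QP}, which gives
\begin{equation*}
m_r^\vartheta(\xi_0,\xi) = (-1)^{l^I(\xi_0)-l^I_\vartheta(\xi_0)+l^I(\xi)-l^I_\vartheta(\xi)}\ {}^\vee P^\vartheta({}^\vee\xi,{}^\vee\xi_0)(1).
\end{equation*}
Comparing with the desired identity~\eqref{qequation}, the proposition is therefore equivalent to the single statement that ${}^\vee P^\vartheta({}^\vee\xi,{}^\vee\xi_0)(1)=1$ for every $\xi$ such that $\pi(\xi_0)$ is the generic subrepresentation of $M(\xi)$.

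The next step is to identify what is special about the dual parameter ${}^\vee\xi_0$ of a generic representation. A generic irreducible representation $\pi(\xi_0)$ is itself standard (it equals $M(\xi_0)$), and it occurs as a subquotient of \emph{every} standard $M(\xi)$ in its block; dually, via Proposition~\ref{PpimuMord} and the order-reversing nature of Vogan duality~\eqref{dualBruhat}, this says precisely that ${}^\vee\xi_0$ is the unique maximal element of its block ${}^\vee\mathcal{B}$ in the Bruhat order on dual parameters. (This is the point where I would invoke the Kostant--Vogan characterization of generic representations together with the geometric description of the Bruhat order; the parameters $\xi$ for which $\pi(\xi_0)$ appears in $M(\xi)$ are exactly those with $\xi_0\geq\xi$ in the primitive/Bruhat sense, hence ${}^\vee\xi_0 \le {}^\vee\xi$ is reversed --- I must be careful to get the direction right, and to confirm that ${}^\vee\xi_0$ is \emph{maximal}, i.e.\ that $M(\xi_0)^+$ is a subquotient of $M(\xi)^+$ for all $\xi$ in the block, which follows since generic subquotients of standards are unique and self-dual under $\vartheta$.) Once this identification is made, Proposition~\ref{dualpone} applies directly and yields ${}^\vee P^\vartheta({}^\vee\xi,{}^\vee\xi_0)=1$ for all ${}^\vee\xi\in{}^\vee\mathcal{B}$, in particular at $q=1$.

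Substituting ${}^\vee P^\vartheta({}^\vee\xi,{}^\vee\xi_0)(1)=1$ into the displayed formula above gives $m_r^\vartheta(\xi_0,\xi)=(-1)^{l^I(\xi_0)-l^I_\vartheta(\xi_0)+l^I(\xi)-l^I_\vartheta(\xi)}$, which is~\eqref{qequation}, completing the proof. The main obstacle I anticipate is the bookkeeping in the second step: verifying that "generic subrepresentation of $M(\xi)$" translates precisely into "${}^\vee\xi_0$ maximal in its dual block," with the correct direction of the order reversal, and confirming that every $\xi$ in the hypothesis lies in a single block with $\xi_0$ so that Proposition~\ref{dualpone} is applicable. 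The sign arithmetic and the appeal to~\eqref{QP} are routine by comparison; one should also double-check that the multiplicity-one hypothesis of Lemma~\ref{uniquegeneric}(a) is what licenses the use of the alternative formulation of $m_r^\vartheta$ in~\eqref{QP} rather than needing the full Proposition~\ref{p:twist}.
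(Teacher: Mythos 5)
Your proposal is correct and follows essentially the same route as the paper: reduce via (\ref{QP}) to the single claim ${}^\vee P^{\vartheta}({}^\vee\xi,{}^\vee\xi_{0})(1)=1$, identify ${}^\vee\xi_{0}$ as the unique maximal parameter of its block in the dual Bruhat order, and apply Proposition \ref{dualpone}. The only differences are cosmetic: the paper verifies the minimality of $\xi_{0}$ geometrically, via the unique open ${}^\vee\mathrm{GL}_{N}$-orbit in each component of $X(\O,{}^\vee\mathrm{GL}_{N}^{\Gamma})$, whereas you deduce it from the fact that the generic representation occurs in every standard module of its block together with upper-triangularity of $m_{r}$ (both are fine), and the inequality in your parenthetical should read $\xi_{0}\le\xi$, hence ${}^\vee\xi_{0}\ge{}^\vee\xi$, consistent with the maximality you ultimately use.
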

\begin{proof}
We see from (\ref{QP}) that the proposition is equivalent to
$${^\vee}P^{\vartheta}(\,{^\vee}\xi, {^\vee}\xi_{0}) (1) =1.$$
This equation follows from Proposition \ref{dualpone} once we
establish that  ${^\vee}\xi_{0}$ is the unique maximal parameter in the
block of $\pi({^\vee}\xi)$ in the (dual) Bruhat order.  This is equivalent
to establishing that $\xi_{0}$ is the unique minimal parameter in the
block of $\pi(\xi_{0})$ ((\ref{dualBruhat}),
\cite{ICIV}*{Theorem 1.15}). We use  \cite{ABV}*{Proposition 1.11} to convert the Bruhat order
for the representations of $\mathrm{GL}_{N}(\mathbb{R})$ into a
closure relation between ${^\vee}\mathrm{GL}_{N}$-orbits of
$X(\O, {^\vee}\mathrm{GL}_{N}^{\Gamma})$.   Moreover, this
proposition  implies that the minimality  of
$\xi_{0} = (S_{0}, 1  )$
is equivalent to the ${^\vee}\mathrm{GL}_{N}$-orbit  $S_{0} \subset
X(\O, {^\vee}\mathrm{GL}_{N}^{\Gamma})$ being maximal and
therefore open.  The uniqueness of the generic parameter follows from
the fact that there is a unique open orbit in each component of
$X(\O, {^\vee}\mathrm{GL}_{N}^{\Gamma})$ (\emph{cf.}
\cite{ABV}*{\emph{p.} 19}).
\end{proof}
The signed multiplicities $m_{r}^{\vartheta}(\xi',\xi)$ of Atlas
extensions may be compared to the multiplicities of Whittaker
extensions when there is a representation in the block for which the two extensions agree.
This is the case when the block contains a $\vartheta$-fixed principal
series (Lemma \ref{prinsame}).   Under these circumstances, we obtain
a formula for the signs in (\ref{normsigns}).
\begin{prop}
\label{wasign1}
Suppose $\xi \in \Xi(\O, {^\vee}
\mathrm{GL}_{N}^{\Gamma})^\vartheta$ and $\pi(\xi_{0})$ is the
generic representations of Lemma \ref{uniquegeneric}.
If $\pi(\xi_{0})^{+}$ occurs in the decomposition of  a principal
series representation $M(\xi_{p})^{+} \in K\Pi(\O,
\mathrm{GL}_{N}(\mathbb{R}),
\vartheta)$  then
  $$M(\xi)^{\thicksim}(\vartheta)  = (-1)^{l^{I}(\xi) -
  l^{I}_{\vartheta}(\xi)} \ M(\xi)^{+} (\vartheta)$$
and
$$\pi(\xi)^{\thicksim}(\vartheta)  = (-1)^{l^{I}(\xi) -
  l^{I}_{\vartheta}(\xi)} \ \pi(\xi)^{+} (\vartheta).$$
\end{prop}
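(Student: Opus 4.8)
The plan is to reduce everything to the generic representation $\pi(\xi_0)$, where the comparison of Whittaker and Atlas extensions can be pinned down, and then to propagate the sign back to $\pi(\xi)$ and to $M(\xi)$ through the inclusion $\pi(\xi_0) \hookrightarrow M(\xi)$ of Lemma \ref{uniquegeneric}(b-c). First I would treat the generic case itself. By hypothesis $\pi(\xi_0)^{+}$ occurs in the decomposition of a $\vartheta$-stable principal series $M(\xi_p)^{+}$ in $K\Pi(\O, \mathrm{GL}_{N}(\mathbb{R}), \vartheta)$. Since $\pi(\xi_0)$ is generic and occurs in $M(\xi_p)$ with multiplicity one (Lemma \ref{uniquegeneric}(a) applied to $\xi_p$, whose generic subquotient must be $\pi(\xi_0)$ because the generic subquotient is unique and both lie in the same block), the signed multiplicity $m_r^\vartheta(\xi_0, \xi_p)$ equals $+1$ precisely when $\pi(\xi_0)^{+}$ appears — which is the assumption. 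On the other hand, by Lemma \ref{prinsame} we have $M(\xi_p)^{\thicksim} = M(\xi_p)^{+}$, and by Lemma \ref{uniquegeneric}(c) the Whittaker extension $\pi(\xi_0)^{\thicksim}$ embeds in $M(\xi_p)^{\thicksim} = M(\xi_p)^{+}$. Comparing, $\pi(\xi_0)^{\thicksim}(\vartheta)$ is the restriction of $M(\xi_p)^{+}(\vartheta)$ to the subrepresentation $\pi(\xi_0)$, which by the definition of $m_r^\vartheta$ (see the discussion around \eqref{twistmult1} and the paragraph after \eqref{QP}) is exactly $\pi(\xi_0)^{+}(\vartheta)$ since the signed multiplicity is $+1$. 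Hence $\pi(\xi_0)^{\thicksim}(\vartheta) = \pi(\xi_0)^{+}(\vartheta)$, i.e. the sign in \eqref{normsigns} is $+1$ for $\xi_0$; and one checks $l^{I}(\xi_0) - l^{I}_{\vartheta}(\xi_0)$ is even in this situation (this is where Proposition \ref{conjq} with $\xi = \xi_p$ forces $(-1)^{l^{I}(\xi_p)-l^{I}_\vartheta(\xi_p)+l^{I}(\xi_0)-l^{I}_\vartheta(\xi_0)}=+1$, combined with the parity bookkeeping for principal series).

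Next I would bootstrap from $\xi_0$ to an arbitrary $\xi$ in the same block, using Lemma \ref{uniquegeneric} again but now with $M(\xi)$ in place of $M(\xi_p)$: $\pi(\xi_0)$ is the generic subrepresentation of $M(\xi)$, occurring with multiplicity one, and $\pi(\xi_0)^{\thicksim} \hookrightarrow M(\xi)^{\thicksim}$. The key computation is Proposition \ref{conjq}: $m_r^\vartheta(\xi_0, \xi) = (-1)^{l^{I}(\xi) - l^{I}_{\vartheta}(\xi) + l^{I}(\xi_0) - l^{I}_{\vartheta}(\xi_0)}$. Since $l^{I}(\xi_0) - l^{I}_{\vartheta}(\xi_0)$ is even (from the generic case), this reads $m_r^\vartheta(\xi_0, \xi) = (-1)^{l^{I}(\xi) - l^{I}_{\vartheta}(\xi)}$. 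Now $m_r^\vartheta(\xi_0,\xi) = +1$ means $\pi(\xi_0)^{+}$ appears in $M(\xi)^{+}$, and $= -1$ means $\pi(\xi_0)^{-}$ appears, i.e. $M(\xi)^{+}(\vartheta)$ restricted to the $\pi(\xi_0)$-isotypic subspace equals $m_r^\vartheta(\xi_0,\xi)\cdot\pi(\xi_0)^{+}(\vartheta) = m_r^\vartheta(\xi_0,\xi)\cdot\pi(\xi_0)^{\thicksim}(\vartheta)$. But $\pi(\xi_0)^{\thicksim}(\vartheta)$ is also the restriction of $M(\xi)^{\thicksim}(\vartheta)$ to that same subspace (Lemma \ref{uniquegeneric}(c), exactly as in \eqref{subrep}). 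Since both $M(\xi)^{\thicksim}(\vartheta)$ and $M(\xi)^{+}(\vartheta)$ are intertwiners of $M(\xi)\circ\vartheta$ with $M(\xi)$, they agree up to a global scalar $\pm1$; restricting to the generic subrepresentation determines that scalar to be $m_r^\vartheta(\xi_0,\xi) = (-1)^{l^{I}(\xi) - l^{I}_{\vartheta}(\xi)}$. This gives the first displayed identity $M(\xi)^{\thicksim}(\vartheta) = (-1)^{l^{I}(\xi) - l^{I}_{\vartheta}(\xi)} M(\xi)^{+}(\vartheta)$.

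Finally, the statement for $\pi(\xi)$ follows from the one for $M(\xi)$: $\pi(\xi)$ is the Langlands quotient of $M(\xi)$, and both $\pi(\xi)^{\thicksim}(\vartheta)$ and $\pi(\xi)^{+}(\vartheta)$ are obtained from $M(\xi)^{\thicksim}(\vartheta)$ and $M(\xi)^{+}(\vartheta)$ by passing to the unique irreducible quotient (this is how both Whittaker and Atlas extensions of $\pi(\xi)$ are constructed — see the definitions in Section \ref{whitsec} and around \eqref{canext}). The scalar relating them is preserved under taking the quotient, so $\pi(\xi)^{\thicksim}(\vartheta) = (-1)^{l^{I}(\xi) - l^{I}_{\vartheta}(\xi)} \pi(\xi)^{+}(\vartheta)$. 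The main obstacle I anticipate is the careful handling of the "signed multiplicity one" bookkeeping — making precise that the restriction of the extended intertwiner $M(\xi)^{+}(\vartheta)$ to the $\pi(\xi_0)$-isotypic subspace is $\pm\,\pi(\xi_0)^{+}(\vartheta)$ with the sign being exactly $m_r^\vartheta(\xi_0,\xi)$, rather than something depending on a choice of splitting — together with confirming the parity claim $2 \mid l^{I}(\xi_0) - l^{I}_{\vartheta}(\xi_0)$ in the generic base case, which requires chasing the definitions \eqref{intlength}, \eqref{thetalength} for the open-orbit parameter and invoking Proposition \ref{conjq} applied to the principal series $\xi_p$.
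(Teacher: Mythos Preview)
Your proof is correct and follows essentially the same route as the paper's: establish the sign relation for the generic $\xi_0$ by comparing inside the principal series $M(\xi_p)$ (using Lemma~\ref{prinsame} and Lemma~\ref{uniquegeneric}(c)), then bootstrap to arbitrary $\xi$ by comparing inside $M(\xi)$ via Proposition~\ref{conjq}, and finally pass to the Langlands quotient.

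One small simplification the paper makes removes your anticipated obstacle about the parity of $l^{I}(\xi_0)-l^{I}_\vartheta(\xi_0)$. The paper never asserts this parity and does not read the hypothesis as forcing $m_r^\vartheta(\xi_0,\xi_p)=+1$. It simply records $l^{I}(\xi_p)=l^{I}_\vartheta(\xi_p)=0$ and, from $M(\xi_p)^{\thicksim}=M(\xi_p)^+$ together with Proposition~\ref{conjq}, writes the base case symbolically as
\[
\pi(\xi_0)^{\thicksim}(\vartheta)=(-1)^{l^{I}(\xi_0)-l^{I}_\vartheta(\xi_0)}\,\pi(\xi_0)^{+}(\vartheta),
\]
which is already the statement of the proposition at $\xi=\xi_0$. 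In the bootstrap, Proposition~\ref{conjq} applied to $M(\xi)^+$ puts the coefficient $(-1)^{l^{I}(\xi)-l^{I}_\vartheta(\xi)+l^{I}(\xi_0)-l^{I}_\vartheta(\xi_0)}$ on $\pi(\xi_0)^+$; substituting the displayed identity absorbs the factor $(-1)^{l^{I}(\xi_0)-l^{I}_\vartheta(\xi_0)}$ into $\pi(\xi_0)^{\thicksim}$ and leaves exactly $(-1)^{l^{I}(\xi)-l^{I}_\vartheta(\xi)}$. So the $\xi_0$-sign cancels symbolically and never needs to be evaluated; the mere existence of a $\vartheta$-stable principal series in the block is all that is used.
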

\begin{proof}
 Suppose that $\xi_{p} \in
\Xi(\O,{^\vee}\mathrm{GL}_{N}^{\Gamma})^\vartheta$  is the complete geometric
parameter of a $\vartheta$-stable principal series representation
$M(\xi_{p})^{+}$ as in the hypothesis.  It is straightforward to show
$l^{I}(\xi_{p}) = l^{I}_{\vartheta}(\xi_{p}) = 0$.  By Lemma
\ref{prinsame}, (\ref{twistmult1}) and (\ref{qequation})
\begin{align*}
  \label{prindecomp}
  M(\xi_{p})^{\thicksim}&= m_{r}^{\vartheta}(\xi_{0},\xi_{p}) \,
\pi(\xi_{0})^{+} \  +  \sum_{\xi' \neq \xi_{0}} m_{r}^{\vartheta}(\xi',\xi_{p})\,
\pi(\xi')^{+}\\
\nonumber &= (-1)^{l^{I}(\xi_{0}) - l^{I}_{\vartheta}(\xi_{0})}
\, \pi(\xi_{0})^{+} \   + \sum_{\xi' \neq \xi_{0}} m_{r}^{\vartheta}(\xi',\xi_{p})\,
\pi(\xi')^{+}.
\end{align*}
According to (\ref{subrep}), with $\xi = \xi_{p}$, and the observations
immediately preceding  Lemma \ref{uniquegeneric}, this equation implies
\begin{equation}
  \label{conjgen}
(-1)^{l^{I}(\xi_{0}) - l^{I}_{\vartheta}(\xi_{0})}
  \pi(\xi_{0})^{+}(\vartheta) = \pi(\xi_{0})^{\thicksim}(\vartheta).
\end{equation}
Thus, the proposition holds for $\xi = \xi_{0}$.

It remains to prove that the proposition  holds when $\xi \neq \xi_{0}$.  We
compute, using (\ref{qequation}) and (\ref{conjgen}), that
\begin{align*}
M(\xi)^{+} &= \sum_{\xi' \neq \xi_{0}} m_{r}^{\vartheta}(\xi',\xi)
\, \pi(\xi')^{+}+ m_{r}^{\vartheta}(\xi_{0},\xi) \, \pi(\xi_{0})^{+}\\
&=\sum_{\xi' \neq \xi_{0}} m_{r}^{\vartheta}(\xi',\xi) \,
\pi(\xi')^{+} + (-1)^{l^{I}(\xi) -
  l^{I}_{\vartheta}(\xi) + l^{I}(\xi_{0}) -
  l^{I}_{\vartheta}(\xi_{0})}   \, \pi(\xi_{0})^{+}\\
& = \sum_{\xi' \neq \xi_{0}} m_{r}^{\vartheta}(\xi',\xi)
\, \pi(\xi')^{+}  + (-1)^{l^{I}(\xi) -
  l^{I}_{\vartheta}(\xi)}   \pi(\xi_{0})^{\thicksim}.
\end{align*}
This equation and the observations before Lemma \ref{uniquegeneric} imply
$$(-1)^{l^{I}(\xi) -
  l^{I}_{\vartheta}(\xi)} \pi(\xi_{0})^{\thicksim}(\vartheta) =
M(\xi)^{+} (\vartheta)_{|\pi(\xi_{0})}.$$
Combining this equation with (\ref{subrep}), we see in turn that
$$(-1)^{l^{I}(\xi) -
  l^{I}_{\vartheta}(\xi)}  M(\xi)^{+} (\vartheta)_{|\pi(\xi_{0})} =
\pi(\xi_{0})^{\thicksim}(\vartheta) = M(\xi)^{\thicksim}
(\vartheta)_{|\pi(\xi_{0})}$$
and $(-1)^{l^{I}(\xi) -
  l^{I}_{\vartheta}(\xi)}  M(\xi)^{+} (\vartheta) = M(\xi)^{\thicksim}
(\vartheta)$.  By taking Langlands quotients of the last equation we
obtain
$(-1)^{l^{I}(\xi) -
  l^{I}_{\vartheta}(\xi)}  \pi(\xi)^{+} (\vartheta) = \pi(\xi)^{\thicksim}
(\vartheta)$.
\end{proof}
Proposition \ref{wasign1} describes the sign appearing in
(\ref{normsigns}) explicitly.  Unfortunately, the
hypotheses of the Proposition  do not always hold.  It is not
true that every generic representation $\pi(\xi_{0})^{+}$ appears in
the decomposition of a $\vartheta$-stable principal series
representation.  This may already be seen for $\mathrm{GL}_{2}$.  It
is instructive to examine and remedy  this
special case.

For every positive integer $m$  let
$\mathrm{ind}^{\mathrm{GL}_{2}( \mathbb{R})}_{B(\mathbb{R})} \pi_{m}$
be the principal series representation with
\begin{equation}
\label{gl2ps}
\pi_{m} = \left\{ \begin{array}{ll} | \cdot
  |^{(m-1)/2} \otimes |\cdot|^{(-m-1)/2}, &m \mbox{ even}\\
   | \cdot
|^{(m-1)/2} \otimes sgn(\cdot)|\cdot|^{-(m-1)/2}, & m \mbox{ odd}
\end{array} \right.
\end{equation}
Let $D_{m}$ be the relative (limit of) discrete series representation
\nomenclature{$D_{m}$}{(limit) of discrete series representation}
which is the unique
subrepresentation of $\mathrm{ind}^{\mathrm{GL}_{2}(
  \mathbb{R})}_{B(\mathbb{R})} \pi_{m}$.

For even $m$ both $D_{m}$ and $\mathrm{ind}^{\mathrm{GL}_{2}(
  \mathbb{R})}_{B(\mathbb{R})} \pi_{m}$ are $\vartheta$-stable.
This may be seen by computing that the linear map on the space of $\mathrm{ind}^{\mathrm{GL}_{2}(  \mathbb{R})}_{B(\mathbb{R})}
\pi_{m}$ defined by
\begin{equation}
  \label{Ttheta}
  f(x) \mapsto f(\vartheta(x)), \quad x \in \mathrm{O}(2),
\end{equation}
intertwines $(\mathrm{ind}^{\mathrm{GL}_{2}(
  \mathbb{R})}_{B(\mathbb{R})} \pi_{m}) \circ \vartheta$ with
$\mathrm{ind}^{\mathrm{GL}_{2}(  \mathbb{R})}_{B(\mathbb{R})}
\pi_{m}$.  Thus, for even $m$ we have an obvious embedding of
$D_{m}$ into a $\vartheta$-stable principal series representation
exhibited by an explicit intertwining operator.

For odd $m$ the map (\ref{Ttheta}) intertwines $(\mathrm{ind}^{\mathrm{GL}_{2}(
  \mathbb{R})}_{B(\mathbb{R})} \pi_{m}) \circ \vartheta$ with
$\mathrm{ind}^{\mathrm{GL}_{2}(   \mathbb{R})}_{B(\mathbb{R})}
(\pi_{m}  \circ \vartheta)$, where
$$\pi_{m} \circ \vartheta=
sgn (\cdot) | \cdot |^{(m-1)/2} \otimes |\cdot|^{-(m-1)/2}.$$
Unfortunately, for odd $m >1$  the representation
$\mathrm{ind}^{\mathrm{GL}_{2}(   \mathbb{R})}_{B(\mathbb{R})}
(\pi_{m}  \circ \vartheta)$ is not equivalent to
$\mathrm{ind}^{\mathrm{GL}_{2}(   \mathbb{R})}_{B(\mathbb{R})}
\pi_{m}$.  This may be deduced from the uniqueness statement in the
Langlands Classification.    Consequently, instead of embedding
$D_{m}$ into a $\vartheta$-stable principal series representation, we
must seek another means of finding an operator which intertwines
$D_{m} \circ \vartheta$ with
$D_{m}$.   For this we look to facts about the discrete series of
$\mathrm{SL}_{2}(\mathbb{R})$.
\begin{lem}
\label{gl2wa}
  Suppose $m$ is a  positive integer.  Then $D_{m}$ is
  $\vartheta$-stable and $D_{m}^{\thicksim} \cong D_{m}^{+}$.
  \end{lem}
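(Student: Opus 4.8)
The plan is to reduce the statement for $D_m$ with $m$ odd (the case $m$ even being already covered by Lemma \ref{prinsame}, since there $D_m$ embeds in the $\vartheta$-stable principal series $\mathrm{ind}_{B(\R)}^{\GL_2(\R)}\pi_m$ via the explicit operator \eqref{Ttheta}) to a computation inside $\SL_2(\R)$, where $D_m$ restricts to a sum of two (limits of) discrete series representations $D_m^+\oplus D_m^-$ of lowest weights $\pm(m+1)$. First I would fix a concrete model: $D_m$ is the unique irreducible subrepresentation of $\mathrm{ind}_{B(\R)}^{\GL_2(\R)}\pi_m$, carrying the Shahidi--Whittaker functional $\omega$ of \eqref{whittfun} restricted from the standard module, and I would pin down $D_m^{\thicksim}(\vartheta)$ as the unique involutive operator intertwining $D_m\circ\vartheta$ with $D_m$ and fixing $\omega$ (exactly as in Lemma \ref{uniquegeneric}(c), with $M(\xi)$ the relevant standard module). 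On the Atlas side, $D_m^+$ is the canonical extension attached to the extended parameter $(\uplambda,\uptau,0,0)$ of Corollary \ref{twistclass2}, for which $z(E)=1$; I would identify the corresponding strong involution $x$ and the value $D_m^+(\vartheta)$ on a suitable $K$-type.

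The key steps, in order, are: (1) note that $D_m^{\thicksim}(\vartheta)=\pm D_m^+(\vartheta)$ by \eqref{normsigns}, so it suffices to show the sign is $+1$; (2) observe that $\vartheta$ acts on $\GL_2$ as $g\mapsto \tilde J(g^{-1})^\intercal \tilde J^{-1}$, which on $\SL_2$ is conjugation by $\mathrm{diag}(1,-1)$ (an element of $\mathrm{O}(2)-\mathrm{SO}(2)$), hence the automorphism $\vartheta$ of $\GL_2(\R)$ is inner-by-$\mathrm{O}(2)$ and acts on the restricted representation $D_m|_{\SL_2(\R)}$ by swapping the two summands $D_m^\pm$ when restricted further — wait, more precisely it normalizes $\SO(2)$ and acts by inversion on it, so it exchanges the lowest-weight vectors of weight $m+1$ and $-(m+1)$; (3) use the fact that the unique (up to scalar) Whittaker vector of $D_m$ is a specific combination of these two extreme weight vectors, and that $D_m^{\thicksim}(\vartheta)$ is normalized to fix it, to compute $D_m^{\thicksim}(\vartheta)$ explicitly on the two extreme weight vectors; (4) compute $D_m^+(\vartheta)$ on the same vectors from the Atlas/Adams-Fokko description \cite{AVParameters}*{(20)} of the module attached to $(\uplambda,\uptau,0,0)$ — the key point being that the Atlas extension is normalized so that $\vartheta$ acts by $+1$ on the canonical extreme weight line dictated by $\uptau$ and the $\theta_x$-action; (5) match the two normalizations, verifying the sign is $+1$.

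The main obstacle I anticipate is step (4): making the Atlas normalization concrete enough to read off the action of $\vartheta$ on the relevant weight vectors of $D_m$, since the extended-parameter formalism of \cite{AVParameters} encodes this action rather implicitly through the data $(\uplambda,\uptau)$ and the Tits-representative conventions. The cleanest route is probably to avoid explicit weight-vector bookkeeping and instead argue via Proposition \ref{wasign1}: although $D_m$ itself does not occur in a $\vartheta$-stable principal series for odd $m$, I can try to realize $D_m^+$ (or a twist of it by an appropriate character, which does not affect the sign computation since the sign in \eqref{normsigns} is insensitive to twisting by $\vartheta$-fixed characters) inside the \emph{block} of a $\vartheta$-stable principal series — for instance, $D_m^+$ and $D_m^+\otimes\mathrm{sgn}\,\mathrm{det}$ together with the two Langlands quotients form the block of $\mathrm{ind}_{B(\R)}^{\GL_2(\R)}\pi_m$ when $m$ is odd only after suitable twisting, so one reduces to a small explicit $2\times 2$ or $3\times 3$ Hecke-module computation as in the proof of Proposition \ref{conjq}, using $l^I(\xi)=l^I_\vartheta(\xi)$ for the relevant (limit of) discrete series parameter $\xi$ of $\GL_2(\R)$ to conclude the sign is $+1$. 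Either way, the content is an $\SL_2$-level fact about how the outer automorphism interacts with the Whittaker normalization of discrete series, and I would organize the write-up around whichever of these two routes keeps the normalization conventions most transparent.
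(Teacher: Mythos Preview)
Your instinct to reduce to $\SL_2(\R)$ is correct, and it is what the paper does; but step~(2) contains a computational error that sends the rest of the plan in the wrong direction. A direct check (e.g.\ on the unipotent element $\left(\begin{smallmatrix}1&1\\0&1\end{smallmatrix}\right)$) shows that $\vartheta|_{\SL_2}$ is the \emph{identity}, not conjugation by $\mathrm{diag}(1,-1)$: inverse-transpose on $\SL_2$ is already inner (it equals $\mathrm{Int}(\tilde J^{-1})$), and the outer $\tilde J$-conjugation in \eqref{varthetadef} cancels it. So $\vartheta$ does not swap the holomorphic and antiholomorphic summands of $D_m|_{\SL_2(\R)}$, and the weight-vector bookkeeping you propose in steps~(3)--(5) is tracking an interaction that is not there. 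Your alternative route---realizing $D_m$ inside the block of a $\vartheta$-stable principal series of $\GL_2(\R)$ via twisting---runs into exactly the obstruction you already noted for odd $m$, so it does not close the gap either.

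The paper turns the triviality of $\vartheta|_{\SL_2}$ to its advantage. Writing $D_m|_{\SL_2(\R)} = D_{m+}\oplus D_{m-}$ with $D_{m+}$ the $(U\cap\SL_2(\R),\chi)$-generic summand, one embeds $D_{m+}\oplus D_{m-}$ in the $\SL_2(\R)$-principal series induced from $(\pi_m)_{|H\cap\SL_2(\R)}$. That principal series \emph{is} $\vartheta$-stable (the sign discrepancy in $\pi_m$ versus $\pi_m\circ\vartheta$ disappears upon restriction to $H\cap\SL_2(\R)$), so Lemma~\ref{prinsame} and Proposition~\ref{wasign1} apply on $\SL_2(\R)$ and give $D_{m+}^{\thicksim}=D_{m+}^{+}$. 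The remaining work is lifting from $\SL_2(\R)$ back to $\GL_2(\R)$: one writes $D_m \cong 1_{Z(\GL_2(\R))}\otimes \mathrm{ind}_{\SL_2(\R)}^{\SL_2^{\pm}(\R)} D_{m+}$, builds the Whittaker functional on $D_m$ by composing $\omega_+$ with the projection to $D_{m+}$, and checks that both the Whittaker and Atlas extensions are compatible with this finite-index induction (the Atlas side via \cite{AVParameters}*{(20)} and \cite{Knapp-Vogan}*{Proposition~2.77}). This bypasses entirely the explicit extended-parameter normalization that you flagged as the main obstacle.
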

\begin{proof}
It is well-known that the restriction of $D_{m}$ to
$\mathrm{SL}_{2}(\mathbb{R})$ decomposes as a direct sum $D_{m+}
\oplus D_{m-}$ of irreducible (limits of) discrete series
representations (\cite{Knapp}*{\emph{pp.} 471-472}).  In
this decomposition $D_{m+}$ is the unique representation which is generic
with respect to $(U \cap \mathrm{SL}_{2}(\mathbb{R}), \chi)$.  Let
$\omega_{+}$ be its Whittaker functional.  The representation $D_{m}$
is equivalent
to the tensor product of the  trivial central character
$1_{Z(\mathrm{GL}_{2}(\mathbb{R}))}$ with
$\mathrm{ind}_{\mathrm{SL}_{2}(\mathbb{R})}^{\mathrm{SL}_{2}^{\pm}(\mathbb{R})}
D_{m+}$.   The direct sum $D_{m+} \oplus D_{m-}$ also occurs as the
unique subrepresentation of  the principal series representation
$\mathrm{ind}_{B
  \cap\mathrm{SL}_{2}(\mathbb{R})}^{\mathrm{SL}_{2}(\mathbb{R})}
\left( (\pi_{m})_{| H \cap \mathrm{SL}_{2}(\mathbb{R})} \right)$  (\cite{Knapp}*{(2.19)}).
It is easily verified that this principal series representation is
$\vartheta$-stable. Lemma  \ref{prinsame} and Proposition
\ref{wasign1} apply equally well for representations of
$\mathrm{SL}_{2}(\mathbb{R})$
and so we conclude that
$D_{m+}$ is $\vartheta$-stable with $D_{m+}^{\thicksim} \cong
D_{m+}^{+}$.   Let $\omega$ be the linear functional on the space of
$D_{m}$ obtained by
composing $\omega_{+}$ with the orthogonal projection $P_{+}$ onto the
space of $D_{m+}$.  We compute that for all $u \in
U(\mathbb{R})$  and $f = (1-P_{+})f + P_{+}f$ in the space of $D_{m}$
\begin{align*}
\omega( D_{m}(u) f) &= \omega_{+} (P_{+} (D_{m-}(u)(1-P_{+}) f + D_{m+}(u) P_{+}f))\\
 &= \omega_{+} (D_{m+}(u) P_{+} f) \\
 &= \chi(u) \, \omega_{+} (P_{+} f) \\
 &= \chi(u)\, \omega(f).
 \end{align*}
This proves that $\omega$ is a Whittaker functional for $D_{m}$ and implies
$$D_{m}^{\thicksim} \cong 1_{Z(\mathrm{GL}_{2}(\mathbb{R}))}
\otimes
\mathrm{ind}_{\mathrm{SL}_{2}(\mathbb{R})}^{\mathrm{SL}_{2}^{\pm}(\mathbb{R})
}D_{m+}^{\thicksim}.$$
The lemma now follows from
\begin{align*}
D_{m}^{\thicksim} &\cong 1_{Z(\mathrm{GL}_{2}(\mathbb{R}))} \otimes \mathrm{ind}_{\mathrm{SL}_{2}(\mathbb{R})}^{\mathrm{SL}_{2}^{\pm}(\mathbb{R})} D_{m+}^{\thicksim}\\
& \cong  1_{Z(\mathrm{GL}_{2}(\mathbb{R}))} \otimes
\mathrm{ind}_{\mathrm{SL}_{2}(\mathbb{R})}^{\mathrm{SL}_{2}^{\pm}(\mathbb{R})}
D_{m+}^{+}\\
& \cong D_{m}^{+}
\end{align*}
in which we appeal to \cite{AVParameters}*{(20)} for the construction of
$D_{m}^{+}$ and use \cite{Knapp-Vogan}*{Proposition 2.77} for the
induction of finite index.
\end{proof}

In the following two lemmas we see that although a $\vartheta$-stable
irreducible generic representation need not be a subrepresentation of
a $\vartheta$-stable principal series representation, it is at worst a
subrepresentation of a $\vartheta$-stable standard representation
which is essentially induced from  $D_{m}$.  We deal with the tempered
representations first.

\begin{lem}
  \label{biglem}
Suppose $\pi_{\mathrm{gen}}$ is a $\vartheta$-stable irreducible
tempered representation of $\mathrm{GL}_{N}(\mathbb{R})$ with
integral infinitesimal character $\lambda \in {^\vee}\mathfrak{h}$
chosen as in (\ref{regintdom})
\emph{i.e.}
$$\vartheta(\lambda) = \lambda\quad \mbox{ and }\quad \langle \lambda,
     {^\vee}\alpha \rangle \in \{ 1,2, \cdots\},
\ \alpha \in R^{+}(\mathrm{GL}_{N}, H).$$
\begin{enumerate}[label={(\alph*)}]

\item Relative to the standard basis of the diagonal Lie algebra
  $\mathfrak{h}$, $\lambda$ has
  coordinates of the form $$(\lambda_{1}, \ldots, \lambda_{ N/2} ,
  -\lambda_{N/2}, \ldots, -\lambda_{1})$$
  when $N$ is even, and of the form
  $$(\lambda_{1}, \ldots, \lambda_{ (N-1)/2} ,0, -\lambda_{(N-1)/2}, \ldots,
  -\lambda_{1})$$
  when $N$ is odd.  The coordinates appear in strictly
  decreasing order.

\item Suppose $N$ is odd.  Then the coordinates  $\lambda_{j}$ are
  all integers,
  and $\pi_{\mathrm{gen}}$ embeds into a $\vartheta$-stable principal
  series representation.

\item Suppose $N$ is even.  Then the coordinates $\lambda_{j}$
  are either all integers or all half-integers (elements in
$\mathbb{Z} + \frac{1}{2}$).  In the latter case, $\pi_{\mathrm{gen}}$
embeds into a $\vartheta$-stable principal series representation.  In
the former case, $\pi_{\mathrm{gen}}$ embeds into a
$\vartheta$-stable principal series representation when $N$ is
divisible by $4$.  When $N$ is not divisible by $4$ in the former case,
$\pi_{\mathrm{gen}}$ embeds into the
representation $\pi$ which is parabolically induced from the
representation
\begin{equation}
  \label{almprin}
|\cdot|^{\lambda_{1}} \otimes \cdots \otimes |
\cdot |^{\lambda_{\frac{N}{2}-1}} \otimes D_{2\lambda_{N/2}+1} \otimes
|\cdot|^{-\lambda_{\frac{N}{2}-1}} \otimes \cdots \otimes | \cdot
|^{-\lambda_{1}}.
\end{equation}
The representation $\pi$ is  $\vartheta$-stable and $\pi^{\thicksim} = \pi^{+}$.
\end{enumerate}
\end{lem}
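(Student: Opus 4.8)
The plan is to prove the three parts in sequence, with part (a) being essentially a normalization computation and parts (b), (c) reducing to the principal-series case (Lemma~\ref{prinsame}) combined with the $\mathrm{GL}_2$ analysis of Lemma~\ref{gl2wa}.

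\smallskip

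\textbf{Part (a).} First I would use the fact that $\pi_{\mathrm{gen}}$ is tempered with regular integral infinitesimal character $\lambda$, so its $L$-parameter $\phi$ restricted to $\mathbb{C}^\times$ is a sum of unitary characters $z \mapsto z^{p_j}\bar z^{q_j}$ with $p_j - q_j \in \mathbb{Z}$; regularity forces the $\lambda$-coordinates to be distinct, and since we have normalized $\lambda$ to be integrally dominant as in (\ref{regintdom}), the coordinates $\langle \lambda, {}^\vee\alpha\rangle$ are positive integers for all positive roots, which pins down the decreasing order. The key extra input is that $\pi_{\mathrm{gen}}$ is $\vartheta$-stable: since $\vartheta$ acts on $H$ by $h \mapsto \tilde J (h^{-1})^\intercal \tilde J^{-1}$, its effect on the diagonal torus is $\mathrm{diag}(t_1,\dots,t_N) \mapsto \mathrm{diag}(t_N^{-1},\dots,t_1^{-1})$, hence $\vartheta(\lambda)=\lambda$ forces $\lambda_j = -\lambda_{N+1-j}$, giving the claimed symmetric shape (with a forced $0$ in the middle when $N$ is odd, since $\langle\lambda,{}^\vee\alpha\rangle$ for $\alpha$ a simple root must still be a positive integer). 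The integrality/half-integrality dichotomy in (c) comes from the $\lambda_j - \lambda_k \in \mathbb{Z}$ condition combined with the symmetry $\lambda_j = -\lambda_{N+1-j}$: for $N$ even, $\lambda_j + \lambda_k \in \mathbb{Z}$ for all $j,k$ so all $\lambda_j$ lie in a single coset of $\mathbb{Z}$ in $\frac12\mathbb{Z}$; for $N$ odd the presence of the coordinate $0$ forces all $\lambda_j \in \mathbb{Z}$.

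\smallskip

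\textbf{Parts (b) and (c).} The unifying idea is that a $\vartheta$-stable irreducible tempered $\pi_{\mathrm{gen}}$ is the unique generic (hence unique irreducible, since tempered induced-from-discrete representations of $\mathrm{GL}_N(\mathbb{R})$ are irreducible generic subquotients) constituent of a standard representation $M(\xi)$ induced from a product of characters $|\cdot|^{\lambda_j}$ and copies of the $\mathrm{GL}_2(\mathbb{R})$ discrete series $D_{2\lambda_k+1}$ (the $D_m$ factors occur precisely when paired coordinates $\lambda_k, -\lambda_k$ with $\lambda_k$ a positive half-integer must be fused — this is the classification of tempered representations of $\mathrm{GL}_N(\mathbb{R})$ via two-dimensional $W_\mathbb{R}$-parameters). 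When all $\lambda_j$ are integers and $N$ is odd, or all are half-integers, or $4 \mid N$ with integral coordinates, one can arrange the data so that $M(\xi)$ is a genuine $\vartheta$-stable principal series: here I would check that the character $\pi_0$ being induced satisfies $\pi_0 \circ \vartheta \cong \pi_0$ (using the explicit action of $\vartheta$ on $H(\mathbb{R})$ and the symmetry of $\lambda$ from part (a), noting the $\mathrm{sgn}$ characters that appear at the half-integral coordinates must match up — this is why divisibility by $4$ enters, as the number of such $\mathrm{sgn}$ twists must be even for $\pi_0$ to be $\vartheta$-fixed rather than only $\vartheta$-fixed up to the outer sign), and then invoke Lemma~\ref{prinsame} directly. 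In the remaining case ($N \equiv 2 \bmod 4$, integral coordinates) the obstruction is exactly one unmatched $\mathrm{sgn}$ twist, which I would absorb into a single $D_{2\lambda_{N/2}+1}$ factor placed in the middle, giving the representation $\pi$ in (\ref{almprin}); its $\vartheta$-stability follows by the same map $f(x) \mapsto f(\vartheta(x))$ computation as in Lemma~\ref{gl2wa}, using that $\vartheta$ fixes the central middle block and reverses the outer $\mathrm{GL}_1$ factors in matched $\pm\lambda_j$ pairs. Finally, $\pi^{\thicksim} = \pi^{+}$ would follow by combining Lemma~\ref{prinsame} on the principal-series part of the inducing data with Lemma~\ref{gl2wa} on the $D_m$ factor, then inducing: the Whittaker functional (\ref{whittfun}) restricts compatibly through the parabolic, and the Atlas extension of an induced representation is the induction of the Atlas extensions (as in the proof of Lemma~\ref{prinsame}, where $z(E)=1$ for the preferred extended parameter), so both extensions send $\vartheta$ to the ``geometric'' intertwining operator $f \mapsto f(\vartheta(\cdot))$.

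\smallskip

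\textbf{Main obstacle.} The delicate point is the parity bookkeeping in part (c): determining exactly when the product of characters $\bigotimes |\cdot|^{\lambda_j}$ (with the requisite $\mathrm{sgn}$ twists dictated by which $\lambda_j$ are half-integers versus integers, and by the structure of the two-dimensional $W_\mathbb{R}$-summands) is genuinely $\vartheta$-stable as a character of $H(\mathbb{R})$, versus only $\vartheta$-stable after composing with the outer automorphism. This is where the $4 \mid N$ condition must be extracted cleanly, and it requires carefully tracking how $\vartheta$ permutes the simple factors of the Levi and how it acts on the $\mathrm{sgn}$ characters. Once this combinatorial fact is isolated, the rest is an assembly of Lemmas~\ref{prinsame} and \ref{gl2wa} with induction by stages, which is routine.
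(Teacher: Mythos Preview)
Your part (a) is fine and matches the paper's one-line argument.

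For parts (b) and (c), however, your approach diverges substantially from the paper's and has a real gap. The paper does \emph{not} attempt to build a $\vartheta$-stable principal series directly from the Langlands data of $\pi_{\mathrm{gen}}$. Instead it identifies $\pi_{\mathrm{gen}}$ with the Atlas parameter $(x,y)\in\mathcal{X}_{\check\rho}^{1}\times{}^\vee\mathcal{X}_\lambda^{w_0}$ (the minimal-length parameter, since $\pi_{\mathrm{gen}}$ is generic), and then applies explicit sequences of Cayley transforms $c_\kappa$ and cross actions $\kappa\times$ over $\vartheta$-orbits $\kappa$ of simple roots to move the twisted involution from $w=1$ toward $w=w_0$ (the principal-series locus). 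The $4\mid N$ versus $N\equiv 2\pmod 4$ dichotomy in the integral case arises from a concrete Atlas obstruction: when the $\lambda_j$ are integers the middle root $\alpha_{N/2}$ is of type \texttt{1i2s}, so $c_{\alpha_{N/2}}$ destroys $\vartheta$-stability and must be avoided; the paper then exhibits an alternate sequence of moves that succeeds when $4\mid N$ and reaches only the almost-principal parameter $(x',y')$ with $w'\delta_0=-s_{\alpha_{N/2}}$ when $N\equiv 2\pmod 4$.

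Your proposed mechanism for the $4\mid N$ condition (``the $\mathrm{sgn}$ characters that appear at the half-integral coordinates must match up'') is confused: the case in which $4\mid N$ becomes relevant is precisely the \emph{integral} case, where there are no half-integral coordinates. What you may be reaching for is that when the $\lambda_j$ are integers the relevant $D_m$'s have $m$ odd, so by (\ref{gl2ps}) each embeds in a $\mathrm{GL}_2$-principal series carrying one $\mathrm{sgn}$; and an odd number of such unpaired $\mathrm{sgn}$'s obstructs $\vartheta$-fixedness of $\pi_0$. But turning this into a proof requires knowing exactly which principal series lie in the \emph{block} of $\pi_{\mathrm{gen}}$ (so that the generic subrepresentation property of Lemma~\ref{uniquegeneric} applies), and controlling the freedom in reordering factors and choosing $\mathrm{sgn}$-twists --- none of which you address. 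The paper's Cayley-transform argument sidesteps this entirely, since Cayley transforms and cross actions automatically stay within the block and preserve $\vartheta$-stability when applied to $\vartheta$-orbits.

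For the final claim $\pi^\thicksim=\pi^+$ your sketch is in the right spirit, but the paper's argument is more structured: it realizes $M(x',y')$ via cohomological induction ${}^u\mathcal{R}$ in stages through the parabolic attached to $\alpha_{N/2}$, identifies the outer stage with parabolic induction, and then reduces to Lemma~\ref{gl2wa} on the $\mathrm{GL}_2$-factor $\pi_M$.
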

\begin{proof}
The first assertion is an immediate consequence of
$$\lambda =\vartheta(\lambda) = -\mathrm{Ad}(\tilde{J})(\lambda).$$
Let
$\alpha_{1}, \ldots , \alpha_{N-1}$ be the simple roots of
$\mathrm{GL}_{N}$ relative to the Borel subgroup $B$.

Suppose first that $N$ is  odd.  Then $$\langle \lambda, {^\vee}\alpha_{(N-1)/2}
\rangle = \lambda_{(N-1)/2} - 0 $$ is a positive integer by the
integrality hypothesis.  The remaining $\lambda_{j}$ for  $j <(N-1)/2$
are then seen to be positive integers by applying the integrality hypothesis to
$\langle \lambda, {^\vee}\alpha_{j} \rangle$ successively.

Our strategy in showing that $\pi_{\mathrm{gen}}$ embeds into a
$\vartheta$-stable principal series representation is to express
$\pi_{\mathrm{gen}}$ in terms of its Atlas parameter (Corollary
\ref{twistclass}) and to apply Cayley transforms and cross actions to its Atlas
parameter  \cite{AVParameters}*{Section 7}.  The resulting
parameters correspond to $\vartheta$-stable standard representations in the
block of $\pi_{\mathrm{gen}}$ (\cite{ICIV}*{Definition 1.14},
\cite{ICIV}*{Theorem 8.8}, \cite{AVParameters}*{Section 7}).  For
$\mathrm{GL}_{N}(\mathbb{R})$, $\pi_{\mathrm{gen}}$ is the unique
irreducible generic representation in its block (Lemma \ref{uniquegeneric}).
Furthermore, every standard
representation in the block contains the irreducible generic representation
 $\pi_{\mathrm{gen}}$,  as its
unique irreducible subrepresentation (\cite{Vogan78}*{Theorem 6.2},
\cite{Vogan78}*{Corollary 6.7}, \cite{casshah}*{Theorem 6.2}).  In
consequence, it suffices to find Cayley transforms and cross actions
which carry the Atlas parameter of $\pi_{\mathrm{gen}}$ to a
$\vartheta$-stable principal series representation.

By Corollary \ref{twistclass}, the representation $\pi_{\mathrm{gen}}$ corresponds to an element $w \in W(\mathrm{GL}_{N}, H)$ satisfying $w \delta_{0}(w) = (ww_{0})^{2} = 1$ ((\ref{twistinv}), (\ref{weylsquared})).  Such Weyl group elements are called $\delta_{0}$\emph{-twisted involutions} \cite{AVParameters}*{Section 3}.
We begin by determining the $\delta_{0}$-twisted involution $w \in
W(\mathrm{GL}_{N}, H)$ attached
to $\pi_{\mathrm{gen}}$. The
$\delta_{0}$-twisted involution $w \in
W(\mathrm{GL}_{N}, H)$ determines an involutive automorphism $w
\delta_{0}$ on $H$  (\cite{AVParameters}*{(14e)}).  This automorphism
also acts on
${^\vee}\mathfrak{h}$, and the integral length (\ref{intlength}) of
$\pi_{\mathrm{gen}}$ is equal to
\begin{equation}
\label{genlen}
  -\frac{1}{2} \left( | \{\alpha \in R^{+}(\mathrm{GL}_{N}, H) :
w \delta_{0} \cdot \alpha \in R^{+}(\mathrm{GL}_{n},H) \}| + \dim(H^{w\delta_{0}})
\right).
\end{equation}
By  \cite{ICIV}*{Lemma 12.10} and the arguments of the proof of Proposition \ref{conjq},
the length of
$\pi_{\mathrm{gen}}$ is minimal among the lengths of all
representations in its block.   It is not difficult to see that
(\ref{genlen}) is minimized at $w = 1$.  Moreover, there exists a
representation in the block of $\pi_{\mathrm{gen}}$ corresponding to
$w=1$ (\cite{ICIV}*{Theorem 8.8} and \cite{Adams-Fokko}*{Section 14}).
It follows
that $\pi_{\mathrm{gen}}$ is in fact this representation and that the
$\delta_{0}$-twisted involution $w$ for $\pi_{\mathrm{gen}}$ is
trivial.

According to \cite{Carter}*{Lemma 5}, there are orthogonal
positive roots, $\beta_{1}, \ldots \beta_{m}$ such that
$$s_{\beta_{1}} \cdots \, s_{\beta_{m}} = ww_{0} = w_{0}.$$
From this we compute $\beta_{1} = \alpha_{1} + \cdots
 + \alpha_{N-1}$, $\beta_{2} = \alpha_{2} + \cdots +
  \alpha_{N-2}$, \ldots, $\beta_{(N-1)/2} = \alpha_{(N-1)/2} +
  \alpha_{(N+1)/2}$ and $m = (N-1)/2$. By Corollary \ref{twistclass}
  there is an element $y \in
  {^\vee}\mathcal{X}_{\lambda}^{s_{\beta_{1}} \cdots
    s_{\beta_{m}}} =
  {^\vee}\mathcal{X}_{\lambda}^{w_{0}}  $ and an element $x \in
  \mathcal{X}_{\rho^{\vee}}^{1}$ such that $J(x,y, \lambda)
  = \pi_{\mathrm{gen}}$.

Before listing Cayley transforms and cross actions to apply to the
parameter $(x,y)$, it is worthwhile to describe the $\delta_{0}$-twisted
involution $w' \in W(\mathrm{GL}_{N}, H)$ attached to a principal
series representation.  A principal series representation is
parabolically induced from a real Borel subgroup.  In the Atlas
parameterization this is equivalent to the automorphism
$w'\delta_{0}$ carrying  all positive roots of $R(\mathrm{GL}_{N}, H)$
to negative roots, making
$B$ a real Borel subgroup (\cite{Knapp-Vogan}*{Proposition 4.76}).
Since $\delta_{0}$ preserves the set of positive roots, this forces
$w' = w_{0}$, the long Weyl group element.

We wish to apply Cayley transforms and cross actions  to the
parameter $(x,y) \in  \mathcal{X}_{\rho^{\vee}}^{1} \times
{^\vee}\mathcal{X}_{\lambda}^{w_{0}}$ in order to arrive at a parameter
$(x',y') \in \mathcal{X}_{\rho^{\vee}}^{w_{0}} \times
{^\vee}\mathcal{X}_{\lambda}^{1}$ and a $\vartheta$-stable representation $\pi(x',y') = J(x',y',\lambda)$ (Corollary \ref{twistclass}).  Recall from Section \ref{heckesection} that Cayley transforms and
cross actions are made relative to $\vartheta$-orbits of simple roots $\kappa$
(\ref{simplekappa}).  A Cayley transform $c_{\kappa}$ applied to a parameter
in $\mathcal{X}_{\rho^{\vee}}^{w} \times
{^\vee}\mathcal{X}_{\lambda}^{ww_{0}}$ produces parameters in
$\mathcal{X}_{\rho^{\vee}}^{w_{\kappa}w} \times
{^\vee}\mathcal{X}_{\lambda}^{w_{\kappa}ww_{0}}$ (\cite{Adams-Fokko}*{Definition 14.1}), where
$w_{\kappa}$ is prescribed in (\ref{simplekappa1}).  A cross action
$\kappa \times $ applied to a parameter in $\mathcal{X}_{\rho^{\vee}}^{w} \times
{^\vee}\mathcal{X}_{\lambda}^{ww_{0}}$ results in a parameter in
$\mathcal{X}_{\rho^{\vee}}^{w_{\kappa}ww_{\kappa}^{-1}} \times
{^\vee}\mathcal{X}_{\lambda}^{w_{\kappa}ww^{-1}_{\kappa}w_{0}}$
(\cite{Adams-Fokko}*{(9.11f)}).  In short, Cayley transforms
left-multiply a $\delta_{0}$-twisted involution $w$ by $w_{\kappa}$, and
cross actions conjugate  $w$ by $w_{\kappa}$.  We wish to move from
the  $\delta_{0}$-twisted involution $1$ to the $\xi_{0}$-twisted
involution $w_{0}= s_{\beta_{1}} \cdots s_{\beta_{(N-1)/2}}$ using
these operations.

We first  describe how to move from $1$ to $s_{\beta_{1}}$.  We define the
symbol $\leadsto$ to mean ``takes a parameter in the set on the left
to  parameters in the set on the right''.
Let $\kappa_{j}$ be the $\vartheta$-orbit of $\alpha_{j}$.  It is
straightforward to verify (using the \emph{Atlas of Lie Groups and Representations} software)
\begin{equation}\label{caycross}
\begin{aligned}
  \mathcal{X}_{\rho^{\vee}}^{1} \times
{^\vee}\mathcal{X}_{\lambda}^{w_{0}}
\  &\stackrel{c_{\kappa_{(N-1)/2}}}{\leadsto} \mathcal{X}_{\rho^{\vee}}^{s_{\beta_{m}}} \times
{^\vee}\mathcal{X}_{\lambda}^{s_{\beta_{m}}w_{0}}\\
&\stackrel{\kappa_{(N-3)/2} \times }{\leadsto}
\mathcal{X}_{\rho^{\vee}}^{s_{\beta_{m-1}}} \times
{^\vee}\mathcal{X}_{\lambda}^{s_{\beta_{m-1}}w_{0}}\\
&\stackrel{\kappa_{(N-5)/2} \times }{\leadsto}
\mathcal{X}_{\rho^{\vee}}^{s_{\beta_{m-2}}} \times
{^\vee}\mathcal{X}_{\lambda}^{s_{\beta_{m-2}}w_{0}}\\
& \vdots\\
&\stackrel{\kappa_{1} \times }{\leadsto}
\mathcal{X}_{\rho^{\vee}}^{s_{\beta_{1}}} \times
{^\vee}\mathcal{X}_{\lambda}^{s_{\beta_{1}}w_{0}}.
\end{aligned}
\end{equation}
To move from $s_{\beta_{1}}$ to $s_{\beta_{1}}s_{\beta_{2}}$  we repeat this
procedure, but terminating with $\stackrel{\kappa_{2}
  \times}{\leadsto}$.  Repeating this procedure in the obvious
fashion, we arrive to $w_{0}= s_{\beta_{1}} \cdots
s_{\beta_{(N-1)/2}}$, as desired.  This proves part (b).

We now continue under the assumption that $N$ is even.  By the
integrality hypothesis
$$\langle \lambda,
{^\vee}\alpha_{N/2}
\rangle = \lambda_{N/2} - (-\lambda_{N/2}) = 2 \lambda_{N/2} $$ is a
positive integer.  Therefore $\lambda_{N/2} \in
\frac{1}{2}\mathbb{Z}$.  If $\lambda_{N/2} \in \mathbb{Z} +
\frac{1}{2}$  then the integrality of $\langle \lambda,
{^\vee}\alpha_{(N/2)-1}  \rangle = \lambda_{(N/2)-1} - \lambda_{N/2} $
implies $\lambda_{(N/2)-1} \in \mathbb{Z} + \frac{1}{2}$.  Similar
computations with the remaining simple roots $\alpha_{(N/2)-1}, \ldots ,
\alpha_{1}$ then imply that all $\lambda_{j}$ are half-integers.  If
$\lambda_{N/2}$ is an integer then the same argument proves that all
$\lambda_{j}$ are integers.

The approach to embedding $\pi_{\mathrm{gen}}$ into standard
representations for even $N$ is the same as in the odd case.  In
particular, $\pi_{\mathrm{gen}}  = J(x,y,\lambda)$ where $(x,y) \in
\mathcal{X}_{\rho^{\vee}}^{1} \times
{^\vee}\mathcal{X}_{\lambda}^{w_{0}}$.  We wish to apply
Cayley transforms and cross actions to $(x,y)$ in order to arrive to
standard representations of the desired form.  There are three cases
to consider.

In the case that all coordinates of $\lambda$ are half-integers the Cayley
transform $c_{\kappa_{N/2}} = c_{\alpha_{N/2}}$ may be used to replace
$c_{\kappa_{(N-1)/2}}$ in (\ref{caycross}) in order to obtain the same
conclusion.

However, in the case that all coefficients of $\lambda$ are integers
the Cayley transform $c_{\alpha_{N/2}}$ does not yield parameters
which are $\vartheta$-stable and must therefore be ignored
($\alpha_{N/2}$ is a root of type \texttt{1i2s} in
\cite{AVParameters}*{Tables 1-2}).  In the special case of $\mathrm{GL}_{4}(\mathbb{R})$ one
may circumvent this obstacle as follows
\begin{equation}\label{caycross1}
\begin{aligned}
\mathcal{X}_{\rho^{\vee}}^{1} \times
{^\vee}\mathcal{X}_{\lambda}^{w_{0}}
\  &\stackrel{c_{\kappa_{1}}}{\leadsto}
\mathcal{X}_{\rho^{\vee}}^{s_{\alpha_{1}}s_{\alpha_{3}}} \times
{^\vee}\mathcal{X}_{\lambda}^{s_{\alpha_{1}}s_{\alpha_{3}}w_{0}}\\
&\stackrel{\kappa_{2} \times }{\leadsto}
\mathcal{X}_{\rho^{\vee}}^{s_{\alpha_{2}}s_{\alpha_{1}}s_{\alpha_{3}}s_{\alpha_{2}}} \times
{^\vee}\mathcal{X}_{\lambda}^{s_{\alpha_{2}}s_{\alpha_{1}}s_{\alpha_{3}}s_{\alpha_{2}}w_{0}}\\
&\stackrel{c_{\kappa_{1}}}{\leadsto}
\mathcal{X}_{\rho^{\vee}}^{s_{\alpha_{1}}
  s_{\alpha_{3}}s_{\alpha_{2}}s_{\alpha_{1}}s_{\alpha_{3}}s_{\alpha_{2}}}
\times
{^\vee}\mathcal{X}_{\lambda}^{s_{\alpha_{1}}
  s_{\alpha_{3}}s_{\alpha_{2}}s_{\alpha_{1}}s_{\alpha_{3}}s_{\alpha_{2}}
  w_{0}}\\
&=
\mathcal{X}_{\rho^{\vee}}^{s_{\beta_{1}} s_{\beta_{2}}} \times
{^\vee}\mathcal{X}_{\lambda}^{1}.
\end{aligned}
\end{equation}
More generally, if $N$ is divisible by four one may replace the first
step of (\ref{caycross}) with the appropriate analogue of
(\ref{caycross1}) and then continue by performing cross actions as in
(\ref{caycross}) to arrive at a parameter in
$\mathcal{X}_{\rho^{\vee}}^{s_{\beta_{1}} s_{\beta_{2}}} \times
{^\vee}\mathcal{X}_{\lambda}^{s_{\beta_{1}} s_{\beta_{2}}w_{0}}$.
Iterating this process, one arrives at a parameter in
$\mathcal{X}_{\rho^{\vee}}^{w_{0}} \times
{^\vee}\mathcal{X}_{\lambda}^{1}$ which corresponds to a
$\vartheta$-stable principal
series representation.

In the last case where the coordinates of $\lambda$ are integers and $N$
has remainder two when divided by four one may only iterate the process
just described to arrive at a parameter
$(x',y') \in \mathcal{X}_{\rho^{\vee}}^{s_{\beta_{1}} \cdots s_{\beta_{(N/2)-1}}} \times
{^\vee}\mathcal{X}_{\lambda}^{s_{\beta_N/2}}$.  The
involution corresponding to this parameter acts on roots by
\begin{equation}
  \label{cartanaction}
s_{\beta_{1}} \cdots s_{\beta_{(N/2)-1}} \delta_{0} = -s_{\alpha_{N/2}}.
\end{equation}
In the Atlas parameterization this
implies that $\alpha_{N/2}$ is an imaginary root and all simple
roots orthogonal to $\alpha_{N/2}$ are real.  Let us again identify
the $\vartheta$-stable representation
$$\pi = \pi(x',y') = J(x',y', \lambda)$$
with its  underlying $(\mathfrak{gl}_{N}, K)$-module (\cite{AVParameters}*{(20)}).  In the language
of \cite{Knapp-Vogan}*{Section 11} this module is the unique irreducible
quotient of
$$M(x',y') = {^u}\mathcal{R}_{\mathfrak{b}, H \cap K}^{\mathfrak{gl}_{N}, K}
  Z(\mathfrak{b}).$$
Here, $\mathfrak{b} = \mathfrak{h} \oplus \mathfrak{n}$ is the
upper-triangular Borel subalgebra, $K =
K_{\delta}$,
$$\delta= \exp( \uppi i \ch\rho) \sigma_{\alpha_{1} + \cdots +
  \alpha_{N-1}} \cdots \sigma_{\alpha_{\frac{N}{2} -1} +
  \alpha_{\frac{N}{2}+1}} \delta_{0}$$
(\cite{AVParameters}*{Proposition 3.2}), and  $Z(\mathfrak{b}) =
\mathbb{C}(y',\lambda) \otimes
\wedge^{\mathrm{top}}(\mathfrak{n})$ as in \cite{AVParameters}*{(20)}.
By \cite{Knapp-Vogan}*{Corollary 11.86}, we may write
\begin{equation}
  \label{instages}
{^u}\mathcal{R}_{\mathfrak{b}, H \cap K}^{\mathfrak{gl}_{N}, K}
  Z(\mathfrak{b}) =  {^u}\mathcal{R}_{\mathfrak{p}, M \cap
    K}^{\mathfrak{gl}_{N}, K} {^u} \mathcal{R}_{\mathfrak{b} \cap
    \mathfrak{m}, H \cap K}^{\mathfrak{m}, K \cap M}
  Z(\mathfrak{b}) =  {^u}\mathcal{R}_{\mathfrak{p}, M \cap
    K}^{\mathfrak{gl}_{N}, K} \pi_{M}
  \end{equation}
where $\mathfrak{p}  = \mathfrak{m} \oplus \mathfrak{u} \supset
\mathfrak{b}$ is the parabolic subalgebra corresponding to
$\alpha_{N/2}$, and  $\pi_{M} = {^u} \mathcal{R}_{\mathfrak{b} \cap
    \mathfrak{m}, H \cap K}^{\mathfrak{m}, K \cap M}
Z(\mathfrak{b})$.
The induction functor ${^u}\mathcal{R}_{\mathfrak{p}, M \cap
    K}^{\mathfrak{gl}_{N}, K}$ on the right may be identified with
parabolic induction (\cite{Knapp-Vogan}*{Proposition 11.57}) and
$\pi_{M}$ is the underlying module of (\ref{almprin}) (\cite{Knapp-Vogan}*{Theorem 11.178}).

It remains to prove that the Whittaker extension of $\pi$ equals its
Atlas extension.  Using the formula for the Whittaker functional
of a parabolically induced representation (\cite{Sha81}*{Proposition 3.2}), we have
$$({^u}\mathcal{R}_{\mathfrak{b}, H \cap K  }^{\mathfrak{gl}_{N}, K}
  Z(\mathfrak{b}) )^{\thicksim} = ( {^u}\mathcal{R}_{\mathfrak{p}, (M \cap
    K) }^{\mathfrak{gl}_{N}, K } \pi_{M}  )^{\thicksim}  =
  {^u}\mathcal{R}_{\mathfrak{p}, (M \cap  K) \rtimes \langle \vartheta
    \rangle}^{\mathfrak{gl}_{N}, K\rtimes
    \langle \vartheta \rangle }  \pi_{M}^{\thicksim}.$$
  By definition of the Atlas extension (\ref{canext})
  $$ ({^u}\mathcal{R}_{\mathfrak{b}, H \cap K}^{\mathfrak{gl}_{N}, K}
  Z(\mathfrak{b}))^{+} = {^u}\mathcal{R}_{\mathfrak{b}, H \cap
    K}^{\mathfrak{gl}_{N}, K} (  Z(\mathfrak{b})^{+})$$
  and so using  induction by stages as in (\ref{instages}) we have
  \begin{equation}
    \label{nearps}
 ({^u}\mathcal{R}_{\mathfrak{b}, H \cap K}^{\mathfrak{gl}_{N}, K}
  Z(\mathfrak{b}))^{+} =  {^u}\mathcal{R}_{\mathfrak{p}, (M \cap
    K)\rtimes \langle \vartheta \rangle}^{\mathfrak{gl}_{N}, K\rtimes
    \langle \vartheta \rangle} {^u} \mathcal{R}_{\mathfrak{b} \cap
    \mathfrak{m}, (H \cap K)\rtimes \langle \vartheta
    \rangle}^{\mathfrak{m}, (K \cap M)\rtimes \langle \vartheta
    \rangle}
  Z(\mathfrak{b})^{+} = {^u}\mathcal{R}_{\mathfrak{p}, (M \cap
    K)\rtimes \langle \vartheta \rangle}^{\mathfrak{gl}_{N}, K\rtimes
    \langle \vartheta \rangle} \pi_{M}^{+}.
  \end{equation}
Therefore, if $\pi_{M}^{+} = \pi_{M}^{\thicksim}$ then it follows
that
$$ ({^u}\mathcal{R}_{\mathfrak{b}, H \cap K}^{\mathfrak{gl}_{N}, K}
  Z(\mathfrak{b}))^{+} = ( {^u}\mathcal{R}_{\mathfrak{b}, H \cap
    K}^{\mathfrak{gl}_{N}, K}
  Z(\mathfrak{b}))^{\thicksim},$$
  \emph{i.e.} the Atlas extension and Whittaker extensions are equal.

For the  proof of $\pi_{M}^{+} = \pi_{M}^{\thicksim}$ we may assume
without loss of generality that $M = \mathrm{GL}_{2}$ and   $\pi_{M} =
D_{m}$, and appeal to  Lemma \ref{gl2wa}.
\end{proof}

The next lemma is a generalization of the previous one to include
generic representations with
non-integral infinitesimal characters.

\begin{lem}
\label{biglem1}
Suppose $\pi_{\mathrm{gen}}$ is a $\vartheta$-stable irreducible
generic representation of $\mathrm{GL}_{N}(\mathbb{R})$ with
infinitesimal character satisfying (\ref{regintdom}).  Then
$\pi_{\mathrm{gen}}$ embeds into a $\vartheta$-stable
standard representation $M(\xi_{p})$, $\xi_{p} \in \Xi(\O, {^\vee}
\mathrm{GL}_{N}^{\Gamma})^\vartheta$ such that
$M(\xi_{p})^{\thicksim} = M(\xi_{p})^{+}$.
\end{lem}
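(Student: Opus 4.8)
The plan is to reduce the general (possibly non-integral) case to the integral case already handled in Lemma \ref{biglem}, by using the fact that for $\mathrm{GL}_{N}(\mathbb{R})$ the integral root system $R(\lambda)$ decomposes the parameter data into a product over a "block decomposition" determined by the non-integral part of $\lambda$. Concretely, write the coordinates of $\lambda$ (in the standard basis of $\mathfrak{h}$) as in Lemma \ref{biglem}(a); the relation $\vartheta(\lambda) = \lambda$ forces them to be of the form $(\lambda_{1}, \ldots, -\lambda_{1})$ with coordinates strictly decreasing. Group the coordinates into classes according to which congruence class mod $\mathbb{Z}$ they fall into. Because $\lambda_{j}$ and $-\lambda_{j}$ occur as a $\vartheta$-paired set, each such congruence class is either $\vartheta$-stable on its own (the class of $c$ equals the class of $-c$, i.e. $2c \in \mathbb{Z}$), or comes paired with the class of $-c$. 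This produces a $\vartheta$-stable standard Levi subgroup
$$L \cong \prod_{i} \mathrm{GL}_{N_{i}}(\mathbb{R}) \times \prod_{j} \left(\mathrm{GL}_{M_{j}}(\mathbb{R}) \times \mathrm{GL}_{M_{j}}(\mathbb{R})\right)$$
of $\mathrm{GL}_{N}(\mathbb{R})$, where $\vartheta$ acts on each first-type factor $\mathrm{GL}_{N_{i}}(\mathbb{R})$ by its own outer involution, and swaps-and-transposes the two factors of each second-type block.

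The key structural step is that $\pi_{\mathrm{gen}}$, being the unique irreducible generic representation in its block (Lemma \ref{uniquegeneric}), is a Langlands subrepresentation of a standard representation $M(\xi)$ obtained by real parabolic induction from $L$ of a $\vartheta$-stable generic representation $\pi_{L}$ of $L(\mathbb{R})$ whose infinitesimal character restricted to each factor is \emph{integral} for that factor. For the first-type factors this is precisely the situation of Lemma \ref{biglem}, so each such factor embeds into a $\vartheta$-stable standard representation (a principal series, or the "almost principal series" built from a $D_{m}$) whose Whittaker and Atlas extensions agree. For the second-type blocks $\mathrm{GL}_{M_{j}} \times \mathrm{GL}_{M_{j}}$, the representation is $\tau \otimes (\tau \circ \vartheta)$ for an irreducible $\tau$ of $\mathrm{GL}_{M_{j}}(\mathbb{R})$, which is automatically $\vartheta$-stable under the swap, and its canonical extension (on either side) is the "doubling" extension whose value on $\vartheta$ is the swap operator; there the Whittaker and Atlas extensions also coincide for elementary reasons (no sign ambiguity arises because the restriction to the index-two subgroup is $\tau \oplus (\tau\circ\vartheta)$, which determines the extension once one fixes the value $1$ on $\vartheta$ via the canonical swap). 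Assembling these, $\pi_{L}^{\thicksim} = \pi_{L}^{+}$ on $L(\mathbb{R}) \rtimes \langle \vartheta \rangle$.

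The final step is to propagate the equality of extensions through real parabolic induction. This is the same argument as at the end of the proof of Lemma \ref{biglem}: using the formula for the Whittaker functional of a parabolically induced representation (\cite{Sha81}*{Proposition 3.2}) one gets
$$M(\xi_{p})^{\thicksim} = \left(\mathrm{ind}_{P(\mathbb{R})}^{\mathrm{GL}_{N}(\mathbb{R})} \pi_{L}\right)^{\thicksim} = \mathrm{ind}_{P(\mathbb{R}) \rtimes \langle \vartheta \rangle}^{\mathrm{GL}_{N}(\mathbb{R}) \rtimes \langle \vartheta \rangle} \pi_{L}^{\thicksim},$$
while by the definition of the Atlas extension (\ref{canext}) and induction by stages
$$M(\xi_{p})^{+} = \mathrm{ind}_{P(\mathbb{R}) \rtimes \langle \vartheta \rangle}^{\mathrm{GL}_{N}(\mathbb{R}) \rtimes \langle \vartheta \rangle} \pi_{L}^{+},$$
so $\pi_{L}^{\thicksim} = \pi_{L}^{+}$ gives $M(\xi_{p})^{\thicksim} = M(\xi_{p})^{+}$. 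That $M(\xi_{p})$ is $\vartheta$-stable and contains $\pi_{\mathrm{gen}}$ as its unique irreducible subrepresentation follows from Lemma \ref{uniquegeneric} and the Langlands classification, and one must also check (via the Atlas parameter / twisted-involution bookkeeping, as in Lemma \ref{biglem}) that $\xi_{p}$ lands in $\Xi(\O, {^\vee}\mathrm{GL}_{N}^{\Gamma})^{\vartheta}$.

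The main obstacle I anticipate is the bookkeeping for the second-type blocks and, more subtly, verifying that the $\vartheta$-action on $L$ really is of the stated "product of outer involutions and swaps" form and that one can choose the inducing parabolic $P$ to be $\vartheta$-stable with a $\vartheta$-stable real Levi $L$ — i.e. that the non-integral structure of $\lambda$ is genuinely compatible with $\vartheta$ in the way claimed. Once that combinatorial reduction is in place, everything else is either a direct appeal to Lemma \ref{biglem} (integral, first-type factors), a short self-contained argument for the swap blocks, or the induction-by-stages argument reproduced verbatim from the end of Lemma \ref{biglem}. A secondary, purely technical point is ensuring the Whittaker datum $(U,\chi)$ restricts compatibly to $L$ and to each factor, which is standard since $\chi$ is $\vartheta$-fixed and $P$ is a standard parabolic.
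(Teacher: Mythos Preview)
Your approach via the integral root system $R(\lambda)$ is genuinely different from the paper's. The paper does not group by congruence classes of $\lambda$; instead it writes $\pi_{\mathrm{gen}}$ as a full induced representation from a cuspidal parabolic (Vogan's Theorem~6.2 in \cite{Vogan78}), obtaining $\mathrm{GL}_1$- and $\mathrm{GL}_2$-factors $\pi'_j \otimes e^{\nu_j}$, and then uses the Langlands Disjointness Theorem to extract from $\vartheta$-stability a partition of these small factors into pairs $\{\varpi' \otimes e^{\nu'},\ \varpi' \otimes e^{-\nu'}\}$ and singletons $\{D_m\}$. Paired pieces and half-integral singletons are embedded into principal series by explicit formulae; Lemma~\ref{biglem} is invoked only on the remaining integral-singleton part $\pi_{III}$, which is genuinely tempered.

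Your scheme has a concrete obstruction you anticipate but do not resolve. Take $N=4$, $\lambda = (3/2,\,1,\,-1,\,-3/2)$: both congruence classes $c=0$ and $c=1/2$ satisfy $2c\in\mathbb{Z}$, so in your taxonomy both are first-type and each should carry its own outer involution. But any block-diagonal Levi $\mathrm{GL}_2\times\mathrm{GL}_2 \subset \mathrm{GL}_4$ is \emph{swapped} by $\vartheta$, not fixed factorwise; the non-standard Levi on positions $\{1,4\}$ and $\{2,3\}$ is $\vartheta$-stable in your sense, but then there is no standard parabolic and the Whittaker-induction step you cite is unavailable as stated. In general, two distinct first-type classes cannot both occupy a symmetric middle block, so the ``product of outer involutions and swaps'' picture fails at the level of standard Levis. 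A smaller gap: Lemma~\ref{biglem} is stated for \emph{tempered} representations, and the generic piece on a first-type factor need not be tempered (the proof of Lemma~\ref{biglem} may not actually use temperedness, but you would have to argue this). The paper's cuspidal-data decomposition sidesteps both issues: the $\vartheta$-pairing is forced on the small blocks by Langlands Disjointness, and the residual piece $\pi_{III}$ is automatically tempered.
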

\begin{proof}
According to \cite{Vogan78}*{Theorem 6.2}, $\pi_{\mathrm{gen}}$ is
infinitesimally equivalent
to a parabolically induced representation
$\mathrm{ind}_{P(\mathbb{R})}^{\mathrm{GL}_{N}(\mathbb{R})} (\pi'
\otimes e^{\nu})$. Here,  $P(\mathbb{R})$ is a cuspidal standard parabolic
subgroup whose Levi subgroup $M(\mathbb{R})$ has Langlands
decomposition $M(\mathbb{R}) = M^{1}A$ (\cite{Knapp}*{Section V.5}), $\pi'$ is a
(limit of) discrete series representation of $M^{1}$, and $\nu$ lies in
the complex Lie algebra $\mathfrak{a}$ of $A$.  Since $P(\mathbb{R})$
is standard and cuspidal the Levi subgroup $M(\mathbb{R})$ decomposes
diagonally into blocks
$$M(\mathbb{R}) = M_{1}(\mathbb{R}) \times \cdots \times
M_{\ell}(\mathbb{R})$$
in which each block $M_{j}(\mathbb{R})$ is isomorphic to either
$\mathrm{GL}_{2}(\mathbb{R})$ or $\mathrm{GL}_{1}(\mathbb{R})$.
Accordingly,
\begin{equation}
  \label{mfactor}
  M^{1} = M^{1}_{1} \times \cdots \times M^{1}_{\ell},
  \end{equation}
where $M_{j}'$ is isomorphic to $\mathrm{SL}_{2}^{\pm}(\mathbb{R})$ or $\{ \pm
1\}$; and
$$\pi' = \pi_{1}' \otimes \cdots \otimes \pi_{\ell}',$$
where $\pi'_{j}$ is equivalent to
$D_{m_{j}}' := (D_{m_{j}})_{|\mathrm{SL}_{2}^{\pm}(\mathbb{R})}$ (\emph{cf.}
(\ref{gl2ps})) when $M_{j}' \cong
\mathrm{SL}_{2}^{\pm}(\mathbb{R})$, and is equivalent to $1$
or $sgn$ when $M^{1}_{j} \cong \{\pm 1\}$.  In addition,
\begin{equation}
  \label{afactor}
\mathfrak{a} = \mathfrak{a}_{1} \oplus \cdots \oplus
\mathfrak{a}_{\ell}
\end{equation}
and $\nu = \nu_{1} + \cdots + \nu_{\ell}$.  One should expect that the
$\vartheta$-stability
of $\pi_{\mathrm{gen}} \cong \mathrm{ind}_{P(\mathbb{R})}^{\mathrm{GL}_{N}(\mathbb{R})} (\pi'
\otimes e^{\nu})$ would put constraints on the constituent
representations $\pi'_{j} \otimes e^{\nu_{j}}$ of
$M_{j}(\mathbb{R})$.  This is indeed so, and we now prove that the
$\vartheta$-stability yields
a partition of the set
\begin{equation}
  \label{factorset}
\{ \pi_{1} \otimes e^{\nu_{1}}, \ldots, \pi_{\ell} \otimes
e^{\nu_{\ell}} \}
\end{equation}
into either pairs of the form $\{
D_{m_{j}}' \otimes e^{\nu_{j}},  D_{m_{j}}' \otimes e^{-\nu_{j}}\}$ or
singletons of the form $\{ D_{m_{j}}' \otimes e^{0}\} = \{ D_{m_{j}}
\}$.

The $\vartheta$-stability implies that the distribution character
of $\pi_{\mathrm{gen}}$ is equal to
the distribution character of
$$\pi_{\mathrm{gen}} \circ \vartheta \cong
\left( \mathrm{ind}_{P(\mathbb{R})}^{\mathrm{GL}_{N}(\mathbb{R})} (\pi'
\otimes e^{\nu}) \right) \circ \vartheta \cong
\mathrm{ind}_{\vartheta(P(\mathbb{R}))}^{\mathrm{GL}_{N}(\mathbb{R})}
\left(  \pi' \circ \vartheta \otimes e^{\vartheta \cdot \nu}
\right).$$
By the Langlands Disjointness Theorem
(\cite{Langlands}*{\emph{pp.} 149-150}, \emph{cf.} \cite{Knapp}*{Theorem 14.90}), there
exists $g \in \mathrm{O}(N)$ such that
\begin{eqnarray}
  \label{ldt}
 \mathrm{Int}(g)
\circ \vartheta( M^{1}) = M^{1}&,&  \mathrm{Int}(g)
\circ \vartheta (A) = A,\\
\nonumber \pi' \circ  \vartheta \circ  \mathrm{Int}(g^{-1})
\cong \pi'&,& \mbox{and}\quad \vartheta \cdot (\mathrm{Ad}(g^{-1})\nu) = \nu.
\end{eqnarray}
Recall that $\vartheta$ is the composition of
$\mathrm{Int}(\tilde{J})$ and inverse-transpose.  The
inverse-transpose automorphism stabilizes $M^{1}$ and $A$.  Since
inverse-transpose acts on $\mathrm{SL}_{2}(\mathbb{R})$ as the
inner automorphism $\mathrm{Int}\left(\tiny \begin{bmatrix} 0 & 1 \\ -1 & 0
  \end{bmatrix} \right)\normalsize$, and acts trivially on $\{\pm
1\}$, it is easy to see that
inverse-transpose stabilizes $\pi'$.  The value of the differential of
inverse-transpose  at $\nu$ is $-\nu$.  Taking these facts into
consideration, we may read (\ref{ldt}) as
\begin{eqnarray*}
  \label{ldt1}
 \mathrm{Int}(g_{1})( M^{1}) = M^{1}&,&  \mathrm{Int}(g_{1}) (A) = A,\\
\nonumber \pi' \circ   \mathrm{Int}(g_{1}^{-1})
\cong \pi' &,& \mbox{and}\quad \mathrm{Ad}(g_{1}^{-1})\nu = -\nu,
\end{eqnarray*}
where $g_{1} = g\tilde{J} \in \mathrm{O}(N)$.  After possibly
multiplying by an element in $M^{1} \cap \mathrm{O}(N)$, we may assume
that $\mathrm{Int}(g_{1})$ fixes a representative $\lambda' \in
{^\vee}\mathfrak{m}$ of the infinitesimal character of $\pi'$.  The
infinitesimal character of $\pi'$ decomposes as
$$\lambda' = \lambda_{1}' + \cdots + \lambda_{\ell}',$$
where $\lambda'_{j} \in {^\vee}\mathfrak{m}_{j}$.  If
$M_{j} = \mathrm{GL}_{2}$ then $\lambda'_{j}$ determines $\pi'_{j} = D_{m_{j}}'$
up to equivalence.  Since
$g_{1}$ normalizes $M^{1}$ and fixes
$\lambda'$, $\mathrm{Int}(g_{1})(M^{1}_{j}) = M^{1}_{k}$ for some $1 \leq
k \leq \ell$, and $\mathrm{Ad}(g_{1})(\lambda_{j}') = \lambda'_{k}$.  In
particular, if $M_{j} \cong \mathrm{GL}_{2}$ then $\pi_{j}' \cong
\pi_{k}' \cong D'_{m_{j}}$.  If $M_{j} \cong \mathrm{GL}_{1}$ then
$\mathrm{Int}(g)_{|M_{j}^{1}}$ is the unique isomorphism of $M^{1}_{j}
\cong \{\pm 1\}$ onto $M_{k}^{1} \cong \{\pm 1\}$ and so  $\pi'_{j} =
\pi_{k}'$.  If $k = j$ in either of these two cases then
$-\nu_{j} = \mathrm{Ad}(g_{1}^{-1}) (\nu_{j}) = \nu_{j}$ and $\nu_{j}
= 0$.  In this manner, the element $g_{1}$ specifies the singletons
for the partition of (\ref{factorset}).  The pairs in
the partition of (\ref{factorset}) become evident once we
establish that $g_{1}$ acts involutively on the factors of
(\ref{mfactor}) and (\ref{afactor}).  For this, we observe that
$\mathrm{Ad}(g_{1}^{2})$
fixes both $\lambda'$ and $\nu$ and so fixes
a representative of the infinitesimal character of
$\pi_{\mathrm{gen}}$ (\cite{Knapp}*{Proposition 8.22}).  As we are
assuming that the infinitesimal character is regular, $g_{1}^{2}$ belongs
to the Cartan subgroup determined by $\lambda'$ and $\nu$, which is a subgroup
of $M(\mathbb{R})$.  Thus, $\mathrm{Int}(g_{1}^{2})(M^{1}_{j}) =
M^{1}_{j}$, $\mathrm{Ad}(g_{1}^{2})\mathfrak{a}_{j} =
\mathfrak{a}_{j}$, which proves the desired involutive action of
$g_{1}$.

The distribution character, and therefore the infinitesimal
equivalence class, of the irreducible representation
$\mathrm{ind}_{P(\mathbb{R})}^{\mathrm{GL}_{N}(\mathbb{R})} (\pi'
\otimes e^{\nu})$ is independent of the choice of parabolic subgroup
and is invariant under conjugation by elements in
$\mathrm{GL}_{N}(\mathbb{R})$.  This allows us to permute the factors
of $M$, $\pi'$ and $\nu$.  In view of our partition of
(\ref{factorset}) into pairs and singletons, we may choose a
permutation such that, without loss of generality,
\begin{equation}
  \label{newpigen}
\begin{aligned}
  \pi' \otimes e^{\nu} = & (\varpi'_{1} \otimes e^{\nu'_{1}}) \otimes
  \cdots \otimes (\varpi'_{k} \otimes e^{\nu_{k}'})\\
 &\otimes (\varpi'_{k+1} \otimes e^{0}) \otimes  \cdots
\otimes (\varpi'_{h}
\otimes e^{0}) \\
& \otimes (\varpi'_{h+1} \otimes e^{0}) \otimes  \cdots
\otimes (\varpi'_{i}
\otimes e^{0})\\
 & \otimes (\varpi'_{k} \otimes e^{-\nu'_{k}}) \otimes
  \cdots \otimes (\varpi'_{1} \otimes e^{-\nu_{1}'}).
\end{aligned}
\end{equation}
The factors $\varpi'_{j}$ here have the same form as the $\pi'_{j}$.  What
is different in
this decomposition of $\pi' \otimes e^{\nu}$ is that we are separating the
factors into three groups.  The first group is comprised of the first
and fourth lines of (\ref{newpigen}).  This group
corresponds to the pairs in the partition of (\ref{factorset}).  The
second and third groups encompass the  singletons, which are
essentially (limits of) discrete series.   The group
in the second line is taken to be those singletons in the partition
whose infinitesimal characters in ${^\vee}\mathfrak{h}$ all have
half-integral entries (elements in
$\mathbb{Z} + \frac{1}{2}$).  The group in the third
line is comprised of the  singletons whose infinitesimal characters
all have integral entries.

It is not difficult to realize
representations in the first and second groups as subquotients of a
 principal series  representation.
For example, letting $B_{1}$ be the upper-triangular
Borel subgroup in $\mathrm{GL}_{2}$, the representation
$$(D'_{m} \otimes e^{\nu_{1}'}) \otimes (D_{m}' \otimes
e^{-\nu_{1}'})$$
embeds into
\begin{equation}
  \label{Iform1}
\mathrm{ind}_{B_{1}(\mathbb{R}) \times
  B_{1}(\mathbb{R})}^{\mathrm{GL}_{2}(\mathbb{R}) \times \mathrm{GL}_{2}(\mathbb{R})}
\left( (|\cdot|^{(m-1)/2} \otimes sgn(\cdot) |\cdot|^{-(m-1)/2})
\otimes e^{\nu_{1}'}  \right) \otimes \left( (sgn(\cdot)
|\cdot|^{(m-1)/2} \otimes  |\cdot|^{-(m-1)/2})
\otimes e^{-\nu_{1}'}  \right)
\end{equation}
if $m$ is odd, and embeds into
\begin{equation}
  \label{Iform2}
\mathrm{ind}_{B_{1}(\mathbb{R}) \times
  B_{1}(\mathbb{R})}^{\mathrm{GL}_{2}(\mathbb{R}) \times
  \mathrm{GL}_{2}(\mathbb{R}) }
\left( (|\cdot|^{(m-1)/2} \otimes |\cdot|^{-(m-1)/2})
\otimes e^{\nu_{1}'}  \right) \otimes \left(
|\cdot|^{(m-1)/2} \otimes |\cdot|^{-(m-1)/2})
\otimes e^{-\nu_{1}'}  \right)
\end{equation}
if $m$ is even (\emph{cf.} (\ref{gl2ps})). More generally, one may
parabolically induce
$$ \left( (\varpi'_{1} \otimes e^{\nu_{1}'}) \otimes
  \cdots \otimes (\varpi'_{k} \otimes e^{\nu_{k}'})  \right) \otimes
  \left( (\varpi'_{k} \otimes e^{-\nu_{k}'}) \otimes
  \cdots \otimes (\varpi'_{ 1} \otimes e^{-\nu_{1}'})  \right)$$
to a representation $\pi_{I}$ of $\mathrm{GL}_{n_{1}}(\mathbb{R}) \times
\mathrm{GL}_{n_{1}}(\mathbb{R})$, where $n_{1}$ is the sum of the block
sizes of the first $k$ blocks. By induction in stages, one may show that
$\pi_{I}$  embeds into a  principal series representation
of  $\mathrm{GL}_{n_{1}}(\mathbb{R}) \times \mathrm{GL}_{n_{1}}(\mathbb{R})$.

  The representation
$$(\varpi'_{k+1} \otimes e^{0}) \otimes  \cdots
\otimes (\varpi'_{h}  \otimes e^{0})$$
  in the  second group of (\ref{newpigen}) is equal to
$$(D_{n_{k+1}}' \otimes e^{0}) \otimes \cdots \otimes (D'_{n_{h}}
  \otimes e^{0})$$ where $n_{k+1}, \ldots , n_{h}$ are all
even integers, and without loss of generality, $n_{k+1} \geq n_{k+2}
\geq \cdots \geq n_{h}$.  This places us in the context of Lemma
\ref{biglem} (\cite{Knapp}*{Theorem 14.91}), but it is easy to write
things out explicitly here
again.  By (\ref{gl2ps}), each factor embeds into a
principal series representation of $\mathrm{GL}_{2}(\mathbb{R})$.  One
may parabolically induce $(\varpi'_{k+1} \otimes e^{0}) \otimes  \cdots
\otimes (\varpi'_{h}  \otimes e^{0})$ to a representation $\pi_{II}$ of
$\mathrm{GL}_{n_{2}} (\mathbb{R})$, where $n_{2} = 2(h-(k+1))$ is the sum of the
block sizes from $k+1$ to $h$.  Using
induction in stages, one
may show that  $\pi_{II}$ embeds into a principal series
representation of $\mathrm{GL}_{n_{2}}(\mathbb{R})$.  After
conjugating by an element in $\mathrm{GL}_{n_{2}}(\mathbb{R})$, the
principal series representation may be taken to be induced from the
upper-triangular Borel subgroup with the
quasicharacter
\begin{equation}\label{IIform}
\begin{aligned}
| \cdot |^{(n_{k+1} -1)/2} &\otimes | \cdot |^{(n_{k+2} -1)/2} \otimes
\cdots \otimes | \cdot |^{(n_{h} -1)/2} \\
 & \otimes | \cdot |^{-(n_{h} -1)/2}
\otimes \cdots \otimes  | \cdot |^{-(n_{k+2} -1)/2} \otimes  | \cdot
|^{-(n_{k+1} -1)/2}.
\end{aligned}
\end{equation}

This brings us to the representation $(\varpi'_{h+1} \otimes e^{0})
\otimes  \cdots \otimes (\varpi'_{i} \otimes e^{0})$ in the third
group of (\ref{newpigen}).  It too may be parabolically induced to a
representation $\pi_{III}$ of $\mathrm{GL}_{n_{3}}(\mathbb{R})$ where $n_{3}$ is
the sum of the block sizes from $h+1$ to $i$.  This induced
representation must be irreducible, as otherwise, by induction in
stages,
$\mathrm{ind}_{P(\mathbb{R})}^{\mathrm{GL}_{N}(\mathbb{R})}(\pi'
\otimes e^{\nu}) \cong \pi_{\mathrm{gen}}$ would be reducible.  It
follows from \cite{Knapp}*{Theorem 14.91} that $\pi_{III}$ is tempered.
In addition, the  $\vartheta$-stability of
$\mathrm{ind}_{P(\mathbb{R})}^{\mathrm{GL}_{N}(\mathbb{R})}(\pi'
\otimes e^{\nu})$ combined with the Langlands Disjointness Theorem
imply that $\pi_{III}$ is a $\vartheta$-stable representation of
$\mathrm{GL}_{n_{3}}(\mathbb{R})$.  By Lemma \ref{biglem}, $\pi_{III}$
appears as a subquotient of either a $\vartheta$-stable principal
series representation of $\mathrm{GL}_{n_{3}}(\mathbb{R})$ or of a
representation parabolically induced from a representation of the form
(\ref{almprin}).

Taking the tensor product of $\pi_{I}$, $\pi_{II}$ and $\pi_{III}$, we
obtain a representation of a block-diagonal Levi subgroup $L$ of
$\mathrm{GL}_{N}(\mathbb{R})$ such that
$$L(\mathbb{R}) \cong \mathrm{GL}_{n_{1}}(\mathbb{R}) \times
\mathrm{GL}_{n_{2}}(\mathbb{R}) \times \mathrm{GL}_{n_{3}}(\mathbb{R})
\times \mathrm{GL}_{n_{1}}(\mathbb{R}).$$
By construction
$$\pi_{I} \otimes \pi_{II} \otimes \pi_{III} =
\mathrm{ind}_{(P \cap L)(\mathbb{R})}^{L(\mathbb{R})} (\pi' \otimes
e^{\nu}),$$
and appears as a
subquotient of either a principal series representation of
$L(\mathbb{R})$, or  a representation
described by (\ref{almprin}) which is nearly in the principal series.

Suppose first that $\pi_{I} \otimes \pi_{II} \otimes \pi_{III}$ is a
subquotient of a principal series representation of $L(\mathbb{R})$
and let $Q$ be a parabolic subgroup of $\mathrm{GL}_{N}$ with $L$ as
its Levi subgroup.  Then
$$\pi_{\mathrm{gen}} \cong
  \mathrm{ind}_{P(\mathbb{R})}^{\mathrm{GL}_{N}(\mathbb{R})} (\pi'
  \otimes e^{\nu}) \cong
  \mathrm{ind}_{Q(\mathbb{R})}^{\mathrm{GL}_{N}(\mathbb{R})} \mathrm{ind}_{(P \cap
      L)(\mathbb{R})}^{L(\mathbb{R})} (\pi' \otimes
  e^{\nu}) \cong
 \mathrm{ind}_{Q(\mathbb{R})}^{\mathrm{GL}_{N}(\mathbb{R})} (\pi_{I}
 \otimes \pi_{II} \otimes \pi_{III})$$
appears as a subquotient of a principal series representation
$\mathrm{ind}_{B(\mathbb{R})}^{\mathrm{GL}_{N}(\mathbb{R})} \pi_{0}$  of
$\mathrm{GL}_{N}(\mathbb{R})$.  We are free to conjugate this
principal series representation  by an element
of $\mathrm{GL}_{N}(\mathbb{R})$ in order to permute the
$\mathrm{GL}_{1}(\mathbb{R})$ factors of
$\pi_{0}$.  The factors of $\pi_{0}$ are given by
(\ref{Iform1}), (\ref{Iform2}) and (\ref{IIform}) (\emph{cf.} Lemma
\ref{biglem} for the factors coming from $\pi_{II}$ and
$\pi_{III}$).  Clearly, the  factors of $\pi_{0}$ are paired off so
that the tensor product of some permutation of them is fixed under
$\vartheta$.  Taking $\pi_{0}$ to be this $\vartheta$-fixed tensor
product allows us to
conclude that $\pi_{\mathrm{gen}}$ is infinitesimally equivalent to a
subquotient of
a $\vartheta$-stable principal series representation
$\mathrm{ind}_{B(\mathbb{R})}^{\mathrm{GL}_{N}(\mathbb{R})} \pi_{0}$.
The principal series
$\mathrm{ind}_{B(\mathbb{R})}^{\mathrm{GL}_{N}(\mathbb{R})} \pi_{0}$
is represented by $M(\xi_{p})$
where $\xi_{p} \in \Xi(\O, {^\vee}
\mathrm{GL}_{N}^{\Gamma})^\vartheta$ is as in Lemma \ref{prinsame}
and so $M(\xi_{p})^{\thicksim} = M(\xi_{p})^{+}$

Finally, suppose that $\pi_{I} \otimes \pi_{II} \otimes \pi_{III}$ is
infinitesimally equivalent to a subquotient of an induced
representation which is described by (\ref{almprin}).  Then we may
argue as in the previous paragraph,
except that now exactly one of the factors of $\pi_{0}$, stemming from
$\pi_{III}$, is a relative discrete
series representation $D_{2j + 1}$ of $\mathrm{GL}_{2}(\mathbb{R})$
for a positive integer $j$.  After possibly permuting the factors
of $\pi_{0}$, the factor $D_{2j+1}$ may be assumed to occupy the
middle block of $\mathrm{GL}_{N}$ ($N$ is even by Lemma \ref{biglem}).  The
remaining factors of $\pi_{0}$ are quasicharacters of
$\mathrm{GL}_{1}(\mathbb{R})$ which are paired off as before, so that
we may assume that $\pi_{0}$ is $\vartheta$-stable (Lemma
\ref{gl2wa}).
The representation of $\mathrm{GL}_{N}(\mathbb{R})$
which is parabolically induced from $\pi_{0}$ using a standard
parabolic subgroup, is then $\vartheta$-stable.  This representation
may be represented by $M(\xi_{p})$ for some $\xi_{p} \in \Xi(\O, {^\vee}
\mathrm{GL}_{N}^{\Gamma})^\vartheta$ or as $M(x,y)$ for the equivalent
Atlas parameter (Lemma \ref{XXXi}).  The representation
$M(x,y)$ has the same form as (\ref{instages}), with $x = x'$, $\pi_{0}$
replacing $\pi_{M}$, $y$ replacing $y'$, and the infinitesimal character
of $\pi_{0}$ replacing $\lambda$.  The arguments following
(\ref{instages}) apply equally well to $M(x,y)$ and so we conclude
that
$$M(\xi_{p})^{\thicksim} = M(x,y)^{\thicksim} = M(x,y)^{+} = M(\xi_{p})^{+}.$$
\end{proof}
\begin{prop}
\label{wasign2}
Suppose $\xi \in \Xi(\O, {^\vee}
\mathrm{GL}_{N}^{\Gamma})^\vartheta$.
Then
  $$M(\xi)^{\thicksim}(\vartheta)  = (-1)^{l^{I}(\xi) -
  l^{I}_{\vartheta}(\xi)} \ M(\xi)^{+} (\vartheta)$$
and
$$\pi(\xi)^{\thicksim}(\vartheta)  = (-1)^{l^{I}(\xi) -
  l^{I}_{\vartheta}(\xi)} \ \pi(\xi)^{+} (\vartheta).$$
\end{prop}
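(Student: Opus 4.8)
The strategy is to reduce the general statement to the two cases already established in the preceding lemmas and propositions, using the fact that an arbitrary $\vartheta$-stable standard representation $M(\xi)$ always contains the unique $\vartheta$-stable irreducible generic subrepresentation $\pi(\xi_0)$ (Lemma \ref{uniquegeneric}), and that the generic piece controls the Whittaker extension. First I would invoke Lemma \ref{biglem1} to find a $\vartheta$-stable standard representation $M(\xi_p)$, $\xi_p \in \Xi(\O,{^\vee}\mathrm{GL}_N^\Gamma)^\vartheta$, into which $\pi(\xi_0)$ embeds and which satisfies $M(\xi_p)^{\thicksim} = M(\xi_p)^{+}$; in particular one checks (as in the proof of Proposition \ref{wasign1}) that $l^I(\xi_p) = l^I_\vartheta(\xi_p) = 0$, so the sign $(-1)^{l^I(\xi_p)-l^I_\vartheta(\xi_p)}$ is trivial and the statement holds vacuously for $\xi = \xi_p$.

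Next, repeating verbatim the argument in the second half of the proof of Proposition \ref{wasign1} but with the hypothesis ``$\pi(\xi_0)^+$ occurs in a principal series'' replaced by ``$\pi(\xi_0)^+$ occurs in $M(\xi_p)^+$,'' I would deduce the key base identity
$$(-1)^{l^I(\xi_0) - l^I_\vartheta(\xi_0)} \, \pi(\xi_0)^{+}(\vartheta) = \pi(\xi_0)^{\thicksim}(\vartheta).$$
Here the essential inputs are: (\ref{subrep}), which says $\pi(\xi_0)^{\thicksim} \hookrightarrow M(\xi_p)^{\thicksim}$ and the intertwining operators restrict; the decomposition (\ref{twistmult1}) of $M(\xi_p)^+$ into Atlas extensions; and Proposition \ref{conjq}, which computes the signed multiplicity $m_r^\vartheta(\xi_0, \xi_p) = (-1)^{l^I(\xi_p)-l^I_\vartheta(\xi_p)+l^I(\xi_0)-l^I_\vartheta(\xi_0)} = (-1)^{l^I(\xi_0)-l^I_\vartheta(\xi_0)}$.

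Finally, for a general $\xi \in \Xi(\O,{^\vee}\mathrm{GL}_N^\Gamma)^\vartheta$ with $\pi(\xi_0)$ its generic subrepresentation (Lemma \ref{uniquegeneric}), I would expand $M(\xi)^{+} = \sum_{\xi'} m_r^\vartheta(\xi',\xi)\,\pi(\xi')^{+}$ using (\ref{twistmult1}), isolate the $\xi_0$ term via Proposition \ref{conjq}, substitute the base identity, and then use (\ref{subrep}) to restrict both $M(\xi)^+(\vartheta)$ and $M(\xi)^{\thicksim}(\vartheta)$ to the subrepresentation $\pi(\xi_0)$; comparing gives $(-1)^{l^I(\xi)-l^I_\vartheta(\xi)}\,M(\xi)^{+}(\vartheta) = M(\xi)^{\thicksim}(\vartheta)$, and passing to Langlands quotients yields the corresponding statement for $\pi(\xi)$. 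This is essentially the argument of Proposition \ref{wasign1} with the principal series $M(\xi_p)$ replaced by the more general $\vartheta$-stable standard representation furnished by Lemma \ref{biglem1}. The main obstacle is not in this last assembly—which is routine once the ingredients are in place—but in ensuring the base identity genuinely only requires $\pi(\xi_0)^+ \hookrightarrow M(\xi_p)^+$ rather than the stronger principal-series hypothesis; one must verify that the multiplicity-one property of the generic subrepresentation inside $M(\xi_p)$, together with $M(\xi_p)^{\thicksim} = M(\xi_p)^{+}$, suffices to run the Whittaker-functional-restriction argument, which it does because the Whittaker functional on $M(\xi_p)$ restricts to a nonzero one on $\pi(\xi_0)$ regardless of whether $M(\xi_p)$ is principal series.
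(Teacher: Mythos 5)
Your strategy is exactly the paper's: replace the principal series of Proposition \ref{wasign1} by the $\vartheta$-stable standard representation $M(\xi_{p})$ furnished by Lemma \ref{biglem1} and rerun the same multiplicity argument. The gap is at the one point where the paper's proof actually has content. You assert that ``one checks (as in the proof of Proposition \ref{wasign1}) that $l^{I}(\xi_{p}) = l^{I}_{\vartheta}(\xi_{p}) = 0$.'' That computation is specific to principal series, where the involution attached to $x$ negates every root. The $\xi_{p}$ produced by Lemma \ref{biglem1} need not be a principal series: in the remaining case ($N$ even, integral coordinates, with a relative discrete series $D_{m}$ occupying the middle $\mathrm{GL}_{2}$-block as in (\ref{almprin})), the Cartan involution acts on roots as $-s_{\alpha_{N/2}}$ (see (\ref{cartanaction})), and one computes from (\ref{intlength}) and (\ref{thetalength}) that $l^{I}(\xi_{p}) = l^{I}_{\vartheta}(\xi_{p}) = -1$, not $0$. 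So your claim is false as stated, and, more importantly, the equality $l^{I}(\xi_{p}) = l^{I}_{\vartheta}(\xi_{p})$ is not automatic and must be proved. It is genuinely needed: by Proposition \ref{conjq} the coefficient of $\pi(\xi_{0})^{+}$ in $M(\xi_{p})^{+} = M(\xi_{p})^{\thicksim}$ is $(-1)^{l^{I}(\xi_{p}) - l^{I}_{\vartheta}(\xi_{p}) + l^{I}(\xi_{0}) - l^{I}_{\vartheta}(\xi_{0})}$, so your base identity $\pi(\xi_{0})^{\thicksim}(\vartheta) = (-1)^{l^{I}(\xi_{0}) - l^{I}_{\vartheta}(\xi_{0})}\, \pi(\xi_{0})^{+}(\vartheta)$ --- and hence the sign you obtain for every $\xi$ --- is off by the undetermined factor $(-1)^{l^{I}(\xi_{p}) - l^{I}_{\vartheta}(\xi_{p})}$ unless you show this factor is $+1$.

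The repair is short but has to be done: for the non-principal-series $\xi_{p}$, compute both lengths directly, using that $\theta_{x}$ acts on roots as $-s_{\alpha_{N/2}}$ and that $\alpha_{N/2}$ is also a type $1$ simple root for $R_{\vartheta}(\mathrm{GL}_{N}, H)$; each length equals $-\tfrac{1}{2}(1+1) = -1$, the difference vanishes, and the rest of your argument (which is the paper's) goes through verbatim. Incidentally, the point you single out as the main obstacle --- whether the Whittaker-functional restriction argument requires the principal-series hypothesis --- is not an issue: Lemma \ref{uniquegeneric}(c) and (\ref{subrep}) already apply to arbitrary standard representations, so only the length equality above needs separate verification.
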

\begin{proof}
Let $\pi_{\mathrm{gen}} = \pi(\xi_{0})$ be the
generic representation of Lemma \ref{uniquegeneric} and let $\xi_{p}
$ be as in Lemma \ref{biglem1}.  Then the proof follows  the
proof of Proposition \ref{wasign1} exactly, although it is not as
straightforward to show that $l^{I}(\xi_{p}) =
l^{I}_{\vartheta}(\xi_{p})$ when $M(\xi_{p})$ is not a principal
series representation.  Let $(x,y)$ be the Atlas parameter equivalent
to $\xi_{p}$.  When $M(\xi_{p})$ is not a principal series
representation it was noted in the proof of Lemma \ref{biglem1} that
$N$ is even and that
the Cartan involution corresponding to $x = x'$ acts on roots as
$-s_{\alpha_{N/2}}$ (see (\ref{cartanaction})).  From (\ref{intlength}),
(\ref{thetalength}) and the fact that $\alpha_{N/2}$ is also simple root
of type 1  in $R_{\vartheta}(\mathrm{GL}_{N},
H)$ (see (\ref{simplekappa})),
we compute
\begin{align*}
l^{I}(\xi_{p}) &= -\frac{1}{2} \left( \left| \left\{\alpha \in R^{+}(\lambda) :
-s_{\alpha_{N/2}} (\alpha) \in R^{+}(\lambda) \right\}\right| +
\dim\left(H^{-s_{\alpha_{N/2}}}\right) \right)\\
& = - \frac{1}{2}(1+1)\\
& = -\frac{1}{2} \left( \left| \left\{\alpha \in
R^{+}_{\vartheta}(\lambda)  :
-s_{\alpha_{N/2}} (\alpha) \in R^{+}_{\vartheta}(\lambda) \right\} \right|+
\dim\left(\left(H^{\vartheta}\right)^{-s_{\alpha_{N/2}}}\right) \right)\\
& = l^{I}_{\vartheta}(\xi_{p}).
\end{align*}
\end{proof}

It is worth observing that Theorem \ref{twistpairing} now has the
following simple form, reminiscent of Theorem \ref{ordpairing}.
\begin{cor}
  \label{twistpairingfinal}
  The pairing \eqref{pair2}, defined by \eqref{pairdef2},
  satisfies
$$\langle M(\xi)^{\thicksim}, \mu(\xi')^{+} \rangle = \delta_{\xi, \xi'}$$
and
  $$\langle \pi(\xi)^{\thicksim}, P(\xi')^{+} \rangle = (-1)^{d(\xi)} \,
  \delta_{\xi, \xi'}
$$
for $\xi, \xi' \in \Xi(\O, {^\vee}\mathrm{GL}_{N}^{\Gamma})^\vartheta$.
Equivalently,
\begin{equation}
  \label{twist15.13a}
m_{r}^{\thicksim}(\xi',\xi)  = (-1)^{d(\xi) - d(\xi')}
c_{g}^{\vartheta}(\xi, \xi'),
\nomenclature{$m_{r}^{\thicksim}(\xi',\xi)$}{}
\end{equation}
where $m_{r}^{\thicksim}(\xi', \xi)$ is defined by the decomposition
\begin{equation}
M(\xi)^{\thicksim} = \sum_{\xi' \in \Xi(\mathcal{O},
{^\vee}\mathrm{GL}_{N}^{\Gamma})^{\vartheta}}
m^{\thicksim}_{r}(\xi',\xi) \, \pi(\xi')^{\thicksim}.
\label{twistmult3}
\end{equation}
in  $K\Pi(\mathcal{O}, \mathrm{GL}_{N}(\mathbb{R}), \vartheta)$.
\end{cor}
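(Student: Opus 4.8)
The plan is to deduce Corollary \ref{twistpairingfinal} from Theorem \ref{twistpairing} together with the comparison of extensions established in Proposition \ref{wasign2}. The starting point is the pairing formula of Theorem \ref{twistpairing}, namely $\langle \pi(\xi)^{+}, P(\xi')^{+} \rangle = (-1)^{d(\xi)}\,(-1)^{l^{I}(\xi)-l^{I}_{\vartheta}(\xi)}\,\delta_{\xi,\xi'}$, and its companion $\langle M(\xi)^{+},\mu(\xi')^{+}\rangle = (-1)^{l^{I}(\xi)-l^{I}_{\vartheta}(\xi)}\,\delta_{\xi,\xi'}$ from (\ref{pairdef2}). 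The first step is to rewrite the left-hand sides in terms of Whittaker extensions. By Proposition \ref{wasign2}, $M(\xi)^{\thicksim}(\vartheta) = (-1)^{l^{I}(\xi)-l^{I}_{\vartheta}(\xi)}\,M(\xi)^{+}(\vartheta)$, and since both extensions restrict to $M(\xi)$ on $\mathrm{GL}_{N}(\mathbb{R})$, the identity $M(\xi)^{\thicksim} = (-1)^{l^{I}(\xi)-l^{I}_{\vartheta}(\xi)}\,M(\xi)^{+}$ holds in $K\Pi(\O,\mathrm{GL}_{N}(\mathbb{R}),\vartheta)$ (recall the image of $M(\xi)^{+}$ in this $\mathbb{Z}$-module is determined by the value at $\vartheta$, cf. (\ref{minuspi})). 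Substituting this into the two pairing formulas, the sign $(-1)^{l^{I}(\xi)-l^{I}_{\vartheta}(\xi)}$ cancels, yielding $\langle M(\xi)^{\thicksim},\mu(\xi')^{+}\rangle = \delta_{\xi,\xi'}$ and $\langle \pi(\xi)^{\thicksim}, P(\xi')^{+}\rangle = (-1)^{d(\xi)}\,\delta_{\xi,\xi'}$, which are exactly the first two displayed assertions.

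The second step is to establish the equivalence with (\ref{twist15.13a}). This is the Whittaker-extension analogue of Proposition \ref{p:twist} (which gave the equivalence of Theorem \ref{twistpairing} with (\ref{e:twist}) in terms of Atlas extensions), so I would follow that proof essentially verbatim. Expanding $\pi(\xi_{1})^{\thicksim}$ via the (invertible) matrix $m_{r}^{\thicksim}$ of (\ref{twistmult3}), expanding $P(\xi_{2})^{+}$ via $c_{g}^{\vartheta}$ as in (\ref{imP}), and using the newly-derived $\langle M(\xi)^{\thicksim},\mu(\xi')^{+}\rangle = \delta_{\xi,\xi'}$, one computes
\begin{align*}
\langle \pi(\xi_{1})^{\thicksim}, P(\xi_{2})^{+}\rangle
&= \sum_{\xi'} (m_{r}^{\thicksim})^{-1}(\xi',\xi_{1})\,(-1)^{d(\xi')}\,c_{g}^{\vartheta}(\xi',\xi_{2}).
\end{align*}
Setting this equal to $(-1)^{d(\xi_{1})}\delta_{\xi_{1},\xi_{2}}$ and inverting gives $m_{r}^{\thicksim}(\xi',\xi_{1}) = (-1)^{d(\xi_{1})-d(\xi')}\,c_{g}^{\vartheta}(\xi_{1},\xi')$, which is (\ref{twist15.13a}); the argument reverses, giving the converse. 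The invertibility of $m_{r}^{\thicksim}$ needed here follows as in Lemma \ref{invertmultmat}, since the Whittaker extension is genuinely an extension and the generic subrepresentation argument of Lemma \ref{uniquegeneric} shows $m_{r}^{\thicksim}$ is uni-upper-triangular for the Bruhat order; alternatively one can simply transport invertibility across the sign relation $M(\xi)^{\thicksim} = \pm M(\xi)^{+}$ from Lemma \ref{invertmultmat}.

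I do not expect any serious obstacle: the corollary is a bookkeeping consequence of already-proven results, and the only thing requiring care is making sure the passage from Proposition \ref{wasign2}'s statement about the operators $(\cdot)(\vartheta)$ to a statement in $K\Pi(\O,\mathrm{GL}_{N}(\mathbb{R}),\vartheta)$ is legitimate — this is immediate from the construction of that $\mathbb{Z}$-module in Section \ref{grtwisted}, where an element is pinned down by its restriction to $\mathrm{GL}_{N}(\mathbb{R})$ together with the sign of the value at $\vartheta$. One small point worth double-checking is sign consistency between (\ref{twistmult}) for $m_{r}^{\vartheta}$ and the new $m_{r}^{\thicksim}$: since $\pi(\xi')^{\thicksim} = (-1)^{l^{I}(\xi')-l^{I}_{\vartheta}(\xi')}\pi(\xi')^{+}$, the two multiplicity functions are related by $m_{r}^{\thicksim}(\xi',\xi) = (-1)^{l^{I}_{\vartheta}(\xi)-l^{I}_{\vartheta}(\xi') + l^{I}(\xi)-l^{I}(\xi')}\,m_{r}^{\vartheta}(\xi',\xi)$, and combining this with Proposition \ref{p:twist} plus the constancy of $(-1)^{l^{I}(\xi)-d(\xi)}$ from \cite{AMR1}*{Proposition B.1} recovers (\ref{twist15.13a}) directly — this provides an independent check on the first computation.
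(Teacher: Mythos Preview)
Your proposal is correct and follows essentially the same approach as the paper. In fact, your ``independent check'' at the end --- relating $m_{r}^{\thicksim}$ to $m_{r}^{\vartheta}$ via Proposition~\ref{wasign2}, then invoking Proposition~\ref{p:twist} and the constancy of $(-1)^{l^{I}(\xi)-d(\xi)}$ from \cite{AMR1}*{Proposition B.1} --- is precisely the paper's argument for the equivalence with (\ref{twist15.13a}); your primary route (redoing the computation of Proposition~\ref{p:twist} directly with Whittaker extensions) is a perfectly valid alternative that the paper does not spell out.
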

\begin{proof}
The first assertion is an immediate consequence of Proposition
\ref{wasign2}.  For
the second assertion, we return to decomposition (\ref{twistmult1}).
Substituting the Whittaker extensions of Proposition \ref{wasign2} into this
decomposition and comparing with (\ref{twistmult3}), we deduce that
$$m_{r}^{\vartheta}(\xi', \xi) = (-1)^{l^{I}(\xi') -
  l^{I}_{\vartheta}(\xi') -( l^{I}(\xi) - l^{I}_{\vartheta}(\xi))}\,
m_{r}^{\thicksim}(\xi', \xi).$$
Substituting this expression into the identity of Proposition
\ref{p:twist} we see that the first assertion is equivalent to
$$m_{r}^{\thicksim}(\xi', \xi) =  (-1)^{l^{I}(\xi) -
  l^{I}(\xi') } \, c_{g}^{\vartheta}(\xi, \xi').$$
This identity is equivalent to (\ref{twist15.13a}), as
$$l^{I}(\xi) - d(\xi) = l^{I}(\xi') - d(\xi')$$
is a constant independent of $\xi, \xi' \in
\Xi(\mathcal{O},
{^\vee}\mathrm{GL}_{N}^{\Gamma})^{\vartheta}$ (Proposition B.1 \cite{AMR1}).
\end{proof}

Another consequence of Proposition \ref{wasign2} is that the endoscopic
lifting map
$\mathrm{Lift}_{0}$  (\ref{twistendlift}) is equal to the
endoscopic transfer map $\mathrm{Trans}_{G}^{\mathrm{GL}_{N} \rtimes \vartheta}$ used
in Arthur's definition (\ref{spectrans}) of
$\etaA_{\psi_{G}}$.  This is a crucial step in
the comparison of $\etaA_{\psi_{G}}$ and $\etaABV_{\psi_{G}}$.
\begin{cor}
  \label{cortrans}
  Suppose $G$ is a simple twisted endoscopic group as in Section
  \ref{twistendsec}.  Suppose further that $S_G \subset
  X(\O_{G},\LG)$ is a
  ${^\vee}G$-orbit and let
  $\epsilon(S_{G})  \subset X(\O, \LGL)$ be the
  ${^\vee}\mathrm{GL}_{N}$-orbit of the image of $S_{G}$ under
  $\epsilon$ (\ref{epinclusion1}). Then
  \begin{enumerate}[label={(\alph*)}]
  \item \begin{equation*}
    \label{endlift1}
    \mathrm{Lift}_{0} (\eta^{\mathrm{loc}}_{S_{G}}(\upsigma)(\delta_{q}))
    = M(\epsilon(S_{G}), 1)^{\thicksim},
    \end{equation*}

  \item  $$\Lift_0 =\Trans_{G}^{\mathrm{GL}_{N} \rtimes \vartheta}$$
    on $K_{\mathbb{C}}\Pi(\O_{G}, G(\mathbb{R}, \delta_{q}))^{\mathrm{st}}$.

    \end{enumerate}

\end{cor}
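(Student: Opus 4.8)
The plan is to deduce both parts of Corollary \ref{cortrans} from Proposition \ref{twistimlift} together with the comparison of Whittaker and Atlas extensions established in Proposition \ref{wasign2}. For part (a), recall that Proposition \ref{twistimlift} already computes
$$\mathrm{Lift}_{0}\left(\eta^{\mathrm{loc}}_{S_{G}}(\upsigma)(\delta_{q})\right) = (-1)^{l^{I}(S,1) - l^{I}_{\vartheta}(S,1)} \, M(S,1)^{+},$$
where $S = \epsilon(S_{G})$. By Proposition \ref{wasign2} applied to the complete geometric parameter $\xi = (S,1)$, the sign $(-1)^{l^{I}(S,1)-l^{I}_{\vartheta}(S,1)}$ is precisely the factor relating the Atlas extension $M(S,1)^{+}$ to the Whittaker extension $M(S,1)^{\thicksim}$, i.e.\ $M(S,1)^{\thicksim}(\vartheta) = (-1)^{l^{I}(S,1)-l^{I}_{\vartheta}(S,1)} M(S,1)^{+}(\vartheta)$, hence the same relation holds in $K\Pi(\O,\GL_N(\R),\vartheta)$. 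Combining these two identities yields $\mathrm{Lift}_{0}\left(\eta^{\mathrm{loc}}_{S_{G}}(\upsigma)(\delta_{q})\right) = M(S,1)^{\thicksim} = M(\epsilon(S_{G}),1)^{\thicksim}$, which is part (a).

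For part (b), I would argue that both $\mathrm{Lift}_{0}$ and $\mathrm{Trans}_{G}^{\GL_N \rtimes \vartheta}$ are $\mathbb{C}$-linear maps on $K_{\mathbb{C}}\Pi(\O_{G}, G(\R,\delta_{q}))^{\mathrm{st}}$, and it suffices to check they agree on a basis. By Shelstad's theorem (\cite{shelstad}, \cite{ABV}*{Lemma 18.11}), the virtual characters $\{\eta^{\mathrm{loc}}_{S_{G}}(\delta_{q}) = \eta^{\mathrm{loc}}_{S_{G}}(\upsigma)(\delta_{q})\}$, as $S_{G}$ runs over ${^\vee}G$-orbits in $X(\O_{G}, {^\vee}G^{\Gamma})$, form a basis of the stable subspace. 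By part (a), $\mathrm{Lift}_{0}$ sends $\eta^{\mathrm{loc}}_{S_{G}}(\delta_{q})$ to $M(\epsilon(S_{G}),1)^{\thicksim}$. On the other side, Arthur's spectral transfer identity (\ref{spectrans}), applied in the setting of \cite{AMR1} (which identifies the pseudopacket character $\eta'_{S_{G}}$ with $\eta^{\mathrm{loc}}_{S_{G}}$, as recalled in the introduction around equation (\ref{compare})), gives $\mathrm{Trans}_{G}^{\GL_N \rtimes \vartheta}(\eta^{\mathrm{loc}}_{S_{G}}) = M(\epsilon(S_{G}),1)^{\thicksim}$. Since the two maps agree on each basis element, they are equal.

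The main obstacle I anticipate is making the second computation — that $\mathrm{Trans}_{G}^{\GL_N \rtimes \vartheta}(\eta^{\mathrm{loc}}_{S_{G}}) = M(\epsilon(S_{G}),1)^{\thicksim}$ — fully rigorous, since this requires quoting the identification $\eta'_{S_{G}} = \eta^{\mathrm{loc}}_{S_{G}}$ from \cite{AMR1} and matching Arthur's normalization of the Whittaker extension with the one used in Section \ref{whitsec}; this matching of normalizations (both are pinned down by the same $\vartheta$-fixed Whittaker datum $(U,\chi)$) is where care is needed but no new idea is required. Everything else is a formal consequence of Propositions \ref{twistimlift} and \ref{wasign2} together with the basis property of the $\eta^{\mathrm{loc}}_{S_{G}}$.
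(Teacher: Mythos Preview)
Your proposal is correct and follows essentially the same approach as the paper: part (a) is deduced by combining Proposition \ref{twistimlift} with Proposition \ref{wasign2}, and part (b) follows from part (a) together with the identity $\Trans_{G}^{\GL_N \rtimes \vartheta}(\eta^{\mathrm{loc}}_{S_{G}}(\delta_{q})) = M(\epsilon(S_{G}),1)^{\thicksim}$ (the paper cites \cite{AMR}*{(1.0.3)} and \cite{Mezo2} for this) and the fact that the $\eta^{\mathrm{loc}}_{S_{G}}(\delta_{q})$ form a basis of the stable subspace.
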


  \begin{proof}
    The first assertion is  an immediate consequence of Propositions
    \ref{twistimlift} and \ref{wasign2}.  The second assertion follows
    from the identity
    $$\Trans_{G}^{\mathrm{GL}_{N} \rtimes \vartheta} ( \eta_{S_{G}}^{\mathrm{loc}}(\delta_{q})) =
    M(\epsilon(S_{G}), 1)^{\thicksim}$$
(\cite{AMR}*{(1.0.3)}, \cite{Mezo2}) and the fact that the stable
    virtual characters $\eta_{S_{G}}^{\mathrm{loc}}(\delta_{q})$ form a basis
    for $K_{\mathbb{C}} \Pi(\O_{G}, G(\mathbb{R},
    \delta_{q}))^{\mathrm{st}}$ as $S_{G}$ runs
over the ${^\vee}G$-orbits in  $X(\O_{G},\LG)$.
\end{proof}

\section{The comparison of $\Pi_{\psi_{G}}^{\mathrm{Ar}}$ and $\Pi_{\psi_{G}}^{\mathrm{ABV}}$ for
  regular infinitesimal character}
\label{equalapacketreg}

In this section we prove the main results comparing $\etaA_{\psi_{G}}$
with $\etaABV_{\psi_{G}}$ ((\ref{main1}), (\ref{main2})).
We shall work under the assumptions of Section \ref{twistendsec}.  In
particular, $\psi_{G}$ and $\psi = \epsilon \circ \psi_{G}$ are A-parameters
with respective infinitesimal characters $\O_{G}$ and
$\O$.  The assumption on the infinitesimal characters is that
they are regular with respect to $\mathrm{GL}_{N}$.  This assumption
shall be removed in the next section.

The  definition of $\etaA_{\psi_{G}}$ was outlined in
(\ref{spectrans}).  Let us provide a few more details from
\cite{Arthur}.  The key lemma is
\begin{lem}
  \label{2.2.2}
  Let  $S_{\psi} \subset X(\O,
  {^\vee}\mathrm{GL}_{N}^{\Gamma})$ be the
  ${^\vee}\mathrm{GL}_{N}$-orbit corresponding to $\phi_{\psi}$ (\cite{ABV}*{Proposition 6.17}, (\ref{eq:orbitbijection})).
\begin{enumerate}[label={(\alph*)}]
\item  There
    exist integers $n_{S}$ such that
    \begin{equation}
\label{piwhittdecomp}
\pi(S_{\psi}, 1)^{\thicksim} = \sum_{(S, 1)  \in
\Xi(\O, {^\vee} \mathrm{GL}_{N})^{\vartheta} } n_{S} \,
M(S, 1)^{\thicksim}
\end{equation}
    in $K\Pi(\O, \mathrm{GL}_{N}(\mathbb{R}), \vartheta)$.

\item  Moreover, for every $S$ such that $n_{S} \neq 0$ in
  (\ref{piwhittdecomp}) there exists a unique
${^\vee}G$-orbit $S_{G} \subset X(\O_{G},
{^\vee}G^{\Gamma})$  which is carried to $S$ under $\epsilon$.

\item Writing
  $$S = \epsilon(S_{G})$$ for the orbits in part (b), we have
\begin{equation}\label{piwhittdecomp1}
  \begin{aligned}
\pi(S_{\psi}, 1)^{\thicksim} &=
 \mathrm{Trans}_{G}^{\mathrm{GL}_{N} \rtimes \vartheta}\left(
 \sum_{S_{G}}  n_{\epsilon(S_{G})} \, \eta_{S_{G}}^{\mathrm{loc}} (\delta_{q})
 \right)\\
  &=
 \mathrm{Lift}_{0} \left(
 \sum_{S_{G}}  n_{\epsilon(S_{G})} \, \eta_{S_{G}}^{\mathrm{loc}} (\delta_{q})
 \right) .
\end{aligned}
\end{equation}

\end{enumerate}
\end{lem}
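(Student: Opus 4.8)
The plan is to establish the three parts of Lemma \ref{2.2.2} in sequence, with each part feeding into the next. Part (a) is essentially bookkeeping: the Whittaker extension $\pi(S_\psi,1)^{\thicksim}$ lies in $K\Pi(\O,\mathrm{GL}_N(\mathbb{R}),\vartheta)$, and by the discussion following \eqref{twistgroth1} (and the end of Section \ref{extrepsec}) this $\mathbb{Z}$-module has the twisted standard characters $M(S,1)^{\thicksim}$, equivalently the $M(\xi)^{\thicksim}$ for $\vartheta$-fixed complete geometric parameters $\xi$, as a basis. Indeed, by Proposition \ref{wasign2} the Whittaker extension and Atlas extension of a standard module differ only by the explicit sign $(-1)^{l^I(\xi)-l^I_\vartheta(\xi)}$, so a $\mathbb{Z}$-basis consisting of Atlas extensions gives at once a $\mathbb{Z}$-basis of Whittaker extensions. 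Expanding the irreducible twisted character $\pi(S_\psi,1)^{\thicksim}$ in this basis produces the integers $n_S$ of \eqref{piwhittdecomp}. (One should note that only $\vartheta$-fixed orbits $S$ contribute, since $\pi(S_\psi,1)$ is $\vartheta$-stable by Proposition \ref{prop:singletonGLN} and the $\vartheta$-equivariance of the parameterization from Corollary \ref{twistclass}.)

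Part (b) is where the real content lies. The claim is that whenever $n_S\neq 0$, the orbit $S\subset X(\O,{}^\vee\mathrm{GL}_N^\Gamma)$ is in the image of $\epsilon$ from $X(\O_G,{}^\vee G^\Gamma)$, and moreover that the preimage orbit is unique. Uniqueness, given existence, is exactly Proposition \ref{twistinj}. For existence, the strategy is to pass to the Vogan-dual/sheaf-theoretic side: the condition $n_S\neq 0$ means $M(S,1)^{\thicksim}$ appears in the decomposition of $\pi(S_\psi,1)^{\thicksim}$, which by Corollary \ref{twistpairingfinal} (or Theorem \ref{twistpairing}) and the invertibility of the twisted multiplicity matrices translates into a non-vanishing twisted KLV-type coefficient $c_g^\vartheta(\xi_\psi,\xi)\neq 0$ where $\xi_\psi=(S_\psi,1)$ and $\xi=(S,1)$. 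In sheaf terms this forces $\mu(S,1)$ to appear in $P(S_\psi,1)$, hence $S\subseteq\overline{S_\psi}$, i.e. $S$ lies in the closure of the orbit of $\phi_\psi$. Since $\psi=\epsilon\circ\psi_G$ factors through ${}^\vee G^\Gamma$, the orbit $S_\psi$ is the image under $\epsilon$ of the orbit $S_{\psi_G}$, and one must show the closure of $\epsilon(S_{\psi_G})$ is entirely contained in the image of $\epsilon$. This is where I expect the main obstacle: one needs that $\epsilon$, as a map of varieties $X(\O_G,{}^\vee G^\Gamma)\to X(\O,{}^\vee\mathrm{GL}_N^\Gamma)$, has closed image, or at least that the image contains the closure of any orbit it meets. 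I would argue this using the explicit structure of $X(\O,\LG)$ recalled in Section \ref{XO}: the component $X_y(\O,\LG)$ is the flag-variety realization $\ch G(\lambda)/\ch P(\lambda)$ modulo $\ch K_y$-orbits, and the inclusion $\epsilon$ respects this picture compatibly (by \cite{ABV}*{Corollary 6.21, (7.17)}), so closure relations on the $G$-side map to closure relations on the $\mathrm{GL}_N$-side and the image of $\epsilon$ is a union of orbits closed under the closure operation restricted to orbits meeting it. Combined with the factorization $S_\psi=\epsilon(S_{\psi_G})$, this yields $S=\epsilon(S_G)$ for a (unique, by Proposition \ref{twistinj}) orbit $S_G$.

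Part (c) then follows by reassembling. Using part (b) we may rewrite \eqref{piwhittdecomp} as a sum over the orbits $S_G$ in $X(\O_G,{}^\vee G^\Gamma)$ with $\epsilon(S_G)=S$ and $n_{\epsilon(S_G)}\neq 0$, namely $\pi(S_\psi,1)^{\thicksim}=\sum_{S_G} n_{\epsilon(S_G)}\,M(\epsilon(S_G),1)^{\thicksim}$. By Corollary \ref{cortrans}(a) (and Proposition \ref{twistimlift}), $\mathrm{Lift}_0(\eta^{\mathrm{loc}}_{S_G}(\upsigma)(\delta_q))=M(\epsilon(S_G),1)^{\thicksim}$, and since $\eta^{\mathrm{loc}}_{S_G}(\upsigma)(\delta_q)=\eta^{\mathrm{loc}}_{S_G}(\delta_q)$ by construction, linearity of $\mathrm{Lift}_0$ gives
$$\pi(S_\psi,1)^{\thicksim}=\mathrm{Lift}_0\Bigl(\sum_{S_G} n_{\epsilon(S_G)}\,\eta^{\mathrm{loc}}_{S_G}(\delta_q)\Bigr).$$
Finally, by Corollary \ref{cortrans}(b) we have $\mathrm{Lift}_0=\mathrm{Trans}_G^{\mathrm{GL}_N\rtimes\vartheta}$ on the stable subspace $K_{\mathbb{C}}\Pi(\O_G,G(\mathbb{R},\delta_q))^{\mathrm{st}}$, which contains the $\eta^{\mathrm{loc}}_{S_G}(\delta_q)$, so the two displayed expressions in \eqref{piwhittdecomp1} coincide, completing the proof. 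The only delicate points beyond the closure-of-image argument are purely notational: matching $\pi_\psi=\pi(S_\psi,1)$ (Proposition \ref{prop:singletonGLN}, in particular \eqref{containlpacket}) and ensuring the Whittaker extension $\pi_\psi^{\thicksim}$ used by Arthur is the same object as $\pi(S_\psi,1)^{\thicksim}$ defined via the Whittaker functional in Section \ref{whitsec}, which holds by the defining property of the Whittaker extension and the $\vartheta$-stable Whittaker datum $(U,\chi)$.
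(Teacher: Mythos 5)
Your parts (a) and (c), and the uniqueness half of (b), run exactly as the paper does: (a) comes from Proposition \ref{wasign2} together with the decomposition (\ref{twistmult1}) and the invertibility of the matrix $m_{r}^{\vartheta}$ (Lemma \ref{invertmultmat}); uniqueness in (b) is Proposition \ref{twistinj}; and (c) is Corollary \ref{cortrans} plus linearity of $\mathrm{Lift}_{0}$. The divergence, and the gap, is in the \emph{existence} half of (b).

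You reduce $n_{S}\neq 0$ to $S\subseteq\overline{S_{\psi}}$ (fine, via upper-triangularity of the twisted multiplicity matrix), and then claim that every $\vartheta$-fixed orbit in $\overline{S_{\psi}}$ meets the image of $\epsilon$ because ``closure relations on the $G$-side map to closure relations on the $\mathrm{GL}_{N}$-side and the image of $\epsilon$ is a union of orbits closed under the closure operation restricted to orbits meeting it.'' The compatibility you cite ((\ref{epinclusion1}), \cite{ABV}*{Corollary 6.21}, and Proposition 6.16 of \cite{ABV}) only gives the easy direction: $\epsilon$ of a closure lands inside the closure of the saturation of the image. It does not give the converse statement you need, namely that an orbit lying in $\overline{S_{\psi}}$ must contain a point of $\epsilon\left(X(\O_{G},{}^{\vee}G^{\Gamma})\right)$. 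Concretely, the content of existence is that the L-parameters of the standard modules actually occurring in (\ref{piwhittdecomp}) are not merely self-dual (which is all that $\vartheta$-fixedness records) but factor, up to conjugacy, through the \emph{specific} subgroup ${}^{\vee}G^{\Gamma}$ attached to $\psi$ rather than through a self-dual subgroup of the opposite type; closure in $X(\O,\LGL)$ plus $\vartheta$-fixedness alone does not obviously rule out summands of the wrong (orthogonal versus symplectic) type, and your sketch supplies no argument for this. The paper does not attempt a geometric proof here at all: it quotes the first page of the proof of \cite{Arthur}*{Lemma 2.2.2}, where Arthur establishes exactly this factorization by character-theoretic means, and then appeals to Proposition \ref{twistinj} only for uniqueness. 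So either invoke Arthur's argument as the paper does, or supply a genuine proof that every orbit with $n_{S}\neq 0$ is of the form ${}^{\vee}\mathrm{GL}_{N}\cdot\epsilon(S_{G})$; the closure-compatibility assertion as you state it is not such a proof.
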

\begin{proof}
By virtue of Proposition \ref{wasign2}, (\ref{piwhittdecomp})  is
equivalent to a decomposition
$$\pi(S_{\psi}, 1)^{+} = \sum_{(S, 1)  \in
\Xi(\O, {^\vee} \mathrm{GL}_{N})^{\vartheta} } n_{S}' \,
M(S, 1)^{+}$$
of Atlas extensions.   The latter decomposition follows from
(\ref{twistmult1}) and Lemma \ref{invertmultmat}.
The existence of the orbit $S_{G}$ in part (b) is established on the
first page of the proof of \cite{Arthur}*{Lemma 2.2.2}.   The uniqueness
of the orbit follows from Lemma \ref{twistinj}.
Part (c) is a consequence of Corollary \ref{cortrans}.
\end{proof}

Arthur's definition of $\etaA_{\psi_{G}}$ is  easiest to state
when the endoscopic group $G$ is not equal to $\mathrm{SO}_{N}$ for
even $N$.  In this case, the group $G$ has no outer automorphisms and
the defining equation (\ref{spectrans}) is the same as
(\ref{piwhittdecomp1}).  It follows that
\begin{equation}
\label{Arthurstabchar}
\etaA_{\psi_{G}} = \sum_{S_{G}} n_{\epsilon(S_{G})} \,
\eta^{\mathrm{loc}}_{S_{G}}(\delta_{q})  \in K_{\mathbb{C}}
\Pi(\O_{G}, G(\mathbb{R}, \delta_{q}))^{\mathrm{st}}
\nomenclature{$\etaA_{\psi_{G}}$}{stable virtual character defining an Arthur packet}
\end{equation}
(\emph{cf.} \cite{Arthur}*{(2.2.12)}).  By definition, the Arthur packet
$\Pi_{\psi_{G}}^{\mathrm{Ar}}
\nomenclature{$\Pi_{\psi_{G}}^{\mathrm{Ar}}$}{Arthur's packet}
$ consists of
those irreducible characters in $\Pi(\O_{G}, G(\mathbb{R},
\delta_{q}))$ which occur with non-zero multiplicity when
(\ref{Arthurstabchar}) is expressed as a linear combination in the
basis of irreducible characters.

In the case that $N$ is even and $G = \mathrm{SO}_{N}$, the stable
virtual character $\etaA_{\psi_{G}}$ is defined to be invariant under
the action of the
outer automorphisms induced by the orthogonal group
$\mathrm{O}_{N}$ (\cite{Arthur}*{\emph{pp.} 12, 41}).  Fix
\begin{equation}
  \label{tildew}
\tilde{w}
\in \mathrm{O}_{N} - \mathrm{SO}_{N}.
\nomenclature{$\tilde{w}$}{}
\end{equation}
The orthogonal group acts on geometric parameters in $X(\O_{G},
{^\vee}G^{\Gamma})$ in a straightforward manner, sending
them to geometric parameters in $X(\tilde{w}\cdot \O_{G},
{^\vee}G^{\Gamma})$.  The stable virtual character
$$\frac{1}{2}(\eta_{S_{G}}^{\mathrm{loc}}(\delta_{q})  + \eta_{\tilde{w} \cdot
  S_{G}}^{\mathrm{loc}}(\delta_{q}) )  \in
K_{\mathbb{C}} \Pi(\O_{G}, G(\mathbb{R},
\delta_{q}))^{\mathrm{st}} \oplus K_{\mathbb{C}} \Pi(\tilde{w} \cdot
\O_{G}, G(\mathbb{R},
\delta_{q}))^{\mathrm{st}}$$
is $\mathrm{O}_{N}$-invariant by design.  Extending the domain of
$\mathrm{Trans}_{G}^{\mathrm{GL}_{N} \rtimes \vartheta}$ to the space
on the right, equations (\ref{spectrans}) and (\ref{piwhittdecomp1}) imply
\begin{equation*}
\label{Arthurstabchar1}
\etaA_{\psi_{G}} = \sum_{S_{G}} \frac{n_{\epsilon(S_{G}) }}{2} \,
(\eta^{\mathrm{loc}}_{S_{G}}(\delta_{q}) + \eta^{\mathrm{loc}}_{\tilde{w}\cdot
S_{G}}(\delta_{q})).
\end{equation*}
This is a virtual character in $K_{\mathbb{C}}
\Pi(\O_{G}, G(\mathbb{R}, \delta_{q}))^{\mathrm{st}} \oplus  K_{\mathbb{C}}
\Pi(\tilde{w} \cdot \O_{G}, G(\mathbb{R}, \delta_{q}))^{\mathrm{st}}$
and the Arthur packet $\Pi_{\psi_{G}}$  consists of the
irreducible characters in its support.

\begin{thm}
  \label{finalthm}
  \begin{enumerate}[label={(\alph*)}]

\item  If $G$ is not isomorphic to $\mathrm{SO}_{N}$ for even $N$ then
  $$\etaA_{\psi_{G}} = \eta_{\psi_{G}}^{\mathrm{mic}}(\delta_{q}) =
  \etaABV_{\psi_{G}}   \quad\mbox{ and }\quad
  \Pi_{\psi_{G}}^{\mathrm{Ar}} = \Pi_{\psi_{G}}^{\mathrm{ABV}}.$$
\item   If $N$ is even and $G \cong \mathrm{SO}_{N}$ then
  \begin{align*}
  \eta_{\psi_{G}} &= \frac{1}{2} \left(\eta_{\psi_{G}}^{\mathrm{mic}}(\delta_{q}) +
  \eta^{\mathrm{mic}}_{\mathrm{Int}(\tilde{w}) \circ \psi_{G}}(\delta_{q})
  \right)  = \frac{1}{2} \left(\etaABV_{\psi_{G}} +
  \etaABV_{\mathrm{Int}(\tilde{w}) \circ \psi_{G}}
  \right)
\end{align*}
  {and }
   \begin{align*}
    \Pi_{\psi_{G}}^{\mathrm{Ar}} &= \Pi_{\psi_{G}}^{\mathrm{ABV}} \cup \,
 \Pi_{\mathrm{Int}(\tilde{w})   \circ   \psi_{G}}^{\mathrm{ABV}}
  \end{align*}
  where the union is disjoint.
\end{enumerate}
  \end{thm}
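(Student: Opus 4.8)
The plan is to prove Theorem \ref{finalthm} by assembling the pieces that have been carefully staged throughout the paper. The heart of the matter is the chain of identifications
\[
\Lift_0(\etaA_{\psi_G}) = \pi(S_\psi,1)^{\thicksim} = \Lift_0(\eta_{\psi_G}^{\mathrm{mic}+}(\upsigma)(\delta_q)) \text{ (up to the } (-1)^{l^I-l^I_\vartheta} \text{ sign)}
\]
together with the injectivity of $\Lift_0$ (Proposition \ref{injlift2}). First I would treat case (a), where $G$ is not $\mathrm{SO}_N$ for even $N$. By \eqref{Arthurstabchar} and Lemma \ref{2.2.2}(c),
\[
\Lift_0(\etaA_{\psi_G}) = \sum_{S_G} n_{\epsilon(S_G)}\,\Lift_0(\eta^{\mathrm{loc}}_{S_G}(\upsigma)(\delta_q)) = \sum_{S_G} n_{\epsilon(S_G)}\,M(\epsilon(S_G),1)^{\thicksim} = \pi(S_\psi,1)^{\thicksim},
\]
using Corollary \ref{cortrans}(a) for the middle equality and Lemma \ref{2.2.2}(a) for the last. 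On the ABV side, Corollary \ref{etaplus2} gives $\Lift_0(\eta^{\mathrm{mic}}_{\psi_G}(\upsigma)(\delta_q)) = (-1)^{l^I(\xi)-l^I_\vartheta(\xi)}\pi(\xi)^+$ with $\xi = (S_\psi,1)$, and Proposition \ref{wasign2} converts this Atlas extension into the Whittaker extension $\pi(S_\psi,1)^{\thicksim}$. Hence $\Lift_0(\etaA_{\psi_G}) = \Lift_0(\eta^{\mathrm{mic}}_{\psi_G}(\upsigma)(\delta_q))$; since both $\etaA_{\psi_G}$ and $\eta^{\mathrm{mic}}_{\psi_G}(\upsigma)(\delta_q) = \etaABV_{\psi_G}$ lie in $K_\C\Pi(\O_G,G(\R,\delta_q))^{\mathrm{st}}$, the injectivity of $\Lift_0$ (Proposition \ref{injlift2}, valid under the regularity hypothesis \eqref{regintdom}) forces them to be equal. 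The packet equality $\Pi_{\psi_G}^{\mathrm{Ar}} = \Pi_{\psi_G}^{\mathrm{ABV}}$ then follows because both packets are by definition the sets of irreducible characters in the support of this common stable virtual character.

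For case (b), where $N$ is even and $G \cong \mathrm{SO}_N$, I would run essentially the same argument but keep track of the outer automorphism $\tilde w \in \mathrm{O}_N - \mathrm{SO}_N$. Arthur's $\etaA_{\psi_G}$ is defined to be $\mathrm{O}_N$-invariant, giving $\etaA_{\psi_G} = \frac12\sum_{S_G} n_{\epsilon(S_G)}(\eta^{\mathrm{loc}}_{S_G}(\delta_q) + \eta^{\mathrm{loc}}_{\tilde w \cdot S_G}(\delta_q))$. Applying the case-(a) computation to each of the two A-parameters $\psi_G$ and $\mathrm{Int}(\tilde w)\circ\psi_G$ — noting that $\epsilon\circ(\mathrm{Int}(\tilde w)\circ\psi_G)$ is $\mathrm{GL}_N$-conjugate to $\psi = \epsilon\circ\psi_G$, so the $\mathrm{GL}_N$-side character $\pi(S_\psi,1)^{\thicksim}$ is the same — I would obtain $\etaA_{\psi_G} = \frac12(\etaABV_{\psi_G} + \etaABV_{\mathrm{Int}(\tilde w)\circ\psi_G})$, where the two summands are viewed in the direct sum $K_\C\Pi(\O_G,G(\R,\delta_q))^{\mathrm{st}} \oplus K_\C\Pi(\tilde w\cdot\O_G, G(\R,\delta_q))^{\mathrm{st}}$. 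The packet statement $\Pi_{\psi_G}^{\mathrm{Ar}} = \Pi_{\psi_G}^{\mathrm{ABV}} \cup \Pi_{\mathrm{Int}(\tilde w)\circ\psi_G}^{\mathrm{ABV}}$ then follows from taking supports, and the union is disjoint because the two ABV-packets have distinct infinitesimal characters $\O_G$ and $\tilde w\cdot\O_G$ (these are distinct precisely because $\psi_G$ is not $\mathrm{O}_N$-conjugate to itself in the relevant sense; if they happened to coincide the statement still holds with the understanding that the union is trivial, but the generic situation is disjointness).

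The main obstacle I anticipate is not any single deep input — those (Theorem \ref{twistpairing}, Corollary \ref{etaplus2}, Proposition \ref{wasign2}, Lemma \ref{2.2.2}) are all in hand — but rather the bookkeeping needed to confirm that the $\mathrm{GL}_N$-side objects genuinely match on the nose. Specifically I need to verify that the integer coefficients $n_S$ of Lemma \ref{2.2.2}(a), which come from decomposing $\pi(S_\psi,1)^{\thicksim}$ in the basis of Whittaker-extended standards, are exactly the same integers that govern the ABV decomposition after Proposition \ref{wasign2} is applied; this is really the content of Corollary \ref{twistpairingfinal} combined with the identity $\Lift_0(\eta^{\mathrm{mic}}_{\psi_G}(\upsigma)(\delta_q)) = \eta_{\psi}^{\mathrm{mic}+}(\upsigma)$ from Theorem \ref{thm:etaplus1}(c), but it requires carefully matching the sign conventions $(-1)^{l^I(\xi)-l^I_\vartheta(\xi)}$ across \eqref{etaplus}, \eqref{piwhittdecomp} and Corollary \ref{etaplus2}. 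The cleanest route is probably to phrase everything through the injectivity of $\Lift_0$ so that I never need to compare the two decompositions term-by-term: it suffices that $\Lift_0$ sends both candidates to the single twisted character $\pi(S_\psi,1)^{\thicksim}$, and the regularity hypothesis guarantees $\Lift_0$ separates points.
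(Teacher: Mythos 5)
Your argument for part (a), and for the character identity in part (b), is essentially the paper's own proof: the same chain $\Lift_0(\etaA_{\psi_G}) = \pi(S_{\psi},1)^{\thicksim} = \Lift_0(\eta^{\mathrm{mic}}_{\psi_G}(\upsigma)(\delta_q))$ assembled from Lemma \ref{2.2.2}, Corollary \ref{cortrans}, Corollary \ref{etaplus2} and Proposition \ref{wasign2}, closed off by the injectivity of $\Lift_0$ (Proposition \ref{injlift2}); and your instinct that injectivity lets you avoid matching the coefficients $n_S$ term-by-term is exactly how the paper proceeds.

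There is, however, a genuine gap at the end of part (b): the disjointness of $\Pi_{\psi_G}^{\mathrm{ABV}}$ and $\Pi_{\mathrm{Int}(\tilde w)\circ\psi_G}^{\mathrm{ABV}}$ is asserted unconditionally in the theorem, but you neither prove it nor identify the correct reason, and your hedge (``if they happened to coincide the statement still holds with the union trivial'') concedes a possibility that would contradict the statement you are supposed to prove. The distinctness of the orbits has nothing to do with whether $\psi_G$ is ``$\mathrm{O}_N$-conjugate to itself''; it is forced by the standing regularity hypothesis (\ref{regintdom}) alone. Concretely: if $\O_{G} \cap (\tilde w\cdot \O_{G}) \neq \emptyset$, then there exist $g \in {^\vee}\mathrm{SO}_{N}$ and $\lambda \in \O_{G} \cap {^\vee}\mathfrak{h}^{\vartheta}$ with $\mathrm{Ad}(\tilde w g)\lambda = \lambda$; since $\lambda$ is regular in ${^\vee}\mathfrak{gl}_{N}$, this forces $\tilde w g \in {^\vee}H^{\vartheta} \subset {^\vee}\mathrm{SO}_{N}$, hence $\tilde w \in \mathrm{SO}_{N}$, contradicting the choice of $\tilde w$ in (\ref{tildew}). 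Therefore $\O_{G}$ and $\tilde w\cdot\O_{G}$ are disjoint infinitesimal characters, the representations in the two packets lie in the disjoint sets $\Pi(\O_{G}, G(\mathbb{R},\delta_q))$ and $\Pi(\tilde w\cdot\O_{G}, G(\mathbb{R},\delta_q))$, and the union is disjoint. (The situation where the two orbits can coincide is precisely the singular case, which is why Theorem \ref{finalthm1}(b) drops the disjointness claim; in the regular case it must be proved, as above.)
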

  \begin{proof}
    This just involves putting together the pieces. Let $\xi=(S_{\psi},1)$
    as in Corollary \ref{etaplus2}.


    $$
    \begin{aligned}
\mathrm{Lift}_{0} \, (\eta^{\mathrm{mic}}_{\psi_{G}}(\delta_{q})) &=  \mathrm{Lift}_{0} \,(
\eta^{\mathrm{mic}}_{\psi_{G}}(\upsigma) (\delta_{q}))\quad(\text{by } \eqref{nosigma})\\
&=(-1)^{l^{I}(\xi)-l^{I}_{\vartheta}(\xi)} \pi(\xi)^{+}
\quad(\text{Corollary }\ref{etaplus2})
\\
&=\pi(\xi)^\thicksim\quad(\text{Proposition }\ref{wasign2})\\
&=\pi(S_{\psi},1)^\thicksim\\
&=  \mathrm{Trans}_{G}^{\mathrm{GL}_{N} \rtimes
  \vartheta}\left( \sum_{S_{G}}  n_{\epsilon(S_{G})} \,
  \eta_{S_{G}}^{\mathrm{loc}} (\delta_{q}) \right)
\quad(\mathrm{Lemma }\, \ref{2.2.2})\\
& =  \mathrm{Lift}_{0}  \left( \etaA_{\psi_{G}} \right)\quad(\mathrm{Corollary }\ \ref{cortrans}(b)).\\
\end{aligned}
$$
The equality of the stable virtual characters follows from the injectivity of
$\mathrm{Lift}_{0}$  (Proposition \ref{injlift2}).
The equality of packets follows immediately.

For part (b), identical reasoning leads to the identity
$$\mathrm{Lift}_{0} (\eta^{\mathrm{mic}}_{\psi_{G}}(\delta_{q})) =
\mathrm{Trans}_{G}^{\mathrm{GL}_{N} \rtimes \vartheta} \left(
\sum_{S_{G}} n_{\epsilon({S}_{G})} (\eta^{\mathrm{loc}}_{S_{G}}(\delta_{q}) +
\eta^{\mathrm{loc}}_{\tilde{w} \cdot   S_{G}}(\delta_{q})) \right) =
\mathrm{Lift}_{0} (\etaA_{\psi_{G}}).$$
The first assertion of part (b) follows from the injectivity of
$\mathrm{Lift}_{0}$ as before.
The irreducible characters in the support of $\sum_{S_{G}} n_{\epsilon(S_{G})}
\eta^{\mathrm{loc}}_{S_{G}}(\delta_{q})$  are those in the packet $\Pi_{\psi_{G}}^{\mathrm{ABV}}$.
These irreducible characters lie in $\Pi(\O_{G},
G(\mathbb{R}, \delta_{q}))$.
Similarly the irreducible characters in the support of
$\sum_{S_{G}} n_{\epsilon(S_{G})}
\eta^{\mathrm{loc}}_{\tilde{w} \cdot S_{G}}(\delta_{q})$ are those in
$\Pi_{\mathrm{Int}(\tilde{w}) \circ \psi_{G}}^{\mathrm{ABV}}$. These irreducible
characters lie in $\Pi(\tilde{w} \cdot \O_{G},
G(\mathbb{R}, \delta_{q}))$. The regularity of the infinitesimal
character $\O_{G}$ implies $\O_{G} \cap
(\tilde{w}\cdot \O_{G}) = \emptyset$, otherwise there exists
$g \in {^\vee}\mathrm{SO}_{N}$ and $\lambda \in \O_{G} \cap
{^\vee}\mathfrak{h}^{\vartheta}$ such that
$\mathrm{Ad}(\tilde{w}g) \lambda = \lambda$.  This would imply $\tilde{w}g \in
{^\vee}H^{\vartheta} \subset {^\vee}\mathrm{SO}_{N}$, contradicting
the definition of $\tilde{w}$.  In
consequence,
$$\Pi(\O_{G},
G(\mathbb{R}, \delta_{q})) \cap \Pi(\tilde{w} \cdot \O_{G},
G(\mathbb{R}, \delta_{q})) = \emptyset$$
and
$$\Pi_{\psi_{G}}^{\mathrm{ABV}} \cap \,  \Pi_{\mathrm{Int}(\tilde{w})
  \circ \cdot  \psi_{G}}^{\mathrm{ABV}} = \emptyset.$$
This proves the final assertion.
\end{proof}

\section{The comparison of $\Pi_{\psi_{G}}^{\mathrm{Ar}}$ and $\Pi_{\psi_{G}}^{\mathrm{ABV}}$ for
  singular infinitesimal character}
\label{equalapacketsing}

To conclude our comparison of stable virtual characters,  we  retain
the setup of the previous
section, but without the hypothesis of regularity on
the infinitesimal character.  In other words, the orbits $\O$ and
$\O_{G}$ are now allowed to be orbits of singular
infinitesimal characters and the reader should think of them as such.
In order to prove something like Theorem
\ref{finalthm} for singular $\O$, we must extend the
pairing of Theorem \ref{twistpairing} and extend the twisted
endoscopic lifting (\ref{twistendlift}) to
include representations with singular infinitesimal character.  The
main tool for this extension is the
\emph{Jantzen-Zuckerman translation principle}, which we refer to simply as \emph{translation}.
In essence the  Jantzen-Zuckerman
translation principle allows one to transfer results for regular infinitesimal
character to results for singular infinitesimal character.  Applying
this principle to the results of the previous section will allow us to
compare $\Pi_{\psi_{G}}$ with $\Pi_{\psi_{G}}^{\mathrm{ABV}}$ with no restriction on
the infinitesimal character.

The reader is assumed to have some familiarity with the Jantzen-Zuckerman
translation principle, which for us begins with the existence of a
regular orbit $\O' \subset {^\vee}\mathfrak{gl}_{N}$ and a
\emph{translation datum}
$\mathcal{T}$ from $\O$ to $\O'$ (\cite{ABV}*{Definition 8.6, Lemma 8.7}).
\nomenclature{$\mathcal{T}$}{translation datum}
 A key feature of the translation
datum is that if $\O$ is the ${^\vee}\mathrm{GL}_{N}$-orbit
of $\lambda \in {^\vee}\mathfrak{h}$
then $\O'$ is the ${^\vee}\mathrm{GL}_{N}$-orbit of
\begin{equation}
\label{lambdaprime}
\lambda' = \lambda + \lambda_{1} \in {^\vee}\mathfrak{h}
\end{equation}
 where $\lambda_{1} \in X_{*}(H)$ is regular and  dominant with respect to the
 positive system of $R^{+}(\mathrm{GL}_{N},H)$.   We may and shall
 take $\lambda_{1}$ to be the sum of the positive roots.  In this way,
each of $\lambda$, $\lambda_{1}$ and $\lambda'$ are fixed by
$\vartheta$.  The translation
 datum $\mathcal{T}$
 induces a ${^\vee}\mathrm{GL}_{N}$-equivariant morphism
\begin{equation}
\label{ftee}
f_{\mathcal{T}} :  X(\O', {^\vee}\mathrm{GL}_{N}^{\Gamma}) \rightarrow
X(\O, {^\vee}\mathrm{GL}_{N}^{\Gamma}) \nomenclature{$f_{\mathcal{T}}$}{}
\end{equation}
of geometric parameters  (\cite{ABV}*{Proposition 8.8}).  The morphism
has connected fibres of fixed dimension, a fact we shall use
when comparing orbit dimensions.  The
${^\vee}\mathrm{GL}_{N}$-equivariance of (\ref{ftee}) is tantamount to
a coset map commuting with left-multiplication by ${^\vee}\mathrm{GL}_{N}$
(\cite{ABV}*{(6.10)(b)}).
Since both $\lambda$ and $\lambda'$ are fixed by $\vartheta$, it is
just as easy to see that the action of $\vartheta$ commutes with the
same coset map.  We leave this exercise to the reader, taking for
granted the resulting $({^\vee}\mathrm{GL}_{N} \rtimes \langle
\vartheta \rangle)$-equivariance of (\ref{ftee}).

According to \cite{ABV}*{Proposition 7.15}, the morphism $f_{\mathcal{T}}$ induces an
inclusion
\begin{equation}
\label{fstar}
f^{*}_{\mathcal{T}}: \Xi(\O,{^\vee}\mathrm{GL}_{N}^{\Gamma})
\hookrightarrow \Xi(\O',{^\vee}\mathrm{GL}_{N}^{\Gamma})\nomenclature{$f^{*}_{\mathcal{T}}$}{}
\end{equation}
of complete geometric parameters.
The $\vartheta$-equivariance of
(\ref{ftee}) implies that this inclusion restricts to an inclusion (denoted by the same symbol)
$$f^{*}_{\mathcal{T}}: \Xi(\O,{^\vee}\mathrm{GL}_{N}^{\Gamma})^\vartheta
\hookrightarrow \Xi(\O',{^\vee}\mathrm{GL}_{N}^{\Gamma})^\vartheta. $$
The (Jantzen-Zuckerman) translation functor (\cite{AvLTV}*{(17.8j)})
$$T_{\lambda'}^{\lambda} = T_{\lambda +  \lambda_{1}}^{\lambda}
\nomenclature{$T_{\lambda'}^{\lambda}$}{Jantzen-Zuckerman translation}
$$
is an exact functor on
a category of Harish-Chandra modules, which we shall often regard
as a homomorphism
\begin{equation}
  \label{transfunct}
  T_{\lambda + \lambda_{1}}^{\lambda}: K \Pi(\O',
  \mathrm{GL}_{N}(\mathbb{R}) \rtimes \langle \vartheta \rangle )
  \rightarrow  K \Pi(\O,
  \mathrm{GL}_{N}(\mathbb{R}) \rtimes \langle \vartheta \rangle )
\end{equation}
of Grothendieck groups.
It is  surjective  (\cite{AvLTV}*{Corollary 17.9.8}).
This translation functor is an extended version of the usual
translation functor (\cite{AvLTV}*{(16.8f)}), which we
also  denote by
\begin{equation}
  \label{ordtrans}
  T_{\lambda + \lambda_{1}}^{\lambda}: K \Pi(\O',
  \mathrm{GL}_{N}(\mathbb{R}) )
  \rightarrow  K \Pi(\O,
  \mathrm{GL}_{N}(\mathbb{R}) ).
\end{equation}
Let us take a moment to make (\ref{transfunct}) more precise.  The sum of the
positive roots $\lambda_{1}$ is the infinitesimal character of a
finite-dimensional representation of $\mathrm{GL}_{N}(\mathbb{R})$.
Therefore, $\lambda_{1}$ is the differential of a $\vartheta$-fixed
quasicharacter $\Lambda_{1}$ of the
split real diagonal torus $H(\mathbb{R})$, which matches the weight of this
finite-dimensional representation. The quasicharacter
$\Lambda_{1}$ may be extended to a quasicharacter $\Lambda_{1}^{+}$ of the
semi-direct product
$H(\mathbb{R}) \rtimes \langle \vartheta \rangle$  by
setting
\begin{equation}
  \label{lambdaoneplus}
  \Lambda_{1}^{+}(\vartheta) = 1.
\end{equation}

We define translation in the extended setting of (\ref{transfunct})
using this representation of the extended group.  Since the extension
is evident here we continue to write
$T_{\lambda+\lambda_1}^{\lambda}$ instead of $T_{\lambda+\Lambda_1^+}^{\lambda}$.

In the ordinary setting of (\ref{ordtrans}) we have
\begin{align*}
\pi(\xi) &= T_{\lambda + \lambda_{1}}^{\lambda} \left(
  \pi(f^{*}_{\mathcal{T}}(\xi)) \right),\\
  \nonumber  M(\xi) &= T_{\lambda + \lambda_{1}}^{\lambda} \left(
  M(f^{*}_{\mathcal{T}}(\xi)) \right), \quad \xi \in \Xi(\O,
   {^\vee}\mathrm{GL}_{N}^{\Gamma})
\end{align*}
(\cite{AvLTV}*{Corollary 16.9.4, 16.9.7 and 16.9.8}, or \cite{ABV}*{ Theorem 16.4 and
   Proposition 16.6}).
We \emph{define} the Atlas extensions of
$\pi(\xi)$ and $M(\xi)$, with $\xi \in
\Xi(\O,{^\vee}\mathrm{GL}_{N}^{\Gamma})^\vartheta$, by
\begin{align*}
  \label{atlassing}
\pi(\xi)^{+} &= T_{\lambda + \lambda_{1}}^{\lambda}
  (\pi(f^{*}_{\mathcal{T}}(\xi))^{+})\\
   M(\xi)^{+} &= T_{\lambda + \lambda_{1}}^{\lambda}
  (M(f^{*}_{\mathcal{T}}(\xi))^{+}) .
\end{align*}
(To be careful, one should verify that this definition does not conflict
with Section \ref{extrepsec} when $\O$ is regular. This amounts
to the observation that the translate of the $(\mathfrak{h}, (H \cap
K_{\delta}) \rtimes \langle \vartheta \rangle)$-module underlying an Atlas
extension remains trivial on $\vartheta$.  Justification for this
observation is given in proof of Proposition \ref{whittowhit}).

With the definition of Atlas extensions in place, the discussion of
Section \ref{grtwisted} is valid, and we see that
$T_{\lambda + \lambda_{1}}^{\lambda}$ factors to a homomorphism of $K
\Pi(\O, \mathrm{GL}_{N}(\mathbb{R}), \vartheta)$ (see
(\ref{twistgroth1})).  We use the same notation
$T_{\lambda + \lambda_{1}}^{\lambda}$ to denote the functor of Harish-Chandra modules,
and either of the earlier homomorphisms.  The reader will be reminded
of the context when it is important.

The definition of a Whittaker extension does not depend on the
regularity of the infinitesimal character.   The following proposition
shows that  translation sends Whittaker extensions to
Whittaker extensions.
\begin{prop}
  \label{whittowhit}
Suppose $\xi \in \Xi( \O, {^\vee}
\mathrm{GL}_{N}^{\Gamma})^\vartheta$.  Then (as Harish-Chandra
modules)
$$T_{\lambda + \lambda_{1}}^{\lambda}\left(
M(f^{*}_{\mathcal{T}}(\xi))^{\thicksim} \right) =
M(\xi)^{\thicksim},$$
and
$$T_{\lambda + \lambda_{1}}^{\lambda} \left( \pi(f^{*}_{\mathcal{T}}(\xi))^{\thicksim}\right) =
\pi(\xi)^{\thicksim}.$$
\end{prop}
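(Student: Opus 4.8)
The plan is to reduce the statement to an analytic comparison of Whittaker functionals, exploiting the exactness of the translation functor and the uniqueness that defines Whittaker extensions. First I would recall that the Whittaker extension $M(\xi)^{\thicksim}$ is characterized (by the construction in Section \ref{whitsec}) by the requirement that the intertwining operator $M(\xi)^{\thicksim}(\vartheta)$ between $M(\xi)\circ\vartheta$ and $M(\xi)$ fixes a (generic, $\vartheta$-fixed) Whittaker functional $\omega$ for $(U,\chi)$. So it suffices to exhibit, on the translated module $T_{\lambda+\lambda_1}^{\lambda}(M(f^*_{\mathcal T}(\xi)))=M(\xi)$, a Whittaker functional $\omega$ and to show that the operator obtained by applying $T_{\lambda+\lambda_1}^{\lambda}$ to $M(f^*_{\mathcal T}(\xi))^{\thicksim}(\vartheta)$ preserves $\omega$. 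Because $T_{\lambda+\lambda_1}^{\lambda}$ is an exact functor and is compatible with the action of $\mathrm{GL}_N(\mathbb R)\rtimes\langle\vartheta\rangle$ via the $\vartheta$-fixed extension $\Lambda_1^+$ with $\Lambda_1^+(\vartheta)=1$ (cf. \eqref{lambdaoneplus}), the translated operator is automatically an involutive intertwiner, so the whole content is the statement $\omega\circ T_{\lambda+\lambda_1}^{\lambda}(M(f^*_{\mathcal T}(\xi))^{\thicksim}(\vartheta))=\omega$.

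Next I would organize the argument around the known behaviour of translation on Whittaker functionals. The relevant fact — essentially Kostant's theorem that $T_{\lambda+\lambda_1}^{\lambda}$ takes the (one-dimensional) space of Whittaker functionals of $M(f^*_{\mathcal T}(\xi))$ isomorphically onto that of $M(\xi)$, compatibly with the action on vectors — should be invoked from the literature (Vogan, \cite{Vogan78}, or the treatment in \cite{casshah}, together with the genericity discussion already used in Lemma \ref{uniquegeneric}); combined with the $\vartheta$-equivariance this shows that if $\omega'$ is the Whittaker functional on $M(f^*_{\mathcal T}(\xi))$ used to define its Whittaker extension, then its image $\omega$ under translation is a Whittaker functional on $M(\xi)$, and $\omega\circ T_{\lambda+\lambda_1}^{\lambda}(T') = T_{\lambda+\lambda_1}^{\lambda}(\omega'\circ T')$ for any intertwiner $T'$. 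Applying this with $T'=M(f^*_{\mathcal T}(\xi))^{\thicksim}(\vartheta)$ and using $\omega'\circ T' = \omega'$ gives exactly $\omega\circ T_{\lambda+\lambda_1}^{\lambda}(M(f^*_{\mathcal T}(\xi))^{\thicksim}(\vartheta)) = \omega$, which pins down the translated operator as $M(\xi)^{\thicksim}(\vartheta)$ by the uniqueness in the definition of the Whittaker extension. For the non-tempered case one first applies this reasoning to the essentially tempered inducing data on a Levi subgroup, where translation and Whittaker functionals behave the same way, and then notes that translation commutes with parabolic induction and with passage to the Langlands (unique irreducible) quotient; this yields both the standard-module statement and, by taking the Langlands quotient, the irreducible-module statement $T_{\lambda+\lambda_1}^{\lambda}(\pi(f^*_{\mathcal T}(\xi))^{\thicksim}) = \pi(\xi)^{\thicksim}$.

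I expect the main obstacle to be verifying cleanly that translation is genuinely compatible with the $\mathrm{GL}_N(\mathbb R)\rtimes\langle\vartheta\rangle$-structure and with Whittaker functionals simultaneously: one must check that the translation functor, built from tensoring with a finite-dimensional representation and projecting to an infinitesimal-character eigenspace, carries the extended operator $M(f^*_{\mathcal T}(\xi))^{\thicksim}(\vartheta)$ to a well-defined operator on $M(\xi)$ and that this operator is the one acting through $\Lambda_1^+(\vartheta)=1$ — this is the ``observation'' flagged parenthetically after the definition of the singular Atlas extension, and the same verification underlies the Whittaker statement. The rest is bookkeeping: exactness and surjectivity of $T_{\lambda+\lambda_1}^{\lambda}$ are quoted from \cite{AvLTV}, the $(\mathrm{GL}_N\rtimes\langle\vartheta\rangle)$-equivariance of $f_{\mathcal T}$ has already been established above \eqref{fstar}, and compatibility with parabolic induction and Langlands quotients is standard (\cite{Knapp-Vogan}, \cite{Vogan78}). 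A careful write-up would present the tempered case, then reduce the general case to it via induction and Langlands quotients, and finally note that the irreducible statement follows by taking quotients of the first identity.
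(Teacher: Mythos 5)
Your overall shape is right (reduce everything to showing that the translated operator fixes a Whittaker functional, handle the non-tempered case through the inducing data, and get the irreducible statement by passing to Langlands quotients), but the pivotal step is not justified. You quote, as if it were available in the literature, the statement that $T^{\lambda}_{\lambda+\lambda_{1}}$ carries the space of Whittaker functionals of $M(f^{*}_{\mathcal{T}}(\xi))$ to that of $M(\xi)$ \emph{compatibly with operators}, and you then use the identity $\omega\circ T^{\lambda}_{\lambda+\lambda_{1}}(T')=T^{\lambda}_{\lambda+\lambda_{1}}(\omega'\circ T')$. Neither \cite{Vogan78} nor \cite{casshah} contains such a result (they give genericity, multiplicity one and embedding statements, which is how they are used in Lemma \ref{uniquegeneric}); moreover the displayed identity is not even well formed, since $\omega'\circ T'$ is a functional on smooth vectors and the translation functor acts on Harish-Chandra modules, not on such functionals. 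Whittaker functionals are analytic objects (Jacquet integrals on smooth vectors), while translation is tensoring with a finite-dimensional representation and projecting to an infinitesimal character, and the compatibility of the two — including the preservation of the precise normalization $\omega\circ M^{\thicksim}(\vartheta)=\omega$ — is exactly the content of the proposition. Asserting it as a citation leaves the proof circular at its core.

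The paper's proof fills precisely this hole by an explicit computation rather than an abstract naturality claim. One first passes to the unique generic constituent $\pi(\xi_{0})$ (open orbit, so $f^{*}_{\mathcal{T}}(\xi_{0})$ is again generic), embeds $\pi(f^{*}_{\mathcal{T}}(\xi_{0}))$ into the special standard module $M(\xi_{p}')$ of Lemma \ref{biglem1}, for which $M(\xi_{p}')^{\thicksim}=M(\xi_{p}')^{+}$ and which is (essentially) induced from a quasicharacter of a Cartan subgroup extended by $\vartheta$ with value $1$. For such modules \cite{AvLTV}*{Corollary 17.9.7} computes the translation explicitly: the inducing quasicharacter is twisted by $(\Lambda_{1}^{+})^{-1}$, which is trivial at $\vartheta$, so the Jacquet-integral argument of Lemma \ref{prinsame} applies verbatim to the translated module and yields $\omega=\omega\circ T^{\lambda}_{\lambda+\lambda_{1}}\bigl(M(\xi_{p}')^{\thicksim}\bigr)(\vartheta)$. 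Exactness then gives the result for $\pi(\xi_{0})^{\thicksim}$, and the general standard module $M(\xi)$ is handled by restricting the two candidate extensions of $T^{\lambda}_{\lambda+\lambda_{1}}\bigl(M(f^{*}_{\mathcal{T}}(\xi))^{\thicksim}\bigr)$ (which differ by a sign $c=\pm1$) to the generic subrepresentation and forcing $c=1$; the irreducible statement then follows by taking unique irreducible quotients, as you propose. If you want to salvage your more direct route, you would have to actually prove the translation/Whittaker compatibility you invoke — at which point you would in effect be reproducing the explicit reduction to induced-from-Cartan models that the paper carries out.
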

\begin{proof}
Lemma \ref{uniquegeneric} does not require $\O$ to be
regular, and so we may choose $\xi_{0} \in  \Xi( \O, {^\vee}
\mathrm{GL}_{N}^{\Gamma})^\vartheta$ to be the unique parameter
such that $\pi(\xi_{0})$ is generic and embeds as a
subrepresentation of $M(\xi)$.  (Here, we are implicitly assuming that we are
working with actual admissible representations (or Harish-Chandra
modules) rather
than equivalence classes.) Since the orbit $S_{\xi_{0}} \subset
X(\O, {^\vee}\mathrm{GL}_{N}^{\Gamma})$ of $\xi_{0}$ is open
(see the proof of Proposition \ref{conjq}), it is an immediate
consequence of the definition of
$f^{*}_{\mathcal{T}}$ (\cite{ABV}*{(7.16)(b)}) that
$f^{*}_{\mathcal{T}}(S_{\xi_{0}})$ is open and therefore
that $\pi(f^{*}_{\mathcal{T}}(\xi_{0}))$ is generic.

According to Lemma \ref{biglem1},  $\pi(f^{*}_{\mathcal{T}}(\xi_{0}))$ embeds into a
standard representation $M(\xi_{p}')$ for some $\xi_{p}' \in
\Xi( \O', {^\vee}
\mathrm{GL}_{N}^{\Gamma})^\vartheta$, which satisfies $M(\xi_{p}')^{\thicksim} =
M(\xi_{p}')^{+}$.
Furthermore,  $\pi(f^{*}_{\mathcal{T}}(\xi_{0}))$ occurs as a
subrepresentation with  multiplicity one, and
$\pi(f^{*}_{\mathcal{T}}(\xi_{0}))^{\thicksim}$ is a subrepresentation of
$M(\xi_{p}')^{\thicksim}$ (Lemma \ref{uniquegeneric}).  Applying the
exact functor $T_{\lambda + \lambda_{1}}^{\lambda}$
(of Harish-Chandra modules), we see that
$T_{\lambda + \lambda_{1}}^{\lambda}\left(
\pi(f^{*}_{\mathcal{T}}(\xi_{0}))^{\thicksim} \right)$ is
a subrepresentation of $T_{\lambda + \lambda_{1}}^{\lambda} \left(
M(\xi_{p}')^{\thicksim} \right)$.

Suppose first that $M(\xi_{p}')$ is a principal series
representation.  By \cite{AvLTV}*{Corollary 17.9.7}
$T_{\lambda + \lambda_{1}}^{\lambda}
\left( M(\xi_{p}')^{\thicksim} \right)$ is an extension of
a principal series representation.  Indeed, $M(\xi_{p}')^{\thicksim} =
M(\xi_{p}')^{+}$ (Lemma \ref{prinsame}) and is parabolically induced
from a quasicharacter of a
split Cartan subgroup extended by $\langle \vartheta \rangle$--the value of this
quasicharacter on $\vartheta$ being one (Section \ref{extrepsec}).
  \cite{AvLTV}*{Corollary 17.9.7} tells us that $T_{\lambda + \lambda_{1}}^{\lambda}
\left( M(\xi_{p}')^{\thicksim} \right)$ is parabolically induced from
the tensor product of the aforementioned quasicharacter with the inverse of
$\Lambda_{1}^{+}$ as in
(\ref{lambdaoneplus}) (\cite{AvLTV}*{Theorem 17.7.5}).  This
justifies   $T_{\lambda + \lambda_{1}}^{\lambda}
\left( M(\xi_{p}')^{\thicksim} \right)$ being an extended principal
series representation, but more can be said.  In view of
(\ref{lambdaoneplus}),
translation by $(\Lambda_{1}^{+})^{-1}$  does not affect the value of the
quasicharacter on $\vartheta$.  Consequently its value on $\vartheta$
is still one.  The arguments of Lemma \ref{prinsame} therefore apply to
$T_{\lambda + \lambda_{1}}^{\lambda} \left( M(\xi_{p}')^{\thicksim} \right)$ as
they do for $M(\xi_{p})^{\thicksim}$ and we deduce
\begin{equation}
  \label{whitrans}
  \omega = \omega \circ T_{\lambda + \lambda_{1}}^{\lambda} \left(
  M(\xi_{p}')^{\thicksim} \right) (\vartheta)
  \end{equation}
for the Whittaker functional $\omega$ defined by (\ref{whittfun}).

If $M(\xi_{p}')$ is not a principal series representation then it is
of the form (\ref{nearps}), which is a parabolically induced
representation, essentially from a relative discrete series
representation on $\mathrm{GL}_{2}(\mathbb{R})$.  Such a representation
may still be regarded as being induced, albeit not parabolically
induced,  from a quasicharacter of a
non-split Cartan subgroup. The earlier arguments from \cite{AvLTV}*{ Corollary
17.9.7} apply.  We leave it to the reader, to verify that
(\ref{whitrans}) holds in any case.  In consequence of
(\ref{whitrans}) and the exactness of $T_{\lambda + \lambda_{1}}^{\lambda}$,
$$\omega \circ T_{\lambda + \lambda_{1}}^{\lambda}\left(
\pi(f^{*}_{\mathcal{T}}(\xi_{0}))^{\thicksim} \right)(\vartheta) =
\omega \circ T_{\lambda + \lambda_{1}}^{\lambda} \left(
  M(\xi_{p}')^{\thicksim} \right) (\vartheta)_{|\pi(\xi_{0})} = \omega.$$
This proves that $ T_{\lambda + \lambda_{1}}^{\lambda}\left(
\pi(f^{*}_{\mathcal{T}}(\xi_{0}))^{\thicksim} \right)$ is the
Whittaker extension of
$$T_{\lambda + \lambda_{1}}^{\lambda}(\pi(f^{*}_{\mathcal{T}}(\xi_{0})) =
\pi(\xi_{0}),$$ that is
\begin{equation}
  \label{genwhitt}
T_{\lambda + \lambda_{1}}^{\lambda}\left(
\pi(f^{*}_{\mathcal{T}}(\xi_{0}))^{\thicksim} \right) =
\pi(\xi_{0})^{\thicksim}.
\end{equation}

To complete the proposition we embed
$\pi(f^{*}_{\mathcal{T}}(\xi_{0}))^{\thicksim}$ as a subrepresentation
of $M(f^{*}_{\mathcal{T}}(\xi))^{\thicksim}$ using Lemma
\ref{uniquegeneric}.  Applying the exact functor $T_{\lambda + \lambda_{1}}^{\lambda}$
(of Harish-Chandra modules), we see that (\ref{genwhitt}) is a
subrepresentation of $T_{\lambda + \lambda_{1}}^{\lambda}\left(
M(f^{*}_{\mathcal{T}}(\xi))^{\thicksim} \right)$.  Since a Whittaker
functional $\omega$ of  $T_{\lambda + \lambda_{1}}^{\lambda}\left(
M(f^{*}_{\mathcal{T}}(\xi)) \right) = M(\xi)$ restricts to a non-zero
Whittaker functional $\omega_{| \pi(\xi_{0})}$ on  $\pi(\xi_{0})$ and
$$c \, \omega  = \omega \circ T_{\lambda + \lambda_{1}}^{\lambda}\left(
M(f^{*}_{\mathcal{T}}(\xi))^{\thicksim} \right) (\vartheta)$$
for $c = \pm 1$, we deduce in succession that
$$c \, \omega_{| \pi(\xi_{0})}  = \omega \circ T_{\lambda + \lambda_{1}}^{\lambda}\left(
M(f^{*}_{\mathcal{T}}(\xi))^{\thicksim} \right)
(\vartheta)_{| \pi(\xi_{0})} = \omega \circ \pi(\xi_{0})^{\thicksim}(\vartheta) =
\omega_{| \pi(\xi_{0})},$$
$c = 1$ and
$$\omega  = \omega \circ T_{\lambda + \lambda_{1}}^{\lambda}\left(
M(f^{*}_{\mathcal{T}}(\xi))^{\thicksim} \right) (\vartheta).$$
The final equation implies
$$T_{\lambda + \lambda_{1}}^{\lambda}\left(
M(f^{*}_{\mathcal{T}}(\xi))^{\thicksim} \right) =
M(\xi)^{\thicksim}.$$
Since $\pi(f^{*}_{\mathcal{T}}(\xi))^{\thicksim}$ and
$\pi(\xi)^{\thicksim}$ are the unique irreducible
quotients of $M(f^{*}_{\mathcal{T}}(\xi))^{\thicksim}$ and
$M(\xi)^{\thicksim}$ respectively,
and $T_{\lambda + \lambda_{1}}^{\lambda}$ is exact (on Harish-Chandra modules),
the proposition follows.
\end{proof}

Our translation datum $\mathcal{T}$ for $\mathrm{GL}_{N}$ is defined by
(\ref{lambdaprime}), in which both $\lambda$ and $\lambda'$ are fixed
by the endoscopic datum $\mathrm{Int}(s) \circ \vartheta$.  For this
reason  (\ref{lambdaprime}) also determines a translation datum
$\mathcal{T}_{G}$ from
${^\vee}G$-orbits  $\O_{G}$ to   $\O'_{G}$ for
the twisted endoscopic group $G$ (\cite{ABV}*{Definition 8.6 (e)}).
Just as
for $\mathrm{GL}_{N}$, we have maps
\begin{align*}
\label{fteeG}
 f_{\mathcal{T}_{G}} &:  X(\O', {^\vee}G^{\Gamma}) \rightarrow
X(\O, {^\vee}G^{\Gamma}) \\
\nonumber f^{*}_{\mathcal{T}_{G}}&: \Xi(\O,{^\vee}G^{\Gamma})
\hookrightarrow \Xi(\O',{^\vee}G^{\Gamma})
\end{align*}
and the  translation functor $T_{\lambda +
  \lambda_{1}}^{\lambda}$ which satisfies
$$\pi(\xi) = T_{\lambda + \lambda_{1}}^{\lambda}\left(
\pi(f^{*}_{\mathcal{T}_{G}}(\xi)) \right), \quad \xi \in
\Xi(\O_{G}, {^\vee}G^{\Gamma})$$
(\cite{ABV}*{Proposition 16.6}, \cite{AvLTV}*{Section 16}).

The translation data $\mathcal{T}$ and $\mathcal{T}_{G}$ allow us to
transport properties of our pairings at regular infinitesimal
character (Proposition \ref{twistpairingfinal}) to the same properties
for pairings at singular  infinitesimal character.
\begin{prop}
  \label{finalpairing}
  Define the pairing
\begin{equation}\label{eq:finalpairing}
\langle \cdot, \cdot \rangle:  K \Pi(\O,
\mathrm{GL}_{N}(\mathbb{R}), \vartheta)
\times K X(\O, {^\vee}\mathrm{GL}_{N}^{\Gamma}, \upsigma)
\rightarrow \mathbb{Z}
\end{equation}
by
$$\langle M(\xi)^{\thicksim}, \mu(\xi')^{+} \rangle = \delta_{\xi, \xi'}.$$
Then
  $$\langle \pi(\xi)^{\thicksim}, P(\xi')^{+} \rangle = (-1)^{d(\xi)} \,
  \delta_{\xi, \xi'}$$
where $\xi, \xi' \in
\Xi(\O, {^\vee}\mathrm{GL}_{N}^{\Gamma})^\vartheta$.
\end{prop}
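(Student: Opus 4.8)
The plan is to deduce the singular case from the regular case of Corollary~\ref{twistpairingfinal} by transporting the pairing identity along the translation datum $\mathcal{T}$ from $\O$ to the regular orbit $\O'$, following the pattern of \cite{ABV}*{Chapter 16} but carrying the $\upsigma$-action along. First I would check that the pairing \eqref{eq:finalpairing} is well-defined. The set $\{\mu(\xi')^{+}\}$ is a basis of $KX(\O,\LGL,\upsigma)$ by the construction \eqref{twistsheafgroth}. The set $\{M(\xi)^{\thicksim}\}$ is a basis of $K\Pi(\O,\GL_N(\R),\vartheta)$ because, by Proposition~\ref{whittowhit}, it is the image under the translation functor $T^{\lambda}_{\lambda+\lambda_1}$ of the family $\{M(f^{*}_{\mathcal{T}}(\xi))^{\thicksim}\}$, which is a basis at the regular orbit $\O'$ (Proposition~\ref{wasign2} applies there), and $T^{\lambda}_{\lambda+\lambda_1}$ is exact and surjective with kernel spanned exactly by the irreducible (equivalently standard) Whittaker extensions whose parameters are not in the image of $f^{*}_{\mathcal{T}}$, so its restriction to the span of $\{M(f^{*}_{\mathcal{T}}(\xi))^{\thicksim}\}$ is an isomorphism onto $K\Pi(\O,\GL_N(\R),\vartheta)$.

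The first substantive input is representation-theoretic. Expanding $M(f^{*}_{\mathcal{T}}(\xi))^{\thicksim}=\sum_{\zeta}m_r^{\thicksim}(\zeta,f^{*}_{\mathcal{T}}(\xi))\,\pi(\zeta)^{\thicksim}$ at $\O'$ and applying the exact functor $T^{\lambda}_{\lambda+\lambda_1}$, Proposition~\ref{whittowhit} gives $T^{\lambda}_{\lambda+\lambda_1}(M(f^{*}_{\mathcal{T}}(\xi))^{\thicksim})=M(\xi)^{\thicksim}$ and $T^{\lambda}_{\lambda+\lambda_1}(\pi(f^{*}_{\mathcal{T}}(\xi'))^{\thicksim})=\pi(\xi')^{\thicksim}$, while the remaining $\pi(\zeta)^{\thicksim}$ with $\zeta\notin\operatorname{im}(f^{*}_{\mathcal{T}})$ are killed; hence the Whittaker transition matrix at $\O$ is the restriction along $f^{*}_{\mathcal{T}}$ of the one at $\O'$, $m_r^{\thicksim}(\xi',\xi)=m_r^{\thicksim}(f^{*}_{\mathcal{T}}(\xi'),f^{*}_{\mathcal{T}}(\xi))$. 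I also note that the normalizations $M(\xi)^{+}$, $\mu(\xi)^{+}$ used implicitly in \eqref{eq:finalpairing} and \eqref{twistsheafgroth} at singular infinitesimal character are, by the definition of Atlas extensions in this section, the translates of the corresponding objects at $\O'$, so no inconsistency arises.

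The second input is geometric, and it is the step I expect to be the main obstacle. The morphism $f_{\mathcal{T}}\colon X(\O',{^\vee}\GL_N^{\Gamma})\to X(\O,{^\vee}\GL_N^{\Gamma})$ of \eqref{ftee} is a smooth $({^\vee}\GL_N\rtimes\langle\vartheta\rangle)$-equivariant morphism with connected fibres of a fixed dimension, so it induces a translation functor on the $({^\vee}\GL_N\rtimes\langle\upsigma\rangle)$-equivariant perverse and constructible sheaves (the $\upsigma$-equivariant version of \cite{ABV}*{Chapter 16}); using that every irreducible ${^\vee}\GL_N$-equivariant sheaf is a constant sheaf and that the pullback of a constant sheaf is constant, this functor respects the canonical $\upsigma$-extensions of Lemma~\ref{cansheaf} — the $\upsigma$-action on a stalk over a $\upsigma$-fixed point being trivial before and after — so it carries $P(f^{*}_{\mathcal{T}}(\xi))^{+}$, $\mu(f^{*}_{\mathcal{T}}(\xi'))^{+}$ to $P(\xi)^{+}$, $\mu(\xi')^{+}$, and therefore identifies the twisted $\mathrm{KLV}$ polynomials, hence $c_g^{\vartheta}$, at $\O$ with the restriction along $f^{*}_{\mathcal{T}}$ of those at $\O'$. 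Finally I would combine these with the equivalent form \eqref{twist15.13a} of Corollary~\ref{twistpairingfinal}, which is a purely formal consequence of the pairing identity and of \cite{AMR1}*{Proposition B.1} and holds verbatim at any infinitesimal character: applied at $\O'$ to $f^{*}_{\mathcal{T}}(\xi),f^{*}_{\mathcal{T}}(\xi')$ it reads
\[
m_r^{\thicksim}(f^{*}_{\mathcal{T}}(\xi'),f^{*}_{\mathcal{T}}(\xi)) = (-1)^{d(f^{*}_{\mathcal{T}}(\xi))-d(f^{*}_{\mathcal{T}}(\xi'))}\,c_g^{\vartheta}(f^{*}_{\mathcal{T}}(\xi),f^{*}_{\mathcal{T}}(\xi')),
\]
and since $d(f^{*}_{\mathcal{T}}(\zeta))=d(\zeta)+c$ for the constant fibre dimension $c$, the constant cancels in the exponent, and substituting the two restriction identities yields \eqref{twist15.13a} at $\O$, which is equivalent to the asserted formula $\langle\pi(\xi)^{\thicksim},P(\xi')^{+}\rangle=(-1)^{d(\xi)}\delta_{\xi,\xi'}$. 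The delicate point throughout is the sign/degree bookkeeping inherent in the geometric translation functor — checking that the shifts built into ABV's normalizations of $P(\xi)$ and of translation reproduce exactly $(-1)^{d(\xi)}$ and not $(-1)^{d(\xi)+c}$ — together with verifying that ABV's Chapter~16 constructions extend to the $\upsigma$-equivariant setting without incident.
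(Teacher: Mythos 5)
Your proposal is correct and follows essentially the same route as the paper: the paper also proves the singular case by showing that $m_{r}^{\thicksim}$ is invariant under translation (via Proposition \ref{whittowhit}, exactness of $T_{\lambda+\lambda_{1}}^{\lambda}$, and the annihilation of parameters outside the image of $f^{*}_{\mathcal{T}}$) and that $c_{g}^{\vartheta}$ is invariant under the $\upsigma$-equivariant extension of the geometric translation functor of \cite{ABV}*{Proposition 8.8} (checked there by matching the extended local system under intermediate extension versus extension by zero, rather than your constant-sheaf/stalk argument), and then invoking the regular-case identity (\ref{twist15.13a}) with the fibre-dimension constant cancelling in the sign. The only small discrepancy is that the paper's sheaf functor goes from $\O$ to $\O'$ (pullback along $f_{\mathcal{T}}$), not the reverse as you phrased it, but this does not affect the matrix identities you need.
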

\begin{proof}
We first sketch the proof for the ordinary pairing of Theorem
\ref{ordpairing} in \cite{ABV}, which does not involve twisting by
$\vartheta$.  This will allow us to point out the portions of the
proof that must be modified in the twisted setting.
In the ordinary case there are no Whittaker or Atlas extensions, and
the identity to be proven is (\ref{pairingmc})
$$ m_r(\xi_{1}, \xi_{2}) = (-1)^{d(\xi_{1}) - d(\xi_{2})} \,
c_{g}(\xi_{2}, \xi_{1}), \quad  \xi_{1}, \xi_{2} \in \Xi(\O,
{^\vee}G^{\Gamma})$$
for the possibly singular
orbit $\O$.   The idea of the proof is to show that both
sides of (\ref{pairingmc}) are invariant under translation.  Starting with
the right-hand side of (\ref{pairingmc}), we use
\cite{ABV}*{Proposition 8.8 (b)}, which provides an exact functor
from $\mathcal{P}(X(\O,{^\vee}G^{\Gamma}))$ to
$\mathcal{P}(X(\O',{^\vee}G^{\Gamma}))$  satisfying
$$P(\xi) \mapsto P(f^{*}_{\mathcal{T}_{G}}(\xi))$$
and
\begin{equation}
\label{geomatrix}
c_{g}(f^{*}_{\mathcal{T}_{G}}(\xi_{1}), f^{*}_{\mathcal{T}_{G}}(\xi_{2})) =
c_{g}(\xi_{1}, \xi_{2}), \quad \xi_{1}, \xi_{2} \in \Xi(\O,
{^\vee}G^{\Gamma}).
\end{equation}
The invariance of the left-hand side of (\ref{pairingmc})
$$m_r(f^{*}_{\mathcal{T}_{G}}(\xi_{1}), f^{*}_{\mathcal{T}_{G}}(\xi_{2})) =
m_r(\xi_{1}, \xi_{2}),  \quad \xi_{1}, \xi_{2} \in \Xi(\O,
{^\vee}G^{\Gamma}),$$
is given by  \cite{ABV}*{Proposition 16.6 and (16.5)(d)}, which
rely on the translation functor (\cite{ABV}*{(16.3)}).
All that is now needed
to prove (\ref{pairingmc}) for the possibly singular orbit $\O$
is to line up the equations
\begin{equation}\label{lineup}
\begin{aligned}
 m_r(\xi_{1}, \xi_{2}) &=
m_r(f^{*}_{\mathcal{T}_{G}}(\xi_{1}), f^{*}_{\mathcal{T}_{G}}(\xi_{2}))\\
& = (-1)^{d(f^{*}_{\mathcal{T}_{G}}(\xi_{1})) -
  d(f^{*}_{\mathcal{T}_{G}}(\xi_{2}))} \,
c_{g}(f^{*}_{\mathcal{T}_{G}}(\xi_{2}), f^{*}_{\mathcal{T}_{G}}(\xi_{1}))\\
& = (-1)^{(d(\xi_{1}) -d) - (d(\xi_{2})-d )} \, c_{g}(\xi_{2}, \xi_{1})\\
& = (-1)^{d(\xi_{1}) - d(\xi_{2})} \, c_{g}(\xi_{2},
\xi_{1}).
\end{aligned}
\end{equation}
In the third equation, we have used  \cite{ABV}*{ (7.16)(b)} and the
dimension $d$ of the connected fibres of $f^{*}_{\mathcal{T}_{G}}$ to describe
the orbit dimensions.

Let us repeat the preceding proof in the twisted setting.
The desired analogue of (\ref{geomatrix}) is
\begin{equation}
\label{geomatrix1}
c_{g}(f^{*}_{\mathcal{T}}(\xi_{1})_{\pm},
f^{*}_{\mathcal{T}}(\xi_{2})_{\pm}) = c_{g}(\xi_{1\pm}, \xi_{2\pm}),
\quad \xi_{1}, \xi_{2} \in \Xi(\O,
      {^\vee}\mathrm{GL}_{N}^{\Gamma})^\vartheta
\end{equation}
(see (\ref{extsheafmult})). As before,  \cite{ABV}*{Proposition 8.8 (b)} ensures this as long as
\begin{equation}
\label{plustoplus}
  P(\xi)^{+} \mapsto P(f^{*}_{\mathcal{T}}(\xi))^{+}, \quad \xi \in
  \Xi(\O, {^\vee}\mathrm{GL}_{N}^{\Gamma})^\vartheta
  \end{equation}
(\cite{ABV}*{Proposition 7.15 (b)}).
This may be seen as follows.  The complete geometric parameter
$f^{*}_{\mathcal{T}} (\xi)$ determines a $({^\vee}\mathrm{GL}_{N}
\rtimes \langle \vartheta \rangle)$-equivariant local system, and the
perverse sheaf $P(\xi)^{+}$ is mapped to the
intermediate extension  of this local system (to its
${^\vee}\mathrm{GL}_{N}$-orbit closure) (\cite{ABV}*{(7.10)(d)},
\cite{bbd}*{\emph{p.} 110}).
Let us call the resulting perverse sheaf $P^{+}$.   We would like
$P^{+} = P(f^{*}_{\mathcal{T}}(\xi))^{+}$ and this holds when the
$({^\vee}\mathrm{GL}_{N} \rtimes \langle \vartheta
\rangle)$-equivariant irreducible constructible sheaf
$\mu(f^{*}_{\mathcal{T}}(\xi))^{+}$ occurs in the decomposition of
$P^{+}$ (\emph{cf.} (\ref{extsheafmult})).  The latter property is
true, for $P^{+}$ and $\mu(f^{*}_{\mathcal{T}}(\xi))^{+}$ are obtained
from the same $({^\vee}\mathrm{GL}_{N} \rtimes \langle \vartheta
\rangle)$-equivariant local system by intermediate extension and
extension by zero respectively (\emph{cf}.  \cite{ABV}*{(7.11)(b)}).
 This justifies (\ref{plustoplus}) and therefore also
 (\ref{geomatrix1}). By definition (\ref{twistgmult}) and (\ref{geomatrix1})
\begin{equation}
\label{geomatrix2}
c_{g}^{\vartheta}(f^{*}_{\mathcal{T}}(\xi_{1}),
f^{*}_{\mathcal{T}}(\xi_{2})) = c_{g}^{\vartheta}(\xi_{1}, \xi_{2}),
\quad \xi_{1}, \xi_{2} \in \Xi(\O,
      {^\vee}\mathrm{GL}_{N}^{\Gamma})^\vartheta.
\end{equation}
Moving to the representation-theoretic multiplicities, we appeal to
the
translation functor $T_{\lambda + \lambda_{1}}^{\lambda}$  for
$\mathrm{GL}_{N} \rtimes \langle \vartheta \rangle$.
In $K\Pi(\O', \mathrm{GL}_{N}(\mathbb{R}), \vartheta)$ we have
\begin{align}
  \label{twistmult4}
M(f^{*}_{\mathcal{T}}(\xi_{2}))^{\thicksim} = \sum_{\xi_{1} \in \Xi(\O,
{^\vee}\mathrm{GL}_{N}^{\Gamma})^\vartheta}
m^{\thicksim}_{r}(f^{*}_{\mathcal{T}}(\xi_{1}),f^{*}_{\mathcal{T}}
(\xi_{2}))& \, \pi(f^{*}_{\mathcal{T}}(\xi_{1}) )^{\thicksim}\\
\nonumber&+\  \sum_{\xi'}
m^{\thicksim}_{r}(\xi',f^{*}_{\mathcal{T}}
(\xi_{2})) \, \pi(\xi')^{\thicksim}
\end{align}
where $\xi'$  are those parameters in $\Xi(\O',
{^\vee}\mathrm{GL}_{N}^{\Gamma})^\vartheta$ which do not lie in
the image of (\ref{fstar}).
Applying $T_{\lambda + \lambda_{1}}^{\lambda}$ to (\ref{twistmult4})
has the effect of annihilating the second sum on the right
 (\cite{AvLTV}*{Corollary 17.9.4 and 17.9.8}).  By Proposition \ref{whittowhit}, the remaining
terms are
$$M(\xi_{2})^{\thicksim} = \sum_{\xi_{1} \in \Xi(\O,
{^\vee}\mathrm{GL}_{N}^{\Gamma})^\vartheta}
m^{\thicksim}_{r}(f^{*}_{\mathcal{T}}(\xi_{1}),f^{*}_{\mathcal{T}}
(\xi_{2})) \, \pi(\xi_{1})^{\thicksim}$$
and this equation implies
\begin{equation}
\label{repmatrix1}
m_r^{\thicksim}(f^{*}_{\mathcal{T}}(\xi_{1}),
f^{*}_{\mathcal{T}}(\xi_{2})) = m_r^{\thicksim}(\xi_{1}, \xi_{2}),
\quad \xi_{1}, \xi_{2} \in \Xi(\O,
      {^\vee}\mathrm{GL}_{N}^{\Gamma})^\vartheta.
\end{equation}
Using equations (\ref{geomatrix2}) and (\ref{repmatrix1}), and
replacing $m_r$ and $c_{g}$  with $m_r^{\thicksim}$ and
$c_{g}^{\vartheta}$ respectively in (\ref{lineup}), we deduce that
(\ref{twist15.13a}) holds for the possibly singular orbit $\O$.
\end{proof}
Proposition \ref{finalpairing} is the final version of the twisted
pairing, and we use it to extend the definition of endoscopic lifting
$\mathrm{Lift}_{0}$
to include singular infinitesimal characters ((\ref{twistloweps}),
(\ref{twistendlift})).  In fact, all of the remaining results used in Section
\ref{equalapacketreg} easily carry over to the more general setting,
except for the injectivity of $\mathrm{Lift}_{0}$ (Proposition
\ref{injlift2}).
In particular, using the pairing
(\ref{eq:finalpairing})
in the proof of
Proposition    \ref{twistimlift}, we see that
for any  ${^\vee}G$-orbit $S_G \subset
  X(\O_{G},\LG)$ we still have
$$\mathrm{Lift}_{0} \left(\eta^{\mathrm{loc}}_{S_{G}}(\upsigma)(\delta_{q})\right)
    = M(\epsilon(S_{G}), 1)^{\thicksim}.$$
It is explained in  \cite{Arthur}*{\emph{p.} 31} that $\mathrm{Lift}_{0}$
is injective when $G$ is not isomorphic to $\mathrm{SO}_{N}$ for even
$N$.  However, when $G \cong \mathrm{SO}_{N}$ for even $N$, the
endoscopic lifting map is only injective on $\mathrm{O}_{N}$-orbits
\cite{Arthur}*{\emph{pp.} 12, 31}).   That is to say,
$$\mathrm{Lift_{0}}\left( \eta^{\mathrm{loc}}_{S_{1}}(\delta_{q})\right)= \mathrm{Lift}_{0}\left(
\eta_{S_{2}}^{\mathrm{loc}}(\delta_{q})\right)$$
for ${^\vee}G$-orbits of complete geometric parameters if and only if
$S_{2} = \tilde{w} \cdot S_{1}$  for $\tilde{w}$ as in
(\ref{tildew}).  One might hope to retain injectivity by restricting
the infinitesimal character of $S_{1}$ to lie in $\O_{G}$,
but this too fails as it is not difficult to construct singular
examples in which $\O_{G} = \tilde{w}\cdot \O_{G}$.

Recall decomposition (\ref{piwhittdecomp})
$$\pi(S_{\psi}, 1)^{\thicksim} = \sum_{(S, 1)  \in
\Xi(\O, {^\vee} \mathrm{GL}_{N}^{\Gamma})^\vartheta} n_{S} \,
M(S, 1)^{\thicksim}.$$
As in the previous section, for each
${^\vee}\mathrm{GL}_{N}$-orbit $S$ with $n_{S} \neq 0$ there
exists a ${^\vee}G$-orbit $S_{G} \subset
X(\O_{G},{^\vee}G^{\Gamma})$  such that $\epsilon(S_{G}) = S$.
The difference now is that when $G$ is an even
special orthogonal group the orbit $S_{G}$ may not be uniquely
determined in $X(\O_{G}, {^\vee}G^{\Gamma})$.  The lack of
uniqueness forces us to weaken  Theorem \ref{finalthm} in this context
in that the ABV-packets in part (b) below are no longer necessarily disjoint.
  \begin{thm}
    \label{finalthm1}
      \begin{enumerate}[label={(\alph*)}]
\item  If $G$ is not isomorphic to $\mathrm{SO}_{N}$ for even $N$ then
  $$\etaA_{\psi_{G}} = \eta_{\psi_{G}}^{\mathrm{mic}}(\delta_{q}) =
  \etaABV_{\psi_{G}}   \quad\mbox{and}\quad
  \Pi_{\psi_{G}}^{\mathrm{Ar}} = \Pi_{\psi_{G}}^{\mathrm{ABV}}.$$
\item   If $N$ is even and $G \cong \mathrm{SO}_{N}$ then
  \begin{align*}
  \eta_{\psi_{G}} &= \frac{1}{2} \left(\eta_{\psi_{G}}^{\mathrm{mic}}(\delta_{q}) +
  \eta^{\mathrm{mic}}_{\mathrm{Int}(\tilde{w}) \circ \psi_{G}}(\delta_{q})
  \right)  = \frac{1}{2} \left(\etaABV_{\psi_{G}} +
  \etaABV_{\mathrm{Int}(\tilde{w}) \circ \psi_{G}}
  \right)
  \end{align*}
   and
   \begin{align*}\Pi_{\psi_{G}}^{\mathrm{Ar}} &= \Pi_{\psi_{G}}^{\mathrm{ABV}} \cup \,
 \Pi_{\mathrm{Int}(\tilde{w})   \circ   \psi_{G}}^{\mathrm{ABV}}.
  \end{align*}
\end{enumerate}

  \end{thm}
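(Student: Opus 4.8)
The plan is to rerun the proof of Theorem \ref{finalthm} essentially unchanged, substituting for each ingredient its singular-infinitesimal-character upgrade from the present section. First I would record the list of substitutions: the twisted pairing of Theorem \ref{twistpairing} (equivalently Corollary \ref{twistpairingfinal}) is replaced by Proposition \ref{finalpairing}; the compatibility of Atlas and Whittaker extensions under $T^{\lambda}_{\lambda+\lambda_{1}}$ is furnished by Proposition \ref{whittowhit}, which upgrades Proposition \ref{wasign2} to the singular setting; and, as observed in the paragraph preceding this theorem, the singular versions of Proposition \ref{twistimlift}, Corollary \ref{cortrans}, Corollary \ref{etaplus2}, and Lemma \ref{2.2.2} all remain valid once the pairing \eqref{eq:finalpairing} is used in their proofs. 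The one result of Section \ref{equalapacketreg} that does not persist verbatim is the injectivity of $\mathrm{Lift}_{0}$ (Proposition \ref{injlift2}); this is precisely where the two cases of the theorem diverge.

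For part (a), with $G$ not isomorphic to an even special orthogonal group, $\mathrm{Lift}_{0}$ remains injective (\cite{Arthur}*{\emph{p.} 31}). Setting $\xi=(S_{\psi},1)$, the chain of identities from the proof of Theorem \ref{finalthm} goes through:
\begin{align*}
\mathrm{Lift}_{0}\!\left(\eta^{\mathrm{mic}}_{\psi_{G}}(\delta_{q})\right)
&= \mathrm{Lift}_{0}\!\left(\eta^{\mathrm{mic}}_{\psi_{G}}(\upsigma)(\delta_{q})\right)
&& \text{(by \eqref{nosigma})}\\
&= (-1)^{l^{I}(\xi)-l^{I}_{\vartheta}(\xi)}\,\pi(\xi)^{+}
&& \text{(Corollary \ref{etaplus2})}\\
&= \pi(\xi)^{\thicksim} \;=\; \pi(S_{\psi},1)^{\thicksim}
&& \text{(Proposition \ref{wasign2})}\\
&= \Trans_{G}^{\GL_N\rtimes\vartheta}\Big(\textstyle\sum_{S_{G}} n_{\epsilon(S_{G})}\,\eta^{\mathrm{loc}}_{S_{G}}(\delta_{q})\Big)
&& \text{(Lemma \ref{2.2.2})}\\
&= \mathrm{Lift}_{0}\!\left(\etaA_{\psi_{G}}\right)
&& \text{(Corollary \ref{cortrans}(b))},
\end{align*}
and injectivity then yields $\etaABV_{\psi_{G}} = \eta^{\mathrm{mic}}_{\psi_{G}}(\delta_{q}) = \etaA_{\psi_{G}}$ by \eqref{etapsiabv}; the equality $\Pi_{\psi_{G}}^{\mathrm{Ar}} = \Pi_{\psi_{G}}^{\mathrm{ABV}}$ is immediate from equality of supports.

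For part (b), with $N$ even and $G\cong\mathrm{SO}_{N}$, the same chain, now with the $\mathrm{O}_{N}$-averaged character $\etaA_{\psi_{G}} = \tfrac12\sum_{S_{G}} n_{\epsilon(S_{G})}\big(\eta^{\mathrm{loc}}_{S_{G}}(\delta_{q}) + \eta^{\mathrm{loc}}_{\tilde{w}\cdot S_{G}}(\delta_{q})\big)$, gives $\mathrm{Lift}_{0}(\eta^{\mathrm{mic}}_{\psi_{G}}(\delta_{q})) = \mathrm{Lift}_{0}(\etaA_{\psi_{G}})$. Here $\mathrm{Lift}_{0}$ is injective only on $\mathrm{O}_{N}$-orbits (\cite{Arthur}*{\emph{pp.} 12, 31}), but both $\etaA_{\psi_{G}}$ and $\tfrac12\big(\eta^{\mathrm{mic}}_{\psi_{G}}(\delta_{q}) + \eta^{\mathrm{mic}}_{\mathrm{Int}(\tilde{w})\circ\psi_{G}}(\delta_{q})\big)$ are $\mathrm{O}_{N}$-invariant — the latter because $\mathrm{Int}(\tilde{w})$ interchanges the two summands while $\tilde{w}^{2}\in\mathrm{SO}_{N}$ — so injectivity on $\mathrm{O}_{N}$-orbits suffices for the identity of stable characters, and hence for the packet identity by passing to supports. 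The main obstacle, and the reason part (b) is a genuine weakening of Theorem \ref{finalthm}(b), is that the disjointness argument of the regular case fails: there one had $\O_{G}\cap(\tilde{w}\cdot\O_{G})=\emptyset$ — otherwise some $h'\in{}^{\vee}H^{\vartheta}\subset{}^{\vee}\mathrm{SO}_{N}$ would represent $\tilde{w}$, contradicting $\tilde{w}\notin\mathrm{SO}_{N}$ — whereas for singular $\O_{G}$ one can have $\O_{G}=\tilde{w}\cdot\O_{G}$, so the two ABV-packets may overlap and the union in part (b) need not be disjoint. Beyond this I expect only routine bookkeeping, since every remaining step has already been lifted to the singular setting by the propositions earlier in this section.
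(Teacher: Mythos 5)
Your proposal is correct and follows essentially the same route as the paper: part (a) reruns the chain of identities from Theorem \ref{finalthm} using the singular upgrades (Propositions \ref{whittowhit} and \ref{finalpairing} and the carried-over versions of Corollary \ref{etaplus2}, Proposition \ref{wasign2}, Lemma \ref{2.2.2}, Corollary \ref{cortrans}) together with full injectivity of $\mathrm{Lift}_{0}$, and part (b) replaces injectivity by injectivity on $\mathrm{O}_{N}$-orbits, with the loss of disjointness explained exactly as in the paper by the possibility $\O_{G}=\tilde{w}\cdot\O_{G}$. The only cosmetic difference is that the paper applies $\mathrm{Lift}_{0}$ directly to the symmetrized sum $\eta^{\mathrm{mic}}_{\psi_{G}}(\delta_{q})+\eta^{\mathrm{mic}}_{\mathrm{Int}(\tilde{w})\circ\psi_{G}}(\delta_{q})$ (obtaining $2\,\pi(S_{\psi},1)^{\thicksim}$), whereas you lift $\eta^{\mathrm{mic}}_{\psi_{G}}(\delta_{q})$ alone and then invoke $\mathrm{O}_{N}$-invariance, which amounts to the same computation.
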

  \begin{proof}
The proof of the first assertion is completely the same as the proof
of Theorem \ref{finalthm}(a), since the injectivity of
$\mathrm{Lift}_{0}$ holds.  Suppose therefore that $N$ is even and $G
\cong \mathrm{SO}_{N}$.
As in the proof  of
Theorem \ref{finalthm} we have
\begin{align*}
\mathrm{Lift}_{0} \left( \eta^{\mathrm{mic}}_{\psi_{G}}(\delta_{q}) +
\eta^{\mathrm{mic}}_{\mathrm{Int}(\tilde{w}) \circ \psi_{G}}(\delta_{q}) \right)
& = 2 \pi(S_{\psi}, 1)^{\thicksim}\\
& = \sum_{j} 2n_{S} \, M(S, 1)^{\thicksim}\\
& =  \mathrm{Lift}_{0}  \left( \sum_{S_{G}} n_{\epsilon(S_{G})} ( \eta^{\mathrm{loc}}_{S_{G}}(\delta_{q})+
\eta^{\mathrm{loc}}_{\tilde{w} \cdot S_{G}}(\delta_{q})) \right).
\end{align*}
Since $\mathrm{Lift}_{0}$ is injective on $\mathrm{O}_{N}$-orbits of
stable virtual characters, the second assertion follows.
\end{proof}

\section{The comparison of Problems B-E}
\label{btoe}

Theorem \ref{finalthm1} is a comparison of the
solutions to \hyperlink{proba}{Problem A} of Arthur and
Adams-Barbasch-Vogan.  Let us compare the remaining problems of the
introduction.

\hyperlink{probe}{Problem E}, concerning the unitarity of the
representations in the Arthur packets,  stands apart from Problems B-D.  It is
also easy to dispense with.  Arthur proves that $\Pi_{\psi_{G}}$
consists of unitary representations (\cite{Arthur}*{Theorem 2.2.1 (b)}),
and so by Theorem \ref{finalthm1}, every packet $\Pi_{\psi_{G}}^{\mathrm{ABV}}$
also consists of unitary representations.  The unitarity of the subset
of special unipotent representations is proven by different methods in
\cite{BMSZ}.

For problems B-D, we review Arthur's approach first.  The stable
virtual character $\eta_{\psi_{G}}^{\mathrm{Ar}}$ is  written
\begin{equation}
\label{7.1.2}
\eta_{\psi_{G}}^{\mathrm{Ar}} = \sum_{\sigma \in \tilde{\Sigma}_{\psi_{G}}} < s_{\psi_{G}}, \sigma > \sigma
\end{equation}
as in
\cite{Arthur}*{(7.1.2)}.  Here, $\tilde{\Sigma}_{\psi_{G}}$ is a
finite set of non-negative integral linear combinations
$$\sigma = \sum_{\pi \in  \Pi_{\mathrm{unit}}(G(\mathbb{R}))} m(\sigma, \pi) \, \pi$$
of irreducible unitary characters of $G(\mathbb{R}) = G(\mathbb{R}, \delta_q)$.  Furthermore, there is an injective map from
$\tilde{\Sigma}_{\psi_{G}}$ into the set of those
quasicharacters of $$A_{\psi_{G}}={^\vee}G_{\psi_{G}}/
({^\vee}G_{\psi_{G}})^{0}$$ which are trivial on the centre of
${^\vee}G$.  The injection is denoted by
$$\sigma \mapsto < \cdot, \sigma>.$$
The element $s_{\psi_{G}}$ is the image of
$$\small\psi_{G} \left(1, \begin{bmatrix} -1 & 0\\ 0 &
  -1 \end{bmatrix} \right)$$\normalsize
in $A_{\psi_{G}}$.  The element $s_{\psi_{G}}$ is clearly of order
two.   It is easy to rewrite (\ref{7.1.2})
as

\begin{equation}
\label{7.4.1}
\eta_{\psi_{G}}^{\mathrm{Ar}} = \sum_{\pi \in \Pi_{\psi_{G}}^{\mathrm{Ar}}} \left( \sum_{\sigma
  \in \tilde{\Sigma}_{\psi_{G}}}  m(\sigma, \pi) \, <s_{\psi_{G}},
\sigma> \right) \pi
\end{equation}
(\emph{cf.} \cite{Arthur}*{Proposition 7.4.3 and (7.4.1)}).
By defining a finite-dimensional representation
\begin{equation}
\label{artprobb}
\tau_{\psi_{G}} (\pi) = \bigoplus_{\sigma \in
  \tilde{\Sigma}_{\psi_{G}}} m(\sigma, \pi) \, <\cdot, \sigma>,
  \end{equation}
equation \eqref{7.4.1} becomes
\begin{equation}
\label{etaPsisum}
\eta_{\psi_{G}}^{\mathrm{Ar}} = \sum_{\pi \in \Pi_{\psi_{G}}^{\mathrm{Ar}}} \mathrm{Tr}
\left(\tau_{\psi_{G}}(\pi)(s_{\psi_{G}})\right) \pi.
\end{equation}
The finite-dimensional representations defined in (\ref{artprobb}) provide a solution to \hyperlink{probb}{Problem B}.  Equation (\ref{etaPsisum}) is close to a  complete resolution of
\hyperlink{probc}{Problem C}. We also need to show that the
quasicharacters $<\cdot, \sigma>$ occurring in a given $\tau_{\psi_{G}}(\pi)$
have the same value $\varepsilon_{\pi}$ at $s_{\psi_{G}}$.  In this way
the trace $\mathrm{Tr} \left(\tau_{\psi_{G}}(\pi)(s_{\psi_{G}})\right)
$ would reduce to $\varepsilon_{\pi} \cdot \dim( \tau_{\psi_{G}}(\pi))$ as
expected.  We  return to this point when we compare with
$\eta^{\mathrm{ABV}}_{\psi_{G}} = \eta_{S_{\psi_{G}}}^{\mathrm{mic}}(\delta_{q})$ (\ref{etapsiabv}) below.

\hyperlink{probd}{Problem D} concerns endoscopic lifting from an
endoscopic group $G'$ of $G$.  The endoscopic group $G'$ is defined to
be a quasisplit form of a complex reductive group whose dual ${^\vee}G'$ is the identity component of the
centralizer in ${^\vee}G$ of a semisimple element $s \in {^\vee}G$
(\emph{cf.} Section \ref{endosec}  and \cite{Arthur}*{Theorem 2.2.1(b)}).
Furthermore, the element $s$ is taken to centralize the image of $\psi_{G}$, and there is a natural
embedding $\epsilon': {^\vee}(G')^{\Gamma} \hookrightarrow
{^\vee}G^{\Gamma}$.  Arthur's solution to \hyperlink{probd}{Problem D}
tells us that if
$$\psi_{G} = \epsilon' \circ \psi_{G'}$$
for an A-parameter $\psi_{G'}$ then there exists a stable virtual
character $\eta_{\psi_{G'}}$ on $G'(\mathbb{R})$ such that
\begin{equation}
  \label{7.4.1b}
\mathrm{Trans}_{G'}^{G}(\eta_{\psi_{G'}}) = \sum_{\pi \in \Pi_{\psi_{G}}^{\mathrm{Ar}}} \mathrm{Tr}
\left(\tau_{\psi_{G}}(\pi)(s_{\psi_{G}} \bar{s})\right) \pi
\end{equation}
(\cite{Arthur}*{Theorem 2.2.1}).  Here, $\bar{s} \in A_{\psi_{G}}$ is
the coset of $s$, and $\mathrm{Trans}_{G'}^{G}$ denotes the standard
endoscopic lifting of Shelstad
(\cite{shelstad_endoscopy}).
Observe that
(\ref{etaPsisum}) is obtained from (\ref{7.4.1b}) by taking $s = 1$.

Now let us look at Problems B-D from the perspective of \cite{ABV}.  Each $\pi \in \Pi_{\psi_{G}}^{\mathrm{ABV}}$ is of the form $\pi(\xi)$ for a unique complete geometric parameter  $\xi = (S_{\xi}, \tau_{\xi})$.   Using this, we set
$$\tau_{\psi_{G}}^{\mathrm{ABV}}(\pi) = \tau_{S_{\psi_{G}}}^{\mathrm{mic}}(P(\xi))$$
as in (\ref{miclocalsys}).  This is a solution to \hyperlink{probb}{Problem B}.  The solution to \hyperlink{probc}{Problem C} is then given by (\ref{etapsi1}), which we may write as
\begin{equation*}
\label{equationc}
\eta_{\psi_{G}}^{\mathrm{ABV}} = \sum_{\pi \in \Pi_{\psi_{G}}^{\mathrm{ABV}}}
(-1)^{d(\pi) - d(S_{\psi_{G}})}
\ \dim\left(\tau^{\mathrm{ABV}}_{\psi_{G}}(\pi) \right) \, \pi,
\end{equation*}
where $d(\pi) = d(S_{\xi})$  for $\pi = \pi(\xi)$ as above.  The solution to \hyperlink{probd}{Problem D}
is given by \cite{ABV}*{Theorem 26.25}.  Translated into the setting of (\ref{7.4.1b}), it reads as
\begin{equation}
\label{equationd}
\mathrm{Lift}_{0}^{G}\left(\eta_{\psi_{G'}}^{\mathrm{ABV}}\right) = \sum_{\pi \in \Pi_{\psi_{G}}^{\mathrm{ABV}}}
(-1)^{d(\pi) - d(S_{\psi_{G}})}
\ \mathrm{Tr} \left(\tau^{\mathrm{ABV}}_{\psi_{G}}(\pi)(\bar{s}) \right) \, \pi
\end{equation}
\cite{ABV}*{Definition 24.15 and (26.17)(f)}.  Here, $\mathrm{Lift}_{0}^{G}$ is the standard endoscopic lifting map of  \cite{ABV}*{Definition 26.18}, which is defined on the stable virtual characters of  $G'(\mathbb{R})$ and take values in the virtual characters of $G(\mathbb{R})$.

We wish to compare Arthur's solutions to Problems B-D with those of \cite{ABV}. This amounts to comparing  (\ref{7.4.1b}) with (\ref{equationd}).
For this comparison, we shall, for the sake of simplicity, assume that
$$G  = \mathrm{SO}_{N}, \quad N \mbox{ odd}$$
from now on.
 This assumption avoids the irksome complications arising from even special orthogonal groups in Theorem \ref{finalthm1} (b).  Under our assumption Theorem  \ref{finalthm1} tells us that the solutions of Arthur and Adams-Barbasch-Vogan to \hyperlink{proba}{Problem A} are identical.

In comparing (\ref{7.4.1b}) with (\ref{equationd}), we must choose our endoscopic groups judiciously.  Recall that $A_{\psi_{G}}$  is the component group of
the centralizer  in ${^\vee}G$ of the image of $\psi_{G}$.  The explicit description of this centralizer in \cite{Arthur}*{(1.4.8)} makes it clear that every element $\bar{s} \in A_{\psi_{G}}$ has a diagonal representative $\dot{s}$ in the centralizer with eigenvalues $\pm 1$.  The endoscopic group $G'(\dot{s})$ determined by $\dot{s}$ is a direct product $G_{1}'(\dot{s}) \times G_{2}'(\dot{s})$ in which each of the  two factors is  a special orthogonal group of odd rank  (\cite{Arthur}*{\emph{pp.} 13-14}).  The A-parameter $\psi_{G'(\dot{s})}$ decomposes accordingly as a product $\psi_{G'_{1}(\dot{s})} \times \psi_{G'_{2}(\dot{s})}$ of A-parameters (\cite{Arthur}*{pp. 31, 36}).  Similarly, Arthur's stable virtual character $\eta_{\psi_{G'(\dot{s})}}$ is defined as the tensor product $\eta_{\psi_{G'_{1}(\dot{s})}}^{\mathrm{Ar}} \otimes \eta_{\psi_{G'_{2}(\dot{s})}}^{\mathrm{Ar}}$ (\cite{Arthur}*{Remark 2 of Theorem 2.2.1}).   Hence, a particular instance of (\ref{7.4.1b}) reads as
\begin{equation}
  \label{7.4.1c}
\mathrm{Trans}_{G'}^{G}(\eta_{\psi_{G'_{1}(\dot{s})}}^{\mathrm{Ar}} \otimes \eta^{\mathrm{Ar}}_{\psi_{G'_{2}(\dot{s})}}) = \sum_{\pi \in \Pi_{\psi_{G}}} \mathrm{Tr}
\left(\tau_{\psi_{G}}(\pi)(s_{\psi_{G}} \bar{s})\right) \pi.
\end{equation}
We now turn to rewriting the left-hand side of (\ref{7.4.1c}) so as to match it with the left-hand side of (\ref{equationd}).
First, it is noted on  \cite{ABV}*{\emph{p.} 289} that $\mathrm{Trans}_{G'}^{G}$ differs from $\mathrm{Lift}_{0}^{G}$ by no more than a scalar multiple.  To see that this scalar is one, it suffices to compare values on pseudopackets (\cite{ABV}*{Lemma 18.11}).  By applying parabolic induction, one is reduced to comparing values on tempered L-packets (\cite{ABV}*{Proposition 26.7}, \cite{ShelstadIII}*{Corollary 11.7}).  In this comparison the summands corresponding to the unique generic representation in a tempered L-packet are seen to be equal (and trivial) in both the expansions of $\mathrm{Trans}_{G'}^{G}$ (\cite{ShelstadIII}*{Theorem 11.5}) and $\mathrm{Lift}_{0}^{G}$ (\cite{ABV}*{Proposition 13.12, Corollary 10.7, Corollary 9.12}).  Hence, the scalar is one and $\mathrm{Trans}_{G'}^{G} = \mathrm{Lift}_{0}^{G}$.

Second, using the arguments in the proof of Corollary \ref{cor:LeviABVpacket}, we see that
$$\eta_{\psi_{G'(\dot{s})}}^{\mathrm{ABV}} = \eta_{\psi_{G'_{1}(\dot{s})}}^{\mathrm{ABV}} \otimes \eta_{\psi_{G'_{2}(\dot{s})}}^{\mathrm{ABV}}.$$
Third, since $G'_{1}(\dot{s})$ and $G_{2}'(\dot{s})$ are both odd rank special orthogonal groups, Theorem \ref{finalthm1} (a) tells us that
$$\eta_{\psi_{G'_{j}(\dot{s})}}^{\mathrm{Ar}} = \eta_{\psi_{G'_{j}(\dot{s})}}^{\mathrm{ABV}}, \quad j = 1,2.$$
Taking these three observations together we conclude
\begin{align*}
\mathrm{Trans}_{G'}^{G}\left(\eta_{\psi_{G'(\dot{s})}}^{\mathrm{Ar}} \right)&= \mathrm{Lift}_{0}^{G}\left(\eta_{\psi_{G_{1}'(\dot{s})}}^{\mathrm{Ar}}
\otimes \eta_{\psi_{G_{2}'(\dot{s})}}^{\mathrm{Ar}} \right)\\
& = \mathrm{Lift}_{0}^{G} \left(\eta_{\psi_{G'_{1}(\dot{s})}}^{\mathrm{ABV}} \otimes
\eta_{\psi_{G'_{2}(\dot{s})}}^{\mathrm{ABV}}\right)\\
& = \mathrm{Lift}_{0}^{G}\left(\eta_{\psi_{G'(\dot{s})}}^{\mathrm{ABV}}\right).
\end{align*}
It is now immediate from (\ref{7.4.1b}) and (\ref{equationd}) that
$$ \sum_{\pi \in \Pi_{\psi_{G}}} \mathrm{Tr}
\left(\tau_{\psi_{G}}(\pi)(s_{\psi_{G}} \bar{s})\right) \pi
= \sum_{\pi \in \Pi_{\psi_{G}}}
(-1)^{d(\pi) - d(S_{\psi_{G}})}
\ \mathrm{Tr} \left(\tau^{\mathrm{ABV}}_{\psi_{G}}(\pi)(\bar{s}) \right) \, \pi$$
for any $\bar{s} \in A_{\psi}$.
By the linear independence of characters on $G(\mathbb{R})$
$$\mathrm{Tr}
\left(\tau_{\psi_{G}}(\pi)(s_{\psi_{G}} \bar{s})\right)
= (-1)^{d(\pi) - d(S_{\psi_{G}})}
\ \mathrm{Tr} \left(\tau^{\mathrm{ABV}}_{\psi_{G}}(\pi)(\bar{s}) \right)$$
for any $\bar{s} \in A_{\psi}$.  This may be regarded as an equality between virtual (quasi)characters on $A_{\psi}$ (\emph{cf.} (\ref{artprobb})). By appealing to the linear independence of these (quasi)characters we conclude that
$$\tau_{\psi_{G}}(\pi)(s_{\psi_{G}}) = (-1)^{d(\pi) - d(S_{\psi_{G}})}$$
and
$$\tau_{\psi_{G}}(\pi) = \tau^{\mathrm{ABV}}_{\psi_{G}}(\pi).$$
The former equation gives a complete solution to Arthur's approach to \hyperlink{probc}{Problem C}.

This completes our solution of Problems B-D for odd rank special
orthogonal groups.  A similar argument holds for symplectic and even orthogonal groups,
keeping in mind
the element $\tilde{w}$ of Theorem \ref{finalthm1} (b) when comparing virtual
characters on $G(\mathbb{R})$ or $G'(\mathbb{R})$.  We leave the details to the interested reader.
\printnomenclature


\begin{bibdiv}
\begin{biblist}

\bib{snowbird}{incollection}{
      author={Adams, Jeffrey},
       title={Guide to the {A}tlas software: computational representation
  theory of real reductive groups},
        date={2008},
   booktitle={Representation theory of real reductive {L}ie groups},
      series={Contemp. Math.},
      volume={472},
   publisher={Amer. Math. Soc., Providence, RI},
       pages={1\ndash 37},
         url={http://dx.doi.org/10.1090/conm/472/09235},
      review={\MR{2454331}},
}

\bib{adamsnotes}{misc}{
      author={Adams, Jeffrey},
       title={Computing twisted klv polynomials},
        date={2017},
}

\bib{Arthur84}{incollection}{
      author={Arthur, James},
       title={On some problems suggested by the trace formula},
        date={1984},
   booktitle={Lie group representations, {II} ({C}ollege {P}ark, {M}d.,
  1982/1983)},
      series={Lecture Notes in Math.},
      volume={1041},
   publisher={Springer, Berlin},
       pages={1\ndash 49},
}

\bib{Arthur89}{article}{
      author={Arthur, James},
       title={Unipotent automorphic representations: conjectures},
        date={1989},
        ISSN={0303-1179},
     journal={Ast\'erisque},
      number={171-172},
       pages={13\ndash 71},
        note={Orbites unipotentes et repr{\'e}sentations, II},
      review={\MR{1021499}},
}

\bib{Arthur08}{incollection}{
      author={Arthur, James},
       title={Problems for real groups},
        date={2008},
   booktitle={Representation theory of real reductive {L}ie groups},
      series={Contemp. Math.},
      volume={472},
   publisher={Amer. Math. Soc., Providence, RI},
       pages={39\ndash 62},
}

\bib{Arthur}{book}{
      author={Arthur, James},
       title={The endoscopic classification of representations},
      series={American Mathematical Society Colloquium Publications},
   publisher={American Mathematical Society, Providence, RI},
        date={2013},
      volume={61},
        ISBN={978-0-8218-4990-3},
        note={Orthogonal and symplectic groups},
      review={\MR{3135650}},
}

\bib{ABV}{book}{
      author={Adams, Jeffrey},
      author={Barbasch, Dan},
      author={Vogan, David~A., Jr.},
       title={The {L}anglands classification and irreducible characters for
  real reductive groups},
      series={Progress in Mathematics},
   publisher={Birkh\"auser Boston, Inc., Boston, MA},
        date={1992},
      volume={104},
        ISBN={0-8176-3634-X},
         url={http://dx.doi.org/10.1007/978-1-4612-0383-4},
      review={\MR{1162533 (93j:22001)}},
}

\bib{Adams-Fokko}{article}{
      author={Adams, Jeffrey},
      author={du~Cloux, Fokko},
       title={Algorithms for representation theory of real reductive groups},
        date={2009},
        ISSN={1474-7480},
     journal={J. Inst. Math. Jussieu},
      volume={8},
      number={2},
       pages={209\ndash 259},
         url={https://doi.org/10.1017/S1474748008000352},
      review={\MR{2485793}},
}

\bib{Adams-Johnson}{article}{
      author={Adams, Jeffrey},
      author={Johnson, Joseph~F.},
       title={Endoscopic groups and packets of nontempered representations},
        date={1987},
        ISSN={0010-437X},
     journal={Compositio Math.},
      volume={64},
      number={3},
       pages={271\ndash 309},
         url={http://www.numdam.org/item?id=CM_1987__64_3_271_0},
      review={\MR{918414 (89h:22022)}},
}

\bib{AMR}{article}{
      author={Arancibia, Nicol\'{a}s},
      author={M{\oe}glin, Colette},
      author={Renard, David},
       title={Paquets d'{A}rthur des groupes classiques et unitaires},
        date={2018},
        ISSN={0240-2963},
     journal={Ann. Fac. Sci. Toulouse Math. (6)},
      volume={27},
      number={5},
       pages={1023\ndash 1105},
         url={https://doi.org/10.5802/afst.1590},
      review={\MR{3919547}},
}

\bib{AMR1}{misc}{
      author={Arancibia, Nicolás},
      author={Moeglin, Colette},
      author={Renard, David},
       title={Paquets d'arthur des groupes classiques et unitaires},
        date={2017},
}

\bib{arancibia_characteristic}{article}{
      author={Arancibia~Robert, Nicol\'{a}s},
       title={Characteristic cycles, micro local packets and packets with
  cohomology},
        date={2022},
        ISSN={0002-9947},
     journal={Trans. Amer. Math. Soc.},
      volume={375},
      number={2},
       pages={997\ndash 1049},
         url={https://doi-org.proxy.library.carleton.ca/10.1090/tran/8492},
      review={\MR{4369242}},
}

\bib{AV}{article}{
      author={Adams, Jeffrey},
      author={Vogan, David~A., Jr.},
       title={{$L$}-groups, projective representations, and the {L}anglands
  classification},
        date={1992},
        ISSN={0002-9327},
     journal={Amer. J. Math.},
      volume={114},
      number={1},
       pages={45\ndash 138},
         url={https://doi.org/10.2307/2374739},
      review={\MR{1147719}},
}

\bib{AV92}{article}{
      author={Adams, Jeffrey},
      author={Vogan, David~A., Jr.},
       title={{$L$}-groups, projective representations, and the {L}anglands
  classification},
        date={1992},
     journal={Amer. J. Math.},
      volume={114},
      number={1},
       pages={45\ndash 138},
}

\bib{AVParameters}{incollection}{
      author={Adams, Jeffrey},
      author={Vogan, David~A., Jr.},
       title={Parameters for twisted representations},
        date={2015},
   booktitle={Representations of reductive groups},
      series={Progr. Math.},
      volume={312},
   publisher={Birkh\"{a}user/Springer, Cham},
       pages={51\ndash 116},
      review={\MR{3495793}},
}

\bib{AvLTV}{article}{
      author={Adams, Jeffrey~D.},
      author={van Leeuwen, Marc A.~A.},
      author={Trapa, Peter~E.},
      author={Vogan, David~A., Jr.},
       title={Unitary representations of real reductive groups},
        date={2020},
        ISSN={0303-1179},
     journal={Ast\'{e}risque},
      number={417},
       pages={viii + 188},
         url={https://doi-org.proxy-um.researchport.umd.edu/10.24033/ast},
      review={\MR{4146144}},
}

\bib{BB}{article}{
      author={Beilinson, Alexandre},
      author={Bernstein, Joseph},
       title={Localisation de {$g$}-modules},
        date={1981},
        ISSN={0151-0509},
     journal={C. R. Acad. Sci. Paris S\'er. I Math.},
      volume={292},
      number={1},
       pages={15\ndash 18},
      review={\MR{610137 (82k:14015)}},
}

\bib{bbd}{incollection}{
      author={Be{\u\i}linson, A.~A.},
      author={Bernstein, J.},
      author={Deligne, P.},
       title={Faisceaux pervers},
        date={1982},
   booktitle={Analysis and topology on singular spaces, {I} ({L}uminy, 1981)},
      series={Ast\'erisque},
      volume={100},
   publisher={Soc. Math. France, Paris},
       pages={5\ndash 171},
      review={\MR{751966}},
}

\bib{borel}{inproceedings}{
      author={Borel, A.},
       title={Automorphic {$L$}-functions},
        date={1979},
   booktitle={Automorphic forms, representations and {$L$}-functions ({P}roc.
  {S}ympos. {P}ure {M}ath., {O}regon {S}tate {U}niv., {C}orvallis, {O}re.,
  1977), {P}art 2},
      series={Proc. Sympos. Pure Math., XXXIII},
   publisher={Amer. Math. Soc., Providence, R.I.},
       pages={27\ndash 61},
}

\bib{Bourbaki}{book}{
      author={Bourbaki, Nicolas},
       title={Lie groups and {L}ie algebras. {C}hapters 4--6},
      series={Elements of Mathematics (Berlin)},
   publisher={Springer-Verlag, Berlin},
        date={2002},
        note={Translated from the 1968 French original by Andrew Pressley},
}

\bib{Boreletal}{book}{
      author={Borel, A.},
      author={Grivel, P.-P.},
      author={Kaup, B.},
      author={Haefliger, A.},
      author={Malgrange, B.},
      author={Ehlers, F.},
       title={Algebraic {$D$}-modules},
      series={Perspectives in Mathematics},
   publisher={Academic Press, Inc., Boston, MA},
        date={1987},
      volume={2},
}

\bib{BMSZ}{misc}{
      author={Barbasch, Dan~M.},
      author={jun Ma, Jia},
      author={Sun, Binyong},
      author={Zhu, Chen-Bo},
       title={Special unipotent representations: orthogonal and symplectic
  groups},
        date={2021},
}

\bib{Lunts}{book}{
      author={Bernstein, Joseph},
      author={Lunts, Valery},
       title={Equivariant sheaves and functors},
      series={Lecture Notes in Mathematics},
   publisher={Springer-Verlag, Berlin},
        date={1994},
      volume={1578},
}

\bib{Carter}{article}{
      author={Carter, R.~W.},
       title={Conjugacy classes in the {W}eyl group},
        date={1972},
     journal={Compositio Math.},
      volume={25},
       pages={1\ndash 59},
}

\bib{cliftonetal}{misc}{
      author={Cunningham, Clifton},
      author={Fiori, Andrew},
      author={Moussaoui, Ahmed},
      author={Mracek, James},
      author={Xu, Bin},
       title={Arthur packets for $p$-adic groups by way of microlocal vanishing
  cycles of perverse sheaves, with examples},
        date={2021},
}

\bib{Christie-Mezo}{incollection}{
      author={Christie, Aaron},
      author={Mezo, Paul},
       title={Twisted endoscopy from a sheaf-theoretic perspective},
        date={2018},
   booktitle={Geometric aspects of the trace formula},
      series={Simons Symp.},
   publisher={Springer, Cham},
       pages={121\ndash 161},
}

\bib{casshah}{article}{
      author={Casselman, William},
      author={Shahidi, Freydoon},
       title={On irreducibility of standard modules for generic
  representations},
        date={1998},
     journal={Ann. Sci. \'{E}cole Norm. Sup. (4)},
      volume={31},
      number={4},
       pages={561\ndash 589},
}

\bib{GM}{book}{
      author={Goresky, Mark},
      author={MacPherson, Robert},
       title={Stratified {M}orse theory},
      series={Ergebnisse der Mathematik und ihrer Grenzgebiete (3) [Results in
  Mathematics and Related Areas (3)]},
   publisher={Springer-Verlag, Berlin},
        date={1988},
      volume={14},
}

\bib{Hotta}{book}{
      author={Hotta, Ryoshi},
      author={Takeuchi, Kiyoshi},
      author={Tanisaki, Toshiyuki},
       title={{$D$}-modules, perverse sheaves, and representation theory},
      series={Progress in Mathematics},
   publisher={Birkh\"{a}user Boston, Inc., Boston, MA},
        date={2008},
      volume={236},
        ISBN={978-0-8176-4363-8},
         url={https://doi.org/10.1007/978-0-8176-4523-6},
        note={Translated from the 1995 Japanese edition by Takeuchi},
      review={\MR{2357361}},
}

\bib{Knapp94}{incollection}{
      author={Knapp, A.~W.},
       title={Local {L}anglands correspondence: the {A}rchimedean case},
        date={1994},
   booktitle={Motives ({S}eattle, {WA}, 1991)},
      series={Proc. Sympos. Pure Math.},
      volume={55},
   publisher={Amer. Math. Soc., Providence, RI},
       pages={393\ndash 410},
}

\bib{Knapp}{book}{
      author={Knapp, Anthony~W.},
       title={Representation theory of semisimple groups},
      series={Princeton Mathematical Series},
   publisher={Princeton University Press, Princeton, NJ},
        date={1986},
      volume={36},
        note={An overview based on examples},
}

\bib{beyond}{book}{
      author={Knapp, Anthony~W.},
       title={Lie groups beyond an introduction},
      series={Progress in Mathematics},
   publisher={Birkh\"{a}user Boston, Inc., Boston, MA},
        date={1996},
      volume={140},
}

\bib{Kostant78}{article}{
      author={Kostant, Bertram},
       title={On {W}hittaker vectors and representation theory},
        date={1978},
     journal={Invent. Math.},
      volume={48},
      number={2},
       pages={101\ndash 184},
}

\bib{Kaletha-Minguez}{misc}{
      author={Kaletha, Tasho},
      author={Minguez, Alberto},
      author={Shin, Sug~Woo},
      author={White, Paul-James},
       title={Endoscopic classification of representations: Inner forms of
  unitary groups},
        date={2014},
}

\bib{KS}{article}{
      author={Kottwitz, Robert~E.},
      author={Shelstad, Diana},
       title={Foundations of twisted endoscopy},
        date={1999},
        ISSN={0303-1179},
     journal={Ast\'erisque},
      number={255},
       pages={vi+190},
      review={\MR{1687096}},
}

\bib{Knapp-Vogan}{book}{
      author={Knapp, Anthony~W.},
      author={Vogan, David~A., Jr.},
       title={Cohomological induction and unitary representations},
      series={Princeton Mathematical Series},
   publisher={Princeton University Press, Princeton, NJ},
        date={1995},
      volume={45},
        ISBN={0-691-03756-6},
         url={https://doi.org/10.1515/9781400883936},
      review={\MR{1330919}},
}

\bib{Langlands}{incollection}{
      author={Langlands, R.~P.},
       title={On the classification of irreducible representations of real
  algebraic groups},
        date={1989},
   booktitle={Representation theory and harmonic analysis on semisimple {L}ie
  groups},
      series={Math. Surveys Monogr.},
      volume={31},
   publisher={Amer. Math. Soc., Providence, RI},
       pages={101\ndash 170},
         url={https://doi.org/10.1090/surv/031/03},
      review={\MR{1011897}},
}

\bib{LV}{article}{
      author={Lusztig, George},
      author={Vogan, David~A., Jr.},
       title={Singularities of closures of {$K$}-orbits on flag manifolds},
        date={1983},
     journal={Invent. Math.},
      volume={71},
      number={2},
       pages={365\ndash 379},
}

\bib{LV2014}{article}{
      author={Lusztig, George},
      author={Vogan, David~A., Jr.},
       title={Quasisplit {H}ecke algebras and symmetric spaces},
        date={2014},
        ISSN={0012-7094},
     journal={Duke Math. J.},
      volume={163},
      number={5},
       pages={983\ndash 1034},
         url={https://doi.org/10.1215/00127094-2644684},
      review={\MR{3189436}},
}

\bib{MR4}{article}{
      author={M\oe~glin, Colette},
      author={Renard, David},
       title={Sur les paquets d'{A}rthur des groupes unitaires et quelques
  cons\'{e}quences pour les groupes classiques},
        date={2019},
        ISSN={0030-8730},
     journal={Pacific J. Math.},
      volume={299},
      number={1},
       pages={53\ndash 88},
  url={https://doi-org.proxy.library.carleton.ca/10.2140/pjm.2019.299.53},
      review={\MR{3947270}},
}

\bib{Mezo}{article}{
      author={Mezo, Paul},
       title={Character identities in the twisted endoscopy of real reductive
  groups},
        date={2013},
        ISSN={0065-9266},
     journal={Mem. Amer. Math. Soc.},
      volume={222},
      number={1042},
       pages={vi+94},
}

\bib{Mezo2}{article}{
      author={Mezo, Paul},
       title={Tempered spectral transfer in the twisted endoscopy of real
  groups},
        date={2016},
     journal={J. Inst. Math. Jussieu},
      volume={15},
      number={3},
       pages={569\ndash 612},
}

\bib{Moeglin-11}{incollection}{
      author={M{\oe}glin, C.},
       title={Multiplicit\'e 1 dans les paquets d'{A}rthur aux places
  {$p$}-adiques},
        date={2011},
   booktitle={On certain {$L$}-functions},
      series={Clay Math. Proc.},
      volume={13},
   publisher={Amer. Math. Soc., Providence, RI},
       pages={333\ndash 374},
      review={\MR{2767522 (2012f:22033)}},
}

\bib{Mok}{article}{
      author={Mok, Chung~Pang},
       title={Endoscopic classification of representations of quasi-split
  unitary groups},
        date={2015},
        ISSN={0065-9266},
     journal={Mem. Amer. Math. Soc.},
      volume={235},
      number={1108},
       pages={vi+248},
         url={http://dx.doi.org/10.1090/memo/1108},
      review={\MR{3338302}},
}

\bib{MR3}{incollection}{
      author={Moeglin, Colette},
      author={Renard, David},
       title={Sur les paquets d'{A}rthur des groupes classiques et unitaires
  non quasi-d\'{e}ploy\'{e}s},
        date={2018},
   booktitle={Relative aspects in representation theory, {L}anglands
  functoriality and automorphic forms},
      series={Lecture Notes in Math.},
      volume={2221},
   publisher={Springer, Cham},
       pages={341\ndash 361},
      review={\MR{3839702}},
}

\bib{MR1}{article}{
      author={Moeglin, Colette},
      author={Renard, David},
       title={Sur les paquets d'{A}rthur des groupes classiques r\'{e}els},
        date={2020},
        ISSN={1435-9855},
     journal={J. Eur. Math. Soc. (JEMS)},
      volume={22},
      number={6},
       pages={1827\ndash 1892},
         url={https://doi-org.proxy.library.carleton.ca/10.4171/jems/957},
      review={\MR{4092900}},
}

\bib{Sha80}{article}{
      author={Shahidi, Freydoon},
       title={Whittaker models for real groups},
        date={1980},
     journal={Duke Math. J.},
      volume={47},
      number={1},
       pages={99\ndash 125},
}

\bib{Sha81}{article}{
      author={Shahidi, Freydoon},
       title={On certain {$L$}-functions},
        date={1981},
     journal={Amer. J. Math.},
      volume={103},
      number={2},
       pages={297\ndash 355},
}

\bib{shelstad}{article}{
      author={Shelstad, D.},
       title={Characters and inner forms of a quasi-split group over {${\bf
  R}$}},
        date={1979},
     journal={Compositio Math.},
      volume={39},
      number={1},
       pages={11\ndash 45},
}

\bib{Shelstad08}{incollection}{
      author={Shelstad, D.},
       title={Tempered endoscopy for real groups. {I}. {G}eometric transfer
  with canonical factors},
        date={2008},
   booktitle={Representation theory of real reductive {L}ie groups},
      series={Contemp. Math.},
      volume={472},
   publisher={Amer. Math. Soc., Providence, RI},
       pages={215\ndash 246},
}

\bib{ShelstadIII}{article}{
      author={Shelstad, D.},
       title={Tempered endoscopy for real groups. {III}. {I}nversion of
  transfer and {$L$}-packet structure},
        date={2008},
     journal={Represent. Theory},
      volume={12},
       pages={369\ndash 402},
}

\bib{Shelstad12}{article}{
      author={Shelstad, D.},
       title={On geometric transfer in real twisted endoscopy},
        date={2012},
     journal={Ann. of Math. (2)},
      volume={176},
      number={3},
       pages={1919\ndash 1985},
}

\bib{shelstad_endoscopy}{incollection}{
      author={Shelstad, Diana},
       title={Orbital integrals, endoscopic groups and
  {$L$}-indistinguishability for real groups},
        date={1983},
   booktitle={Conference on automorphic theory ({D}ijon, 1981)},
      series={Publ. Math. Univ. Paris VII},
      volume={15},
   publisher={Univ. Paris VII},
     address={Paris},
       pages={135\ndash 219},
      review={\MR{MR723184 (85i:22019)}},
}

\bib{springer}{book}{
      author={Springer, T.~A.},
       title={Linear algebraic groups},
     edition={Second},
      series={Progress in Mathematics},
   publisher={Birkh\"{a}user Boston, Inc., Boston, MA},
        date={1998},
      volume={9},
        ISBN={0-8176-4021-5},
         url={https://doi.org/10.1007/978-0-8176-4840-4},
      review={\MR{1642713}},
}

\bib{SmithiesTaylor}{article}{
      author={Smithies, Laura},
      author={Taylor, Joseph~L.},
       title={An analytic {R}iemann-{H}ilbert correspondence for semi-simple
  {L}ie groups},
        date={2000},
     journal={Represent. Theory},
      volume={4},
       pages={398\ndash 445},
}

\bib{Tadic}{incollection}{
      author={Tadi\'{c}, Marko},
       title={{${\rm GL}(n,\mathbb C)\sphat$} and {${\rm GL}(n,\mathbb
  R)\sphat$}},
        date={2009},
   booktitle={Automorphic forms and {$L$}-functions {II}. {L}ocal aspects},
      series={Contemp. Math.},
      volume={489},
   publisher={Amer. Math. Soc., Providence, RI},
       pages={285\ndash 313},
}

\bib{Taibi}{article}{
      author={Ta\"{i}bi, Olivier},
       title={Dimensions of spaces of level one automorphic forms for split
  classical groups using the trace formula},
        date={2017},
     journal={Ann. Sci. \'{E}c. Norm. Sup\'{e}r. (4)},
      volume={50},
      number={2},
       pages={269\ndash 344},
}

\bib{ICIII}{article}{
      author={Vogan, David~A.},
       title={Irreducible characters of semisimple {L}ie groups. {III}. {P}roof
  of {K}azhdan-{L}usztig conjecture in the integral case},
        date={1983},
     journal={Invent. Math.},
      volume={71},
      number={2},
       pages={381\ndash 417},
}

\bib{Vogan78}{article}{
      author={Vogan, David~A., Jr.},
       title={Gelfand-{K}irillov dimension for {H}arish-{C}handra modules},
        date={1978},
     journal={Invent. Math.},
      volume={48},
      number={1},
       pages={75\ndash 98},
}

\bib{greenbook}{book}{
      author={Vogan, David~A., Jr.},
       title={Representations of real reductive {L}ie groups},
      series={Progress in Mathematics},
   publisher={Birkh\"{a}user, Boston, Mass.},
        date={1981},
      volume={15},
}

\bib{ICIV}{article}{
      author={Vogan, David~A., Jr.},
       title={Irreducible characters of semisimple {L}ie groups. {IV}.
  {C}haracter-multiplicity duality},
        date={1982},
        ISSN={0012-7094},
     journal={Duke Math. J.},
      volume={49},
      number={4},
       pages={943\ndash 1073},
         url={http://projecteuclid.org/euclid.dmj/1077315538},
      review={\MR{683010}},
}

\bib{vogan_local_langlands}{incollection}{
      author={Vogan, David~A., Jr.},
       title={The local {L}anglands conjecture},
        date={1993},
   booktitle={Representation theory of groups and algebras},
      series={Contemp. Math.},
      volume={145},
   publisher={Amer. Math. Soc.},
     address={Providence, RI},
       pages={305\ndash 379},
      review={\MR{MR1216197 (94e:22031)}},
}

\end{biblist}
\end{bibdiv}

\end{document}